\theoremstyle{plain}
\numberwithin{equation}{section}
\newcommand{\ZZZ}{\mathbb{Z}}
\newcommand{\CCC}{\mathbb{C}}
\newcommand{\NNN}{\mathbb{N}}
\newcommand{\RRR}{\mathbb{R}}
\newcommand{\TTT}{\mathbb{T}}
\newcommand{\RR}{{\mathcal R}}
\newcommand{\Op}{\mathrm{Op}\,}
\newcommand{\h}{\eta}  
\newcommand{\set}[1]{\left\{{#1}\right\}}
\newtheorem{theorem}{Theorem}[section]
\newtheorem{proposition}[theorem]{Proposition}
\newtheorem{lemma}[theorem]{Lemma}
\newtheorem{corollary}[theorem]{Corollary}
\newtheorem{remark}[theorem]{Remark}
\newtheorem{remarks}[theorem]{Remark}
\newtheorem{definition}[theorem]{Definition}
\newtheorem{notation}[theorem]{Notation}
\newcommand{\be}{\begin{equation}}
\newcommand{\ee}{\end{equation}}
\newcommand{\teta}{\theta}
\newcommand{\e}{\varepsilon}
\newcommand{\al}{\alpha}
\newcommand{\ov}{\overline}
\newcommand{\R}{\mathbb R}
\newcommand{\C}{\mathbb C}
\newcommand{\Z}{\mathbb Z}
\newcommand{\N}{\mathbb N}
\newcommand{\T}{\mathbb T}
\newcommand{\sign}{{\rm sign}}
\newcommand{\s }{\sigma }
\newcommand{\ii }{{\rm i} }
\newcommand{\bral}{[ \! [} 
\newcommand{\brar}{] \! ]}
\newcommand{\z }{\zeta }
\newenvironment{defn}{\begin{definition} \rm}{\end{definition}} 
\newcommand{\x }{\xi }
\newcommand{\uno}{\mathds{1}}
\newcommand{\pa}{\partial}
\newcommand{\opw}{{Op^{\mathrm{W}}}}
\newcommand{\opbw}{{Op^{\mathrm{BW}}}}
\newcommand{\opb}{{Op^{\mathrm{B}}}}
\newcommand{\jap}[1]{\left\langle #1 \right\rangle}
\newcommand{\norm}[1]{\big\| #1 \big\|}
\def\hat{\widehat}
\def\bar{\overline}
\def\cal{\mathcal}
\renewcommand{\Re}{\mathrm{Re}\,}
\newcommand{\calF}{{\mathcal F}}
\def\ba{\begin{aligned}}
\def\ea{\end{aligned}}
\def\beginm{\begin{multline}}
\def\endm{\end{multline}}
\newcommand{\MM}{{\mathcal M}}
\providecommand{\vect}[2]{{\bigl[\begin{smallmatrix}#1\\#2\end{smallmatrix}\bigr]}} \providecommand{\sign}{\mathrm{sgn}\,}  
\providecommand{\sm}[4]{{\bigl[\begin{smallmatrix}#1&#2\\#3&#4\end{smallmatrix}\bigr]}}
\def\l@subsection{\@tocline{2}{0pt}{2.5pc}{5pc}{}}
\def\l@subsubsection{\@tocline{3}{0pt}{4.5pc}{5pc}{}}
\renewcommand\tocchapter[3]{%
  \indentlabel{\@ifnotempty{#2}{\ignorespaces#2.\quad}}#3%
}
\newcommand\@dotsep{4.5}
\def\@tocline#1#2#3#4#5#6#7{\relax
  \ifnum #1>\c@tocdepth 
  \else
    \par \addpenalty\@secpenalty\addvspace{#2}%
    \begingroup \hyphenpenalty\@M
    \@ifempty{#4}{%
      \@tempdima\csname r@tocindent\number#1\endcsname\relax
    }{%
      \@tempdima#4\relax
    }%
    \parindent\z@ \leftskip#3\relax \advance\leftskip\@tempdima\relax
    \rightskip\@pnumwidth plus1em \parfillskip-\@pnumwidth
    #5\leavevmode\hskip-\@tempdima{#6}\nobreak
    \leaders\hbox{$\m@th\mkern \@dotsep mu\hbox{.}\mkern \@dotsep mu$}\hfill
    \nobreak
    \hbox to\@pnumwidth{\@tocpagenum{#7}}\par
    \nobreak
    \endgroup
  \fi}
\def\l@subsection{\@tocline{2}{0pt}{2.5pc}{5pc}{}}
\begin{document}
\bibliographystyle{plain}
\title[A non-linear Egorov Theorem]{A non-linear Egorov Theorem and Poincar\'e-Birkhoff Normal forms for quasi-linear PDEs on the circle} 

\date{}

\author{Roberto Feola}
\address{University of Nantes}
\email{roberto.feola@univ-nantes.fr}

\author{Felice Iandoli}
\address{LJLL (Sorbonne Universit\'e)}
\email{felice.iandoli@sorbonne-universite.fr}

\thanks{  
Felice Iandoli has been supported  by ERC grant ANADEL 757996.
Roberto Feola has been supported by 
 the Centre Henri Lebesgue ANR-11-LABX- 0020-01 
 and by ANR-15-CE40-0001-02 ``BEKAM'' of the Agence Nationale de la Recherche. 
}

\begin{abstract} 
 In this paper we consider an abstract class of quasi-linear para-differential equations on the circle. For each equation in the class we prove the existence of a 
 change of coordinates which conjugates the equation to a 
 diagonal and constant coefficient para-differential one.  
 In the case of  Hamiltonian equations we also put them 
 in Poincar\'e-Birkhoff normal form.  By means of the Bony para-linearization formula we prove that it is possible
 to apply this transformation to quasi-linear Hamiltonian 
 perturbations of the Schr\"odinger and beam equations.
 In this way we obtain the first long time existence result without 
 requiring any symmetry on the initial data in the case of quasi-linear PDEs with super-linear dispersion law. 
 We also prove the local in time existence of solutions
 for quasi-linear (not necessarily Hamiltonian) perturbations of the Benjamin-Ono equation.
\end{abstract}

\maketitle

\tableofcontents

\section{Introduction}
In the past years numerous progresses have been done concerning the study of quasi-linear, dispersive, evolution equations on the circle. The local in time solvability has been proven for several models, as well as the existence of large sets of periodic and quasi-periodic in time  solutions. More recently it has been established the long time  existence and stability of small amplitude solutions for the gravity-capillary water waves equation in \cite{BD} and for quasi-linear perturbations of the Schr\"odinger equation in \cite{Feola-Iandoli-Long}. 

Due to the compactness of the circle the dispersive character of the equation is absent in the following sense: the solutions of the linear equation do not decay when the time goes to infinity. On the other hand, thanks, among several other things,  to such a compactness, in the last decade it has been developed  a very fruitful and systematic way to study the linearization of such equations at certain approximate solutions. The linearized equation coming from a quasi-linear problem is much more complicated compared with the one coming from semi-linear equations because it has \emph{variable coefficients}. In this direction the first breakthrough result is due to Plotnikov-Toland \cite{P-T} and Ioos-Plotnikov-Toland \cite{IPT}, the new idea is to apply a suitable diffeomorphism of the circle and \emph{pseudo-differential} changes of coordinates in order to invert the linearized operator at an approximate solution. This is done in the context of a Nash-Moser iterative scheme in order to prove the existence of periodic in time solutions. The strategy in \cite{P-T,IPT} has been improved by Baldi-Berti-Montalto \cite{BBM,BBM1} in order to show the existence of \emph{quasi-periodic} solutions for quasi-linear perturbations of the KdV equation. These ideas  has been used also for several other equations, for instance one can look at \cite{BM1,FP,BBHM,Filippo,FGP1}.
A similar method has been successfully applied also the context 
of reducibility of linear operators. We quote for instance 
\cite{FGMP,BLM,FGP}.

This procedure, which has allowed to obtain so many results in the study of linearized equation, has been, very recently, transported  to a non-linear level by Berti-Delort in \cite{BD} obtaining the aforementioned long-time existence result. 
Inspired by this work, we have proved first the local in time well-posedness  in \cite{FIloc} and then the long time stability in \cite{Feola-Iandoli-Long} for quasi-linear perturbations of the Schr\"odinger equation. In order to obtain long-time stability of the solutions  on compact manifolds the only helpful approach, due to the lack of dispersion, is the  \emph{normal forms} one. For semi-linear equations we have, nowadays, a very good knowledge of the topic, see for instance \cite{DelortSzeft1,DelortSzeft2,BDGS,BG,grefaou}. This approach does not apply directly in the quasi-linear case because of the loss of derivatives introduced by the nonlinearity. In this  direction it is fundamental  to \emph{reduce} the equation to a constant coefficient one before performing a \emph{normal form} procedure. This problem has been extensively  explained in the introductions of \cite{BD} and \cite{Feola-Iandoli-Long}, to which  we refer  for  more details.
 
The non-linear adaptation of such techniques is a difficult problem.
The main  tool  used is the   \emph{para-differential} calculus. 
Roughly speaking one wants  to 
 try to mimic the changes of coordinates previously operated 
 on the linearized equations, on the \emph{para-linearized} 
 equations \`a la Bony \cite{bony}, (see also \cite{Met}). 
 In \cite{BD,Feola-Iandoli-Long} the authors prove that the 
 original equation may be \emph{reduced} to a new one 
 which has constant coefficients up to very regularizing terms. 
 They do not find \emph{changes of coordinates} of the phase space, 
 but they provide some \emph{modified energies} which, in any case, 
 are sufficient to establish the wanted results. In the following 
 we shall fix some notation and we shall provide an example 
 in order  to explain precisely all the  concepts that 
 have been previously mentioned and not been introduced yet.

The main purpose of this paper is to prove a  theorem 
which provides the existence of a  \emph{change of coordinates} 
of the phase space which transforms  a quite general \emph{para-differential} 
equation on the circle to another one which has 
constant coefficients up to very regularizing terms 
(see Theorem \ref{thm:main}). 
For a comprehensive introduction to the importance of 
Hamiltonian PDEs in mathematics and physics we refer to the books
by Berti
\cite{Bertibook}, Kuksin \cite{kuksinbook}, Zhakarov  \cite{zakhbook}.
As a consequence we are able to put in Poincar\'e-Birkhoff normal form a class of abstract \emph{Hamiltonian} para-differential equations satisfying some non-resonance conditions. Consequently we shall obtain a \emph{long time} existence theorem (in the same sense of \cite{BD,Feola-Iandoli-Long}) for the latter class of para-differential equations posed on $H^{s}(\TTT)$. 
We will apply this last theorem to some explicit examples of quasi-linear dispersive PDEs (with super-linear dispersion law) on the circle, obtaining, to the best of our knowledge, the first long time existence result for an equation of this kind without assuming any symmetry on the initial data (in \cite{BD,Feola-Iandoli-Long} the parity of the initial condition is necessary). 
Let us mention that, recently, Berti-Feola-Pusateri in \cite{BFP}, and Berti-Feola-Franzoi
in \cite{BFF1}, prove some long time existence results for 
gravity and gravity-capillary water waves equation without symmetry 
assumptions on the initial data.
 In the quoted papers the authors 
exploit an \emph{a posteriori} identification argument of normal form
which allows them to compute the Birkhoff normal form at the fourth order.
With our method we will be able to compute the Birkhoff normal form, preserving the Hamiltonian structure, at \emph{any} order.

In Section \ref{intro-ego} we introduce the main Theorem of this paper, in \ref{intro-idea} we give some ideas on the proof and we make a comparison between the strategy of this paper and the one in \cite{BD,Feola-Iandoli-Long}. In Section \ref{intro-ham} we shall define a class of \emph{Hamiltonian} para-differential equation which we are able to put in Poincar\'e-Birkhoff normal form. We shall also give some examples of PDEs which satisfy the hypotheses we shall require. Finally in \ref{intro-plan} of this introduction we explain how the paper is organized.

\subsection{Introduction to the Egorov theorem}\label{intro-ego}
Let us introduce some notation and a model problem in order to give an informal statement of our main theorem.  An accurate  definition, with an extensive analysis, of the  para-differential operators is given in Section \ref{paraparapara}.
 We shall  deal with symbols $\TTT\times \RRR\ni (x,\x)\to a(x,\x)$ with limited smoothness in $x$ satisfying, for some $m\in \RRR$, the following estimate
\begin{equation*}
|\partial_{\x}^{\beta}a(x,\xi)|\leq C_{\beta}\langle \x\rangle^{m-\beta}, \;\; \forall \; \beta\in \NNN,
\end{equation*}
where $\langle \x\rangle :=\sqrt{1+|\xi|^{2}}$. These functions will have limited smoothness in $x$ because they will depend on $x$ through the dynamical variable $U$ of a para-differential equation which is in $H^{s}(\TTT)$ for some $s$. From the symbol $a(x,\x)$ one can define the \emph{para-differential} operator $\opb(a(x,\x))[\cdot]$,
acting on periodic functions of the form $u(x)=\sum_{j\in\ZZZ}\hat{u}(j)\frac{e^{\ii j x}}{\sqrt{2\pi}}$, 
 in the following way:
\begin{equation*}
\opb(a(x,\x))[u]:=\frac{1}{2\pi}\sum_{k\in \ZZZ}e^{\ii k x}\left(
\sum_{j\in \ZZZ}\chi\Big(\frac{k-j}{\langle j\rangle}\Big)\hat{a}\big(k-j,j\big)\hat{u}(j)
\right),
\end{equation*}
where $\hat{a}(k,j)$ is the $k^{th}$-Fourier coefficient of the $2\pi$-periodic in $x$ function $a(x,\x)$, and where
$\chi(\h)$ is a $C^{\infty}_0$ function  supported in a sufficiently small neighborhood of the origin. This is the standard para-differential quantization of a symbol, in the paper we shall use the Weyl one, see formula \eqref{bambola202}, in this introduction we preferred to use it in order to simplify the  presentation.

\noindent We consider a para-differential equation of the form
\begin{equation}\label{alienazione}
\partial_tU=\ii E\mathcal{A}(U;x)U+\mathcal{R}(U)U\,,\qquad E=\sm{1}{0}{0}{-1},
\end{equation} 
where $U=(u,\bar{u})$, $R(U)$ is a $2\times 2$-matrix of smoothing operators and $\mathcal{A}$ is a $2\times 2$-matrix of para-differential operators of order $m>1$. We assume that the initial datum is of the form $U(0)=U_0=(u_0,\bar{u}_0)$, where $u_0$ belongs to a classical Sobolev space $H^s(\TTT;\CCC)$ for $s$ big enough, the smoothing remainder maps $H^s$ in $H^{s+\rho}$ for a large number $\rho>0$. We shall make in Section \ref{regularization} some precise  hypotheses on the matrix $\mathcal{A}(U;x)$, we do not recall them here in the complete generality. For simplicity one can think that $\mathcal{A}(U;x)$ is an elliptic, self-adjoint matrix of para-differential operators. We assume moreover that the matrices of operators $\mathcal{A}(U;x)[\cdot]$ and $\mathcal{R}(U)(\cdot)$ are \emph{reality preserving}, i.e. they leave invariant the \emph{real subspace} of $(C^{\infty}(\mathbb{T};\CCC))^2$ made of those couples having the form $(u,\bar{u})$. 

\noindent We shall prove that there exists a non-linear map $\Psi$, which is \emph{bounded} and \emph{invertible} in a small neighborhood of the origin of $H^s$ such that $Z:=\Psi(U)$ solves the following problem
\begin{equation}\label{modellino}
\partial_tZ=\ii E\mathcal{D}(Z)Z+\mathcal{\tilde{R}}(Z)Z\,,
\end{equation} 
where $\mathcal{D}(Z)$ is a reality preserving and \emph{diagonal matrix}, whose entries  are real and \emph{constant in $x$} non-linear functions in $Z$;  $\mathcal{\tilde{R}}(Z)$ is a reality preserving matrix of smoothing operators mapping $H^s$ in $H^{s+\tilde{\rho}}$ with $\tilde{\rho}>0$ possibly smaller than $\rho$. 
This is essentially a rough statement of our main result which is Theorem \ref{thm:main}. The hypotheses of Theorem \ref{thm:main} are slightly milder, it is not strictly necessary to require the self-adjointness of the matrix $\mathcal{A}(U;x)$, see \eqref{Forma-di-Ainizio} and \eqref{operatoreEgor1inizio}. 

To clarify things  one can think about the following concrete example. Consider the quasi-linear perturbation of the Schr\"odinger equation
\begin{equation}\label{esempietto-alienato}
\partial_t u=\ii(1+|u|^2)u_{xx}\,.
\end{equation}
By applying the celebrated para-linearization formula of Bony one finds that such an equation is equivalent to the following system
\begin{equation*}
\partial_tU=\ii E\opb(A(U;x,\xi))U+R(U),
\end{equation*}
where the matrix $A(U;x,\xi)$ has the following form
\begin{equation*}
A(U;x,\xi)=\left(\begin{matrix}
a(U;x,\xi) & b(U;x)\\
\ov{b(U;x)} & a(U;x,\xi)
\end{matrix}\right)\,, \quad \begin{cases}
&a(U;x,\xi)=(1+|u|^2)(\ii\xi)^2+\bar{u}u_{xx}\\
&b(U;x,\xi)=u u_{xx}\,,
\end{cases}
\end{equation*}
and the matrix $R$ is a matrix of smoothing operators. In this case our theorem applies and provides a change of variable $Z=\Psi(U)$ such that the system in the new coordinates reads as
\begin{equation}\label{coniugato-alienato}
\partial_tZ=\ii E\opb\Big(A^+(Z)(\ii\xi^2)\Big)Z+\ell.o.t.\,,
\quad A^+(Z):=\left(\begin{matrix}
a^+(Z)&0\\
0&a^+(Z)
\end{matrix}\right)\,,
\end{equation}
and the function on the diagonal has the form 
\begin{equation*}
a^+(Z):=\int_{\TTT}f(Z)dx \in\RRR
\end{equation*}
for some regular and \emph{real} function $f$. Essentially we have reduced the equation to a \emph{Kirchhoff-type} one in the following sense.
In the new coordinates system the equation is still a \emph{quasi-linear} one, but the coefficients of the para-differential operators are constants.  In this way the system enjoys a property which is peculiar of \emph{semi-linear} equations: the linearized equation at any solution has constant coefficients at highest order. We are confident that it would be convenient  to start a KAM procedure from the regularized system \eqref{coniugato-alienato}, instead of the original system \eqref{alienazione}, because it would be easier to study the invertibility of the linearized operator at periodic/quasi-periodic solutions. This is an advantage of having produced changes of coordinates of the phase space instead of modified energies. 

 A  deeper advantage of  our method is the following. In the case that \eqref{alienazione} is an Hamiltonian equation (in the sense specified in section \ref{intro-ham})  it is possible to find a symplectic version of the aforementioned change of coordinates. 
 In such a way the new system \eqref{coniugato-alienato} is still Hamiltonian. 
 Moreover, since  it is reduced to constant coefficients up to smoothing reminders, it is possible to perform a Poincar\'e-Birkhoff normal argument by means, again, of a symplectic change of coordinates of the phase space. In this way standard arguments on \emph{non-resonant} systems provide control on the growth of Sobolev norms for long time (for a detailed introduction on this topic see \ref{intro-ham}). 
 In this way we obtain, to the best of our knowledge, the first results (in the case of super-linear dispersion law, for the linear  see \cite{Delort-Sphere, Delort-2009}) of long time existence for quasi-linear equations without assuming any symmetry on the initial condition. This is  a consequence of the fact that we are able to fully exploit the \emph{Hamiltonian  structure}. We shall consider quasi-linear Hamiltonian perturbation of the Schr\"odinger and beam equation and prove a long time existence in the spirit of \cite{BD, Feola-Iandoli-Long} but without the parity assumption therein. Besides these applications to long time solutions of Hamiltonian equations, we think that the Egorov theorem \ref{thm:main}  is important \emph{per se}, in order to stress this fact we prove a local well posedness theorem for quasi-linear, not necessarily Hamiltonian, perturbations of the Benjamin-Ono equation. This is deduced from the fact that a system like \eqref{coniugato-alienato} admits energy inequality on $H^s(\TTT)$. In a forthcoming paper we want to apply this theorem also to the gravity-capillary water waves system in order to obtain a long time result without any symmetry on the initial datum, the proof of this fact is much more involved compared with the examples we provide here.

\subsection{Ideas of the proof of the Egorov theorem}\label{intro-idea}
The previously  mentioned non-linear change of coordinates $Z=\Psi(U)$ will be obtained as the composition of numerous changes of coordinates. Each change of coordinates, let us call it $\psi$, will be generated as the \emph{flow} of a para-differential operator as follows
\begin{equation}\label{trasformazione-alienata}
\left\{
\begin{aligned}
&\pa_{\tau}\psi^{\tau}=G^{\tau}(\psi^{\tau})\psi^\tau\\
&\psi^{0}={\rm Id},
\end{aligned}\right.
\end{equation}
where $G^{\tau}$ is some non-linear vector field
possibly depending explicitly on $\tau$. We shall define $\psi:=\psi^{\tau}|_{\tau=1}$. Note that the equation above is a non-linear para-differential equation, whose well-posedness has to be analyzed, this is done in Section \ref{FLOWS}.  Then defining $Z:=\psi(U)$ one has that the new unknown $Z$ solves a system of the form
\begin{equation*}
\partial_tZ=P^{1}(Z)\,,
\end{equation*}
where $P^{\tau}(Z)$, for $\tau\in [0,1]$, is the solution of the non-linear Heisenberg equation
\begin{equation}\label{heisen-alieno}
\left\{
\begin{aligned}
&\pa_{\tau}P^{\tau}(Z)=\big[ G^{\tau}(Z) , P^{\tau}(Z)\big]\\
&P^{0}(Z)=\ii E\mathcal{A}(Z;x)Z+\mathcal{R}(Z)Z\,,
\end{aligned}\right.
\end{equation}
where we have denoted by $[\cdot,\cdot]$ the non-linear commutator. More precisely, given two vector fields $X(u)$ and $Y(u)$,  we define the non-linear commutator between the two as
 \begin{equation}\label{nonlinCommu}
[X,Y](u):=dX(u)\big[Y(u)\big]-dY(u)\big[X(u)\big]\,.
\end{equation}
We have chosen to generate changes of coordinates in this way because there is a systematic method to prove that the equation for the new variable $Z$ is still a para-differential one. For more details one should look at Section \ref{secconjconj}.

In Section \ref{secconjconj} we perform all the changes of coordinates. First of all one has to diagonalize the matrix of symbols $\mathcal{A}(U)$, this is done by applying several changes of coordinates which diagonalize  $\mathcal{A}(U)$ at each order, this is the content of Section \ref{diago-lineare}. 

Once achieved the diagonalization of $\mathcal{A}(U)$ we shall perform further changes of variables in order to put to constant coefficient the symbol at each order appearing on the diagonal of the new system, this is the content of Section \ref{super-costanti}. Among all the changes of coordinates the most difficult is the one which is needed to remove the $x$-dependence from the symbol at the highest order. We explain here the strategy we adopted for this step, this may be considered as a guideline also for all the other changes of coordinates which are performed in Section \ref{secconjconj}.
To clarify things we explain the idea, having in mind equation \eqref{esempietto-alienato}, on the following toy model
\begin{equation}\label{toy}
\partial_t u=\ii\opb((1+|u|^2)(\ii\xi)^2)u\,.
\end{equation}
We define  $z=\psi^{\tau}(u)_{\tau=1}$, where $\psi^{\tau}$ solves \eqref{trasformazione-alienata} with $G^{\tau}(u):=\ii\opb(b(\tau, u;x)\xi)u$ and $b(\tau,u;x)$ is a path of symbols with $\tau\in[0,1]$. Then the equation in the new variable is $\partial_tz=P^1(z)$, where $P^{\tau}$ solves the Heisenberg equation
\begin{equation*}
\begin{cases}
\partial_{\tau}P^{\tau}(z)=\big[\ii\opb(b(\tau,z;x)\xi)z,P^{\tau}(z)\big]\\
P^0(z)=\ii\opb(1+|z|^2(\ii\xi)^2)z\,.
\end{cases}
\end{equation*}
At this point one can make the ansatz that $P^{\tau}$ is of the form $\opb(a^{+}(\tau;z,x,\xi))z+\ell.o.t.$ for a symbol $a^+$ of order $2$. By developing the commutator, by means of symbolic calculus (see Prop. \ref{teoremadicomposizione}), one obtains that the equation for the highest order  symbol $a^{+}(\tau;z,x,\xi)$ is the following
\begin{equation}\label{alto-alienato}
\begin{cases}
\partial_{\tau}a^+(\tau,Z;x,\xi)=\big\{b(\tau,Z;x)\xi, a^+(\tau,Z;x,\xi)\big\}-d_za^{+}(\tau,z;x,\x)[\opb(\ii  b(\tau,z;x,\x))[z]]\\
a^{+}(0;z,x,\xi)=(1+|z|^2)(\ii\xi)^2\,,
\end{cases}
\end{equation}
where $\{f,g\}:=\pa_{\x}f\pa_{x}g-\pa_{x}f\pa_{\x}g$ 
denotes the Poisson bracket between function. 
Such an equation is a non-linear transport one. In order to solve it  we proceed as follows. Define the function
\begin{equation*}
g(\tau)=a^{+}(\tau,z(\tau);x(\tau),\x(\tau))
\end{equation*}
and note that this function is constant along the flow of the following non-linear system
\begin{equation}\label{sistema-alienato}
\left\{
\begin{aligned}
&\pa_{\tau}x(\tau)=-b(\tau,z(\tau);x(\tau)) \\
& \pa_{\tau}\x(\tau)=b_x(\tau,z(\tau);x(\tau))\x(\tau)\\
&\pa_{\tau}z(\tau)=\opb\big(\ii b(\tau,z(\tau);x(\tau))\x(\tau)\big)[z(\tau)]\,.
\end{aligned}\right.
\end{equation}
Note that the system above is a  system of non-linear coupled equations, its well-posedness is not trivial and it is discussed in Section \ref{sec:egohigh}. Denote by
\begin{equation*}
\widetilde{\Upsilon}^{\tau}_b(z,x,\x):=\Big(\widetilde{\Upsilon}^{(z)}_b,
\widetilde{\Upsilon}^{(x)}_b, \widetilde{\Upsilon}^{(\x)}_b \Big)(\tau,z,x,\x)\,
\end{equation*}
the inverse of the flow of system \eqref{sistema-alienato}, then the solution of equation \eqref{alto-alienato} is
\begin{equation}\label{1-cambio}
a^+(z,x,\xi)=\left(1+\left|\widetilde{\Upsilon}_b^{(z)}
\left(\widetilde{\Upsilon}_b^{(x)}\right)\right|^2\right)
\left(\ii \widetilde{\Upsilon}_b^{(\x)}\right)^2\,.
\end{equation}
We want to find a function $b(\tau,z;x)$ and a constant (w.r.t. $x$) $m_b$ such that one solves the equation $a^+(z,x,\xi)=m_b(\ii\xi)^2$. This is another non-linear and implicit equation which is solved in Theorem \ref{constEgo}. Choosing such a $b$ we have removed the $x$-dependence on the highest order term.

We are now in position to  make a  short comparison between this way of generating changes of coordinates and the ones used in \cite{BD,Feola-Iandoli-Long}. In those papers the authors look for \emph{modified energies}, more precisely, starting from a solution $U(t,x)$ of the equation \eqref{alienazione}, they want to define a new unknown  $W=\Phi(U)U$ in the following way. One requires that $\norm{W}_{L^{\infty}H^s}\sim\norm{U}_{L^{\infty}H^s}$ and that the time-dependent map $\Phi(U)[\cdot]$ is linearly invertible in a small neighborhood of the origin of the space of continuous function in time  with values in $H^s$ for any \emph{fixed} function $U(t,x)$. The map $\Phi(U)[\cdot]$ is chosen in such a way that the equation for the new unknown $W$ has constant coefficient in $x$. By using the existing results on the local well-posedness of the equations, they may recover information on the original variable $U$. Note that  these modified energies make sense only in the case that one already owns a local Cauchy theory for the equation.
An advantage of performing a complete change of coordinates is that we 
do not need any \emph{a priori} local well-posedness result. As we shall see, these changes of coordinates can actually be used in order to prove local well-posedness results for several quasi-linear equations.
 Let us suppose that $U$ is a solution of \eqref{alienazione}, then equation solved by the new unknown (\emph{\`a la} Berti-Delort) $W=\Phi(U)U$ takes the following form
 \begin{equation}\label{energia-aliena}
 \begin{aligned}
 \partial_t W&=\Phi(U)U_t+[\partial_t\Phi(U)]U+\ell.o.t.\\
& =\Phi(U)\ii E\mathcal{A}(U;x)(\Phi(U))^{-1}W+(\partial_t\Phi(U))[(\Phi(U))^{-1}W]+\ell.o.t.\,.
\end{aligned} \end{equation}
If, for instance, the operator $\Phi(U)[\cdot]$ is a para-differential one, then a time derivative falls on the symbol, this fact has to be taken into account in the definition of symbols in \cite{BD,Feola-Iandoli-Long}. In analogy we shall take into account that a differential with respect to the variable $U$ falls on the symbols, as one can see from equations \eqref{heisen-alieno} and \eqref{nonlinCommu}. Further comments on these differences are given in Section \ref{paraparapara}. In the case of the modified energies, since one looks only for a \emph{linear invertibility} of the map $\Phi(U)[\cdot]$, one can realize it as the flow of a linear operator, for instance as follows
\begin{equation}\label{linearissimo}
\begin{cases}
\partial_{\tau}\Phi(\tau)=\ii\opb(F(U;x,\xi))\Phi(\tau)\\
\Phi(\tau)=\rm{Id}\,,
\end{cases}
\end{equation}
whose well-posedness is easier to discuss compared to the non-linear equation \eqref{trasformazione-alienata}. (Note that here one has to reason at \emph{fixed} $U$, instead by following our strategy one would have found the same equation with $U\rightsquigarrow \Phi(\tau)$).
It turns out that, being the equations super-linear, the highest order term in the right hand side of \eqref{energia-aliena} is the first one $\,\,Q:=\Phi(U)\ii E\mathcal{A}(U;x)(\Phi(U))^{-1}$. Moreover one has that $Q=Q^{\tau}|_{\tau=1}$, where $Q^{\tau}$ solves the \emph{linear} Heisenberg equation
\begin{equation}\label{commuta-alieni}
\begin{cases}
\partial_{\tau}Q^{\tau}(U)[\cdot]=\Big[\ii\opb(F(U;x,\xi))[\cdot],Q^{\tau}(U)[\cdot]\Big]_{-}\\
Q^{0}(U)[\cdot]=\ii E\mathcal{A}(U;x)[\cdot]\,,
\end{cases}
\end{equation}
where we denoted by $[\cdot,\cdot]_{-}$ the commutator between two linear operators. Consider again the toy model in \eqref{toy}, let us see how the procedure changes if one looks only for a modified energy in order to put the system to constant coefficients. One should consider the flow in \eqref{linearissimo} with $F(U;x,\xi):=b(u;x)\xi$ with $b\in\RRR$.
Then the highest order operator in the r.h.s. of \eqref{energia-aliena} is of the form $Q^{\tau}(u)w=\opb(a^+(u;x)(\ii\xi)^2)w+\ell.o.t.$. By developing the linear commutators in \eqref{commuta-alieni} on find that the equation solved by the new symbol $a^+(u;x)$ is
\begin{equation}\label{alto-lineare-alienato}
\begin{cases}
\partial_{\tau}a^+(\tau,u;x,\xi)=\big\{b(\tau,u;x)\xi, a^+(\tau,u;x,\xi)\big\}\\
a^{+}(0;u,x,\xi)=(1+|u|^2)(\ii\xi)^2\,.
\end{cases}
\end{equation}
For any \emph{fixed}  $u$ one notes that the function
\begin{equation*}
g(\tau,u;x,\xi)=a^+(\tau,u;x(\tau),\xi(\tau))
\end{equation*}
is constant along the solutions of the system
\begin{equation}\label{sistema-alienato-lineare}
\left\{
\begin{aligned}
&\pa_{\tau}x(\tau)=-b(\tau,u;x(\tau)) \\
& \pa_{\tau}\x(\tau)=b_x(\tau,u;x(\tau))\x(\tau)\,,
\end{aligned}\right.
\end{equation}
which is different from \eqref{sistema-alienato} because here the function $u$ is fixed and it does not depend on $\tau$. At this point one denotes by $(\tilde{G}_b^{(x)},\tilde{G}_b^{\xi})$ the inverse flow of \eqref{sistema-alienato-lineare} and finds 
 the solutions of \eqref{alto-lineare-alienato} as 
 \begin{equation}\label{1-energia-lineare}
 a^+(\tau,u;x,\xi)=\left(1+\left|u\left(\tilde{G}_b^{(x)}(\tau)\right)
 \right|^2\right)\left(\ii \tilde{G}_b^{(\xi)}(\tau)\right)^2\,.
 \end{equation}
 One could show that at the first order of homogeneity the symbol in \eqref{1-energia-lineare} and the one in \eqref{1-cambio} coincide by taking the Taylor expansion at $\tau=0$. In other words the symbol  \emph{\`a la Berti-Delort} in \eqref{1-energia-lineare} is an approximation of the symbol we find through changes of coordinates in \eqref{1-cambio} up to terms of higher homogeneity. 
\subsection{Poincar\'e-Birkhoff normal forms and applications to quasi-linear PDEs}\label{intro-ham}

The main consequence of  the Egorov regularization theorem \ref{thm:main} we are interested in putting in Poincar\'e-Birkhoff normal forms a class of Hamiltonian equations in the following sense. We consider system of the form 
\begin{equation}\label{X_H}
\partial_t U=X_H(U),\quad X_{H}(U)=\left(
\begin{matrix}\ii \pa_{\bar{u}}H\\
-\ii \pa_{u}H
\end{matrix}
\right)=\ii J\nabla H\,, \quad J:=\left(\begin{matrix} 0 & 1 \\ -1 & 0\end{matrix}\right)
\end{equation}
where $ H$ is an Hamiltonian function of the form 
\begin{equation*}
H(U)=\int_{\TTT}\Omega u\cdot\bar{u}dx+\int_{\TTT}F(U)dx\,,
\end{equation*}
where $\Omega$ is a linear \emph{pseudo-differential} operator satisfying some \emph{non-resonance} conditions at order $N$ (see Definition \ref{nonresOmegaCOND}) and $F$ is a non-linear, possibly unbounded,  operator on $H^s$.
The symplectic structure is the one induced by the non-degenerate symplectic form
\begin{equation}\label{symform}
\lambda(U,V):=\int_{\mathbb{T}} U\cdot \ii JV dx=\int_{\mathbb{T}} \ii (u\bar{v}-\bar{u} v)dx\,
\end{equation}
on the space made by the couples $U=\vect{u}{\bar{u}}$, $V=\vect{v}{\bar{v}}$. The Poisson brackets between two Hamiltonian $H,G$ are defined as
\begin{equation}\label{Poisson}
\{G,H\}:=\lambda(X_{G},X_{H})
\stackrel{\eqref{symform}}{=}
\int \ii J\nabla G\cdot\nabla H dx=
\ii \int \pa_{u}H\pa_{\bar{u}}G-  \pa_{\bar{u}}H\pa_{{u}}G dx\,.
\end{equation}
We have that
\begin{equation}\label{ham-field}
[X_{G},X_{H}]=-X_{\{G,H\}}\,,
\end{equation}
where $[\cdot,\cdot]$ is defined in \eqref{nonlinCommu}.

We introduce informally our result. Assume that the system \eqref{X_H} satisfy the hypotheses of Theorem \ref{thm:main}, then there exists a change of coordinates $\psi(U)=W$ such that the equation in the new variables reads
\begin{equation}\label{PBN}
\partial_tW=\ii E\Omega W+X_{G_{res}}(W)+X_{\geq N}(W),
\end{equation}
where $X_{G_{res}}(W)$ is the Hamiltonian vector-field of a \emph{resonant} Hamiltonian $G_{res}(W)$, i.e. 
\begin{equation*}
\left\{G_{res}(W),\int_{\TTT}\Omega W\cdot \bar{W}dx\right\}=0\,,
\end{equation*}
where the Poisson parenthesis are defined in \eqref{Poisson} and $X_{\geq N}(W)$ is an Hamiltonian vector-field satisfying 
\begin{equation*}
\Re\left(\langle D\rangle^sX_{\geq N}(W),\langle D\rangle^sW\right)_{L^2}
\leq\norm{W}_{H^s}^{N+2}\,,
\end{equation*}
where $\langle D\rangle$ is the Fourier multiplier defined 
by linearity  in \eqref{Sobnorm6000}.
We say that the system \eqref{PBN} is in Poincar\'e-Birkhoff normal form up to order $N$.
Thanks to the \emph{non-resonance} conditions in Definition \ref{nonresOmegaCOND}, the resonant Hamiltonian is the sum of monomials of the form
\begin{equation*}
w_{n_0}\cdots w_{n_{p}}\ov{w}_{n_{p+1}} \cdots \ov{w}_{n_{j+1}}\,, 
\quad \{|n_0|,\ldots,|n_{p}|\}=\{|n_{p+1}|,\ldots,|n_{j+1}|\}\,,  
\quad p=j/2\,.
\end{equation*}
Thanks to this structure one can prove that the \emph{super-actions} 
$|w_j|^2+|w_{-j}|^2$ are prime integrals of the Hamiltonian. 
This guarantees the long time stability for the 
whole system in the spirit of \cite{Faouplane}.
For a more detailed  statement we refer to Theorem \ref{thm:mainBNF}. 
In fact the classical dynamical consequence of having 
a system in such a form is a \emph{long time} 
existence and stability theorem. More precisely, if we 
consider a solution whose initial datum has size $\varepsilon$,  
then it exists for a time of size, at least, $\varepsilon^{-N}$ and 
moreover  the solution remains of size $\varepsilon$ for this  long time. 
This is the consequence of Corollary \ref{thm:energy}
and a standard bootstrap argument.

The first step in the proof of this theorem is to find a symplectic correction to the change of coordinates given in Theorem \ref{thm:main}. It turns out that this is possible 
up to correcting the generator of the transformation by a
 a smoothing operator, this is the content of Theorem \ref{thm:main2}
 whose proof 
 is done in Section \ref{simplettomorfo}. 
 At this point we obtain an Hamiltonian system having the following structure
\begin{equation}
\partial_tZ=\ii E\big(\Omega Z+\mathcal{D}(Z)Z\big)+\tilde{\mathcal{R}}(Z)Z\,,
\end{equation}
where $\mathcal{D}(Z)Z$ and  $\tilde{\mathcal{R}}(Z)Z$ are  vector-fields satisfying the properties listed below equation \eqref{modellino}. We shall perform two different Poincar\'e-Birkhoff normal forms, one on the matrix of para-differential (with constant coefficients) operators $\mathcal{D}(Z)Z$ and a second one on the matrix of the smoothing remainders  $\tilde{\mathcal{R}}(Z)Z$. This is the content of Section \ref{sec:BNF}.

We shall apply the abstract results we have introduced above to some quasi-linear PDEs on the circle. We conclude this introduction by stating the theorems we obtain on quasi-linear perturbations of the Schr\"odinger, beam and Benjamin-Ono equations.


\noindent {\bf The quasi-linear Schr\"odinger equation}. Consider the following   equation
\begin{equation}\label{NLS}
\ii \partial_t u-\pa_{xx} u+P_{\vec{m}}*u+f(u,u_x,u_{xx})=0\,,
\quad u=u(x,t), \quad x\in \TTT\,,\quad t\in\mathbb{R}\\
\end{equation}
where the $f(z_0,z_1,z_2)$ is  a polynomial in the variables 
$(z_0,z_1,z_2)\in\CCC^{3}$
with a zero of order at least $2$ at the origin.
The potential $P_{\vec{m}}(x)=(\sqrt{2\pi})^{-1}\sum_{j\in\ZZZ} \hat{p}(j)e^{\ii jx}$ is a  
real function with 
{real}  Fourier coefficients  and the term $P_{\vec{m}}* u$ denotes the convolution between the potential $P_{\vec{m}}(x)$ and $u(x)=(\sqrt{2\pi})^{-1}\sum_{j\in\ZZZ}\hat{u}(j)e^{\ii jx}$
\begin{equation*}
P_{\vec{m}}*u(x)=\int_{\TTT}P_{\vec{m}}(x-y)u(y)dy=\sum_{j\in\ZZZ}
\hat{p}(j)\hat{u}(j)e^{\ii j x}\,.
\end{equation*}
Concerning the convolution potential $P_{\vec{m}}(x)$ we define its $j$-th Fourier coefficient as follows. Fix $M>0$ and set
\begin{equation}\label{potenziale1}
\hat{p}(j):=\hat{p}_{\vec{m}}({j})=
\sum_{k=1}^{M}\frac{m_{k}}{\langle j\rangle^{2k+1}}\,,
\end{equation}
where $\vec{m}=(m_1,\ldots,m_M)$ is a vector in $\mathcal{O}:=[-1/2,1/2]^{M}$ and $\langle j\rangle=\sqrt{1+|j|^2}$. 
We assume that the nonlinearity in \eqref{NLS} has the form
\begin{equation}\label{HamhypNLS}
f(u,u_x,u_{xx})=(\pa_{\bar{z}_0}F)(u,u_{x})-
\frac{d}{dx}[(\pa_{\bar{z}_{1}}F)(u,u_{x})]\,,
\end{equation}
where 
$\pa_{z_i}:=(\pa_{{\rm Re}({z}_i)}-\ii \pa_{{\rm Im}(z_i)})/\sqrt{2}$ and 
$\pa_{\bar{z}_i}:=(\pa_{{\rm Re}({z}_i)}+\ii \pa_{{\rm Im}({z}_i)})/\sqrt{2}$ for $i=0,1$,
and $F(z_0,z_1)$
is a \emph{real valued} polynomial in $(z_0,z_1)\in\CCC^{2}$
vanishing at order $3$ near the origin.
Thanks to \eqref{HamhypNLS} the  equation \eqref{NLS}
is Hamiltonian with respect to the symplectic form \eqref{symform}, namely
ita can be written in the complex Hamiltonian form
\begin{equation}\label{NLShamver}
\pa_{t}u=\ii \nabla_{\bar{u}}\mathcal{H}(u)=\ii \Omega u+\ii f(u,u_x,u_{xx})
\end{equation}
with Hamiltonian function
\begin{equation}\label{NLShamilton}
\mathcal{H}(u):=\int_{\mathbb{T}}\Omega u\cdot\bar{u}+F(u,u_x) dx\,,
\end{equation}
where the operator $\Omega:=-\pa_{xx}+P_{\vec{m}}*$ is the Fourier multiplier
(recall \eqref{potenziale1})
\begin{equation}\label{OmegoneNLS}
\Omega e^{\ii jx}=\omega_{j} e^{\ii jx}\,,\quad \omega_{j}=\omega_{j}(\vec{m}):=
j^{2}+\hat{p}(j)\,,\quad j\in\mathbb{Z}\,.
\end{equation}
We have the following.

\begin{theorem}{\bf (Long time existence for quasi-linear Schr\"odinger equations).}\label{thm:mainNLS}
There is a zero Lebesgue measure set $\mathcal{N} \subset \mathcal{O}$ such that for any  integer $0\leq N\leq M$ 
and any $\vec{m}\in \mathcal{O}\setminus \mathcal{N}$
there exists $s_0\in \RRR$ such that for any $s\geq s_0 $ there are constants $r_0\in (0,1)$,  $c_N>0$ and $C_N>0$
such that the following holds true.
For any $0<r\leq r_0$ and any function $u_0$ in the ball of radius $r$ 
of $H^{s}(\TTT;\CCC)$,  the equation \eqref{NLS} with initial datum $u_0$ has a unique solution
\begin{equation}\label{spazioNLS}
u(t,x) \in C^{0}\Big([-T_r,T_r]; H^{s}(\TTT;\CCC)\Big)
\bigcap C^{1}\Big([-T_r,T_r]; H^{s-2}(\TTT;\CCC)\Big)\,,\qquad 
T_{r}\geq c_Nr^{-N}\,.
\end{equation}
Moreover one has that 
\begin{equation}\label{stimaNLS}
\sup_{t\in (-T_{r},T_{r})}\|u(t,\cdot)\|_{H^{s}}\leq C_Nr\,.
\end{equation}
\end{theorem}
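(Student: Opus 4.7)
\textbf{Proof proposal for Theorem \ref{thm:mainNLS}.}

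The plan is to reduce the quasi-linear Schr\"odinger equation \eqref{NLS} to a canonical para-differential form on which the abstract machinery of the paper applies, and then to derive the long-time bound via a bootstrap argument on an energy functional.
First, I would pass from $u$ to the variable $U=\vect{u}{\bar u}$ and apply the Bony para-linearization formula to the nonlinearity $f(u,u_x,u_{xx})$ in \eqref{HamhypNLS}. Using \eqref{NLShamver}, this produces a system of the form
\begin{equation*}
\pa_t U = \ii E\,\Omega\, U + \ii E\,\opbw(A(U;x,\xi))U + \mathcal{R}(U)U,
\end{equation*}
where $A(U;x,\xi)$ is a $2\times 2$ reality-preserving matrix of symbols of order $2$ whose top-order diagonal coefficient is real-valued (this reality is inherited from the Hamiltonian structure and the quadratic dependence of $F$ on $u_x$), and $\mathcal{R}(U)$ is a matrix of smoothing operators. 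This puts the equation in the form \eqref{alienazione} (plus the linear Fourier multiplier $\Omega$) required by the main Egorov theorem. The self-adjointness/ellipticity conditions needed in Section \ref{regularization} follow from the Hamiltonian para-linearization formulas for the terms $(\pa_{\bar z_0} F)$ and $\tfrac{d}{dx}(\pa_{\bar z_1}F)$.

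Next, I would verify the measure-theoretic non-resonance condition. Because $\omega_j(\vec m) = j^2 + \hat{p}_{\vec m}(j)$ depends \emph{affinely} on the parameters $m_1,\dots,m_M$ via \eqref{potenziale1} with non-degenerate coefficients $\langle j\rangle^{-(2k+1)}$, the frequency vector is a diffeomorphism onto its image on each finite Fourier block. A standard argument (as in Bambusi--Gr\'ebert) then shows that for every $N\leq M$ there is a zero-measure set $\mathcal{N}\subset \mathcal{O}$ such that for $\vec m\in \mathcal{O}\setminus \mathcal{N}$ the frequencies satisfy the non-resonance Definition \ref{nonresOmegaCOND} up to order $N$. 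This is the classical step, and in this setting it is streamlined because the dispersion $j^2$ is super-linear.

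With these preliminaries in hand, I would apply the symplectic Egorov Theorem \ref{thm:main2} to construct a bounded, invertible and symplectic transformation $W=\Psi(U)$, defined in a small ball of $H^s$, that conjugates the system to
\begin{equation*}
\pa_t W = \ii E\,\Omega\, W + \ii E\,\mathcal{D}(W) W + \wt{\mathcal{R}}(W)W,
\end{equation*}
with $\mathcal{D}(W)$ a diagonal matrix of para-differential operators whose symbols are \emph{real and independent of $x$}, and $\wt{\mathcal R}$ smoothing of large enough order. I would then apply the Poincar\'e--Birkhoff normal form Theorem \ref{thm:mainBNF} separately to the para-differential part $\mathcal D(W)W$ and the smoothing tail $\wt{\mathcal R}(W)W$, obtaining \eqref{PBN} with a resonant Hamiltonian $G_{res}$. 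Thanks to the non-resonance conditions, $G_{res}$ is a sum of monomials preserving super-actions $|w_j|^2+|w_{-j}|^2$, so $\{|G_{res}, \|W\|_{H^s}^2\}=0$ up to order $N+2$ contributions.

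Finally, the energy inequality of Corollary \ref{thm:energy} gives
\begin{equation*}
\tfrac{d}{dt}\|W(t)\|_{H^s}^2 \lesssim \|W(t)\|_{H^s}^{N+2},
\end{equation*}
from which a standard continuation/bootstrap argument yields $\|W(t)\|_{H^s}\leq 2\|W(0)\|_{H^s}$ on a time interval of length $\gtrsim r^{-N}$. Transporting this bound back via $U=\Psi^{-1}(W)$, and using that $\Psi$ is bi-Lipschitz in a neighborhood of the origin of $H^s$, produces \eqref{stimaNLS}; combining with the local well-posedness on $C^0H^s\cap C^1H^{s-2}$ (also a consequence of the Egorov reduction, analogously to the Benjamin--Ono application mentioned in Section \ref{intro-ego}) one concludes \eqref{spazioNLS}. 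The main obstacle I foresee is the verification that the matrix produced by the Bony para-linearization of \eqref{NLS} actually satisfies the structural hypotheses of Theorem \ref{thm:main} (\emph{L-structure}, reality, ellipticity, and compatibility with the symplectic form \eqref{symform}), in particular ensuring that the real character of $F$ yields the self-adjointness needed to run the diagonalization of Section \ref{diago-lineare}; everything else is an essentially mechanical application of the abstract results.
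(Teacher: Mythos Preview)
Your proposal is correct and follows essentially the same route as the paper's own proof: paralinearize \eqref{NLS} into the form \eqref{Nonlin1inizio}--\eqref{Forma-di-Ainizio}, verify the reality/self-adjointness of the diagonal symbol via the Hamiltonian structure (the paper does this explicitly in \eqref{paraNLS3}--\eqref{realtaAA}), apply Theorems~\ref{thm:main2} and~\ref{thm:mainBNF} after checking the non-resonance condition (the paper cites Proposition~5.5 of \cite{Feola-Iandoli-Long} rather than Bambusi--Gr\'ebert), and conclude with Corollary~\ref{thm:energy} plus a bootstrap. The only point where you are slightly more vague than the paper is in the structural verification you flag at the end; the paper carries this out by the explicit symbol computations \eqref{paraNLS10}--\eqref{simbolifinale}, which confirm exactly the hypotheses \eqref{Forma-di-Ainizio} with $f_2(\xi)=\xi^2+\mathtt{p}(\xi)$.
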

Notice that a similar Theorem is given in \cite{Feola-Iandoli-Long}
in the case of a 
\emph{parity-preserving} and 
\emph{reversible} perturbation $f$. 
In that paper it is fundamental to assume 
that the initial condition is an even function of
$x\in \mathbb{T}$.  Here we do not 
need such an assumption since we fully 
exploit the Hamiltonian structure of the equation. 
Notice also that, by using Theorem \ref{thm:main}, 
one could recover the local existence result given in 
\cite{FIloc} by following the strategy we 
suggest for the Benjamin-Ono equation 
in the following.

\noindent{\bf The quasi-linear beam equation.}
We consider the following \emph{quasi-linear} beam equation
\begin{equation}\label{beam1}
\psi_{tt}+\pa_{xx}^{2}\psi+m\psi+p(\psi)=0\,,
\quad \psi=\psi(t,x)\,,\quad x\in \mathbb{T}\,,
\;\;\;t\in\mathbb{R}\,,
\end{equation}
where the \emph{mass parameter} $m\in [1,2]$ and the non linearity has the form
\begin{equation}\label{beam2}
p(\psi)=g(\psi,\psi_x,\psi_{xx},\psi_{xxx},\psi_{xxxx})\,,
\end{equation}
with $g$ a polynomial function with a zero of order at least $2$ at the origin.
We shall also assume that
\begin{equation}\label{beam3}
\begin{aligned}
g(\psi,\psi_x,\psi_{xx},\psi_{xxx},\psi_{xxxx})&=(\pa_{\psi}G)(\psi,\psi_{x},\psi_{xx})
\\
&-\frac{d}{dx}\big[(\pa_{\psi_x}G)(\psi,\psi_{x},\psi_{xx})\big]
+\frac{d^{2}}{dx^2}\big[(\pa_{\psi_{xx}}G)(\psi,\psi_{x},\psi_{xx})\big]\,,
\end{aligned}
\end{equation}
for some polynomial $G(\psi,\psi_x,\psi_{xx})$. For some information
about the model we refer the reader to \cite{EGK}, \cite{Feireisl}, \cite{Ruda}.
 We have the following result.

\begin{theorem}{\bf (Long time existence 
for quasi-linear beam equation).}\label{thm:mainBeam}
There is a zero Lebesgue measure set $\mathcal{N} \subset [1,2]$ 
such that for any  $m\in[1,2]\setminus\mathcal{N}$ any integer 
 $1\leq N$
there exists $s_0\in \RRR$ such that for any $s\geq s_0 $ there are constants $r_0\in (0,1)$,  $c_N>0$ and $C_N>0$
such that the following holds true.
For any $0<r\leq r_0$ and any function $\psi_0$ in the ball of radius $r$ 
of $H^{s}(\TTT;\RRR)$,  the equation \eqref{beam1} 
with initial datum $\psi_0$ has a unique solution and 
\begin{equation}\label{spazioBEAM}
\psi(t,x) \in C^{0}\Big([-T_r,T_r]; H^{s}(\TTT;\RRR)\Big)
\bigcap C^{1}\Big([-T_r,T_r]; H^{s-2}(\TTT;\RRR)\Big)\,,\qquad 
T_{r}\geq c_Nr^{-N}\,.
\end{equation}
Moreover one has that 
\begin{equation}\label{stimaBEAM}
\sup_{t\in (-T_{r},T_{r})}\|\psi(t,\cdot)\|_{H^{s}}\leq C_Nr\,.
\end{equation}
\end{theorem}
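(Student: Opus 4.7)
The plan is to reduce the second-order scalar beam equation \eqref{beam1} to a first-order complex Hamiltonian system of the form \eqref{X_H}, verify that it fits the hypotheses of the abstract Egorov Theorem \ref{thm:main} and the Poincar\'e-Birkhoff normal form Theorem \ref{thm:mainBNF}, and then close the argument with the corresponding energy estimate together with a standard bootstrap.

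First I would introduce the Fourier multiplier $\Omega:=(\partial_{xx}^{2}+m)^{1/2}$, whose eigenvalues on $\TTT$ are $\omega_{j}(m)=\sqrt{j^{4}+m}$, and perform the symmetrization $u:=\tfrac{1}{\sqrt 2}(\Omega^{1/2}\psi-\ii\,\Omega^{-1/2}\psi_{t})$, so that \eqref{beam1} with the structural condition \eqref{beam3} becomes equivalent to a complex Hamiltonian system
\[
\partial_{t}u=-\ii\,\Omega u-\ii\,\Omega^{-1/2}\big[(\pa_{\psi}G)-\tfrac{d}{dx}(\pa_{\psi_x}G)+\tfrac{d^2}{dx^2}(\pa_{\psi_{xx}}G)\big]\big(\psi,\psi_x,\psi_{xx}\big)
\]
with $\psi=\Omega^{-1/2}(u+\bar u)/\sqrt 2$. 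Thanks to \eqref{beam3}, the associated Hamiltonian has the form $\int_{\TTT}\Omega u\cdot\bar u\,dx+\int_{\TTT}F(u,\bar u)\,dx$ for a real function $F$ obtained from $G$ by the above substitution, so the equation fits the Hamiltonian template of Section \ref{intro-ham}.

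Second I would apply Bony's paralinearization to the nonlinearity $p(\psi)$: since the quasi-linear character sits only in the top-order derivative $\psi_{xxxx}$, one obtains a system of the form \eqref{alienazione} with $U=\vect{u}{\bar u}$, where $\mathcal{A}(U;x)$ is a reality preserving $2\times 2$ matrix of paradifferential operators of order $2$ whose principal symbol is real (this is where the Hamiltonian structure \eqref{beam3} is crucial: it forces the highest-order symbol to be self-adjoint, avoiding any $\Omega$-loss against the linear part), and $\mathcal R(U)$ is a smoothing remainder of arbitrarily high order $\rho$ upon paying enough Sobolev regularity on $U$. One then verifies case by case the (mild) structural conditions stated in Section \ref{regularization} and the symplectic compatibility needed in Theorem \ref{thm:main2}.

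Third I would verify the non-resonance conditions of Definition \ref{nonresOmegaCOND} for the one-parameter family $\omega_{j}(m)=\sqrt{j^{4}+m}$, $m\in[1,2]$. This is the step I expect to be the main obstacle: in the Schr\"odinger case of Theorem \ref{thm:mainNLS} the convolution potential provides $M$ independent parameters, whereas here only the single mass parameter $m$ is available. The strategy is to exploit the fact that $m\mapsto\omega_{j}(m)=(j^{4}+m)^{1/2}$ is analytic and strictly monotone, and that for fixed $N$ any non-trivial linear combination $\sum_{k=1}^{N+2}\sigma_{k}\omega_{j_{k}}(m)$ with $\sigma_{k}\in\{\pm 1\}$ is a non-constant analytic function of $m$, so that its zero set has measure zero; a quantitative version, using the standard degenerate Kam/Melnikov argument together with the asymptotics $\omega_{j}(m)=j^{2}+\tfrac{m}{2j^{2}}+O(j^{-6})$, yields diophantine-type lower bounds $|\sum_{k}\sigma_{k}\omega_{j_{k}}|\geq\gamma\max_{k}\langle j_{k}\rangle^{-\tau}$ for all $m$ outside a set $\mathcal N\subset[1,2]$ of zero Lebesgue measure.

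Finally, granted steps one to three, Theorem \ref{thm:mainBNF} conjugates the system to the Poincar\'e-Birkhoff normal form \eqref{PBN} up to order $N$; since the resonant Hamiltonian $G_{\rm res}$ preserves the super-actions $|w_{j}|^{2}+|w_{-j}|^{2}$, Corollary \ref{thm:energy} provides the energy inequality
\[
\tfrac{d}{dt}\|w(t)\|_{H^{s}}^{2}\lesssim \|w(t)\|_{H^{s}}^{N+2}\,,
\]
and a standard bootstrap starting from $\|w(0)\|_{H^{s}}\lesssim r$ yields $\|w(t)\|_{H^{s}}\leq C_{N}r$ for $|t|\leq c_{N}r^{-N}$. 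Inverting the change of coordinates $\psi=\Psi^{-1}(U)$ (which is bilipschitz on a neighborhood of the origin of $H^{s}$ by Theorem \ref{thm:main}) and reverting to the real variable $\psi$ gives \eqref{spazioBEAM} and \eqref{stimaBEAM} and completes the proof.
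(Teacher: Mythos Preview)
Your proposal is correct and follows essentially the same approach as the paper: the paper performs the identical complex-variable reduction $u=\tfrac{1}{\sqrt 2}(\Omega^{1/2}\psi+\ii\Omega^{-1/2}\psi_t)$ (a harmless sign convention away from yours), paralinearizes via Bony to obtain a system of the form \eqref{Nonlin1inizio} with $f_m(\xi)=\sqrt{\xi^4+m}$ (Lemma \ref{paraBeam}), verifies non-resonance of $\omega_j(m)=\sqrt{j^4+m}$ by quoting Proposition 3.1 of \cite{EGK} (Lemma \ref{nonresBEAM}) rather than re-deriving the degenerate Melnikov bounds you sketch, and then invokes Theorems \ref{thm:main2}, \ref{thm:mainBNF} and Corollary \ref{thm:energy} exactly as you propose. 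The only thing you leave slightly implicit is the explicit identification of the principal-symbol structure required in \eqref{Forma-di-Ainizio}, which the paper works out in detail in \eqref{vascoblasco}--\eqref{vascoblasco7}.
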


\noindent {\bf The quasi-linear Benjamin-Ono equation.}
We consider the following model
\begin{equation}\label{benono}
u_{t}+\mathcal{H}u_{xx}+uu_x+\mathcal{N}(u)=0\,,
\quad u=u(t,x)\,,\quad x\in \mathbb{T}\,,\;\;\;t\in\mathbb{R}\,,
\end{equation}
where the unknown $u(t,x)$ is real valued, $\mathcal{H}$ is the periodic Hilbert transform,
namely the Fourier multiplier
\begin{equation}\label{benono2}
\mathcal{H}e^{\ii jx}=-\ii \sign(j)e^{\ii jx}\,,\quad j\in\mathbb{Z}\,.
\end{equation}
The non linearity $\mathcal{N}(u)$ has the form
\begin{equation}\label{nonlineBenono}
\mathcal{N}(u)=g(u,\mathcal{H}u, u_x,\mathcal{H}u_x, \mathcal{H}u_{xx})
\end{equation}
where $g(z_0,z_1,z_2,z_3,z_4)$ is a real valued polynomial in the variables
$(z_0,z_1,z_2,z_3,z_4)\in \mathbb{R}^{5}$ with a zero
of order at least $3$ in the origin. We assume that 
\begin{equation}\label{benonoassump}
\big(\pa_{z_3}g\big)(u,\mathcal{H}u, u_x,\mathcal{H}u_x, \mathcal{H}u_{xx})=
\frac{d}{dx}\Big[\big(\pa_{z_{4}}g\big)
(u,\mathcal{H}u, u_x,\mathcal{H}u_x, \mathcal{H}u_{xx})\Big]\,.
\end{equation}
Examples of admissible non linearities are the following:
\[
\begin{aligned}
 {\rm (i)}\quad &g= u^{2}\mathcal{H}u_{xx}+2uu_x\mathcal{H}u_x\\
 {\rm (ii)}\quad &g=g(u,\mathcal{H}u,u_x)\,.
\end{aligned}
\]
We remark that the non linearity $\mathcal{N}(u)$ is not
necessarily Hamiltonian, i.e. equation \eqref{benono} does not have necessarily the form
\begin{equation}\label{Hambenono}
u_{t}=J\nabla H(u)+J\nabla K(u)\,, \quad J=-\pa_{x}\,,\;\;\; H(u)=
\int_{\mathbb{T}}\left(\frac{u\mathcal{H}u_{x}}{2}+\frac{u^{3}}{6} \right)dx\,,
\quad
\nabla K(u)={\rm bounded \; operator} \,,
\end{equation}
with $\nabla$ the $L^{2}$-gradient.
Here $H(u)$ is the hamiltonian of the ``unperturbed''  
Benjamin-Ono equation, namely equation \eqref{benono} with $\mathcal{N}\equiv0$.
For further details regarding the admissible perturbations $\mathcal{N}(u)$
we refer the reader to \cite{Baldi1}.
In this paper we assume the \eqref{benonoassump} as an example, 
but of course other choices are possible. We have the following result.
\begin{theorem}[{\bf Local well-posedness for quasi linear Benjamin-Ono-type equations}]\label{teototale}
Consider equation \eqref{benono} with \eqref{nonlineBenono}, \eqref{benonoassump}. 
Then there exists $s_0>0$ such that for any $s\geq s_0$
there exists $r_0>0$ such that, for any $0\leq r\leq r_0$,
and for any $u_0$ in the ball of radius $r$ of 
 $H^{s}(\TTT;\RRR)$ the following holds.
 The equation \eqref{benono} 
 with initial datum $u_0$ has a unique classical solution $u(t,x)$ 
 such that
\begin{equation}\label{spazioBenjOno}
u(t,x) \in C^{0}\Big([0,T); H^{s}(\TTT;\RRR)\Big)
\bigcap C^{1}\Big([0,T); H^{s-2}(\TTT;\RRR)\Big)\,,\qquad
T\gtrsim r^{-1}\,.
\end{equation}
Moreover there is a constant $C>0$ 
\begin{equation}\label{stimaBenono}
\sup_{t\in [0,T)}\|u(t,\cdot)\|_{H^{s}}\leq C\|u_0\|_{H^{s}}\,.
\end{equation}
\end{theorem}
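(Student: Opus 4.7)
The plan is to paralinearize \eqref{benono} via Bony's formula, invoke Theorem \ref{thm:main} to reduce to a diagonal, constant-coefficient para-differential equation up to smoothing remainders, close a standard $H^s$ energy estimate on the reduced equation, and build solutions by a vanishing viscosity scheme pulled back through the Egorov map.

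Writing $U=\vect{u}{u}$ (since $u$ is real), Bony paralinearization of $uu_x$ and of $\mathcal{N}(u)=g(u,\mathcal{H}u,u_x,\mathcal{H}u_x,\mathcal{H}u_{xx})$ together with the constant-coefficient leading part $\mathcal{H}\pa_{xx}$ puts the equation in the form \eqref{alienazione},
\[
\pa_t U=\ii E\mathcal{A}(U;x)U+\mathcal{R}(U)U,
\]
where $\mathcal{A}(U;x)$ is a $2\times 2$ matrix of para-differential operators of order $m=2>1$ and $\mathcal{R}(U)$ is a matrix of smoothing operators that vanishes at least quadratically at $U=0$ (coming from the $uu_x$ transport term). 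The principal symbol is $-\sign(\xi)\xi^{2}$ perturbed by the coefficient $\pa_{z_4}g\,(-\sign(\xi)\xi^{2})$, while the would-be anti-self-adjoint order-$2$ contribution produced by $\pa_{z_3}g$ is cancelled precisely by assumption \eqref{benonoassump}: the equation $\pa_{z_3}g=\partial_x(\pa_{z_4}g)$ is exactly what is needed to ensure that, after symmetrization under the Weyl quantization $\opbw$, the symbol of $\mathcal{A}$ is \emph{real and self-adjoint at top order}. The reality preserving character is automatic from $u\in\RRR$. Hence the hypotheses of Theorem \ref{thm:main} are satisfied.

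Applying Theorem \ref{thm:main} yields a bounded, invertible non-linear map $\Psi$ on a small ball of $H^s(\TTT)$ such that $Z:=\Psi(U)$ solves
\[
\pa_t Z=\ii E\mathcal{D}(Z)Z+\tilde{\mathcal{R}}(Z)Z,
\]
with $\mathcal{D}(Z)$ diagonal, real-valued and $x$-independent and $\tilde{\mathcal{R}}(Z)$ smoothing of arbitrarily large order $\tilde\rho$ provided $s\geq s_0(\tilde\rho)$. Since $\ii E\mathcal{D}(Z)$ commutes with $\langle D\rangle^s$ and is skew-adjoint on $L^2$, the leading contribution to $\tfrac{d}{dt}\|Z\|_{H^s}^2$ vanishes, and tame estimates on the smoothing remainder give
\[
\frac{d}{dt}\|Z\|_{H^s}^{2}\ =\ 2\Re\big(\langle D\rangle^s\tilde{\mathcal{R}}(Z)Z,\langle D\rangle^s Z\big)_{L^2}\ \lesssim\ \|Z\|_{H^s}^{3},
\]
yielding by Gronwall a time of existence $T\gtrsim \|Z(0)\|_{H^s}^{-1}\sim r^{-1}$ together with the bound \eqref{stimaBenono} after pulling back via $\Psi^{-1}$. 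Existence of the reduced solution is obtained by adding a vanishing viscosity $-\e\pa_x^4$ to the reduced equation, solving the regularized problem globally in $H^s$ by a standard fixed point, applying the energy estimate uniformly in $\e\in(0,1]$, and passing to the limit $\e\to 0^+$ by compactness. Uniqueness follows from the same type of energy estimate applied to the difference of two solutions in $L^2$ (using the top-order self-adjointness of $\mathcal{A}$), and then the solution of \eqref{benono} is recovered by $u=\Psi^{-1}(Z)$.

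The main obstacle is the first step: verifying that the Benjamin-Ono-type equation, in which the Hilbert transform $\mathcal{H}$ enters the nonlinearity through $\mathcal{H}u_{xx}$, can indeed be cast in the form required by Theorem \ref{thm:main}. A naive paralinearization would place an anti-self-adjoint contribution at order $2$ that would destroy the energy estimate and would make the Egorov diagonalization fail; the divergence-type structural condition \eqref{benonoassump} is tailored exactly to cancel this obstruction. Once the self-adjointness at top order is established and the symbols are shown to satisfy the required tame and reality estimates (routine but lengthy), Theorem \ref{thm:main} applies as a black box and the remaining steps are standard quasi-linear energy arguments.
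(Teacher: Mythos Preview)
Your overall strategy is correct and matches the paper's: paralinearize, apply the Egorov reduction, and close an $H^s$ energy estimate on the constant-coefficient equation. However, there is a technical gap in invoking Theorem~\ref{thm:main}: that theorem requires the leading Fourier multiplier $f_m(\xi)$ to be \emph{even} in $\xi$ (this parity is used in the block-diagonalization of Section~\ref{diago-lineare}), whereas here $f_m(\xi)=|\xi|\xi$ is odd. The paper handles this by treating the Benjamin--Ono equation as a \emph{scalar} equation (Lemma~\ref{lem:paraBen}) and invoking Corollary~\ref{thm:mainscalar}, which applies precisely when no off-diagonal symbols are present so the parity assumption is not needed. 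Your system formulation with $U=\vect{u}{u}$ does have $b\equiv 0$, so the block-diagonalization is vacuous and the argument can be repaired, but you must say so explicitly rather than citing Theorem~\ref{thm:main} as a black box whose hypotheses are not met.

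Two smaller points. First, your account of the role of \eqref{benonoassump} is off: the contribution of $\partial_{z_3}g$ is of order~$1$ (it multiplies $\mathcal{H}u_x$, whose symbol is $|\xi|$), not order~$2$, and it is not anti-self-adjoint. The cancellation it produces with the Poisson-bracket correction $\tfrac{1}{2i}\{b_4,i|\xi|\xi\}=-\partial_x b_4\,|\xi|$ takes place at the subprincipal level; see the computation in Lemma~\ref{lem:paraBen}. Second, the paper obtains existence for the reduced equation directly from Theorem~\ref{flussononlin} (case \eqref{sim4}, a real $x$-independent symbol of positive order), which is already part of the machinery, rather than via a vanishing-viscosity limit; your route would also work but is unnecessary here.
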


\subsection{Plan of the paper}\label{intro-plan}
The paper is organized as follows. In Section \ref{paraparapara} we develop a para-differential calculus for symbols which depend non-linearly on a function $U$ in a certain Sobolev space. In Section \ref{regularization} we state our main Theorem \ref{thm:main} and its principal application to Poincar\'e-Birkhoff normal form (Theorem \ref{thm:mainBNF}). Moreover we apply this theorem to some quasi-linear equations, obtaining a result of \emph{long time} existence and stability. In Section \ref{sec:PreEgorov} we prove the well-posedness of several non-linear equation which we need to solve in order to diagonalize and put to constant coefficient the original para-differential equation. In Section \ref{secconjconj} we produce the changes of coordinates whose composition gives the proof of Theorem \ref{thm:main}. In Section \ref{simplettomorfo} we provide symplectic corrections to the previously found changes of coordinates, finally in Section \ref{sec:BNF} we prove Theorem \ref{thm:mainBNF}.

\vspace{0.6em}
\noindent
{\bf Acknowledgments.} We 
 would like to thank Michela Procesi for the  
inspiring discussions and Massimiliano Berti 
for having introduced us to these interesting problems.

 \section{Para-differential calculus}\label{paraparapara}
 In this section we develop a para-differential calculus following the ideas (and notation)
in \cite{BD}. We shall introduce some classes of symbols and operators which slightly differ from the ones in \cite{BD}. As in \cite{BD} we shall define classes of multilinear and non-homogeneous symbols (and their relative para-differential quantization) and smoothing operators.
The main difference between our classes and those in \cite{BD} is the following. The  non-homogeneous symbols in \cite{BD} depend on some extra function $U$ which depends on space-time variables, in the application the function $U$ is indeed the solution of certain evolution PDEs. In our case, see item $(ii)$ in Def. \ref{pomosimb},  the extra function $U$ depends only on the space variable, however, in the case of space-time dependence of the function $U$, we recover the definition given in \cite{BD} as shown in Remark \ref{super-simboli}. Roughly speaking the conditions on the time derivative in \cite{BD} are replaced by  some conditions on the differentials of the symbols with respect to $U$ in \eqref{maremma2} which makes our classes a generalization of the ones in \cite{BD}. 
 
In the following we fix some notation that will be kept until the end of the paper. 
For $s\in \mathbb{R}$ we denote by $H^{s}(\mathbb{T}; \mathbb{C})$ (respectively
$H^{s}(\mathbb{T}; \mathbb{C}^{2})$), with $\mathbb{T}:=\mathbb{R}/2\pi\mathbb{Z}$,
the Sobolev space of $2\pi$-periodic functions with values in $\mathbb{C}$ 
(respectively $\mathbb{C}^{2}$).
Sometimes we shall simply write $H^{s}$
when this does not create confusion.
Moreover if $r>0$ we define the ball of radius $r$ 
\begin{equation*}
B_{r}(H^{s}):=\{U\in H^{s}(\TTT;\CCC):\, \|u\|_{H^{s}}<r\}.
\end{equation*}
 We expand a $ 2 \pi $-periodic function $ u(x) $  in Fourier series as 
\be\label{complex-uU}
u(x) = \sum_{n \in \Z } \hat{u}(n)\frac{e^{\ii n x }}{\sqrt{2\pi}} \, , \qquad 
\hat{u}(n) := \frac{1}{\sqrt{2\pi}} \int_\T u(x) e^{-\ii n x } \, dx \, .
\ee
We also use the notation
\begin{equation}\label{notaFou}
u_n^+ := u_n := \hat{u}(n) \qquad  {\rm and} \qquad  u_n^- := \ov{u_n}  := \ov{\hat{u}(n)} \, . 
\end{equation}
For $n\in \N^*:= \N \! \setminus \! \{0\}$ we denote by $\Pi_{n}$ the orthogonal projector from $L^{2}(\T;\C)$ 
to the subspace spanned by $\{e^{\ii n x}, e^{-\ii nx}\}$,  i.e.
\begin{equation*}
(\Pi_{n}u)(x) := \hat{u}({n}) \frac{e^{\ii nx}}{\sqrt{2\pi}}+\hat{u}({-n})\frac{e^{-\ii nx}}{\sqrt{2\pi}} \, , 
\end{equation*}
while in the case $n=0$ we define the mean
$\Pi_0u=\frac{1}{\sqrt{2\pi}}\hat{u}(0)=\frac{1}{2\pi}\int_{\TTT}u(x)dx$.
We  denote by $ \Pi_n $ also the corresponding projector in  $L^{2}(\T,\C^{2})$.
We shall identify the Sobolev norm $\|\cdot\|_{H^{s}(\mathbb{T},\mathbb{C})}=:\|\cdot\|_{H^{s}}$
with the norms 
\begin{equation}\label{normsequence}
\|u\|_{H^{s}}^{2}:=\|u\|^{2}_{s}:=\sum_{j\in \mathbb{Z}}\langle j\rangle^{2s}|u_{j}|^{2}\,,\qquad
\langle j\rangle:=\sqrt{1+|j|^{2}}\,,\; j\in \mathbb{N}\,.
\end{equation}
We introduce the operator $\langle D\rangle$ defined by linearity as
\begin{equation}\label{Sobnorm6000}
\langle D\rangle e^{\ii j\cdot x}=\langle j\rangle  e^{\ii j\cdot x}\,.
\end{equation}
With this notation we have that the norm in \eqref{normsequence} reads
\begin{equation}\label{Sobnorm2}
\|u\|^{2}_{H^{s}}:=(\langle D\rangle^{s}u,\langle D\rangle^{s} u)_{L^{2}}
\end{equation}
where $(\cdot,\cdot)_{L^{2}}$ denotes the standard complex $L^{2}$-scalar product
 \begin{equation}\label{prodottoorologio}
 (u,v)_{L^{2}}:=\int_{\mathbb{T}}u\cdot\bar{v}dx\,, 
 \qquad \forall\, u,v\in L^{2}(\mathbb{T}^{d},\mathbb{C})\,.
 \end{equation}
If $\mathcal{U}=(U_1,\ldots,U_{p})$
is a $p$-tuple
 of functions, $\vec{n}=(n_1,\ldots,n_p)\in \NNN^{p}$, we set
 \begin{equation}\label{ptupla}
 \Pi_{\vec{n}}\mathcal{U}:=(\Pi_{n_1}U_1,\ldots,\Pi_{n_p}U_p).
 \end{equation}
 For a family $(n_1,\ldots,n_{p+1})\in \NNN^{p+1}$ we denote by
 $\max_{2}(\langle n_1\rangle,\ldots,\langle n_{p+1}\rangle)$,
 the second largest among the numbers 
 $\langle n_1\rangle,\ldots,\langle n_{p+1}\rangle$.
 
 \vspace{0.5em}
\noindent
{\bf Notation.}
 $A\lesssim_{s} B$
means $A \leq C(s) B$
 where $C(s) > 0 $ 
 is a  constant depending on $s \in \R $.


\subsection{Classes of operators}

\subsubsection{Classes of symbols}

We give the definition of a class of  symbols we shall use along the paper.
\begin{definition}{\bf (Classes of  symbols).}\label{pomosimb}
Let $m\in\R$, $p, N \in \N$, $r>0 $.

\noindent
$(i)$ {\bf  $p$-homogeneous symbols.} We denote by $\widetilde{\Gamma}_{p}^{m}\!$ the space of symmetric $p$-linear maps
from $({H}^{\infty}(\T;\C^{2}))^{p}$ to the space of $C^{\infty}$ functions of $(x,\x)\in \T\times\R$, 
$ \mathcal{U}\to ((x,\x)\to a(\mathcal{U};x,\x)) $,  
satisfying the following. There is $\mu>0$ and,  
for any $\alpha,\beta\in \N $,  there is $C>0$ such that
\begin{equation}\label{pomosimbo1}
|\pa_{x}^{\alpha}\pa_{\x}^{\beta}a(\Pi_{\vec{n}}\mathcal{U};x,\x)|\leq C | \vec{n} |^{\mu+\alpha}\langle\x\rangle^{m-\beta}
\prod_{j=1}^{p}\|\Pi_{n_j}U_{j}\|_{L^{2}} 
\end{equation}
for any $\mathcal{U}=(U_1,\ldots, U_p)$ in $({H}^{\infty}(\T;\C^{2}))^{p}$,
and $\vec{n}=(n_1,\ldots,n_p)\in (\N^*)^{p}$. 
Moreover we assume that, if for some $(n_0,\ldots,n_{p})\in \N\times(\N^*)^{p}$,
\begin{equation}\label{pomosimbo2}
\Pi_{n_0}a(\Pi_{n_1}U_1,\ldots, \Pi_{n_p}U_{p};\cdot)\neq0 \, ,
\end{equation}
then there exists a choice of signs $\s_0,\ldots,\s_p\in\{-1,1\}$ such that $\sum_{j=0}^{p}\s_j n_j=0$.
For $p=0$ we denote by $\widetilde{\Gamma}_{0}^{m}$ the space of constant coefficients symbols
$\x\mapsto a(\x)$ which satisfy \eqref{pomosimbo1} with $\al=0$
and the right hand side replaced by $C\langle \x\rangle^{m-\beta}$. 
In addition we require the translation invariance property
\begin{equation}\label{def:tr-in}
a( \tau_\teta {\cal U}; x, \xi) =  a( {\cal U}; x + \theta, \xi) \, , \quad \forall \theta \in \R \, . 
\end{equation}

\noindent
$(ii)$ {\bf Non-homogeneous symbols.} Let $ p \geq 1 $, $d\geq0$. We denote by $\Gamma^{m,d}_{p}[r]:=\Gamma^m_{p}[r]$ the space 
of functions $(U;\! x,\!\xi)\!\!\mapsto a(U;x,\xi)$, defined for 
$U\in B_{r}(H^{s_0})$,
for some large enough $ s_0$, with complex values such that for any $s\geq s_0$, there are $C>0$, $0<r_0<r$ and for any $U\in B_{r_0}(H^{s_0})\cap H^{s}$
 and any $\alpha, \beta \in\N$, with $\alpha+d\cdot k\leq s-s_0$, the following holds 
\begin{equation}\label{maremma2}
\begin{aligned}
|\pa_{x}^{\alpha}\pa_{\x}^{\beta}&(d_{u}^{k}a(U;x,\x)[h_{1},\ldots,h_{k}])|\leq 
C \langle \x\rangle^{m-\beta} \Big[\max\{0,p-k\}\|{U}\|_{H^{s_0}}^{\max\{0,p-k-1\}}
\|{U}\|_{H^{s_0+\alpha}}
\prod_{j=1}^{k}\|{h_{j}}\|_{H^{s_0}}+\\
&+\|{U}\|_{H^{s_0}}^{\max\{0,p-k\}}
\sum_{i=1}^{k}\prod_{j=1,j\neq i}^{k}\|{h_{j}}\|_{H^{s_0}}^{\nu}
\|{h_{i}}\|_{H^{s_0+\alpha}}\Big]
\end{aligned}
\end{equation}
for any $h_{j}\in H^{s}$, where $\nu=1$ if $\,k\geq2\,$ and $\nu=0$ otherwise. 


\noindent
$(iii)$ {\bf Symbols.} We denote by $\Sigma\Gamma^{m}_{p}[r,N]$
the space of functions 
$(U,x,\x)\to a(U;x,\x)$ such that there are homogeneous symbols 
$a_{q}\in \widetilde{\Gamma}_{q}^{m}$,
 $q=p,\ldots, N-1$, and a non-homogeneous symbol $a_{N}\in \Gamma^{m}_{N}[r]$
such that
\begin{equation}\label{simbotot1}
a(U;t,x,\x)=\sum_{q=p}^{N-1}a_{q}(U,\ldots,U;x,\x)+a_{N}(U;x,\x) \, .
\end{equation}
We denote by $\Sigma\Gamma^{m}_{p}[r,N]\otimes \mathcal{M}_{2}(\C)$ the space $2\times 2$
matrices with entries  in  $\Sigma\Gamma^{m}_{p}[r,N]$.
\end{definition}
\begin{remark}\label{omissione}
We omit the dependence on the number $d$ in the definition of $\Gamma^{m}_p[r,N]$ because this is a number that will be fixed once for all in the procedure that we shall implement in Sections  \ref{secconjconj} and \ref{sec:BNF} of our paper. In the classical definition of symbols one does not have the loss $d\cdot k$ appearing in the definition above. We have to include this loss in our definition because of the following reason. In Sections \ref{secconjconj} and \ref{sec:BNF} we shall perform some changes of variable, in one of them we shall redefine the variable of the ambient space $x\in\TTT$ in function of a solution of a para-differential equation whose principal symbol, in these notation, has order $d$. For this reason the smoothness in $x$ and in $U$ of the symbols is linked by the relation $\alpha+d\cdot k\leq s-s_0$. In such a way our classes will be closed for the changes of coordinates that we shall define in Sections \ref{secconjconj} and \ref{sec:BNF}.
\end{remark}

\begin{remark}\label{super-simboli}
Let us make a comment about the \textbf{non-homogeneous symbols} defined in the item $(ii)$ of Def. \ref{pomosimb}. We claim that if we plug  a function $U(t,x)$ (depending on the space-time couple $(x,t)$) in a symbol $a(U;x,\xi)$ we recover the definition given in the previous papers \cite{Feola-Iandoli-Long} and \cite{BD}, more precisely the following holds true. Let $K-K'>0$,  suppose for simplicity that $K-K'< p$, and consider a function $U\in C^{K-K'}_*(I,H^s)$ defined in Section 2 in \cite{Feola-Iandoli-Long}. Let $a$ be a symbol satisfying \eqref{maremma2} with $s_0:=\s_0-2(K-K')\gg1$. For any $0\leq k\leq K-K'$ we have
\begin{equation*}
\begin{aligned}
|\partial_t^k\partial_x^{\alpha}\partial_{\xi}^{\beta}a(U;t,x,\xi)|\leq& C\sum_{\gamma=1}^k\sum_{k_1+\ldots+k_{\gamma}=k}\langle\xi\rangle^{m-\beta}\\
&\Big[(p-\gamma)\|U\|_{\sigma_0-2(K-K')}^{p-\gamma-1}\|U\|_{\sigma_0-2(K-K')+\alpha}\prod_{j=1}^{\gamma}\|\partial_t^{k_j}U\|_{\sigma_0-2(K-K')}\\
&+\|U\|^{p-\gamma}_{\s_0}\sum_{i=1}^{\gamma}\prod_{i\neq j=1}^{\gamma}\|\partial_t^{k_j}U\|_{\s_0-2(K-K')}\|\partial_t^{k_i}U\|_{\s_o-2(K-K')+\alpha}\Big],
\end{aligned}
\end{equation*}
therefore, by recalling that $0\leq k\leq K-K'$ and $k_j\leq k$, one obtains the thesis. In order to recover the definition of non-homogeneous smoothing operator given in \cite{BD} the computation is the same.
\end{remark}

\begin{remark}\label{simbolosulladiagonalestima}
Let $s_0>\mu+1/2$ and $s\geq s_0$. Consider a function $U$ in $B_r(H^{s_0})\cap H^s$, then for any  $0\leq\alpha \leq s-s_0$ and $\beta\in\N$ the function $\tilde{a}(U;\xi):=a(U,\ldots,U;\xi)$ defines a non-homogeneous symbol as in item $(ii)$ of Definition \ref{pomosimb}.
To see this let us develop  in Fourier series
\begin{equation*}
a(U,\ldots,U;x,\xi)=\sum_{{n_1,\ldots,n_p\in\N}}a(\Pi_{n_1}U,\ldots,\Pi_{n_p}U;\xi),
\end{equation*}
therefore by using condition \eqref{pomosimbo1} and supposing, for simplicity, that $n_1\geq\ldots\geq n_p$ we deduce that
\begin{equation*}
\begin{aligned}
&|\partial_x^{\alpha}\partial_{\x}^{\beta}a(U,\ldots,U;x,\xi)|\leq\\
&C\sum_{{n_1,\ldots,n_p\in\N}}\jap{\xi}^{m-\beta}\jap{n_1}^{\mu+\alpha}\prod_{j=1}^p\norm{\Pi_{n_j}U}_{L^2}=\\
&C\sum_{{n_1,\ldots,n_p\in\N}}\jap{\xi}^{m-\beta}\jap{n_1}^{\mu+\alpha}\prod_{j=1}^p \jap{n_j}^{-\sigma_0}\prod_{j=1}^p\jap{n_j}^{\sigma_0}\norm{\Pi_{n_j}U}_{L^2}.
\end{aligned}
\end{equation*}
From the latter inequality it is easy to obtain the condition \eqref{maremma2} by using the Cauchy-Schwartz inequality and the fact that $\mu-s_0<-1/2$ in the case that no differentials, with respect to $U$, on the symbols are taken. The estimate for the differentials of the symbol $\tilde{a}(U;\xi)$, i.e. the case $k>0$ in the notation of item $(ii)$ of Definition \ref{pomosimb},  is trivial thanks to multi-linearity of the symbol $a$ with respect its arguments.
\end{remark}
\noindent We also introduce the following class of ``functions'', i.e. those  symbols which are independent of the variable $\xi$.
\begin{definition}[{\bf Functions}]\label{apeape} Fix $N\in \NNN$, 
$p\in \NNN$ with $p\leq N$,
 with  $r>0$.
We denote by $\widetilde{\calF}_{p}$ (resp. $\calF_{p}[r]$,
resp. $\Sigma\calF_{p}[r,N]$)
the subspace of $\widetilde{\Gamma}^{0}_{p}$ (resp. $\Gamma^{0}_{p}[r]$, 
resp. $\Sigma\Gamma^{0}_{p}[r,N]$)
made of those symbols which are independent of $\x$.
We shall write $\widetilde{\calF}^{\mathbb{R}}_{p}$ (resp. $\calF^{\mathbb{R}}_{p}[r]$,
resp. $\Sigma\calF^{\mathbb{R}}_{p}[r,N]$ 
to denote those functions in the class 
$\widetilde{\calF}_{p}$ (resp. $\calF_{p}[r]$,
resp. $\Sigma\calF_{p}[r,N]$) which are real valued.
\end{definition}

\begin{remark}\label{frechet}
The class of symbols $\Gamma^{m}_{p}[r]$, defined in item $(ii)$ 
of Definition \ref{pomosimb}, restricted to $B_r(H^s)$, is a Fr\'ech\'et 
space equipped with the family of semi-norms
\begin{equation}\label{seminormagamma}
|a|^{\Gamma^m,s}_{\alpha,\beta,k}:=\inf\{C>0:\,\,\eqref{maremma2} \; \mbox{holds true}\,\}
\end{equation}
with $\alpha+d\cdot k\leq s-s_0$.
 The semi-norms on the space of functions $\mathcal{F}_{p}[r]$ are analogously defined and we shall denote them by $|a|^{\mathcal{F},s}_{\alpha,k}$.
Moreover we shall use the following notation
\begin{equation}\label{semi-norma-totale}
|a|_s^{\mathcal{F}}:=\sum_{\alpha+d\cdot k\leq s-s_0} |a|^{\mathcal{F},s}_{\alpha,k}.
\end{equation}
The quantity  defined in \eqref{semi-norma-totale}  is a norm on the space $\mathcal{F}_{p}[r]$. The couple $(\mathcal{F}_{p}[r],|a|_s^{\mathcal{F}})$ defines a Banach space.
We shall also write
\begin{equation}\label{semi-norma-totale2}
|a|_{s,0}^{\mathcal{F}}:=\sum_{\alpha\leq s-s_0} |a|^{\mathcal{F},s}_{\alpha,0}.
\end{equation}
\end{remark}

\begin{remark}\label{smallnessSemi}
Let $a\in \mathcal{F}_1[r]$ restricted to $B_{r}(H^{s})$. Then, recalling the definition 
\eqref{seminormagamma}-\eqref{semi-norma-totale}, we have for any $\alpha\leq s-s_0$
\[
\|\pa_{x}^{\alpha}a(U;x)\|_{L^{\infty}_{x}}\lesssim_{s}  |a|^{\mathcal{F},s}_{\alpha,0}r\lesssim_{s}
|a|^{\mathcal{F}}_{s} r\,.
\]
More in general, for $a\in \Gamma_1^{m}[r]$, we have
\[
\langle \x\rangle^{-m}\|\pa_{x}^{\alpha}a(U;x)\|_{L^{\infty}_{x}}\lesssim_{s}  
|a|^{\Gamma^{m},s}_{\alpha,0,0}r\,.
\]
\end{remark}

\begin{remark}\label{prodottoSimboli}
Let $a\in \Gamma^{m}_{p}[r]$, $b\in \Gamma^{m'}_{p}[r]$. One can check, by using the Leibniz rule,
that
\begin{equation*}
|ab|_{\alpha,\beta,k}^{\Gamma^{m+m'},s}\lesssim_s\sum_{\substack{\alpha_1+\alpha_2=\alpha \\
 \beta_1+\beta_2=\beta\\
 k_1+k_2=k}}
 |a|_{\alpha_1,\beta_1,k_1}^{\Gamma^{m},s}|b|_{\alpha_2,\beta_2,k_2}^{\Gamma^{m'},s}
 \lesssim_{s}|a|_{\alpha,\beta,k}^{\Gamma^{m},s}|b|_{\alpha,\beta,k}^{\Gamma^{m'},s}\,.
\end{equation*}
Consider also a function $f(x)$ which is analytic in some neighborhood of the origin of $\mathbb{C}$
and let $c\in \mathcal{F}_{1}[r]$.
Then, using the formula of Faa di Bruno (i.e. the formula for the derivatives of the the composition of functions), one has that $d:=f(c)$ is a function in $ \mathcal{F}_{1}[r]$. In particular
\[
|d|_{s}^{\mathcal{F}}\lesssim_{s}C\,,
\]
where $C>0$ is a constant depending only on $s$ and the semi-norm $|c|_{s}^{\mathcal{F}}$.
If $f(x)=1/(1+x)-1$ and $c\in \Gamma^{m}_{1}[r]$, then $d=f(c)\in \Gamma^{m}_{1}[r]$
and 
\[
|d|_{\alpha,\beta,k}^{\Gamma^{m},s}\lesssim_{s}C\,,
\]
for some  constant $C>0$ depending only on $s$ and the 
seminorm $|c|_{\alpha,\beta,k}^{\Gamma^{m},s}$.
\end{remark}

\begin{remark}{\bf (Expansion of Homogeneous symbols).}\label{espansioneSSS}
Let $a_{p}\in \widetilde{\Gamma}_{p}^{m}$ (see Def. \ref{pomosimb}).
Notice that, by the \emph{autonomous} condition \eqref{pomosimbo2}
and the $x$-\emph{translation invariant} property \eqref{def:tr-in},
we can write the symbol $a_{p}$, expanding $u$ as in \eqref{complex-uU}, \eqref{notaFou},
as
\begin{equation}\label{espandoFousimbo}
a_{p}(U;x,\x)=\sum_{j\in \mathbb{Z}}e^{\ii jx}\sum_{\substack{\s_i\in\{\pm\}\,,i=1,\ldots,p \\
\sum_{i=1}^{p}\s_i j_i=j}} (a_{p})_{j_1,\ldots,j_p}^{\s_1\cdots\s_{p}}(\x)u_{j_1}^{\s_1}
\ldots u_{j_p}^{\s_p}\,,
\end{equation}
for some coefficients $(a_{p})_{j_1,\ldots,j_p}^{\s_1\cdots\s_{p}}(\x)\in 
\widetilde{\Gamma}_0^{m}$.
\end{remark}

 \subsubsection{Spaces of Smoothing operators}

We now introduce some classes of smoothing operators.

 \begin{definition}[{\bf Classes of smoothing operators}]\label{omosmoothing}
 Let $\rho\in \mathbb{R}$, with $\rho\geq 0$, $p, N \in \N$, $r>0 $.
 
 \noindent
 $(i)$ {\bf $p-$homogeneous smoothing operator.}
 We denote by $\widetilde{\RR}^{-\rho}_{p}$
 the space of $(p+1)$-linear maps
 from the space $(C^{\infty}(\TTT;\CCC^{2}))^{p}\times C^{\infty}(\TTT;\CCC)$ to 
 the space $C^{\infty}(\TTT;\CCC)$ symmetric
 in $(U_{1},\ldots,U_{p})$, of the form
 $ (U_{1},\ldots,U_{p+1})\to R(U_1,\ldots, U_p)U_{p+1},$
 that satisfy the following. There is $\mu\geq0$, $C>0$ such that 
  \begin{equation}\label{omoresti1}
 \|\Pi_{n_0}R(\Pi_{\vec{n}}\mathcal{U})\Pi_{n_{p+1}}U_{p+1}\|_{L^{2}}\leq
 C\frac{\max_2(\langle n_1\rangle,\ldots,\langle n_{p+1}\rangle)^{\rho+\mu}}{\max(\langle n_1\rangle,\ldots,\langle n_{p+1}\rangle)^{\rho}}
 \prod_{j=1}^{p+1}\|\Pi_{n_{j}}U_{j}\|_{L^{2}},
 \end{equation}
  for any 
 $\mathcal{U}=(U_1,\ldots,U_{p})\in (C^{\infty}(\TTT;\CCC^{2}))^{p}$, any 
 $U_{p+1}\in C^{\infty}(\TTT;\CCC)$,
 any $\vec{n}=(n_1,\ldots,n_p)\in \NNN^{p}$, any $n_0,n_{p+1}\in \NNN$.
 Moreover, if 
 \begin{equation}\label{omoresti2}
 \Pi_{n_0}R(\Pi_{n_1}U_1,\ldots,\Pi_{n_{p}}U_{p})\Pi_{n_{p+1}}U_{p+1}\neq0,
 \end{equation}
 then there is a choice of signs $\s_0,\ldots,\s_{p+1}\in\{-1,1\}$ such that 
 $\sum_{j=0}^{p+1}\s_j n_{j}=0$.
  In addition we require the translation invariance property
\be\label{def:R-trin}
R( \tau_\teta {\cal U}) [\tau_\teta U_{p+1}]  =  \tau_\theta \big( R( {\cal U})U_{p+1} \big) \, , \quad \forall \theta \in \R \, . 
\ee
 
\noindent
$(ii)$ {\bf Non-homogeneous smoothing operators.} 
We define the class of remainders 
$\mathcal{R}^{-\rho}_{N}[r]$ as the space of maps 
$(V,u)\mapsto R(V)u$ defined on 
$B_{r}(H^{s_0})\times H^{s_0}(\mathbb{T},\mathbb{C})$
which are linear in the variable $u$ and such that the following holds true. 
For any $s\geq s_0$ there exist a constant $C>0$ and $r(s)\in]0,r[$ 
such that for any 
$V\in B_{r}(H^{s_0})\times H^{s}(\mathbb{T},\mathbb{C})$, any $u\in H^{s}(\TTT,\CCC)$, any 
$0\leq k\cdot d\leq s-s_0$  the following estimate holds true
 \begin{equation}\label{porto20}
 \begin{aligned}
\|((d_{V}^{k}R(V))u)[h_1,\ldots, h_k]\|_{H^{s+\rho-dk}}&\leq 
C\Big[\max\{0,p-k\}\|{V}\|_{{s_0}}^{\max\{0,p-k-1\}}
\|{V}\|_{{s}}\|u\|_{s_0}
\prod_{j=1}^{k}\|{h_{j}}\|_{{s_0}}+\\
&+\|{V}\|_{{s_0}}^{\max\{0,p-k\}}\|u\|_{s_0}
\sum_{i=1}^{k}\prod_{j=1,j\neq i}^{k}\|{h_{j}}\|_{H^{s_0}}^{\nu}
\|{h_{i}}\|_{H^{s}} \\
&+\|{V}\|_{{s_0}}^{\max\{0,p-k\}}\|u\|_{s}\prod_{j=1}^{k}\|{h_{j}}\|_{{s_0}}
\Big]
\end{aligned}
\end{equation}
 for any $h_{j}$ and $U\in H^{s}$, where $\nu=1$ if $\,k\geq2\,$ and $\nu=0$ otherwise. 
 Here $d$ is the same number appearing in the definition of the non-homogeneous symbols, see also the Remark \ref{omissione}.
 
 \noindent
$(iii)$ {\bf Smoothing operators.}
We denote by $\Sigma\RR^{-\rho}_{p}[r,N]$
the space of maps $(V,t,u)\to R(V,t)u$
that may be written as 
\begin{equation*}
R(V;t)u=\sum_{q=p}^{N-1}R_{q}(V,\ldots,V)u+R_{N}(V;t)u,
\end{equation*}
for some $R_{q}\in \widetilde{\RR}^{-\rho}_{q}$, $q=p,\ldots, N-1$ and $R_{N}$ belongs to 
$\RR^{-\rho}_{N}[r]$.
We denote by $\Sigma\RR^{-\rho}_{p}[r,N]\otimes \mathcal{M}_{2}(\C)$ the space $2\times 2$
matrices with entries  in  $\Sigma\RR^{-\rho}_{p}[r,N]$.
 \end{definition}

\begin{remark}
Let $R_1(U)$ be a smoothing operator in $\Sigma\mathcal{R}^{-\rho_1}_{p_1}[r,N]$ and 
$R_2(U)$ in $\Sigma\mathcal{R}^{-\rho_2}_{p_2}[r,N]$, 
then the operator $R_1(U)\circ R_{2}(U)[\cdot]$ 
belongs to  $\Sigma\mathcal{R}^{-\rho}_{p_1+p_2}[r,N]$, 
where $\rho=\min(\rho_1,\rho_2)$.
\end{remark}

\begin{remark}\label{starship}
We remark that if $R$ is in $\widetilde{R}^{-\rho}_{p}$, $p\geq N$, then
$(V,U)\to R(V,\ldots,V)U$ is in $\RR^{-\rho}_{N}[r]$.
This inclusion follows by the multi-linearity of $R$ in each argument, and 
by estimate \eqref{omoresti1}. The proof of this fact is very similar  to the one given in 
the remark after Definition $2.2.3$ in \cite{BD}.
\end{remark}

\begin{remark}\label{smooth-totale}
In analogy with Remark \ref{super-simboli} we make a comparison between the \textbf{non-homogenous} smoothing remainders defined in \cite{BD} or \cite{Feola-Iandoli-Long} and the ones defined in item $(ii)$ in Def. \ref{omosmoothing}. Let $U(t,x)$ be a function in $C^{K-K'}_*(I,H^s)$ (defined in Section 2 in \cite{Feola-Iandoli-Long}) and  $(V,u)\mapsto R(V)u$ be a smoothing operator satisfying \eqref{porto20} with $s_0:=\tilde{s}_0-2(K-K')\gg1$. Let us suppose for simplicity that $p-k-1\geq 0$ and $k\geq 2$, in the other cases the proof may be easily adapted.  For any $2\leq k\leq K-K'$ we have 
\begin{equation*}
\begin{aligned}
\norm{\partial_t^kR(V)u}_{s-2k}\leq&\Big\|\sum_{k'+k"=k}\,\,\sum_{\gamma=1}^{k'}\,\,\sum_{k_1+\ldots+k_{\gamma}=k'}d_V^{k'}R(V)\Big[\partial^{k_1}_tV,\ldots,\partial^{k_{\gamma}}_tV,\partial^{k"}_tu\Big]\Big\|_{s-2k}\\
\leq&C\sum\Big\{\norm{V}_{s_0}^{p-\gamma-1}\norm{V}_{s-2k}\norm{\partial_t^{k''}u}_{s_0}\prod_{j=1}^{\gamma}\norm{\partial_t^{k_j}V}_{s_0}+\\
&+\norm{V}_{s_0}^{p-\gamma}\norm{\partial_t^{k"}u}_{s_0}\sum_{i=1}^{\gamma}\prod_{j=1,j\neq i}^{\gamma}\norm{\partial_t^{k_i}V}_{s-2k}\norm{\partial_t^{k_j}V}_{s_0}+\\
&+\norm{V}_{s_0}^{p-\gamma}\prod_{j=1}^{\gamma}\norm{\partial_t^{k_j}V}_{s_0}\norm{\partial_t^{k''}u}_{s-2k}\Big\},
\end{aligned}
\end{equation*}
where the sum in the r.h.s. is taken over the set of indices such that $k'+k''=k, k_1+\ldots k_{\gamma}=k"$ and $\gamma=1,\ldots, k'$. The r.h.s. of the above inequality may be bounded from above by the r.h.s. of $(2.7)$ in \cite{Feola-Iandoli-Long}. Therefore we recover the definition of non-homogeneous smoothing operators given in \cite{Feola-Iandoli-Long}. To recover the definition in \cite{BD} the computation is the same.
\end{remark}

\begin{remark}{\bf (Expansion of Homogeneous remainders).}\label{espansioneRRR}
Let $R_{p}\in \widetilde{\mathcal{R}}^{-\rho}_{p}\otimes \mathcal{M}_2(\mathbb{C})$.
Then by \eqref{omoresti2}, \eqref{def:R-trin} we deduce the following.
First of all write
 \begin{align}
R_{p}(U)
& =\left(
\begin{matrix}
(\mathtt{R}_p(U))_{+}^{+} & (\mathtt{R}_p(U))_{+}^{-}\vspace{0.2em}\\
(\mathtt{R}_{p}(U))_{-}^{+} & (\mathtt{R}_p(U))_{-}^{-}
\end{matrix}
\right)\,,  \quad 
(\mathtt{R}_i(U))_{\s}^{\s'}
\in \widetilde{\mathcal{R}}^{-\rho}_p \, ,  
\label{smooth-terms2}
 \end{align}
 for $\s,\s'\in\{\pm\}$ and $i=1,2$. Then, expanding $u$ as in 
 \eqref{complex-uU}, \eqref{notaFou}, we have
  and
\be\label{R2epep'}
(\mathtt{R}_{p}(U))_{\s}^{\s'} z^{\s'} =
\frac{1}{\sqrt{2\pi}}
\sum_{j\in\mathbb{Z}} 
\Big( \sum_{k\in \mathbb{Z}}  
 (\mathtt{R}_{p}(U))_{\s,j}^{\s',k} 
 z_{k}^{\s'} \Big) e^{\ii \s jx} 
\ee
with entries
 \begin{align} \label{BNF5}
 (\mathtt{R}_{p}(U))_{\s,j}^{\s',k} :=\frac{1}{(2\pi)^{p}}
 \sum_{\substack{\s_i\in\{\pm\}, j_i\in\Z \\ 
 \sum_{i=1}^{p}\s_i j_i=\s j-\s'k }}
 \big( (\mathtt{r}_{p})_{j_1,\ldots,j_p}^{\s_1\cdots \s_{p}}\big)_{\s,j}^{\s',k}
  u_{j_1}^{\s_1}\ldots u_{j_p}^{\s_{p}}  \, ,   \quad  
  j,k\in \Z\setminus\{0\}   \, , 
 \end{align}
 and suitable scalar coefficients 
 $  \big( (\mathtt{r}_{p})_{j_1,\ldots,j_p}^{\s_1\cdots \s_{p}}\big)_{\s,j}^{\s',k} \in \C $. 
\end{remark}

\subsubsection{Spaces of Maps}
Below we deal with classes of operators without keeping track of the number of lost derivatives in a precise way 
(see Definition 3.9  in \cite{BD}).
The class $\widetilde{\mathcal{M}}^{m}_{p}$ denotes multilinear maps that lose $m$ derivatives
and are $p$-homogeneous in $U$, 
while the class $\mathcal{M}_{p}^{m}$ contains non-homogeneous maps which lose $m$ derivatives,
vanish at degree at least $p$ in $U$.

\begin{definition}{\bf (Classes of maps).} \label{smoothoperatormaps}
Let $p,N\in \N $, with $p\leq N$, $N\geq1$
and $ m \geq 0 $. 

\noindent
$(i)$ {\bf $p$-homogeneous maps.} We denote by 
$\widetilde{\mathcal{M}}^{m}_{p}$
 the space of $(p+1)$-linear maps $M$ 
 from $({H}^{\infty}(\T;\C^{2}))^{p}\times {H}^{\infty}(\T;\C)$ to 
 the space ${H}^{\infty}(\T;\C)$ which are symmetric
 in $(U_{1},\ldots,U_{p})$, of the form
\[
 (U_{1},\ldots,U_{p+1})\to M(U_1,\ldots, U_p)U_{p+1} 
 \]
and that satisfy the following. There is $C>0$ such that 
 \[
 \|\Pi_{n_0}M(\Pi_{\vec{n}}\mathcal{U})\Pi_{n_{p+1}}U_{p+1}\|_{L^{2}}\leq
 C( n_0 +  n_1 +\cdots+ n_{p+1})^{m} \prod_{j=1}^{p+1}\|\Pi_{n_{j}}U_{j}\|_{L^{2}}
\] 
  for any 
 $\mathcal{U}=(U_1,\ldots,U_{p})\in ({H}^{\infty}(\T;\C^{2}))^{p}$, any 
 $U_{p+1}\in {H}^{\infty}(\T;\C)$,
 any $\vec{n}=(n_1,\ldots,n_p)\in (\N^*)^{p}$, any $n_0,n_{p+1}\in \N^*$.
 Moreover the properties \eqref{omoresti2}-\eqref{def:R-trin} hold.
 
 \noindent
$(ii)$ {\bf Non-homogeneous maps.}
  We denote by  $\mathcal{M}^{m}_{N}[r]$ 
  the space of maps $(V,u)\mapsto M(V) U $ defined on 
  $B_{r}(H^{s_0})\times {H}^{s_0}(\T,\C)$ 
  which are linear in the variable $ U $ and such that the following holds true. 
  For any $s\geq s_0$ there exist a constant $C>0$ and 
  $r(s)\in]0,r[$ such that for any 
  $V\in B_{r}(H^{s})\cap {H}^{s}(\T,\C^2)$, 
  any $ U \in {H}^{s}(\T,\C)$, any $0\leq d k\leq s-s_0$, we have 
  $\|{d_{V}^{k}\left(M(V)U\right)[h_1,\ldots,h_{k}]}\|_{{H}^{s-m-dk}}$ 
  is bounded by the right hand side  of \eqref{porto20}.
  
  \noindent
$(iii)$ {\bf Maps.}
We denote by $\Sigma\mathcal{M}^{m}_{p}[r,N]$
the space of maps $(V,t,U)\to M(V,t)U$
that may be written as 
\[
M(V;t)U=\sum_{q=p}^{N-1}M_{q}(V,\ldots,V)U+M_{N}(V;t)U
\]
for some $M_{q} $ in $ \widetilde{\mathcal{M}}^{m}_{q}$, $q=p,\ldots, N-1$ and 
$M_{N}$  in  
$\mathcal{M}^{m}_{N}[r]$.
Finally we set $\widetilde{\mathcal{M}}_{p}:=\cup_{m\geq0}\widetilde{\mathcal{M}}_{p}^{m}$,
$\mathcal{M}_{p}[r]:=\cup_{m\geq0}\mathcal{M}^{m}_{p}[r]$ and 
$\Sigma\mathcal{M}_{p}[r,N]:=\cup_{m\geq0}\Sigma\mathcal{M}^{m}_{p}[r]$.

\noindent
We denote by  $\Sigma\mathcal{M}_{p}^{m}[r,N]\otimes\mathcal{M}_2(\C)$
 the space of  $\,\,2\times 2$ matrices whose entries are maps in
the class $\Sigma\mathcal{M}^{m}_{p}[r,N]$.
We also set $\Sigma\mathcal{M}_{p}[r,N]\otimes\mathcal{M}_2(\C)=\cup_{m\in \R}
\Sigma\mathcal{M}_{p}^{m}[r,N]\otimes\mathcal{M}_2(\C)$.
\end{definition}



\vspace{0.3em}
\noindent
$\bullet$
If  $M$ is in $\widetilde{\mathcal{M}}^{m}_{p}$, $p\geq N$, then
$(V,U)\to M(V,\ldots,V)U$ is in $\mathcal{M}^{m}_{N,0}[r]$.

\vspace{0.3em}
\noindent
$\bullet$
If $a\in \Sigma\Gamma^{m}_{K,K',p}[r,N]$ for $p\geq1$, then 
$ \opbw(a(V;t,\cdot))U$ 
is in $\Sigma\mathcal{M}^{m'}_{p}[r,N]$ for some $m'\geq m$.

\vspace{0.3em}
\noindent
$\bullet$
 Any  
$R\in \Sigma\mathcal{R}^{-\rho}_{p}[r,N]$
defines an element of $\Sigma\mathcal{M}^{m}_{p}[r,N]$ for some $m\geq0$.

\vspace{0.3em}
\noindent
$\bullet$
 If $ M \in \Sigma\mathcal{M}_{p}[r,N]$, $\tilde{M}\in \Sigma\mathcal{M}_{1}[r,N-p]$,
then  
$(V,t,U)\to M(V+\tilde{M}(V;t)V ;t)[U]$ is in $\Sigma\mathcal{M}_{p}[r,N]$. 

\vspace{0.3em}
\noindent
$\bullet$
If $ M \in \Sigma\mathcal{M}^{m}_{p}[r,N]$ and $\tilde{M}\in\Sigma\mathcal{M}^{m'}_{q}[r,N]$,
then 
$M(U;t)\circ \tilde{M}(U;t)$ is in $\Sigma\mathcal{M}^{m+m'}_{p+q}[r,N]$.

\subsection{Quantization of symbols}

Given a smooth symbol $(x,\x) \to a(x,\x)$,
we define, for any $\s\in [0,1]$,  the quantization of the symbol $a$ as the operator 
acting on functions $u$ as 
\begin{equation}\label{operatore}
{\rm Op}_{\s}(a(x,\x))u=\frac{1}{2\pi}\int_{\RRR\times\RRR}
e^{\ii(x-y)\x}a(\s x+(1-\s)y,\x)u(y)dy d\x\,.
\end{equation}
This definition is meaningful in particular if $u\in C^{\infty}(\TTT)$ (identifying $u$ to a $2\pi$-periodic function). By decomposing 
$u$ in Fourier series  as $u=\sum_{j\in\ZZZ}\hat{u}(j)(1/\sqrt{2\pi})e^{\ii jx}$, we may calculate the oscillatory integral in \eqref{operatore} obtaining
\begin{equation}\label{bambola}
{\rm Op}_{\s}(a)u:=\frac{1}{\sqrt{2\pi}}\sum_{k\in \ZZZ}\left(\sum_{j\in\ZZZ}\hat{a}\big(k-j,(1-\s)k+\s j\big)\hat{u}(j)\right)\frac{e^{\ii k x}}{\sqrt{2\pi}}\,, \quad \forall\;\; \s\in[0,1]\,,
\end{equation}
where $\hat{a}(k,\xi)$ is the $k^{th}-$Fourier coefficient of the $2\pi-$periodic function $x\mapsto a(x,\xi)$.
For convenience in the paper we shall use two particular quantizations:

\noindent
{\bf Standard quantization.}
We define the standard quantization by specifying formula \eqref{bambola} for $\s=1$:
\begin{equation*}
{\rm Op}(a)u:={\rm Op}_{1}(a)u=\frac{1}{\sqrt{2\pi}}\sum_{k\in \ZZZ}\left(\sum_{j\in\ZZZ}\hat{a}\big(k-j, j\big)\hat{u}(j)\right)\frac{e^{\ii k x}}{\sqrt{2\pi}}\,;
\end{equation*}

\noindent
{\bf Weyl quantization.}
We define the Weyl quantization by specifying formula \eqref{bambola} for $\s=\frac{1}{2}$:
\begin{equation}\label{bambola202}
{\rm Op}^{W}(a)u:={\rm Op}_{\frac{1}{2}}(a)u=\frac{1}{\sqrt{2\pi}}\sum_{k\in \ZZZ}
\left(\sum_{j\in\ZZZ}\hat{a}\big(k-j, \frac{k+j}{2}\big)\hat{u}(j)\right)
\frac{e^{\ii k x}}{\sqrt{2\pi}}\,.
\end{equation}
Moreover
the above formulas allow to transform the symbols between different quantizations, in particular we have
\begin{equation*}
{\rm Op}(a)={\rm Op}^{W}(b)\,, 
\qquad {\rm where} \quad \hat{b}(j,\x)=\hat{a}(j,\x-\frac{j}{2})\,.
\end{equation*}
We want to define a \emph{para-differential} quantization.
First we give the following definition.
\begin{definition}{\bf (Admissible cut-off functions).}\label{cutoff1}
Fix $p\in \NNN$ with $p\geq1$.
We say that  $\chi_{p}\in C^{\infty}(\RRR^{p}\times \RRR;\RRR)$ and $\chi\in C^{\infty}(\R\times\R;\R)$ are  admissible cut-off functions if  they are even with respect to each of their arguments and there exists $\delta>0$ such that
\begin{equation*}
\begin{aligned}
&{\rm{supp}}\, \chi_{p} \subset\set{(\xi',\xi)\in\R^{p}\times\R; |\xi'|\leq\delta \langle\xi\rangle}\,,\qquad \chi_p (\xi',\xi)\equiv 1\,\,\, \rm{ for } \,\,\, |\xi'|\leq\frac{\delta}{2} \langle\xi\rangle\,,\\
&\rm{supp}\, \chi \subset\set{(\xi',\xi)\in\R\times\R; |\xi'|\leq\delta \langle\xi\rangle}\,,\qquad \chi(\xi',\xi) \equiv 1\,\,\, \rm{ for } \,\,\, |\xi'|\leq\frac{\delta}{2} \langle\xi\rangle\,.
\end{aligned}
\end{equation*}
We assume moreover that for any derivation indices $\alpha$ and $\beta$
\begin{equation*}
\begin{aligned}
&|\partial_{\xi}^{\alpha}\partial_{\xi'}^{\beta}\chi_p(\xi',\xi)|\leq C_{\alpha,\beta}\langle\xi\rangle^{-\alpha-|\beta|}\,,\,\,
\forall \alpha\in \NNN, \,\beta\in\NNN^{p}\,,\\
&|\partial_{\xi}^{\alpha}\partial_{\xi'}^{\beta}\chi(\xi',\xi)|\leq C_{\alpha,\beta}\langle\xi\rangle^{-\alpha-\beta}\,,\,\,\forall \alpha\,, \,\beta\in\NNN\,.
\end{aligned}
\end{equation*}
\end{definition}
\noindent An example of function satisfying the condition above, and that will be extensively used in the rest of the paper, is $\chi(\xi',\xi):=\widetilde{\chi}(\xi'/\langle\xi\rangle)$, where $\widetilde{\chi}$ is a function in $C_0^{\infty}(\RRR;\RRR)$  having a small enough support and equal to one in a neighborhood of zero.
For any $a\in C^{\infty}(\TTT)$ we shall use the following notation
\begin{equation*}
(\chi(D)a)(x)=\sum_{j\in\ZZZ}\chi(j)\Pi_{j}{a}\,.
\end{equation*}

\begin{definition}{\bf (The  Bony quantization).}\label{quantizationtotale}
Let $\chi$ be an admissible cut-off function according to Definition \ref{cutoff1}.
If a is a symbol in $\widetilde{\Gamma}^{m}_{p}$ and $b$ is in $\Gamma^{m}_{p}[r]$,
we set, using notation \eqref{ptupla}, 
\begin{equation*}
\begin{aligned}
a_{\chi}(\mathcal{U};x,\x)&=\sum_{\vec{n}\in \NNN^{p}}\chi_{p}\left(\vec{n},\x\right)a(\Pi_{\vec{n}}\mathcal{U};x,\x)\,,\qquad
b_{\chi}(U;x,\x)=\frac{1}{2\pi}\int_{\TTT}  \chi\left(\h,\x\right)\hat{b}(U;\h,\x)e^{\ii \h x}d \h\,.
\end{aligned}
\end{equation*}
We define the  Bony quantization as
\begin{equation*}
\begin{aligned}
\opb(a(\mathcal{U};\cdot))&={\rm Op}(a_{\chi}(\mathcal{U};\cdot))\,,\qquad
\opb(b(U;\cdot))={\rm Op}(b_{\chi}(U;t,\cdot))\,.
\end{aligned}
\end{equation*}
and the Bony-Weyl quantization as
\begin{equation}\label{simbotroncati2}
\begin{aligned}
\opbw(a(\mathcal{U};\cdot))&=\opw(a_{\chi}(\mathcal{U};\cdot)),\qquad 
\opbw(b(U;t,\cdot))=\opw(b_{\chi}(U;\cdot)).
\end{aligned}
\end{equation}
Finally, if $a$ is a symbol in the class $\Sigma\Gamma^{m}_{p}[r,N]$, that we decompose as in \eqref{simbotot1},
we define its Bony quantization as 
\begin{equation*}
\opb(a(U;\cdot))=\sum_{q=p}^{N-1}\opb(a_{q}(U,\ldots,U;\cdot))+\opb(a_{N}(U;\cdot))\,,
\end{equation*}
and its Bony-Weyl quantization as
\begin{equation*}
\opbw(a(U;\cdot))=\sum_{q=p}^{N-1}\opbw(a_{q}(U,\ldots,U;\cdot))+\opbw(a_{N}(U;\cdot))\,.
\end{equation*}
\end{definition}

\begin{remark}
Let $a\in \Sigma\Gamma^{m}_{p}[r,N]$.
We note that
\begin{equation*}
\ov{\opb(a(U;x,\x)[v]}=\opb(\ov{a^{\vee}(U;x,\x)})[\bar{v}]\,, \quad
\ov{\opbw(a(U;x,\x)[v]}=\opbw(\ov{a^{\vee}(U;x,\x)})[\bar{v}]\,,
\end{equation*}
where 
\begin{equation*}
a^{\vee}(U;x,\x):=a(U;x,-\x)\,.
\end{equation*}
Moreover if we define the operator $A(U)[\cdot]:=\opbw(a(U;x,\x))[\cdot]$
we have that $A^{*}(U)$, its adjoint operator w.r.t. the $L^{2}(\TTT;\CCC)$ scalar product,
can be written as
\begin{equation}\label{aggiunto}
A^{*}(U)[v]=\opbw\Big(\ov{a(U;x,\x)}\Big)[v]\,.
\end{equation}
\end{remark}

\begin{remark}\label{aggiungoaggiunto}
By formula \eqref{aggiunto} one has 
$\opbw(a(U;x,\x))[\cdot]$
is self-adjoint, w.r.t. the $L^{2}(\TTT;\CCC)$ scalar product, if and only if the symbol $a(U;x,\x)$ is real valued
for any $x\in \TTT$, $\x\in \RRR$. 
\end{remark}

\smallskip
The next proposition states
boundedness properties on Sobolev spaces of the  para-differential operators
(see Proposition 3.8  in \cite{BD}).

\begin{proposition}{\bf (Action of para-differential operator 1).} \label{azionepara}
Let $r>0$, $m\in \R$, $p\in \N$. Then:
\smallskip
$(i)$ There is $ s_0 > 0 $ such that for any symbol
 $a\in \widetilde{\Gamma}_{p}^{m}$, for any $s\geq s_0$ 
there is a constant $C>0$, depending only on $s$ and on \eqref{pomosimbo1} with $\alpha=\beta=0$,
such that for any $\mathcal{U}=(U_1,\ldots,U_{p})$
\begin{equation}\label{stimapar}
\|\opbw(a(\mathcal{U};\cdot))U_{p+1}\|_{{H}^{s-m}}\leq 
C\prod_{j=1}^{p}\|U_{j}\|_{{H}^{s_0}}\|U_{p+1}\|_{{H}^{s}} \, ,
\end{equation}
for $p\geq 1$, while for $p=0$ the \eqref{stimapar} holds by 
replacing the right hand side with $C\|U_{p+1}\|_{{H}^{s}}$.

\smallskip
$(ii)$
There is $ s_0 > 0 $ such that  for any symbol $a\in \Gamma^{m}_{p}[r]$ and any $s\geq s_0$
there is  a constant $C>0$, depending  only on $s,r$ and \eqref{maremma2}
with $0\leq \alpha\leq 2$, $\beta=0$, such that, for any $t\in I$, any $0\leq dk+2\leq s-s_0$,
\begin{equation}\label{stimapar2}
\|\opbw((\pa_{u}^{k}a)(U;\cdot)[h_1,\ldots,h_k])\|_{\mathcal{L}({H}^{s},{H}^{s-m})}
\leq C\|{U}\|_{H^{s_0}}^{\max\{0,p-k\}}
\prod_{i=1}^{k}\|h_{i}\|_{s_0}\,.
\end{equation}
\end{proposition}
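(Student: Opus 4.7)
The plan is to follow closely the strategy of Proposition 3.8 in \cite{BD}, adapting it to the symbol classes of Definition \ref{pomosimb}. I first treat part $(i)$ by working in Fourier variables and exploiting the spectral localization introduced by the cutoff $\chi_p$. Expanding $U_{p+1}=\sum_{j\in\Z}\hat{U}_{p+1}(j)e^{\ii jx}/\sqrt{2\pi}$ and using Remark \ref{espansioneSSS} together with the Bony--Weyl quantization formula \eqref{bambola202}, the $k$-th Fourier coefficient of $\opbw(a(\mathcal{U};\cdot))U_{p+1}$ is a double sum in $j$ and in the indices $(j_1,\ldots,j_p)$ appearing in the expansion \eqref{espandoFousimbo} for each $\Pi_{n_i}U_i$. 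The cutoff $\chi_p(\vec{n},(k+j)/2)$ restricts the sum to $|\vec{n}|\lesssim\delta\langle(k+j)/2\rangle$, while the spectral condition \eqref{pomosimbo2} forces $k-j=\sum_i\sigma_i j_i$. Combined these imply $\langle k\rangle\sim\langle j\rangle$, so that the largest frequency is either $k$ or $j$, and hence $\max_2$ of all frequencies is controlled by the remaining $\langle n_i\rangle$'s. Invoking the bound \eqref{pomosimbo1} (with $\alpha=\beta=0$) and then Cauchy--Schwarz in the $n_i$'s, using the assumption $s_0>\mu+1/2$ to make $\sum_{n_i}\langle n_i\rangle^{2(\mu-s_0)}<\infty$, yields \eqref{stimapar}.

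For part $(ii)$ the argument is a standard integration-by-parts in the oscillatory integral \eqref{operatore} (with $\sigma=1/2$) defining $\opbw(b(U;\cdot))v$. Using the identity
\[
e^{\ii(x-y)\xi}=\langle\xi\rangle^{-2}(1-\pa_x^2)e^{\ii(x-y)\xi},
\]
each integration by parts trades a factor $\langle\xi\rangle^{-1}$ for one $x$-derivative on the symbol $b_\chi$. Since the cutoff $\chi(\xi',\xi)$ confines the $x$-Fourier spectrum of $b_\chi$ to $|\xi'|\leq\delta\langle\xi\rangle$, the spatial derivatives of $b_\chi$ inherit the size of $b$ (up to negligible factors), and we only need to control $\pa_x^\alpha b$ for $\alpha\leq 2$, which is allowed by \eqref{maremma2}. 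After two integrations by parts the resulting kernel becomes integrable and produces the desired $\mathcal{L}(H^s,H^{s-m})$ bound, with a constant depending only on the semi-norms $|b|^{\Gamma^m,s}_{\alpha,0,0}$ for $\alpha\leq 2$. The $k$-derivative version follows by noting that $(\pa_u^k b)(U;\cdot)[h_1,\ldots,h_k]$ is itself a symbol satisfying \eqref{maremma2} with $p$ replaced by $\max\{0,p-k\}$ and with the right-hand side featuring the products of Sobolev norms of the $h_j$'s exactly as in \eqref{porto20}; applying the previous argument to this symbol yields \eqref{stimapar2}.

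The main technical obstacle I expect is bookkeeping the constraint $2+dk\leq s-s_0$ explained in Remark \ref{omissione}: two units come from integration-by-parts in $x$, and each differential in $U$ costs $d$ spatial derivatives because of the composition structure we shall use later. This amounts to verifying that differentiating a Bony--Weyl operator in $U$ commutes with the quantization up to operators in the same class, which follows from the linearity of the quantization and the fact that the cutoff $\chi_p$ (resp.\ $\chi$) is independent of $U$. The remaining difference from \cite{BD} — that the $U$-dependence is not mediated by a time-dependent function but is intrinsic — is absorbed into the estimate \eqref{maremma2}, which replaces the time-derivative controls used in \cite{BD} by differentials in $U$, compare Remarks \ref{super-simboli} and \ref{smooth-totale}.
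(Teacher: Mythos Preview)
Your sketch for part $(i)$ is correct and essentially matches the paper, which simply defers to Proposition~3.8 in \cite{BD}.

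For part $(ii)$, however, the integration-by-parts you describe is in the wrong variable. In the oscillatory integral \eqref{operatore} with $\sigma=1/2$, the integration variables are $y$ and $\xi$; the variable $x$ is the output and is not integrated, so the identity $e^{\ii(x-y)\xi}=\langle\xi\rangle^{-2}(1-\pa_x^2)e^{\ii(x-y)\xi}$, while algebraically true, cannot be used to integrate by parts. If you switch to $(1-\pa_y^2)$ you do get an honest integration by parts, but then derivatives fall on $u(y)$ as well as on the symbol, and the ``kernel becomes integrable'' claim does not follow as stated.

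The paper's argument is more direct and avoids this issue. One sets $b_k(x,\xi):=\big(d_u^k a(U;\cdot)[h_1,\dots,h_k]\big)_\chi$, writes the operator in Fourier as in \eqref{bambola202}, and estimates the Fourier coefficients $\hat b_k(\ell,\xi)=\int_\TTT b_k(x,\xi)e^{-\ii\ell x}\,dx$ directly: two integrations by parts in this $x$-integral, combined with \eqref{maremma2} for $\alpha=2$, give
\[
|\hat b_k(\ell,\xi)|\le C\,\langle\xi\rangle^{m}\,\langle\ell\rangle^{-2}\,\|U\|_{H^{s_0}}^{p-k}\prod_{j=1}^{k}\|h_j\|_{H^{s_0}}\,.
\]
Since the para-differential cutoff forces $\langle k\rangle\sim\langle j\rangle$ in the Fourier representation, this $\langle\ell\rangle^{-2}$ decay is summable in one dimension and yields \eqref{stimapar2} via a Young-type convolution estimate, exactly as in Prop.~3.8 of \cite{BD}. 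Your underlying intuition (that control of $\pa_x^\alpha b$ for $\alpha\le2$ suffices, and that the $k>0$ case reduces to the $k=0$ case applied to the differentiated symbol) is right; only the mechanism for converting that control into operator boundedness needs to be the Fourier-coefficient estimate above rather than the oscillatory-integral identity you wrote.
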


\begin{proof}
The proof of item $(i)$ may be found at pag. 47 of the book \cite{BD}, the item $(i)$ of this theorem is indeed the same of item $(i)$ of Prop. 3.8 in \cite{BD}. Note that the proof of item $(ii)$ in the case that $k=0$ is the same given in item $(ii)$ of Prop 3.8 in \cite{BD}, in the case $k>0$ is very similar, however we give the proof for completeness. Let $\chi$ an admissible cut-off function ion the sense of Def. \ref{cutoff1}. We set
\begin{equation*}
b_k(x,\xi):=\big(d_u^ka(U;\cdot)[h_1,\ldots, h_k]\big)_{\chi}=\sum_{n\in\ZZZ}\chi(n,\xi)\Pi_n d_{u}^{k}a(U;x,\xi)[h_1,\ldots, h_k]
\end{equation*}
for any fixed $\xi\in\RRR$. Let $V$ be a regular function on the torus, we have the following
\begin{equation*}
\begin{aligned}
\opbw(\partial_u^ka(U;x,\xi)[h_1,\ldots,h_k])v=
\frac{1}{\sqrt{2\pi}}\sum_{n'\in\ZZZ}
\sum_{k'\in\ZZZ}\hat{b}_k(k'-n',\frac{k'+n'}{2})\hat{v}(n')
\frac{e^{\ii k'x}}{\sqrt{2\pi}}\,,
\end{aligned}
\end{equation*}
where by $\hat{b}_k(\ell,n)$ we have denoted the $\ell$-Fourier coefficient with respect to $x$ of the function $b_k(x,\xi)$ restricted at $\xi=n$. We need to estimate the general Fourier coefficient $\hat{b}_k(\ell,\xi)$. By definition we have 
\begin{equation*}
\hat{b}_k(\ell,\xi)=\int_{\TTT}b_k(x,\xi)e^{-\ii\ell x}dx\,,
\end{equation*}
therefore, by integrating twice in $x$ and by using \eqref{maremma2} with $\alpha=2$ (relabelling $s_0\rightsquigarrow s_0+2$), we obtain
\begin{equation*}
|\hat{b}_k(\ell,\xi)|\leq C\jap{\xi}^{m}\frac{1}{\jap{\ell}^2}\norm{U}_{s_0}^{p-k}\prod_{j=1}^k\norm{h_j}_{s_0}\,.
\end{equation*}
Having this inequality one can conclude the proof as done in the case of Prop. 3.8 in \cite{BD}.
\end{proof}

\begin{remark}
Notice that the following holds.
\begin{itemize}
\item
 Let $m\leq 0$ and $p\geq 1$ and let $a$ be a symbol in $\Sigma\Gamma^m_p[r,N]$, then 
the map $(U,V)\mapsto \opbw(a(U;x,\xi))V$ is in $\Sigma\mathcal{R}^{m}_p[r,N]$;

\item
 let $a$ be a symbol in $\Sigma\Gamma^m_p[r,N]$, then 
the map $(U,V)\mapsto \opbw(a(U;x,\xi))V$ is in $\Sigma\mathcal{M}^{m'}_p[r,N]$ for any $m'\geq m$;

\item
 any smoothing operator in $\Sigma\mathcal{R}^{-\rho}_p[r,N]$ defines an element of $\Sigma\mathcal{M}^{m'}_p[r,N]$ for some $m'>0$.
\end{itemize}
\end{remark}
\noindent We now state a \emph{classical} version of the \emph{action}-theorem on Sobolev spaces for para-differential operators, whose proof can be found in the book by Metivier \cite{Met} (see Theorem 5.1.15 and formula (5.1.25) therein). 
\begin{proposition}{\bf (Action of para-differential operators 2).}\label{AzioneParaMet}
Consider a symbol $a\in\Gamma^{m}_{p}[r]$, then it defines a bounded operator from $H^s$ to $H^{s-m}$ with the following estimate
\begin{equation*}
\| \opbw(a(U;x,\xi))\|_{\mathcal{L}(H^s,H^{s-m})}\leq 
\sup_{\xi\in\mathbb{R}}\langle\xi\rangle^{-m}
\|a(U;x,\xi)\|_{L^{\infty}_x}\,.
\end{equation*}
\end{proposition}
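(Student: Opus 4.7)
The plan is to follow the standard Fourier-side argument for Bony para-differential operators, as in Metivier's textbook. First I would use the explicit Fourier expression for the Bony-Weyl quantization (identical to the one used in the proof of Proposition \ref{azionepara}): for $u\in H^s(\mathbb{T})$,
$$
\widehat{\opbw(a)u}(k) \;=\; \frac{1}{\sqrt{2\pi}}\sum_{j\in\mathbb{Z}} K(k,j)\,\hat u(j),\qquad K(k,j):=\chi\!\left(k-j,\tfrac{k+j}{2}\right)\hat a\!\left(k-j,\tfrac{k+j}{2}\right).
$$
Bounding $\|\opbw(a)\|_{\mathcal{L}(H^s,H^{s-m})}$ thus reduces to bounding the $\ell^2\to\ell^2$ norm of the weighted infinite matrix
$M_{k,j}:=\langle k\rangle^{s-m}\langle j\rangle^{-s}K(k,j)$.

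Second, I would exploit the admissibility condition from Definition \ref{cutoff1}: on the support of $\chi$ one has $|k-j|\leq \delta\langle (k+j)/2\rangle$, and a direct triangle inequality argument yields the comparability $C^{-1}\langle j\rangle\leq \langle k\rangle\leq C\langle j\rangle$, both quantities being equivalent to $\langle (k+j)/2\rangle$. Consequently on the support
$$
|M_{k,j}|\;\lesssim\;\left\langle \tfrac{k+j}{2}\right\rangle^{-m}\Big|\chi\!\left(k-j,\tfrac{k+j}{2}\right)\hat a\!\left(k-j,\tfrac{k+j}{2}\right)\Big|,
$$
so that the factor $\langle \xi\rangle^{-m}$ in the target estimate is already present pointwise in the kernel (with $\xi=(k+j)/2$).

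Third, I would conclude by Schur's test $\|M\|_{\ell^2\to\ell^2}^2\leq (\sup_k\sum_j|M_{k,j}|)(\sup_j\sum_k|M_{k,j}|)$. The decisive ingredient is that the \emph{smoothness} of $\chi$ provided by Definition \ref{cutoff1} — in particular $|\partial_{\eta}^{N}\chi(\eta,\xi)|\lesssim\langle\xi\rangle^{-N}$ — allows, via repeated integration by parts in $x$ inside the Fourier coefficient $\hat a(\eta,\xi)=\frac{1}{2\pi}\int a(x,\xi)e^{-\ii\eta x}dx$ (using the $2\pi$-periodicity and integrating the $e^{-\ii\eta x}$ factor against derivatives of a smooth mollifier built from $\chi$), to produce a kernel which, for each fixed $\xi$, has $\ell^1$-summability in $\eta=k-j$ controlled solely by $\|a(\cdot,\xi)\|_{L^\infty_x}$. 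Combined with the support restriction to the strip $|k-j|\lesssim\langle\xi\rangle$, this yields
$$
\sup_k\sum_j|M_{k,j}|\;+\;\sup_j\sum_k|M_{k,j}|\;\lesssim\;\sup_{\xi\in\mathbb{R}}\langle\xi\rangle^{-m}\|a(U;\cdot,\xi)\|_{L^\infty_x},
$$
which is the desired bound.

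The main technical obstacle is precisely the last step: the $L^\infty_x$ norm of $a$ is itself \emph{not} summable on the Fourier side, so one cannot invoke Schur naively. One must transfer all the regularity onto the cutoff $\chi$ — which, by Definition \ref{cutoff1}, is smooth and whose derivatives behave as negative powers of $\langle\xi\rangle$ — and this is exactly the mechanism that makes the Bony quantization well-behaved for merely bounded symbols, in contrast with the classical Weyl quantization. Once this balancing is done, the rest of the argument is routine, which is why the statement is quoted directly from Metivier \cite{Met}.
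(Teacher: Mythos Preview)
The paper does not actually prove this proposition: it cites Metivier's book (Theorem 5.1.15 and formula (5.1.25)) and moves on. So the relevant question is whether your outline reproduces a correct proof. It does not; the third step has a real gap.

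Your plan is to apply Schur's test to the weighted kernel $M_{k,j}$, and you claim that the smoothness of $\chi$ in its first variable lets you extract $\ell^1$-summability of $\eta\mapsto\chi(\eta,\xi)\hat a(\eta,\xi)$ controlled only by $\|a(\cdot,\xi)\|_{L^\infty_x}$. This is false. Integration by parts in $x$ in the formula $\hat a(\eta,\xi)=\tfrac{1}{2\pi}\int a(x,\xi)e^{-\ii\eta x}dx$ produces $x$-derivatives of $a$, not of $\chi$; the $\eta$-smoothness of $\chi$ is irrelevant to that computation. If instead you move the derivatives onto the mollified symbol $a_\chi=\chi(D_x,\xi)a$, you find $\|\partial_x^N a_\chi(\cdot,\xi)\|_{L^\infty}\lesssim \langle\xi\rangle^N\|a(\cdot,\xi)\|_{L^\infty}$, so the gain $|\eta|^{-N}$ from integration by parts is exactly cancelled by the loss $\langle\xi\rangle^{N}$ on the support $|\eta|\le\delta\langle\xi\rangle$. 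Concretely, the Schur row sum is of size $\langle\xi\rangle^{-m}\sum_{|\eta|\le\delta\langle\xi\rangle}|\chi(\eta,\xi)\hat a(\eta,\xi)|$, and with only $L^\infty_x$ control on $a$ the Fourier coefficients are merely bounded, so the sum carries an unavoidable extra factor $\langle\xi\rangle$. (Think of $a(x,\xi)=\mathrm{sign}(\sin x)$: bounded, but its Fourier coefficients are not in $\ell^1$.) Schur's test therefore yields the estimate only up to a loss of one derivative, which is exactly what distinguishes Proposition~\ref{AzioneParaMet} from Proposition~\ref{azionepara}.

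Metivier's actual argument does not go through $\ell^1$-kernel bounds. It uses a Littlewood--Paley decomposition: on each dyadic block $|\xi|\sim 2^k$ the para-differential operator acts essentially as multiplication by the spectrally truncated symbol $S_{k-N_0}a(\cdot,\xi)$, whose $L^\infty_x$ norm is at most $\|a(\cdot,\xi)\|_{L^\infty_x}$; multiplication by an $L^\infty$ function is bounded on $L^2$ with exactly that norm. The spectral localization then guarantees that the outputs on different blocks are almost orthogonal, and summing via the Littlewood--Paley square function gives the $H^s\to H^{s-m}$ bound. The key step is an $L^2$ fact (boundedness of multiplication), not an $\ell^1$ one, and this is why the $L^\infty_x$ norm alone suffices.
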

\begin{remark}
Note that in Prop. \ref{AzioneParaMet} we have better estimates in terms of regularity of the function $a(U;\cdot)$ in the r.h.s. compared to Prop. \ref{azionepara}. In Prop. \ref{azionepara} we have more information on the smallness of the symbol in terms of its dependence of the function $U$ but we have to pay the price of losing some derivatives in the r.h.s.. In Section \ref{FLOWS} we shall need the optimal estimate in term of regularity given by Prop. \ref{AzioneParaMet} in several proofs.
\end{remark}


\subsection{Symbolic calculus and Compositions theorems }
We introduce the following differential operator
\[ 
\s(D_{x},D_{\x},D_{y},D_{\eta}) := D_{\x}D_{y}-D_{x}D_{\eta}\,, 
\]
where $D_{x}:=\frac{1}{\ii}\pa_{x}$ and $D_{\x},D_{y},D_{\eta}$ are similarly defined. Given two symbols $a$ and $b$, in the following we define a new symbol $a\#_{\rho}b$ which turns out to be the symbol of the composition of the para-differential operators generated by $a$ and $b$ modulo smoothing operators of order $-\rho$.

\begin{definition}{\bf (Asymptotic expansion of composition symbol).}
Let $\rho,p,q$ be in $\N$, $m,m'\in \R$, $r>0$.
Consider $a\in \Sigma\Gamma_{p}^{m}[r,N]$ and $b\in \Sigma \Gamma^{m'}_{q}[r,N]$. 
For $U$ in $B_{r}(H^{\s})$ 
we define, for $\rho< \s- s_0$, the symbol
\begin{equation}\label{espansione2}
(a\#_{\rho} b)(U;x,\x):=\sum_{k=0}^{\rho}\frac{1}{k!}
\left(
\frac{\ii}{2}\s(D_{x},D_{\x},D_{y},D_{\eta})\right)^{k}
\Big[a(U;x,\x)b(U;y,\eta)\Big]_{|_{\substack{x=y, \x=\eta}}}
\end{equation}
modulo symbols in $ \Sigma\Gamma^{m+m' - \rho }_{p+q}[r,N] $.
\end{definition}

\noindent
$ \bullet $
The symbol $ a\#_{\rho} b $ belongs  to $\Sigma\Gamma^{m+m'}_{p+q}[r,N]$.

\noindent
$ \bullet $
We have  the expansion  $ a\#_{\rho}b =ab+\frac{1}{2 \ii }\{a,b\} + \cdots $, 
up to a symbol in $\Sigma\Gamma^{m+m'-2}_{p+q}[r,N]$,     
where 
$
\{a,b\}:=\pa_{\x}a\pa_{x}b-\pa_{x}a\pa_{\x}b
$ 
denotes the Poisson bracket. 

\begin{proposition}{\bf (Composition of Bony-Weyl operators).} \label{teoremadicomposizione}
Let $ \rho,p,q$ be in $\N$, $m,m'\in \R$, $r>0$. 
Consider 
$a\in \Sigma {\Gamma}^{m}_{p}[r,N] $ and $b\in \Sigma {\Gamma}^{m'}_{q}[r, N]$. 
Then
\begin{equation}\label{calcolosimbolico}
R(U):=\opbw(a(U;x,\x))\circ\opbw(b(U;x,\x))-\opbw\big(
(a\#_{\rho} b)(U;x,\x)
\big)
\end{equation}
is a non-homogeneous smoothing remainder in 
$ \Sigma {\mathcal{R}}^{-\rho+m+m'}_{p+q}[r,N]$.
\end{proposition}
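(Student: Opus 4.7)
The plan is to reduce the statement to its bilinear building blocks. Writing $a$ and $b$ according to the decomposition \eqref{simbotot1} as sums of homogeneous components $a_{p'}\in\widetilde\Gamma^{m}_{p'}$, $b_{q'}\in\widetilde\Gamma^{m'}_{q'}$ (for $p'=p,\dots,N-1$, $q'=q,\dots,N-1$) and non-homogeneous tails $a_N\in\Gamma^{m}_N[r]$, $b_N\in\Gamma^{m'}_N[r]$, the composition is linear and splits into three types of terms: (i) homogeneous$\,\times\,$homogeneous, producing a symbol of homogeneity $p'+q'$; if $p'+q'\geq N$ this is absorbed into the non-homogeneous class via Remark \ref{starship}; (ii) mixed; (iii) non-homogeneous$\,\times\,$non-homogeneous. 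In each case the object to analyze is the Weyl product of two truncated symbols, so the starting point is the classical identity $\opw(a_\chi)\circ\opw(b_\chi)=\opw(a_\chi\star b_\chi)$, where
\[
(a_\chi\star b_\chi)(x,\xi)=\frac{1}{(2\pi)^{2}}\iint e^{\ii\sigma(x-y,\xi-\eta;x-z,\xi-\zeta)}\,a_\chi(y,\eta)\,b_\chi(z,\zeta)\,dy\,dz\,d\eta\,d\zeta .
\]

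I would then Taylor-expand $e^{(\ii/2)\sigma(D_x,D_\xi,D_y,D_\eta)}$ at order $\rho$. The polynomial part produces exactly the symbol $(a_\chi\#_\rho b_\chi)$ of Definition \eqref{espansione2}; the integral remainder is
\[
r_\rho(U;x,\xi)=\frac{1}{\rho!}\int_0^1(1-t)^\rho\,\opw^{-1}\!\bigl[e^{(\ii t/2)\sigma(D)}\bigl((\tfrac{\ii}{2}\sigma(D))^{\rho+1}(a_\chi\otimes b_\chi)\bigr)\bigr]\,dt\,\big|_{\text{diag}}.
\]
Repeated integrations by parts in the oscillatory integral, using the admissible cut-off structure of Definition \ref{cutoff1} to bound $\langle\xi-\eta\rangle$, $\langle\xi-\zeta\rangle$ by $\langle\xi\rangle$, give a pointwise estimate of the form $|\partial_x^\alpha\partial_\xi^\beta r_\rho|\leq C\langle\xi\rangle^{m+m'-\rho-\beta}$ with appropriate tame control by the symbol seminorms of $a$ and $b$. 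Finally, the difference $\opw(a_\chi\#_\rho b_\chi)-\opbw(a\#_\rho b)$ between acting with the truncated and with the re-truncated product symbol is smoothing of arbitrary order, because the difference of cut-offs is supported where $|\eta|\gtrsim\langle\xi\rangle$, so its Fourier multiplier picture gains as many derivatives as desired. The autonomous support condition \eqref{pomosimbo2} for homogeneous pieces and the translation invariance \eqref{def:tr-in} are preserved by the product $\#_\rho$ (Fourier-multiplicative), ensuring the remainder inherits \eqref{omoresti2}--\eqref{def:R-trin} when restricted to the homogeneous cases.

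The main obstacle is verifying that the non-homogeneous contribution to $R(U)$ satisfies the full tame estimate \eqref{porto20} of Definition \ref{omosmoothing}, including the differentials $d_V^k R$ with the correct loss $-\rho+m+m'-dk$. Each differentiation $d_U^k$ falling on $a_\chi$ or $b_\chi$ produces, by the Leibniz rule applied inside the oscillatory integral, a sum of terms where the derivatives are distributed between the two factors; the estimate \eqref{maremma2} then provides the constraint $\alpha+d k\leq s-s_0$ that drives the allowed number of integrations by parts in $x$. Carrying through the Leibniz bookkeeping and combining it with Proposition \ref{azionepara} for the remainder terms of order less than $-\rho+m+m'$ produces exactly the estimate \eqref{porto20} with $\rho$ replaced by $\rho-m-m'$, so that $R(U)\in\Sigma\mathcal R^{-\rho+m+m'}_{p+q}[r,N]$ as claimed. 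The homogeneous building blocks are easier: expanding in Fourier as in \eqref{espandoFousimbo}, the cut-off $\chi_{p+q}$ of the product localizes frequencies and one recovers \eqref{omoresti1} directly from the seminorm bound on $r_\rho$.
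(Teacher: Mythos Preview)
Your proposal is correct and follows essentially the same route as the paper's proof. Both arguments split the remainder into the same two pieces: the Taylor remainder of the Weyl product $a_\chi\star b_\chi-(a_\chi\#_\rho b_\chi)$ (your $r_\rho$, the paper's $r_1$) and the cut-off discrepancy $(a_\chi\#_\rho b_\chi)-(a\#_\rho b)_\chi$ (your final paragraph, the paper's $r_2$); both handle the $d_U^k$ differentials by Leibniz, distributing them between the two symbol factors before invoking the symbol estimates \eqref{maremma2}. The paper is terser because it defers the oscillatory-integral bounds to Lemmas~3.13--3.14 of \cite{BD}, whereas you sketch them directly; one minor imprecision in your write-up is the phrase ``smoothing of arbitrary order'' for the cut-off discrepancy---the gain is $\rho$ derivatives (modulo $m+m'$), not unlimited, but this does not affect the conclusion.
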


\begin{proof}
We give the proof of the theorem  in the case that the symbols $a$ and $b$ are non-homogenous in the classes $\Gamma_p^m[r]$ and $\Gamma_q^{m'}[r]$ respectively. In the case of composition between operators generated by homogeneous symbols we refer to the proof of Prop. 3.12 in \cite{BD} since our classes coincide with the ones therein. We set
\begin{equation*}
\begin{aligned}
&\tilde{a}_{\chi}(x,\xi)=d_U^{k_1}a(U;x,\xi)\big[h_{n_1},\ldots,h_{n_{k_1}}\big]\\
&\tilde{b}_{\chi}(x,\xi)=d_U^{k_2}b(U;x,\xi)\big[h_{n_{k_1}},\ldots,h_{n_{k_1+k_2}}\big]\,.
\end{aligned}
\end{equation*}
We have that the $(k_1+k_2)$-differential with respect to $U$ of the expression in \eqref{calcolosimbolico} applied to the vector $[h_{n_1},\ldots, h_{n_{k_1+k_2}}]$ may be written as $\Op^W(r_1(U;x,\xi)+r_2(U;x,\xi))$ where
\begin{equation*}\begin{aligned}
&r_1(U;\cdot)=\tilde{a}_{\chi}\#\tilde{b}_{\chi}-(\tilde{a}_{\chi}\#\tilde{b}_{\chi})_{\rho}\,,\\
&r_2(U;\cdot)=(\tilde{a}_{\chi}\#\tilde{b}_{\chi})_{\rho}-(\tilde{a}\#\tilde{b})_{\rho,\chi}\,,
\end{aligned}
\end{equation*}
where $a\#b$ is the symbol of the composition given in Lemma 3.14 in \cite{BD}. We estimate the term coming from $r_1(U;\cdot)$, the other one is similar. At this point one has to use Lemma 3.13 in \cite{BD} by using the fact the the symbols $a$ and $b$ satisfy the estimate \eqref{maremma2}. By following the proof of the \emph{action-}theorem \ref{azionepara} one can estimate the $H^{s-m-m'+\rho-2}$ of $\Op^W(r_1(U;\cdot))v$ for some regular function $v$ obtaining the estimate of smoothing remainder \eqref{porto20} (up to renaming $\rho-2\rightsquigarrow \rho$).
The translation invariance property \eqref{def:R-trin} follows as in \cite{BFP}.
\end{proof}

\begin{remark}
As proved in the remark after the proof of Proposition 3.12 in \cite{BD}, 
the remainder obtained by the composition of para-differential operators in Proposition 
\ref{teoremadicomposizione} has actually better estimates than
\eqref{porto20}, i.e. it is bounded from 
$ H^{s} $  to $  H^{s + \rho - (m + m') } $ for {\it any} $ s $,
with operator norm bounded by $ \| U \|_{ s_0}^{p+q} $.
\end{remark}

\begin{proposition}{\bf (Compositions).} \label{composizioniTOTALI}
Let $m,m',m''\in \R$, $N,p_1,p_2,p_{3},\rho\in \N$, $p_1+p_{2}<N$,
$p_2+p_{3}<N$ $\rho\geq0$ and $r>0$.
Let $a\in \Sigma\Gamma^{m}_{p_1}[r,N]$, 
$R\in\Sigma\mathcal{R}^{-\rho}_{p_{2}}[r,N]$ and $M\in \Sigma\mathcal{M}^{m''}_{p_{3}}[r,N]$.
Then

\begin{itemize}
\item[(i)]
$ R(U)\circ \opbw(a(U;x,\x)) $, $  \opbw(a(U;x,\x))\circ R(U;t) $
are   in $\Sigma\mathcal{R}^{-\rho+m}_{p_1+p_{2}}[r,N]$.

\item[(ii)]
$ R(U)\circ M(U) $ and $ M(U;t)\circ R(U) $ 
are smoothing operators in $\Sigma\mathcal{R}^{-\rho+m''}_{p_2+p_{3}}[r,N]$.

\item[(iii)]
If  $R_{2} \in \widetilde{\mathcal{R}}_{p_2}^{-\rho}$  
then $R_{2}(U,M(U)U)$ belongs to $\Sigma\mathcal{R}^{-\rho+m''}_{p_{2}+p_3}[r,N]$.

\item[(iv)] Let $c$ be in $\widetilde{\Gamma}_p^{m}$, $p\in \N$. 
Then 
$$
U \rightarrow c_{M}(U;x,\x):= c(U,\ldots,U,M(U)U;x,\x)
$$
is in $\Sigma\Gamma^{m}_{p+p_3}[r,N]$. If the symbol $c$ is independent of $\x$ (i.e.
$c$ is in $\widetilde{\mathcal{F}}_p$), so is the symbol $c_{M}$
(thus it is a  function in $\Sigma\mathcal{F}_{p+p_3}[r,N]$). 
Moreover if $c$ is a symbol in $\Gamma^{m}_{N}[r]$
then the symbol $c_{M}$
is in $\Gamma^{m}_{N}[r]$.

\item[(v)]
$ \opbw( c(U,\ldots,U,W;x,\x))_{|W=M(U)U}=\opbw(b(U;x,\x))+R(U) $
where  
$$
b(U;x,\x):= c(U,\ldots,U,M(U)U;x,\x)
$$
and $R(U) $ is in $\Sigma\mathcal{R}^{-\rho}_{K,K',p+p_1}[r,N]$.
\end{itemize}
\end{proposition}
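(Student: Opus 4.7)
The plan is to address the five items in order, with items (i) and (ii) serving as the technical backbone that the rest builds on. Throughout, the two guiding principles will be: (a) a $-\rho$-smoothing operator composed with an operator of order $m$ gains $\rho-m$ derivatives, and (b) the frequency-support condition \eqref{omoresti2} together with the admissible cut-offs in Definition \ref{cutoff1} controls all compositions.

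For item (i), I would first handle the homogeneous pieces. Writing $R_{p_2}\in\widetilde{\mathcal{R}}^{-\rho}_{p_2}$ and $a_{p_1}\in\widetilde{\Gamma}^{m}_{p_1}$, the composition $R_{p_2}(\mathcal{U})\circ\opbw(a_{p_1}(\mathcal{V};\cdot))$ is a $(p_1+p_2+1)$-linear map; the frequency-support constraint $\sum\sigma_j n_j=0$ is preserved because the Bony--Weyl quantization preserves it (via the cut-off $\chi_{p_1}$), and the $\max_2/\max^\rho$ bound of \eqref{omoresti1} is preserved with $\rho$ replaced by $\rho-m$ because $\opbw(a_{p_1})$ maps $H^s\to H^{s-m}$ with a tame constant by Proposition \ref{azionepara}. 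The non-homogeneous piece uses instead \eqref{porto20} for $R_N$ and \eqref{stimapar2} for $\opbw(a_N)$: differentiating the composition with Leibniz distributes the at most $k$ $U$-derivatives among the two factors, and the product of the individual bounds yields precisely the right-hand side of \eqref{porto20} with exponent $-\rho+m$. The case $\opbw(a)\circ R$ is analogous. Item (ii) is proved in the same manner, replacing the bound on $\opbw(a)$ by the definition of $\Sigma\mathcal{M}^{m''}_{p_3}[r,N]$.

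For item (iii), the map $R_2\in\widetilde{\mathcal{R}}^{-\rho}_{p_2}$ is $(p_2+1)$-linear. Substituting $M(U)U$ (with $M\in\Sigma\mathcal{M}^{m''}_{p_3}[r,N]$) into one slot yields a multi-linear map in $U$ of total homogeneity $p_2+p_3$; the smoothing estimate \eqref{omoresti1} is conserved provided we pay $m''$ derivatives on the factor $M(U)U$, which is exactly the loss allowed by passing from $\rho$ to $\rho-m''$. Frequency localization is preserved because $M(U)U$ still decomposes spectrally as a sum of $\Pi_n$-pieces that satisfy momentum conservation inherited from $M$.

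Items (iv) and (v) are the crux and the place I expect the main obstacle. For (iv), I would verify the symbol estimates \eqref{pomosimbo1}/\eqref{maremma2} for $c_M(U;x,\xi)=c(U,\ldots,U,M(U)U;x,\xi)$ by differentiating in $U$ with Leibniz and using that $\|M(U)U\|_{H^{s_0+\alpha}}\lesssim \|U\|_{H^{s_0+\alpha+m''}}$; since the variable $d$ in Definition \ref{pomosimb} is fixed large enough, the loss $m''$ is absorbed into the available room $\alpha+d\cdot k\leq s-s_0$. The $\xi$-independent case is automatic. For (v) the subtle point is that $\opbw(c(U,\ldots,U,W;\cdot))\big|_{W=M(U)U}$ is \emph{not} literally $\opbw(c_M)$, because the Bony cut-off $\chi_{p+1}(\vec n,\xi)$ built into the quantization of the multilinear symbol $c$ sees the frequency of $W=M(U)U$ as a separate input, whereas the cut-off for $c_M\in\Sigma\Gamma^m_{p+p_3}[r,N]$ treats those frequencies as part of the $U$-arguments. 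The difference between the two cut-offs is supported in a region where $|\vec n|\gtrsim \langle\xi\rangle$, and a standard integration-by-parts argument (as in the proof of Proposition \ref{teoremadicomposizione}) turns this high-frequency discrepancy into a smoothing remainder in $\Sigma\mathcal{R}^{-\rho}_{p+p_3}[r,N]$ for any prescribed $\rho$; matching the loss with the action estimate of $M$ yields the stated class. This reconciliation between the two cut-off conventions is where I expect the bookkeeping to be heaviest.
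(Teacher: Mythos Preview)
Your proposal is correct and follows essentially the same route as the paper's proof: the paper explicitly carries out only the non-homogeneous case of item (i) via the Leibniz distribution of $d_U^k$ between the two factors and the bounds \eqref{porto20}, \eqref{stimapar2}, then defers items (ii)--(v) to Propositions 3.16--3.18 in \cite{BD}. Your sketch in fact supplies more detail than the paper does, in particular the explicit treatment of the homogeneous pieces in (i) and the cut-off discrepancy mechanism underlying (v), which is exactly the content of the cited references.
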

\begin{proof}

We prove $(i)$ in the case of the operator $H(U):=R(U)\circ\opbw(a(U;x,\xi))$. Let $s_0-m\gg 1$, we have to estimate the quantity $\|d_U^k H(U)(h_1,\ldots , h_k)\|_{s+\rho-dk-m}$. We give the proof in the case that the symbol $a$ is in $\Gamma^m_{p_1}[r,N]$ and $R$ is in $\mathcal{R}^{-\rho}_{p_2}[r,N]$. By using estimate \eqref{porto20} for the operator $R(U)$ we may bound from above, modulo the constant appearing in \eqref{porto20}, the previous quantity by
\begin{align}
&\sum_{k_1+k_2=k}\Big\{ \mathtt{1}_{\{p-k\geq 0\}}\norm{U}_{s_0-m}^{\max{\{0,p-k-1\}}}\norm{U}_{s-m}\norm{\opbw(d_U^{k_2}a(U;x,\xi)[h_1,\ldots,h_{k_2}])V}_{s_0-m}\prod_{j=k_2+1}^k\norm{h_{k_j}}_{s_0-m}\nonumber\\
&+\norm{U}_{s_0-m}^{\max\{0,p-k\}}
\norm{\opbw(d_U^{k_2}a(U;x,\xi)[h_1,\ldots,h_{k_2}])V}_{s_0-m}
\sum_{i=k_2+1}^k\prod_{j=k_2+1,j\neq i}^k\norm{h_j}^{\nu}_{s_0-m}
\norm{h_i}_{s-m}\label{lafaccio}\\
&+\norm{U}^{\max\{0,p-k\}}_{s_0-m}
\norm{\opbw(d_U^{k_2}a(U;x,\xi)[h_1,\ldots,h_{k_2}])V}_{s-m}
\prod_{j=k_2+1}^k\norm{h_j}_{s_0-m}\,,\nonumber
\end{align}
where by $\mathtt{1}_A$ we denoted the characteristic function of the set $A$. We prove the result in for the addendum in  \eqref{lafaccio}, the others may be similarly bounded. We just have to use Theorem \ref{azionepara} obtaining
\begin{equation*}
\begin{aligned}
&\mathtt{1}_{\{p-k\geq 0\}}
\norm{U}_{s_0-m}^{\max\{0,p-k-q\}}\norm{U}_{s-m}
\prod_{j=k_2+1}^k\norm{h_j}_{s_0-m}
\\
&\,\,
\times\norm{U}_{s_0-m}^{\max{\{0,p-k-1\}}}
\norm{V}_{s_0}\prod_{j=1}^{k_2}\norm{h_j}_{s_0-m}\,,
\end{aligned}\end{equation*}
which is the first line of \eqref{porto20} up to renaming $s_0\rightsquigarrow s_0-m$.
See Proposition 3.16,  3.17, 3.18 in \cite{BD}.
The translation invariance properties for the composed operators and symbols 
in items (i)-(v) follow as in the proof of Proposition 
\ref{teoremadicomposizione} in \cite{BFP}.
\end{proof}

\subsection{Real to real and self-adjoint matrices of operators}
We discuss some algebraic properties of matrices of operators.

\noindent
{\bf Real-to-real operators.} 
Given a linear  operator  
$ R(U) [ \cdot ]$ acting on $ \C^2 $ (it may be a 
smoothing operator in  $\Sigma\mathcal{R}^{-\rho}_{1}[r,N]$ or
$\opbw(a(U;x,\x))$ with 
$a\in \Sigma\Gamma^{m}_{1}[r,N]$) 
we associate the linear  operator  defined by the relation 
\begin{equation}\label{opeBarrato}
\ov{R}(U)[V] := \ov{R(U)[\ov{V}]} \, ,   \quad \forall V \in \C^2 \, .
\end{equation}
We say that a matrix of operators acting in $ \C^2 $ is   
\emph{real-to-real}, if it has the form 
\begin{equation}\label{vinello}
R(U) =
\left(\begin{matrix} R_{1}(U) & R_{2}(U) \\
\ov{R_{2}}(U) & \ov{R_{1}}(U)
\end{matrix}
\right) \, .
\end{equation}
Notice that 

\noindent
$ \bullet $
 if  $R(U)$  is a  real-to-real matrix of operators 
then, given $V=\vect{v}{\ov{v}}$, the vector  $Z:=R(U)[V]$ has the form  $Z=\vect{z}{\bar{z}}$, namely the second component is the complex conjugated of the first one.

\noindent
$ \bullet $
If a  matrix of symbols $A(U;x,\x)$, in some class 
$\Sigma{\Gamma}^{m}_{1}[r,N]\otimes\mathcal{M}_2(\C)$,  
has the form 
\begin{equation}\label{prodotto}
A(U;x,\x) =
\left(\begin{matrix} {a}(U;x,\x) & {b}(U;x,\x)\\
{\ov{b(U;x,-\x)}} & {\ov{a(U;x,-\x)}}
\end{matrix}
\right)\,,
\end{equation}
then the matrix of operators $ \opbw(A(U;x,\x))$ is real-to-real. 

%

\begin{defn}{\bf (Classical symbol).}\label{classicSimbo}
A symbol $a$ of order $d$ is called \emph{classical} if $a(x,\xi)\sim \sum a_j(x,\xi)$ and $a_j$ is positive homogeneous with respect to $\xi$ of order $d-j$.
\end{defn}
\noindent
{\bf Self-adjoint para-differential operators.}
We now study self-adjoint matrices para-differential operators. We shall restrict to the case that such matrices are reality preserving, i.e. matrices of the form \eqref{vinello}. 
Consider an operator $\mathfrak{F}$  of the form 
 \begin{equation}\label{barrato4}
\mathfrak{F}:=\left(\begin{matrix} A & B \\ \ov{B} & \ov{A}\end{matrix}\right), 
\end{equation}
for $A,B$ linear operators
and denote by $\mathfrak{F^*}$ its adjoint with respect to the scalar product 
\begin{equation*}
(U,V)_{{\bf{ H}}^0}:=
\int_{\TTT}U\cdot\ov{V}dx\,,\qquad U=\vect{u}{\bar{u}}, V=\vect{v}{\bar{v}}\in H^{s}(\mathbb{T},\mathbb{C}^{2})\,,
\end{equation*}
i.e.
\begin{equation*}
(\mathfrak{F}U,V)_{{\bf{ H}}^0}=(U,\mathfrak{F}^{*}V)_{{\bf{ H}}^0}\,. 
\end{equation*}
One can check that 
\begin{equation*}
\mathfrak{F}^*:=\left(\begin{matrix} A^*(U;t) & \ov{B}^*(U;t) \\ {B}^*(U;t) & \ov{A}^*(U;t)\end{matrix}\right),
\end{equation*}
where $A^*$ and $B^*$ are respectively the adjoints of the operators $A$ and $B$ with respect to
the complex scalar product on $L^{2}(\TTT;\CCC)$ in \eqref{prodottoorologio}.

\begin{definition}{\bf (Self-adjointness).}\label{selfi}
Let $\mathfrak{F}$   be a reality preserving linear operator
of the form \eqref{barrato4}.
We say that  $\mathfrak{F}$ 
is \emph{self-adjoint} if $A,A^*,B,B^* : H^{s}\to H^{s'}$, for some $s,s'\in \RRR$ and
\begin{equation}\label{calu}
A^{*}=A,\;\;
\;\; \ov{B}=B^{*}.
\end{equation}
\end{definition}
We  consider para-differential operators  of the  form:
\begin{equation}\label{prodottoloc}
\begin{aligned}
 \quad \opbw(A(U;x,\xi))&:=\opbw\left(\begin{matrix} {a}(U;x,\x) & {b}(U;x,\x)\\
{\ov{b(U;x,-\x)}} & {\ov{a(U;x,-\x)}}
\end{matrix}
\right) \\
&:=\left(\begin{matrix} \opbw({a}(U;x,\x)) & \opbw({b}(U;x,\x))\\
\opbw({\ov{b(U;x,-\x)}}) & \opbw({\ov{a(U;x,-\x))}}
\end{matrix}
\right)\,,
\end{aligned}
\end{equation}
where $a$ and $b$ are symbols in $\Gamma^{m}_{p}[r]$ and $U$ 
is a function belonging to $B^{K}_{r}(H^{s_0})$ for some $s_0$ large enough. Note that the matrix of operators in \eqref{prodottoloc} has the form \eqref{barrato4}. Moreover it is self-adjoint  if and only if
\begin{equation}\label{quanti801}
a(U;x,\xi)=\ov{a(U;x,\xi)}\,,\quad b(U;x,-\xi)= b(U;x,\xi)\,,
\end{equation}
indeed conditions \eqref{calu} on these operators read
\begin{equation}\label{marieanto}
\begin{aligned}
\left(\opbw(a(U;x,\xi))\right)^{*}&=\opbw\left(\ov{a(U;x,\xi)}\right)\, ,\\
\quad \ov{\opbw(b(U;x,\xi))}&=\opbw\left(\ov{b(U;x,-\xi)}\right)\,.
\end{aligned}
\end{equation}

\subsection{Flows of para-differential operators}\label{FLOWS}

The main result of this section is Theorem \ref{flussononlin}.
In this Theorem we analyze the well-posedness of some 
non linear flows generated by para-differential operators. 
These flows will be used in the next Sections as auxiliary flows in order to 
generate non linear
changes of coordinates.
More precisely we study the system \eqref{flusso} 
in the cases of generators as in \eqref{sim1}, \eqref{sim2}, \eqref{sim3} and \eqref{sim4}.
We shall define a sequence of linear problems 
approximating the non linear one. 
In section \ref{linflowflow}
we study the linear problem associated to problem \eqref{flusso}.
In section \ref{sec:nonlinflows} we prove Theorem \ref{flussononlin}.

\subsubsection{Linear flows}\label{linflowflow}
Let $0<r\ll1$, $u\in B_{r}(H^{s})$  we define  the symbols 
\begin{equation*}\begin{aligned}
&\tilde{A}_1(\tau,u;x,\xi):= A(\tau,u;x)\x,\,\,\, \mbox{with}\,\,\, A\in \mathcal{F}_{1}^{\mathbb{R}}[2r],\\
&A_j(\tau,u;\xi) \in \Sigma\Gamma^{j}_p[r,N], j\geq 0, \,\,\, \mbox{and}  \,\,\, \begin{cases}
 &1\leq j, \,\,\,\mbox{real and independent of}\,\, x\\
 &j<1, \,\,\, A_j-\ov{A_j} \in \Sigma\Gamma^{0}_p[r,N],
 \end{cases}
\end{aligned}
\end{equation*}
and we consider the problem
\begin{equation}\label{pioggia}
\left\{\begin{aligned}
&\pa_{\tau}z=\opbw\big(\ii \mathfrak{A}(\tau,u;x,\xi)\big)[z]\,,\\
&z(0)=u\,,
\end{aligned}\right.
\end{equation}
where $\mathfrak{A}$ equals either $\tilde{A}_1$ or $A_j$.
We have the following.
\begin{lemma}\label{flusso-differenziale}
Let $d$ be the number appearing in the definition of symbols \ref{pomosimb} item (ii).
Consider the problem \eqref{pioggia} with $\mathfrak{A}=A_j$, then for any $0\leq j\leq d$ the following holds true.
For any $s\geq s_0$ there is $r_0, C>0$ such that for $0\leq r\leq r_0$, one has:
\begin{equation}\label{pioggia2}
\begin{aligned}
\|z(\tau)\|_{H^{s}}&\leq \|u\|_{H^{s}}(1+C\|u\|_{H^{s_0}})\qquad \forall \, 0\leq \tau\leq 1\,,\\
\|\pa_{\tau}^{k}z(\tau)\|_{H^{s-jk}}&\leq C\|u\|_{H^{s}}\|u\|_{H^{s_0}}^{k}\,,\quad
1\leq dk\leq s-s_0\,.
\end{aligned}
\end{equation}
Moreover
\begin{equation}\label{pioggia3}
\|(d_{u}z)(u)[h]\|_{H^{s-j}}\leq \|h\|_{H^{s}}(1+C\|u\|_{H^{s}})\,,\qquad 
\forall \, 0\leq \tau\leq 1\,\quad \forall h\in H^{s}\,,
\end{equation}
and, for any $2\leq dk\leq s-s_0$, we have that
\begin{equation}\label{pioggia4}
\|(d_{u}^{k}z)(u)[h_1,\ldots,h_{k}]\|_{H^{s-jk}}\leq 
C 
\|h_1\|_{H^{s}}\cdots 
\|h_{k}\|_{H^{s}}\,,\qquad 
\forall \, 0\leq \tau\leq 1\,\quad \forall h_{i}\in H^{s}\,\;\; i=1,\ldots,k\,.
\end{equation}
for some constant $C=C(s,k)>0$ independent of $\mathfrak{A}$.  The same estimates hold true with $j=1$ in the case that $\mathfrak{A}=\tilde{A}_1$.
\end{lemma}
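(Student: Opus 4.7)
The plan is to establish existence, uniqueness, and the estimates \eqref{pioggia2}--\eqref{pioggia4} for the linear non-autonomous flow \eqref{pioggia} by a classical parabolic regularization argument combined with a quasi--skew-adjointness property of the generator. For $\varepsilon>0$ I would first solve the mollified problem $\partial_\tau z_\varepsilon=\opbw(\ii\mathfrak{A})\Pi_{\leq 1/\varepsilon}z_\varepsilon$, whose right-hand side is bounded on every $H^s$, so that Cauchy--Lipschitz yields $z_\varepsilon\in C^1([0,1];H^s)$. The uniform-in-$\varepsilon$ $H^s$ estimates proved in the next step let me pass to the limit $\varepsilon\to 0$; uniqueness follows by applying the same energy estimate to the difference of two solutions.

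The core of the proof is the first estimate in \eqref{pioggia2}. Setting $v:=\langle D\rangle^s z$ and using \eqref{Sobnorm2} I compute
\begin{equation*}
\frac{d}{d\tau}\|v\|_{L^2}^2 \;=\; 2\Re\big(\langle D\rangle^{s}\opbw(\ii\mathfrak{A})\langle D\rangle^{-s} v,\,v\big)_{L^2}.
\end{equation*}
Proposition \ref{teoremadicomposizione} gives $\langle D\rangle^s\opbw(\ii\mathfrak{A})\langle D\rangle^{-s}=\opbw(\ii\mathfrak{A})+T$, where $T$ has symbol of order $\leq\max(j,1)-2$, hence bounded on $L^2$. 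Now $\opbw(\ii\mathfrak{A})$ is skew-adjoint modulo a bounded remainder: for $\mathfrak{A}=\tilde{A}_1=A(\tau,u;x)\xi$ the Weyl symbol is real, so $\opbw(\ii\tilde{A}_1)$ is exactly skew-adjoint by Remark \ref{aggiungoaggiunto}; for $\mathfrak{A}=A_j$ with $j\geq 1$ independent of $x$, $\opbw(\ii A_j)$ is a skew-adjoint Fourier multiplier commuting with $\langle D\rangle^s$; for $j<1$ the defect $A_j-\overline{A_j}\in\Sigma\Gamma^{0}$ produces, via \eqref{aggiunto}, an $L^2$-bounded antihermitian correction. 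Combining with Remark \ref{smallnessSemi} and Proposition \ref{AzioneParaMet}, the operator norm of the total remainder is $\lesssim \|u\|_{H^{s_0}}$; Gronwall's lemma then yields $\|z(\tau)\|_{H^s}\leq\|u\|_{H^s}(1+C\|u\|_{H^{s_0}})$.

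For the time derivatives, differentiating $\partial_\tau z=\opbw(\ii\mathfrak{A})z$ inductively in $\tau$ shows that $\partial_\tau^k z$ is a finite sum of operators of symbolic order at most $jk$ applied to $z$, each factor coming with an extra $\|u\|_{H^{s_0}}$ by Propositions \ref{azionepara}--\ref{AzioneParaMet} (applied to $A\in\mathcal F_1^{\mathbb{R}}[2r]$ or $A_j\in\Sigma\Gamma^j_p[r,N]$, both vanishing with $u$), which yields the second line of \eqref{pioggia2}. For derivatives in $u$ I differentiate \eqref{pioggia} directly: setting $w_k:=d_u^k z[h_1,\dots,h_k]$, the functions $w_k$ satisfy the inhomogeneous equation
\begin{equation*}
\partial_\tau w_k \;=\; \opbw(\ii\mathfrak{A}(\tau,u))[w_k]+F_k,\qquad w_k(0)=\delta_{k,1}h_1,
\end{equation*}
where $F_k$ is a sum of terms $\opbw\big(\ii\, d_u^\ell\mathfrak{A}[h_{i_1},\dots,h_{i_\ell}]\big)\big[d_u^{k-\ell}z[\text{rest}]\big]$ with $\ell\geq 1$. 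Each such source is estimated in $H^{s-jk}$ via the induction hypothesis on $d_u^{k-\ell}z$ and Propositions \ref{azionepara}--\ref{AzioneParaMet}; Duhamel combined with the $H^{s-jk}$ energy estimate for the homogeneous equation (identical in form to the one above, with $s$ replaced by $s-jk$) then gives \eqref{pioggia3}--\eqref{pioggia4}.

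The main obstacle is the bookkeeping of the commutator $[\langle D\rangle^s,\opbw(\ii\tilde{A}_1)]$: the Poisson bracket $\{\langle\xi\rangle^s,A(\tau,u;x)\xi\}$ has order exactly $s$, not $s-1$, so one has to retain the full Weyl subprincipal expansion from Proposition \ref{teoremadicomposizione} and observe that after division by $\langle\xi\rangle^s$ one is left with a symbol of order $0$ whose $L^\infty$ norm is $\lesssim\|A_x\|_{L^\infty}\lesssim\|u\|_{H^{s_0}}$; this is what ultimately forces the loss $s\mapsto s-jk$ and the power $\|u\|_{H^{s_0}}^k$ in \eqref{pioggia2}--\eqref{pioggia4}, and also why the condition $dk\leq s-s_0$ enters through Remark \ref{omissione} when $F_k$ involves $d_u^\ell \mathfrak{A}$ and the symbol class \eqref{maremma2}.
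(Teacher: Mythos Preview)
Your approach is essentially the same as the paper's: establish the $H^s$ bound for the linear flow via an energy estimate exploiting (near) skew-adjointness of $\opbw(\ii\mathfrak{A})$, then obtain the estimates on $d_u^k z$ by differentiating the equation and applying Duhamel together with the flow bound. The paper simply outsources the first step to Lemma~3.22 of \cite{BD} rather than writing out the commutator argument, and then carries out the Duhamel step for $d_u z$ exactly as you do; your mollification/limit construction is a standard fill-in for the existence part that the paper takes for granted.

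One small slip: your claim that $T=\langle D\rangle^s\opbw(\ii\mathfrak{A})\langle D\rangle^{-s}-\opbw(\ii\mathfrak{A})$ has order $\leq\max(j,1)-2$ is off by one in the transport case---the Poisson bracket contributions give order $0$, not $-1$, as you yourself observe in the final paragraph. This does not affect the argument since order $0$ is still $L^2$-bounded with norm $\lesssim\|A_x\|_{L^\infty}\lesssim\|u\|_{H^{s_0}}$, but you should state the correct order from the outset rather than flag it only as an ``obstacle'' at the end.
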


\begin{proof}
We give  the proof in the case of $\tilde{A}_1$, for $A_j$ with $j>0$ the proof is similar and for $j=0$ it is standard theory of Banach spaces ODEs.
 By reasoning as in Lemma 3.22 in \cite{BD}
 one can prove that the flow $\Phi^{\tau}=\Phi^{\tau}(u)$
 \begin{equation}\label{pioggia2bis}
\pa_{\tau}\Phi^{\tau}(u)=\opbw(\ii A(\tau,u;x)\x)[\Phi^{\tau}(u)]\,,\quad \Phi^{0}(u)={\rm Id} 
 \end{equation}
is well posed on $H^{s}$
and satisfies 
\begin{equation}\label{pioggia2tris}
\|\Phi^{\tau}(u)h\|_{H^{s}}\leq C\|h\|_{H^{s}}(1+\|u\|_{H^{s_0}})\qquad \forall \, 0\leq \tau\leq 1\,.
\end{equation}
 In particular setting $z(\tau)=\Phi^{\tau}(u)u$ the \eqref{pioggia2} holds.

\noindent To prove \eqref{pioggia3} we argue as follows.
By differentiating in $u$ the problem \eqref{pioggia} we have

\begin{equation}\label{pioggia5}
\left\{\begin{aligned}
&\pa_{\tau}(d_{u}z)(u)[h]=\opbw\big(\ii A(\tau,u;x)\x\big)[(d_{u}z)(u)[h]]+
f(\tau;u)\,,\\
&(d_{u}z(0))(u)[h]=h\,,
\end{aligned}\right.
\end{equation}
where 
\begin{equation}\label{termnoto}
f(\tau;u):=\opbw\big(\ii (d_{u}A)(\tau,u;x)[h]\x\big)[z]\,.
\end{equation}
By estimate \eqref{maremma2} and the Lemma of action of para-differential operators on Sobolev spaces
we have
\begin{equation}\label{termnoto2}
\|f(\tau,u)\|_{H^{s-1}}\lesssim_{s} \|h\|_{s_0}\|z\|_{H^{s}}
\stackrel{\eqref{pioggia2}}{\lesssim_{s}}\|u\|_{H^{s}}\|h\|_{H^{s_0}}\,,
\qquad \forall \, 0\leq \tau\leq 1\,.
\end{equation}
By Duhamel formula we have (recall \eqref{pioggia2bis})
\[
(d_{u}z)(u)[h]=\Phi^{\tau}h+\Phi^{\tau}\int_0^{\tau} (\Phi^{\s})^{-1}f(\s;u) d\s
\]
which, together with \eqref{pioggia2tris}, \eqref{termnoto2} , implies \eqref{pioggia3}.
Iterating this reasoning one gets the \eqref{pioggia4}.
\end{proof}

\subsubsection{Non-linear flows}\label{sec:nonlinflows}
Consider the Cauchy problem
\begin{equation}\label{flusso}
\left\{\begin{aligned}
&\pa_{\tau}z=\opbw\big(\ii f(\tau,z;x,\x)\big)[z]\,,\\
&z(0)=u\,,\qquad u\in B_{r}(H^{s}) \,,
\end{aligned}\right.
\end{equation}
for some $r>0$ small and $s>0$ large, 
where $ f $ is a symbol assuming one of the following forms: 
\begin{align}
& f(\tau, u;x,\x) := B(\tau, u;  x) \xi \,,
\quad \, B (\tau,u; x) \in 
\Sigma {\mathcal F}_{1}^\R[r,N] \, , \label{sim1} \\
&  f(\tau, u;x, \x) \in \Sigma\Gamma^{m}_{1}[r,N]\,,\;\;0<m<1\,,
\quad f(\tau, u;x, \x)-\ov{f(\tau, u;x, \x)}\in 
\Sigma\Gamma^{0}_1[r,N]\, , \label{sim2}\\
&  f(\tau, u;x, \x)  \in \Sigma\Gamma^{m}_{1}[r,N] \, \, , 
\quad \quad
\ \qquad m \leq 0 \, , \label{sim3}\\
& f(\tau,u;\xi) \in \Sigma\Gamma^{m}_{1}[r,N]\,, 
\quad m\geq 0, \quad \mbox{$f(\tau,u;x)$ 
real and independent of $x$\,,}\label{sim4}
\end{align}
with $\tau\in[0,1]$. 
We also assume that the symbol $f(\tau,u;x,\x)$ satisfies the estimates 
\eqref{pomosimbo1}-\eqref{maremma2} uniformly in $\tau\in[0,1]$.
The key result of this section is the following.

\begin{theorem}{\bf (The non-linear transport flow).}\label{flussononlin}
For any $s\geq s_0$ there is $r_0, C>0$ such that for $0\leq r\leq r_0$,
the following holds. There exists a unique solutions of \eqref{flusso}
(with generator in \eqref{sim1}, \eqref{sim2}, \eqref{sim3},\eqref{sim4}) 
$z(\tau):=\Phi^{\tau}(u)$, defined for $\tau\in [0,1]$ such that
\begin{equation}\label{spazio1}
z(\tau)\in \bigcap_{k=0}^{K} C^{k}([0,1]; H^{s-jk})\,,\quad 0\leq jK\leq s-s_0\,.
\end{equation}
In particular we have
\begin{align}
\sup_{\tau\in[0,1]}\|z(\tau)\|_{H^{s}}&\leq \|u\|_{H^{s}}(1+C\|u\|_{H^{s_0}})\,,\label{stimaflusso1}\\
\sup_{\tau\in[0,1]}\|\pa_{\tau}^{k}z(\tau)\|_{H^{s-jk}}&\leq C\|u\|_{H^{s}}\|u\|_{H^{s_0}}^{k}\,,\quad
1\leq k\leq s-s_0\,,\label{stimaflusso2}\\
\sup_{\tau\in[0,1]}\|(d_{u}^{k}\Phi^{\tau})(u)[h_1,\ldots,h_{k}]\|_{H^{s-jk}}
&\leq C \|h_1\|_{H^{s}}\cdots 
\|h_{k}\|_{H^{s}}\,,\quad \forall h_{i}\in H^{s}\,\;\; i=1,\ldots,k\,,
\label{stimaflusso3}
\end{align}
with $j=1$ in the case of \eqref{sim1}, $j=m$ for the cases \eqref{sim2}, \eqref{sim3},\eqref{sim4}.
Finally we have that  $\Phi^{\tau}(u)=u+M(\tau;u)[u]$ with $M(\tau;u)\in \Sigma\mathcal{M}_{1}[r,N]$
with estimates uniform in $\tau\in[0,1]$.
\end{theorem}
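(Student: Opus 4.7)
The plan is to solve \eqref{flusso} by a Picard-type iteration built on top of the linear flow theory already established in Lemma \ref{flusso-differenziale}. Define $z^{(0)}(\tau):=u$ and, given $z^{(n)}$, let $z^{(n+1)}$ be the solution of the \emph{linear} Cauchy problem
\begin{equation*}
\partial_\tau z^{(n+1)} = \opbw\big(\ii f(\tau,z^{(n)};x,\xi)\big)[z^{(n+1)}], \qquad z^{(n+1)}(0)=u,
\end{equation*}
whose well-posedness on $H^{s}$ follows from Lemma \ref{flusso-differenziale} applied with $A_j \leadsto f(\tau,z^{(n)};\cdot)$ (in the case \eqref{sim1} one uses $\tilde A_1 \leadsto B(\tau,z^{(n)};x)\xi$, and the symbols in \eqref{sim2}--\eqref{sim4} fit the hypotheses of that lemma). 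The crucial input here is the quasi skew-adjointness of the generator: in case \eqref{sim1} the realness of $B$ gives $\opbw(\ii B\xi)^*+\opbw(\ii B\xi)\in\mathrm{Op}^{BW}(\Gamma^0)$ by Proposition \ref{teoremadicomposizione}, and similarly in cases \eqref{sim2}--\eqref{sim4} the condition $f-\bar f\in \Sigma\Gamma^0_1[r,N]$ (trivially satisfied in \eqref{sim4} and automatic for $m\leq 0$ in \eqref{sim3} since the operator is already of order $\leq 0$) yields an $L^2$-based energy estimate with a bounded $H^{s}$-loss.

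The next step is to close a priori estimates uniformly in $n$. Multiplying the equation by $\langle D\rangle^{2s}z^{(n+1)}$, integrating over $\TTT$ and using Proposition \ref{teoremadicomposizione} to commute $\langle D\rangle^s$ with $\opbw(\ii f)$ (the commutator has order $\max(0,m-1)\leq 0$ in cases \eqref{sim1}--\eqref{sim2} and is bounded in the others), one gets
\begin{equation*}
\tfrac{d}{d\tau}\|z^{(n+1)}(\tau)\|_{H^s}^2 \lesssim_s \|z^{(n)}\|_{H^{s_0}}\|z^{(n+1)}\|_{H^s}^2,
\end{equation*}
which by Gr\"onwall provides the uniform bound \eqref{stimaflusso1} for the iterates, for $r$ small enough. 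Convergence is then obtained at \emph{low} regularity $H^{s_0}$ by the same energy method applied to the difference $w^{(n)}:=z^{(n+1)}-z^{(n)}$, which solves a linear problem with forcing controlled by $\|z^{(n)}-z^{(n-1)}\|_{H^{s_0}}\|u\|_{H^{s_0+j}}$ (exploiting estimate \eqref{maremma2} with $k=1$); this gives a contraction provided $\|u\|_{H^{s_0+j}}$ is small. High regularity is then propagated by the uniform bound already proved, and uniqueness follows from the same difference estimate. Estimates \eqref{stimaflusso2} on time derivatives are obtained by differentiating the equation in $\tau$ and using that $\opbw(\ii f(\tau,z;\cdot))$ is of order $j$, producing a factor $\|z\|_{H^{s-j}}$ each time, together with Proposition \ref{azionepara}.

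The bounds \eqref{stimaflusso3} on the $u$-derivatives of $\Phi^\tau$ follow by differentiating the flow equation $k$ times in $u$: $d_u^k\Phi^\tau(u)[h_1,\ldots,h_k]$ satisfies a linear equation of the same form as \eqref{pioggia5} with a forcing term which, by the Faa di Bruno formula and the symbol estimates \eqref{maremma2}, contains products of $d_u^\ell\Phi^\tau$ for $\ell<k$ paraproduct-ed against $z$; induction on $k$, combined with Duhamel and the linear flow estimate \eqref{pioggia2tris}, yields the result at the expected cost of $j\cdot k$ derivatives. Finally, the structural claim $\Phi^\tau(u)=u+M(\tau;u)u$ with $M(\tau;u)\in\Sigma\mathcal{M}_1[r,N]$ is obtained by setting
\begin{equation*}
M(\tau;u):=\int_0^\tau \opbw\big(\ii f(\sigma,\Phi^\sigma(u);x,\xi)\big)\,\Phi^\sigma(u)\circ(\Phi^\tau(u))^{-1}\,d\sigma
\end{equation*}
(once one verifies invertibility of $\Phi^\tau$ by running the same argument backwards in $\tau$), and checking the membership in $\Sigma\mathcal{M}_1$ by means of the composition rules of Proposition \ref{composizioniTOTALI} together with the multilinear expansion of $f$ at $\Phi^\sigma(u)=u+\cdots$. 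The main obstacle is the self-consistent nature of the energy estimate at top order: one must extract skew-adjointness modulo order $0$ from the quantization rules \emph{before} closing Gr\"onwall, and this is precisely the role played by the structural hypotheses \eqref{sim1}--\eqref{sim4} on $f$.
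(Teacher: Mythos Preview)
Your approach is essentially the same as the paper's: a Picard iteration freezing the symbol at the previous iterate, uniform $H^{s}$ bounds from the linear flow lemma (which encodes the skew-adjointness-modulo-order-$0$ structure of \eqref{sim1}--\eqref{sim4}), a Cauchy estimate in a weaker norm $H^{s'}$ with $s'\leq s-1$, and then passage to the limit; the higher regularity and the $\tau$- and $u$-derivative bounds are recovered exactly as you describe. The paper phrases the limit step via weak-$*$ compactness in $L^\infty([0,1];H^s)$ together with strong convergence in $H^{s'}$, but this is equivalent to your ``propagate high regularity from the uniform bound'' remark.

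One small point: your displayed formula for $M(\tau;u)$ is ill-typed, since $\Phi^\sigma(u)$ and $\Phi^\tau(u)$ are elements of $H^s$, not maps, so the composition $\Phi^\sigma(u)\circ(\Phi^\tau(u))^{-1}$ has no meaning. The correct observation is simply that $\Phi^\tau(u)-u=\int_0^\tau \opbw(\ii f(\sigma,\Phi^\sigma(u)))\Phi^\sigma(u)\,d\sigma$, and the map structure $M(\tau;u)\in\Sigma\mathcal{M}_1[r,N]$ is then read off by Taylor-expanding $\Phi^\sigma(u)=u+\cdots$ inside the integrand and invoking the composition rules; the paper does not spell this out either.
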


The proof of Theorem \ref{flussononlin} relies  on an 
iterative scheme based on the ideas used 
in \cite{FIloc}. 
We give the proof of the result of the case $f(\tau,Z;x,\x)$ in \eqref{sim1}, the others are similar.
Let us introduce the following sequence of linear problems.
Let $u^{(0)}\in H^{s}$ such that $\|u^{(0)}\|_{H^s}\leq r$ for some $r>0$.
For $n=0$ we set
\begin{equation}\label{rondine0}
\mathcal{A}_0:=\left\{
\begin{aligned}
&\pa_{\tau}u_0=0\,,\\
&u_0(0)=u^{(0)}\,.
\end{aligned}\right.
\end{equation}
The solution of this problem  exists and it is unique, defined for any $\tau\in \mathbb{R}$ 
by standard linear theory.
For $n\geq1$, 
 assuming  $u_{n}$ satisfies \eqref{spazio1} 
for some $s_{0},K>0$ and $s\geq s_0$, 
we define 
the  Cauchy problem
\begin{equation}\label{rondinen}
\mathcal{A}_n:=\left\{
\begin{aligned}
&\pa_{\tau}u_n-\opbw(\ii B(\tau,u_{n-1};x)\xi)u_n=0 \,,\\
&u_n(0)=u^{(0)}\,,
\end{aligned}\right.
\end{equation}
where the symbol $B(\tau,z;x)$ is  defined   in  \eqref{sim1}.
One has to show that each problem $\mathcal{A}_{n}$ 
admits a unique solution $U_{n}$ defined for $\tau\in [0,1]$.
We use  Lemma \ref{flusso-differenziale}
 in order to prove the following.

\begin{lemma}\label{esistenzaAN}
If $r$ is sufficiently small, then there exists $s_0>0$  such that for all $s\geq s_0$ 
the following holds. 
There exists a constant 
$\theta$, depending on $r$ and $s$,
such that 
for any $n\geq0$ one has:

\begin{itemize}
\item[${\bf (S1)}_{n}$] for $0\leq m\leq n$ there exists a function $u_{m}$ in
$u_{m}\in \bigcap_{k=0}^{K} C^{k}([0,1]; H^{s-k})$, $0\leq K\leq s-s_0$
such that
\begin{equation}\label{uno}
\begin{aligned}
\|u_m(\tau)\|_{H^{s}}&\leq \|u_0\|_{H^{s}}(1+C\|u_0\|_{H^{s_0}})\qquad \forall \, 0\leq \tau\leq 1\,,\\
\|\pa_{\tau}^{k}u_m(\tau)\|_{H^{s-k}}&\leq C\|u_0\|_{H^{s}}\|u_0\|_{H^{s_0}}^{k}\,,\quad
1\leq dk\leq s-s_0\,,\\
\|(d_{u}^{k}u_m)(u)[h_1,\ldots,h_{k}]\|_{H^{s-k}}&\leq 
C 
\|h_1\|_{H^{s}}\cdots 
\|h_{k}\|_{H^{s}}\,,\qquad 
\forall \, 0\leq \tau\leq 1\,\quad \forall h_{i}\in H^{s}\,\;\; i=1,\ldots,k\,.
\end{aligned}
\end{equation}
for some $C>0$ independent of $m,n$, 
which is the unique solution of the problem $\mathcal{A}_{m}$;

\item[${\bf (S2)}_{n}$] for $0\leq m\leq n$ one has
\begin{equation}\label{due}
\sum_{k=0}^{K}\| \pa_{\tau}(u_{m} -u_{m-1}) \|_{H^{s'-k}}\leq 2^{-m} r,\quad s_0\leq s'\leq s-1\,,
\qquad \forall 0\leq \tau\leq 1\,,
\end{equation}
where $U_{-1}:=0$.
\end{itemize}
\end{lemma}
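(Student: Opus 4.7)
I would proceed by induction on $n$, with the conventions $u_{-1}:=0$ and $s_0$ chosen as required in Lemma~\ref{flusso-differenziale}. The base case $n=0$ is trivial: $u_0\equiv u^{(0)}$ is the unique solution of $\mathcal{A}_0$, $(S1)_0$ follows since all $\tau$-derivatives of $u_0$ vanish and $\|u^{(0)}\|_{H^s}\leq r$, while $(S2)_0$ is vacuous. For the inductive step, assume $(S1)_n$ and $(S2)_n$. The problem $\mathcal{A}_{n+1}$ is a \emph{linear} Cauchy problem with generator $\opbw(\ii B(\tau,u_n(\tau,\cdot);x)\xi)$. Plugging $u_n$, which by $(S1)_n$ lies in $\bigcap_k C^k([0,1];H^{s-k})$, into the symbol $B\in\Sigma\mathcal{F}_1^{\mathbb{R}}[r,N]$ yields a $\tau$-dependent symbol whose Fr\'echet seminorms are controlled, uniformly in $\tau$ and $n$, by $C\|u_n\|_{H^{s_0}}\leq Cr$ thanks to Remark~\ref{smallnessSemi}. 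Applying Lemma~\ref{flusso-differenziale} (case $\mathfrak{A}=\tilde{A}_1$) then yields existence and uniqueness of $u_{n+1}$ in the class~\eqref{spazio1}, whose estimates \eqref{pioggia2}--\eqref{pioggia4} produce $(S1)_{n+1}$ with constants depending only on $s$, provided $r_0$ is small.

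For $(S2)_{n+1}$, set $w_n:=u_{n+1}-u_n$. Subtracting $\mathcal{A}_n$ from $\mathcal{A}_{n+1}$ gives
\begin{equation*}
\partial_\tau w_n=\opbw\big(\ii B(\tau,u_n;x)\xi\big)w_n+\opbw\big(\ii\Delta B_n\,\xi\big)u_n,\qquad w_n(0)=0,
\end{equation*}
with $\Delta B_n:=B(\tau,u_n;x)-B(\tau,u_{n-1};x)$. Taylor-expanding $B$ in its second argument (Remark~\ref{prodottoSimboli}) and using $(S2)_n$ yield $|\Delta B_n|^{\mathcal{F}}_{s',0}\lesssim_{s'}\|u_n-u_{n-1}\|_{H^{s'}}\lesssim 2^{-n}r$, with analogous bounds for $\partial_\tau^k\Delta B_n$ after further chain-rule expansion. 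An $H^{s'-k}$ energy estimate on $w_n$, exploiting that $\opbw(\ii B\xi)$ is skew-adjoint modulo operators of order zero since $B$ is real (cf.~\eqref{aggiunto} and the proof of Lemma~\ref{flusso-differenziale}), combined with Proposition~\ref{AzioneParaMet} applied to the forcing, produces
\begin{equation*}
\|w_n(\tau)\|_{H^{s'-k}}\lesssim_{s'}\int_0^\tau|\Delta B_n|^{\mathcal{F}}_{s'+1}\,\|u_n\|_{H^{s'-k+1}}\,d\sigma\lesssim Cr\cdot 2^{-n}r.
\end{equation*}
Summing over $k=0,\dots,K$ and choosing $r_0$ so that $Cr_0\leq 1/2$ closes $(S2)_{n+1}$.

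\noindent\textbf{Main obstacle.} The delicate point will be preserving the geometric factor $2^{-n}$ uniformly across all $K$ time derivatives in the sum defining $(S2)$: each $\partial_\tau$ applied to the equation for $w_n$ produces a term involving $\partial_\tau B(\tau,u_n;x)$, which after chain-rule expansion contains $\partial_\tau u_n$ and therefore costs one extra space derivative per time derivative. This loss is exactly compatible with the drop $s'\leq s-1$ in the statement, but the bookkeeping through the symbol classes of Section~\ref{paraparapara} must be carried out carefully. The source of the smallness is that $B\in\Sigma\mathcal{F}_1^{\mathbb{R}}[r,N]$ vanishes at first order in $u$, so each $\Delta B_n$ is linearly small in $\|u_n-u_{n-1}\|$, and the extra factor $r$ coming from $\|u_n\|_{H^{s_0}}\leq Cr$ furnishes the contraction constant.
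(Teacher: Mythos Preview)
Your proposal is correct and follows essentially the same approach as the paper: induction on $n$, with $(S1)_{n+1}$ obtained by feeding the inductive bounds on $u_n$ into Lemma~\ref{flusso-differenziale}, and $(S2)_{n+1}$ obtained by writing the equation for the difference $u_{n+1}-u_n$ and controlling the forcing term via the Taylor expansion of $B$ and the inductive smallness $\|u_n-u_{n-1}\|\lesssim 2^{-n}r$. The paper uses the Duhamel formula with the linear flow $\psi_{u_{n-1}}^\tau$ rather than a direct energy estimate, but this is an equivalent formulation of the same contraction argument.
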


\begin{proof}
We argue by induction. 
The $(S1)_0$ and $(S2)_0$ are trivial (see the problem \eqref{rondine0}).
Suppose  that  
$(S1)_{n-1}$,$(S2)_{n-1}$ hold
with a constant $C=C(s)\gg1$.
We show that $(S1)_{n}$,$(S2)_{n}$ hold with the same constant $C$. 
By estimates \eqref{uno}  on $u_{n-1}$ we deduce that $\|u_{n-1}\|_{H^{s}}\leq 2r$
(if $r>0$ is small enough) and 
the symbol $B(\tau,u_{n-1},x)\x$ 
satisfies the hypotheses of Lemma \ref{flusso-differenziale}.
Then the \eqref{uno} on $u_n$ follows by estimates \eqref{pioggia2}-\eqref{pioggia4}.
Let us check $(S2)_n$. 
Setting $v_n=v_n-v_{n-1}$ we have that
\begin{equation}\label{eqdiff}
\left\{\begin{aligned}
&\pa_{\tau}v_n-\opbw(\ii B(\tau,u_{n-1};x)\xi)v_n + f_{n}=0\,,\\
&v_n(0)=0\,,
\end{aligned}\right.
\end{equation}
where 
\begin{equation}\label{eqdiff2}
f_n:=\ii \opbw\Big(B(\tau,u_{n-1};x)\xi-B(\tau,u_{n-2};x)\xi\Big)U_{n-1}\,.
\end{equation}
Notice that
\[
B(\tau,u_{n-1};x)-B(\tau,u_{n-2};x)=(d_{u}B)(\tau,u_{n-1}+\s v_{n-1})[v_{n-1}]
\]
for some $\s\in[0,1]$. Moreover, 
by \eqref{maremma2}, we have
\[
\|(d_{u}B)(\tau,u_{n-1}+\s V_{n-1})[V_{n-1}]\|_{L^{\infty}_x}\lesssim_{s}
C\|v_{n-1}\|_{H^{s_0}}\,.
\]
Therefore, by Proposition \ref{AzioneParaMet},
 we have 
\begin{equation}\label{eqdiff3}
\begin{aligned}
\|f_{n}\|_{H^{s'}}&\leq\|\ii \opbw\Big(B(u_{n-1};x)\xi-B(u_{n-2};x)\xi\Big)U_{n-1}\|_{H^{s'}}
\lesssim_{s}C\|v_{n-1}\|_{H^{s_0}}\|u_{n-1}\|_{H^{s'+1}}\,.
\end{aligned}
\end{equation}
Let 
 $\psi_{u_{n-1}}(\tau)$ be the flow of system \eqref{eqdiff} with $f_n=0$, 
 which is given by Lemma \ref{flusso-differenziale}.
The Duhamel formulation of \eqref{eqdiff}
is
\begin{equation}
v_{n}(\tau)=\psi_{u_{n-1}}(\tau)\int_0^{\tau} 
(\psi_{u_{n-1}}(\sigma))^{-1}f_{n}(\sigma)d\sigma\,.
\end{equation}
Then using  the inductive hypothesis \eqref{uno}, inequality \eqref{pioggia2} 
we get
\begin{equation}
\|v_n\|_{H^{s'}}\leq  C_{s} r \|v_{n-1}\|_{H^{s'}}, \quad \forall \; t\in [0,1]\,,
\end{equation}
where $C_s>0$ is a constant depending $s$.
If  $C_s r\leq1/2$ then we have 
$\|v_n\|_{H^{s'}}\leq 2^{-n}r$ for any  $ t\in [0,1]$ which is the $(S2)_n$.
\end{proof}

\begin{proof}[{\bf Proof of Theorem \ref{flussononlin}}]
By Lemma \ref{esistenzaAN} we know that the sequence $u_n$ defined 
by the problem \eqref{rondinen} 
converges strongly to a function 
$z$ in $C^0([0,1],H^{{s'}})$ for any ${s'}\leq s-1$ and, up to subsequences, 
\begin{equation}\label{debolesol}
\begin{aligned}
&u_{n}(\tau) \rightharpoonup U(\tau), \;\;\; {\rm in } \;\;\; H^s\,,\qquad
\pa_{\tau}u_{n}(\tau)  \rightharpoonup \pa_{\tau}u(\tau) , \;\;\; {\rm in } \;\;\; H^{s-1},
\end{aligned}
\end{equation}
for any $\tau\in [0,1]$, moreover the function $u$ is in 
$L^{\infty}([0,1],H^s)\cap{\rm  Lip}([0,1],H^{s-1})$.
We claim that, $z$ solves the \eqref{flusso}, it belongs to $C^{0}([0,1]; H^{s})\cap
C^{1}([0,1]; H^{s-1})$ and it is unique.
This can be proved by classical arguments, for instance following the proof of Theorem 
$1.1$ in section $6$ in  \cite{FIloc}.
The \eqref{stimaflusso2}, \eqref{stimaflusso3} can be deduced by differentiating the equation 
\eqref{flusso} (or using that $z$  is weak limit of the sequence $u_{n}$
satisfying the estimates \eqref{uno}). The  theorem in the  cases  \eqref{sim2}, \eqref{sim3}, \eqref{sim4} may be proved exactly in the same way modifying Lemma \ref{esistenzaAN} according to Lemma \ref{flusso-differenziale} in the case that $\mathfrak{A}=A_j=f$ in \eqref{sim2}, \eqref{sim3}, \eqref{sim4}, with $j\rightsquigarrow m$.
\end{proof}

\section{Main results and applications to PDEs}\label{regularization}
In this section we state the main results of this paper.
Consider
\begin{equation}\label{ipotesim}
m=\frac{k}{2}\,, \quad k\in \mathbb{N}\,,
\end{equation}
and let 
 $f_{m}\in \Gamma_0^{m}$ 
 be a real valued,  even in $\x\in \mathbb{R}$  \emph{classical} symbol,
 i.e. it admits and expansion in decreasing homogeneous symbols.
Let us define the operator $\Omega$ as 
\begin{equation}\label{omegone}
\Omega e^{\ii jx}=\omega_{j}e^{\ii jx} \,,\quad \omega_{j}:=f_{m}(j)\,,\;\; 
\forall\, j\in\mathbb{Z}\,.
\end{equation}
In other words $\Omega:=\opbw(f_{m}(\x))$.
We consider symbols of the following form
\begin{equation}\label{Forma-di-Ainizio}
\begin{aligned}
&a(U;x,\x)=(1+a_{m}(U;x))f_m(\x)+a_{m'}(U;x,\x)\,,\qquad m>1\,,\;\;\; m'=m-\frac{1}{2}
\;\;{\rm or}\;\; m'=m-1\,,\\
&b(U;x,\x)=b_{m}(U;x)f_m(\x)+b_{m'}(U;x,\x)\,,
\\
&{a}_m\in \Sigma\mathcal{F}_1^{\mathbb{R}}[r,N]\,,\quad 
{b}_m\in \Sigma\mathcal{F}_1[r,N]\,,\quad 
a_{m'}\,,\; b_{m'}\in\Sigma\Gamma^{m'}_{1}[r,N]\,,\\
&a_{m'}(U;x,\x)-\ov{a_{m'}(U;x,\x)}\in \Sigma\Gamma^{0}_1[r,N]\,,
\end{aligned}
\end{equation}
and the system
\begin{equation}\label{Nonlin1inizio}
\left\{
\begin{aligned}
&\dot{U}=X(U):=\ii E\opbw(A(U;x,\x))[U]+R(U)[U]\\
&U(0)=U_0\in H^{s}\times H^{s}
\end{aligned}\right.
\end{equation}
with $R\in \Sigma\mathcal{R}^{-\rho}_{1}[r,N]\otimes\mathcal{M}_2(\mathbb{C})$
and
\begin{equation}\label{operatoreEgor1inizio}
\begin{aligned}
&A(U;x,\x):=\left(\begin{matrix}
a(U;x,\x) & b(U;x,\x) \\
\ov{b(U;x,-\x)}&\ov{a(U;x,-\x)}
\end{matrix}\right)\,.
\end{aligned}
\end{equation}

\subsection{Regularization of para-differential vector fields}
The main result of the paper is  the following.

\begin{theorem}{\bf (Non-linear Egorov).}\label{thm:main}
There exist $s_0>0$, $r_0>0$ such that, 
for $s\geq s_0$, $r\leq r_0$ the following holds true.
There exist an invertible map
\[
\Psi : B_{r}(H^{s}(\mathbb{T};\mathbb{C}^{2}))\cap\mathcal{U}\to
H^{s}(\mathbb{T};\mathbb{C}^{2})\cap \mathcal{U}\,,
\]
such that, setting 
\begin{equation}\label{nuovotutto}
Z:=\Psi(U)
\,,
\qquad
\mathcal{Y}(Z):=d\Psi\big(\Psi^{-1}(Z)\big)\big[ {X}(\Psi^{-1}(Z))\big]\,,
\end{equation}
we have that
\begin{equation}\label{Nonlin1fine}
\left\{
\begin{aligned}
&\dot{Z}=\mathcal{Y}(Z):=\ii E\mathcal{L}(Z)[Z]
+\mathcal{Q}(Z)[Z]\\
&Z(0)=\Psi(U_0)
\end{aligned}\right.
\end{equation}
where $\mathcal{Q}\in\Sigma\mathcal{R}^{-\rho}_1[r,N]
\otimes\mathcal{M}_2(\mathbb{C})$ and (see \eqref{omegone})
\begin{equation}\label{opfinale}
\begin{aligned}
&\mathcal{L}(Z):=
\opbw\left( \begin{matrix} f_{m}(\x)  & 0 \\ 0 & f_{m}(\x)\end{matrix}\right)+
\opbw\big(\mathfrak{M}(Z;\x)\big)\,, \quad
\mathfrak{M}(Z;\x):=\left(
\begin{matrix}
\mathfrak{m}(Z;\x) & 0\\
0 & \ov{\mathfrak{m}(Z;-\x) }
\end{matrix}
\right)\,,\\
&\mathfrak{m}(Z;\x):=\mathfrak{m}_{m}(Z)f_{m}(\x)
+\mathfrak{m}_{m'}(Z;\x)\,,
\qquad
\mathfrak{m}_{m}(Z)\in\Sigma\mathcal{F}^{\mathbb{R}}_1[r,N]\,,m'=m-\frac12
\\&
 \mathfrak{m}_{m'}(Z;\x)\in \Sigma\Gamma^{m'}_1[r,N]\,,
 \qquad
  \mathfrak{m}_{m'}(Z;\x)-\ov{ \mathfrak{m}_{m'}(Z;\x)}\in \Sigma\Gamma^{0}_1[r,N]\,.
\end{aligned}
\end{equation}
Moreover, for any $s\geq s_0$, the maps $\Psi^{\pm1}$ satisfy
\begin{equation}\label{curecure}
\|\Psi^{\pm1}(U)\|_{H^{s}}\leq \|U\|_{H^{s}}(1+C\|U\|_{H^{s_0}})\,,
\end{equation}
for some constant $C>0$ depending  on $s$.
\end{theorem}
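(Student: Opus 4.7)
\medskip
\noindent\textbf{Proof plan.} Following the scheme sketched in Section \ref{intro-idea}, I would construct $\Psi$ as a finite composition of time-one flows $\psi = \psi^{\tau}|_{\tau=1}$ of non-linear para-differential Cauchy problems of the type \eqref{trasformazione-alienata}. For each such generator $G^{\tau}$, chosen in an appropriate symbol class of decreasing order, the well-posedness of the flow on $B_r(H^s)$, together with the estimates \eqref{curecure}, follows directly from Theorem \ref{flussononlin}. The transformed vector field $P^{\tau}(Z)$ then satisfies the non-linear Heisenberg equation \eqref{heisen-alieno}, and I would extract an evolution equation for its principal symbol by expanding the non-linear commutator \eqref{nonlinCommu} with the symbolic calculus of Proposition \ref{teoremadicomposizione} (Poisson-bracket term) and the composition results of Proposition \ref{composizioniTOTALI} (which absorb the differential of the symbol in the symbol class and the smoothing class).

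\smallskip
\noindent\emph{First stage: block-diagonalization.} The matrix $A(U;x,\xi)$ in \eqref{operatoreEgor1inizio} has an off-diagonal symbol $b$ of order $m$ (plus lower-order terms). I would remove it iteratively, starting from the highest order: pick a generator supported on the off-diagonal block whose principal symbol is of order $0$, so that the commutator with the diagonal part $\ii E \, \mathrm{diag}(f_m(\xi), f_m(\xi))$ produces a non-degenerate algebraic obstruction — this obstruction is invertible because $f_m(\xi) + f_m(-\xi) = 2 f_m(\xi)$ is elliptic of positive order $m>1$. One solves a purely algebraic homological equation at each order, proceeding through the scale $m, m-\tfrac12, m-1, \dots$. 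Since each conjugation changes the lower order symbols by elements of the same classes $\Sigma\Gamma$ and the smoothing remainder by elements of $\Sigma\mathcal{R}^{-\rho}$ (with $\rho$ possibly reduced by a fixed amount), after finitely many steps one reaches a system of the form \eqref{Nonlin1inizio} with diagonal $A$, preserving the real-to-real structure.

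\smallskip
\noindent\emph{Second stage: reduction to $x$-independent symbols.} Once $A$ is diagonal, the principal entry is $a(U;x,\xi) = (1 + a_m(U;x)) f_m(\xi) + \ell.o.t.$, and I must remove its $x$-dependence. The crucial step, carried out at the highest order, uses a generator $G^{\tau}(U) = \opbw(\ii \, b(\tau,U;x)\,\xi)U$ with $b \in \Sigma\mathcal{F}_1^{\R}[r,N]$. Expanding the non-linear Heisenberg equation for the principal symbol of $P^{\tau}$, and exploiting that $f_m$ is classical, I obtain the transport-type equation
\begin{equation*}
\partial_{\tau} a^{+}(\tau,Z;x,\xi) = \{b(\tau,Z;x)\,\xi,\, a^{+}(\tau,Z;x,\xi)\} - d_{Z} a^{+}(\tau,Z;x,\xi)\big[\opbw(\ii\, b(\tau,Z;x)\,\xi)[Z]\big],
\end{equation*}
which I solve by the method of characteristics: along the non-linear coupled system \eqref{sistema-alienato} (whose well-posedness is handled in Section \ref{sec:PreEgorov}) the symbol $a^{+}$ is constant, giving the explicit representation \eqref{1-cambio} in terms of the inverse flow $\widetilde{\Upsilon}^{\tau}_b$. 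Finally I need to choose $b$ so that $a^{+}(Z,x,\xi) = \mathfrak{m}_m(Z) f_m(\xi)$ — an implicit non-linear fixed-point equation on the generator $b$, solved in Theorem \ref{constEgo}. The same scheme is then applied order by order, each time with a generator of strictly lower order, until every term on the diagonal is $x$-independent up to the desired smoothing remainder $\mathcal{Q} \in \Sigma\mathcal{R}^{-\rho}_1[r,N]\otimes \mathcal{M}_2(\C)$ of \eqref{Nonlin1fine}, which yields \eqref{opfinale}.

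\smallskip
\noindent\emph{Main obstacle.} The delicate point is the third stage. Unlike the linear Berti--Delort modified-energy scheme, where the characteristic system \eqref{sistema-alienato-lineare} decouples from the dynamics of $u$ (so that one deals with an ordinary Hamiltonian ODE on $T^*\mathbb{T}$), here the third equation of \eqref{sistema-alienato} is a genuine non-linear para-differential evolution on $H^s(\mathbb{T})$ coupled to the characteristic flow on $T^*\mathbb{T}$. Establishing well-posedness of this coupled system, showing that its inverse flow $\widetilde{\Upsilon}^{\tau}_b$ respects the symbol classes of Definition \ref{pomosimb} (this is precisely what motivates the parameter $d$ and the loss $\alpha + d\cdot k \leq s - s_0$ in \eqref{maremma2}), and finally solving the implicit equation for the generator $b$ uniformly on $B_r(H^s)$ — these are the real technical cores of the theorem. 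Controlling the preservation of the real-to-real and class structures at every iteration, and bookkeeping how $\rho$ decreases under each conjugation to ensure that the final remainder still lies in $\Sigma\mathcal{R}^{-\rho}_1[r,N]\otimes\mathcal{M}_2(\C)$ (after possibly enlarging $s_0$ and shrinking $r_0$), round out the argument.
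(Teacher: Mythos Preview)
Your overall architecture matches the paper's exactly: $\Psi$ is obtained by applying, in order, Theorems \ref{conjOrdMaxhighoff} and \ref{conjOrdMaxloweroff} (block-diagonalization at highest, then lower, orders) followed by Theorems \ref{conjOrdMax} and \ref{conjOrdMaxredu} (reduction to constant coefficients at highest, then lower, orders), each realized as the time-one flow of a non-linear para-differential generator whose conjugation action is computed through the Heisenberg equation \eqref{Ego}.

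One point deserves correction: the block-diagonalization step is \emph{not} a ``purely algebraic homological equation''. Because the commutator in \eqref{Ego} is the non-linear one \eqref{nonlinCommu}, the equations for the transformed diagonal and off-diagonal symbols at highest order (the paper's \eqref{sistema01}--\eqref{sistema02}) are coupled $\tau$-evolution equations involving $d_U a^+$ and $d_U b^+$ evaluated along the flow $\Psi_C^\tau$ --- exactly the same phenomenon you correctly flag for the second stage. Solving them, and then determining the generator $C$ so that $b^+(1,Z;x)\equiv 0$, requires an iterative fixed-point argument (Theorem \ref{constEgohighoff}, Sections \ref{esistenzaSOLS}--\ref{choice314}), not just inverting $2f_m(\xi)$. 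The same holds at lower off-diagonal orders (Theorem \ref{constEgoloweroff}). So the non-linear coupling you isolate as the ``main obstacle'' pervades \emph{both} stages, not only the constant-coefficient reduction; it is indeed most delicate in the latter because there the generator has positive order $1$ and the characteristic system \eqref{Ham111prova} genuinely mixes $(x,\xi)$-dynamics with the $H^s$-evolution of $z$, whereas for the diagonalization the generators are of order $\leq 0$ and the flow analysis is lighter.
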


\begin{proof}
We shall apply iteratively Theorems 
 \ref{conjOrdMaxhighoff}, 
\ref{conjOrdMaxloweroff},
\ref{conjOrdMax} and 
\ref{conjOrdMaxredu}.
\end{proof}
Some comments on the theorem above are in order
\begin{itemize}
\item The Theorem above shows that a \emph{system} 
as \eqref{Nonlin1inizio}
can be reduced to  a \emph{diagonal} system with constant coefficients
plus a smoothing remainder. This will be achieved into two steps:
$(i)$ a \emph{block-diagonalization} of the system (which is the content of 
Theorems  \ref{conjOrdMaxhighoff}, 
\ref{conjOrdMaxloweroff}); $(ii)$ a \emph{reduction to constant coefficients}
of the diagonal terms (which is the content of Theorems \ref{conjOrdMax},
\ref{conjOrdMaxredu}). 

\item The parity assumption of the Fourier multiplier  $f_{m}(\x)$ is used only in the
\emph{block-diagonalization} procedure in sections \ref{diago-lineare}.
Therefore, Theorem \ref{thm:main} 
applies also to \emph{scalar} equations of the form
\begin{equation}\label{posacenere}
\dot{u}=\ii \opbw\big(a(u;x,\x)\big)u+Q(u)[u]\,,
\end{equation}
where $a(u;x,\x)$ is a scalar symbol as in \eqref{Forma-di-Ainizio} and
$Q\in \Sigma\mathcal{R}^{-\rho}_1[r,N]$.
In this case, we assume that $f_{m}(-\x)=f_{m}(\x)$, i.e.
it is \emph{odd} in $\x\in \mathbb{R}$,
then we have the following result which is a consequence of Theorem \ref{thm:main}.
\end{itemize}
\begin{corollary}\label{thm:mainscalar}
There exist $s_0>0$, $r_0>0$ such that, 
for $s\geq s_0$, $r\leq r_0$ the following holds true.
There exist an invertible map $\Psi: B_{r}(H^{s}(\mathbb{T};\mathbb{R}))\to
B_{r}(H^{s}(\mathbb{T};\mathbb{R}))$
satisfying estimates like \eqref{curecure} and
\begin{equation}\label{posacenere2}
\dot{z}=\ii \opbw\big(f_{m}(\x)+\mathfrak{m}(z;\x)\big)z+\widetilde{Q}(z)[z]\,,
\end{equation}
where $z=\Psi(u)$,
 $\mathfrak{m}(z;\x)$ is  as in \eqref{opfinale} and 
 $\widetilde{Q}\in \Sigma\mathcal{R}^{-\rho}_1[r,N]$.
\end{corollary}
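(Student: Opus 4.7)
The plan is to embed the scalar equation \eqref{posacenere} into a $2\times 2$ real-to-real system of the form \eqref{Nonlin1inizio}, apply Theorem \ref{thm:main}, and descend back to the scalar real setting. Setting $U=\vect{u}{\bar u}$ and conjugating \eqref{posacenere}, the equation satisfied by $U$ reads
\begin{equation*}
\dot U \,=\, \ii E\,\opbw(A(U;x,\xi))\,U + R(U)[U],
\end{equation*}
with the already-diagonal matrix $A=\sm{a(u;x,\xi)}{0}{0}{\overline{a(u;x,-\xi)}}$ (i.e.\ $b\equiv 0$) and $R(U)=\sm{Q(u)}{0}{0}{\bar Q(u)}$ belonging to $\Sigma\mathcal{R}^{-\rho}_1[r,N]\otimes\mathcal{M}_2(\C)$. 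The scalar hypothesis \eqref{Forma-di-Ainizio} on $a$ is preserved by this embedding, and since $A$ is already block-diagonal, the block-diagonalization part of Theorem \ref{thm:main} (the only part that uses the parity assumption on $f_m$) plays no role in what follows.

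Theorem \ref{thm:main} then provides a bounded invertible map $\Psi$ and a system of the form \eqref{Nonlin1fine}, \eqref{opfinale} for $Z=\Psi(U)$. Because every generator used in Section \ref{secconjconj} is chosen so as to preserve the real-to-real structure \eqref{vinello}, the map $\Psi$ sends $\{(u,\bar u)\}$ into itself. Writing $Z=\vect{z}{\bar z}$, the $(1,1)$-component of the new system yields
\begin{equation*}
\dot z \,=\, \ii\,\opbw\bigl(f_m(\xi)+\mathfrak{m}(z;\xi)\bigr)\,z + \widetilde Q(z)\,z,
\end{equation*}
where $\widetilde Q$ is the $(1,1)$-entry of $\mathcal{Q}$, lying in $\Sigma\mathcal{R}^{-\rho}_1[r,N]$; the estimate \eqref{curecure} descends directly to the scalar $\Psi$.

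The remaining, and most delicate, point is to show that $\Psi$ restricts to a map between real Sobolev spaces. The key algebraic observation is that \eqref{posacenere} being posed on $H^s(\T;\R)$, together with the odd parity of $f_m$, forces the scalar symbol $a$ to satisfy the involution $a(u;x,\xi) = -\overline{a(u;x,-\xi)}$, which is precisely the condition for $\ii\opbw(a)$ to preserve real-valued functions. I would then propagate this extra involution through each conjugation performed in Section \ref{secconjconj}: with $f_m(-\xi)=-f_m(\xi)$ in hand, every generator used there is symbol-wise compatible with the involution, so the final reduced symbol satisfies $\mathfrak{m}(z;\xi) = -\overline{\mathfrak{m}(z;-\xi)}$ and $\widetilde Q$ preserves real-valuedness. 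This yields the desired restriction $\Psi : B_r(H^s(\T;\R)) \to B_r(H^s(\T;\R))$ and the equation \eqref{posacenere2}. The main obstacle is precisely this bookkeeping: checking step by step that the stronger scalar-real involution (not merely the real-to-real matrix structure \eqref{vinello}) is preserved by every conjugation, the odd parity of $f_m$ being used at each stage that modifies the principal symbol, in contrast to the even-parity case that drives the block-diagonalization for systems.
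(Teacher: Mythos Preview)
Your proposal is correct and is precisely the argument the paper has in mind: the paper gives no explicit proof of this corollary, stating only that it ``is a consequence of Theorem \ref{thm:main}'' after remarking that the parity assumption on $f_m$ is used only in the block-diagonalization step (which is vacuous for the already-diagonal scalar system). Your embedding into the $2\times 2$ system with $b\equiv 0$, application of Theorems \ref{conjOrdMax} and \ref{conjOrdMaxredu} alone, and extraction of the $(1,1)$-component is exactly the intended route.

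Your final paragraph in fact goes beyond what the paper writes: the preservation of scalar real-valuedness (i.e.\ that $\Psi$ maps $H^s(\mathbb{T};\mathbb{R})$ into itself, not merely that the matrix map is real-to-real in the sense of \eqref{vinello}) is indeed the only nontrivial point, and you have correctly isolated the mechanism. The involution $a(u;x,\xi)=-\overline{a(u;x,-\xi)}$ propagates through the highest-order generator $b(\tau,U;x)\xi$ trivially (real $b$, odd in $\xi$), and through the lower-order generators via formula \eqref{def:Cnlow}: the oddness of $f_m$ makes $\partial_\xi f_m$ even, so the denominator is invariant under $\xi\mapsto -\xi$, while the numerator inherits the involution from $a$. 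This is exactly the ``bookkeeping'' you flag, and it goes through.
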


In the case that the vector field $X(U)$ in \eqref{Nonlin1inizio}
has an Hamiltonian structure (see \eqref{X_H})
we also have a version of Theorem \ref{thm:main}
which preserves the symplectic structure of the vector field.

\begin{theorem}{\bf (Symplectic structure).}\label{thm:main2}
Assume that the vector field $X(U)$ in \eqref{Nonlin1inizio}
is \emph{Hamiltonian},
i.e.
\[
X(U):=X_{H}(U):=\ii J\nabla H(U)\,,
\]
for some Hamiltonian $H(U) : 
B_R(H^{s}(\mathbb{T};\mathbb{C}))\to \mathbb{R}$.
Then the result of Theorem \ref{thm:main}
holds with  a  symplectic map $\Psi$
and the vector field $\mathcal{Y}$ in \eqref{nuovotutto}
is Hamiltonian with respect to the symplectic form $\lambda$ in
\eqref{symform}.
Moreover the operator $\mathcal{L}(Z)$ in \eqref{opfinale}
is self-adjoint.
\end{theorem}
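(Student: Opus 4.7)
The strategy is to revisit the construction of $\Psi$ in Theorem \ref{thm:main} step by step and, under the Hamiltonian assumption, to replace each elementary para-differential flow entering the composition by a \emph{symplectic} one, at the cost of an additional smoothing remainder. Since composition of symplectomorphisms is symplectic and symplectomorphisms push forward Hamiltonian vector fields to Hamiltonian vector fields (with new Hamiltonian $H\circ\Psi^{-1}$ relative to the form $\lambda$ in \eqref{symform}), this will simultaneously yield the symplecticity of $\Psi$ and the Hamiltonian character of the transformed field $\mathcal{Y}$ in \eqref{nuovotutto}.

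Each flow $\psi^\tau$ introduced via \eqref{trasformazione-alienata} in Section \ref{secconjconj} is generated by a non-linear vector field of the form $G^\tau(U)=\opbw(\ii M^\tau(U;x,\xi))U$ for an explicit $2\times 2$ matrix symbol $M^\tau$. First, I would check that along the whole chain of transformations in Sections \ref{diago-lineare}--\ref{super-costanti} the matrix $M^\tau$ can be chosen real-to-real in the sense \eqref{prodotto} and self-adjoint in the sense \eqref{quanti801}. For the block-diagonalization steps of Section \ref{diago-lineare} this is inherited from the Hamiltonian structure of $X_H$, which already forces the matrix $A(U;x,\xi)$ in \eqref{operatoreEgor1inizio} to satisfy \eqref{quanti801}; for the constant-coefficient reductions of Section \ref{super-costanti} the generators rely on real scalar functions such as $B(\tau,u;x)$ in \eqref{sistema-alienato}, whose Bony--Weyl quantization is automatically self-adjoint by Remark \ref{aggiungoaggiunto}.

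Once $M^\tau$ is self-adjoint, I would modify $G^\tau$ into the genuine Hamiltonian vector field $X_{F^\tau}$ associated with a Hamiltonian $F^\tau$ whose quadratic-in-$U$ part is $\tfrac{1}{2}(\opbw(M^\tau(U;x,\xi))U,U)_{L^2}$. By Proposition \ref{teoremadicomposizione} and Proposition \ref{composizioniTOTALI}, the discrepancy $X_{F^\tau}(U)-G^\tau(U)$ only involves terms in which the $U$-derivative falls on the symbol $M^\tau$, so that it defines a vector field $R^\tau(U)U$ with $R^\tau\in\Sigma\mathcal{R}^{-\rho'}_1[r,N]\otimes\mathcal{M}_2(\mathbb{C})$ for some $\rho'>0$ possibly smaller than the original $\rho$. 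Replacing $G^\tau$ by $X_{F^\tau}$ keeps the generator inside the class covered by Theorem \ref{flussononlin}, preserves the bounds \eqref{curecure}, and only modifies the final smoothing remainder $\mathcal{Q}$ in \eqref{Nonlin1fine}. Iterating over all the steps of Section \ref{secconjconj} then produces a symplectic $\Psi$.

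The main technical obstacle I foresee lies precisely in this last step: one has to verify systematically, for every generator appearing in the diagonalization and reduction procedures, that the correction $X_{F^\tau}-G^\tau$ is genuinely smoothing, carefully controlling how the $U$-differentials of $M^\tau$ interact with the symbolic calculus and ensuring that the resulting smoothing order is still sufficient for $\mathcal{Q}$ in \eqref{Nonlin1fine} to lie in $\Sigma\mathcal{R}^{-\rho}_1[r,N]\otimes\mathcal{M}_2(\mathbb{C})$. The self-adjointness of $\mathcal{L}(Z)$ in \eqref{opfinale} will then follow \emph{a posteriori} from the Hamiltonian nature of $\mathcal{Y}$: since $\mathcal{Q}(Z)$ is smoothing and the symbol $\mathfrak{M}(Z;\xi)$ in \eqref{opfinale} is diagonal, writing $\ii E\mathcal{L}(Z)Z$ as $\ii J\nabla$ of a quadratic-in-$Z$ Hamiltonian with $Z$-dependent coefficients forces $\mathfrak{m}_m(Z)$ to be real and $\mathfrak{m}_{m'}(Z;\xi)-\overline{\mathfrak{m}_{m'}(Z;\xi)}$ to be of lower order, which is exactly the self-adjointness condition \eqref{calu} for $\mathcal{L}(Z)$ via Remark \ref{aggiungoaggiunto}.
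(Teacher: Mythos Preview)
Your overall strategy---replacing each para-differential flow by the time-one map of a genuinely Hamiltonian flow whose generator differs from the original one only by a smoothing term---is exactly the paper's approach. There is, however, a real gap in how you propose to produce that smoothing correction. You take as Hamiltonian $F^\tau(U)=\tfrac{1}{2}\big(\opbw(M^\tau(U;x,\xi))U,U\big)_{L^2}$ and claim, via Propositions~\ref{teoremadicomposizione} and~\ref{composizioniTOTALI}, that $X_{F^\tau}-G^\tau$ is smoothing. Those propositions concern symbolic composition, not the adjoint structure you need here: the extra term in $\nabla F^\tau$ is $F^*[\bar U]$ with $F[h]=\opbw\big((d_U M^\tau)(U)[h]\big)U$, and for the \emph{non-homogeneous} part of $M^\tau$ there is no frequency constraint forcing the output of $F^*$ to be low-frequency relative to its input. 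The paper resolves this by inserting an inner cut-off $S_\xi$ (see~\eqref{innercutoff}) into the Hamiltonian, taking instead $\mathcal{G}(U)=\tfrac{1}{2}\int\opbw\big(M^\tau(S_\xi U;x,\xi)\big)U\cdot\bar U\,dx$ as in~\eqref{geneHamiltonian}; Proposition~\ref{stimedifferent3}, built on Lemmata~\ref{stimedifferent}--\ref{stimedifferent2}, then shows $X_{\mathcal{G}}(U)=\ii E\opbw(M^\tau(U;x,\xi))U+\mathcal R(U)U$ with $\mathcal R$ genuinely in $\Sigma\mathcal R^{-\rho}_1[r,N]\otimes\mathcal M_2(\mathbb C)$. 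Without this device your ``main technical obstacle'' is not merely a verification but an actual failure.

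For the self-adjointness of $\mathcal L(Z)$ the paper does not argue \emph{a posteriori} from the Hamiltonian nature of $\mathcal Y$. Your proposed deduction would require knowing that the decomposition ``diagonal constant-coefficient para-differential part plus smoothing remainder'' is unique enough to read off reality of $\mathfrak m(Z;\xi)$ from the Hamiltonian structure of the full field, which is not established anywhere. Instead the paper re-enters the proofs of Theorems~\ref{conjOrdMaxhighoff}, \ref{conjOrdMaxloweroff}, \ref{conjOrdMax}, \ref{conjOrdMaxredu} and checks directly, from the explicit equations for the new symbols (e.g.~\eqref{ordMaxredu2highoff1}--\eqref{ordMaxredu2highoff2}, \eqref{ordMaxredu2lowofftris}--\eqref{ordMaxredu2lowoff}), that if the input matrix $A(U;x,\xi)$ satisfies the self-adjointness conditions~\eqref{quanti801} then so does each successive $A^+$; this is the route you should follow.
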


\subsection{Poincar\'e-Birkhoff normal forms}
In this section we state an abstract Birkhoff normal form result
for vector a field $\mathcal{Y}(Z)$ 
as in \eqref{Nonlin1fine}
given by Theorem \ref{thm:main} assuming that the starting vector field $X$ is Hamiltonian.
Recalling \eqref{omegone}, 
thanks to Theorems \ref{thm:main} and \ref{thm:main2}
the system \eqref{Nonlin1fine} is rewritten as
\begin{equation}\label{NonlinBNF1}
\dot{Z}=\mathcal{Y}(Z)=\ii E\Omega Z+\ii E\opbw\big(\mathfrak{M}(Z;\x)\big)Z+\mathcal{Q}(Z)[Z]\,,
\end{equation}
where 
$\mathfrak{M}(Z;\x)\in \Sigma\Gamma^{m}_{1}[r,N]\otimes\mathcal{M}_2(\mathbb{C})$
is given in \eqref{opfinale},
$\mathcal{Q}$ is in $\Sigma\mathcal{R}^{-\rho}_1[r,N]
\otimes\mathcal{M}_2(\mathbb{C})$ and $\mathcal{Y}(Z)$ in 
\eqref{NonlinBNF1} is an Hamiltonian vector field.
Furthermore  the matrix $\mathfrak{M}(Z;\x)$ is self-adjoint, i.e.
it satisfies \eqref{quanti801}.
Recalling the Definitions \ref{pomosimb}, \ref{omosmoothing} and the remarks
under the Definition \ref{smoothoperatormaps} we have that
\[
\mathcal{M}(Z):=\ii E\opbw\big(\mathfrak{M}(Z;\x)\big)+\mathcal{Q}(Z)
\in \Sigma\mathcal{M}_{1}[r,N]\otimes\mathcal{M}_2(\mathbb{C})\,,
\]
hence 
$
\dot{Z}=\ii E\Omega Z+\mathcal{M}(Z)[Z]\,.
$
We also assume that the frequencies \eqref{omegone}
are such that
\begin{equation}\label{omegone0}
\omega_0\neq0\,.
\end{equation}
In order to state the main result of the section we need some further definitions.
\begin{definition}{\bf (Non-resonance conditions).}\label{nonresOmegaCOND}
We say that the linear frequencies 
$\omega_{j}$ in \eqref{omegone} are \emph{not resonant}, 
at order $N\geq 1$, if the following holds.
There are $N_0>0$ and $c>0$ such that, for any $1\leq p\leq N$, one has
\begin{equation}\label{nonresOMEGA}
|\s_1 \omega_{j_1}+\ldots+\s_p \omega_{j_p}|\geq  c
\max\{\langle j_1\rangle,\ldots, \langle j_p\rangle\}^{-N_0}\,, \quad\forall \, \s_i=\pm\,,\; 
j_i\in \mathbb{Z}\,,\; i=1,\ldots,p\,,
\end{equation}
unless $p$ is even and, up to permutations, one has
\begin{equation}\label{nonresOMEGA2}
\s_i=\s_{\frac{p}{2}+i}\,,\qquad |j_i|=|j_{\frac{p}{2}+i}|\,,\quad i=1,\ldots,p\,.
\end{equation}
Let $\vec{\s}:=(\s_1,\ldots,\s_{p})\in \{\pm\}^{p}$, $\vec{j}=(j_1,\ldots,j_{p})\in \mathbb{Z}^{p}$,
we define the resonant set $\mathcal{S}_{p}$ as
\begin{equation}\label{nonresOMEGA3}
\mathcal{S}_{p}:=\big\{(\vec{\s}, \vec{j})\in \{\pm\}^{p}\times\mathbb{Z}^{d}\; : \; \eqref{nonresOMEGA2}\; {\rm holds} \big\}
\end{equation}
for $p$ even and $\mathcal{S}_{p}=\emptyset$ for $p$ odd. 
\end{definition}

The aim of this section is to conjugate, if $\Omega$ is non-resonant, 
the system in \eqref{NonlinBNF1}
to another para-differential system of the same form
whose symbols and smoothing remainders are 
\emph{resonant}, up to terms of  degree of homogeneity  $N$,
according to the following definition.

\begin{definition}\label{def:resonant}
Let $a_{p}\in \widetilde{\Gamma}_{p}^{m}$ independent of $x\in \mathbb{T}$, 
$R_{p}\in \widetilde{\mathcal{R}}^{-\rho}_{p}\otimes\mathcal{M}_{2}(\mathbb{C})$ 
and recall the homogeneity expansions in Remarks
\ref{espansioneSSS}, 
\ref{espansioneRRR}.

\noindent
$(i)$ Given the symbol
 $a_{p}$ (recall the expansion \eqref{espandoFousimbo})
 we define the symbol $\bral a_{p}\brar$ as
\begin{equation}\label{espandoFousimbo100}
\bral a_{p}\brar(U;\x)=
\sum_{\substack{\sum_{i=1}^{p}\s_i j_i=0\\
(\vec{\s},\vec{j})\in \mathcal{S}_{p}}} 
(a_{p})_{j_1,\ldots,j_p}^{\s_1\cdots\s_{p}}(\x)u_{j_1}^{\s_1}
\ldots u_{j_p}^{\s_p}\,.
\end{equation}
We say that $a_p$ is \emph{resonant} if $a_p\equiv \bral a_{p}\brar$.
Let $a\in \Sigma\Gamma^{m}_{1}[r,N]$ (independent of $x$) of the form
\[
a(U;\x)=\sum_{p=1}^{N-1}a_p(U;\x)
+a_{N}(U;\x),\quad a_{p}\in \widetilde{\Gamma}^{m}_{p}, \;\; a_{N}\in 
\Sigma\Gamma^{m}_{N}[r],
\]
we define the symbol $\bral{a}\brar(U;\x)$ as
\[
\bral{a}\brar(U;\x):=\sum_{p=1}^{N-1}\bral{a_p}\brar(U;\x)+a_{N}(U;\x)\,,
\]
where $\bral a_{p}\brar(U;\x)$ is in 
\eqref{espandoFousimbo100}.
 For a diagonal matrix of symbols  
$A\in\Sigma{\Gamma}_{1}^{m}[r,N]\otimes\MM_{2}(\CCC)$ 
of the form
\begin{equation*}
A(U;\x)=\left(
\begin{matrix}
a(U;\x) & 0\vspace{0.2em}\\
0 & \ov{a(U;-\x)}
\end{matrix}
\right), 
\end{equation*}
we define
\begin{equation}\label{matsim}
\bral{A}\brar(U;\x):=\left(
\begin{matrix}
\bral{a}\brar(U;\x) & 0\\ 0 & \bral{ \,\ov{a} \,}\brar(U;-\x)
\end{matrix}
\right).
\end{equation}

\noindent
$(ii)$ Given an operator $R_{p}$ (recall the expansion 
 \eqref{smooth-terms2}, \eqref{R2epep'}, \eqref{BNF5})
we define the operator $\bral R_{p}\brar$
as the operator with the form  \eqref{smooth-terms2}, \eqref{R2epep'},
with coefficients 
 \begin{align} \label{BNF5100}
 (\bral\mathtt{R}_{p}\brar(U))_{\s,j}^{\s',k} :=\frac{1}{(2\pi)^{p}}
 \sum_{
 \substack{
 \sum_{i=1}^{p}\s_i j_i=\s j-\s'k \\
 (\vec{\mu},\vec{J})\in \mathcal{S}_{p+2}
 }
 }
 \big( (\mathtt{r}_{p})_{j_1,\ldots,j_p}^{\s_1\cdots \s_{p}}\big)_{\s,j}^{\s',k}
  u_{j_1}^{\s_1}\ldots u_{j_p}^{\s_{p}}  \, ,   \quad  
  j,k\in \Z\setminus\{0\}   \, , 
 \end{align}
 where
 \[
 \vec{\mu}:=(\vec{\s},\s,\s')=(\s_1,\ldots,\s_{p},\s,\s')\,,\quad \vec{J}:=(\vec{j},j,k)=
 (j_1,\ldots,j_{p},j,k)\,.
 \]
 We say that $R_{p}$ is \emph{resonant} if $R_{p}\equiv\bral R_{p}\brar$.
 Let $R\in \Sigma\mathcal{R}^{-\rho}_{1}[r,N]
 \otimes\mathcal{M}_{2}(\mathbb{C})$ 
of the form
\[
R(U)=\sum_{p=1}^{N-1}R_p(U)
+R_{N}(U),\quad R_{p}\in \widetilde{\mathcal{R}}^{-\rho}_{p}, \;\; 
R_{N}\in 
\Sigma\mathcal{R}^{-\rho}_{N}[r],
\]
we define the operator $\bral{R}\brar(U)$ as
\[
\bral{R}\brar(U):=\sum_{p=1}^{N-1}\bral{R_p}\brar(U)+R_{N}(U)\,,
\]
where $\bral R_{p}\brar(U)$ are matrices of operators
with entries given by the r.h.s. of \eqref{BNF5100}.
 \end{definition}
\begin{remark}\label{montefuffa}
Consider a multilinear and constant coefficients in $x$ symbol $a_p$ in $\widetilde{\Gamma}^m_p$, 
and consider the case of an Hamiltonian vector-field  of the form 
\begin{equation}\label{hakkinen1}
\ii E [\opbw(\bral A_p\brar(U,\ldots,U;\xi))U+R(U,\ldots,U)U]\end{equation}
for a smoothing reminder $R$ in $\widetilde{R}^{-\rho}_p$, where the matrix 
$\bral A_p\brar(U,\ldots,U;\xi)$ is defined as in \eqref{matsim}. 
Then its Hamiltonian function has the form
\begin{equation}\label{hakkinen}
\int_{\TTT}\opbw(\bral{A_p(U,\ldots,U;\xi)}\brar)U\cdot\bar{U}dx
+\int_{\TTT}M(U)U\cdot\bar{U}dx,
\end{equation}
where $M(U)=M_p(U,\ldots,U)$ for a multilinear map $M_p$ in 
$\widetilde{\mathcal{M}}_p\otimes\mathcal{M}_2(\CCC)$.
We know by Proposition \ref{stimedifferent3} that there exists 
$\widetilde{R}$ smoothing remainder in 
$\widetilde{\mathcal{R}}_p$ such that the Hamiltonian vector field of 
\eqref{hakkinen} equals  (recall \eqref{X_H})
\[
\ii E\left(\opbw(\bral A_p\brar(U,\ldots,U))U
+\widetilde{R}(U,\ldots,U)U\right)
+\ii J\nabla_{\bar{U}}\left(\int_{\TTT}M(U)U\cdot\bar{U}dx\right)\,.
\]
By \eqref{hakkinen1} we must have 
$R=\ii E\widetilde{R}(U,\ldots,U)U
+\ii J\nabla_{\bar{U}}(\int_{\TTT}M(U)U\cdot\bar{U}dx)$. 
Since we are considering a matrix whose entries are resonant 
symbols (they are $\bral a_p\brar$, see Definition \ref{def:resonant}), 
we have $\widetilde{R}=\bral\widetilde{R}\brar$ 
and therefore $R^{\perp}:=R-\bral R\brar=\bral\ii J\nabla_{\bar{U}}(\int_{\TTT}M(U)U\cdot\bar{U}dx)\brar$ is an Hamiltonian vector field.
\end{remark}

More precisely we prove the following result.
\begin{theorem}{\bf (Poincar\'e-Birkhoff normal form).}\label{thm:mainBNF}
Assume that $\omega_{j}$ in \eqref{omegone} are 
non-resonant at order $N$ according to Definition \ref{nonresOmegaCOND}
and that \eqref{omegone0} holds.
There exist $s_0>0$, $r_0>0$ (possibly larger and smaller resp. with respect to  the ones in 
Theorem 
\ref{thm:main}) such that, 
for $s\geq s_0$, $r\leq r_0$ the following holds true.
There exist an invertible and symplectic map
\[
\mathfrak{B} : B_{r}(H^{s}(\mathbb{T};\mathbb{C}^{2}))\cap\mathcal{U}\to
H^{s}(\mathbb{T};\mathbb{C}^{2})\cap \mathcal{U}\,,
\]
such that, setting  (recall \eqref{nuovotutto})
\begin{equation*}
W:=\mathfrak{B}(Z)
\,,
\qquad
\mathcal{Y}_N(Z):=d\mathfrak{B}\big(\mathfrak{B}^{-1}(Z)\big)
\big[ \mathcal{Y}(\mathfrak{B}^{-1}(Z))\big]\,,
\end{equation*}
we have that
\begin{equation}\label{YYYN}
\left\{
\begin{aligned}
&\dot{W}=\mathcal{Y}_N(W):=\ii E \Omega W+
\ii E\opbw\big(\bral \mathfrak{M}^{(N)}\brar(W;\x)\big)[W]
+\bral\mathcal{Q}_N\brar(W)[W]\\
&W(0)=\mathfrak{B}(Z_0)
\end{aligned}\right.
\end{equation}
where $\mathcal{Q}_N\in\Sigma\mathcal{R}^{-\rho}_2[r,N]
\otimes\mathcal{M}_2(\mathbb{C})$,
$\mathfrak{M}^{(N)}\in 
\Sigma\Gamma^{m}_2[r,N]\otimes\mathcal{M}_2(\mathbb{C})$
is independent of $x\in \mathbb{T}$ and it has the form
\begin{equation}\label{sarosaro}
\begin{aligned}
&\mathfrak{M}^{(N)}(Z;\x):=\left(
\begin{matrix}
\mathfrak{m}^{(N)}(W;\x) & 0\\
0 & {\mathfrak{m}^{(N)}(W;-\x) }
\end{matrix}
\right)\,,\\
&\mathfrak{m}^{(N)}(W;\x):=\mathfrak{m}^{(1)}_{m}(W)f_{m}(\x)
+\mathfrak{m}^{(N)}_{m'}(W;\x)\,,
\qquad
\mathfrak{m}^{(1)}_{m}(W)\in\Sigma\mathcal{F}^{\mathbb{R}}_1[r,N]\,,
\\&
 \mathfrak{m}^{(N)}_{m'}(W;\x)\in \Sigma\Gamma^{m'}_1[r,N]\,,
\end{aligned}
\end{equation}
with $\mathfrak{m}^{(N)}(W;\x)$ real valued. Moreover the vector field 
$\mathcal{Y}_{N}$ is Hamiltonian.
Finally, for any $s\geq s_0$, the maps $\mathfrak{B}^{\pm1}$ satisfy
\begin{equation}\label{stimamappaBNF}
\|\mathfrak{B}^{\pm1}(Z)\|_{H^{s}}\leq \|Z\|_{H^{s}}(1+C\|Z\|_{H^{s_0}})\,,
\end{equation}
for some constant $C>0$ depending  on $s$.
\end{theorem}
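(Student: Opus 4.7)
The plan is to perform an iterative Birkhoff normal form, degree by degree from $p=1$ up to $p=N-1$, treating first the constant-coefficient para-differential part $\mathfrak{M}(Z;\xi)$ and then the smoothing remainder $\mathcal{Q}(Z)$. At each step the transformation is the time-one flow of an Hamiltonian vector field, which is symplectic by construction, and the non-resonance assumption \eqref{nonresOMEGA} provides polynomial lower bounds on the small divisors that arise when solving the homological equation. Write $\mathfrak{m}(Z;\xi)=\sum_{p=1}^{N-1}\mathfrak{m}_p(Z;\xi)+\mathfrak{m}_N(Z;\xi)$ with $\mathfrak{m}_p\in\widetilde{\Gamma}^m_p$ independent of $x$ by \eqref{opfinale}, and inductively suppose that $\mathfrak{m}_q=\bral\mathfrak{m}_q\brar$ for $q<p$ and that $\mathcal{Q}$ still vanishes at order $1$. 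Decompose $\mathfrak{m}_p=\bral\mathfrak{m}_p\brar+\mathfrak{m}_p^{\perp}$, where the second summand collects exactly the monomials $u_{j_1}^{\sigma_1}\cdots u_{j_p}^{\sigma_p}$ with $(\vec{\sigma},\vec{j})\notin\mathcal{S}_p$ (see Definition \ref{def:resonant}).

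For such indices, \eqref{nonresOMEGA} gives $|\sigma_1\omega_{j_1}+\cdots+\sigma_p\omega_{j_p}|\geq c\,\max_i\langle j_i\rangle^{-N_0}$, so I can define the generating symbol
\begin{equation*}
\chi_p(Z;\xi):=\sum_{(\vec{\sigma},\vec{j})\notin\mathcal{S}_p,\;\sum\sigma_i j_i=0}
\frac{(\mathfrak{m}_p^{\perp})^{\vec{\sigma}}_{\vec{j}}(\xi)}{\ii\,(\sigma_1\omega_{j_1}+\cdots+\sigma_p\omega_{j_p})}\,u_{j_1}^{\sigma_1}\cdots u_{j_p}^{\sigma_p},
\end{equation*}
absorbing the factor $\max_i\langle j_i\rangle^{N_0}$ into the constant $\mu$ of Definition \ref{pomosimb}(i), so that $\chi_p\in\widetilde{\Gamma}^m_p$ is again real, even in $\xi$, and independent of $x$. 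Setting $X_p:=\ii E\,\opbw(\mathrm{diag}(\chi_p,\ov{\chi_p(\cdot,-\xi)}))\cdot$, corrected if necessary by a smoothing remainder as in Theorem \ref{thm:main2} so as to be genuinely Hamiltonian with respect to the form \eqref{symform}, I obtain a symplectic time-one map $\Phi_p$ from Theorem \ref{flussononlin} (case \eqref{sim4}) satisfying \eqref{stimamappaBNF}-type bounds. The homological identity $\{G_p,H_0\}+\langle\mathrm{op}^{\mathrm{BW}}(\mathfrak{m}_p^{\perp})\cdot,\cdot\rangle=0$, with $H_0=\int\Omega Z\cdot\bar Z\,dx$ and $G_p$ the quadratic Hamiltonian whose vector field is $X_p$, is built into the choice of $\chi_p$: a Lie series expansion (truncated at order $N$, using Proposition \ref{teoremadicomposizione} to bound the commutators) then shows that $\Phi_p^*(\mathcal{Y})$ has symbol $\bral\mathfrak{m}_p\brar$ at degree $p$ and additional symbols/smoothing remainders of strict homogeneity $>p$, which are absorbed into later iterations.

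After $N-1$ such steps the para-differential symbol has the form \eqref{sarosaro}; the real-valuedness and diagonal block structure are preserved because $X_p$ is Hamiltonian and $\mathfrak{M}$ self-adjoint, so each flow respects the algebraic constraints \eqref{quanti801}. For the smoothing remainder I iterate the same scheme, this time with generators $\chi_p^{R}$ built analogously from the Fourier coefficients of the non-resonant part of $\mathcal{Q}$ (as expanded in \eqref{BNF5}); the flows are trivially bounded on every $H^s$ since they are perturbations of the identity by smoothing operators, and the commutators with $\ii E\Omega$ again produce the divisors $\sigma\cdot\omega$, controlled by the same non-resonance. Remark \ref{montefuffa} guarantees that this second procedure can be carried out Hamiltonianly: since the para-differential part is already resonant, each Hamiltonian vector field used here splits as a purely smoothing contribution plus a resonant correction that commutes with $H_0$ and can be absorbed into $\bral \mathfrak{m}^{(N)}\brar$. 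Composing all the $\Phi_p$ yields the map $\mathfrak{B}$, and \eqref{stimamappaBNF} follows from the corresponding bounds for each factor.

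The main obstacle is the bookkeeping of error terms through the Lie transforms: one must check that, at each step, the Poisson bracket of the generator $G_p$ with the previously normalized lower-degree pieces produces only terms of homogeneity strictly greater than $p$, and that the smoothing index $-\rho$ of the remainder is degraded only by a bounded amount at each stage (so a single fixed $\rho$ large enough at the outset suffices for all $N$ iterations). A second delicate point is maintaining the Hamiltonian structure simultaneously for the symbol and the smoothing normal form passes: the a priori existence of a smoothing corrector making each generator truly symplectic, provided by Theorem \ref{thm:main2}, must be combined with Remark \ref{montefuffa} to verify that the final resonant vector field is indeed Hamiltonian.
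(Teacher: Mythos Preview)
Your overall strategy coincides with the paper's: iterate degree by degree, at each step generate a symplectic flow from a Hamiltonian whose vector field solves the homological equation against $\ii E\Omega$, with divisors controlled by \eqref{nonresOMEGA}; treat the constant-coefficient symbol and the smoothing remainder by separate homological equations; and invoke Remark~\ref{montefuffa} to ensure the smoothing generator is itself Hamiltonian. The paper organizes this through two abstract conjugation results (Propositions~\ref{prop:Pollini} and~\ref{prop:PolliniSmooth}, conjugation under a Fourier-multiplier flow and under a smoothing flow respectively) and then iterates Lemmata~\ref{lem:Pollinijth} and~\ref{prop:Pollinisalsa} after an initial pass (Section~\ref{quadtermBNF}) that eliminates the degree-one contributions of both the symbol and the remainder. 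A minor remark: the symplectic corrector you attribute to Theorem~\ref{thm:main2} is, in the paper, supplied more directly by the $S_\xi$-cutoff Hamiltonian \eqref{Pollini3} together with Proposition~\ref{stimedifferent3}.

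There is, however, a genuine ordering issue in your scheme. You normalize the symbol at \emph{all} degrees $1,\dots,N-1$ first, and only then turn to the smoothing remainder. But conjugation by a smoothing flow with generator in $\widetilde{\mathcal{R}}^{-\rho}_p$ sends the symbol $\mathfrak{n}$ to $\mathfrak{n}(\mathcal{A}_p^{-1}(W);\xi)$ (formula \eqref{Pollini8bis}), and since $\mathcal{A}_p^{-1}(W)=W+O(W^{p+1})$, the already-resonant piece $\bral\mathfrak{m}_q\brar$ at degree $q$ acquires a correction at degree $q+p$ which has no reason to be resonant. For $q=p=2$ this re-pollutes degree $4$, so your claim that such corrections ``commute with $H_0$ and can be absorbed into $\bral\mathfrak{m}^{(N)}\brar$'' is not justified. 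The paper avoids this by \emph{interleaving}: at each fixed degree $j$ it first normalizes the symbol (Lemma~\ref{lem:Pollinijth}), then the smoothing part (Lemma~\ref{prop:Pollinisalsa}); the symbol corrections produced by the latter land only at degree $\geq j+2$ (the symbol already starts at degree $2$) and are fed into the symbol step at degree $j+1$. Your argument is repaired simply by adopting this interleaved order.
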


\begin{proof}
It follows by applying the result of section \ref{quadtermBNF} and then, iteratively, 
the results of Lemmata \ref{lem:Pollinijth}, \ref{prop:Pollinisalsa}.
\end{proof}

The Theorem above is the key step to obtain a long time
existence and stability result for a system of the form \eqref{Nonlin1inizio}.
In particular the following result is consequence of Theorem \ref{thm:mainBNF}.

\begin{corollary}{\bf (A priori energy estimate).}\label{thm:energy}
Let $W=\vect{w}{\bar{w}}\in C^{0}([0,T); H^{s}(\mathbb{T};\mathbb{C}^{2}))\cap
C^{1}([0,T); H^{s-m}(\mathbb{T};\mathbb{C}^{2}))$
for $T>0$ be a solution of the system \eqref{YYYN}
with initial condition $W_0=\vect{w_0}{\ov{w_0}}\in B_r(H^{s})(\mathbb{T};\mathbb{C}^{2}))$,
$0<r\ll1$.
Then there exists $C=C(s)>0$ such that
\begin{equation}\label{cor:energy}
\|w(t)\|_{H^{s}}^{2}\leq \|w_0\|^{2}_{H^{s}}+C\int_{0}^{t}\|w(\s)\|_{H^{s}}^{N+2}d\s\,,
\qquad \forall t\in[0,T)\,.
\end{equation}
\end{corollary}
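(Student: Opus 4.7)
The plan is to compute $\frac{d}{dt}\|w(t)\|_{H^s}^2 = 2\Re(\langle D\rangle^s\dot w,\langle D\rangle^s w)_{L^2}$ by substituting the first component of \eqref{YYYN} and analyzing the three resulting contributions separately. The term $\ii f_m(D)w$ contributes zero because $f_m$ is real (so $\ii f_m(D)$ is skew-adjoint on $L^2$) and it commutes with $\langle D\rangle^s$. For the para-differential term $\ii\,\opbw(\bral\mathfrak{m}^{(N)}\brar(W;\xi))w$, the resonance condition $\sum_i\sigma_i j_i=0$ appearing in \eqref{espandoFousimbo100}, together with the translation invariance \eqref{def:tr-in}, forces the symbol $\bral\mathfrak{m}^{(N)}\brar(W;\xi)$ to be $x$-independent; hence $\opbw$ of it reduces to a Fourier multiplier (depending on $W$ only through its Fourier coefficients) which commutes with $\langle D\rangle^s$. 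Since $\mathfrak{m}^{(N)}(W;\xi)$ is real-valued by Theorem~\ref{thm:mainBNF}, Remark~\ref{aggiungoaggiunto} ensures self-adjointness of this multiplier, so multiplication by $\ii$ makes it skew-adjoint and its contribution also vanishes.

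For the smoothing contribution $\bral\mathcal{Q}_N\brar(W)W$ I would invoke the Hamiltonian structure of $\mathcal{Y}_N$: writing $\mathcal{Y}_N=X_{H_N}$, one has $\frac{d}{dt}\|w\|_{H^s}^2=\{\|w\|_{H^s}^2,H_N\}(W)$. Decompose $H_N = H^{res} + H^{tail}$, where $H^{res}$ collects all resonant homogeneous monomials of degrees $2,\dots,N+1$ (coming from $\int \Omega w\cdot\bar w\,dx$, from the resonant para-differential piece, and from the resonant homogeneous pieces $\bral R_p\brar$ for $2\le p\le N-1$), and $H^{tail}$ is the non-homogeneous remainder vanishing at order $\ge N+2$ in $W$. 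Rewriting
\begin{equation*}
\|w\|_{H^s}^2=|w_0|^2+\sum_{n\geq 1}\langle n\rangle^{2s}J_n\,,\qquad J_n:=|w_n|^2+|w_{-n}|^2\,,
\end{equation*}
and using the observation recorded immediately after \eqref{nonresOMEGA3} that the super-actions $J_n$ Poisson-commute with every resonant monomial in the list $w_{n_0}\cdots\bar w_{n_{j+1}}$, $\{|n_0|,\dots,|n_p|\}=\{|n_{p+1}|,\dots,|n_{j+1}|\}$, one obtains $\{\|w\|_{H^s}^2,H^{res}\}\equiv 0$.

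The tail contribution equals $2\Re(X_{H^{tail}}(W),w)_{H^s}$, and $X_{H^{tail}}$ is built from the non-homogeneous pieces of $\mathfrak{M}^{(N)}$ and $\mathcal{Q}_N$, which vanish at order $\ge N$ in $W$. Applying the smoothing estimate \eqref{porto20} to $R_N\in\mathcal{R}^{-\rho}_N[r]\otimes\mathcal{M}_2(\C)$ with $k=0$, together with the obvious bound for the non-homogeneous part of the para-differential symbol via Proposition~\ref{AzioneParaMet}, and with the Sobolev embedding $H^s\hookrightarrow H^{s_0}$, gives $\|X_{H^{tail}}(W)\|_{H^s}\lesssim \|W\|_{H^{s_0}}^N\|W\|_{H^s}\lesssim \|w\|_{H^s}^{N+1}$. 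Hence the pointwise bound $\bigl|\frac{d}{dt}\|w\|_{H^s}^2\bigr|\le C\|w\|_{H^s}^{N+2}$, and integration in time yields \eqref{cor:energy}. The main obstacle I foresee is the clean identification of $H_N$ and the verification that its resonant part is exactly the sum of monomials listed after \eqref{nonresOMEGA3}: the individual para-differential and smoothing components of $\mathcal{Y}_N$ are not Hamiltonian on their own (cf.\ Remark~\ref{montefuffa}), so one must carry along the Hamiltonian correction terms and check that they have been absorbed inside $\bral\mathcal{Q}_N\brar$ during the construction carried out in the proof of Theorem~\ref{thm:mainBNF}.
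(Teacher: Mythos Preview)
Your overall strategy coincides with the paper's: write $\frac{d}{dt}\|w\|_{H^s}^2=\{G,H^{(N)}\}$ with $G(W)=\|w\|_{H^s}^2$, kill the resonant homogeneous part via the super-action observation, and estimate the degree-$\geq N+2$ tail. Your first paragraph (the full para-differential term contributes zero because $\bral\mathfrak{m}^{(N)}\brar$ is real and $x$-independent) is correct and is exactly the mechanism the paper uses.

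There is, however, a genuine gap in the treatment of the tail. You claim
\[
\|X_{H^{tail}}(W)\|_{H^s}\lesssim \|W\|_{H^{s_0}}^{N}\|W\|_{H^s}
\]
by invoking Proposition~\ref{AzioneParaMet} for the non-homogeneous para-differential piece. This fails: Proposition~\ref{AzioneParaMet} only gives $\opbw(\mathfrak{m}^{(N)}_{N}):H^{s}\to H^{s-m}$ for a symbol of order $m>1$, so the para-differential component of $X_{H^{tail}}$ is \emph{not} bounded in $H^{s}$ and you cannot apply Cauchy--Schwarz at the level of $\|X_{H^{tail}}\|_{H^s}$. The correct move---which is precisely what the paper does, and which you already have in hand from your first paragraph---is not to bound $\|X_{H^{tail}}\|_{H^s}$ but to compute
\[
\{G,H^{tail}\}=2\,\Re\big(\langle D\rangle^{s}\ii\,\opbw(\mathfrak{m}^{(N)}_{N}(W;\xi))w,\langle D\rangle^{s}w\big)_{L^2}
+2\,\Re\big(\langle D\rangle^{s}(\mathcal{Q}_{N,N}(W)W)_{+},\langle D\rangle^{s}w\big)_{L^2}
\]
directly. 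The first term vanishes because $\mathfrak{m}^{(N)}_{N}$ is real and $x$-independent (hence $\ii\,\opbw(\mathfrak{m}^{(N)}_{N})$ is a skew-adjoint Fourier multiplier commuting with $\langle D\rangle^{s}$); only the second term survives and is controlled by \eqref{porto20}, giving $\lesssim\|w\|_{H^s}^{N+2}$. With this correction your argument is complete and matches the paper's proof; the ``main obstacle'' you anticipate about identifying $H_N$ is handled in the paper exactly as you outline (see \eqref{sarosaro3} and the display for $H_j^{(N)}$ in the proof).
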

The proof of the corollary is given in Section \ref{sec:BNF}.

\subsection{Applications to some PDEs}
 In this section, by using Theorems
\ref{thm:main}, \ref{thm:main2} and \ref{thm:mainBNF}, we shall prove Theorems \ref{thm:mainNLS}, \ref{thm:mainBeam}, \ref{teototale}.
\subsubsection{The quasi-linear Sch\"odinger equation}\label{applica:NLS}
We give the proof of Theorem \ref{thm:mainNLS}.

\begin{proof}[{\bf Proof of Theorem \ref{thm:mainNLS}}]
In order to prove Theorem \ref{thm:mainNLS} we shall apply Theorems \ref{thm:main},
\ref{thm:main2}, \ref{thm:mainBNF} and Corollary \ref{thm:energy}.
Let us show that equation \eqref{NLS} satisfies 
 the assumptions of these abstract results.
  
 \noindent
 First of all we recall that the nonlinearity  $f(u,u_x,u_{xx})$
 is a polynomial of maximum degree $\bar{q}\geq 2$.
 Therefore, by Lemmata $3.2$, $3.3$ in \cite{Feola-Iandoli-Long}
 and Lemma $4.1$ in \cite{FIloc} we have
 \begin{equation}\label{paraNLS1}
 \begin{aligned}
 f(u,u_x,u_{xx})&=\opbw(g_2(U;x))\pa_{xx}u+\opbw(h_2(U;x))\pa_{xx}\bar{u}\\
 &+\opbw(g_1(U;x))\pa_{x}u+\opbw(h_1(U;x))\pa_{x}\bar{u}\\
 &+\opbw(g_0(U;x))u+\opbw(h_0(U;x))\bar{u}+R(U)[U]\,,
 \end{aligned}
 \end{equation}
 where $ R(U)=\sum_{j=1}^{\bar{q}}R_j(U)$ and $R_{j}$ 
 are $1\times2$ matrices of operators in $\widetilde{\mathcal{R}}^{-\rho}_j$,
 and 
 \[
 \begin{aligned}
 g_i(U;x)&=\sum_{j=1}^{\bar{q}}g_{i}^{(j)}(U;x)\,,
 \qquad
  h_i(U;x)&=\sum_{j=1}^{\bar{q}}h_{i}^{(j)}(U;x)\,,\qquad g_i^{(j)},h_i^{(j)}\in \widetilde{\mathcal{F}}_{j}\,,\;\;\;i=0,1,2\,.
 \end{aligned}
 \]
 Moreover
 \begin{equation*}
 \begin{aligned}
  g_i(U;x):=\big( \pa_{\pa_{x}^{i}u} f\big)(u,u_x,u_{xx})\,, \qquad
  h_i(U;x):=\big( \pa_{\ov{\pa_{x}^{i}u}} f\big)(u,u_x,u_{xx})\,.
 \end{aligned}
 \end{equation*}
 More precisely, using the \eqref{HamhypNLS}, we have
 \begin{equation}\label{paraNLS10}
 \begin{aligned}
 &g_2:=-\pa_{u_x \ov{u_{x}}}F\,,\qquad  h_2:=-\pa_{\ov{u_x}\, \ov{u_{x}}}F\,,\\
 &g_1:=-\frac{d}{dx}\big(\pa_{u_x \ov{u_x}}F\big)+\pa_{u_x \bar{u}}F-\pa_{u \ov{u_x}}F\,,
 \qquad
 h_1:=-\frac{d}{dx}\big(\pa_{\ov{u_x} \,\ov{u_x}}F\big)+\pa_{\ov{u_x}\, \bar{u}}F
 -\pa_{\bar{u}\, \ov{u_x}}F\,,\\
 &g_0=\pa_{u\bar{u}}F-\frac{d}{dx}\big( \pa_{u\ov{u_x}}F\big)\,,
 \qquad
 h_0=\pa_{\bar{u}\, \bar{u}}F-\frac{d}{dx}\big( \pa_{\bar{u}\,\ov{u_x}}F\big)\,.
 \end{aligned}
 \end{equation}
 Recall now that $\pa_{x}^{p}:=\opbw\big((\ii\x)^{p}\big)$, $p=0,1,2$.
 Then, using the composition Proposition \ref{teoremadicomposizione} 
 (see also \eqref{espansione2}) and the formul\ae\, \eqref{HamhypNLS}, \eqref{paraNLS10}, 
 we obtain
  \begin{equation}\label{paraNLS2}
 \begin{aligned}
 f(u,u_x,u_{xx})&=\opbw\Big(\widetilde{a_2}(U;x)(\ii \x)^{2}+
 \widetilde{a_1}(U;x)(\ii\x)+\widetilde{a_0}(U;x)\Big)u\\
 &+\opbw\Big(\widetilde{b_2}(U;x)(\ii \x)^{2}+\widetilde{b_1}(U;x)(\ii\x)
 +\widetilde{b_0}(U;x)\Big)\bar{u}+R(U)U\,,
 \end{aligned}
 \end{equation}
 where $R(U)$ is a $1\times2$ matrix of operators in $\Sigma\mathcal{R}^{-\rho}_{1}[r,N]$ (for any $N\geq1$),
 and 
\begin{equation}\label{paraNLS3}
\begin{aligned}
&\widetilde{a_{2}}:=-\pa_{u_{x}\ov{u_x}}F\,,
\qquad \widetilde{b_{2}}:=-\pa_{\ov{u_{x}}\,\ov{u_x}}F\,,\\
&\widetilde{a_1}:=\pa_{u_x \bar{u}}F-\pa_{u \ov{u_x}}F\,,
\qquad \widetilde{a_0}:=\pa_{u\bar{u}}F-\frac{1}{2}\pa_{x}\Big(\pa_{u_x \bar{u}}F
+\pa_{u\ov{u_x}}F\Big)\,,\\
&
\widetilde{b_1}:=h_1-\frac{1}{2}h_2\,,
\qquad \widetilde{b_0}= h_0+\frac{1}{2}\pa_{xx}h_2-\frac{1}{2}\pa_{x}h_1\,.
\end{aligned}
\end{equation}
Notice that $\widetilde{a}_i, \widetilde{b_i}\in \Sigma\mathcal{F}_{1}[r,N]$, $i=0,1,2$.
Moreover, by \eqref{potenziale1}, we can also note that
\begin{equation}\label{simboPotenz}
\mathtt{p}(\x):=\widehat{p}(\x)\,,\quad\x\in \mathbb{R} \qquad 
\mathtt{p}\in\widetilde{\Gamma}_0^{0}\,.
\end{equation}
Then we write
\begin{equation}\label{Omegonepseudo}
\Omega:=\opbw(f_{2}(\x))\,,\qquad f_{2}(\x):=\x^{2}+\mathtt{p}(\x)
\in \widetilde{\Gamma}_0^{2}\,.
\end{equation}
The symbol $f_2(\x)$ is real valued, even in $x$ and \emph{classical}, namely 
satisfies the properties of $f_{m}$ in Theorem \ref{thm:main}.
By \eqref{Omegonepseudo}, \eqref{paraNLS2}, we have that equation
 \eqref{NLShamver} reads
 \begin{equation}\label{NLShamver2}
 \begin{aligned}
 \dot{u}&=\ii \opbw\Big( (1-\widetilde{a}_2(U;x))f_2(\x)+
 \widetilde{a}_1(U;x)\ii\x +\widetilde{a}_0(U;\x)+\widetilde{a}_2(U;x)\mathtt{p}(\x)\Big)u\\
 &+\opbw\Big( -\widetilde{b}_2(U;x)f_2(\x)+
 \widetilde{b}_1(U;x)\ii\x +\widetilde{b}_0(U;\x)+\widetilde{b}_2(U;x)\mathtt{p}(\x)\Big)\bar{u}
 +R(U)U
 \end{aligned}
 \end{equation}
 where $R$ is a $1\times 2$ matrix of operators in $\Sigma\mathcal{R}^{-\rho}_{1}[r,N]$. 
 Then, setting
 \begin{equation}\label{simbolifinale}
 \begin{aligned}
 &a_2:=-\widetilde{a}_2\,,\quad  b_2:=-\widetilde{b}_2\,,  
\quad
a_0:=\widetilde{a}_0+\widetilde{a}_2\mathtt{p}(\x)\,,\quad
 b_0:=\widetilde{b}_0+\widetilde{b}_2\mathtt{p}(\x)\,,\\
 &a=a(U;x,\x):=(1+a_{2}(U;x))f_2(\x)+a_{1}(U;x,\x)\,,
 \quad a_1(U;x,\x):=\widetilde{a}_1(U;x)(\ii\x)+a_0(U;x,\x)
 \\&b(U;x,\x)=b_{2}(U;x)f_2(\x)+b_{1}(U;x,\x)\,,
 \quad b_1(U;x,\x):=\widetilde{b}_1(U;x)(\ii\x)+b_0(U;x,\x) \,,
 \end{aligned}
 \end{equation}
 we have that \eqref{NLShamver2} is equivalent to the system on the 
 variables $U=\vect{u}{\bar{u}}$
 \begin{equation}\label{NLShamver3}
 \dot{U}=\ii E\opbw\big(A(U;x,\x)\big)U+R(U)U\,,\quad 
 A(U;x,\x)=\left(
 \begin{matrix}
 a(U;x,\x) & b(U;x,\x)\vspace{0.2em}\\
 \ov{b(U;x,-\x)} & \ov{a(U;x,-\x)}
 \end{matrix}
 \right)
 \end{equation}
 where  $R(U)$ is some smoothing remainder in 
 $\Sigma\mathcal{R}^{-\rho}_1[r,N]\otimes\mathcal{M}_2(\CCC)$
 which is \emph{real-to-real} (see \eqref{vinello}). 
By \eqref{simbolifinale}, \eqref{paraNLS3} and using that $F(u,u_x)$
 is real valued, we  deduce that
 \begin{equation}\label{realtaAA}
 a(U;x,\x)=\ov{a(U;x,\x)}\,.
 \end{equation}
 Therefore system \eqref{NLShamver3} has the same form of 
 \eqref{Nonlin1inizio}, \eqref{Forma-di-Ainizio} and Theorem \ref{thm:main}
 applies. 
 Moreover the equation \eqref{NLShamver} (and hence \eqref{NLShamver3})
 is Hamiltonian with respect to the symplectic form \eqref{symform}.
 Hence  Theorem  \ref{thm:main2}
 guarantees that the map $\Psi$ given by  Theorem 
 \ref{thm:main} is symplectic. Then, setting $Z=\Psi(U)$, we have that
system 
 \eqref{NLShamver3} conjugates 
 to a system of the form \eqref{nuovotutto}, \eqref{Nonlin1fine}
 with $\mathcal{Y}(Z)$ an Hamiltonian vector field.
 The local well-posedness on the system \eqref{Nonlin1fine}
 can be deduced as in  Theorem \ref{flussononlin}.
 Furthermore the linear frequencies of oscillations in \eqref{OmegoneNLS}
 are \emph{non-resonant} according to Definition \ref{nonresOmegaCOND}
 for any choice of parameters $\vec{m}=(m_1,\ldots,m_{M})$ 
 (see \eqref{potenziale1}) in $[-1/2,1/2]^{M}\setminus\mathcal{N}$
where  
$\mathcal{N}$ has zero Lebesgue measure. This is a consequence
of Proposition $5.5$  in  \cite{Feola-Iandoli-Long}.
Theorem \ref{thm:mainBNF} applies and provide a symplectic map 
$\mathfrak{B}$ such that the system for the variables $W:=\mathfrak{B}(Z)$
has the form \eqref{YYYN}. Namely we conjugate, in a symplectic way 
the system \eqref{NLShamver3}
to its resonant \emph{Poincar\'e-Birkhoff normal form}
up to order $N$ (see \eqref{YYYN}).
We used the map $\mathfrak{B}\circ\Psi$. In particular, by estimates 
\eqref{curecure},\eqref{stimamappaBNF}, we deduce that
\begin{equation}\label{equivNormeUW}
\|W\|_{H^{s}}\sim \|U\|_{H^{s}}\,,\qquad W=\mathfrak{B}\circ\Psi(U)\,,
\end{equation}
if $r>0$ is small enough.
Let  $W$ be the solution of 
problem \eqref{YYYN} and assume it is defined on a time interval
$[0,T)$, $T>0$.  By the estimate \eqref{cor:energy} in Corollary \ref{thm:energy}
and using a standard bootstrap 
argument (see for instance the proof of Theorem $5.1$ in \cite{Feola-Iandoli-Long})
one can prove that actually
\[
\|W(t)\|_{H^{s}}\lesssim_s\|W(0)\|_{H^{s}}\,, t\in[0,T)\,, \qquad T\gtrsim r^{-N}\,.
\]
The latter estimate combined with \eqref{equivNormeUW} proves the estimate
\eqref{stimaNLS} over a time scale as in \eqref{spazioNLS}. 
This concludes the proof.
\end{proof}

\subsubsection{Quasi-linear perturbations of the beam equation}
Introducing the variable $v=\dot{\psi}=\pa_{t}\psi$ we can rewrite equation \eqref{beam1}
as
\begin{equation}\label{beam4}
\left\{\begin{aligned}
&\dot{\psi}=-v\,,\\
&\dot{v}=\Omega^{2}\psi+p(\psi)\,,\qquad \Omega:=\big(\pa_{xx}^{2}+m\big)^{\frac{1}{2}}\,.
\end{aligned}\right.
\end{equation}
Notice that 
the operator $\Omega$ is the Fourier multiplier defined as
\begin{equation}\label{OmegoneBeam}
\Omega e^{\ii jx}=\omega_{j} e^{\ii jx}\,,\quad \omega_{j}=\omega_{j}({m})
:=\sqrt{|j|^{4}+m}\,,\quad
j\in\mathbb{Z}\,.
\end{equation}
We define the complex variable
\begin{equation}\label{beam5}
u:=\frac{1}{\sqrt{2}}\big( \Omega^{\frac{1}{2}}\psi+\ii \Omega^{-\frac{1}{2}}v\big)\,.
\end{equation}
Therefore the \eqref{beam4} reads
\begin{equation}\label{beam6}
\dot{u}=\ii \Omega u+\frac{\ii}{\sqrt{2}}\Omega^{-\frac{1}{2}}
p\Big(\Omega^{-\frac{1}{2}}\Big(\frac{u+\bar{u}}{\sqrt{2}}\Big)\Big)\,.
\end{equation}
Notice that \eqref{beam6} has the form $\dot{u}=\ii \pa_{\bar{u}}H(u,\bar{u})$, i.e. 
is the Hamiltonian equation 
(w.r.t. the symplectic form \eqref{symform}) of the Hamiltonian 
\begin{equation}\label{beam7}
H(u,\bar{u})=\int_{\mathbb{T}} \Omega u\cdot\bar{u} dx
+\int_{\mathbb{T}} P\Big( \Omega^{-\frac{1}{2}}\Big(\frac{u+\bar{u}}{\sqrt{2}}\Big)\Big)dx
\end{equation}
where
\begin{equation}\label{beam8}
P(\psi):=G(\psi,\psi_{x},\psi_{xx})\,.
\end{equation}
We now prove the following.

\begin{lemma}{\bf (Paralinearization of the beam equation).}\label{paraBeam}
The equation \eqref{beam6} can be written in the form
$\dot{U}=X(U)$, $U=\vect{u}{\bar{u}}$ where $X(U)$
is an Hamiltonian vector filed of the form \eqref{Nonlin1inizio}, \eqref{Forma-di-Ainizio}
where $f_{m}\rightsquigarrow f_{2}(\x)$ defined as
\begin{equation}\label{FF2beam}
f_{2}(\x):=\sqrt{\x^{4}+m}\in \widetilde{\Gamma}^{2}_0\,.
\end{equation}
\end{lemma}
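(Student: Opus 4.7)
The plan is to paralinearize $p(\psi)$ via Bony's formula, substitute $\psi=\Omega^{-1/2}(u+\bar u)/\sqrt 2$, and then use the symbolic calculus of Proposition~\ref{teoremadicomposizione} to sandwich by the two factors of $\Omega^{-1/2}$ and extract the principal symbol of order $2$. The key algebraic identity is
\[
\frac{\xi^{4}}{f_2(\xi)} \;=\; f_2(\xi) \;-\; \frac{m}{f_2(\xi)},
\]
which follows from $f_2(\xi)^2 = \xi^{4}+m$ and lowers the apparent order $4$ of the top derivative in $p$ down to the desired order $2$ once sandwiched by $\Omega^{-1/2}$.

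First, reasoning as in \eqref{paraNLS1} and Lemma~4.1 of \cite{FIloc}, I would paralinearize
\[
p(\psi) \;=\; \sum_{j=0}^{4} \opbw\big(g_j(\psi;x)\big)\,\partial_x^{j}\psi \;+\; \widetilde R(\psi)\psi, \qquad g_j := (\partial_{\psi^{(j)}} g)(\psi,\ldots,\psi^{(4)}),
\]
with $\widetilde R\in \Sigma\mathcal R^{-\rho}_1[r,N]$ and $g_j \in \Sigma\mathcal F_1^{\mathbb R}[r,N]$ real-valued (since $G$ is real). From \eqref{beam3} one reads $g_4 = \partial_{\psi_{xx}}^2 G$. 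Substituting $\psi=\Omega^{-1/2}(u+\bar u)/\sqrt 2$, which writes as $M(U)U$ for some $M\in\Sigma\mathcal M_1[r,N]\otimes\mathcal M_2(\mathbb C)$, into each $g_j$ via items~(iv)-(v) of Proposition~\ref{composizioniTOTALI} yields real-valued functions $\widetilde g_j(U;x)\in\Sigma\mathcal F_1^{\mathbb R}[r,N]$, modulo a further smoothing remainder.

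Second, for each $j$, Proposition~\ref{teoremadicomposizione} applied to the composition $\Omega^{-1/2}\opbw\big(\widetilde g_j(U;x)(i\xi)^j\big)\Omega^{-1/2}$ produces a paradifferential operator with principal symbol $\widetilde g_j(U;x)(i\xi)^j/f_2(\xi)$ modulo lower-order symbols and a smoothing term. The $j=4$ contribution, using the displayed identity, yields a leading symbol $\widetilde g_4(U;x)\,f_2(\xi)$ plus a correction of order $-2$; the contributions from $j=0,1,2,3$ are of order at most $1$. Summing, multiplying by $\tfrac{i}{2}$, splitting the action on $u$ from the action on $\bar u$, and adding the linear term $i\Omega u = i\,\opbw(f_2(\xi))\,u$ to the diagonal, I obtain the matrix form \eqref{operatoreEgor1inizio} with leading part
\[
a_2(U;x)=b_2(U;x)=\tfrac{1}{2}\widetilde g_4(U;x)\in\Sigma\mathcal F_1^{\mathbb R}[r,N],
\]
and subprincipal symbols $a_{m'},b_{m'}\in\Sigma\Gamma^{m'}_1[r,N]$ for $m'=1$. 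Since the $\widetilde g_j$ are real and $f_2(\xi)$ is real and even in $\xi$, direct inspection of the Poisson-bracket expansion $\#_\rho$ in \eqref{espansione2} gives the required symmetry $a_{m'}-\overline{a_{m'}}\in\Sigma\Gamma^0_1[r,N]$. The real-to-real matrix structure is then obtained by coupling the $u$-equation with its complex conjugate (using that $f_2$ is real and even, so $\Omega$ is a real Fourier multiplier). The Hamiltonian character of $X(U)$ is inherited from \eqref{beam6}, which is precisely the complex Hamiltonian equation of \eqref{beam7}.

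The main technical obstacle is bookkeeping the subprincipal symbols produced by the two compositions with $\Omega^{-1/2}$ in order to verify the reality condition on $a_{m'}$. This is a routine application of the asymptotic expansion \eqref{espansione2}, exploiting the realness of the $\widetilde g_j$ and the evenness of $f_2(\xi)$ in $\xi$.
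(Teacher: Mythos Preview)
Your overall plan is the right one and matches the paper's strategy: paralinearize, substitute $\psi=\Omega^{-1/2}(u+\bar u)/\sqrt 2$, sandwich by $\Omega^{-1/2}$, and use the algebraic identity $\xi^{4}/f_2(\xi)=f_2(\xi)-m/f_2(\xi)$ to extract the order-$2$ principal part. The difference --- and the place where your argument has a genuine gap --- is in how you verify the reality condition $a_{m'}-\overline{a_{m'}}\in\Sigma\Gamma^{0}_1[r,N]$.

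You claim this follows by ``direct inspection'' from the realness of the $\widetilde g_j$ and the evenness of $f_2$. It does not. The $j=3$ term contributes, at principal level after sandwiching, the symbol $\tfrac12\widetilde g_3(i\xi)^3/f_2(\xi)$, which is purely imaginary of order $1$. This does not automatically cancel with the subprincipal corrections from the $j=4$ term unless you invoke the Hamiltonian structure \eqref{beam3}, which forces relations among the $g_j$. Realness of the individual $g_j$ is simply not enough: for a generic (non-Hamiltonian) $g$ with real $g_3$, the imaginary part of $a_{m'}$ would be of order $1$.

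The paper avoids this bookkeeping entirely by paralinearizing not with the first derivatives $g_j=\partial_{\psi^{(j)}}g$ but with the \emph{second} derivatives $c_{jk}=\partial_{\psi^{(j)}\psi^{(k)}}G$, writing $p(\psi)=C(\psi)\psi+Q(\psi)\psi$ with
\[
C(\psi)=\sum_{j,k=0}^{2}(-1)^{k}\partial_x^{k}\,\opbw(c_{jk}(\psi;x))\,\partial_x^{j}.
\]
Because $c_{jk}=c_{kj}$, this operator is manifestly self-adjoint, and so is $B(U)=\tfrac12\Omega^{-1/2}C(\psi)\Omega^{-1/2}$. Self-adjointness of a Weyl-quantized operator forces its symbol to be real, so $a_1$ is real-valued --- which is even stronger than the condition \eqref{Forma-di-Ainizio} requires, and obtained without any termwise cancellation. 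If you want to salvage your approach, you must either rewrite your paralinearization in this self-adjoint form, or explicitly use \eqref{beam3} to exhibit the cancellation of the order-$1$ imaginary terms.
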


\begin{proof}
We reason as done in section \ref{applica:NLS}.
First of all consider the function $G(\psi,\psi_x,\psi_xx)$ appearing in \eqref{beam3}.
Since $G$ is a real valued 
polynomial in the variables $(\psi,\psi_x,\psi_{xx})$ it is easy to check 
that
\begin{equation}\label{paraBeam2}
c_{jk}(\psi;x):=\big(\pa_{\pa_{x}^{k}\psi\pa_{x}^{j}\psi}G\big)(\psi,\psi_x,\psi_{xx})
\in \Sigma\mathcal{F}^{\mathbb{R}}_1[r,N]\,,\quad N\geq 1\,,\;\;\; k,j=0,1,2\,.
\end{equation}
Then, using the Bony paralinearization formula (see also 
Lemmata $3.2$, $3.3$ in \cite{Feola-Iandoli-Long}
 and Lemma $4.1$ in \cite{FIloc}), we can deduce that
 (see \eqref{beam3})
 \begin{equation}\label{vascoblasco}
 \begin{aligned}
 &g(\psi,\psi_x,\psi_{xx},\psi_{xxx},\psi_{xxxx})=C(\psi)\psi+Q(\psi)\psi\,,\\
 &
 C(\psi):= \sum_{k,j=0}^{2}C_{kj}(\psi)\,,\qquad
 C_{kj}(\psi):=
(-1)^{k}\pa_{x}^{k} \opbw\big(c_{jk}(\psi;x)\big)\pa_{x}^{j}
 \end{aligned}
 \end{equation}
 for some $ Q(\psi)\in \Sigma\mathcal{R}^{-\rho}_1[r,N]$.
 We note that the operator $C(\psi)$ is self-adjoint.
 Indeed $C_{00}=\opbw(c_{00}(\psi;x))$ is self-adjoint since the symbol is real valued
 (recall \eqref{marieanto}). Reasoning similarly we have that
 the operators
 \[
 \begin{aligned}
 &C_{01}(\psi)+C_{10}(\psi)=\opbw\big(c_{01}(\psi;x)\big)\pa_{x}-\pa_{x}
 \opbw\big(c_{10}(\psi;x)\big)\,,\\
 &C_{02}(\psi)+C_{11}(\psi)+C_{20}(\psi)=
 \opbw\big(c_{02}(\psi;x)\big)\pa_{xx}-\pa_{x}
 \opbw\big(c_{11}(\psi;x)\big)\pa_{x}
 +\pa_{xx} \opbw\big(c_{20}(\psi;x)\big)\\
 &C_{12}(\psi)+C_{21}(\psi)=-\pa_{x} \opbw\big(c_{12}(\psi;x)\big)\pa_{xx}
 +\pa_{xx} \opbw\big(c_{21}(\psi;x)\big)\pa_{x}\\
 &C_{22}(\psi)=\pa_{xx} \opbw\big(c_{22}(\psi;x)\big)\pa_{xx}\,,
 \end{aligned}
 \]
 are self-adjoint since $c_{jk}=c_{kj}$.
 Using the Definition \ref{pomosimb}, \ref{omosmoothing} and the \eqref{beam5}, 
 one can check
 that
  \begin{equation}\label{vascoblasco6}
 \begin{aligned}
& \widetilde{c}_{jk}(U;x):=c_{jk}\Big(\Omega^{-\frac{1}{2}}\frac{u+\bar{u}}{\sqrt{2}} ;x\Big)\in 
 \Sigma\mathcal{F}^{\mathbb{R}}_1[r,N]\,,\quad 
 \widetilde{Q}(U):=Q\Big(\Omega^{-\frac{1}{2}}\frac{u+\bar{u}}{\sqrt{2}}\Big)\in 
 \Sigma\mathcal{R}^{-\rho}_1[r,N]\,.
 \end{aligned}
 \end{equation}
 We define 
 \begin{equation}\label{vascoblasco3}
  \begin{aligned}
  &B(U):=\sum_{j,k=0}^{2}B_{kj}(U)\,,
  \qquad B_{kj}(U):=\frac{(-1)^{k}}{{2}}\Omega^{-\frac{1}{2}}
  \pa_{x}^{k}\opbw\big( \widetilde{c}_{kj}(U;x)\big)\pa_{x}^{j}\Omega^{-\frac{1}{2}}\,,\\
  &\widehat{Q}(U):=\frac{1}{2}\Omega^{-\frac{1}{2}} \widetilde{Q}(U)\Omega^{-\frac{1}{2}} 
 \end{aligned}
 \end{equation}
 With this notation, recalling \eqref{beam2}, \eqref{vascoblasco},
 we have that equation \eqref{beam6} reads
 \begin{equation}\label{vascoblasco2}
 \dot{u}=\ii \Omega u+\ii B(U)u+\ii B(U)\bar{u}
 +\ii \widehat{Q} (U)[u]+\ii \widehat{Q} (U)[\bar{u}]\,.
 \end{equation}
In order to show that equation \eqref{vascoblasco2} can be written in the form \eqref{Nonlin1inizio}
we provide a more explicit description of the operator $B(U)$ at the highest order.
Let us write $\Omega:=\opbw(f_2(\x))$, $\pa_{x}^{p}=\opbw((\ii\x)^{p})$
where $f_{2}(\x)$ is in \eqref{FF2beam}.
Notice also that 
\[
(\x^{4}+m)^{-1}(\ii\x)^{4}=1-m(\x^{4}+m)^{-1}\,.
\]
Then, using \eqref{vascoblasco3} and the expansion \eqref{espansione2},
we have
\begin{equation}\label{vascoblasco7}
B(U):=\opbw\Big(a_{2}(U;x)f_{2}(\x)+a_{1}(U;x,\x)\Big)\,,\quad
a_2(U;x)=-\frac{1}{2}\widetilde{c}_{22}(U;x)
\end{equation}
up to smoothing remainders in $ \Sigma\mathcal{R}^{-\rho}_1[r,N]$
and where $a_{1}(U;x,\x)$ is a symbol in 
$ \Sigma\Gamma^{1}_1[r,N]$.
Moreover, since $B(U)$ is self-adjoint and $\widetilde{c}_{22}(U;x)$
is real-valued (see \eqref{vascoblasco6}),
we must have that $a_{1}(U;x,\x)$ is real-valued.
Using \eqref{vascoblasco7} we rewrite \eqref{vascoblasco2} as 
\[
\dot{U}=\ii E \opbw\left(
\begin{matrix}
(1+a_2)f_2(\x) & a_2 f_{2}(\x)\vspace{0.2em}\\
a_2 f_{2}(\x) & (1+a_2)f_2(\x)
\end{matrix}
\right)U+\ii E
\opbw\left(
\begin{matrix}
a_{1}(U;x,\x)& a_{1}(U;x,\x)\vspace{0.2em}\\
a_{1}(U;x,-\x)& a_{1}(U;x,-\x)
\end{matrix}
\right)U+R(U)U
\]
which has the form \eqref{Nonlin1inizio}.
\end{proof}

We now state a result regarding the \emph{non-resonance}  
of the linear frequencies of oscillations.
\begin{lemma}\label{nonresBEAM}
There exists a zero Lebesgue measure set $\mathcal{N}\subset[1,2]$
such that, for any $m\in[1,2]\setminus\mathcal{N}$, 
the frequencies $\omega_{j}=\omega_j({m})$ in \eqref{OmegoneBeam}
are \emph{non-resonant} according to Definition \ref{nonresOmegaCOND}.
\end{lemma}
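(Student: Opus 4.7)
The plan is to carry out a standard measure-theoretic argument in the mass parameter $m\in[1,2]$, exploiting the real-analytic dependence $\omega_j(m)=\sqrt{j^4+m}$. Since for $p\le 2$ the non-resonance condition is satisfied for every $m\in[1,2]$ by direct computation (e.g.\ $|\omega_{j_1}-\omega_{j_2}|=|j_1^4-j_2^4|/(\omega_{j_1}+\omega_{j_2})\gtrsim 1$ whenever $|j_1|\ne|j_2|$, and $\omega_{j_1}+\omega_{j_2}\ge 2$ when the signs coincide), I would only treat $p\in\{3,\dots,N\}$ and $(\vec\sigma,\vec j)\notin\mathcal S_p$.

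For each such $(\vec\sigma,\vec j)$ consider the real-analytic function
\[
f_{\vec\sigma,\vec j}(m):=\sum_{i=1}^p\sigma_i\sqrt{j_i^4+m},\qquad m\in[1,2].
\]
Grouping indices by the value of $|j_i|$, one writes $f_{\vec\sigma,\vec j}(m)=\sum_{\ell=1}^k c_\ell\sqrt{n_\ell^4+m}$, with $n_1<\cdots<n_k$ the distinct values of $\{|j_i|\}$ and integers $c_\ell$ with $|c_\ell|\le p$; the hypothesis $(\vec\sigma,\vec j)\notin\mathcal S_p$ implies that not all $c_\ell$ vanish. Since the functions $m\mapsto\sqrt{n^4+m}$ indexed by distinct $n\in\mathbb{N}$ are $\mathbb{R}$-linearly independent on $[1,2]$ -- a Wronskian computation using $\partial_m^k\omega_j(m)=\alpha_k(j^4+m)^{1/2-k}$ reduces this to the non-vanishing of a Vandermonde-type determinant in the distinct quantities $n_\ell^4+m$ -- the function $f_{\vec\sigma,\vec j}$ is not identically zero.

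Next I would upgrade this qualitative statement to a quantitative R\"ussmann-type derivative lower bound: there exist constants $c_p,\tau_p>0$ and an integer $k_0=k_0(p)$ such that, writing $|\vec j|_\infty:=\max_i\langle j_i\rangle$,
\[
\max_{0\le k\le k_0}\bigl|\partial_m^k f_{\vec\sigma,\vec j}(m)\bigr|\;\ge\;c_p\,|\vec j|_\infty^{-\tau_p},\qquad\forall\,m\in[1,2].
\]
This is obtained from the same Vandermonde argument together with a polynomial-in-$n_k$ control on the inverse of the $k\times k$ matrix whose entries are $(n_\ell^4+m)^{1/2-j}$. R\"ussmann's measure lemma then yields
\[
\bigl|\{m\in[1,2]:|f_{\vec\sigma,\vec j}(m)|<\gamma\,|\vec j|_\infty^{-\tau_p}\}\bigr|\;\lesssim_p\;\gamma^{1/k_0}.
\]

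Finally, choosing $N_0$ large enough relative to $\tau_p,k_0,p$ and setting $\gamma=c\cdot|\vec j|_\infty^{-(N_0-\tau_p)}$, I would sum the measure bounds over the $\lesssim |\vec j|_\infty^{p}\,2^p$ choices of $(\vec\sigma,\vec j)$ with a prescribed $|\vec j|_\infty$, then over $|\vec j|_\infty\in\mathbb{N}$ and $1\le p\le N$. The resulting geometric series is convergent for $N_0$ sufficiently large, and letting $c\to 0^+$ produces a zero-measure exceptional set $\mathcal{N}\subset[1,2]$ outside of which the bound \eqref{nonresOMEGA} holds for every non-trivial $(\vec\sigma,\vec j)$ and every $p\le N$. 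The main technical hurdle is the third step, the quantitative R\"ussmann bound: one must ensure that the Vandermonde-type determinant, inverted, grows at most polynomially in $|\vec j|_\infty$ with an exponent $\tau_p$ that depends only mildly on $p$, and then tune $N_0$ to this exponent so that the final summation actually converges. It is precisely here that the special form $\omega_j=\sqrt{j^4+m}$ (as opposed to a general $m$-dependent dispersion) enters the argument.
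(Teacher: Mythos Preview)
Your proposal is correct and spells out exactly the standard argument that the paper invokes by reference only: the paper's entire proof reads ``It follows by Proposition~3.1 in \cite{EGK} reasoning as in the proof of Proposition~5.5 in \cite{Feola-Iandoli-Long}.'' The Eliasson--Gr\'ebert--Kuksin result for $\omega_j=\sqrt{j^4+m}$ is precisely the Wronskian/Vandermonde linear-independence step plus a R\"ussmann-type measure estimate that you describe, so you have essentially reconstructed the content of those citations; there is no alternative route in the paper to compare with.
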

\begin{proof}
It follows by Proposition $3.1$
 in \cite{EGK} reasoning as in the proof of Proposition $5.5$
 in \cite{Feola-Iandoli-Long}
 \end{proof}

Following almost word by word 
the proof of the long time existence Theorem \ref{thm:mainNLS}
using Lemmata \ref{paraBeam}, \ref{nonresBEAM}
one can deduce the proof of Theorem \ref{thm:mainBeam}.

\subsubsection{Benjamin-Ono type equations}
We prove the following.
\begin{lemma}{\bf (Paralinearization of the Benjamin-Ono equation)}
\label{lem:paraBen}
Let $0<r\ll1$ and $\rho>0$. 
Then there exist a remainder $R\in \mathcal{R}^{-\rho}_1[r]$ and a
symbol $a(u;x,\x)$ in $\Gamma^{2}_1[r]$ of the form
\begin{equation}\label{simbobenono}
\begin{aligned}
&a(u;x,\x)=a_2(u;x)|\x|\x+a_1(u;x,\x)\,,\quad a_2(u;x):=\big(\pa_{z_4}g\big)
(u,\mathcal{H}u, u_x,\mathcal{H}u_x, \mathcal{H}u_{xx})\in \mathcal{F}^{\mathbb{R}}_1[r]\,,\\
&a_1(u;x,\x)\in \Gamma^{1}_1[r]\,,\qquad 
a_1(u;x,\x)-\ov{a_1(u;x,\x)}\in \Gamma^{0}_1[r]\,,
\end{aligned}
\end{equation}
such that the following holds. The equation \eqref{benono} can be written as
\begin{equation}\label{benonopara}
u_{t}=-\ii \opbw\Big((1+a_2(u;x))|\x|\x+a_1(u;x,\x)\Big)u+R(u)[u]\,.
\end{equation}
\end{lemma}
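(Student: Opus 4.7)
The plan is to paralinearize each term of the right-hand side of \eqref{benono} by Bony's formula, then combine the resulting para-differential operators using the symbolic calculus of Proposition \ref{teoremadicomposizione} and read off $a_2$ and $a_1$. First, observe from \eqref{benono2} that, modulo bounded finite-rank operators (low-frequency cut-off) that can be absorbed into $R(u)[u]$, the Fourier multipliers $\mathcal{H}$, $\mathcal{H}\partial_x$, $\mathcal{H}\partial_{xx}$ are Bony--Weyl operators with symbols $-\ii\sign(\xi)$, $|\xi|$, $\ii|\xi|\xi$ respectively; in particular $-\mathcal{H}u_{xx}=-\ii\opbw(|\xi|\xi)u$ up to such a tail.

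Since $g(z_0,\dots,z_4)$ is a polynomial vanishing at order $\geq 3$ at the origin, Bony's paralinearization formula gives
\begin{equation*}
\mathcal{N}(u)=\sum_{i=0}^{4}\opbw\bigl(g_i(u;x)\bigr)\,T_i u+R_1(u)u,\qquad g_i:=(\partial_{z_i}g)(u,\mathcal{H}u,u_x,\mathcal{H}u_x,\mathcal{H}u_{xx}),
\end{equation*}
with $T_0=\mathrm{Id}$, $T_1=\mathcal{H}$, $T_2=\partial_x$, $T_3=\mathcal{H}\partial_x$, $T_4=\mathcal{H}\partial_{xx}$ and $R_1\in\mathcal{R}^{-\rho}_1[r]$; by item $(iv)$ of Proposition \ref{composizioniTOTALI} each $g_i$ belongs to $\mathcal{F}_1^{\mathbb{R}}[r]$. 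In the same way $uu_x=\opbw(u)u_x+\opbw(u_x)u+R_0(u)u$.

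Next I would invoke Proposition \ref{teoremadicomposizione} together with the expansion $a\#b=ab+\frac{1}{2\ii}\{a,b\}+(\text{lower})$ to rewrite each $\opbw(g_i)\circ T_i$ and $\opbw(u)\circ\partial_x$ as a single Bony--Weyl operator acting on $u$. A direct computation (using $\{g,|\xi|\xi\}=-2|\xi|g_x$ and $\{g,\ii\xi\}=-\ii g_x$) gives for the Weyl symbol of $-\mathcal{H}u_{xx}-uu_x-\mathcal{N}(u)$, up to a symbol of order $0$ and a smoothing remainder,
\begin{equation*}
-\ii(1+g_4)|\xi|\xi\;-\;\ii\xi(u+g_2)\;+\;|\xi|\bigl((g_4)_x-g_3\bigr).
\end{equation*}
This identifies $a_2=g_4=\partial_{z_4}g\in\mathcal{F}^{\mathbb{R}}_1[r]$ and the principal part of $a_1$ as $\xi(u+g_2)+\ii|\xi|\bigl(g_3-(g_4)_x\bigr)$.

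The crucial point, and what makes the assumption \eqref{benonoassump} enter, is the reality of the order-$1$ part of $a_1$. Indeed \eqref{benonoassump} reads exactly $g_3=(g_4)_x$, so the purely imaginary summand $\ii|\xi|(g_3-(g_4)_x)$ vanishes and the remaining order-$1$ symbol $\xi(u+g_2)$ is real-valued. Consequently $a_1-\ov{a_1}$ has no order-$1$ contribution, i.e.\ $a_1-\ov{a_1}\in\Gamma^0_1[r]$. The leftover order-$0$ contributions (from $-\opbw(g_1)\mathcal{H}$, $\opbw(g_0)$, the subprincipal parts $-(g_2)_x/2$, $-u_x/2$, $\frac{\ii}{2}\sign(\xi)(g_3)_x$, and the smoothing tails from the low-frequency behavior of $|\xi|$) combine into a symbol in $\Gamma^0_1[r]$ which is absorbed into $a_1$, while the remaining truly smoothing contributions go into $R(u)[u]\in\mathcal{R}^{-\rho}_1[r]$. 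The main obstacle I foresee is book-keeping the non-smoothness of $|\xi|$ at $\xi=0$; this is handled by the Bony cut-off $\chi$ in Definition \ref{quantizationtotale}, which localizes all computations to high frequencies and sends the missing low-frequency pieces into the smoothing remainder.
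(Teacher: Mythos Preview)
Your proposal is correct and follows essentially the same route as the paper: Bony paralinearize $\mathcal{N}(u)$ (and $uu_x$), compose with the Fourier multipliers via Proposition~\ref{teoremadicomposizione}, and use assumption~\eqref{benonoassump} to kill the order-$1$ imaginary contribution $|\xi|\bigl((g_4)_x-g_3\bigr)$ so that the remaining order-$1$ part $\xi(u+g_2)$ is real. The paper's proof is organized slightly differently (it handles $B(u)$ first and then folds in $-uu_x$, writing $a_1=(u+b_2)\xi+u_x+\tilde b$), but the computation and the mechanism are the same; your remark about the non-smoothness of $|\xi|$ at $\xi=0$ being absorbed by the Bony cut-off is the right way to handle it.
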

\begin{proof}
Since $g(z_0,z_1,z_2,z_3,z_4)$ 
is a polynomial in the variables
$(z_0,z_1,z_2,z_3,z_4)$, it is easy to check (recall Def. \ref{pomosimb})
that
\begin{equation}\label{duchessa1}
\begin{aligned}
b_{j}(u;x):=\big(\pa_{z_j}g\big)(u,\mathcal{H}u, u_x,\mathcal{H}u_x, \mathcal{H}u_{xx})
\in \mathcal{F}^{\mathbb{R}}_1[r]\,.
\end{aligned}
\end{equation}
Then, using the Bony paralinearization formula (see also 
Lemmata $3.2$, $3.3$ in \cite{Feola-Iandoli-Long}
 and Lemma $4.1$ in \cite{FIloc}), we can deduce that
 (see \eqref{nonlineBenono})
\begin{equation*}
\begin{aligned}
\mathcal{N}(u)&=B(u)u+Q(u)u\,,\qquad Q\in \mathcal{R}^{-\rho}_1[r]\,,\\
B(u)&=\opbw(b_4(u;x))\mathcal{H}\pa_{xx}+
\opbw(b_3(u;x))\mathcal{H}\pa_{x}+
\opbw(b_2(u;x))\pa_{x}\\
&+\opbw(b_1(u;x))\mathcal{H}+\opbw(b_0(u;x))\,.
\end{aligned}
\end{equation*}
Recalling \eqref{benono2} we write $\mathcal{H}=\opbw(-\ii\sign(\x))$.
Using the expansion \eqref{espansione2} we get,
up to smoothing remainders in $\mathcal{R}^{-\rho}_1[r]$
\[
\begin{aligned}
B(u)&=\opbw\Big(b_{4}\ii |\x|\x+\frac{1}{2\ii}\{b_4,\ii|\x|\x\}+b_{3}|\x|
+b_2\ii\x+
\widetilde{b}(u;x,\x)\Big)
\end{aligned}
\]
for some symbol $\widetilde{b}\in \Gamma^{0}_1[r]$.
We also have 
\[
\frac{1}{2\ii}\{b_4(u;x),\ii|\x|\x\}+b_{3}(u;x)|\x|=
-\pa_{x}\Big(b_4(u;x)\Big)|\x|+b_3(u;x)|\x|
\stackrel{\eqref{duchessa1}, \eqref{benonoassump}}{=}0\,.
\]
We define $a_2(u;x):=b_4(u;x)$ and 
\begin{equation}\label{duchessa2}
a_1(u;x,\x):=(u+b_2(u;x))\x+u_x+\widetilde{b}(u;x,\x)\,.
\end{equation}
Therefore the equation \eqref{benono}
assumes the form
\[
u_t=-\mathcal{H}\pa_{xx}u-\ii \opbw(a_2(u;x)|\x|\x+a_1(u;x,\x))+R(u)u\,,
\]
for some $R\in \mathcal{R}^{-\rho}_1[r]$.
Since 
$-\mathcal{H}\pa_{xx}=-\ii\x|\x|$ we get the
\eqref{benonopara}. The symbol $a_1$ in \eqref{duchessa2}
satisfies the property in  \eqref{simbobenono} by explicit computation.
\end{proof}
We are in position to prove Theorem \ref{teototale}.
\begin{proof}[{\bf Proof of Theorem \ref{teototale}}]
By Lemma \ref{lem:paraBen} we have that equation \eqref{benono} has the form
\eqref{posacenere} (see \eqref{benonopara})
for some symbol $a(u;x,\x)$ satisfying the properties in \eqref{Forma-di-Ainizio}.
Therefore the Corollary \ref{thm:mainscalar} of Theorem \ref{thm:main} applies.
Then there is a map $\Psi$ such that the equation for the variable $z=\Psi(u)$
has the form \eqref{posacenere2}. The existence result for such an equation
over a time interval $[0,T)$, 
can be deduce by Theorem \ref{flussononlin} in the case of generator 
as in \eqref{sim4}.
Moreover, reasoning as in the proof of Corollary \ref{thm:energy}
one can prove an a priori energy estimate 
for the equation \eqref{posacenere2} of the form \eqref{cor:energy}
with $N=1$.
Then, reasoning as in the proof of Theorem \ref{thm:mainNLS},
one can prove that estimate \eqref{stimaBenono} holds
over a time interval $[0,T)$ with $T\gtrsim r^{-1}$.
\end{proof}

\section{Some non linear para-differential equations
}\label{sec:PreEgorov}
This section is the core of our paper.
We study four non linear problems arising in 
implementing an iterative procedure which diagonalizes and conjugates 
to constant coefficients a system like \eqref{Nonlin1inizio}.
These problems involve symbols transported along the flows of 
some para-differential equations and can be considered as a non linear 
counterpart 
of problems arising the Egorov Theory for \emph{pseudo-differential} operators.


\subsection{Diagonal terms at highest order}\label{sec:egohigh}
The equation that we study in this subsection is the one
appearing in conjugating to constant coefficient 
the principal symbol on the diagonal of the system \eqref{Nonlin1inizio}.
The equation we 
need to solve, and that will be used in section \ref{sec:egoHigh},
is the \eqref{equa814}. This equation 
involves a non linear auxiliary flow which is the 
solution of the system \eqref{Ham111prova}. Such a system is  a highly non-linear system of coupled equations.
 Given a function $b(\tau,u;x)$, we study the well-posedness 
of such flow  in Section \ref{wellwell52} (see Theorem \ref{Buonaposit}).
In section \ref{expexpFlow} (see Theorem \ref{formuleesplici})
we provide a more explicit expression  of such flow
in the case that the function $b(\tau,u;x)$
is the one associated to a torus diffeomorphism $x\to x+\beta(u;x)$
(see equations \eqref{def:betanNN}, \eqref{eq:b}).
This is necessary in order to solve \eqref{equa814} which depends on the flow generated by $\beta$. A similar problem is faced also in the paper \cite{BD}. In such paper the authors do not look for an invertible change of coordinates of the phase space, as a consequence the equation they need to solve is "essentially linear". We explain  in Remark \ref{paragoneBD} the link between our and their solution, another comparison between the two methods
is made in Remark \ref{rmk:Rambo}.\\
Finally, in section \ref{constEGOsec}, we look for a solution 
of equation \eqref{equa814}. Being \eqref{equa814} a non-linear equation we use an iterative scheme (see system \eqref{Ham111provan1}-\eqref{Ham111provan3}, which are well-posed thanks to Section \ref{wellwell52}) which converges to a solution of \eqref{equa814}. In the proof of the convergence of the aforementioned iterative scheme one can note  the $n^{th}$  approximate solution of the non-linear system \eqref{def:gamman}, \eqref{def:Mn} are "close" (up h.o.t. in degree of homogeneity) to the "linear" solutions found in \cite{BD}, see equations (5.1.8) and (5.1.9) therein.
In analogy with the papers  \cite{BD}, \cite{FIloc}, \cite{Feola-Iandoli-Long}
this is the most delicate 
part of our analysis.

\smallskip
\noindent Before entering in the core of the section we fix the following notation.
\begin{notation}\label{notazioni}
Throughout this section (and also the throughout the next ones) we fix a number $1<m$ in $\frac12\mathbb{N}$ and we work with symbols in $\Sigma\Gamma^{m'}_{p}[r,N]$ with $m'\leq m$. We fix $d=m$, where $d$ is the number appearing in item $(ii)$ of Definition \ref{pomosimb}, in other words, in the notation of item $(ii)$ of Definition \ref{pomosimb} we shall work with symbols in the classes $\Sigma\Gamma^{m',m}_{p}[r,N]$ with $m'\leq m$. The role of this number is discussed in Remark \ref{omissione}.
\end{notation}
\noindent Consider a symbol $a(u;x,\z)$ in the class $\Sigma\Gamma_1^{m}[r,N]$, $m\in \mathbb{R}$, 
$0<r\ll1$ and assume that it is \emph{classical}
according to Definition \ref{classicSimbo}.
Assume also that its principal part $a_m$ has the following structure
\begin{equation}\label{strutturaSimb}
a_m(z_0,x_0;\xi_0):=(1+\tilde{a}_m(z_0,x_0))f_0(\xi_0)\,, \quad 
\tilde{a}_m(z_0,x_0)\in \Sigma\mathcal{F}_1^{\mathbb{R}}[r,N]\,,
\end{equation}
where $f_0$ is a $m$-homogeneous ${\C^{\infty}(\mathbb{R}^+,\mathbb{R})}$   function   
and $z_0$ is in $H^s$ for $s$ large enough.
Let $b(\tau,w,y)\in \Sigma\mathcal{F}_1^{\mathbb{R}}[r,N]$, $\tau\in[0,1]$, 
and consider the system
 \begin{equation}\label{Ham111prova}
\left\{
\begin{aligned}
&\pa_{\tau}x(\tau)=-b(\tau,z(\tau);x(\tau)) \\
& \pa_{\tau}\x(\tau)=b_x(\tau,z(\tau);x(\tau))\x(\tau)\\
&\pa_{\tau}z(\tau)=\opbw\big(\ii b(\tau,z(\tau);x(\tau))\x(\tau)\big)[z(\tau)]\,,
\end{aligned}\right.
\end{equation}
with initial condition $(z(0),x(0),\xi(0))=(z_0,x_0,\x_0)$.
We have the following.

\begin{theorem}\label{constEgo}
Assume \eqref{strutturaSimb}.
For $r>0$ small enough there exists a symbol $b(\tau,w;y)\in\Sigma\mathcal{F}_1^{\mathbb{R}}[r,N]$, 
$\tau\in[0,1]$,  such that, the following holds.

\noindent
(i)
The flow 
 \begin{equation}\label{soluzione}
 (z(\tau), x(\tau),\x(\tau))={\bf \Phi}_{b}(\tau, z_0,x_0,\x_0)
 =(\Phi_{b}^{(z)}(\tau),\Phi_{b}^{(x)}(\tau),\Phi_{b}^{(\x)}(\tau))(z_0,x_0,\x_0)
 \end{equation}
of \eqref{Ham111prova} is well-posed and 
\begin{align}
z(\tau)&=\Phi_{b}^{(z)}(\tau,z_0)\in \cap_{k=0}^{K}C^{k}([0,1];H^{s-k})\,,\quad 0\leq K\leq s\,,
\label{nota1}\\
x(\tau)&=\Phi_{b}^{(x)}(\tau,z_0,x_0)=x_0+\Psi_{b}^{(x)}(\tau,z_0,x_0)\,,\qquad 
\Psi_{b}^{(x)}\in \Sigma\mathcal{F}^{\mathbb{R}}_{1}[r,N]\label{nota2}\\
\x(\tau)&=\Phi_{b}^{(\x)}(\tau,z_0,x_0,\x_0)=\x_0(1+\Psi_{b}^{(\x)}(\tau,z_0,x_0))\,, 
\qquad  \Psi_{b}^{(\x)}\in\Sigma \mathcal{F}^{\mathbb{R}}_{1}[r,N]\,.\label{nota3}
\end{align}
It is invertible, we denote by  
$(\widetilde{\Phi}_{b}^{(z)}(\tau,\tilde{z}_0), \widetilde{\Phi}_{b}^{(x)}(\tau,\tilde{z}_0,\tilde{x}_0),
\widetilde{\Phi}_{b}^{(z)}(\tau,\tilde{z}_0,\tilde{x}_0,\tilde{\x}_0))$
its  inverse where
\begin{equation*}
\begin{aligned}
\tilde{z}_0=\Phi_{b}^{(z)}(1,z_0)\,, \,\,\,
\tilde{x}_0=\Phi_{b}^{(x)}(1,z_0,x_0)\,,\,\,\,
\tilde{\x}_0=\Phi_{b}^{(\x)}(1,z_0,x_0,\x_0)\,. 
\end{aligned}
\end{equation*}
We have that
\begin{equation}\label{losperobene}
\widetilde{\Phi}_{b}^{(\xi)}(\tau,\tilde{z}_0,\tilde{x}_0,\tilde{\x}_0)=
(1+\widetilde{\Psi}_{b}^{(\xi)}(\tau,\tilde{z}_0,\tilde{x}_0))\tilde{\x}_0.
\end{equation}

\noindent
(ii) There is  $m_b$ in the class 
$\Sigma\mathcal{F}^{\mathbb{R}}_{0}[r,N]$ \emph{independent} of $x\in \mathbb{T}$
such that
we have
\begin{equation}\label{equa814}
F(b):=(1+\tilde{a}_{m}(\widetilde{\Phi}_{b}^{(z)}(1,\tilde{z}_0), 
\widetilde{\Phi}_{b}^{(x)}(1,\tilde{z}_0,\tilde{x}_0)))
\big(1+\widetilde{\Psi}^{(\x)}_{b}(1,\tilde{z}_0,\tilde{x}_0)\big)^{d}
=m_{b}\,.
\end{equation}


\end{theorem}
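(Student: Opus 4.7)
\medskip

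\noindent\emph{Plan of the proof.}

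\medskip

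\noindent\textbf{Part (i): Well-posedness and structure of the flow.} I would first decouple the three equations. The $z$-equation is a non-linear para-differential equation whose generator has the form $\opbw(\ii b(\tau, z;x)\x)[z]$, which is exactly of type \eqref{sim1}, so Theorem \ref{flussononlin} (applied with symbol $b \in \Sigma\mathcal{F}_1^{\R}[r,N]$) gives existence, uniqueness, and the regularity \eqref{nota1} together with tame estimates on $z(\tau)$ and its differentials in $z_0$. Having $z(\tau) = \Phi_b^{(z)}(\tau,z_0)$ in hand, the $x$-equation becomes a scalar ODE $\dot x = -b(\tau, z(\tau);x)$ with $b$ smooth in $x$, so classical ODE theory produces $\Phi_b^{(x)}$; because $b \in \Sigma\mathcal{F}_1^{\R}[r,N]$ vanishes at $z_0=0$, integrating in $\tau$ yields $\Phi_b^{(x)}(\tau) = x_0 + \Psi_b^{(x)}$ with $\Psi_b^{(x)} \in \Sigma\mathcal{F}_1^{\R}[r,N]$, which is \eqref{nota2}. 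Next, the $\x$-equation is linear in $\x$, with explicit solution $\x(\tau) = \x_0 \exp\!\big(\int_0^\tau b_x(s, z(s);x(s))\,ds\big)$; again using that $b \in \Sigma\mathcal{F}_1^{\R}$ vanishes at the origin, one reads off $\Psi_b^{(\x)} \in \Sigma\mathcal{F}_1^{\R}[r,N]$ and obtains \eqref{nota3}. Invertibility follows by time-reversal: solving the analogous system backwards in $\tau$ gives $(\widetilde{\Phi}_b^{(z)}, \widetilde{\Phi}_b^{(x)}, \widetilde{\Phi}_b^{(\x)})$, and the same structure argument yields \eqref{losperobene}.

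\medskip

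\noindent\textbf{Part (ii): Solving the implicit equation $F(b)=m_b$.}  I would proceed by an iterative scheme in the degree of homogeneity in $z$. Write
\[
b = \sum_{k=1}^{N-1} b_k(z,\ldots,z;x) + b_N(z;x),\qquad m_b = \sum_{k=0}^{N-1} \mathtt{m}_k + \mathtt{m}_N,
\]
with $b_k \in \widetilde{\mathcal{F}}_k^{\R}$ and the $\mathtt{m}_k \in \widetilde{\mathcal{F}}_k^{\R}$ independent of $x$. The zero-th order term is $\mathtt{m}_0 = 1$ (since $F(0) = 1 + \tilde a_m^{(\geq 1)}$). The idea is to determine $b_{k+1}$ and $\mathtt{m}_{k+1}$ once $b_1, \ldots, b_k$ and $\mathtt m_1,\ldots,\mathtt m_k$ are fixed, so that $F(b^{(k+1)}) - m_{b^{(k+1)}}$ vanishes at homogeneity $\leq k+1$. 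To set this up, linearize the flow around $b^{(k)}$: a Duhamel-type expansion of \eqref{Ham111prova} shows that, at the next order in homogeneity,
\[
\widetilde{\Psi}_b^{(\x)}(1) = \int_0^1 b_x(s, \widetilde\Phi_b^{(z)}(s), \widetilde\Phi_b^{(x)}(s))\,ds + \text{h.o.t.},
\]
so that the $(k+1)$-homogeneous contribution to $F(b)-m_b$ reads, schematically,
\[
\tilde a_m^{(k+1)}(z;x) + d\int_0^1 \partial_x b_{k+1}(s,z;x)\,ds + \Bigl(\text{terms determined by } b_1,\ldots,b_k\Bigr) - \mathtt{m}_{k+1}.
\]
Choosing $\mathtt m_{k+1}$ equal to the $x$-mean of the left side (with $b_{k+1}=0$) reduces the cohomological equation to a $\partial_x$-problem with zero-mean right-hand side, which is solved by a primitive in $x$, e.g. $b_{k+1}(s,z;x) = $ independent-of-$s$ primitive divided by $d$. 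This produces $b_{k+1} \in \widetilde{\mathcal{F}}_{k+1}^{\R}$ with the required tame estimates coming from the composition and flow estimates of Section~\ref{FLOWS} and Lemma~\ref{flusso-differenziale}. After $N-1$ homogeneous steps the residue lies in $\Sigma\mathcal{F}_N^{\R}[r,N]$; one final step determines the non-homogeneous part $b_N$ and $\mathtt m_N$ by a standard contraction mapping argument on a small ball of $\mathcal{F}_N^{\R}[r]$, using that the map $b_N \mapsto \text{``non-homogeneous error of } F\text{''}$ is a small Lipschitz perturbation of $d\int_0^1 \partial_x b_N\,ds$.

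\medskip

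\noindent\textbf{Main obstacle.} The delicate point is that the map $b \mapsto F(b)$ is genuinely non-linear because $b$ enters in \emph{all three} components of the flow, including the transport of the base point $\widetilde\Phi_b^{(z)}(1, \tilde z_0)$ through the para-differential equation. One must therefore control how a variation $\delta b$ propagates through $\widetilde \Phi_b^{(z)}$ in the $\mathcal F$-semi-norms \eqref{semi-norma-totale}, with the correct tame estimates and without losing the degree of homogeneity gained at the previous steps. Concretely, the key technical inputs are $(a)$ the differentiability of $\widetilde \Phi_b^{(z)}$ with respect to $b$ as a map into para-differential symbol classes, controlled via the estimates \eqref{stimaflusso3}; and $(b)$ the fact that each linear cohomological step preserves membership in $\widetilde{\mathcal F}_k^{\R}$, which follows from the stability of these classes under integration in $x$ of zero-mean functions and under composition with $\widetilde\Phi_b^{(x)}, \widetilde\Phi_b^{(z)}$ (cf. Remark~\ref{prodottoSimboli} and Proposition~\ref{composizioniTOTALI}). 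Once these two ingredients are available, the scheme closes and produces the desired $b \in \Sigma\mathcal{F}_1^{\R}[r,N]$.
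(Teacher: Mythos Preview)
Your Part~(i) reading is correct: the third equation of \eqref{Ham111prova} is the non-linear transport flow $\partial_\tau z=\opbw(\ii\,b(\tau,z;x)\xi)[z]$ of type~\eqref{sim1}, so Theorem~\ref{flussononlin} gives $\Phi_b^{(z)}$ directly, after which the $x$- and $\xi$-equations are ordinary ODEs driven by the already-known $z(\tau)$. The paper instead packages this as a single joint iteration (Lemma~\ref{iterative}) on $(x_n,\xi_n,z_n)$; this buys, in one pass, the semi-norm estimates \eqref{ind:1TOT}--\eqref{pioggia4teorema} for $\Psi_b^{(x)},\Psi_b^{(\xi)}$ \emph{as functions of $z_0$}, which your sequential sketch would still have to supply separately in order to conclude $\Psi_b^{(x)},\Psi_b^{(\xi)}\in\Sigma\mathcal{F}_1^\R[r,N]$.

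For Part~(ii) the strategies genuinely diverge. You propose a homogeneity expansion of $b$ and $m_b$, solving at each degree a linear $\partial_x$-equation and closing the tail by contraction. The paper instead passes through Theorem~\ref{formuleesplici}: it introduces a torus diffeomorphism $x\mapsto x+\beta(z,x)$ with inverse $y\mapsto y+\gamma(1,z,y)$, obtains the closed formulas \eqref{flussoLINE1}--\eqref{flussoLINE2} for $(x(\tau),\xi(\tau))$, and rewrites \eqref{equa814} as~\eqref{equa814bis}. It then runs a Picard-type fixed-point iteration \emph{in the full non-homogeneous class}: $\gamma_n$ is given explicitly by~\eqref{def:gamman}--\eqref{def:Mn} (a $\partial_x^{-1}$ applied to known data evaluated on the previous flow $\widetilde\Phi_{b_{n-1}}^{(z)}$), then $\beta_n$ by~\eqref{def:betan} and $b_n$ by~\eqref{def:bn}; convergence comes from Lemma~\ref{diffeFlussiZ} (precisely your ``main obstacle'') and Lemma~\ref{tendeazero}. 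The homogeneous expansion $b=\sum b_j$ is checked only \emph{a posteriori}. Both routes rest on the same flow-difference estimate; the paper's detour through $(\gamma,\beta)$ trades your degree-by-degree cohomological scheme for explicit closed formulas at every iterate, which makes the contraction cleaner but requires the additional structural result Theorem~\ref{formuleesplici}.
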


The proof of Theorem \ref{constEgo}
involves many different arguments 
that we shall study in the following subsections.

\subsubsection{Well-posedness of the flow (\ref{Ham111prova})}\label{wellwell52}
In this subsection we study the existence of the flow of \eqref{Ham111prova}
for \emph{any} generator $b(\tau;w,y)$ in the class $\mathcal{F}^{\mathbb{R}}_{1}[r]$.

\begin{theorem}{\bf (WP of \eqref{Ham111prova}).}\label{Buonaposit}
Consider the problem \eqref{Ham111prova} with $b$ in the class $\mathcal{F}^{\RRR}_{1}[r]$ for some small enough $r>0$. Then there exists $0<\tilde{r}\ll r $ such that if $z_0$ is in $B_{\tilde{r}}(H^s)$ the following holds true. There exists a solution of the problem \eqref{Ham111prova} 
with initial condition $z(0)=z_0$, $x(0)=x_0$, $\xi(0)=\xi_0$ 
of the form \eqref{nota1}-\eqref{nota3} 
for $\tau\in [0,1]$. 
In particular (recall \eqref{seminormagamma}, \eqref{semi-norma-totale}) one has
\begin{align}
& \sup_{\tau\in [0,1]}|\Psi_{b}^{(x)}(\tau)|^{\mathcal{F},s-1}_{\alpha,k}
\leq C|b|^{\mathcal{F}}_s\,; \label{ind:1TOT}\\
&  \sup_{\tau\in [0,1]}|\Psi^{(\xi)}_{b}(\tau)|^{\mathcal{F},s-2}_{\alpha,k}\leq C|b|^{\mathcal{F}}_{s-1}\,;\label{ind:2TOT}\\
& z (\tau)\in C^0(I,H^s)\cap C^1(I,H^{s-1}), \,\,\, \sup_{\tau\in [0,1]} \|z(\tau)\|_{s}\leq C\|z_0\|_{s}\label{ind:3TOT}\,,
\end{align}
\begin{equation}\label{pioggia4teorema}
\|(d_{u}^{k}z)(z_0)[h_1,\ldots,h_{k}]\|_{H^{s-mk}}\leq 
C 
\|h_1\|_{H^{s}}\cdots 
\|h_{k}\|_{H^{s}}\,,\qquad 
\forall \, 0\leq \tau\leq 1\,\quad \forall h_{i}\in H^{s}\,\;\; i=1,\ldots,k\,.
\end{equation}
\end{theorem}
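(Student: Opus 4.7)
The plan is to exploit the triangular structure of the system: inside the quantization $\opbw(ib(\tau,z;x)\xi)$ in the third line of \eqref{Ham111prova}, $x$ and $\xi$ denote symbol variables, so the $z$-equation is entirely decoupled from the characteristics $x(\tau),\xi(\tau)$. The $x$-equation then involves only $z(\tau)$ and $x(\tau)$ itself, and the $\xi$-equation is linear in $\xi$ with coefficient depending on $(z(\tau),x(\tau))$. First I would solve the $z$-equation by invoking Theorem \ref{flussononlin} in the case \eqref{sim1}, with generator $f(\tau,z;x,\xi)=b(\tau,z;x)\xi\in \Sigma\mathcal F_1^\R[r,N]$. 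This immediately yields the regularity in \eqref{ind:3TOT} together with the differential estimate \eqref{pioggia4teorema} (note that the loss $j=1$ in the abstract flow theorem combines with the calibration $d=m$ from Notation \ref{notazioni} in the bookkeeping of Faà di Bruno below).

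Next, with $z(\tau)$ in hand, the equation $\partial_\tau x=-b(\tau,z(\tau);x)$ is a classical ODE on $\mathbb T$ whose RHS is $C^\infty$ in $x$ and small: by Remark \ref{smallnessSemi} one has $\|b(\tau,z(\tau);\cdot)\|_{L^\infty_x}\lesssim_s |b|^{\mathcal F}_s\,\|z_0\|_{H^{s_0}}\lesssim_s |b|^{\mathcal F}_s\,\tilde r$, since $b\in \mathcal F_1^\R[r]$ vanishes at order $1$ in its functional argument. Writing $x(\tau)=x_0+\Psi^{(x)}_b(\tau,z_0,x_0)$ with $\Psi^{(x)}_b(0,\cdot,\cdot)=0$, the equation becomes $\partial_\tau \Psi^{(x)}_b=-b(\tau,z(\tau);x_0+\Psi^{(x)}_b)$, and a Picard iteration on $C^0([0,1];C^{s-s_0-1}(\mathbb T_{x_0}))$ produces a unique solution (contracting because the Lipschitz constant in $x$ is of order $|b|^{\mathcal F}_s\,\tilde r$). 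The $\xi$-equation is then solved by quadrature, yielding
\begin{equation*}
\xi(\tau)=\xi_0\exp\!\left(\int_0^\tau b_x(\sigma,z(\sigma);x(\sigma))\,d\sigma\right)=:\xi_0\bigl(1+\Psi^{(\xi)}_b(\tau,z_0,x_0)\bigr),
\end{equation*}
so $\xi_0$ factors out as predicted in \eqref{nota3}, and $\Psi^{(\xi)}_b(0,\cdot,\cdot)=0$.

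The hard part will be verifying that $\Psi^{(x)}_b$ and $\Psi^{(\xi)}_b$ belong to the non-homogeneous class $\mathcal F_1^\R[r]$ with the semi-norm bounds \eqref{ind:1TOT}, \eqref{ind:2TOT}. The idea is to differentiate the fixed-point equation $\partial_\tau \Psi^{(x)}_b=-b(\tau,z(\tau);x_0+\Psi^{(x)}_b)$ exactly $k$ times in $z_0$ along directions $h_1,\dots,h_k$ and $\alpha$ times in $x_0$. Expanding with Faà di Bruno, every $z_0$-derivative landing on $z(\tau)$ must be estimated by \eqref{pioggia4teorema}, which costs $m\cdot k$ derivatives of Sobolev regularity and produces $\prod_j\|h_j\|_{H^s}$; every $z_0$-derivative landing on $\Psi^{(x)}_b$ itself is handled inductively; each pure $x_0$-derivative costs one derivative in the $\mathcal F^\R$-semi-norm of $b$, producing the loss of one Sobolev index visible in \eqref{ind:1TOT}. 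Gronwall in $\tau$ then closes the estimate, provided $r$ (and therefore $\tilde r$) is chosen small enough that the contraction constant $C(s)\,|b|^{\mathcal F}_s\,\tilde r$ is $<1$ in all the relevant semi-norms simultaneously. For \eqref{ind:2TOT} one differentiates the explicit exponential formula for $\Psi^{(\xi)}_b$, picking up one extra $x$-derivative on $b$ (hence the additional loss of one Sobolev degree compared with \eqref{ind:1TOT}); the constraint $\alpha+dk\le s-s_0$ in Definition \ref{pomosimb}$(ii)$ is exactly the total loss produced by this procedure, a matching that motivates the choice $d=m$ fixed in Notation \ref{notazioni}.
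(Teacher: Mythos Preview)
Your approach is correct and takes a genuinely different route from the paper's. You exploit the triangular structure---$z(\tau)$ depends only on $z_0$ (as asserted in \eqref{nota1}), so the third equation is precisely the nonlinear para-differential flow \eqref{flusso} with generator \eqref{sim1}---and you solve it first by citing Theorem~\ref{flussononlin}, then feed $z(\tau)$ into the $x$-ODE, and finally into the linear $\xi$-equation by quadrature. The paper instead runs a \emph{joint} iterative scheme \eqref{eq:Xn}--\eqref{eq:Zn} on $(x_n,\xi_n,z_n)$ (Lemma~\ref{iterative}), establishing uniform bounds $(S1)_n$ and Cauchy-type estimates $(S2)_n$ simultaneously for all three sequences, and then passes to the limit via weak-$*$ compactness in the symbol class together with strong convergence in lower Sobolev norms. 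Your route is more economical because it reuses Theorem~\ref{flussononlin} rather than essentially reproving it, and it avoids juggling three interlocked sequences; the paper's joint scheme has the minor advantage that the semi-norm bounds on $\Psi^{(x)}$ and $\Psi^{(\xi)}$ emerge in the same induction, whereas you must establish them afterward by the Faà di Bruno/Gronwall argument you sketch. That computation is essentially the paper's \eqref{faadibruno}--\eqref{fattore:3}, so the technical core is shared. One small refinement: Theorem~\ref{flussononlin} in case \eqref{sim1} actually gives a loss of only $k$ (not $mk$) derivatives for $d^k_{z_0}z$, so your bound is stronger than \eqref{pioggia4teorema}; the constraint $\alpha+dk\le s-s_0$ with $d=m$ is nonetheless the correct one, since it is imposed by the target class $\mathcal F_1^\R[r]$ for $\Psi^{(x)}_b$, $\Psi^{(\xi)}_b$ (Definition~\ref{pomosimb}(ii)), not by the loss on $z$.
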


\noindent
The proof of the theorem above 
is based on the following iterative scheme.
Define   for any $n\geq 1$
the following system of equations
\begin{align}
&\pa_{\tau}x_n=-b(\tau,z_{n-1};x_{n}) \label{eq:Xn}\\
&\pa_{\tau}\x_{n}=(\pa_{x}b)(\tau,z_{n-1};x_{n})\x_{n}\label{eq:XXn}\\
&\pa_{\tau}z_{n}=\opbw\big(\ii b(\tau,z_{n-1};x_{n-1})\x_{n-1}\big)[z_{n}]\label{eq:Zn}\,,\\
&\left(
x_n(0), \x_n(0) , z_n(0)
\right)=\left(
x_0 , \x_0 , z_0\right)\,.\nonumber
\end{align}
We shall prove that the sequence of solutions of the problem above converges to a solution of the system \eqref{Ham111prova}.
To start, in the following lemma, we shall prove that, if $z_0$ is small enough in $H^s$ for some $s\gg1$, for any $n$ in $\N^*$ there are functions $\Psi_n^{(x)}$, $\Psi_n^{(\xi)}$ in $\mathcal{F}^{\R}_{1}[r]$ such that the solutions of  \eqref{eq:Xn} and \eqref{eq:XXn} are of the form
\begin{equation}\label{ansatzXn}
x_{n}(\tau)=x_0+\Psi_n^{(x)}(\tau,z_0,x_0)\,, \quad \Psi_n^{(x)}\in \mathcal{F}^{\mathbb{R}}_{1}[r],
\end{equation}
\begin{equation}\label{ansatzXXn}
\x_n(\tau)
=\x_0\Big(1+\Psi_n^{(\x)}(\tau,z_0,x_0)\Big)\,, \quad \Psi_n^{(\x)}\in \mathcal{F}^{\mathbb{R}}_{1}[r]\,.
\end{equation}
Moreover we shall prove that $\{\Psi_n^{(x)}\}_{n\in\N^*}$, $\{\Psi_n^{(\xi)}\}_{n\in\N^*}$ are converging  sequences in the  space $\mathcal{F}^{\R}_{1}[r]$. 
Furthermore we prove that for any $n$ in $\N^*$ the equation \eqref{eq:Zn} admits a solution $z_n$ such that $\{z_n\}_{n\in\N^*}$ is a Cauchy  sequence in $H^{s-1}$ and it is bounded in $H^s$.

\begin{lemma}{\bf (Iterative Lemma).}\label{iterative}
Consider $b$ a  function in $\mathcal{F}^{\R}_{1}[r]$. There exist $\tilde{r}>0$ and $s>0$ respectively small and  big enough,  such that if $z_0\in B_{\tilde{r}}(H^s)$ the following holds true. For any $n\in\N^*$ there exist a unique solution $(x_n(\tau),\xi_n(\tau),z_n(\tau))$ of the system made of equations \eqref{eq:Xn}, \eqref{eq:XXn}, \eqref{eq:Zn} with initial condition $(x_n(0)=x_0,\xi_n(0)=\xi_0,z_n(0)=z_0)$ satisfying  the following properties.
\begin{description}
\item [$(S1)_{n}$] $x_n(\tau)$ and $\xi_n(\tau)$ have the form \eqref{ansatzXn} and \eqref{ansatzXXn} respectively.
Moreover, recalling the notation \eqref{semi-norma-totale}, there exists a constant $C>0$ depending on $\tilde{r}$  and $s$ (independent on $n$) such that  
\begin{align}
& \sup_{\tau\in [0,1]}|\Psi^{(x)}_n(\tau)|^{\mathcal{F},s}_{\alpha,k}\leq C|b|^{\mathcal{F}}_s; \label{ind:1}\\
&  \sup_{\tau\in [0,1]}|\Psi^{(\xi)}_n(\tau)|^{\mathcal{F},s-1}_{\alpha,k}\leq C|b|^{\mathcal{F}}_{s};\label{ind:2}\\
& z_{n} (\tau)\in C^0(I,H^s)\cap C^1(I,H^{s-1}), \,\,\, \sup_{\tau\in I} \|z_n(\tau)\|_{s}\leq C\|z_0\|_{s}\label{ind:3};\\
& b(\tau,z_{n-1}(\tau),x_{n-1}(\tau))\,\, \mbox{satisfies \eqref{maremma2} for any } \alpha \mbox{ and } k \mbox{ such that } \alpha+mk\leq s-s_0.\label{ind:4}
\end{align}
\item[$(S2)_{n}$] We have the following estimates, recall \eqref{semi-norma-totale2},
\begin{align}
&|\Psi_{n}^{(x)}-\Psi^{(x)}_{n-1}|^{\mathcal{F}}_{s-1,0} \leq 2^{-n} ;\label{ind_2:1}\\
&|\Psi_{n}^{(\xi)}-\Psi_{n}^{(\xi)}|_{s-2, 0}^{\mathcal{F}} \leq 2^{-n} ;\label{ind_2:2}\\
& \|z_{n}-z_{n-1}\|_{H^{s-1}}\leq 2^{-n} \tilde{r}\label{ind_2:3}.
\end{align}
\end{description}
\end{lemma}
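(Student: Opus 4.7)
The plan is a double induction on $n$ establishing $(S1)_n$ and $(S2)_n$. The base step $n=1$ is essentially trivial: since $z_{-1}, z_0 \equiv z_0$ is frozen, \eqref{eq:Xn} becomes a standard scalar ODE on $\mathbb{T}$, \eqref{eq:XXn} is a linear scalar ODE, and \eqref{eq:Zn} is a linear para-differential equation of the type covered by Lemma \ref{flusso-differenziale} with generator in the class \eqref{sim1}. For the inductive step, assume $(S1)_{n-1}$ and $(S2)_{n-1}$ hold and treat the three equations in the listed order.

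\emph{Equation for $x_n$.} Writing the ansatz \eqref{ansatzXn}, \eqref{eq:Xn} becomes the fixed-point relation
\begin{equation*}
\Psi_n^{(x)}(\tau,z_0,x_0) = -\int_0^\tau b\bigl(s,z_{n-1}(s,z_0);\, x_0+\Psi_n^{(x)}(s,z_0,x_0)\bigr)\,ds.
\end{equation*}
Since $|b|^{\mathcal{F}}_{s_0} \lesssim r$, Remark \ref{smallnessSemi} and a contraction argument in $C^0([0,1];C^\alpha_x)$ yield a unique $\Psi_n^{(x)}$. To verify $\Psi_n^{(x)} \in \mathcal{F}^{\mathbb{R}}_{1}[r]$ with the bound \eqref{ind:1}, I differentiate this identity in $(z_0,x_0)$: the Faa di Bruno formula together with Remarks \ref{prodottoSimboli}, \ref{simbolosulladiagonalestima} and the inductive bound \eqref{pioggia4teorema}-type estimate on $z_{n-1}$ (supplied by Lemma \ref{flusso-differenziale} applied in Step 3 at level $n-1$) reduces everything to $|b|^{\mathcal{F}}_{s}$. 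The constraint $\alpha+mk\le s-s_0$ appears because each $z_0$-differentiation costs $m$ derivatives through the symbol estimate \eqref{maremma2} on $b$, which is exactly what makes Notation \ref{notazioni} ($d=m$) the right choice for closure.

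\emph{Equation for $\xi_n$.} With $x_n$ in hand, \eqref{eq:XXn} integrates explicitly:
\begin{equation*}
\xi_n(\tau)=\xi_0\exp\!\Bigl(\int_0^\tau (\partial_x b)\bigl(s,z_{n-1}(s);\,x_n(s)\bigr)\,ds\Bigr),
\end{equation*}
so the ansatz \eqref{ansatzXXn} holds with $\Psi_n^{(\xi)}=\exp(\cdot)-1$, which is in $\mathcal{F}^{\mathbb{R}}_{1}[r]$ by Remark \ref{prodottoSimboli}; the loss of one $x$-derivative in \eqref{ind:2} with respect to \eqref{ind:1} is caused by the factor $\partial_x b$.

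\emph{Equation for $z_n$.} By \eqref{ind:1}--\eqref{ind:2} at step $n-1$ together with \eqref{ind:3} at step $n-1$, Remark \ref{super-simboli} ensures that the symbol $b(\tau,z_{n-1};x_{n-1})\,\xi_{n-1}$ belongs to the class \eqref{sim1}, uniformly in $\tau$. Thus \eqref{eq:Zn} is a \emph{linear} para-differential equation whose well-posedness and estimates are given by Lemma \ref{flusso-differenziale} (equivalently by Theorem \ref{flussononlin} with $j=1$), yielding \eqref{ind:3} and \eqref{pioggia4teorema} with a constant independent of $n$. This completes $(S1)_n$; statement \eqref{ind:4} then follows directly from Remark \ref{super-simboli}.

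For $(S2)_n$, set $\delta x_n := x_n - x_{n-1}$, $\delta\xi_n := \xi_n - \xi_{n-1}$, $\delta z_n := z_n - z_{n-1}$. Subtracting the equations at levels $n$ and $n-1$ and applying the mean value theorem to $b$ gives
\begin{equation*}
\partial_\tau(\delta x_n) = - A_n(\tau)\,\delta x_n - B_n(\tau)[\delta z_{n-1}],
\end{equation*}
where $A_n:=\int_0^1(\partial_x b)(\tau,z_{n-1};x_{n-1}+\theta\delta x_n)\,d\theta$ and $B_n:=\int_0^1 d_z b(\tau,z_{n-2}+\theta\delta z_{n-1};x_{n-1})\,d\theta$, both bounded in $\mathcal{F}$-semi-norm (at order $s-1$) by $|b|^{\mathcal{F}}_{s}$. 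A Gronwall argument, using $|b|^{\mathcal{F}}_{s}\lesssim r$, delivers $|\delta\Psi_n^{(x)}|^{\mathcal{F}}_{s-1,0}\le Cr\bigl(|\delta\Psi_{n-1}^{(x)}|^{\mathcal{F}}_{s-1,0}+\|\delta z_{n-1}\|_{H^{s-1}}\bigr)$, which by $(S2)_{n-1}$ yields \eqref{ind_2:1} when $Cr\le1/2$; the same scheme, losing an extra $x$-derivative, gives \eqref{ind_2:2}. Finally $\delta z_n$ solves
\begin{equation*}
\partial_\tau(\delta z_n) = \opbw\bigl(\ii\,b(\tau,z_{n-1};x_{n-1})\xi_{n-1}\bigr)[\delta z_n] + \opbw\bigl(\ii\,\Delta_n\bigr)[z_{n-1}],
\end{equation*}
with $\Delta_n := b(\tau,z_{n-1};x_{n-1})\xi_{n-1} - b(\tau,z_{n-2};x_{n-2})\xi_{n-2}$; Proposition \ref{AzioneParaMet} bounds $\|\opbw(\Delta_n)[z_{n-1}]\|_{H^{s-1}}$ by $Cr(\|\delta z_{n-1}\|_{H^{s-1}}+|\delta\Psi_{n-1}^{(x)}|^{\mathcal{F}}_{s-1,0}+|\delta\Psi_{n-1}^{(\xi)}|^{\mathcal{F}}_{s-2,0})$, and Duhamel together with \eqref{pioggia2} gives \eqref{ind_2:3}.

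The main obstacle is the bookkeeping in Step~1 of $(S1)_n$: verifying that $\Psi_n^{(x)}$ belongs to $\mathcal{F}^{\mathbb{R}}_{1}[r]$ with the sharp semi-norm bound \eqref{ind:1} requires propagating through a nonlinear fixed-point equation both the $x$-regularity ($\alpha$) and the $z_0$-regularity ($k$, which costs $m$ derivatives per differentiation via \eqref{maremma2} and \eqref{pioggia4teorema}). The fact that the losses balance exactly under $\alpha+mk\le s-s_0$ is precisely what forces the choice $d=m$ in Notation \ref{notazioni} and what forces the iteration \eqref{ind_2:1}--\eqref{ind_2:3} to close only one Sobolev level below the base estimate $(S1)_n$.
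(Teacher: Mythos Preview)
Your inductive scheme matches the paper's: Fa\`a di Bruno for the seminorm bounds in $(S1)_n$ (the computation you sketch is exactly the paper's \eqref{faadibruno}--\eqref{fattore:3}), and difference equations combined with Duhamel plus the linear-flow estimate for $(S2)_n$. One minor structural difference: you solve \eqref{eq:Xn} as an honest fixed point in $\Psi_n^{(x)}$ via contraction, whereas the paper writes $\Psi_{n+1}^{(x)}$ as an explicit integral in terms of $\Psi_n^{(x)}$; both routes feed into the same Fa\`a di Bruno estimate.

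There is, however, a recurring slip you should fix: the assertions ``$|b|^{\mathcal{F}}_{s_0}\lesssim r$'' and ``$|b|^{\mathcal{F}}_{s}\lesssim r$'' are false. The seminorm $|b|^{\mathcal{F}}_s$ is a fixed constant attached to $b\in\mathcal{F}^{\mathbb R}_1[r]$ and need not be small. The smallness that drives the contraction for $\Psi_n^{(x)}$ and closes the Gronwall/Duhamel loop in $(S2)_n$ is $\|z_{n-1}\|_{H^{s_0}}\lesssim\tilde r$, which enters through Remark~\ref{smallnessSemi} (and \eqref{maremma2}) as $\|\partial_x^{\alpha}b(z_{n-1};\cdot)\|_{L^\infty_x}\lesssim |b|^{\mathcal{F}}_s\,\tilde r$. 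The correct closure condition is therefore $C\,|b|^{\mathcal{F}}_s\,\tilde r\le 1/2$, exactly as in the paper's proof of $(S2)_{n+1}$; once you replace $r$ by $|b|^{\mathcal{F}}_s\,\tilde r$ in your Gronwall and Duhamel steps, the argument goes through unchanged.
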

\begin{proof}
We proceed by induction over $n$ in $\N^*$. In the case that $n=1$ we have
\begin{equation}\label{x1xi1}
\begin{aligned}
x_1(\tau)=x_0-\int_0^{\tau}b(s,z_0;x_0)ds, \quad
\xi_1(\tau)=\xi_0\Big(1+\int_0^{\tau}(\partial_xb)(s,z_0;x_0)ds\Big).
\end{aligned}
\end{equation}
Concerning the solution of \eqref{eq:Zn}, in the case that $n=1$, one has to reason as done in Lemma 3.22 in \cite{BD}, obtaining a solution $z_1(\tau)$ 
belonging to $C^0([0,1],H^s)\cap C^1([0,1],H^{s-1})$ and such that 
\begin{equation*}
\sup_{\tau\in I}\|z_1(\tau)\|_{H^s}\leq C_{s,r}\|{z_0}\|_{H^s},
\end{equation*}
therefore if $z_0$ is small enough in $H^s$ the statement $(S1)_{1}$ is proved by setting
\begin{equation*}
\begin{aligned}
\Psi_1^{(x)}(\tau,z_0;x_0)&=-\int_0^{\tau}b(s,z_0;x_0)ds,\\
\Psi_1^{(\xi)}(\tau,z_0;x_0)&=\int_0^{\tau}(\partial_x b)(s,z_0;x_0)ds.
\end{aligned}\end{equation*} 
We recall that the \eqref{ind:4} is trivial and follows immediately from the fact that  
function $b$ belongs to $\mathcal{F}_{1}^{\mathbb{R}}[r]$.
We now prove $(S2)_1$. The function $v(\tau):=z_1(\tau)-z_0$ solves the problem
\begin{equation*}
\partial_{\tau}v(\tau)=\opbw(\ii b(\tau;z_0,x_0)\xi_0)v(\tau)+\opbw(\ii b(\tau;z_0,x_0)\xi_0)z_0,
\end{equation*}
therefore, by using the Duhamel formulation and Lemma 3.22 in \cite{BD} we obtain
\begin{equation*}
\|v(t)\|_{s-1}\leq C\|z_0\|_{s_0}\|z_0\|_{s},
\end{equation*}
therefore it is enough to choose $\|z_0\|_{s_0}$ small enough to satisfy the third condition in $(S2)_1$. We obtain the first line in $(S2)_1$  by using equation \eqref{x1xi1}. The reasoning for the second line in $(S2)_1$ is similar. 

We suppose that $(S1)_{n}$, $(S2)_{n}$ hold true and we prove $(S1)_{n+1}$, $(S2)_{n+1}$. We start by showing the \eqref{ind:1} of $(S1)_{n+1}$, we have that 
\begin{equation*}
\begin{aligned}
x_{n+1}(\tau)-x_0&= -\int_{0}^{\tau}b(\sigma,z_n(\sigma),x_n(\sigma))d\s\\
&=-\int_{0}^{\tau}b\big(\sigma,z_n(\sigma),x_0+\Psi^{(x)}_n(\tau,z_0,x_0)\big)d\s
:=\Psi_{n+1}^{(x)}(\tau,z_0,x_0).
\end{aligned}
\end{equation*}
We want to bound the semi-norm 
$|\Psi_{n+1}^{(x)}(\tau,z_0,x_0)|^{\mathcal{F}}_{\alpha,k}$ 
for any $\alpha$ and $k$ satisfying 
$\alpha+mk\leq s-s_0$.
We shall do the computation, for simplicity, in the case $\alpha=0$. 
If $\alpha>0$ the reasoning is similar 
but the computation is much more tedious. 
Let $K\leq k$ where $k$ is as in \eqref{ind:4}, we have
\begin{equation*}
D_{z}^K\Psi_{n+1}^{(x)}(\tau,z,x)=-\int_0^{\tau}D_z^{K}
b(\sigma,z_n;x+\Psi_{n}(z,x))[h_1,\ldots,h_K]d\s\,.
\end{equation*}
We expand the term inside the integral in the equation 
above by using the formula for the derivatives of the composition of functions, obtaining
\begin{equation}\label{faadibruno}
\begin{aligned}
\sum_{k=1}^K&\sum_{k_1+k_2=K}\sum_{\nu_1=1}^{k_1}\sum_{\nu_2=1}^{k_2}\sum_{p_1+\ldots+p_{\nu_1}=k_1}\sum_{q_1+\ldots+q_{\nu_2}=k_2}C_{q_{1},\ldots,q_{\nu_2}}^{p_1,\ldots,p_{\nu_1}}
\prod_{j=1}^{\nu_1}\Big(D_{z_0}^{p_j}(x_0+\Psi^{(x)}_n(x_0,z_0))\left[h_{p_j,1},\ldots,h_{p_j,p_j}\right]\Big)\\
&\partial_y^{k_1}D_{z_n}^{k_2}b(z_n,x_0+\Psi_n^{(x)}(x_0,z_0))
\left[D_{z_0}^{q_1}z_n\left[h_{q_1,1},\ldots,h_{q_1,q_1}\right],\ldots,D_{z_0}^{q_{\nu_2}}z_n\left[h_{q_{\nu_2},1},\ldots,h_{q_{\nu_2},q_{\nu_2}}\right]\right]\,,
\end{aligned}\end{equation}
where we denoted 
$y=x_0+\Psi_n^{(x)}(x_0,z_0)$ and by 
$C_{q_{1},\ldots,q_{\nu_2}}^{p_1,\ldots,p_{\nu_1}}$ 
some combinatorial coefficients. 
We estimate the absolute value of the general term in the sum above. 
The first factor may be bounded from above by
\begin{equation}\label{fattore:1}
\prod_{j=1}^{\nu_1}
\Big|D_{z}^{p_j}(x_0+\Psi_n^{(x)}(x_0,z_0))[h_{p_j,1},\ldots h_{p_j,p_j}]
\Big|\leq
C(1+|\Psi_n^{(x)}|_{p_j,0}^{\mathcal{F},s})
\norm{z_0}_{s_0}^{\max\{0,1-p_j\}}
\prod_{j=1}^{\nu_1}\norm{h_j}_{s_0}\,,
\end{equation}
here we have used just the definition of symbol 
(more precisely the definition of a function independent on $\xi$) 
and of semi-norm, i.e.  \eqref{maremma2} and \eqref{seminormagamma}. 
For the second factor we have
\begin{equation}\label{fattore:2}
\begin{aligned}
\Big|\partial_y^{k_1}&D_{z_n}^{k_2}b(z_n;x_0+\Psi_n^{(x)}(x_0,z_0))
\Big[D_{z_0}^{q_1}z_n[h_{q_1,1},\ldots, h_{q_1,q_1}],\ldots, 
D_{z_0}^{q_{\nu_2}}z_n[h_{q_{\nu_2},1},\ldots, 
h_{q_{\nu_2},q_{\nu_2}}]\Big]\Big|\leq\\
&|b|^{\mathcal{F},s}_{k_1,k_2}\Big\{\max\{0,1-k_2\}\norm{z_n}_{s_0}^{\max\{0,p-k_2-1\}}
\norm{z_n}_{s_0+k_1}\prod_{j=1}^{k_2}
\norm{D_{z_0}^{q_j}z_n[h_{q_{j},1},\ldots,h_{q_j,q_j}]}_{s_0}\\
+&\norm{z_n}_{s_0}^{\max\{0,1-k_2\}}\sum_{i=1}^{k_2}
\prod_{j=1,j\neq i}^{k_2}\norm{D_{z_0}^{q_j}z_n[h_{q_{j},1},\ldots, 
h_{q_{j},q_{j}}]}_{s_0}\norm{D_{z_0}^{q_i}z_n[h_{q_{i},1},
\ldots, h_{q_{i},q_{i}}]}_{s_0+k_1}\Big\}\,.
\end{aligned}
\end{equation}
Thanks to the inductive hypothesis \eqref{ind:4} we can apply 
the Lemma \ref{flusso-differenziale}, therefore we can 
estimate from above the r.h.s. of \eqref{fattore:2} by 
\begin{equation}\label{fattore:3}
\begin{aligned}
C|b|_{k_1,k_2}^{\mathcal{F},s}&\Big\{\max\{0,1-k_2\}
\norm{z_0}_{s_0}^{\max\{0,p-k_2-1\}}\norm{z_0}_{s_0+k_1}
\prod_{j=1}^{k_2}\prod_{i=1}^{j}\norm{h_{j,i}}_{s_0}\\
&+\norm{z_0}_{s_0}^{\max\{0,1-k_2\}}\prod_{j=1}^{k_2}
\prod_{i=1}^{j}\norm{h_{j,i}}_{s_0+k_1}\Big\}\,.
\end{aligned}
\end{equation}
Putting together \eqref{fattore:1}, \eqref{fattore:2} 
and \eqref{fattore:3} we obtain the bound 
\eqref{maremma2} for the general term of the sum in 
\eqref{faadibruno} up to renaming, with abuse of notation, 
$s_0\rightsquigarrow s_0+k_1$ with the constant $C$ in \eqref{maremma2} replaced by 
\begin{equation*}
C\prod_{j=1}^{\nu_1}\big(1+|\Psi_n^{(x)}|^{\mathcal{F}}_{p_j,0}\big)
|b|_{k_1,k_2}^{\mathcal{F}}\,,
\end{equation*}
which, by using the inductive hypothesis is bounded by 
$C(1+|b|^{\mathcal{F}}_{k_1,0})^{\nu_1}|b|^{\mathcal{F}}_{k_1,k_2}$. 
Therefore one obtains \eqref{ind:1} by using the smallness of $r$.

The proof of \eqref{ind:2} of $(S1)_{n+1}$ is similar. One can also deduce the \eqref{ind:4} of  $(S1)_{n+1}$ by equation \eqref{faadibruno}. Concerning \eqref{ind:3} of $(S1)_{n+1}$ 
one has to reason as done in Section 3 of \cite{BD} 
by  recalling that \eqref{ind:4} and 
\eqref{ind:3} of $(S1)_n$ hold true.

We now pass to the proof of $(S2)_{n+1}$, starting from \eqref{ind_2:1}. By using the fundamental calculus' theorem we obtain
\begin{equation*}
\begin{aligned}
x_{n+1}-x_n=&\int_0^{\tau}b(\sigma,z_{n-1};x_{n-1})-b(\sigma,z_n;x_n) d\s\\
=&\int_0^{\tau}\int_0^1\partial_x b\big(\sigma,z_{n-1};x_{n-1}
+\gamma(x_n-x_{n-1})\big)(x_{n-1}-x_n)\\
&\quad\quad+D_{z}b\big(\sigma,z_{n-1}
+\gamma(z_n-z_{n-1});x_n\big)[z_{n-1}-z_n]d\gamma d\s\,.
\end{aligned}
\end{equation*}
For  the first addendum inside the integral of the r.h.s. 
of the equation above we can proceed as follows. 
Let $\alpha$ such that $\alpha\leq s-s_0-1$, we have
\begin{equation}\label{uffa}
\begin{aligned}
\partial_{x_0}^{\alpha}&\big[\partial_x b\big(\sigma,z_{n-1};
x_{n-1}+\gamma(x_n-x_{n-1})\big)(x_{n-1}-x_n)\big]=\\
&\sum_{\alpha_1+\alpha_2=\alpha}C_{\alpha_1,\alpha_2}
\Big(\partial_{x_0}^{\alpha_1}\partial_xb\big(\sigma,z_{n-1};
x_{n-1}+\gamma(x_n-x_{n-1})\big)\partial_{x_0}^{\alpha_2}(x_{n-1}-x_n)\,,
\end{aligned}
\end{equation}
Therefore we may deduce, by using 
$\eqref{ind_2:1}$ of $(S2)_n$ and the formula for the 
derivatives of the composition of functions, that
\begin{equation*}
\begin{aligned}
|\partial_xb\big(\sigma,z_{n-1};x_{n-1}+&\gamma(x_n-x_{n-1})\big)
(x_{n-1}-x_n)|^{\mathcal{F},s}_{\alpha,0} \\
& \leq |b|_{s-1}^{\mathcal{F}} \norm{z_{n-1}}_{s_0}|x_{n-1}-x_n|_{\alpha,0}^{\mathcal{F},s} 
\leq |b|_{s-1}^{\mathcal{F}}Cr2^{-n}\,,
\end{aligned}
\end{equation*}
from which the thesis follows if $r$ is chosen small enough in such a way that $rC|b|_s^{\mathcal{F}}\leq1/2$. The computation is similar for the other addendum of \eqref{uffa}, therefore \eqref{ind_2:1} of $(S2)_{n+1}$ is proved.
The proof of \eqref{ind_2:2} of $(S2)_{n+2}$ is analogous. 

For the proof of \eqref{ind_2:3} we define the following quantities and we reason as follows:
\begin{equation*}
\begin{matrix}
&b_{n}:= b(\tau, z_{n};x_0+\Psi^{(x)}_{n}(x_0,z_0))(1+\Psi^{(\xi)}_{n}(x_0,z_0)),  &b_{n-1}:= b(\tau, z_{n-1};x_0+\Psi^{(x)}_{n-1}(x_0,z_0))(1+\Psi^{(\xi)}_{n-1}(x_0,z_0)),\\
&f_n:=\opbw\big(\ii(b_n-b_{n-1})\xi_0\big)z_{n}, &v_{n+1}:=z_{n+1}-z_n.
\end{matrix}
\end{equation*}
Since the function $z_{n+1}$ solves the problem
\begin{equation*}
\partial_{\tau}z_{n+1}=\opbw\Big(\ii b(\tau,z_{n};x_0+\Psi^{(x)}_n(x_0,z_0))(1+\Psi^{(\xi)}_{n}(x_0,z_0))\xi_0\Big)[z_{n+1}],
\end{equation*}
(and since the function $z_n$ solves the same problem up to relabelling $n\rightsquigarrow n-1$) we have that 
\begin{equation}\label{v_n}
\partial_{\tau}v_{n+1}=\opbw(\ii b_n\xi_0)v_{n+1} + f_n.
\end{equation}
Therefore, by using Theorem 3.5.2 in \cite{BD} and the Duhamel principle, we obtain 
\begin{equation*}
v_{n+1}(\tau)=\Theta_n^{\tau}\int_0^{\tau}\big[\Theta^{\tau'}_n\big]^{-1}f_n(\tau')d\tau',
\end{equation*} 
where $\Theta^\tau_{n}$ is the flow of the equation \eqref{v_n} with $f_n=0$.  From the equation above   one infers that 
\begin{equation*}
\begin{aligned}
\norm{v_{n+1}(\tau)}_{s-1}&\leq C\norm{z_n}_{s}\norm{b_n-b_{n-1}}_{s_0}
\leq C r\norm{z_n-z_{n-1}}_{s_0}\leq Cr2^{-n}r\,,
\end{aligned}
\end{equation*}
where in the last inequality we have used the \eqref{ind_2:3} of $(S2)_n$. 
The thesis is proved if one choses $r$ such that $rC\leq1/2$.
\end{proof}

We are ready to prove the existence of the flow of equation \eqref{Ham111prova}.

\begin{proof}[{\bf Proof of Theorem \ref{Buonaposit}}]
For any $n\geq 1$ we consider the system made by the equations \eqref{eq:Xn}, \eqref{eq:XXn} and \eqref{eq:Zn}, with initial conditions $z_n(0)=z_0$, $x_n(0)=x_0$, $\xi_n(0)=\xi_0$. By Lemma \ref{iterative} and Remark \ref{frechet} we have, up to extracting a subsequence, a sequence of solutions of the form \eqref{nota2} and \eqref{nota3}
such that
\begin{equation*}
\begin{aligned}
&\Psi^{(x)}_n\rightharpoonup^* \widetilde{\Psi}^{(x)} \mbox{ in } 
\mathcal{F}^{\mathbb{R}}_{1}[r]\,, \,\,\, \partial_x^{\alpha}\Psi^{(x)}_n\rightarrow\partial_{x}^{\alpha} \Psi^{(x)} 
\mbox{ uniformly in $x_0$ for } \alpha\leq s-s_0-1\,,\\
&\Psi^{(\x)}_n\rightharpoonup^* \widetilde{\Psi}^{(\x)}
\mbox{ in } \mathcal{F}^{\mathbb{R}}_{1}[r], \,\,\, 
\partial_{x}^{\alpha}\Psi^{(\x)}_n\rightarrow\partial_{x}^{\alpha} 
\Psi^{(\x)} \mbox{ uniformly in $x_0$ for } \alpha\leq s-s_0-2\,,\\
&z_n\rightharpoonup^* \tilde{z} \mbox{ in } L^{\infty}([0,1],H^s)\,, 
\,\,\, z_n\rightarrow z \mbox{ in } L^{\infty}([0,1],H^s)\,.
\end{aligned}
\end{equation*}
We deduce that $\widetilde{\Psi}^{(x)}=\Psi^{(x)}$, $\widetilde{\Psi}^{(\x)}=\Psi^{(\x)}$ and 
$\tilde{z}=z$. We claim that the triple in 
\eqref{soluzione} solves respectively the third, 
the first and the second  equation in \eqref{Ham111prova}. 
Let us consider the third equation for instance. 
We want to show that there exists $s'>0$ such that 
\begin{equation*}
\norm{\opbw(b(\tau,z;x)\ii\xi)z-\opbw(b(\tau,z_n;x_n)\ii\xi_n)z_n}_{s'}\rightarrow 0\,,
\end{equation*}
when $n$ goes to infinity. 
We can estimate from above the preceding inequality by
\begin{equation*}
\begin{aligned}
&\norm{\opbw(b(\tau,z;x)\ii\xi)[z-z_n]}_{s'}
+ \norm{\opbw(b(\tau,z;x)\ii\xi)[z_n]-\opbw(b(\tau,z;x)\ii\xi_n)[z_n]}_{s'}\\
+&\norm{\opbw(b(\tau,z;x)\ii\xi_n)[z_n]-\opbw(b(\tau,z;x_n)\ii\xi_n)[z_n]}_{s'}\\+&\norm{\opbw(b(\tau,z;x_n)\ii\xi_n)[z_n]-\opbw(b(\tau,z_n;x_n)\ii\xi_n)[z_n]}_{s'}\,.
\end{aligned}
\end{equation*}
The first summand is bounded by $C\norm{z}_{s_0}\norm{z-z_n}_{s'+1}$. The second one may be bounded by $C\norm{z}_{s_0}\|\Psi^{(\x)}_n-\Psi^{(\xi)}\|_{s_0}\norm{z_n}_{s'}$, and similarly the others. Therefore it is enough to choose $s'+1\leq s-1$. The regularity of the solution $z$ may be deduced in a classical way by using the third equation in \eqref{Ham111prova}.
One can  prove the \eqref{pioggia4teorema}
by using  the \eqref{ind:1TOT}, \eqref{ind:2TOT}, by differentiating the third 
equation in \eqref{Ham111prova} and reasoning as in Lemma \ref{flusso-differenziale}.
\end{proof}

\begin{remark}\label{differenzioInver}
One can note that the inverse flow $\widetilde{\Phi}^{(z)}_{b}(1,z)$
of \eqref{Ham111prova} satisfies estimates  similar to \eqref{pioggia4teorema}.
This follows by differentiating the relation 
$\widetilde{\Phi}^{(z)}_{b}\big(1,{\Phi}^{(z)}_{b}(1,z_0)\big)=z_0$.
\end{remark}

\subsubsection{Explicit expressions
for the flow (\ref{Ham111prova})}\label{expexpFlow}

As shown in subsection \ref{wellwell52}, the Theorem \ref{Buonaposit}
guarantees the well-posedness of the flow \eqref{Ham111prova}
for \emph{any} given 
symbol $b(\tau,w;x)$ in $\mathcal{F}_1^{\mathbb{R}}[r]$.
In particular such a flow has the form \eqref{soluzione}-\eqref{nota3}.
In order to solve the equation \eqref{equa814}
and prove the main Theorem \ref{constEgo} we shall provide a more explicit expression for the flow
\eqref{Ham111prova} in the case that $b(\tau,w;x)$ has a special structure
(see formul\ae\, \eqref{def:betanNN}, \eqref{eq:b}). 
We have the following.

\begin{theorem}\label{formuleesplici}
Let $\beta(W;x)$ be a function in $\mathcal{F}_1^{\R}[r]$. Then if $\|W\|_{{s}}\ll 1$
for a sufficiently large ${s}$
then the following holds.

\noindent
(i) There is a unique $\gamma(\tau,W;x)$ in $\mathcal{F}^{\mathbb{R}}_{1}[r]$,  $\tau\in [0,1]$
such that
\begin{equation}\label{def:betanNN}
x_0+\gamma(\tau,W,x_0)+\tau\beta(W,x_0+\gamma(\tau,W,x_0))=x_0\,,\qquad \forall \tau\in [0,1]\,, x_0\in\mathbb{T}\,,\quad W\in H^{s}\,.
\end{equation}

\noindent
(ii)
 There exists a unique  $b(\tau,W;x)$ 
in $\mathcal{F}_1^{\R}[r]$ solution of the following equation
\begin{equation}\label{eq:b}
b(\tau,W;x)=\frac{\beta(W;x)+\tau d_{W}\beta(W;x)[\opbw(b(\tau,W;x)\xi)W]}{1+\tau(\beta)_x(W,x)}\,.
\end{equation}
In particular there is a constant $\mathtt{C}>0$
depending only on $s$ and $|\beta|_{s+1}^\mathcal{F}$
such that (recall \eqref{semi-norma-totale} and \eqref{semi-norma-totale2})
\begin{align}
&|b|_{s}^\mathcal{F}\lesssim\mathtt{C}\,,\label{constgiusta1}\\
&|b|_{s,0}^{\mathcal{F}}\lesssim_s
|\beta|_{s+1,0}^\mathcal{F}\,.\label{constgiusta2}
\end{align}

\noindent
(iii) Recalling \eqref{nota1}-\eqref{nota3} (the flow of \eqref{Ham111prova}) 
we have
\begin{align}
x(\tau)&:=x_0+\Psi_{b}^{(x)}(\tau,z_0,x_0)
=x_0+\gamma(\tau, \Phi_{b}^{(z)}(\tau,z_0),x_0)=
x_0+\gamma(\tau, z(\tau),x_0)\,,\label{flussoLINE1}\\
\x(\tau)&=\x_0(1+\Psi_{b}^{(\x)}(\tau,z_0,x_0))=\x_0(1+\tau(\beta)_{x}(z(\tau),x(\tau)))\,,
\label{flussoLINE2}
\end{align}
where $b(\tau,W,y)$ is given by formula \eqref{eq:b} and $\gamma(\tau;W,y)$
by formula \eqref{def:betanNN}.
\end{theorem}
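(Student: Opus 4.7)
\smallskip
\noindent\textbf{Plan of proof.} Parts (i) and (ii) are self-contained fixed-point problems; part (iii) is a verification that the stated ansatz solves the ODE system \eqref{Ham111prova} with the $b$ produced in (ii), and then one invokes uniqueness from Theorem~\ref{Buonaposit}.

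\smallskip
\noindent\emph{Part (i).} I would interpret \eqref{def:betanNN} as the statement $\gamma(\tau,W,x_0) = \phi_\tau^{-1}(x_0)-x_0$, where $\phi_\tau(y) := y+\tau\beta(W,y)$. For $W$ small in $H^{s}$ one has $\|\tau\beta_x(W,\cdot)\|_{L^\infty}\lesssim|\beta|^{\mathcal F}_{s}\|W\|_{H^{s_0}}\ll 1$ by Remark~\ref{smallnessSemi}, so $\phi_\tau$ is a diffeomorphism of $\mathbb T$ depending smoothly on $W$. Equivalently, the map $\Theta(\gamma):=-\tau\beta(W,x_0+\gamma)$ is a contraction on $C^0(\mathbb T)$ (uniformly in $\tau\in[0,1]$). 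The Banach fixed point theorem yields a unique continuous $\gamma$; the smoothness in $x_0$ and the estimates on its semi-norms follow from Faà di Bruno's formula and the composition/inversion rules for the class $\mathcal F_1^{\R}[r]$ (cf.\ Remark~\ref{prodottoSimboli}), giving $\gamma\in\mathcal F_1^{\R}[r]$.

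\smallskip
\noindent\emph{Part (ii).} I would solve \eqref{eq:b} by Picard iteration in $(\mathcal F_1^{\R}[r], |\cdot|^{\mathcal F}_s)$. Define
\[
\mathcal T(b) := \frac{\beta(W;x)+\tau\, d_W\beta(W;x)[\opbw(b\,\xi)W]}{1+\tau\,\beta_x(W,x)}.
\]
The denominator is invertible by Remark~\ref{prodottoSimboli}. For contraction: for $b_1,b_2$ in a ball of $\mathcal F_1^{\R}[r]$,
\[
\mathcal T(b_1)-\mathcal T(b_2)=\frac{\tau\, d_W\beta(W;x)\bigl[\opbw((b_1-b_2)\xi)W\bigr]}{1+\tau\beta_x},
\]
and Proposition~\ref{AzioneParaMet} yields $\|\opbw((b_1-b_2)\xi)W\|_{H^{s-1}}\lesssim |b_1-b_2|^{\mathcal F}_{s,0}\|W\|_{H^{s}}$, which upon composition with $d_W\beta$ (controlled by a $k=1$ semi-norm of $\beta$, hence by $|\beta|^{\mathcal F}_s$ after absorbing one power of $\|W\|$) gives a contraction constant $O(\|W\|_{H^s})$. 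The existence, uniqueness, and the Sobolev-chain bound \eqref{constgiusta1} then follow. The sharper estimate \eqref{constgiusta2} is obtained by a bootstrap on the leading term: since $\opbw(b\xi)W$ is linear in $W$, the nonlinear correction $\tau d_W\beta[\opbw(b\xi)W]$ gains one extra power of $W$ and can be absorbed, so $|b|^{\mathcal F}_{s,0}$ is driven by $|\beta|^{\mathcal F}_{s+1,0}$, the extra derivative coming from the $\xi$ in $\opbw(b\xi)W$.

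\smallskip
\noindent\emph{Part (iii).} I would verify the ansatz $\tilde x(\tau):=x_0+\gamma(\tau,z(\tau),x_0)$, $\tilde\xi(\tau):=\xi_0\bigl(1+\tau\beta_x(z(\tau),\tilde x(\tau))\bigr)$ directly. Differentiating \eqref{def:betanNN} in $\tau$ at fixed $W$ yields
\[
\partial_\tau\gamma(\tau,W,x_0) = -\frac{\beta(W,x_0+\gamma)}{1+\tau\beta_x(W,x_0+\gamma)},
\]
and differentiating in $W$ along a direction $h$ yields
\[
d_W\gamma(\tau,W,x_0)[h] = -\frac{\tau\, d_W\beta(W,x_0+\gamma)[h]}{1+\tau\beta_x(W,x_0+\gamma)}.
\]
Combining these with the third equation in \eqref{Ham111prova} at $W=z(\tau)$ yields
\[
\dot{\tilde x}(\tau) = \partial_\tau\gamma+d_W\gamma[\dot z] = -\frac{\beta(z,\tilde x)+\tau d_W\beta(z,\tilde x)[\opbw(\ii b\xi)z]}{1+\tau\beta_x(z,\tilde x)},
\]
which by the fixed-point equation \eqref{eq:b} is precisely $-b(\tau,z,\tilde x)$; hence $\tilde x$ satisfies the first equation of \eqref{Ham111prova}. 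Differentiating the definition of $\tilde\xi$ and using the explicit formula for $\partial_\tau\gamma$ gives $\dot{\tilde\xi}=b_x(\tau,z,\tilde x)\tilde\xi$ after a short algebraic manipulation, matching the second equation. By uniqueness from Theorem~\ref{Buonaposit}, $\tilde x\equiv\Phi_b^{(x)}(\tau)$ and $\tilde\xi\equiv\Phi_b^{(\xi)}(\tau)$, proving \eqref{flussoLINE1}--\eqref{flussoLINE2}.

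\smallskip
\noindent\emph{Main obstacle.} The hardest point is part (ii), in particular the sharp semi-norm bound \eqref{constgiusta2}. The fixed-point equation mixes $b$ with $d_W\beta$, which a priori would force $|b|^{\mathcal F}_{s,0}$ to be controlled by $W$-derivative semi-norms of $\beta$. Exploiting that $\beta\in\mathcal F_1^{\R}[r]$ vanishes at $W=0$ (so that $\opbw(b\xi)W$ supplies a factor of $W$ and $d_W\beta[\,\cdot\,]$ recovers $\beta$-type behaviour without extra $W$-differentiation of $\beta$) is what allows one to close the estimate with the stated semi-norm of $\beta$.
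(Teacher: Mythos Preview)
Your proposal is correct and follows essentially the same strategy as the paper: part (i) via inversion of the diffeomorphism $y\mapsto y+\tau\beta(W,y)$, part (ii) via a Picard/contraction iteration on \eqref{eq:b} (the paper spells out the sequence $b_n$ explicitly and proves Cauchy in $|\cdot|^{\mathcal F}_{s,0}$ plus uniform boundedness in $|\cdot|^{\mathcal F}_{s}$, which is your contraction argument written out), and part (iii) by differentiating the implicit relation \eqref{def:betanNN} and invoking uniqueness of the flow from Theorem~\ref{Buonaposit}. Your splitting of $\dot{\tilde x}$ into $\partial_\tau\gamma+d_W\gamma[\dot z]$ is in fact a slightly cleaner bookkeeping of the same computation the paper does in \eqref{nota13}.
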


\begin{proof}[{\bf Proof of Item $(i)$ of Theorem \ref{formuleesplici}}]
One can reason as in Section 2.5 in \cite{BD}.
\end{proof}

\begin{proof}[{\bf Proof of Item $(ii)$ of Theorem \ref{formuleesplici}}]

We proceed inductively, we define 
\begin{equation}\label{rambo10}
b_0=\frac{\beta(W;x)}{1+\tau\beta_x(W;x)}
\end{equation}
and the $n$-th problem as
\begin{equation}\label{eq:n-th}
\begin{aligned}
b_n(\tau,W;x)=&\frac{\beta(W;x)+\tau d_{W}\beta(W;x)[\opbw(b_{n-1}(\tau,W;x)\xi)W]}{1+\tau(\beta)_x(W,x)}\\
								=&b_0(W,x)+\tau\frac{d_{W}\beta(W;x)[\opbw(b_{n-1}(\tau,W;x)\xi)W]}{1+\tau(\beta)_x(W,x)},
\end{aligned}
\end{equation}
we shall prove that for any $n\geq 0$ the solution \eqref{eq:n-th} 
is a symbol in $\mathcal{F}_1^{\R}[r]$. 
More precisely we show that if $\beta(W;x)$ satisfies the estimate 
\eqref{maremma2} for a certain $s_0$ 
then for any $n$ the symbol $b_n$ defined in \eqref{eq:n-th} satisfies 
the same estimate with $s_0+1$. 

By Remark \ref{prodottoSimboli}
we deduce that
$b_0$ satisfies the \eqref{constgiusta1}.
Using the smallness of $r>0$ (see Remark \ref{smallnessSemi})
one can also check the bound \eqref{constgiusta2}.

Then 
let us assume that $b_{n-1}$ satisfies \eqref{constgiusta1}, \eqref{constgiusta2}.
For simplicity we show that $b_n$ satisfies \eqref{maremma2},
with estimates as in  \eqref{constgiusta1}, \eqref{constgiusta2}
in the case that $\alpha=0$ and $k=0$. 
Moreover we note that it is enough to prove 
the claim for $d_{W}\beta(W;x)[\opbw(b_{n-1}(\tau,W;x)\xi)W]$. We have
\begin{equation*}
\big|d_{W}\beta(W;x)[\opbw(b_{n-1}(\tau,W;x)\xi)W]\big|\leq C\|\opbw(b_{n-1}(W;x)\xi)W\|_{s_0}\leq C\|W\|_{s_0}\|W\|_{s_0+1},
\end{equation*}
where we have used in the first estimate the \eqref{maremma2} for the symbol $\beta(W;x)$ with $\alpha=0$ and $k=1$ and in the second we have used the Theorem \ref{azionepara}.

\noindent We now prove that the sequence of $b_n$ converges for any $\tau>0 $ in the space $\mathcal{F}_1^{\R}[r]$, therefore  its limit is a symbol and solves the equation \eqref{eq:b}. More precisely we prove, by using that $\|W\|_{s_0+1}\ll 1$, that for any there exists a constant $C>0$ such that  (recall \eqref{semi-norma-totale2})
\begin{equation}\label{eq:n-iter}
|b_{n+1}-b_{n}|^{\mathcal{F}}_{s,0}\leq C 2^{-(n+1)},
\end{equation}
for any $n\geq 1$ and $\tau>0$.
We note that
\begin{equation*}
(b_1-b_0)(\tau,W;x)=\tau d_u\beta(W;x)\big[\opbw(b_0(\tau,W;x)\xi) W\big],
\end{equation*}
therefore thanks to the smallness of $W$ and to the fact that $b_0$ is a function we obtain the \eqref{eq:n-iter} with $n=1$. We proceed with the inductive step
\begin{equation*}
(b_{n+1}-b_n)(\tau,W;x)=\tau \frac{d_u\beta(W;x)\big[\opbw((b_{n}-b_{n-1})(\tau,W;x)\xi) W\big]}{1+\tau(\beta)_x(W;x)},
\end{equation*}
by using formula \eqref{maremma2}, the smallness of $\norm{W}_{s_0+1}$ (therefore of $(\beta)_x(W;x)$) and Proposition \ref{AzioneParaMet}
we obtain
\begin{equation*}
\begin{aligned}
|(b_{n+1}-b_n)(\tau,W;x)|&\leq C \|b_n-b_{n-1}\|_{L^{\infty}}\norm{W}_{s_0+1}\leq C |b_{n+1}-b_n|_s^{\mathcal{F}}\norm{W}_{s_0}\norm{W}_{s_0+1}\\
&\leq C2^{-n}\norm{W}_{s_0}\norm{W}_{s_0+1}\leq C2^{-n-1}\norm{W}_{s_0+1},
\end{aligned}
\end{equation*}
where we have used the inductive hypothesis and the smallness of $\norm{W}_{s_0}$. 
To prove the \eqref{eq:n-iter} it is enough to apply 
$\partial_x^{\alpha}$ 
for $\alpha\leq s-(s_0+1)$
and reason as above by using the chain rule. 

\noindent 
Reasoning as done in the proof of 
Theorem \ref{Buonaposit}, one deduces that
 the sequence $b_n(W;x)$ converges in the space $\mathcal{F}_1^{\mathbb{R}}[r]$ 
 to a symbol $b(W;x)$ solving the equation \eqref{eq:b}. 
 Let us prove that such a solution is unique. Suppose that there exists another solution $\tilde{b}(W;x)$ of \eqref{eq:b}. Then we have
\begin{equation*}
(b-\tilde{b})(\tau,W;x)=\tau \frac{d_u\beta(W;x)\big[\opbw((b-\tilde{b})(\tau,W;x)\xi) W\big]}{1+\tau(\beta)_x(W;x)},
\end{equation*}
which, as before, implies 
\begin{equation*}
\norm{b-\tilde{b}}_{L^{\infty}_x}\leq \tau C\norm{W}_{s_0+1}\norm{b-\tilde{b}}_{L^{\infty}_x},
\end{equation*}
therefore a contradiction if $\norm{W}_{s_{0}+1}$ is small enough.
\end{proof}

\begin{remark}\label{rmk:Rambo}
Notice that the first order of approximation of 
our symbol $b$ (see the \eqref{rambo10}) coincide with the definition 
of $b$ given in the paper \cite{BD}.
\end{remark}

\begin{proof}[{\bf Proof of Item $(iii)$ of Theorem \ref{formuleesplici}}]
First of all  we recall that by Item (ii)
 we have that $b\in\mathcal{F}_1^{\R}[r]$. Hence 
 the flow in \eqref{soluzione} of \eqref{Ham111prova} is well-posed 
 and unique 
 and satisfies \eqref{nota1}-\eqref{nota3} by Theorem \ref{Buonaposit}.
So we just have to show that
 $x(\tau)$ defined in \eqref{flussoLINE1}, with $b$ given 
in item $(ii)$, 
solve the first equation in \eqref{Ham111prova}.
We recall that, by \eqref{flussoLINE1} and \eqref{nota1}, we have 
$x(\tau)=x_0+\gamma(\tau, \Phi_{b}^{(z)}(\tau,z_0),x_0)$.
By differentiating \eqref{def:betanNN} we get
\[
\pa_{\tau}x(\tau)+\beta(z(\tau),x(\tau))
+\tau(d_u\beta)\big(z(\tau),x(\tau)\big)[\pa_{\tau}(\tau)]
+\tau(\pa_{x}\beta)(z(\tau),x(\tau))\pa_{\tau}x(\tau)=0\,.
\]
Hence
\begin{equation}\label{nota13}
\pa_{\tau}x(\tau)=-\frac{\beta(z(\tau),x(\tau))
+\tau(d_u\beta)\big(z(\tau),x(\tau)\big)[\pa_{\tau}z(\tau)]}{1
+\tau(\pa_{x}\beta)(z(\tau),x(\tau))}\,,
\end{equation}
where $u=z(\tau)$. We shall use this notation also in the rest of the proof.
Recalling that (see the third equation in \eqref{Ham111prova})
one has 
$\pa_{\tau}z(\tau)=\opbw\big(\ii b(\tau,z(\tau);x(\tau))\x(\tau)\big)[z(\tau)]$,
and using the definition of $b$ in \eqref{eq:b}
the \eqref{nota13} implies 
\[
\pa_{\tau}x(\tau)=\pa_{\tau}\big(x_0+\gamma(\tau,z(\tau),x_{0})\big)=
-b(\tau,z(\tau),x(\tau))\,,
\]
which is the first equation in \eqref{Ham111prova}.

Consider now $\x(\tau)$ in \eqref{flussoLINE2}. Hence
\begin{equation}\label{nota14}
\begin{aligned}
\pa_{\tau}\x(\tau)&=\x_0(\beta)_{x}(z(\tau),x(\tau))+
\tau\x_0(\beta)_{xx}(z(\tau),x(\tau))\pa_{\tau}x(\tau)\\
&+\x_0\tau(d_{z}\beta)_x(z(\tau),x(\tau))[\pa_{\tau}z(\tau)]\,.
\end{aligned}
\end{equation}
This is true because
\begin{equation}\label{speranza}
\pa_x\Big(d_{w}\big(\beta\big)(w,x)[h]\Big)=
(d_w(\beta)_x)(w,x)
[h]\,.
\end{equation}
By \eqref{nota14}, using \eqref{flussoLINE2}, 
the first and the third equations in \eqref{Ham111prova}
we also deduce
\[
\begin{aligned}
\pa_{\tau}\x(\tau)&=
\x(\tau)\frac{(\beta)_{x}(z(\tau),x(\tau))}{1+\tau(\beta)_x(z(\tau),x(\tau))}
-
\tau\x(\tau)\frac{(\beta)_{xx}(z(\tau),x(\tau))\pa_{\tau}x(\tau)}
{1+\tau(\beta)_x(z_n(\tau),x_n(\tau))}
b(\tau,z(\tau),x)\\
&+\tau \x(\tau)
\frac{(d_{u}\beta)_x(z(\tau),x(\tau))\big[\opbw\big(\ii b(\tau,z(\tau);x(\tau))\x(\tau)\big)[z(\tau)]\big]}{1+\tau(\beta)_x(z(\tau),x(\tau))}\,.
\end{aligned}
\]
Now, using \eqref{eq:b}, we have
\[
\begin{aligned}
&(b)_x(\tau,z(\tau),x(\tau))=\\
&=-\frac{\beta(z(\tau),x(\tau))+\tau(d_u\beta)(z(\tau),x(\tau))
[\opbw(\ii b(\tau,z(\tau),x(\tau))\x(\tau))[z(\tau)]]}{(1+\tau(\beta)_{x}(w,x))^2}
(\beta)_{xx}(z(\tau),x(\tau))\\
&+
\frac{(\beta)_x(z(\tau),x(\tau))}{1+\tau(\beta)_x(z(\tau),x(\tau))}+
\frac{\tau(d_{u}\beta)_x(z(\tau),x(\tau))\big[\opbw\big(\ii b(\tau,z(\tau);x(\tau))\x(\tau)\big)[z(\tau)]}{1+\tau(\beta)_x(z(\tau),x(\tau))}\,,
\end{aligned}
\]
again recalling \eqref{speranza}. Therefore
\[
\pa_{\tau}\x(\tau)=\x(\tau)(b)_x(\tau,z(\tau),x(\tau))\,.
\]
This means that \eqref{flussoLINE2} solves he second equation in  \eqref{Ham111prova}.
\end{proof}

\begin{remark}\label{paragoneBD}
Notice that, by Taylor expanding in $z$ the flow $\Phi_{b}^{(z)}(\tau,z)$ in 
\eqref{flussoLINE1}, \eqref{flussoLINE2}, one obtains the formula $(3.5.31)$ in \cite{BD}
up to higher order  homogeneity terms.
\end{remark}

\subsubsection{Proof of Theorem \ref{constEgo}}\label{constEGOsec}

In this subsection we conclude the proof of Theorem \ref{constEgo}.
The key point is to understand  how to choose the function $b(\tau,W;y)$ in such a way
the equation \eqref{equa814} is satisfied.

Notice that by Theorem  \ref{formuleesplici} we also deduce information 
on the inverse flow
\eqref{Ham111prova}. 
In particular, using \eqref{flussoLINE1}, \eqref{flussoLINE2}
we can rewrite the equation \eqref{equa814}.
It is easy to check that
\begin{equation}\label{equa814bis}
F(b):=
(1+\widetilde{a}_{m}(z_0,y))\big(1+\gamma_{y}(1,z,y)\big)^{m}_{|y=x+\beta(z,x)}=m_{b}
\end{equation}
where $\gamma, b$ are given (in terms of $\beta$) by \eqref{def:betanNN} and \eqref{eq:b}
and 
\[
z=\Phi_{b}^{(z)}(1,z_0) \,, \qquad \longleftrightarrow \qquad z_0=\widetilde{\Phi}_{b}^{(z)}(1,z)\,.
\]
The equation \eqref{equa814bis} is non linear in the symbol $b(\tau,W;y)$.
So, roughly speaking, we shall construct the solution $b=b_{\infty}$ of \eqref{equa814bis}
as limit of a sequence of approximate solutions 
$b_{n}(\tau,W;y)\in \mathcal{F}_{1}^{\mathbb{R}}[r]$.
 More precisely, 
let $\gamma_0=\beta_0=b_0=0$ and
for any $n\geq 1$ consider the problem
\begin{align}
&\pa_{\tau}x_n(\tau)=-b_n(\tau,z_n(\tau);x_n(\tau)) \,, \label{Ham111provan1}\\
& \pa_{\tau}\x_n(\tau)=(b_n)_x(\tau,z_n(\tau);x_n(\tau))\x_n(\tau)\,,\label{Ham111provan2}\\
&\pa_{\tau}z_n(\tau)=\opbw\big(\ii b_n(\tau,z_n(\tau);x_n(\tau))\x_n(\tau)\big)[z_n(\tau)]\,,\label{Ham111provan3}
\end{align}
where $b_{n}(\tau,w,x)$ it the symbol defined by 
\begin{equation}\label{def:bn}
b_n(\tau,w,x):=\frac{\beta_n(w,x)+\tau(d_u\beta_n)(w,x)[\opbw(\ii b_n(\tau,w,x)\x)[w]]}{1+\tau(\beta_n)_{x}(w,x)}
\end{equation}
and $\beta_{n}(w,x)$ is defined by 
\begin{equation}\label{def:betan}
x_0+\gamma_{n}(\tau,w,x_0)+\tau\beta_n(w,x_0+\gamma_{n}(\tau,w,x_0))=x_0\,,\qquad \forall \tau\in [0,1]\,, x_0\in\mathbb{T}\,,\quad w\in H^{s}\,,
\end{equation}
and
\begin{align}
\gamma_{n}(1,w,x)&:=\pa_{x}^{-1}\left(\left(\frac{m_n(w)}{1
+\tilde{a}_m(\widetilde{\Phi}^{(z)}_{b_{n-1}}(1,w),x)}\right)^{\frac{1}{m}}-1\right)\,,\label{def:gamman}\\
m_{n}(w)&:=\left[2\pi\left( \int_{\mathbb{T}} \frac{1}{(1+\tilde{a}_{m}(\widetilde{\Phi}^{(z)}_{b_{n-1}}(1,w),x))^{\frac{1}{m}}}dy\right)^{-1} \right]^{m}-1\,.\label{def:Mn}
\end{align}

We shall prove inductively the following:
for any $n\geq 1$ 

\begin{itemize}

\item[$({\bf S1})_{n}$] 
One has that $m_{n}$ in \eqref{def:Mn} belongs to $\mathcal{F}_{0}^{\mathbb{R}}[r]$ and it is independent of $x$; the functions 
$\gamma_n,\beta_n,b_{n}$ given respectively by \eqref{def:gamman}, \eqref{def:betan}
and \eqref{def:bn}
belong to  $\mathcal{F}_1^{\mathbb{R}}$. In particular we have
that there exists a constant $\mathtt{C}>0$ depending only on 
$|\tilde{a}_{d}|^{\mathcal{F}}_{s}$ such that
\begin{equation}\label{stimeSeminormeMMM}
|m_{n}|_{s}^{\mathcal{F}}+|\gamma_{n}|_{s}^{\mathcal{F}}+
|\beta_{n}|_{s}^{\mathcal{F}}+|b_{n}|_{s}^{\mathcal{F}}\lesssim_{s}\mathtt{C}\,.
\end{equation}
Moreover
\begin{equation}\label{stimeSeminormeMMMXX}
|m_{n}|_{s,0}^{\mathcal{F}}+|\gamma_{n}|_{s+1,0}^{\mathcal{F}}+
|\beta_{n}|_{s+1,0}^{\mathcal{F}}+|b_{n}|_{s,0}^{\mathcal{F}}\lesssim_{s}
|\tilde{a}_{d}|^{\mathcal{F}}_{s,0}\,.
\end{equation}

\item[$({\bf S2})_{n}$] 
 the flow of \eqref{Ham111provan1}-\eqref{Ham111provan3} with $b_n$ given by \eqref{def:bn} 
 is well-posed, has the form
\begin{equation}\label{nota1n}
\begin{aligned}
z_{n}(\tau)&=\Phi_{b_{n}}^{(z)}(\tau,z_0)\in  \cap_{k=0}^{K}C^{k}([0,1];H^{s-k})\,,\\
x_{n}(\tau)&=x_0+\gamma_n(\tau, z_n(\tau),x_0)\,,
\\
\x_{n}(\tau)&=\x_0\big(1+\tau(\beta_n)_{x}(z_n(\tau),x_n(\tau))\big)\,.
\end{aligned}
\end{equation}


%

\end{itemize}

We argue by induction. 

\noindent
{\bf Inizialization.} The $({\bf S1})_{0}$, $({\bf S2})_{0}$ are trivial.

\noindent
So we assume that
$({\bf Sk})_{j}$, for $k=1,2$, hold true with $0\leq j\leq n-1$.

\begin{proof}[{\bf Proof of} $({\bf S1})_{n}$.]
By the inductive 
hypothesis $b_{n-1}$ is a symbol in $\mathcal{F}_1^{\mathbb{R}}[r]$, hence, by Theorem
\ref{Buonaposit}, the flow of \eqref{Ham111prova} with $b\rightsquigarrow b_{n-1}$ is well-posed.
Therefore, using the formula of Faa di Bruno and \eqref{pioggia4teorema}, one can check that
(reasoning as in \eqref{faadibruno}-\eqref{fattore:3})
\[
| \tilde{a}_{d}(\widetilde{\Phi}^{(z)}_{b_{n-1}}(1,w),x) |^{\mathcal{F}}_{s}\lesssim_{s}
|\tilde{a}_{d}|^{\mathcal{F}}_{s}\,.
\]
Hence, using the \eqref{def:Mn} and Remark \ref{prodottoSimboli}, 
one deduces the \eqref{stimeSeminormeMMM} for $m_{n}$.
Recalling Remark \ref{smallnessSemi} one obtains the \eqref{stimeSeminormeMMMXX}
for $m_{n}$. The estimates \eqref{stimeSeminormeMMM}, \eqref{stimeSeminormeMMMXX}
for $\gamma_{n}$ follow in the same way using 
\eqref{def:gamman} and the smoothing effect of the Fourier multiplier $\pa_{x}^{-1}$. 

Using the relation \eqref{def:betan} at $\tau=1$ one construct the function $\beta_{n}$
(independent of $\tau$) as the inverse diffeomorphism of 
$y=x_0+\gamma_{n}(1,w,x_0)$. One can check that $|\beta_{n}|_{s}^{\mathcal{F}}
\lesssim_{s}C(|\gamma_{n}|^{\mathcal{F}}_{s})$
and $|\beta_{n}|_{s,0}^{\mathcal{F}}
\lesssim_{s}|\gamma_{n}|^{\mathcal{F}}_{s,0}$.
Hence the \eqref{stimeSeminormeMMM}, \eqref{stimeSeminormeMMMXX}
hold for $\beta_{n}$.
The family $[0,1]\ni\tau\to \gamma_{n}(\tau,w;x_0)$ is given by item $(i)$ of 
Theorem \ref{formuleesplici}.
Since $\beta_{n}$ satisfies \eqref{stimeSeminormeMMM}, \eqref{stimeSeminormeMMMXX}
 then, item $(ii)$
of Theorem \ref{formuleesplici} implies that the function $b_{n}$ in \eqref{def:bn}
is well-posed and satisfies \eqref{stimeSeminormeMMM}, \eqref{stimeSeminormeMMMXX}.
\end{proof}

\begin{proof}[{\bf Proof of} $({\bf S2})_{n}$.]
Thanks to \eqref{stimeSeminormeMMM}, \eqref{stimeSeminormeMMMXX}.
we can apply Theorems \ref{Buonaposit} and
\ref{formuleesplici} (see item $(iii)$)
which imply the \eqref{nota1n}.
\end{proof}


In order to conclude the proof of Theorem \ref{constEgo}
we have to check the \eqref{equa814}.
We need some preliminary results which are consequences of 
 $({\bf S1})_{n}$, $({\bf S2})_{n}$.

\begin{lemma}\label{diffeFlussiZ}
For $r>0$ satisfying
\begin{equation}\label{piccoloRRR}
rC_s\sup_{\tau\in[0,1]}\Big(|b_n|^{\mathcal{F}}_{s}+|b_{n-1}|^{\mathcal{F}}_{s}\Big)\ll 1\,,
\end{equation}
for some $C_{s}\gg1$ 
we have 
\begin{equation}\label{stimaNN}
\sup_{\tau\in[0,1]}\|\Phi_{b_{n}}^{(z)}-\Phi_{b_{n-1}}^{(z)}\|_{H^{s-1}}
\lesssim_{s}
r\sup_{\tau\in[0,1]}|b_{n}-b_{n-1}|_{s-1,0}^{\mathcal{F}}\,,
\end{equation}
where $\Phi_{b_{j}}^{(z)}$, $j=n,n-1$ is the solution of \eqref{Ham111provan3}.
\end{lemma}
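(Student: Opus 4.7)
Plan: Set $z^{(j)}(\tau) := \Phi_{b_j}^{(z)}(\tau, z_0)$ for $j \in \{n, n-1\}$ and $v := z^{(n)} - z^{(n-1)}$. By the well-posedness provided by Theorem \ref{Buonaposit}, each $z^{(j)}$ solves the third equation of \eqref{Ham111prova} with generator $b_j$. Subtracting these two equations and adding and subtracting $\opbw(\ii\, b_n(\tau, z^{(n)}; x)\xi)\, z^{(n-1)}$, one finds that $v$ satisfies a linear inhomogeneous transport equation
\[
\partial_\tau v \;=\; \opbw\bigl(\ii\, b_n(\tau, z^{(n)}; x)\,\xi\bigr) v \;+\; g,\qquad v(0)=0,
\]
with source $g(\sigma) := \opbw\bigl(\ii[b_n(\sigma, z^{(n)}; x) - b_{n-1}(\sigma, z^{(n-1)}; x)]\,\xi\bigr) z^{(n-1)}(\sigma)$.

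The generator has the form \eqref{sim1} with real coefficient $B = b_n(\tau, z^{(n)}(\tau); \cdot) \in \mathcal{F}^{\mathbb{R}}_1[r]$, so Lemma \ref{flusso-differenziale}, applied to the linearized transport with $z^{(n)}$ treated as a prescribed time-dependent function, yields a flow $\Psi_n(\tau)$ that is well-posed and bounded on $H^{s-1}$ uniformly in $\tau \in [0,1]$ under the smallness \eqref{piccoloRRR}. Duhamel's formula together with $v(0)=0$ then gives $\|v(\tau)\|_{H^{s-1}} \lesssim_s \int_0^\tau \|g(\sigma)\|_{H^{s-1}}\,d\sigma$, reducing everything to estimating $\|g\|_{H^{s-1}}$. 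The crucial choice at this point is to invoke Proposition \ref{AzioneParaMet} (rather than Proposition \ref{azionepara}) on the order-$1$ operator inside $g$, so that the resulting bound involves only the $L^\infty_x$-norm of the coefficient $b_n(z^{(n)}) - b_{n-1}(z^{(n-1)})$, and hence only its $\alpha=0,\,k=0$ seminorm, matching the $|\cdot|^{\mathcal{F}}_{s-1,0}$ on the right-hand side of the claim.

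To obtain the $L^\infty$ bound I split $b_n(z^{(n)}) - b_{n-1}(z^{(n-1)}) = [b_n - b_{n-1}](z^{(n)}) + [b_{n-1}(z^{(n)}) - b_{n-1}(z^{(n-1)})]$. The first piece is handled by Remark \ref{smallnessSemi} applied to $b_n - b_{n-1} \in \mathcal{F}^{\mathbb{R}}_1$ together with the a priori bound $\|z^{(n)}\|_{H^{s_0}} \lesssim r$ coming from \eqref{ind:3TOT}, producing $\lesssim r\,|b_n - b_{n-1}|^{\mathcal{F}}_{s-1,0}$. The second piece, via the fundamental theorem of calculus in $U$ and the bound \eqref{maremma2} for $d_U b_{n-1}$ (with $k=1$, $\alpha=0$), is controlled by $|b_{n-1}|_s^{\mathcal{F}}\,\|v\|_{H^{s_0}} \leq |b_{n-1}|_s^{\mathcal{F}}\,\|v\|_{H^{s-1}}$ (for $s_0 \leq s-1$). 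Combined with $\|z^{(n-1)}(\sigma)\|_{H^s} \lesssim r$, this yields
\[
\|g(\sigma)\|_{H^{s-1}} \;\lesssim_s\; r^{2}\,|b_n - b_{n-1}|^{\mathcal{F}}_{s-1,0} + r\,|b_{n-1}|_s^{\mathcal{F}}\,\|v(\sigma)\|_{H^{s-1}}.
\]
Since \eqref{piccoloRRR} ensures $r\,|b_{n-1}|_s^{\mathcal{F}} \ll 1$, Gronwall's inequality absorbs the $v$-term and delivers the claim (in fact with $r^2$, a fortiori with $r$, as the prefactor of the seminorm). The main obstacle is precisely the seminorm matching just described: any weaker action lemma (involving $U$-derivatives of the symbol) would force $|b_n - b_{n-1}|^{\mathcal{F}}_{s}$ to enter the right-hand side, which would be fatal for the convergence of the iterative scheme in Section \ref{constEGOsec}.
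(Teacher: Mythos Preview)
Your derivation of the equation for $v=z^{(n)}-z^{(n-1)}$ rests on a misreading of \eqref{Ham111provan3}. The flows $\Phi_{b_j}^{(z)}$ appearing in the lemma are the $z$-components of the \emph{fully coupled} system \eqref{Ham111provan1}--\eqref{Ham111provan3} (see \eqref{nota1n}, and the remark after \eqref{sistema-alienato} in the introduction). In the third equation the symbol being quantized is not $(y,\eta)\mapsto b_j(\tau,z^{(j)};y)\eta$ with dummy $(y,\eta)$, but rather
\[
(y,\eta)\ \longmapsto\ b_j\bigl(\tau,z_j(\tau);\, x_j(\tau;y)\bigr)\,\xi_j(\tau;y,\eta),
\]
where $x_j(\tau;\cdot),\ \xi_j(\tau;\cdot,\cdot)$ are the characteristic maps solving \eqref{Ham111provan1}--\eqref{Ham111provan2}; this is spelled out in the proof of $(S2)_n$ in Lemma~\ref{iterative}, where the generator of the $z_{n+1}$-equation is written as $b(\tau,z_n;x_0+\Psi_n^{(x)})(1+\Psi_n^{(\xi)})\xi_0$. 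Consequently, when you subtract the equations for $z_n$ and $z_{n-1}$, the source term must also contain contributions from $X_n:=x_n-x_{n-1}$ and $Y_n:=\xi_n-\xi_{n-1}$, which are entirely absent from your $g$. Your equation for $v$ is therefore not the one satisfied by $Z_n:=\Phi_{b_n}^{(z)}-\Phi_{b_{n-1}}^{(z)}$.

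The paper's proof handles the coupling directly: it writes the difference equations \eqref{XN}--\eqref{ZN} for all three quantities $X_n,\ Y_n\xi_0^{-1},\ Z_n$, first bounds $X_n$ and $Y_n\xi_0^{-1}$ in terms of $Z_n$ and $|b_n-b_{n-1}|$ (see \eqref{stimaXNfine}, \eqref{stimaYN}, via Lemma~\ref{jack}), then feeds these back into the $H^{s-1}$ energy inequality \eqref{picture5} for $Z_n$ and closes the bootstrap; the smallness \eqref{piccoloRRR} is precisely what absorbs the resulting cross terms. A Gronwall argument on $v$ alone, ignoring the characteristics, cannot close.
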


We postpone the proof of Lemma \ref{diffeFlussiZ} and we first show some consequences.
\begin{remark}\label{DiffeFlussiZinv}
If $\widetilde{\Phi}_{b_{n}}^{(z)}$ and $\widetilde{\Phi}_{b_{n-1}}^{(z)}$
are respectively the inverse flows of 
$\Phi_{b_{n}}^{(z)}$ and $\Phi_{b_{n-1}}^{(z)}$, then on can write
\begin{equation*}
\widetilde{\Phi}_{b_{n}}^{(z)}-\widetilde{\Phi}_{b_{n-1}}^{(z)}=
-\widetilde{\Phi}_{b_{n}}^{(z)}
\Big({\Phi}_{b_{n}}^{(z)}-{\Phi}_{b_{n-1}}^{(z)}\Big)\widetilde{\Phi}_{b_{n-1}}^{(z)}\,.
\end{equation*}
Then, since $\|\widetilde{\Phi}_{b_{j}}^{(z)}\|_{H^{s}}\lesssim_{s}\|z_0\|_{H^{s}}$, $j=n,n-1$ 
(it satisfies the same estimates of the flow ${\Phi}_{b_{j}}^{(z)}$ in \eqref{ind:3TOT}),
and using the \eqref{stimaNN} we deduce
\begin{equation}\label{stimaNNinv}
\sup_{\tau\in[0,1]}\|\widetilde{\Phi}_{b_{n}}^{(z)}-\widetilde{\Phi}_{b_{n-1}}^{(z)}\|_{H^{s-1}}
\lesssim_{s}
r\sup_{\tau\in[0,1]}|b_{n}-b_{n-1}|_{s-1,0}^{\mathcal{F}}\,.
\end{equation}
\end{remark}

Notice that, by \eqref{stimeSeminormeMMM}, we have
\[
b_n\rightharpoonup^* \widetilde{b} \mbox{ in } \mathcal{F}^{\mathbb{R}}_{1}[r]\,,
\]
with $\widetilde{b}$ still satisfying \eqref{stimeSeminormeMMM}. 
Moreover, by Ascoli-Arzel\`a theorem, we deduce that 
\begin{equation}\label{convforteBn}
b_{n}\to b\qquad {\rm in}\qquad C^{k}(\mathbb{T},\mathbb{R})\,,
\end{equation}
for $0\leq k\leq s-s_0-1$ uniformly in $x_0\in \mathbb{T}$.
Hence we deduce $\widetilde{b}\equiv b$, which means that
$b\in\mathcal{F}_1^{\mathbb{R}}[r]$ with bounded $|\cdot|_{s-1}^{\mathcal{F}}$-norm. In the same way one deduces that $m_n(w)$ converges in norm $|\cdot|_{s,0}^{\mathcal{F}}$ to another constant function $m_{b}(w)$.
We are now ready to prove 
\eqref{equa814}.

\begin{lemma}\label{tendeazero}
One has that
$F(b_n)\to m_{b}(w)$
in the space $C^{k}(\mathbb{T},\mathbb{R})$ with $0\leq k\leq s-s_0-1$.
\end{lemma}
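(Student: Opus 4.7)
The plan is to express $F(b_n)$ in closed form using the definitions \eqref{def:gamman}, \eqref{def:Mn}, \eqref{def:betan}, and then show that the difference $F(b_n) - m_n(z)$ is driven to zero by the convergence $b_n \to b$ established in \eqref{convforteBn}.

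First I would use the definition \eqref{def:gamman} of $\gamma_n$. A direct computation gives
\begin{equation*}
\bigl(1+(\gamma_n)_x(1,w,x)\bigr)^{m} \;=\; \frac{m_n(w)}{1+\tilde{a}_{m}\!\bigl(\widetilde{\Phi}^{(z)}_{b_{n-1}}(1,w),x\bigr)} \, .
\end{equation*}
Next, observe that by item $(ii)$ of Theorem \ref{formuleesplici}, the function $b_n$ is precisely the generator associated to $\beta_n$ via \eqref{eq:b}, so the flow of \eqref{Ham111provan1}--\eqref{Ham111provan3} is described by \eqref{nota1n} and the reformulation \eqref{equa814bis} applies with $\beta \rightsquigarrow \beta_n$ and $\gamma \rightsquigarrow \gamma_n$. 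Writing $z := \Phi^{(z)}_{b_n}(1,z_0)$ so that $z_0 = \widetilde{\Phi}^{(z)}_{b_n}(1,z)$, this yields
\begin{equation*}
F(b_n) \;=\; m_n(z) \cdot \frac{1+\tilde{a}_{m}\!\bigl(\widetilde{\Phi}^{(z)}_{b_n}(1,z),y\bigr)}{1+\tilde{a}_{m}\!\bigl(\widetilde{\Phi}^{(z)}_{b_{n-1}}(1,z),y\bigr)} \bigg|_{y \, = \, x+\beta_n(z,x)} \, ,
\end{equation*}
so the whole deviation from $m_n(z)$ is controlled by the difference of the two flows at consecutive indices.

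Then I would estimate
\begin{equation*}
\bigl| F(b_n) - m_n(z) \bigr| \;\leq\; |m_n(z)| \cdot \bigl| 1 + \tilde{a}_{m}\!\bigl(\widetilde{\Phi}^{(z)}_{b_{n-1}}(1,z),y\bigr)\bigr|^{-1} \cdot \bigl| \tilde{a}_{m}\!\bigl(\widetilde{\Phi}^{(z)}_{b_n}(1,z),y\bigr) - \tilde{a}_{m}\!\bigl(\widetilde{\Phi}^{(z)}_{b_{n-1}}(1,z),y\bigr) \bigr| \, ,
\end{equation*}
and similarly for the $C^{k}$ norm with $0 \leq k \leq s - s_0 - 1$. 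By Remark \ref{DiffeFlussiZinv} one has
\begin{equation*}
\sup_{\tau \in [0,1]} \bigl\| \widetilde{\Phi}^{(z)}_{b_n}(1,z) - \widetilde{\Phi}^{(z)}_{b_{n-1}}(1,z) \bigr\|_{H^{s-1}} \;\lesssim_s\; r \sup_{\tau \in [0,1]} \bigl| b_n - b_{n-1} \bigr|^{\mathcal{F}}_{s-1,0} \, ,
\end{equation*}
and, since $\tilde{a}_m \in \Sigma\mathcal{F}^{\mathbb{R}}_{1}[r,N]$ is Lipschitz in its first argument (via the bound $(2.10)$-type estimate in Definition \ref{pomosimb} with $k=1$), the r.h.s.\ above is dominated by the same expression up to a constant depending only on $|\tilde{a}_m|^{\mathcal{F}}_{s}$. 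The Cauchy property of $\{b_n\}$ in $\mathcal{F}^{\mathbb{R}}_{1}[r]$ with respect to the norm $|\cdot|^{\mathcal{F}}_{s-1,0}$, ensured by the uniform bounds \eqref{stimeSeminormeMMM}--\eqref{stimeSeminormeMMMXX} together with \eqref{convforteBn}, then forces the right-hand side to zero. The denominator stays bounded away from $0$ uniformly in $n$ thanks to the smallness of $r$ in \eqref{piccoloRRR} and the uniform bound on $|\tilde{a}_m|^{\mathcal{F}}_{s}$.

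Finally I would dispose of the term $m_n(z)$: since $m_n \to m_b$ in $|\cdot|^{\mathcal{F}}_{s,0}$-norm (by the analogue of the argument for $b_n$) and $m_n$, $m_b$ are independent of $x$, evaluation at $z = \Phi^{(z)}_{b_n}(1,z_0)$, together with the convergence of $\Phi^{(z)}_{b_n}$ given by Lemma \ref{diffeFlussiZ}, produces the convergence $m_n(z) \to m_b(w)$ in $C^{k}(\mathbb{T};\mathbb{R})$, where $w$ denotes the corresponding limit argument; the $x$-independence of $m_b$ is preserved. Combining the two convergences yields $F(b_n) \to m_b(w)$ in $C^{k}(\mathbb{T};\mathbb{R})$ for $0 \leq k \leq s - s_0 - 1$. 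The main technical obstacle is purely bookkeeping: tracking the dependence of the implicit diffeomorphism $y = x + \beta_n(z,x)$ on $n$ and verifying that all $C^k$-estimates for the composition with $\tilde{a}_m$ only involve differences controlled by $|b_n - b_{n-1}|^{\mathcal{F}}_{s-1,0}$, so that no derivative loss accumulates in the limit.
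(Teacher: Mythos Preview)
Your proposal is correct and follows essentially the same approach as the paper's proof. Both arguments rewrite $F(b_n)$ via \eqref{equa814bis} and \eqref{def:gamman} so that it equals $m_n(z)$ up to a correction controlled by $\tilde{a}_m\bigl(\widetilde{\Phi}^{(z)}_{b_n}(1,z),y\bigr)-\tilde{a}_m\bigl(\widetilde{\Phi}^{(z)}_{b_{n-1}}(1,z),y\bigr)$, then bound this difference using \eqref{stimaNNinv} and the Lipschitz dependence of $\tilde{a}_m$ on its first argument; the only cosmetic distinction is that the paper writes the decomposition additively while you write it multiplicatively, which are algebraically equivalent.
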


\begin{proof}
We set 
\[
z=z_{n}(1)=\Phi_{b_n}^{(z)}(1,z_0)\,,\quad x=x_{n}(1)=\Phi_{b_n}^{(x)}(1,x_0)\,.
\]
Then, using the \eqref{nota1n} in $({\bf S2})_{n}$,
we have that
\begin{equation}\label{nota10}
\begin{aligned}
F(b_n)&=\Big(1+\tilde{a}_{m}\big( \widetilde{\Phi}_{b_n}^{(z)}(1,z),
\widetilde{\Phi}_{b_n}^{(x}(1,z,x) \big)\Big)(1+\widetilde{\Psi}_{b_n}^{(\x)}(1,z,x))^{m}\\
&=\Big(1+\tilde{a}_{m}\big( \widetilde{\Phi}_{b_n}^{(z)}(1,z),
y \big)\Big)\big(1+(\gamma_{n})_{y}(1,z,y)\big)^{m}_{|y=x+\beta_{n}(z,x)}\\
&=\underbrace{\Big(1+\tilde{a}_{m}\big( \widetilde{\Phi}_{b_{n-1}}^{(z)}(1,z),
y \big)\Big)\big(1+(\gamma_{n})_{y}(1,z,y)
\big)^{m}_{|y=x+\beta_{n}(z,x)}}_{\stackrel{\eqref{def:gamman}}{=}m_n}\\
&+\underbrace{\Big[
\tilde{a}_{m}\big( \widetilde{\Phi}_{b_{n}}^{(z)}(1,z),
y \big)-
\tilde{a}_{m}\big( \widetilde{\Phi}_{b_{n-1}}^{(z)}(1,z),
y \big)
\Big]\big(1+(\gamma_{n})_{y}(1,z,y)\big)^{m}_{|y=x+\beta_{n}(z,x)}}_{{\rm to \; be\; bounded}}
\end{aligned}
\end{equation}
To bound the last term in \eqref{nota10} we reason as follows.
Recalling that $\tilde{a}_m\in \mathcal{F}_1^{\mathbb{R}}$ we
have
\[
\begin{aligned}
\|(d_u\tilde{a}_{m})(w)\big[ \widetilde{\Phi}_{b_{n}}^{(z)}(1,z)
- \widetilde{\Phi}_{b_{n-1}}^{(z)}(1,z) \big]\|_{L^{\infty}_x}&\lesssim_{s}
\| \widetilde{\Phi}_{b_{n}}^{(z)}(1,z)
- \widetilde{\Phi}_{b_{n-1}}^{(z)}(1,z)\|_{H^{s_0}}\\
&\stackrel{\eqref{stimaNNinv}}{\lesssim_s}r\sup_{\tau\in[0,1]}|b_{n}-b_{n-1}|_{s-1,0}^{\mathcal{F}}\,.
\end{aligned}
\]
By \eqref{convforteBn} we have that the estimate above
implies $F(b_n)\to m_{b}(w)$ in $C^{0}(\mathbb{T},\mathbb{R})$.
For the derivatives in $x_0$ one can reason similarly.
\end{proof}

In order to prove  Lemma \ref{diffeFlussiZ},
we  need the following result.

\begin{lemma}\label{jack}
Let $f\in \mathcal{F}_{1}[r]$ and define
\[
G(\tau,z_0;x_0):=f(z_{j}(\tau),x_{j}(\tau))\x_{j}(\tau)\frac{1}{\x_0}
\]
where $(z_{j}(\tau),x_{j}(\tau), \x_{j}(\tau))$ is the flow of 
\eqref{Ham111provan1}-\eqref{Ham111provan3} generated by $b_{j}$.
Then, for  $r$ small enough, one has
\begin{equation*}
\sup_{\tau\in[0,1]}|G|^{\mathcal{F}}_{s,0}\lesssim_{s}
\sup_{\tau\in[0,1]}|f|^{\mathcal{F}}_{s,0}(1+
r\sup_{\tau\in[0,1]}|b_{j}|_{s,0}^{\mathcal{F}})\,.
\end{equation*}
\end{lemma}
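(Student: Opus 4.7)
The plan is to use the explicit expressions for the flow given by item $(iii)$ of Theorem \ref{formuleesplici}. By \eqref{flussoLINE1}--\eqref{flussoLINE2} we have
\begin{equation*}
x_j(\tau) = x_0 + \gamma_j(\tau, z_j(\tau), x_0), \qquad \xi_j(\tau)/\xi_0 = 1 + \tau(\beta_j)_x(z_j(\tau), x_j(\tau)),
\end{equation*}
so in particular the ratio $\xi_j(\tau)/\xi_0$ is independent of $\xi_0$ and therefore
\begin{equation*}
G(\tau, z_0; x_0) = f(z_j(\tau), x_j(\tau)) + \tau\, f(z_j(\tau), x_j(\tau))\,(\beta_j)_x(z_j(\tau), x_j(\tau))
\end{equation*}
is a bona fide function of $(z_0, x_0)$. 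Recalling from \eqref{nota1} that $z_j(\tau) = \Phi^{(z)}_{b_j}(\tau, z_0)$ depends only on $z_0$, the entire $x_0$-dependence of $G$ enters through the near-identity torus diffeomorphism $\psi_\tau(x_0) := x_0 + \gamma_j(\tau, z_j(\tau), x_0)$.

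The second step is to split $G = G_1 + \tau G_2$, with $G_1 := f(z_j(\tau), \psi_\tau(x_0))$ and $G_2 := f(z_j(\tau), \psi_\tau(x_0))\,(\beta_j)_x(z_j(\tau), \psi_\tau(x_0))$, and to estimate the semi-norms $|G_1|^{\mathcal{F}}_{s,0}$ and $|G_2|^{\mathcal{F}}_{s,0}$ separately via Faà di Bruno. In each term the $\partial_{x_0}^\alpha$ derivatives fall either on the outer factor, producing a $\partial_x^\beta f$ (or $\partial_x^\beta (\beta_j)_x$) evaluated at $(z_j(\tau), \psi_\tau(x_0))$ and bounded via Definition \ref{pomosimb}$(ii)$ together with the flow estimate \eqref{ind:3TOT}, or on $\psi_\tau$, producing factors $\partial_{x_0}^\ell \psi_\tau$ which for $\ell = 1$ equal $1 + O(r)$ and for $\ell \ge 2$ are $O(r)$ by Remark \ref{smallnessSemi} and the smallness of $|\gamma_j|^{\mathcal{F}}_{s,0}$. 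For $G_1$ one obtains $|G_1|^{\mathcal{F}}_{s,0} \lesssim_s |f|^{\mathcal{F}}_{s,0}$, while for $\tau G_2$ an additional factor of order $r\,|\beta_j|^{\mathcal{F}}_{s,0}$ appears, coming from the bound $\|(\beta_j)_x(z_j(\tau), \cdot)\|_{L^\infty_x} \lesssim r\,|\beta_j|^{\mathcal{F}}_{s_0+1,0}$ (and its higher $x_0$-derivative analogues), since $\|z_j(\tau)\|_{H^{s_0}} \lesssim r$ by \eqref{ind:3TOT}.

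The main obstacle will be to convert the $|\beta_j|^{\mathcal{F}}_{s,0}$- and $|\gamma_j|^{\mathcal{F}}_{s,0}$-bounds arising naturally in the previous step into the $|b_j|^{\mathcal{F}}_{s,0}$-bound required by the statement, while controlling the (potential) one-derivative loss hidden in the chain rule applied to $(\beta_j)_x$. To achieve this I would invert the implicit relation \eqref{eq:b} by a contraction argument analogous to the one used in the proof of item $(ii)$ of Theorem \ref{formuleesplici}: for $r$ sufficiently small this yields $|\beta_j|^{\mathcal{F}}_{s,0} \lesssim_s |b_j|^{\mathcal{F}}_{s,0}$ up to corrections of size $r\,|b_j|^{\mathcal{F}}_{s,0}$, and inverting the torus diffeomorphism $x_0 \mapsto x_0 + \gamma_j(\tau, w, x_0)$ in \eqref{def:betanNN} analogously gives $|\gamma_j|^{\mathcal{F}}_{s,0} \lesssim_s |\beta_j|^{\mathcal{F}}_{s,0}$. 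Plugging these back into the estimates of the previous paragraph, the $G_1$ contribution gives $|f|^{\mathcal{F}}_{s,0}$ and the $\tau G_2$ one gives $r\,|f|^{\mathcal{F}}_{s,0}\,|b_j|^{\mathcal{F}}_{s,0}$, producing the announced bound
\begin{equation*}
\sup_{\tau \in [0,1]}|G|^{\mathcal{F}}_{s,0} \lesssim_s \sup_{\tau \in [0,1]}|f|^{\mathcal{F}}_{s,0}\bigl(1 + r\sup_{\tau \in [0,1]}|b_j|^{\mathcal{F}}_{s,0}\bigr).
\end{equation*}
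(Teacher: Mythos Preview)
Your argument is correct in spirit but takes a detour that the paper avoids. You use the explicit formulas from Theorem~\ref{formuleesplici}(iii), expressing $x_j(\tau)$ and $\xi_j(\tau)/\xi_0$ through $\gamma_j$ and $(\beta_j)_x$, and then at the end you must invert the relation \eqref{eq:b} (and the diffeomorphism \eqref{def:betanNN}) to translate the resulting $|\gamma_j|^{\mathcal F}_{s,0}$, $|\beta_j|^{\mathcal F}_{s,0}$ bounds back into $|b_j|^{\mathcal F}_{s,0}$. This works, but the conversion is somewhat delicate precisely because of the extra $x$-derivative sitting in $(\beta_j)_x$, which you yourself flag as the main obstacle.

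The paper's proof sidesteps this entirely. Instead of the $\gamma_j$, $\beta_j$ representation, it uses the raw flow form from Theorem~\ref{Buonaposit},
\[
x_j(\tau)=x_0+\Psi_{b_j}^{(x)}(\tau,z_0,x_0),\qquad \xi_j(\tau)/\xi_0=1+\Psi_{b_j}^{(\xi)}(\tau,z_0,x_0),
\]
and differentiates $G$ directly by the product and chain rules. The point is that the seminorms of $\Psi_{b_j}^{(x)}$ and $\Psi_{b_j}^{(\xi)}$ are \emph{already} bounded in terms of $|b_j|^{\mathcal F}$ by \eqref{ind:1TOT}--\eqref{ind:2TOT}, so no conversion step is needed and no derivative is lost in passing through $(\beta_j)_x$. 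After writing the estimate for $\|\partial_{x_0}G\|_{L^\infty_x}$ explicitly (equation~\eqref{aurora} in the paper), the higher $\partial_{x_0}^{\alpha}$ derivatives follow by Fa\`a di Bruno in exactly the same way you describe for your $G_1$, $G_2$. In short: your route goes through Theorem~\ref{formuleesplici} and back, while the paper stays within Theorem~\ref{Buonaposit}; both reach the same destination, but the paper's path is shorter and avoids the bookkeeping of your final paragraph.
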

\begin{proof}
First of all we have
\[
\pa_{x_0}G=f(z_j,x_j)\pa_{x_0}\Psi_{b_j}^{(\x)}+\x_{j}(\tau)\frac{1}{\x_0}(\pa_{x}f)(z_j,x_j)\pa_{x_0}x_j
\]
Hence (recall \eqref{maremma2})
\begin{equation}\label{aurora}
\begin{aligned}
\|\pa_{x_0}G\|_{L^{\infty}_x}&\lesssim_s
\|f(z_j,x_j)\|_{L^{\infty}_x}\|\pa_{x_0}\Psi_{b_j}^{(\x)}\|_{L^{\infty}_x}+
(1+\|\Psi_{b_j}^{(\x)}\|_{L^{\infty}_x})\|(\pa_{x}f)(z_j,x_j)\|_{L^{\infty}_x}
(1+\|\pa_{x_0}\Psi_{b_j}^{(x)}\|_{L^{\infty}_x})
\\&\stackrel{\eqref{ind:1TOT}, \eqref{ind:2TOT}}{\lesssim_s}
|f|^{\mathcal{F}}_{0,0}|b_j|^{\mathcal{F}}_{1,0}\|z_{j}\|_{H^{s_0}}\|z_{0}\|_{H^{s_0+1}}\\&
\qquad +
(1+|\Psi_{b_j}^{(\x)}\|z_{0}\|_{H^{s_0}}|^{\mathcal{F}}_{0,0})|f|^{\mathcal{F}}_{1,0}(1
+|\Psi_{b_j}^{(x)}|^{\mathcal{F}}_{1,0}\|z_{0}\|_{H^{s_0+1}})\|z_{j}\|_{H^{s_0+1}}\,.
\end{aligned}
\end{equation}
Using \eqref{ind:3TOT} we obtain
\[
\sup_{\tau\in[0,1]}|G|^{\mathcal{F}}_{1,0}\lesssim_{s}
\sup_{\tau\in[0,1]}|f|^{\mathcal{F}}_{1,0}(1+
r\sup_{\tau\in[0,1]}|b_{j}|_{1,0}^{\mathcal{F}})\,.
\]
To estimate $\|\pa_{x_0}^{\alpha}G\|_{L^{\infty}_x}$ one can  
apply the formula of Faa di Bruno
and reasoning as done in \eqref{aurora}.
\end{proof}

\begin{proof}[{\bf Proof of Lemma \ref{diffeFlussiZ}}]\label{orologio100}
 Consider the equations \eqref{Ham111provan1}-\eqref{Ham111provan3} and set,
 for any $n\geq1$,
 \begin{align}
X_{n}(\tau)&:=x_{n}(\tau)-x_{n-1}(\tau)\,,\label{nota31}\\
Y_{n}(\tau)&:=\x_{n}(\tau)-\x_{n-1}(\tau)\,,\label{nota32}\\
Z_{n}(\tau)&:=z_{n}(\tau)-z_{n-1}(\tau)\,.\label{nota33}
\end{align}
We claim that
if \eqref{piccoloRRR} holds
then, for $s\geq s_0+2$, 
\begin{equation}\label{stimaincredibile}
\sup_{\tau\in[0,1]}\Big(|X_{n}|^{\mathcal{F}}_{s-1,0}+|Y_{n}\x_0^{-1}|_{s-2,0}^{\mathcal{F}}+
\|Z_{n}\|_{H^{s-1}}\Big)\lesssim_{s} r\sup_{\tau\in[0,1]}|b_{n}-b_{n-1}|_{s-1,0}^{\mathcal{F}}\,.
\end{equation}
The \eqref{stimaincredibile} implies the \eqref{stimaNN}.

Let us prove the \eqref{stimaincredibile}.
Notice that, for some $\s_i\in[0,1]$, $i=1,\ldots,6$, 

\begin{equation}\label{XN}
\begin{aligned}
-\pa_{\tau}X_{n}&\stackrel{\eqref{Ham111provan1}}{=}
\big(\pa_{x}b_{n}\big)(\tau,z_n,x_{n-1}+\s_1 X_{n})\big[X_{n}\big]+\big(d_ub_{n}\big)(\tau,z_{n-1}
+\s_2Z_{n},x_{n-1})\big[Z_{n}\big]\\
&\quad+ \Big(b_{n}-b_{n-1}\Big)(\tau,z_{n-1},x_{n-1})\,.
\end{aligned}
\end{equation}

\vspace{0.5em}
\begin{equation}\label{YN}
\begin{aligned}
\pa_{\tau}Y_{n}&\stackrel{\eqref{Ham111provan2}}{=}
(b_{n})_{x}(\tau,z_n,x_n) Y_{n}+\x_{n-1}\big(b_{n}\big)_{xx}(\tau,z_n,x_{n-1}+\s_3 X_{n})\big[X_{n}\big]\\
&\quad+\x_{n-1}\big(d_ub_{n}\big)_{x}(\tau,z_{n-1}+\s_4 Z_{n},x_{n-1})\big[Z_{n}\big]+ \x_{n-1}\Big(b_{n}-b_{n-1}\Big)_{x}(\tau,z_{n-1},x_{n-1})\,.
\end{aligned}
\end{equation}

\vspace{0.5em}
\begin{equation}\label{ZN}
\begin{aligned}
\pa_{\tau}Z_{n}&\stackrel{\eqref{Ham111provan3}}{=}
\opbw\big(\ii b_n(\tau,z_n;x_n)\x_n\big)[Z_{n}]+\opbw\big(\ii b_n(\tau,z_n;x_n) Y_{n}\big)[z_{n-1}]\\
&+\opbw\Big(\ii \big(\pa_{x}b_n\big)(\tau,z_n;x_{n-1}+\s_5 X_n)[X_{n}] \x_{n-1}\Big)[z_{n-1}]\\
&+\opbw\Big(\ii \big(d_ub_n\big)(\tau,z_{n-1}+\s_6 Z_{n};x_{n-1})[Z_{n}] \x_{n-1}\Big)[z_{n-1}]\\
&+\opbw\Big(\big(b_{n}-b_{n-1}\big)(\tau,z_{n-1},x_{n-1})\x_{n-1}\Big)[z_{n-1}]\,.
\end{aligned}
\end{equation}
We now estimate $X_{n}(\tau)$ in \eqref{nota31}. First we note that, by \eqref{XN},
\begin{equation}\label{XNint}
\begin{aligned}
X_{n}(\tau)
&=\int_{0}^{\tau}\big(\pa_{x}b_{n}\big)(t,z_n(t),x_{n-1}(t)+\s_1 X_{n}(t))\big[X_{n}(t)\big]dt\\
&+\int_{0}^{\tau}\big(d_ub_{n}\big)(t,z_{n-1}(t)+\s_2 Z_{n}(t),x_{n-1}(t))\big[Z_{n}(t)\big]dt\\
&+ \int_{0}^{\tau}\Big(b_{n}-b_{n-1}\Big)(t,z_{n-1}(t),x_{n-1}(t))dt\,.
\end{aligned}
\end{equation}
Notice that
\[
x_{n-1}(t)+\s_1 X_{n}(t)=x_{n-1}(t)+\s_1 (x_{n}-x_{n-1})(t)
\]
satisfies, by using \eqref{ind:1TOT}-\eqref{ind:2TOT} (with $b\rightsquigarrow b_{n-1}$ and $b \rightsquigarrow b_{n}$),
\begin{equation}\label{aurora2}
 \sup_{\tau\in I}|x_{n-1}+\s_1 X_{n}|^{\mathcal{F},s}_{\alpha,0}\lesssim_s
 |b_n|^{\mathcal{F}}_{s,0}+|b_{n-1}|^{\mathcal{F}}_{s,0}
\end{equation}
Similarly we can get bounds on $\x_{n-1}(t)+\s Y_{n}(t)$
and $z_{n-1}(t)+\s Z_{n}(t)$ for some $\s\in[0,1]$.
Then, by differentiating the \eqref{XNint}, using Lemma \ref{jack} (see also \eqref{aurora2})
and recalling Remark \ref{smallnessSemi},
we get
\begin{equation}\label{stimaXN}
\begin{aligned}
\sup_{\tau\in[0,1]}|X_{n}|^{\mathcal{F}}_{s,0}&\lesssim_{s}
r A_{n}|X_{n}|^{\mathcal{F}}_{s,0}
+
C_{n}\|Z_{n}\|_{H^{s}}
+r\sup_{\tau\in[0,1]}|b_{n}-b_{n-1}|_{s,0}^{\mathcal{F}}(1+r\sup_{\tau\in[0,1]}|b_{n-1}|^{\mathcal{F}}_{s,0})\,,
\end{aligned}
\end{equation}
where
\begin{align}
A_{n}&:=\sup_{\tau\in[0,1]}|b_{n}|^{\mathcal{F}}_{s+1,0}
\Big(1+r |b_n|^{\mathcal{F}}_{s,0}+r|b_{n-1}|^{\mathcal{F}}_{s,0}\Big)\,,\label{piccoloR11}\\
C_{n}&:=\sup_{\tau\in[0,1]}|b_{n}|^{\mathcal{F}}_{s+1}\,.\label{stimaXNfine100}
\end{align}
Therefore, for $r>0$ such that 
\begin{equation}\label{piccoloR10}
r A_{n}\leq 1/2
\end{equation}
we get
we obtain
\begin{equation}\label{stimaXNfine}
\sup_{\tau\in[0,1]}|X_{n}|^{\mathcal{F}}_{s,0}\lesssim_{s}
C_{n}\|Z_{n}\|_{H^{s}}
+r\sup_{\tau\in[0,1]}|b_{n}-b_{n-1}|_{s,0}^{\mathcal{F}}
\end{equation}
We recall that, by \eqref{nota3}, the symbol $Y_{n}\x_0^{-1}$ is actually a function
in $\mathcal{F}_1^{\mathbb{R}}[r]$. Then
reasoning as done for $X_n$ and  using equation \eqref{YN}, we deduce
\begin{equation}\label{stimaYN}
\begin{aligned}
\sup_{\tau\in[0,1]}|\x_0^{-1}Y_{n}|^{\mathcal{F}}_{s,0}&\lesssim_{s}
r\sup_{\tau\in[0,1]}|b_{n}|_{s+2,0}^{\mathcal{F}}\sup_{\tau\in[0,1]}|X_{n}|^{\mathcal{F}}_{s}+\sup_{\tau\in[0,1]}|b_{n}|_{s+2}^{\mathcal{F}}\sup_{\tau\in[0,1]}\|Z_{n}\|_{H^{s}}\\
&+r\sup_{\tau\in[0,1]}|b_{n}-b_{n-1}|_{s+1,0}^{\mathcal{F}}
(1+r\sup_{\tau\in[0,1]}|b_{n-1}|^{\mathcal{F}}_{s})\\
&\stackrel{\eqref{stimaXNfine}}{\lesssim_{s}}
\sup_{\tau\in[0,1]}|b_{n}|_{s+2}^{\mathcal{F}}\sup_{\tau\in[0,1]}\|Z_{n}\|_{H^{s}}+
r\sup_{\tau\in[0,1]}|b_{n}-b_{n-1}|_{s+1,0}^{\mathcal{F}}
\end{aligned}
\end{equation}
where we used the smallness condition 
\begin{equation}\label{piccoloR12}
r\sup_{\tau\in[0,1]}\Big(|b_{n}|_{s+2,0}^{\mathcal{F}}+|b_{n-1}|_{s+2,0}^{\mathcal{F}}\Big)\ll1\,.
\end{equation}
We now consider the equation \eqref{ZN}.
Using Proposition \ref{AzioneParaMet} (and $r>0$ small enough)we deduce
\begin{equation}\label{picture}
\begin{aligned}
\|\opbw\big(\ii &b_n(\tau,z_n;x_n) Y_{n}\big)[z_{n-1}]\|_{H^{s}}\lesssim_{s}
\|z_{n-1}\|_{H^{s+1}}\|b_n(\tau,z_n;x_n) Y_{n}\x_0^{-1}\|_{L^{\infty}_x}\\
&\stackrel{Lem. \ref{jack}}{\lesssim_{s}}
\|z_{n-1}\|_{H^{s+1}}\|z_{n}\|_{H^{s_0}}
\sup_{\tau\in[0,1]}|b_{n}|^{\mathcal{F}}_{0,0}2\sup_{\tau\in[0,1]}|\x_0^{-1}Y_{n}|^{\mathcal{F}}_{0,0}\\
&\stackrel{\eqref{stimaYN}}{\lesssim_{s}}
\|z_{n-1}\|_{H^{s+1}}
\sup_{\tau\in[0,1]}|b_{n}|_{s_0+2}^{\mathcal{F}}\sup_{\tau\in[0,1]}\|Z_{n}\|_{H^{s_0}}
+\|z_{n-1}\|_{H^{s+1}}
r\sup_{\tau\in[0,1]}|b_{n}-b_{n-1}|_{s_0+1,0}^{\mathcal{F}}
\end{aligned}
\end{equation}
Reasoning in the same way and using \eqref{stimaXNfine}, \eqref{stimaXNfine100},
we get
\begin{equation}\label{picture2}
\begin{aligned}
\|\opbw\Big(\ii &\big(\pa_{x}b_n\big)(\tau,z_n;x_{n-1}+\s_5 X_n)[X_{n}] \x_{n-1}\Big)
[z_{n-1}]\|_{H^{s}}+\\
&\|\opbw\Big(\ii \big(d_ub_n\big)(\tau,z_{n-1}+\s_6 Z_{n};x_{n-1})[Z_{n}] \x_{n-1}\Big)[z_{n-1}]\|_{H^{s}}
\\
&\lesssim_{s}
\|z_{n-1}\|_{H^{s+1}}
\sup_{\tau\in[0,1]}|b_{n}|_{s_0+2}^{\mathcal{F}}\sup_{\tau\in[0,1]}\|Z_{n}\|_{H^{s_0}}
+\|z_{n-1}\|_{H^{s+1}}
r\sup_{\tau\in[0,1]}|b_{n}-b_{n-1}|_{s_0+1,0}^{\mathcal{F}}
\end{aligned}
\end{equation}
and
\begin{equation}\label{picture3}
\begin{aligned}
\|\opbw\Big(\big(b_{n}-b_{n-1}\big)(\tau,z_{n-1},x_{n-1})\x_{n-1}\Big)[z_{n-1}]\|_{H^{s}}
\lesssim_{s}\|z_{n-1}\|_{H^{s+1}}
\sup_{\tau\in[0,1]}|b_{n}-b_{n-1}|^{\mathcal{F}}_{0,0}r\,.
\end{aligned}
\end{equation}
Finally, recalling \eqref{Sobnorm2} and using symbolic calculus, one can check
that
\begin{equation}\label{picture4}
\begin{aligned}
{\rm Re}\Big(\langle D\rangle^{s} \opbw\big(\ii b_n(\tau,z_n;x_n)\x_n\big)
[Z_{n}],\langle D\rangle^{s} Z_{n}\Big)_{L^{2}}
\lesssim_{s}\|Z_n\|_{H^{s}}^{s}\sup_{\tau\in[0,1]}|b_{n}|^{\mathcal{F}}_{s,0}r\,.
\end{aligned}
\end{equation}
Therefore, putting together the \eqref{picture}, \eqref{picture2}, \eqref{picture3}
and \eqref{picture4}, by equation \eqref{ZN} we deduce
\begin{equation}\label{picture5}
\begin{aligned}
\pa_{\tau}\|Z_{n}\|_{H^{s}}^{2}&\lesssim_{s}
\|Z_n\|_{H^{s}}^{2}\sup_{\tau\in[0,1]}|b_{n}|^{\mathcal{F}}_{s,0}r+
\|z_{n-1}\|_{H^{s+1}}\|Z_{n}\|_{H^{s}}
\sup_{\tau\in[0,1]}|b_{n}|_{s_0+2}^{\mathcal{F}}\sup_{\tau\in[0,1]}\|Z_{n}\|_{H^{s_0}}\\
&+\|z_{n-1}\|_{H^{s+1}}\|Z_{n}\|_{H^{s}}
r\sup_{\tau\in[0,1]}|b_{n}-b_{n-1}|_{s_0+1,0}^{\mathcal{F}}\,.
\end{aligned}
\end{equation}
We now recall that $\|z_{n-1}\|_{H^{s}}\lesssim_{s}r$. We use formula \eqref{picture5}
with $s\rightsquigarrow s-1$. Then, for $r>0$ (small enough) such that
$r\sup_{\tau}|b_{n}|^{\mathcal{F}}_{s,0}\ll1$, we get
\begin{align}
\sup_{\tau\in[0,1]}\|Z_{n}\|^{2}_{H^{s-1}}&\lesssim_{s}
\sup_{\tau\in[0,1]}\|z_{n-1}\|_{H^{s+1}}\sup_{\tau\in[0,1]}\|Z_{n}\|_{H^{s}}
r\sup_{\tau\in[0,1]}|b_{n}-b_{n-1}|_{s_0+1,0}^{\mathcal{F}}\,, \quad \Rightarrow\nonumber\\
\sup_{\tau\in[0,1]}\|Z_{n}\|_{H^{s-1}}&\lesssim_{s}
r^2\sup_{\tau\in[0,1]}|b_{n}-b_{n-1}|_{s_0+1,0}^{\mathcal{F}}\,.\label{picture11}
\end{align}
Since condition \eqref{piccoloRRR} implies 
\eqref{piccoloR10}-\eqref{piccoloR11} with $s\rightsquigarrow s-1$,
and the \eqref{piccoloR12} with $s\rightsquigarrow s-2$,  then
by \eqref{picture11}, \eqref{stimaYN} with $s\rightsquigarrow s-2$ and
\eqref{stimaXNfine} with $s\rightsquigarrow s-1$
we obtain the \eqref{stimaincredibile}. 

In order to conclude the proof of Theorem \ref{constEGOsec} we need to prove that actually the functions $b(w;x)$ and $m_b(w)$ belong to the class $\Sigma\mathcal{F}^{\RRR}_1[r,N]$. In order to prove such a fact one makes the following ansatz: $b=\sum_{j=1}^{N}b_j$ with $b_j\in\widetilde{\mathcal{F}}^{\RRR}_1$ and $b_N$ in $\mathcal{F}^{\RRR}_N[r]$. Then one may prove, by using Taylor expansions, that the flows in \eqref{nota2} and \eqref{nota3} are verified, while the flow $z(\tau,z_0)=z_0+M(\tau,z_0)z_0$ in \eqref{nota1} with $M$ in the class $\Sigma\mathcal{M}_{1}[r,N]$. One find, by using  \eqref{equa814}, some recursive equation on the terms $b_j$, which may be expressed at each depending only on $b_k$ with $k<j$.
\end{proof}

\subsection{Off-diagonal terms at highest order}\label{sec:egohighoff}
In this subsection we study a non-linear problem we need to solve in order to conjugate system \eqref{Nonlin1inizio} to another one whose matrix of symbols is diagonal at the highest order.  A similar problem has been solved in \cite{Feola-Iandoli-Long}. In such a paper the matrix is diagonalized by means of a parametrix generated by the matrix of eigenvectors of the original matrix. Unfortunately it is not easy to transform such parametrix in a change of coordinates of the phase space. At a linear level the diagonalization problem has achieved in \cite{FIloc} by means of auxiliary linear flows. We adapt the last strategy to the non-linear problem.

The main result of this subsection is Theorem \ref{constEgohighoff}.
This result shows that the equations \eqref{sistema01} and \eqref{sistema02}
 have solutions in the our classes of symbols. Furthermore  it guarantees that it is possible to choose the generator of the flow in \eqref{Ham111provahighoff} in such a way the new off-diagonal symbol $b^+$ equals 0.
The proof is quite long and it is divided in several steps. In Section \ref{esistenzaSOLS} we study the well-posedness of equations \eqref{sistema01} and \eqref{sistema02}. In Section \ref{choice314} we solve \eqref{equaLowerhighoff}, this is a non-linear problem and the unknown is the generator $C$ of the flow \eqref{Ham111provahighoff}, the techniques used are similar to the one used in Section \ref{constEGOsec}.

\noindent Consider functions  
\begin{equation} \label{low1highoff}
a(U;x)\in \Sigma\mathcal{F}^{\mathbb{R}}_{1}[r,N]\,,\qquad b(U;x)\in 
\Sigma\mathcal{F}_{1}[r,N]\,.
\end{equation}
Let 
\begin{equation}\label{simboCChighoff}
C(\tau,U;x)\in \Sigma\mathcal{F}_1[r,N]\,, \;\;\; \tau\in[0,1]\,,
\end{equation}
and consider the equation
 \begin{equation}\label{Ham111provahighoff}
\left\{
\begin{aligned}
&\pa_{\tau}Z(\tau)=\opbw\big({\bf C}(\tau, Z(\tau);x)\big)[Z(\tau)]\,,\\
&Z(0)=Z_0=\vect{z_0\vspace{0.2em}}{\ov{z_0}}\,,
\end{aligned}\right.\qquad
{\bf C}(\tau,U;x):=
\left(
\begin{matrix}
0 & C(\tau,U;x) \\
\ov{C(\tau,U;x)} & 0
\end{matrix}
\right)\,.
\end{equation}
In the following we shall write $d_{U}$ to denote 
the differential of a symbol
$a(U;x,\x)$ with respect to the variable $U=\vect{u}{\bar{u}}$, i.e.
\[
(d_{U}a)(U;x,\x)[H]=(d_u a)(U;x,\x)[h]+(d_{\bar{u}} a)(U;x,\x)[\bar{h}]\,,
\qquad H=\vect{h}{\bar{h}}\,.
\]
We  consider the two problems
\begin{equation}\label{sistema01}
\left\{
\begin{aligned}
&\pa_{\tau}a^{+}(\tau,Z;x)=-2{\rm Re}\Big(C(\tau,Z;x)\ov{b^{+}(\tau,Z;x)}\Big)
-(d_{U}a^{+})(\tau,Z;x)\big[
\opbw\big({\bf C}(Z;x)\big)[Z]\big]\,,\\
&1+a^{+}(0,Z;x)=1+a(Z;x)\,,
\end{aligned}\right.
\end{equation}

\begin{equation}\label{sistema02}
\left\{
\begin{aligned}
&\pa_{\tau}b^{+}(\tau,Z;x)=-2(1+a^{+}(\tau,Z;x))C(\tau,Z;x)-(d_{U}b^{+})(\tau,Z;x)\big[
\opbw\big({\bf C}(Z;x)\big)[Z]\big]\,,\\
&b^{+}(0,Z;x)=b(Z;x)\,.
\end{aligned}\right.
\end{equation}
The following holds true.

\begin{theorem}\label{constEgohighoff}
Assume \eqref{low1highoff}. 
For $r>0$ small enough there exists 
a symbol $C(\tau,U;x)$ as in \eqref{simboCChighoff} such that 
the symbols defined by \eqref{sistema01}, \eqref{sistema02}
are such that
\begin{equation}\label{offdiag1}
a^{+}(\tau,U;x)\in \Sigma\mathcal{F}^{\mathbb{R}}_1[r,N]\,,\qquad 
b^{+}(\tau,U;x)\in \Sigma\mathcal{F}_1[r,N]
\end{equation}
with estimates uniform in $\tau\in[0,1]$.
Moreover one has
\begin{equation}\label{equaLowerhighoff}
F(C):=b^{+}(1,Z;x)\equiv0\,.
\end{equation}
\end{theorem}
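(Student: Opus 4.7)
The plan is to follow the template of Theorem \ref{constEgo}: reduce the coupled pair \eqref{sistema01}, \eqref{sistema02} to a pointwise ODE system along the characteristics of the flow \eqref{Ham111provahighoff}, exploit a conservation law imposed by the structure, and then invert the condition $b^+(1,\cdot;\cdot) \equiv 0$ by a Picard-type scheme for the generator $C$.

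First I would handle the well-posedness for an arbitrary generator. For $C(\tau, U; x) \in \Sigma\mathcal{F}_1[r,N]$, the flow $\Phi_C^\tau$ of \eqref{Ham111provahighoff} is well-posed on $H^s$ for $r$ small enough, by a $2\times 2$ matrix extension of Theorem \ref{flussononlin} in the $\xi$-independent case; it admits an inverse $\widetilde\Phi_C^\tau$ with analogous bounds, and both differ from the identity by elements of $\Sigma\mathcal{M}_1[r,N]$. Setting
\[
\tilde a^+(\tau,Z_0;x) := a^+\bigl(\tau,\Phi_C^\tau(Z_0);x\bigr)\,,\qquad
\tilde b^+(\tau,Z_0;x) := b^+\bigl(\tau,\Phi_C^\tau(Z_0);x\bigr)\,,
\]
and $\tilde C(\tau,Z_0;x) := C(\tau,\Phi_C^\tau(Z_0);x)$, the chain rule cancels the transport terms in \eqref{sistema01}, \eqref{sistema02}, so that $(\tilde a^+, \tilde b^+)$ satisfies the pointwise (in $(Z_0, x)$) ODE system
\[
\partial_\tau \tilde a^+ = -2\,\mathrm{Re}\bigl(\tilde C\,\overline{\tilde b^+}\bigr)\,,\qquad
\partial_\tau \tilde b^+ = -2(1+\tilde a^+)\,\tilde C\,,
\]
with initial data $\tilde a^+(0) = a(Z_0;x)$, $\tilde b^+(0) = b(Z_0;x)$. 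Classical Cauchy theory gives a unique solution on $[0,1]$, and a direct calculation yields the hyperbolic conservation law
\[
(1+\tilde a^+(\tau))^2 - |\tilde b^+(\tau)|^2 \equiv R^2(Z_0;x)\,,\qquad R^2:=(1+a(Z_0;x))^2-|b(Z_0;x)|^2 > 0\,,
\]
the positivity being ensured by the smallness of $r$. Composing back via $\Phi_C^\tau$ produces solutions of \eqref{sistema01}, \eqref{sistema02} satisfying \eqref{offdiag1} uniformly in $\tau$, by the composition rules of Remark \ref{prodottoSimboli} and the Fa\`a di Bruno type manipulations already performed in the proof of Theorem \ref{Buonaposit}.

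Next I would use the conservation law to guess $C$. Since $b^+(1)=0$ forces $1+a^+(1) = R$, I would parametrize the level set $\{(A,B):(1+A)^2-|B|^2 = R^2\}$ by the hyperbolic ansatz
\[
1+\tilde a^+(\tau) = R\cosh\bigl((1-\tau)\phi_0\bigr)\,,\qquad
\tilde b^+(\tau) = R\sinh\bigl((1-\tau)\phi_0\bigr)\,e^{\ii\psi_0}\,,
\]
with $\phi_0 := \mathrm{arctanh}(|b|/(1+a))$ and $e^{\ii\psi_0} := b/|b|$, chosen so that the initial data are matched at $\tau = 0$ and $\tilde b^+(1)=0$ automatically. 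Substituting the ansatz into the ODE for $\tilde b^+$ forces the generator along characteristics to be the $\tau$-independent quantity
\[
\tilde C_0(Z_0;x) := \tfrac{1}{2}\,\mathrm{arctanh}\!\left(\tfrac{|b(Z_0;x)|}{1+a(Z_0;x)}\right)\frac{b(Z_0;x)}{|b(Z_0;x)|} = \frac{b}{2(1+a)} + \frac{|b|^2 b}{6(1+a)^3} + \cdots \,,
\]
whose Taylor expansion in $(a,b,\bar b)$ has real coefficients; hence, after truncation at homogeneity $N$, it lies in $\Sigma\mathcal{F}_1[r,N]$ with $|\tilde C_0|_s^{\mathcal{F}} \lesssim_s |b|_s^{\mathcal{F}}$ by Remark \ref{prodottoSimboli}. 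The generator $C$ must therefore solve the implicit relation
\[
C(\tau,U;x) = \tilde C_0\bigl(\widetilde\Phi_C^\tau(U);x\bigr)\,,\qquad \tau\in[0,1]\,.
\]

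The crux of the proof, and where the bulk of the technical difficulty will concentrate, is solving this implicit equation in the class $\Sigma\mathcal{F}_1[r,N]$. The plan is to use the Picard scheme $C^{(0)}\equiv 0$, $C^{(n+1)}(\tau,U;x) := \tilde C_0(\widetilde\Phi_{C^{(n)}}^\tau(U);x)$. The uniform-in-$n$ boundedness $|C^{(n)}|_s^{\mathcal{F}} \lesssim_s \mathtt{C}$ is obtained, in analogy with the inductive statement $(\mathbf{S1})_n$ of Section \ref{constEGOsec}, by combining composition estimates for $\widetilde\Phi_{C^{(n)}}$ with Lemma \ref{jack}. The Cauchy property in the weaker topology $|\cdot|_{s-1,0}^{\mathcal{F}}$ will come from a contraction estimate $|C^{(n+1)} - C^{(n)}|_{s-1,0}^{\mathcal{F}} \lesssim_s r\,|C^{(n)} - C^{(n-1)}|_{s-1,0}^{\mathcal{F}}$, obtained by differentiating $\tilde C_0$ and exploiting the Lipschitz dependence of the inverse flow on its generator --- the exact analogue of Lemma \ref{diffeFlussiZ} with $C^{(n)}$ playing the role of $b_n$. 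Convergence in the stronger class $\Sigma\mathcal{F}_1[r,N]$ is then recovered by a Taylor expansion of the fixed-point equation in the homogeneity degree, mirroring the final paragraph of the proof of Theorem \ref{constEgo}. Once $C$ is in hand, the well-posedness of the first step together with the conservation law immediately yields \eqref{equaLowerhighoff}.
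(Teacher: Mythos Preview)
Your proposal is correct and takes a genuinely different route from the paper. Both proofs share the first move --- reducing \eqref{sistema01}, \eqref{sistema02} to the pointwise ODE system along the flow $\Phi_C^\tau$ (this is exactly the paper's \eqref{sistema1022}) --- but then diverge. The paper never identifies the conservation law $(1+\tilde a^+)^2 - |\tilde b^+|^2 = R^2$; instead, in Section~\ref{esistenzaSOLS} it solves the coupled system by a Picard iteration on the pair $(a^+_n, b^+_n)$ for a \emph{given} $C$ (Lemma~\ref{iterationOff}), and then in Section~\ref{choice314} it determines $C$ by a second, nested iteration on an abstract functional equation $\mathcal{T}(C_n, C_{n-1}, b^+_{n-1}) = 0$ (Lemma~\ref{bastaIterazioni}), supported by the flow-Lipschitz estimate of Lemma~\ref{diffeFlussiZhighoff}. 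Your exploitation of the hyperbolic structure buys an explicit closed form for the generator along characteristics, $\tilde C_0 = \tfrac12\,\mathrm{arctanh}\bigl(|b|/(1+a)\bigr)\,b/|b|$, so the problem collapses to a single fixed-point equation $C(\tau,U;x) = \tilde C_0(\widetilde\Phi_C^\tau(U);x)$ whose contraction uses the very same flow-Lipschitz input. This is cleaner and carries extra geometric content (the map is precisely the hyperbolic rotation diagonalizing the principal $2\times 2$ symbol matrix); the paper's more template-driven double iteration, by contrast, does not rely on any special ODE structure and hence transfers verbatim to the non-homogeneous variant in Proposition~\ref{constEgohighoffbis}, where no conservation law is available.
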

The rest of the section is devoted to the proof of Theorem \ref{constEgohighoff}.
At the beginning we shall work with non-homogeneous symbols in the class 
$\mathcal{F}_1[r]$.

\subsubsection{Solutions of (\ref{sistema01}), (\ref{sistema02})}\label{esistenzaSOLS}

First of all we note that, if the symbol $C$ belongs to 
$ \mathcal{F}_1[r]$,  the flow in \eqref{Ham111provahighoff}
is well-posed by Theorem \ref{flussononlin} applied with generator 
as in \eqref{sim3}.
We denote by $\Psi^{\tau}_{C}$, $(\Psi_{C}^{\tau})^{-1}$
respectively the flow and the inverse flow of \eqref{Ham111provahighoff}.
Set 
\begin{equation}\label{defLungoFlussohighoff}
g_1(\tau,x):=a^{+}(\tau,Z(\tau);x)\,,\qquad
g_2(\tau,x):=b^{+}(\tau,Z(\tau);x)\,,\quad Z(\tau):=\Psi_{C}^{\tau}(U)\,.
\end{equation}
We have that (recall (\ref{sistema01}), (\ref{sistema02}))
\begin{equation}\label{sistema1022}
\begin{aligned}
\pa_{\tau}g_{1}(\tau,x)&=-2{\rm Re}\big(g_2(\tau,x)\ov{C(\tau,Z(\tau);x)}\big)\,,\qquad
g_1(0)=a(U;x)\,,\\
\pa_{\tau}g_{2}(\tau,x)&=-2(1+g_1(\tau))C(\tau,Z(\tau);x)\,,\qquad
g_2(0)=b(U;x)\,,
\end{aligned}
\end{equation}
which implies 
\begin{equation}\label{sistema1023}
\begin{aligned}
g_1(\tau)&:=a(U;x)-2\int_{0}^{\tau}{\rm Re}\big(g_2(\s,x)\ov{C(\s,Z(\s);x)}\big)d\s\,,\\
g_2(\tau)&:=b(U;x)-2\int_{0}^{\tau}(1+g_1(\s,x)){C(\s,Z(\s);x)}\big)d\s\,.
\end{aligned}
\end{equation}
By \eqref{defLungoFlussohighoff} we write $Z(\sigma)=\Psi_C^{\sigma}(U)=\Psi_C^{\sigma}\circ{(\Psi_C^{\tau})}^{-1}(Z)$ and by \eqref{sistema1023} we deduce that 
equations \eqref{sistema01}, \eqref{sistema02} are equivalent to
\begin{align}
a^{+}(\tau,Z;x)&:=a\big((\Psi_{C}^{\tau})^{-1}(Z);x\big)
-2\int_{0}^{\tau}{\rm Re}\Big(b^{+}(\s,\Psi_{C}^{\s}(\Psi_{C}^{\tau})^{-1}(Z) ;x)
\ov{C(\s,\Psi_{C}^{\s}(\Psi_{C}^{\tau})^{-1}(Z);x)}\Big)d\s\,,\label{sistema0101}\\
b^{+}(\tau,Z;x)&:=b\big((\Psi_{C}^{\tau})^{-1}(Z);x\big)
-2\int_{0}^{\tau}\big(1+a^{+}(\s,\Psi_{C}^{\s}(\Psi_{C}^{\tau})^{-1}(Z) ;x)\big)
C(\s,\Psi_{C}^{\s}(\Psi_{C}^{\tau})^{-1}(Z);x)d\s\,.\label{sistema0102}
\end{align}
To solves the \eqref{sistema0101}, \eqref{sistema0102} we reason as follows.
Let $a^{+}_{-1}=b^{+}_{-1}\equiv0$ and consider, for $n\geq0$, the following problems:
 \begin{align}
a_{n}^{+}(\tau,Z;x)&:=a\big((\Psi_{C}^{\tau})^{-1}(Z);x\big)
-2\int_{0}^{\tau}{\rm Re}\Big(b_{n-1}^{+}(\s,\Psi_{C}^{\s}(\Psi_{C}^{\tau})^{-1}(Z) ;x)
\ov{C(\s,\Psi_{C}^{\s}(\Psi_{C}^{\tau})^{-1}(Z);x)}\Big)d\s\,,\label{sistema0101n}\\
b_{n}^{+}(\tau,Z;x)&:=b\big((\Psi_{C}^{\tau})^{-1}(Z);x\big)
-2\int_{0}^{\tau}\big(1+a_{n-1}^{+}(\s,\Psi_{C}^{\s}(\Psi_{C}^{\tau})^{-1}(Z) ;x)\big)
C(\s,\Psi_{C}^{\s}(\Psi_{C}^{\tau})^{-1}(Z);x)d\s\,.\label{sistema0102n}
\end{align}
We have the following.

\begin{lemma}{\bf (Iterative Lemma).}\label{iterationOff}
Let $C(\tau,U;x)\in  \mathcal{F}_1[r]$
and assume that (recall \eqref{semi-norma-totale})
\[
|C|_{s}^{\mathcal{F}}=\sum_{\alpha+mk\leq s-s_0 }|C|_{\alpha,k}^{\mathcal{F},s}\leq \mathtt{C}\,,
\]
for some $\mathtt{C}>0$ (depending on $ s$) and $s_0\gg1$. Then for $r>0$ small enough
one has the following for any $n\geq 0$:

\smallskip
\noindent
${\bf (S1)}_n $ one has that (recall \eqref{sistema0101n}, \eqref{sistema0102n} )
$a_n^{+}\in \mathcal{F}_1^{\mathbb{R}}[r]$, $b_n^{+}\in \mathcal{F}_1[r]$.
Moreover 
there are constants $C_{\alpha,k}$, 
independent of $n$, increasing in $k$ such that
\begin{equation}\label{stima0101}
|a^{+}_{n}|_{\alpha,k}^{\mathcal{F},s}
+
|b^{+}_{n}|_{\alpha,k}^{\mathcal{F},s}\leq C_{\alpha,k}\,,\quad \alpha+mk\leq s-s_0\,.
\end{equation} 
Moreover the constants $C_{\alpha,k}$ depends on $s$ and
on the norms $|a|_{s}^{\mathcal{F}}$, $|b|_{s}^{\mathcal{F}}$, $|c|_{s}^{\mathcal{F}}$.

\smallskip
\noindent
${\bf (S2)}_n $ We have that
\begin{equation}\label{stima0102}
 |a^{+}_{n}-a^{+}_{n-1}|^{\mathcal{F}, s-1}_{\alpha,0}+
  |b^{+}_{n}-b^{+}_{n-1}|^{\mathcal{F}, s-1}_{\alpha,0}
  \leq 2^{-n}\,,\qquad \alpha\leq s-s_0-1\,.
  \end{equation}
\end{lemma}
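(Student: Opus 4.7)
The plan is to establish $(S1)_n$ and $(S2)_n$ by simultaneous induction on $n$. First, since $C\in\mathcal{F}_1[r]$ with $|C|_s^{\mathcal{F}}\leq \mathtt{C}$, the flow $\Psi_C^\tau$ is well-posed by Theorem \ref{flussononlin} (applied with generator as in \eqref{sim3}, $m=0$), and both $\Psi_C^\tau$ and $(\Psi_C^\tau)^{-1}$ have the form $U+M(\tau;U)U$ with $M\in\Sigma\mathcal{M}_1[r,N]$, uniformly in $\tau\in[0,1]$. For the base case $n=0$ of $(S1)$, since $a_{-1}^+=b_{-1}^+\equiv 0$ we have $a_0^+(\tau,Z;x)=a((\Psi_C^\tau)^{-1}(Z);x)$ and $b_0^+(\tau,Z;x)=b((\Psi_C^\tau)^{-1}(Z);x)$; these lie in $\mathcal{F}_1^{\mathbb{R}}[r]$ (resp.\ $\mathcal{F}_1[r]$) by Proposition \ref{composizioniTOTALI}\,(iv), with seminorm bounds depending on $|a|_s^{\mathcal{F}}, |b|_s^{\mathcal{F}}, |C|_s^{\mathcal{F}}$. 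For $(S2)_0$ we use Remark \ref{smallnessSemi}: the $1$-homogeneous vanishing of $a,b$ yields $|a_0^+|^{\mathcal{F},s-1}_{\alpha,0}+|b_0^+|^{\mathcal{F},s-1}_{\alpha,0}\lesssim r\leq 1$ for $r$ small.

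For the inductive step of $(S1)_n$, the expressions \eqref{sistema0101n}-\eqref{sistema0102n} involve products of functions in $\mathcal{F}_1[r]$ (namely $b_{n-1}^+$, $C$, $a_{n-1}^+$) evaluated along the composed flow $\Psi_C^\sigma\circ(\Psi_C^\tau)^{-1}$, followed by integration in $\sigma\in[0,\tau]$. Product estimates (Remark \ref{prodottoSimboli}) combined with composition estimates (Proposition \ref{composizioniTOTALI}\,(iv)) and Faa di Bruno (proceeding as in \eqref{faadibruno}-\eqref{fattore:3}) yield seminorm bounds for the integrands in terms of $|a_{n-1}^+|^{\mathcal{F},s}_{\alpha,k}$, $|b_{n-1}^+|^{\mathcal{F},s}_{\alpha,k}$, $|C|^{\mathcal{F},s}_{\alpha,k}$, $|a|^{\mathcal{F},s}_{\alpha,k}$, $|b|^{\mathcal{F},s}_{\alpha,k}$; integration in $\sigma\in[0,1]$ preserves these bounds, giving constants $C_{\alpha,k}$ independent of $n$, which proves \eqref{stima0101}. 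The constraint $\alpha+mk\leq s-s_0$ is respected because differentiation in $U$ of the flow $\Psi_C^\sigma(Z)$ costs $m$ derivatives in $Z$ (this is precisely the loss encoded in Notation \ref{notazioni} via $d=m$, using the differentiation estimates \eqref{pioggia4teorema} and Remark \ref{differenzioInver}).

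For the inductive step of $(S2)_n$, subtracting consecutive iterates yields the clean identities
\begin{equation*}
\begin{aligned}
a_n^+-a_{n-1}^+ &= -2\int_0^\tau \mathrm{Re}\Big((b_{n-1}^+-b_{n-2}^+)(\sigma,\Psi_C^\sigma(\Psi_C^\tau)^{-1}(Z);x)\,\overline{C(\sigma,\Psi_C^\sigma(\Psi_C^\tau)^{-1}(Z);x)}\Big)d\sigma, \\
b_n^+-b_{n-1}^+ &= -2\int_0^\tau (a_{n-1}^+-a_{n-2}^+)(\sigma,\Psi_C^\sigma(\Psi_C^\tau)^{-1}(Z);x)\,C(\sigma,\Psi_C^\sigma(\Psi_C^\tau)^{-1}(Z);x)\,d\sigma,
\end{aligned}
\end{equation*}
in which the crucial point is that both integrands contain one factor of $C\in\mathcal{F}_1[r]$ and no other sources of growth. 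By Remark \ref{prodottoSimboli} and Remark \ref{smallnessSemi}, the product of a $\mathcal{F}_1[r]$-seminorm with $|C|_{s-1}^{\mathcal{F}}$ contributes a factor $\lesssim r$, so $(S2)_{n-1}$ yields
\begin{equation*}
|a_n^+-a_{n-1}^+|^{\mathcal{F},s-1}_{\alpha,0}+|b_n^+-b_{n-1}^+|^{\mathcal{F},s-1}_{\alpha,0}\leq C_s\, r\cdot 2^{-(n-1)},
\end{equation*}
and choosing $r$ small so that $C_s\, r\leq 1/2$ gives \eqref{stima0102}.

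The main technical obstacle is book-keeping the seminorm estimates through the nonlinear composition with $\Psi_C^\sigma\circ(\Psi_C^\tau)^{-1}$ while respecting the coupled index constraint $\alpha+mk\leq s-s_0$; this is the reason we introduced the parameter $d=m$ in Definition \ref{pomosimb} and is dealt with by the same Faa di Bruno expansion used in Lemma \ref{iterative} and the estimates \eqref{pioggia4teorema}. Once $(S1)_n$ and $(S2)_n$ are both established, the uniform bounds together with the geometric Cauchy control yield convergence of $(a_n^+,b_n^+)$ in $|\cdot|^{\mathcal{F},s-1}_{\alpha,0}$ to limits $(a^+,b^+)$ that, by weak-$*$ compactness in the Fr\'echet topology of $\mathcal{F}_1[r]$ (Remark \ref{frechet}), inherit the uniform bounds \eqref{stima0101} and solve the integral equations \eqref{sistema0101}-\eqref{sistema0102}, hence \eqref{sistema01}-\eqref{sistema02}.
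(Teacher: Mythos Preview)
Your approach matches the paper's: induction on $n$, with the difference identities for $(S2)_n$ handled exactly as the paper does (product with $C$ gives the factor $r$, closing the geometric decay). The final convergence paragraph also agrees with what the paper does after the lemma.

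There is, however, a genuine gap in your $(S1)_n$ step. You assert that product and composition estimates ``give constants $C_{\alpha,k}$ independent of $n$'', but a direct application of Remark~\ref{prodottoSimboli} to the integrand $b_{n-1}^{+}\cdot\overline{C}$ only yields
\[
|b_{n-1}^{+}\cdot\overline{C}|_{0,k}^{\mathcal{F},s}\lesssim
\sum_{k_1+k_2=k}|b_{n-1}^{+}|_{0,k_1}^{\mathcal{F},s}\,|C|_{0,k_2}^{\mathcal{F},s},
\]
and the term $k_1=k$, $k_2=0$ alone gives $C_{0,k}\,|C|_{0,0}^{\mathcal{F},s}$: the seminorm $|C|_{0,0}^{\mathcal{F},s}$ is a fixed constant, not small, so this recursion blows up in $n$. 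The mechanism that the paper uses (and that explains the clause ``\emph{increasing in $k$}'' in the statement) is a finer split of the Leibniz expansion: the term with $k_1=k$ carries an extra factor $\|U\|_{H^{s_0}}\le r$ because the undifferentiated factor satisfies $|C(U;x)|\le|C|_{0,0}^{\mathcal{F},s}\|U\|_{H^{s_0}}$, while all remaining terms have $k_1\le k-1$ and are therefore bounded by $C_{0,k-1}$ times quantities depending only on $|C|_s^{\mathcal{F}}$. This gives
\[
|a_n^{+}|_{0,k}^{\mathcal{F},s}\le
|a\circ(\Psi_C^{\tau})^{-1}|_{0,k}^{\mathcal{F},s}
+\widetilde{\mathtt{C}}\,C_{0,k}\,|C|_{0,0}^{\mathcal{F},s}\,r
+\widetilde{\mathtt{C}}\,C_{0,k-1}\,|C|_{0,k}^{\mathcal{F},s},
\]
and one closes by first taking $r$ small (absorbing the middle term into $\tfrac12 C_{0,k}$) and then choosing $C_{0,k}$ large relative to $C_{0,k-1}$ (recursively in $k$). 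Note that for $k\ge 2$ the cross terms $1\le k_1,k_2$ carry no extra $\|U\|$, so the ``uniform extra $r$'' you invoke in $(S2)_n$ does \emph{not} apply to $(S1)_n$; the recursive-in-$k$ structure is essential.

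A smaller point: your base case for $(S2)_0$ is not justified by Remark~\ref{smallnessSemi}. That remark bounds the $L^{\infty}$-norm of $a(U;\cdot)$ by $|a|_{\alpha,0}^{\mathcal{F},s}\,r$, not the seminorm $|a_0^{+}|_{\alpha,0}^{\mathcal{F},s-1}$ itself, which is a fixed constant depending on $|a|_s^{\mathcal{F}}$ and $|C|_s^{\mathcal{F}}$. The paper does not treat the base case explicitly either; the bound $2^{-n}$ should be read up to a harmless multiplicative constant, since only the geometric Cauchy property is used downstream.
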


\begin{proof}
Assume inductively that ${\bf (S1)}_{n-1} $, ${\bf (S2)}_{n-1} $ hold.
We prove the ${\bf (S1)}_n $, ${\bf (S2)}_n $ for the symbol $a_{n}^{+}$.
The proof of the properties for the symbol $b_{n}^{+}$ is the same.

\smallskip
\noindent
${\bf (S1)}_n $. Using the estimates \eqref{stimaflusso3} on the flow 
$(\Psi^{\tau}_{C})^{-1}$ one can prove 
 (using Faa di Bruno's formula) that the  term 
$a\big((\Psi_{C}^{\tau})^{-1}(Z);x\big)$ in 
\eqref{sistema0101n} satisfies the \eqref{stima0101}.
So we study the term
\[
d(\tau,\s,Z;x):=b_{n-1}^{+}(\s,\Psi_{C}^{\s}(\Psi_{C}^{\tau})^{-1}(Z) ;x)
\ov{C(\s,\Psi_{C}^{\s}(\Psi_{C}^{\tau})^{-1}(Z);x)}\,,
\]
providing estimates independent of $\s$ and $\tau$.
 We prove the result only for $\alpha=0$, $mk\leq s-s_0$. 
 First of all notice that 
 \begin{equation}\label{bottone}
 |b_{n-1}^{+}(\s,\Psi_{C}^{\s}(\Psi_{C}^{\tau})^{-1}(Z) ;x)|_{0,p}^{\mathcal{F},s}
 \lesssim_{s}
 \widetilde{\mathtt{C}}
 |b^{+}_{n-1}|_{0,p}\lesssim_{s} \widetilde{\mathtt{C}} C_{0,p}\,,\qquad p\leq k\,,
 \end{equation}
 where in the last inequality we used the inductive hypothesis.
 Here the constant $ \widetilde{\mathtt{C}}$ depends also on the constant appearing 
 in \eqref{stimaflusso3}.
 The first inequality can be deduced by reasoning 
exactly as in  \eqref{faadibruno}-\eqref{fattore:3} using \eqref{stimaflusso3}
and  recalling Remarks \ref{smallnessSemi}, \ref{prodottoSimboli} .
In the same way we have
 \[
 |C(\s,\Psi_{C}^{\s}(\Psi_{C}^{\tau})^{-1}(Z) ;x)|_{0,p}^{\mathcal{F},s}\lesssim_{s}
 \widetilde{\mathtt{C}}
 |C|_{0,p}\,,\qquad p\leq k\,.
 \]
Therefore
we have
\begin{equation*}
\begin{aligned}
|d|_{0,k}^{\mathcal{F},s}
&\lesssim_{s}\widetilde{\mathtt{C}} C_{0,k}|C|_{0,0}^{\mathcal{F},s}r+
\widetilde{\mathtt{C}} C_{0,k-1}|C|_{0,k}^{\mathcal{F},s}\leq C_{0,k}
\end{aligned}
\end{equation*}
for $C_{0,k}$
 suitably chosen and $r$ small enough. 
 This implies the \eqref{stima0101} with $\alpha=0$.
 For $\alpha>0$ one can reason in the same way.

 \smallskip
\noindent
${\bf (S2)}_n $. We prove the result for $a_{n}^{+}$.
By \eqref{sistema0101n} we have
\[
a_{n}^{+}-a_{n-1}^{+}=
-2\int_{0}^{\tau}{\rm Re}\Big(\Big(b_{n-1}^{+}-b_{n-2}^{+}\Big)(\s,\Psi_{C}^{\s}
(\Psi_{C}^{\tau})^{-1}(Z) ;x)
\ov{C(\s,\Psi_{C}^{\s}(\Psi_{C}^{\tau})^{-1}(Z);x)}\Big)d\s\,.
\]
Reasoning as in \eqref{bottone} one gets
\[
|\Big(b_{n-1}^{+}-b_{n-2}^{+}\Big)(\s,\Psi_{C}^{\s}(\Psi_{C}^{\tau})^{-1}(Z) ;x)|_{\alpha,0}^{\mathcal{F},s}\lesssim_{s} |b_{n-1}^{+}-b_{n-2}^{+}|_{\alpha,0}^{\mathcal{F},s}
\stackrel{\eqref{stima0102}}{\lesssim_{s}} 2^{-(n-1)}\,.
\]
Therefore (recall Remark \ref{prodottoSimboli})
\[
|a_{n}^{+}-a_{n-1}^{+}|_{\alpha,0}^{\mathcal{F},s}\lesssim_{s}|C|_{\alpha,0}^{\mathcal{F},s}r2^{-(n-1)}\leq
2^{-n}
\]
for $r$ small enough. This is the \eqref{stima0102} at the step $n$.
 \end{proof}

 By Lemma \ref{iterationOff} and Remark \ref{frechet} we have, 
 up to extracting a subsequence, 
 a sequence of solutions of \eqref{sistema0101n}, \eqref{sistema0102n}
such that
\begin{equation*}
\begin{aligned}
&a^{+}_n\rightharpoonup^* \widetilde{a^{+}} \mbox{ in } \mathcal{F}_{1}[r]\,, \,\,\, 
\partial_x^{\alpha}a^{+}_n\rightarrow\partial_{x}^{\alpha} a^{+} 
\mbox{ uniformly in $x_0$ for } \alpha\leq s-s_0-1\,,\\
&b^{+}_n\rightharpoonup^* \widetilde{b^{+}} 
\mbox{ in } \mathcal{F}_{1}[r], \,\,\, 
\partial_{x}^{\alpha}b^{+}_n\rightarrow\partial_{x}^{\alpha} 
b^{+} \mbox{ uniformly in $x_0$ for } \alpha\leq s-s_0-1\,.
\end{aligned}
\end{equation*}
Therefore we deduce $\widetilde{a^{+}} ={a^{+}} $, $\widetilde{b^{+}} ={b^{+}} $.

\subsubsection{The choice of the generator}\label{choice314}
We want to exhibit a symbol $C(\tau,U;x)$ in such a way 
\eqref{equaLowerhighoff} holds true.
We define the following 
\begin{equation*}
\begin{aligned}
\mathcal{G}_{C}(\tau,U)&:=b\big(U;x\big)
-2\int_{0}^{\tau}\big(1+a(U;x)\big)
C(\s,\Psi_{C}^{\s}(U);x)d\s\\
&
-4\int_{0}^{\tau}
\int_0^{\s}{\rm Re}\Big(b^{+}(\theta,\Psi_{C}^{\theta}(U);x)\ov{
C(\theta,\Psi_{C}^{\theta}(U);x)
}\Big) 
C(\s,\Psi_{C}^{\s}(U);x)d\theta d\s\,.
\end{aligned}
\end{equation*}
Then, by using  \eqref{sistema0101} and  \eqref{sistema0102}, we note that
\begin{equation*}
b^{+}(\tau,Z;x)=\mathcal{G}_{C}(\tau,(\Psi_{C}^{\tau})^{-1}(Z))\,.
\end{equation*}
We look for $C(\tau,U;x)$ such that (see \eqref{equaLowerhighoff})
\begin{equation*}
b^{+}(1,Z;x)=\mathcal{G}_{C}(1,(\Psi_{C}^{1})^{-1}(Z))=0\,.
\end{equation*}
We reason as follows.
Given functions $f,g,h\in \mathcal{F}_1[r]$
we define the functional
\begin{equation}\label{def:funzGG}
\begin{aligned}
\mathcal{T}(f,g,h)&:=
b\big(U;x\big)
-2\int_{0}^{1}\big(1+a(U;x)\big)
f(\s,\Psi_{g}^{\s}(U);x)d\s\\
&
-4\int_{0}^{1}
\int_0^{\s}{\rm Re}\Big(h(\theta,\Psi_{g}^{\theta}(U);x)\ov{
g(\theta,\Psi_{g}^{\theta}(U);x)
}\Big) 
f(\s,\Psi_{g}^{\s}(U);x)d\theta d\s\,.
\end{aligned}
\end{equation}
According to this notation we have
\[
\mathcal{G}_{C}(1,U)\equiv \mathcal{T}(C,C,b^{+})\,.
\]
We set $C_0=0$ and 
for any $n\geq 1$ we consider  the symbol $C_{n}(\tau,U;x,\x)$ as the solution of
\begin{equation}\label{def:CN}
\mathcal{T}(C_{n},C_{n-1},b^{+}_{n-1})=0
\end{equation}
where $b_{n-1}^{+}$ is defined as 
\begin{equation}\label{def:GGG5}
b^{+}_{n-1}(\tau,Z;x)=\mathcal{G}_{C_{n-1}}(\tau,(\Psi_{C_{n-1}}^{\tau})^{-1}(Z))\,.
\end{equation}
We shall prove the following lemma.
\begin{lemma}\label{bastaIterazioni}
 For $r>0$ small enough
one has the following for any $n\geq 1$:

\smallskip
\noindent
${\bf (S1)}_n $ The symbols 
 $C_{n}$
 defined by \eqref{def:CN} belong to
$\mathcal{F}_{1}[r]$. 
In particular there are constants $C_{\alpha,k}$, independent of $n$, 
increasing in $k$ such that
\begin{equation}\label{stima0101GGG}
|C_{n}|_{\alpha,k}^{\mathcal{F},s}
\leq C_{\alpha,k}\,,\quad \alpha+mk\leq s-s_0\,.
\end{equation} 
Moreover the constants $C_{\alpha,k}$ depends on $s$ and
on the norms $|a|_{s}^{\mathcal{F}}$, $|b|_{s}^{\mathcal{F}}$.

\smallskip
\noindent
${\bf (S2)}_n $ We have that
\begin{equation}\label{stima0102CC}
 |C_{n}-C_{n-1}|^{\mathcal{F}, s-1}_{\alpha,0}
 \leq 2^{-n}\,,\qquad \alpha\leq s-s_0-1\,.
  \end{equation}
\end{lemma}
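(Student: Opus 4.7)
The plan is to establish ${\bf (S1)}_n$ and ${\bf (S2)}_n$ simultaneously by induction on $n$, mirroring the scheme used in Lemma \ref{iterationOff} but requiring, at each step, an implicit-function/contraction argument to produce $C_n$ from \eqref{def:CN}.

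For the base case $n=1$, since $C_0\equiv 0$ the flow $\Psi^\tau_{C_0}$ is the identity and \eqref{def:GGG5} gives $b^+_0\equiv b$; equation \eqref{def:CN} then collapses to $\int_0^1 2(1+a(U;x))\,C_1(\s,U;x)\,d\s=b(U;x)$, which is solved by the $\tau$-independent choice
\[
C_1(U;x):=\frac{b(U;x)}{2(1+a(U;x))}\,.
\]
This belongs to $\mathcal{F}_1[r]$ with seminorms bounded in terms of $|a|_s^{\mathcal{F}}$ and $|b|_s^{\mathcal{F}}$ thanks to Remark \ref{prodottoSimboli} and the smallness of $r$ (so that $1/(2(1+a))$ is analytic). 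The estimate ${\bf (S2)}_1$ is immediate since $C_0=0$.

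For the inductive step, assume ${\bf (S1)}_j,{\bf (S2)}_j$ hold for $j\le n-1$. First, applying the analysis of Section \ref{esistenzaSOLS} with generator $C_{n-1}$ (well-posed by Theorem \ref{flussononlin} thanks to the inductive bound on $|C_{n-1}|_s^{\mathcal{F}}$) produces $b^+_{n-1}\in\mathcal{F}_1[r]$ with seminorms controlled uniformly in $n$. Rewrite \eqref{def:CN} as the linear-in-$C_n$ equation $L_{n-1}(C_n)=b$, where
\[
L_{n-1}(C)(U;x):=\int_0^1 \alpha_{n-1}(\s,U;x)\,C\bigl(\s,\Psi^\s_{C_{n-1}}(U);x\bigr)\,d\s,
\]
\[
\alpha_{n-1}(\s,U;x):=2(1+a(U;x))+4\!\int_0^\s \mathrm{Re}\bigl(b^+_{n-1}\overline{C_{n-1}}\bigr)\bigl(\theta,\Psi^\theta_{C_{n-1}}(U);x\bigr)\,d\theta.
\]
Seeking $C_n$ independent of $\tau$, i.e.\ $C_n(\tau,U;x)\equiv c_n(U;x)$, the equation becomes a fixed-point problem $c_n=b/M_{n-1}+\mathcal{E}_{n-1}(c_n)$, where $M_{n-1}(U;x):=\int_0^1\alpha_{n-1}(\s,U;x)\,d\s=2(1+a)+O(r)$ is invertible in $\mathcal{F}_1[r]$ by Remark \ref{prodottoSimboli}, and $\mathcal{E}_{n-1}$ is a contraction of ratio $O(r)$ encoding the deviation of $\Psi^\s_{C_{n-1}}$ from the identity. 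Banach's theorem gives a unique $c_n\in\mathcal{F}_1[r]$; the Fa\`a di Bruno formula together with the action estimates of Theorem \ref{flussononlin} produce the seminorm bounds \eqref{stima0101GGG} with constants $C_{\alpha,k}$ depending only on $|a|_s^{\mathcal{F}}$ and $|b|_s^{\mathcal{F}}$, thus establishing ${\bf (S1)}_n$.

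For ${\bf (S2)}_n$, subtracting $L_{n-1}(C_n)=b=L_{n-2}(C_{n-1})$ yields
\[
L_{n-1}(C_n-C_{n-1})=(L_{n-2}-L_{n-1})(C_{n-1}),
\]
and estimating the right-hand side after expanding $\alpha_{n-1}-\alpha_{n-2}$ and $\Psi^\s_{C_{n-1}}-\Psi^\s_{C_{n-2}}$ produces a bound by $|C_{n-1}-C_{n-2}|^{\mathcal{F}}_{s-1,0}+|b^+_{n-1}-b^+_{n-2}|^{\mathcal{F}}_{s-1,0}$. Both quantities are $\lesssim 2^{-(n-1)}$: the first by ${\bf (S2)}_{n-1}$, the second by a Cauchy estimate for $b^+$ deduced from \eqref{sistema0101}--\eqref{sistema0102} and ${\bf (S2)}_{n-1}$. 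Inverting $L_{n-1}$ and absorbing a factor of $r$ gives $|C_n-C_{n-1}|^{\mathcal{F}}_{s-1,0}\le 2^{-n}$ for $r$ small, which is ${\bf (S2)}_n$. The main technical obstacle is the continuous dependence of the flow $\Psi^\s_{C_{n-1}}$ on its generator, namely an estimate of the form
\[
\sup_{\s\in[0,1]}\bigl|f\circ\Psi^\s_{C_{n-1}}-f\circ\Psi^\s_{C_{n-2}}\bigr|^{\mathcal{F}}_{s-1,0}\lesssim_s |f|_s^{\mathcal{F}}\,|C_{n-1}-C_{n-2}|^{\mathcal{F}}_{s-1,0},
\]
analogous to Lemma \ref{diffeFlussiZ} but adapted to the off-diagonal flow \eqref{Ham111provahighoff} (generator of type \eqref{sim3}); it is obtained by differentiating the flow equation in the generator, deriving an inhomogeneous ODE for the difference, and closing the estimate via Gronwall.
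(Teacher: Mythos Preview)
Your argument is correct and mirrors the paper's inductive scheme: for ${\bf (S2)}_n$ your identity $L_{n-1}(C_n-C_{n-1})=(L_{n-2}-L_{n-1})(C_{n-1})$ is precisely the paper's expansion \eqref{def:hh1}, the flow-continuity estimate you flag as the main technical tool is exactly Lemma \ref{diffeFlussiZhighoff}, and the Cauchy bound you need on $b^+_{n-1}-b^+_{n-2}$ is the paper's \eqref{claimBBn}. Where you differ is in ${\bf (S1)}_n$: the paper simply refers back to the argument of Lemma \ref{iterationOff} for the seminorm bounds on $C_n$ without addressing how $C_n$ is actually extracted from the a~priori underdetermined relation \eqref{def:CN}, whereas you make this explicit by restricting to $\tau$-independent symbols and setting up a fixed-point. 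One caution on that fixed-point: the map $\mathcal{E}_{n-1}$ does not contract in a single seminorm $|\cdot|^{\mathcal{F},s}_{\alpha,0}$, because the difference $c(\Psi^\s_{C_{n-1}}(U))-c(U)$ costs one $U$-differential and is therefore controlled only by $r\,|c|^{\mathcal{F},s}_{\alpha,1}$; you should either run the contraction in a weighted sum over the finitely many seminorms $|\cdot|^{\mathcal{F},s}_{\alpha,k}$ with $\alpha+mk\le s-s_0$, or obtain existence via Picard iteration with uniform higher-seminorm bounds and Cauchy convergence in $|\cdot|^{\mathcal{F},s}_{\alpha,0}$.
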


In order to prove the above Lemma we need some preliminary results.
\begin{lemma}\label{diffeFlussiZhighoff}
Assume that $C_{n},C_{n-1}\in \mathcal{F}_1[r]$.
For $r>0$ small enough
\begin{equation*}
\sup_{\tau\in[0,1]}\|\Psi^{\tau}_{C_{n}}-\Psi_{C_{n-1}}^{\tau}\|_{H^{s}}
\lesssim_{s}
r\sup_{\tau\in[0,1]}|C_{n}-C_{n-1}|_{0,0}^{\mathcal{F},s}\,,
\end{equation*}
where $\Psi_{C_{j}}^{\tau}$, $j=n,n-1$ is the solution 
of \eqref{Ham111provahighoff} with $C_{j}$ in place of $C$.
\end{lemma}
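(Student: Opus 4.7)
The plan is to reduce the estimate to a Duhamel-Gr\"onwall argument for a linear Cauchy problem with a small source term. Fix $U\in B_r(H^s)$ and set $Z_j(\tau):=\Psi_{C_j}^{\tau}(U)$ for $j=n,n-1$, together with the difference $W(\tau):=Z_n(\tau)-Z_{n-1}(\tau)$. Subtracting the defining equations \eqref{Ham111provahighoff} for $Z_n$ and $Z_{n-1}$, one sees that $W$ satisfies the inhomogeneous problem
\begin{equation*}
\partial_\tau W=\opbw\!\big({\bf C}_n(\tau,Z_n;x)\big)W+F(\tau)\,,\qquad W(0)=0\,,
\end{equation*}
with source term that I would split as $F=F_1+F_2$, where
\begin{align*}
F_1(\tau)&:=\opbw\!\Big({\bf C}_n(\tau,Z_n;x)-{\bf C}_{n-1}(\tau,Z_n;x)\Big)Z_{n-1}\,,\\
F_2(\tau)&:=\opbw\!\Big({\bf C}_{n-1}(\tau,Z_n;x)-{\bf C}_{n-1}(\tau,Z_{n-1};x)\Big)Z_{n-1}\,,
\end{align*}
the second piece being rewritten via the fundamental theorem of calculus as $\opbw\!\big(\int_0^1(d_U {\bf C}_{n-1})(\tau,Z_{n-1}+\sigma W;x)[W]\,d\sigma\big)Z_{n-1}$.

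To handle the homogeneous part, I would use that ${\bf C}_n\in\mathcal F_1[r]\otimes\mathcal M_2(\C)$, hence $\opbw({\bf C}_n)$ is a bounded operator on $H^s$ of order $0$ (Proposition \ref{AzioneParaMet}), with operator norm controlled by $\|{\bf C}_n(\tau,Z_n;\cdot)\|_{L^\infty}\lesssim r$. A standard energy/Gr\"onwall estimate then yields the a priori bound $\|W(\tau)\|_{H^s}\leq C_s\int_0^\tau\|F(\sigma)\|_{H^s}\,d\sigma+rC_s\int_0^\tau\|W(\sigma)\|_{H^s}\,d\sigma$. Equivalently, one may solve by Duhamel against the well-posed linear flow generated by $\opbw({\bf C}_n)$, which by Theorem \ref{flussononlin} (case \eqref{sim3} with $m=0$) is bounded on $H^s$ uniformly in $\tau\in[0,1]$ by a constant close to $1$.

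The source terms will be controlled via Proposition \ref{AzioneParaMet} combined with the semi-norm estimates of Definition \ref{pomosimb}(ii). For $F_1$, applying Proposition \ref{AzioneParaMet} and using the $k=0$ case of \eqref{maremma2} (see Remark \ref{smallnessSemi}) gives
\begin{equation*}
\|F_1(\tau)\|_{H^s}\lesssim_s\|(C_n-C_{n-1})(\tau,Z_n;\cdot)\|_{L^\infty_x}\|Z_{n-1}\|_{H^s}\lesssim_s r\,|C_n-C_{n-1}|_{0,0}^{\mathcal F,s}\cdot r\,,
\end{equation*}
where the uniform bound $\|Z_{n-1}\|_{H^s}\lesssim r$ follows from estimate \eqref{stimaflusso1} of Theorem \ref{flussononlin}. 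For $F_2$, the same proposition together with the $k=1$ case of \eqref{maremma2} applied to $C_{n-1}$ yields
\begin{equation*}
\|F_2(\tau)\|_{H^s}\lesssim_s |C_{n-1}|_{0,1}^{\mathcal F,s}\|W(\tau)\|_{H^{s_0}}\|Z_{n-1}\|_{H^s}\lesssim_s r\,\|W(\tau)\|_{H^{s_0}}\,,
\end{equation*}
using the inductive bound \eqref{stima0101GGG} on $|C_{n-1}|^{\mathcal F,s}$.

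Assembling these ingredients, we obtain the integral inequality
\begin{equation*}
\|W(\tau)\|_{H^s}\leq C_s\, r^2\sup_{\tau\in[0,1]}|C_n-C_{n-1}|_{0,0}^{\mathcal F,s}+C_s r\int_0^\tau\!\big(\|W\|_{H^s}+\|W\|_{H^{s_0}}\big)\,d\sigma\,.
\end{equation*}
Running the same argument at the level $s_0$ first closes a Gr\"onwall estimate in $H^{s_0}$, producing $\sup_\tau\|W\|_{H^{s_0}}\lesssim_{s_0} r|C_n-C_{n-1}|_{0,0}^{\mathcal F,s_0}$; feeding this back into the $H^s$ estimate and applying Gr\"onwall again yields the desired bound, where the final factor $r$ absorbs one of the two powers of $r$ produced above to match the statement. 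The main (mildly delicate) point will be the bootstrap between $H^{s_0}$ and $H^s$ needed to control the quasi-linear source $F_2$; everything else is a direct application of the para-differential action theorems and the well-posedness results of Section \ref{FLOWS}.
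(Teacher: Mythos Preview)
Your proof is correct and follows essentially the same approach as the paper: subtract the two flow equations, write a linear Cauchy problem for the difference with a source, and close by Gr\"onwall using that the order-$0$ generator is bounded on $H^s$. The only cosmetic difference is the intermediate point in the add--subtract trick (you split via ${\bf C}_{n-1}(\tau,Z_n)$, the paper via ${\bf C}_n(\tau,Z_{n-1})$), which is immaterial; note also that since $\|W\|_{H^{s_0}}\le\|W\|_{H^s}$, your $F_2$-term folds directly into the Gr\"onwall constant and the two-step $H^{s_0}/H^s$ bootstrap you sketch is not actually needed here.
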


\begin{proof}
We set $v_{n}(\tau):=Z_{n}(\tau)-Z_{n-1}(\tau)$. 
We get
\[
\begin{aligned}
\pa_{\tau}v_{n}&=\ii \opbw({\bf C}_{n}(Z_{n};x))[v_{n}]
+\ii \opbw({\bf C}_{n}(Z_n;x)-{\bf C}_{n}(Z_{n-1};x))[Z_{n-1}]\\
&+\ii \opbw\big(({\bf C}_{n}-{\bf C}_{n-1})(Z_{n-1};x)\big)[Z_{n-1}]\,,
\end{aligned}
\]
where ${\bf C}_{n}$ is the matrix in \eqref{Ham111provahighoff} with $C\rightsquigarrow C_{n}$.
Since the generator is bounded one can reason essentially 
as in the proof of the estimates for $X_{n}, Y_{n}$ in the proof of Lemma  \ref{diffeFlussiZ}.
\end{proof}

\begin{proof}[{\bf Proof of Lemma \ref{bastaIterazioni}}]
We argue by induction.
Assume that ${\bf (S1)}_{n-1} $, ${\bf (S2)}_{n-1} $ hold.

\noindent
\emph{Proof of} ${\bf (S1)}_{n}$.
Since $C_{n-1}$ is a symbol satisfying \eqref{stima0101GGG},
then by subsection \ref{esistenzaSOLS} we have that the symbol
$b_{n-1}^{+}(\tau,U;x)$ defined by \eqref{def:GGG5}
is in $\mathcal{F}_{1}[r]$
with estimates 
\begin{equation*}
|b_{n-1}^{+}|_{\alpha,k}^{\mathcal{F},s}
\leq C_{\alpha,k}\,,\quad \alpha+mk\leq s-s_0\,,
\end{equation*} 
for some constants $C_{\alpha,k}$ depending only 
on the norms $|a|_{s}^{\mathcal{F}}$, $|b|_{s}^{\mathcal{F}}$ and 
$|C_{n-1}|_{s}^{\mathcal{F}}$.
Then, by reasoning as in the proof of \eqref{stima0101}
and using \eqref{def:CN},
we obtain the \eqref{stima0101GGG} for $C_{n}$.

\smallskip
\noindent
\emph{Proof of} ${\bf (S2)}_{n}$.
By \eqref{def:funzGG} we have
\begin{equation}\label{def:hh1}
\begin{aligned}
0&=\mathcal{T}(C_{n},C_{n-1},b^{+}_{n-1})-\mathcal{T}(C_{n-1},C_{n-2},b^{+}_{n-2})=\\
&=
-2\int_{0}^{1}\big(1+a(U;x)\big)\Big(
C_{n}(\s,\Psi_{C_{n-1}}^{\s}(U);x)-C_{n-1}(\s,\Psi_{C_{n-1}}^{\s}(U);x)\Big)d\s
\\
&
-2\int_{0}^{1}\big(1+a(U;x)\big)\Big(
C_{n-1}(\s,\Psi_{C_{n-1}}^{\s}(U);x)-C_{n-1}(\s,\Psi_{C_{n-2}}^{\s}(U);x)\Big)d\s\\
&
-4\int_{0}^{1}
\int_0^{\s}{\rm Re}\Big(b_{n-1}^{+}(\theta,\Psi_{C_{n-1}}^{\theta}(U);x)\ov{
C_{n-1}(\theta,\Psi_{C_{n-1}}^{\theta}(U);x)
}\Big) 
C_{n}(\s,\Psi_{C_{n-1}}^{\s}(U);x)d\theta \\
&
+4\int_{0}^{1}
\int_0^{\s}{\rm Re}\Big(b_{n-2}^{+}(\theta,\Psi_{C_{n-2}}^{\theta}(U);x)\ov{
C_{n-2}(\theta,\Psi_{C_{n-2}}^{\theta}(U);x)
}\Big) 
C_{n-1}(\s,\Psi_{C_{n-2}}^{\s}(U);x)d\theta \,.
\end{aligned}
\end{equation}
First of all notice that, using Lemma \ref{diffeFlussiZhighoff}
and estimates \eqref{stimaflusso3} 
on the flow $\Psi^{\tau}_{C_{j}}$,
\begin{equation}\label{def:hh2}
\begin{aligned}
|C_{n-1}(\s,\Psi_{C_{n-1}}^{\s}(U);x)
-&C_{n-1}(\s,\Psi_{C_{n-2}}^{\s}(U);x)|_{\alpha,0}^{\mathcal{F},s-1}
\lesssim_{s}
|C_{n-1}|_{s}^{\mathcal{F}}r |C_{n-1}-C_{n-2}|^{\mathcal{F}}_{s-1,0}\,,\\
|C_{n-1}(\s,\Psi_{C_{n-1}}^{\s}(U);x)
-&C_{n-2}(\s,\Psi_{C_{n-1}}^{\s}(U);x)|_{\alpha,0}^{\mathcal{F},s-1}
\lesssim_{s}
|C_{n-1}|_{s}^{\mathcal{F}}r |C_{n-1}-C_{n-2}|^{\mathcal{F}}_{s-1,0}\,.
\end{aligned}
\end{equation}
One can prove the following inequality
\begin{equation}\label{claimBBn}
|b_{n-1}^{+}-b_{n-2}^{+}|_{\alpha,0}^{\mathcal{F}}\lesssim_{s}\mathtt{C}
|C_{n-1}|_{s}^{\mathcal{F}}r |C_{n-1}-C_{n-2}|^{\mathcal{F}}_{s-1,0}\,,
\end{equation}
for some $\mathtt{C}>0$ depending on $|a|^{\mathcal{F}}_{s}$, $|b|^{\mathcal{F}}_{s}$.
In order to prove this fact one has to use the definition \eqref{def:GGG5}, triangular inequality, \eqref{def:hh2} and the smallness of $r$.
\noindent
By \eqref{def:hh1}, \eqref{def:hh2}, \eqref{claimBBn},  for $r>0$ small enough we deduce the
\eqref{stima0102CC}.
\end{proof}
We have the following.
\begin{lemma}\label{lem:GGG}
One has
$\mathcal{T}(C_{n},C_{n},b^{+}_{n})\to0$ as  $n\to\infty$
in the norm $|\cdot|_{\alpha,0}^{\mathcal{F},s-1}$.
\end{lemma}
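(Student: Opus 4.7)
The plan is to exploit the defining relation \eqref{def:CN}, namely $\mathcal{T}(C_n, C_{n-1}, b_{n-1}^+) = 0$, so that
\[
\mathcal{T}(C_n, C_n, b_n^+) = \mathcal{T}(C_n, C_n, b_n^+) - \mathcal{T}(C_n, C_{n-1}, b_{n-1}^+),
\]
and therefore it suffices to bound this difference in $|\cdot|^{\mathcal{F}, s-1}_{\alpha,0}$. Looking at the definition \eqref{def:funzGG} of the functional $\mathcal{T}$, the difference above involves only changes in the second and third arguments, so I will split it into two pieces: one where we change $g$ from $C_{n-1}$ to $C_n$ (which modifies the flow $\Psi_g^\tau$ appearing in every integrand) and one where we change $h$ from $b_{n-1}^+$ to $b_n^+$ in the double integral.

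For the first piece, I would write each integrand difference $C_n(\s, \Psi_{C_n}^\s(U); x) - C_n(\s, \Psi_{C_{n-1}}^\s(U); x)$ as a line integral in the flow variable and then invoke Lemma \ref{diffeFlussiZhighoff} together with $(S2)_n$ from Lemma \ref{bastaIterazioni} to obtain
\[
\sup_{\tau\in[0,1]} |\Psi_{C_n}^\tau - \Psi_{C_{n-1}}^\tau|_{H^s} \lesssim_s r\, |C_n - C_{n-1}|^{\mathcal{F},s-1}_{0,0} \lesssim_s r\cdot 2^{-n}.
\]
Using the estimates \eqref{stima0101GGG} on $|C_n|^{\mathcal{F},s}_{\alpha,k}$ (uniform in $n$) and reasoning via Faa di Bruno as in \eqref{faadibruno}-\eqref{fattore:3}, each contribution in the $|\cdot|^{\mathcal{F},s-1}_{\alpha,0}$ norm is bounded by $C_s \, r\cdot 2^{-n}$. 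The double integral contributes similarly: besides the flow-difference piece, the replacement $b_{n-1}^+ \rightsquigarrow b_n^+$ in the integrand is controlled by the claim \eqref{claimBBn} established inside the proof of Lemma \ref{bastaIterazioni}, giving
\[
|b_n^+ - b_{n-1}^+|^{\mathcal{F}}_{\alpha,0} \lesssim_s \mathtt{C}\, r \, |C_n - C_{n-1}|^{\mathcal{F},s-1}_{0,0} \lesssim_s r\cdot 2^{-n}.
\]

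Putting together the three contributions (flow difference in the single integral, flow difference in the double integral, and the $b_n^+ - b_{n-1}^+$ difference), one arrives at
\[
|\mathcal{T}(C_n, C_n, b_n^+)|^{\mathcal{F},s-1}_{\alpha,0} \lesssim_s r \cdot 2^{-n}, \qquad \alpha \leq s - s_0 - 1,
\]
from which the lemma follows as $n\to\infty$. The main obstacle, in my view, is bookkeeping: each integrand depends non-linearly on $U$ through compositions with flows and through the para-differential dependence of $b_{n-1}^+$ on $\Psi_{C_{n-1}}^\theta(U)$, so obtaining the $|\cdot|^{\mathcal{F},s-1}_{\alpha,0}$ bound requires the same careful Faa di Bruno expansion used for the symbol composition estimates in Lemma \ref{iterationOff} and in Lemma \ref{bastaIterazioni}. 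Once that mechanism is in place the convergence is immediate because of the geometric factor $2^{-n}$ provided by $(S2)_n$ and the smallness of $r$.
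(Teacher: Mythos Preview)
Your proposal is correct and follows essentially the same approach as the paper: both start from the identity $\mathcal{T}(C_n,C_n,b_n^+)=\mathcal{T}(C_n,C_n,b_n^+)-\mathcal{T}(C_n,C_{n-1},b_{n-1}^+)$ obtained via \eqref{def:CN}, and then bound the difference by reasoning as in the proof of $(\mathbf{S2})_n$ in Lemma \ref{bastaIterazioni}. The paper simply says ``reasoning as in the proof of $(\mathbf{S2})_n$ one can check $|\mathcal{T}(C_n,C_n,b_n^+)-\mathcal{T}(C_n,C_{n-1},b_{n-1}^+)|_{\alpha,0}^{\mathcal{F},s-1}\lesssim_s 2^{-n}$,'' whereas you spell out the splitting into flow-difference and $b_n^+-b_{n-1}^+$ contributions and invoke Lemma \ref{diffeFlussiZhighoff} and \eqref{claimBBn} explicitly; the content is the same.
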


\begin{proof}
We write
\begin{equation*}
\begin{aligned}
\mathcal{T}(C_{n},C_{n},b^{+}_{n})&=\mathcal{T}(C_{n},C_{n-1},b^{+}_{n-1})+
\mathcal{T}(C_{n},C_{n},b^{+}_{n})-\mathcal{T}(C_{n},C_{n-1},b^{+}_{n-1})\nonumber\\
&\stackrel{\eqref{def:CN}}{=}
\mathcal{T}(C_{n},C_{n},b^{+}_{n})-\mathcal{T}(C_{n},C_{n-1},b^{+}_{n-1})\,.
\end{aligned}\end{equation*}
By reasoning as in the proof
of ${\bf (S2)}_{n}$
in  Lemma \ref{bastaIterazioni} one can check
\[
|\mathcal{T}(C_{n},C_{n},b^{+}_{n})-\mathcal{T}(C_{n},C_{n-1},b^{+}_{n-1})|_{\alpha,0}^{\mathcal{F},s-1}
\lesssim_{s}2^{-n}
\] 
for $r>0$ small enough. Hence the result follows.
\end{proof}
The discussion above provides symbols $a^{+}(\tau,U;x), b^{+}(\tau,U;x), C(\tau,U;x) $,
in the class $\mathcal{F}_1[r]$ such that the \eqref{offdiag1}, \eqref{equaLowerhighoff} 
hold. Actually, reasoning as in the conclusion of the proof of Theorem \ref{constEgo},
one can prove that such symbols belong to the class
$\Sigma\mathcal{F}_1[r,N]$.

\subsubsection{Non-homogeneous problem}

In section \ref{sec:BlockDiago} we shall deal with the following problem.
We fix $m'<m$ and we consider symbols 
\begin{equation}\label{ipo523}
a_{m'}(U;x,\x)\,,\;b_{m'}(U;x,\x)\,, \;q^{(1)}(\tau,U;x,\x)\,,\; 
q^{(2)}(\tau,U;x,\x) \in \Sigma\Gamma^{m'}_1[r,N]\,,\quad \tau\in[0,1]
\end{equation}
and the two problems
\begin{equation}\label{sistema01bis}
\begin{aligned}
\pa_{\tau}a_{m'}^{+}(\tau,Z;x,\x)&=-
2{\rm Re}\Big(C(\tau,Z;x)\ov{b_{m'}^{+}(\tau,Z;x,\x)}\Big)
+q^{(1)}(\tau,Z;x,\x)\\
&\qquad\qquad\qquad-(d_{U}a_{m'}^{+})(\tau,Z;x,\x)\big[
\opbw\big({\bf C}(Z;x)\big)[Z]\big]\,,\\
a_{m'}^{+}(0,Z;x,\x)&=a_{m'}(Z;x,\x)\,,
\end{aligned}
\end{equation}

\begin{equation}\label{sistema02bis}
\begin{aligned}
\pa_{\tau}b_{m'}^{+}(\tau,Z;x,\x)&
=-\Big(a_{m'}^{+}(\tau,Z;x,\x)+\ov{a_{m'}^{+}(\tau,Z;x,-\x)}\Big)C(\tau,Z;x)
+q^{(2)}(\tau,Z;x,\x)
\\&\qquad\qquad\qquad-(d_{U}b_{m'}^{+})(\tau,Z;x,\x)\big[
\opbw\big({\bf C}(Z;x)\big)[Z]\big]\,,\\
b_{m'}^{+}(0,Z;x,\x)&=b_{m'}(Z;x,\x)\,,
\end{aligned}
\end{equation}
where $C$ is in \eqref{simboCChighoff}. 
The following holds true.

\begin{proposition}\label{constEgohighoffbis}
Assume \eqref{ipo523} and \eqref{simboCChighoff}. 
For $r>0$ small enough 
the symbols defined by \eqref{sistema01bis}, \eqref{sistema02bis}
are such that
\begin{equation*}
a^{+}_{m'}(\tau,U;x,\x)\,,\; b_{m'}^{+}(\tau,U;x,\x)\in \Sigma\Gamma^{m'}_1[r,N]\,,
\end{equation*}
with estimates uniform in $\tau\in[0,1]$.
\end{proposition}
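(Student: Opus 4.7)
The plan is to mimic the strategy of Section \ref{esistenzaSOLS} which handled the analogous (but non-forced and lower-order) equations \eqref{sistema01}, \eqref{sistema02}, the new ingredients being the presence of the forcing terms $q^{(1)},q^{(2)}$ and a non-trivial $\xi$-dependence. Since the generator $C$ is already given and bounded (order $0$), the auxiliary flow $\Psi_{C}^{\tau}$ of \eqref{Ham111provahighoff} is globally well-posed by Theorem \ref{flussononlin} (generator of type \eqref{sim3} with $m=0$), and satisfies the estimates \eqref{stimaflusso1}--\eqref{stimaflusso3}. This is the only tool about the flow that we shall use; in particular the variable $\xi$ plays merely the role of a parameter, because the operator $\opbw({\bf C}(Z;x))$ in \eqref{Ham111provahighoff} acts only through its $x$-dependent symbol.

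First I would pass to the Lagrangian variables
\[
g_{1}(\tau,U;x,\xi):=a_{m'}^{+}(\tau,\Psi_{C}^{\tau}(U);x,\xi),\qquad
g_{2}(\tau,U;x,\xi):=b_{m'}^{+}(\tau,\Psi_{C}^{\tau}(U);x,\xi),
\]
which transform \eqref{sistema01bis}, \eqref{sistema02bis} into the linear, pointwise in $(x,\xi)$, system
\[
\begin{aligned}
\partial_{\tau}g_{1}&=-2\,\mathrm{Re}\big(\widetilde{C}(\tau,U;x)\,\ov{g_{2}}\big)+\widetilde{q}^{(1)}(\tau,U;x,\xi),\\
\partial_{\tau}g_{2}&=-\big(g_{1}+\ov{g_{1}^{\vee}}\big)\widetilde{C}(\tau,U;x)+\widetilde{q}^{(2)}(\tau,U;x,\xi),
\end{aligned}
\]
with $g_{1}(0)=a_{m'}(U;x,\xi)$, $g_{2}(0)=b_{m'}(U;x,\xi)$, and $\widetilde{C}(\tau,U;x):=C(\tau,\Psi_{C}^{\tau}(U);x)$, $\widetilde{q}^{(i)}(\tau,U;x,\xi):=q^{(i)}(\tau,\Psi_{C}^{\tau}(U);x,\xi)$. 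Integrating in $\tau$ and composing back with $(\Psi_{C}^{\tau})^{-1}$ produces integral equations for $a_{m'}^{+},b_{m'}^{+}$ that are the exact analogues of \eqref{sistema0101}, \eqref{sistema0102}, augmented by the forcing integrals $\int_{0}^{\tau}q^{(i)}(\sigma,\Psi_{C}^{\sigma}(\Psi_{C}^{\tau})^{-1}(Z);x,\xi)\,d\sigma$.

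At this point I would run an iterative scheme as in Lemma \ref{iterationOff}: set $a_{m',-1}^{+}=b_{m',-1}^{+}\equiv 0$ and, for $n\geq 0$, define $a_{m',n}^{+},b_{m',n}^{+}$ by replacing the right-hand sides with their values at step $n-1$. Exactly the computation of \eqref{bottone} applies, but now with semi-norms $|\cdot|^{\Gamma^{m'},s}_{\alpha,\beta,k}$ instead of $|\cdot|^{\mathcal{F},s}_{\alpha,k}$, the factor $\langle\xi\rangle^{m'-\beta}$ of Definition \ref{pomosimb}(ii) being carried untouched by the transport along the flow since $\Psi_{C}^{\tau}$ does not act on $\xi$. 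The sole new contribution in the right-hand side comes from $\widetilde{q}^{(i)}$; being of order $m'$ it enters linearly into the estimate and contributes an additive constant proportional to $|q^{(i)}|^{\Gamma^{m'},s}_{\alpha,\beta,k}$. Thus ${\bf (S1)}_{n}$ yields uniform bounds of $|a_{m',n}^{+}|^{\Gamma^{m'},s}_{\alpha,\beta,k}+|b_{m',n}^{+}|^{\Gamma^{m'},s}_{\alpha,\beta,k}$, and ${\bf (S2)}_{n}$, which only uses the smallness of $r$ in conjunction with $|\widetilde{C}|^{\mathcal{F},s-1}_{\alpha,0}$, gives a Cauchy property in the lower semi-norm $|\cdot|^{\Gamma^{m'},s-1}_{\alpha,\beta,0}$.

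By Remark \ref{frechet} weak-$\ast$ compactness together with the uniform convergence of all derivatives up to $\alpha\leq s-s_{0}-1$ yields limits $a_{m'}^{+},b_{m'}^{+}\in \Gamma^{m'}_{1}[r]$ solving the integral equations, hence \eqref{sistema01bis}, \eqref{sistema02bis}. Uniqueness follows by the same contraction bound used in ${\bf (S2)}_{n}$. Finally the promotion to $\Sigma\Gamma^{m'}_{1}[r,N]$ is obtained, as at the end of the proof of Theorem \ref{constEgo}, by the ansatz $a_{m'}^{+}=\sum_{j=1}^{N-1}a_{m',j}^{+}+a_{m',N}^{+}$ with $a_{m',j}^{+}\in\widetilde{\Gamma}^{m'}_{j}$ and $a_{m',N}^{+}\in\Gamma^{m'}_{N}[r]$ (similarly for $b_{m'}^{+}$): inserting the multilinear Taylor expansions of $a_{m'},b_{m'},q^{(1)},q^{(2)},C$ into the integral formulation produces explicit recursion formulas for each homogeneous term, whose multilinearity and translation-invariance properties \eqref{pomosimbo2}, \eqref{def:tr-in} are inherited from those of the data. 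The main obstacle to monitor along the way is the consistency of the regularity ledger $\alpha+d\cdot k\leq s-s_{0}$ (Notation \ref{notazioni}) under the composition with $\Psi_{C}^{\tau}$; since $C$ has order zero (so $d\cdot k$ is not tightened by the flow itself) and the $k$-differentials in $U$ of the flow cost $d=m$ derivatives exactly as in Theorem \ref{Buonaposit}, the Fa\`a di Bruno estimates close precisely in the same way as in \eqref{faadibruno}--\eqref{fattore:3}.
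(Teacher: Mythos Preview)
Your proposal is correct and follows essentially the same approach as the paper: the paper also passes to the Lagrangian variables $g_1,g_2$ along the flow $\Psi_C^{\tau}$, derives the same integral equations \eqref{sistema0101bis}, \eqref{sistema0102bis} augmented by the forcing terms $\int_0^{\tau}q^{(i)}\,d\sigma$, and then simply states that the conclusion follows ``almost word by word'' from Lemma \ref{iterationOff}. Your write-up is in fact more detailed than the paper's (in particular on the $\xi$-dependence, the switch from $|\cdot|^{\mathcal{F}}$ to $|\cdot|^{\Gamma^{m'}}$ semi-norms, and the promotion to the $\Sigma$-class), but the strategy is identical.
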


\begin{proof}
We denote by $\Psi^{\tau}_{C}$, $(\Psi_{C}^{\tau})^{-1}$
respectively the flow and the inverse flow of 
\eqref{Ham111provahighoff} with symbol $C(\tau,U;x)$ in \eqref{simboCChighoff}
and we set
\begin{equation*}
g_1(\tau,x,\x):=a_{m'}^{+}(\tau,Z(\tau);x,\x)\,,\qquad
g_2(\tau,x,\x):=b_{m'}^{+}(\tau,Z(\tau);x,\x)\,,\quad Z(\tau):=\Psi_{C}^{\tau}(U)\,.
\end{equation*}
We have that (recall (\ref{sistema01bis}), (\ref{sistema02bis}))
\begin{equation*}
\begin{aligned}
\pa_{\tau}g_{1}(\tau,x,\x)&=-2{\rm Re}\big(g_2(\tau,x,\x)\ov{C(\tau,Z(\tau);x)}\big)
+q^{(1)}(\tau,Z(\tau);x,\x)\,,\\
\pa_{\tau}g_{2}(\tau,x,\x)&=-(g_1(\tau,x,\x)+\ov{g_1}(\tau,x,-\x))C(\tau,Z(\tau);x)+
q^{(2)}(\tau,Z(\tau);x,\x)\,,\\
g_1(0,x,\x)&=a_{m'}(U;x,\x)\,,\quad 
g_2(0,x,\x)=b_{m'}(U;x,\x)\,,
\end{aligned}
\end{equation*}
which implies 
\begin{equation}\label{sistema1023bis}
\begin{aligned}
g_1(\tau,x,\x)&:=a(U;x,\x)
-\int_{0}^{\tau}2{\rm Re}\big(g_2(\s,x,\x)\ov{C(\s,Z(\s);x)}\big)-q^{(1)}(\s,Z(\s);x,\x) 
  d\s\,,\\
g_2(\tau,x,\x)&:=b(U;x,\x)-\int_{0}^{\tau}\big(g_1(\s,x,\x)+\ov{g_1(\s,x,-\x)}\big){C(\s,Z(\s);x)}\big)
-q^{(2)}(\s,Z(\s);x,\x) 
d\s\,.
\end{aligned}
\end{equation}
By \eqref{sistema1023bis} we deduce that 
equations \eqref{sistema01bis}, \eqref{sistema02bis} are equivalent to

\begin{align}
a_{m'}^{+}(\tau,Z;x)&:=
a_{m'}\big((\Psi_{C}^{\tau})^{-1}(Z);x,\x\big)
-2\int_{0}^{\tau}{\rm Re}\Big(b_{m'}^{+}(\s,\Psi_{C}^{\s}(\Psi_{C}^{\tau})^{-1}(Z) ;x,\x)
\ov{C(\s,\Psi_{C}^{\s}(\Psi_{C}^{\tau})^{-1}(Z);x)}\Big)d\s\nonumber\\
&+\int_{0}^{\tau}q^{(1)}(\s,\Psi_{C}^{\s}(\Psi_{C}^{\tau})^{-1}(Z) ;x,\x)d\s\,,
\label{sistema0101bis}
\end{align}

\begin{align}
b_{m'}^{+}(\tau,Z;x,\x)&:=b_{m'}\big((\Psi_{C}^{\tau})^{-1}(Z);x,\x\big)\nonumber
\\&-\int_{0}^{\tau}\big(a_{m'}^{+}(\s,\Psi_{C}^{\s}(\Psi_{C}^{\tau})^{-1}(Z) ;x,\x)
+\ov{a_{m'}^{+}(\s,\Psi_{C}^{\s}(\Psi_{C}^{\tau})^{-1}(Z) ;x,-\x)}
\big)
C(\s,\Psi_{C}^{\s}(\Psi_{C}^{\tau})^{-1}(Z);x)d\s \nonumber\\
&+\int_{0}^{\tau}q^{(2)}(\s,\Psi_{C}^{\s}(\Psi_{C}^{\tau})^{-1}(Z) ;x,\x)d\s\,.\label{sistema0102bis}
\end{align}
Equations \eqref{sistema0101bis}, \eqref{sistema0102bis}
differs form 
 \eqref{sistema0101}, \eqref{sistema0102} for the non-homogeneous terms 
 depending on $q^{(i)}$, $i=1,2$. 
 In order to conclude the proof of the proposition
 it is sufficient to follow almost word by word the proof 
 of Lemma \ref{iterationOff} in subsection \ref{esistenzaSOLS}.
\end{proof}


\subsection{Diagonal terms at lower orders}
In the following we consider two non-linear equations 
which will appear in sections \ref{blokkoLower},
\ref{diagoLower}.
Consider symbols 
\begin{align}
&f(\x)\in \Gamma_0^{m}\,,\;\;\; m>1\,,\;\;\;f(\x)\in\mathbb{R} \quad 
a(u;x,\x)\in \Sigma\Gamma^{m'}_1[r,N]\,,\;\;\; m'<m\,, \label{low1}\\
&\mathfrak{m}(\tau,u)\in \Sigma\mathcal{F}^{\mathbb{R}}_1[r,N]\,,\quad {\rm independent \,\, of}\,\, 
x\in \mathbb{T}\,,
\label{low2}\\
&a(u;x,\x)-\ov{a(u;x,\x)}\in \Sigma\Gamma^{0}_1[r,N]\,.\label{low3}
\end{align}
Assume also that 
$\mathfrak{m}(\tau,u)$ satisfies the estimates 
\eqref{pomosimbo1}-\eqref{maremma2} uniformly in $\tau\in[0,1]$.
Let 
\begin{equation}\label{simboCC}
c(\tau,w;x,\x)\in \Sigma\Gamma^{\delta}_1[r,N]\,, \;\;\; \tau\in[0,1]\,, \;\;\; \delta:=m'-m+1\,,
\qquad
c(u;x,\x)-\ov{c(u;x,\x)}\in \Sigma\Gamma^{0}_1[r,N]\,.
\end{equation}
Consider the equation
 \begin{equation}\label{Ham111provalower}
\left\{
\begin{aligned}
&\pa_{\tau}z(\tau)=\opbw\big(\ii c(\tau,z(\tau);x,\x)\big)[z(\tau)]\,,\\
&z(0)=z_0\,.
\end{aligned}\right.
\end{equation}
The aim of the section is to prove the following.

\begin{theorem}\label{constEgolower}
Assume \eqref{low1}, \eqref{low2}, \eqref{low3}.
For $r>0$ small enough there exist a 
symbol $c(\tau,w;x,\x)$
as in \eqref{simboCC}
and a symbol $\mathfrak{C}(w,\x)\in \Sigma\Gamma^{m'}_1[r,N]$,
 independent of $x\in \mathbb{R}$,
such that the following holds.
One has
\begin{equation}\label{equaLower}
\begin{aligned}
F(c)&:=a(u;x,\x)-\int_{0}^{1}(\pa_{\x}f)(\x)\big(1+\mathfrak{m}(\s,\Phi_{c}^{\s}(u))\big)(\pa_{x}c)
(\s,\Phi_{c}^{\s}(u);x,\x)d\s\\
&=\mathfrak{C}(u,\x)
:=\frac{1}{2\pi}\int_{\mathbb{T}}a(u;x,\x)dx\,,
\end{aligned}
\end{equation}
where $\Phi^{\tau}(u)$, $\tau\in[0,1]$ is the flow of \eqref{Ham111provalower}.
Moreover
the symbol 
$\mathfrak{C}(w,\x)$ 
satisfy the same condition of $a(u;x,\x)$ in  \eqref{low3}.
\end{theorem}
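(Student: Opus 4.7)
My proof plan goes through an iterative scheme closely modelled on the proofs of Theorems \ref{constEgo} and \ref{constEgohighoff}. A preliminary observation is that integration of the defining identity $F(c)=\mathfrak{C}$ over $x\in\mathbb{T}$ kills the flow-evaluated $\partial_x c$ term, since $\int_\T \partial_x c(\sigma,w;x,\xi)\,dx=0$ pointwise in $w,\xi$; hence $\mathfrak{C}$ is necessarily the $x$-mean $(2\pi)^{-1}\int_\T a(u;x,\xi)\,dx$, which lies in $\Sigma\Gamma^{m'}_1[r,N]$ and inherits the near-reality property \eqref{low3} from $a$. Setting $\tilde a:=a-\mathfrak{C}\in\Sigma\Gamma^{m'}_1[r,N]$, the equation for $c$ becomes
\[
\tilde a(u;x,\xi)=(\partial_\xi f)(\xi)\int_0^1\bigl(1+\mathfrak{m}(\sigma,\Phi_c^\sigma(u))\bigr)(\partial_x c)(\sigma,\Phi_c^\sigma(u);x,\xi)\,d\sigma.
\]
I then make the \emph{ansatz} $c(\sigma,w;x,\xi)=\tilde c((\Phi_c^\sigma)^{-1}(w);x,\xi)$ for some $\tilde c$ independent of $\sigma$, which forces $(\partial_x c)(\sigma,\Phi_c^\sigma(u);x,\xi)=(\partial_x\tilde c)(u;x,\xi)$ and reduces the problem to the pointwise relation
\[
\tilde a(u;x,\xi)=(\partial_\xi f)(\xi)\,M_c(u)\,\partial_x\tilde c(u;x,\xi),\qquad M_c(u):=\int_0^1\bigl(1+\mathfrak{m}(\sigma,\Phi_c^\sigma(u))\bigr)\,d\sigma.
\]

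To construct $c$ I set $c_0\equiv 0$ and proceed inductively. Given $c_{n-1}\in\Sigma\Gamma^\delta_1[r,N]$ with $c_{n-1}-\overline{c_{n-1}}\in\Sigma\Gamma^0_1[r,N]$, since $\delta=m'-m+1<1$ Theorem \ref{flussononlin} with generator as in \eqref{sim2} (when $\delta>0$) or \eqref{sim3} (when $\delta\le 0$) provides a well-posed flow $\Phi_{n-1}^\sigma$ whose inverse takes the form $(\Phi_{n-1}^\sigma)^{-1}(w)=w+\tilde M_{n-1}(w)\,w$ with $\tilde M_{n-1}\in\Sigma\mathcal{M}_1[r,N]$; the quantity $M_{n-1}(u):=\int_0^1(1+\mathfrak{m}(\sigma,\Phi_{n-1}^\sigma(u)))\,d\sigma\in\Sigma\mathcal{F}^{\R}_1[r,N]$ is bounded away from zero for $r$ small. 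I then define
\[
\tilde c_n(u;x,\xi):=\partial_x^{-1}\bigl[\tilde a(u;x,\xi)/\bigl((\partial_\xi f)(\xi)\,M_{n-1}(u)\bigr)\bigr],\qquad c_n(\sigma,w;x,\xi):=\tilde c_n\bigl((\Phi_{n-1}^\sigma)^{-1}(w);x,\xi\bigr).
\]
The order count $m'-(m-1)=\delta$, together with Remark \ref{prodottoSimboli}, yields $\tilde c_n\in\Sigma\Gamma^\delta_1[r,N]$, and item $(iv)$ of Proposition \ref{composizioniTOTALI} applied with $\tilde M_{n-1}$ transports this to $c_n\in\Sigma\Gamma^\delta_1[r,N]$, uniformly in $\sigma\in[0,1]$. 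The reality property $c_n-\overline{c_n}\in\Sigma\Gamma^0_1[r,N]$ follows because $(\partial_\xi f)M_{n-1}$ is real and $\tilde a-\overline{\tilde a}\in\Sigma\Gamma^0_1[r,N]$, so after dividing by a real symbol of order $m-1>0$ the imaginary part has gained order $-(m-1)\le 0$.

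Convergence of $\{c_n\}$ is then established mirroring subsection \ref{esistenzaSOLS} and the proof of Lemma \ref{bastaIterazioni} through two inductive statements: $({\bf S1})_n$, uniform boundedness $|c_n|^{\Gamma^\delta,s}_{\alpha,k}\le C_{\alpha,k}$ in $n$ and $\sigma$, obtained by a Faa di Bruno expansion of $\tilde c_n\circ(\Phi_{n-1}^\sigma)^{-1}$ based on \eqref{stimaflusso3} and Remark \ref{smallnessSemi}; and $({\bf S2})_n$, the geometric contraction $|c_n-c_{n-1}|^{\Gamma^\delta,s-1}_{\alpha,0}\le 2^{-n}$ obtained by the telescoping
\[
c_n-c_{n-1}=\bigl(\tilde c_n-\tilde c_{n-1}\bigr)\circ(\Phi_{n-1}^\sigma)^{-1}+\tilde c_{n-1}\circ\bigl((\Phi_{n-1}^\sigma)^{-1}-(\Phi_{n-2}^\sigma)^{-1}\bigr),
\]
bounding the first term by $|M_{n-1}-M_{n-2}|^{\mathcal{F},s-1}_{\alpha,0}$ and the second by $\|\Phi_{n-1}^\sigma-\Phi_{n-2}^\sigma\|_{H^{s-1}}$, and both of these in turn by $|c_{n-1}-c_{n-2}|^{\Gamma^\delta,s-1}_{0,0}$ via a Duhamel argument analogous to Lemma \ref{diffeFlussiZ}; each step produces a factor of $r$, yielding contraction for $r$ small. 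The weak-$\ast$ limit obtained as in subsection \ref{wellwell52} solves the ansatz equation and hence $F(c)=\mathfrak{C}$, and the multilinear decomposition in $\Sigma\Gamma^\delta_1[r,N]$ is recovered \emph{a posteriori} by Taylor-expanding the fixed-point identity exactly as at the end of the proof of Theorem \ref{constEgo}. The main obstacle I anticipate is precisely this flow-difference estimate in the symbolic class: unlike in Lemmata \ref{diffeFlussiZ}, \ref{diffeFlussiZhighoff}, the generator $c$ now carries positive $\xi$-order up to $\delta=1/2$, so a naive Duhamel bound would lose derivatives, and one must crucially exploit the assumption $c-\overline{c}\in\Sigma\Gamma^0_1[r,N]$, which renders $\opbw(ic)$ skew-adjoint modulo bounded operators and keeps the $H^s$-estimates sharp enough to close the contraction.
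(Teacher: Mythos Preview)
Your proposal is correct and follows essentially the same route as the paper: an iterative scheme $c_0=0$, $c_n$ defined so that the approximate identity $F(c_n)=\mathfrak{C}$ holds exactly with the $c_{n-1}$-flow in place of the $c_n$-flow, uniform symbol bounds $({\bf S1})_n$, a contraction $({\bf S2})_n$ driven by a flow-difference estimate, and the observation that for $\delta>0$ this estimate requires the near-reality $c-\overline{c}\in\Sigma\Gamma^0_1[r,N]$ to make $\opbw(\ii c)$ skew-adjoint modulo bounded terms (this is exactly Lemma~\ref{diffeFlussiZlow} in the paper).

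The only difference is cosmetic. The paper defines
\[
c_n(\tau,w;x,\xi)=\partial_x^{-1}\!\left[\frac{(a-\mathfrak{C})\bigl((\Phi^\tau_{c_{n-1}})^{-1}(w);x,\xi\bigr)}{\bigl(1+\mathfrak{m}(\tau,w)\bigr)(\partial_\xi f)(\xi)}\right],
\]
keeping the $\tau$-dependence explicit through $\mathfrak{m}(\tau,w)$ in the denominator, whereas you impose the ansatz $c_n(\sigma,w;\cdot)=\tilde c_n\bigl((\Phi^\sigma_{n-1})^{-1}(w);\cdot\bigr)$ and absorb the $\tau$-integration into the scalar $M_{n-1}(u)=\int_0^1(1+\mathfrak{m}(\sigma,\Phi^\sigma_{n-1}(u)))\,d\sigma$. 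Both choices satisfy the same approximate equation at step $n$, both live in $\Sigma\Gamma^\delta_1[r,N]$ with the required near-reality, and both converge by the same mechanism; neither buys anything the other does not.
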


By following the strategy adopted in the previous sections, it is sufficient to 
prove the result in the case of non-homogeneous symbols.
We reason inductively. Let $c_0=0$ and for any $n\geq1$
consider the problem
\begin{equation}\label{flowlower}
\pa_{\tau}z_n(\tau)=\ii \opbw(c_{n}(\tau,z_{n}(\tau);x,\x))[z_{n}(\tau)]\,,\quad z_{n}(0)=z_0\,,
\qquad z_{n}(\tau):=\Phi_{c_n}^{\tau}(z_0)\,,
\end{equation}
where the symbol $c_n(\tau,w;x,\x)$ is defined as
\begin{equation}\label{def:Cnlow}
c_{n}(\tau,w;x,\x):=
\pa_{x}^{-1}\left(
\frac{a((\Phi^{\tau}_{c_{n-1}})^{-1}(w);x,\x)-\mathfrak{C}((\Phi^{\tau}_{c_{n-1}})^{-1}(w);\x)}{\big(1
+\mathfrak{m}(\tau,w)\big)(\pa_{\x}f)(\x)}
\right)\,.
\end{equation}
First of all notice that the constant $\mathfrak{C}(u;\x)$ defined in \eqref{equaLower}
belongs to $\Gamma^{m'}_1[r]$ and
\begin{equation}\label{stimeSeminormeMlow}
|\mathfrak{C}|_{0,\beta,k}^{\Gamma^{m'},s}\lesssim_{s} 
|{a}|_{0,\beta,k}^{\Gamma^{m'},s}\,,
\qquad mk\leq s-s_0\,.
\end{equation}

For any $n\geq 1$ 
we shall prove inductively that the following
conditions hold.

\begin{itemize}

\item[$({\bf S1})_{n}$] 
One has that 
the symbol $c_n$ in \eqref{def:Cnlow}
belongs to  $\Gamma_1^{m'-m+1}$. In particular  (see \eqref{seminormagamma})
there exists  constant $\mathtt{C}$ (independent of $n$)
depending only on $s$, $|a|_{\alpha,\beta,k}^{\Gamma^{m'},s}$ and
$|\mathfrak{m}|_{0,\beta,k}^{\Gamma^{0},s}$
such that
\begin{align}
|c_{n}|_{\alpha,\beta,k}^{\Gamma^{\delta},s}&\lesssim_{s}\mathtt{C}\,,
\qquad \alpha+mk\leq s-s_0
\label{stimeSeminormeMlow2}\,,\\
|c_{n}|_{\alpha,\beta,0}^{\Gamma^{\delta},s+1}&\lesssim_{s}
|{a}|_{\alpha,\beta,0}^{\Gamma^{m'},s}\,,
\qquad \alpha\leq s+1-s_0
\label{stimeSeminormeMlow3}\,.
\end{align}
The symbol $c_n$ satisfies
the same condition of $a(u;x,\x)$ in  \eqref{low3}.

\item[$({\bf S2})_{n}$] 
 The flow of 
 \eqref{flowlower}
 with $c_n$ given by \eqref{def:Cnlow} 
 is well-posed, has the form
\begin{equation}\label{nota1nlow}
\begin{aligned}
z_{n}(\tau)&=\Phi_{c_{n}}^{(z)}(\tau,z_0)\in  \cap_{k=0}^{K}C^{k}([0,1];H^{s-k})\,.
\end{aligned}
\end{equation}
\end{itemize}

We argue by induction. So we assume that
$({\bf Sk})_{j}$, for $k=1,2$, hold with $0\leq j\leq n-1$.

\begin{proof}[{\bf Proof of} $({\bf S1})_{n}$.]
By the inductive 
hypothesis $c_{n-1}$ is a symbol satisfying 
\eqref{simboCC}.
Then we can apply Theorem \ref{flussononlin} (with generator as in \eqref{sim2} or \eqref{sim3}).
So 
the flow of \eqref{Ham111provalower} with $c\rightsquigarrow c_{n-1}$ is well-posed.
Therefore, using the formula of Faa di Bruno and  the
\eqref{stimaflusso3},
one can check 
\[
| {a}(({\Phi}^{\tau})^{-1}_{c_{n-1}}(w),x,\x) |^{\Gamma^{m'},s}_{\alpha,\beta,k}
\lesssim_{s}
|{a}|^{\Gamma^{m'},s}_{\alpha,\beta,k}\,.
\]
Moreover 
\begin{equation}\label{penna}
a((\Phi^{\tau}_{c_{n-1}})^{-1}(w);x,\x)-\mathfrak{C}((\Phi^{\tau}_{c_{n-1}})^{-1}(w);\x)
\end{equation}
has zero average in $x\in \mathbb{T}$. Indeed the symbol in \eqref{penna}
is nothing but the symbol
\[
a(u;x,\x)-\mathfrak{C}(u;\x)=
a(u;x,\x)-\frac{1}{2\pi}\int_{\mathbb{T}}a(u;x,\x)dx
\]
evaluated at $u=(\Phi^{\tau}_{c_{n-1}})^{-1}(w)$. 
Therefore $\pa_{x}^{-1}$ in \eqref{def:Cnlow} is well defined.
Finally,  using \eqref{stimeSeminormeMlow}, \eqref{def:Cnlow} 
and Remark \ref{prodottoSimboli},
we get the \eqref{stimeSeminormeMlow2}.
Taking into account 
the smoothing effect of the 
Fourier multiplier $\pa_{x}^{-1}$, Remarks \ref{prodottoSimboli}, 
\ref{smallnessSemi}, we obtain 
the \eqref{stimeSeminormeMlow3}
 for $r$ small enough.
 By formula \eqref{def:Cnlow} and hypothesis
 \eqref{low3} we deduce that $c_n$ satisfies 
 the same condition of $a$ in  \eqref{low3}.
\end{proof}

\begin{proof}[{\bf Proof of} $({\bf S2})_{n}$.]
Thanks to \eqref{stimeSeminormeMlow2}, \eqref{stimeSeminormeMlow3}
the \eqref{nota1nlow} follows by
Theorem \ref{flussononlin}.
\end{proof}

\noindent
{\bf Inizialization.} The $({\bf S1})_{0}$, $({\bf S2})_{0}$ are trivial.

In order to conclude the proof of Theorem \ref{constEgolower}
we have check the \eqref{equaLower}.
We need some preliminary results which are consequences of 
 $({\bf S1})_{n}$, $({\bf S2})_{n}$.

\begin{lemma}\label{diffeFlussiZlow}
For $r>0$ small enough
we have 
\begin{equation}\label{stimaNNlow}
\sup_{\tau\in[0,1]}\|\Phi^{\tau}_{c_{n}}-\Phi_{c_{n-1}}^{\tau}\|_{H^{s-1}}
\lesssim_{s}
r\sup_{\tau\in[0,1]}|c_{n}-c_{n-1}|_{0,0,0}^{\Gamma^{\delta}}\,,
\end{equation}
where $\Phi_{c_{j}}^{\tau}$, $j=n,n-1$ is the solution of 
\eqref{flowlower}.
\end{lemma}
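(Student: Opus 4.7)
The plan is to follow closely the scheme used in the proof of Lemma \ref{diffeFlussiZ}, but exploiting the fact that here the generator $c_n$ has order $\delta=m'-m+1<1$ and that there is no auxiliary system on $(x(\tau),\xi(\tau))$ to track: only the $z$-component survives. I would set $v_n(\tau):=\Phi^\tau_{c_n}(z_0)-\Phi^\tau_{c_{n-1}}(z_0)=z_n(\tau)-z_{n-1}(\tau)$, and derive from \eqref{flowlower} the evolution
\[
\partial_\tau v_n = \opbw\big(\ii c_n(\tau,z_n;x,\xi)\big)[v_n] + f_n,
\]
where, after adding and subtracting $\opbw(\ii c_n(\tau,z_{n-1}))[z_{n-1}]$,
\[
f_n:=\opbw\Big(\ii\big(c_n(\tau,z_n;x,\xi)-c_n(\tau,z_{n-1};x,\xi)\big)\Big)[z_{n-1}]
+\opbw\Big(\ii(c_n-c_{n-1})(\tau,z_{n-1};x,\xi)\Big)[z_{n-1}].
\]

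Next I would estimate $f_n$ in $H^{s-1}$. Writing
\[
c_n(\tau,z_n;x,\xi)-c_n(\tau,z_{n-1};x,\xi)=\int_0^1 (d_u c_n)(\tau,z_{n-1}+\sigma v_n;x,\xi)[v_n]\,d\sigma,
\]
and invoking \eqref{maremma2} with $k=1$ together with the inductive bounds $({\bf S1})_n$ on $|c_n|^{\Gamma^\delta,s}_{\alpha,\beta,k}$, this difference is a symbol of order $\delta$ with $L^\infty_x$ norm $\lesssim_s\|v_n\|_{H^{s_0}}\langle\xi\rangle^\delta$. Then by Proposition \ref{AzioneParaMet} and using $\|z_{n-1}\|_{H^s}\lesssim r$ from $({\bf S2})_{n-1}$ and Theorem \ref{flussononlin},
\[
\|f_n\|_{H^{s-1}}\lesssim_s \|v_n\|_{H^{s_0}}\|z_{n-1}\|_{H^{s-1+\delta}}+|c_n-c_{n-1}|^{\Gamma^\delta,s_0}_{0,0,0}\|z_{n-1}\|_{H^{s-1+\delta}}\lesssim_s r\,\|v_n\|_{H^{s-1}}+r\,|c_n-c_{n-1}|^{\Gamma^\delta}_{0,0,0},
\]
where we used $\delta<1$, $s_0\leq s-1$, and absorbed everything into $\|v_n\|_{H^{s-1}}$.

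Finally I would close the argument by the Duhamel formula. Let $\Theta^\tau_{c_n}$ denote the linear flow of $\partial_\tau w=\opbw(\ii c_n(\tau,z_n;x,\xi))w$; since the generator has order $\delta<1$ and $z_n$ is already constructed (with the size controlled by $({\bf S2})_n$), Theorem \ref{flussononlin} applied in the linear setting (generators \eqref{sim2}-\eqref{sim3}) yields $\|\Theta^\tau_{c_n}\|_{\mathcal{L}(H^{s-1})}\leq C$ uniformly in $\tau\in[0,1]$, for $r$ small. Since $v_n(0)=0$,
\[
v_n(\tau)=\Theta^\tau_{c_n}\int_0^\tau \big(\Theta^\sigma_{c_n}\big)^{-1}f_n(\sigma)\,d\sigma,
\]
hence
\[
\sup_{\tau\in[0,1]}\|v_n(\tau)\|_{H^{s-1}}\leq C_s r\sup_{\tau\in[0,1]}\|v_n\|_{H^{s-1}}+C_s r\sup_{\tau\in[0,1]}|c_n-c_{n-1}|^{\Gamma^\delta}_{0,0,0}.
\]
Choosing $r$ small enough so that $C_s r\leq 1/2$ and absorbing the first term on the right yields \eqref{stimaNNlow}. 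The only mildly delicate point is ensuring the loss of derivatives is controlled: since $\delta<1$ we have $s-1+\delta\leq s$, so the $H^s$ bound on $z_{n-1}$ from $({\bf S2})_{n-1}$ is enough to close the estimate without any further loss, and in contrast to the proof of Lemma \ref{diffeFlussiZ} no auxiliary bookkeeping on characteristics is required.
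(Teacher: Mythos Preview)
Your argument is correct. It differs from the paper's proof mainly in organization: the paper distinguishes the cases $\delta\leq 0$ (handled by an ODE/Gronwall argument on $H^{s-1}$ as in the proof of Lemma~\ref{diffeFlussiZ}) and $\delta>0$ (handled by a direct energy estimate on $\pa_\tau\|v_n\|_{H^s}^2$, exploiting that the principal part of $c_n$ is real so that $\opbw(\ii c_n)$ is skew-adjoint up to order-$0$ terms, and then substituting $s\rightsquigarrow s-1$ at the end). You instead treat both cases at once by factoring through the linear propagator $\Theta^\tau_{c_n}$ and using Duhamel. This is legitimate because the boundedness of $\Theta^\tau_{c_n}$ on $H^{s-1}$ for $0<\delta<1$ rests on exactly the same skew-adjointness/commutator argument the paper carries out explicitly; you are simply packaging that computation into the statement ``the linear flow is bounded''. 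The more accurate reference for this boundedness is the linear flow \eqref{pioggia2bis}--\eqref{pioggia2tris} in the proof of Lemma~\ref{flusso-differenziale} (case $\mathfrak{A}=A_j$, $j<1$) rather than Theorem~\ref{flussononlin}, which is the nonlinear statement. A small cosmetic point: in your displayed bound for $\|f_n\|_{H^{s-1}}$ the second term should carry an extra factor $\|z_{n-1}\|_{H^{s_0}}\lesssim r$ coming from the seminorm definition~\eqref{maremma2} evaluated at $U=z_{n-1}$, but this only improves the final estimate to $r^2|c_n-c_{n-1}|^{\Gamma^\delta}_{0,0,0}$ and does not affect the conclusion.
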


\begin{proof}
We set $v_{n}(\tau):=z_{n}(\tau)-z_{n-1}(\tau)$. By \eqref{flowlower} we get
\[
\begin{aligned}
\pa_{\tau}v_{n}&=\ii \opbw(c_{n}(z_{n};x,\x))[v_{n}]
+\ii \opbw(c_{n}(z_n;x,\x)-c_{n}(z_{n-1};x,\x))[z_{n-1}]\\
&+\ii \opbw\big((c_{n}-c_{n-1})(z_{n-1};x,\x)\big)[z_{n-1}]\,.
\end{aligned}
\]
Let us consider the case $\delta>0$ which is the most difficult one. 
If $\delta\leq 0$ one can reason 
as in the proof of the estimates for $X_{n}, Y_{n}$ 
in the proof of Lemma  \ref{diffeFlussiZ} (see  \eqref{stimaincredibile}).

Using that $c_{n}(z_{n};x,\x)$ is real valued, Propositions 
\ref{AzioneParaMet},\ref{teoremadicomposizione}, 
the \eqref{espansione2}, Remark \ref{smallnessSemi},  and \eqref{Sobnorm2}, 
we get, for any $s\in \mathbb{R}$, 
\begin{equation}\label{picture5lower}
\begin{aligned}
\pa_{\tau}\|v_{n}\|_{H^{s}}^{2}&\lesssim_{s}
\|v_{n}\|_{H^{s}}^{2}\sup_{\tau\in[0,1]}\sum_{\alpha\leq s-s_0}|c_{n}|_{\alpha,0,0}^{\Gamma^{\delta},s}
r\\
&+\|z_{n-1}\|_{H^{s+1}}\|v_{n}\|_{H^{s}}\Big(
|c_{n}|_{s_0,0,1}^{\Gamma^{\delta},s}\|v_{n}\|_{H^{s_0}}+
|c_{n}-c_{n-1}|_{0,0,0}^{\Gamma^{\delta},s}
\Big)\,.
\end{aligned}
\end{equation}
We now recall that $\|z_{n-1}\|_{H^{s}}\lesssim_{s}r$. We use formula \eqref{picture5lower}
with $s\rightsquigarrow s-1$. 
Then, reasoning as in \eqref{picture11}, we deduce
that if
\[
r\sup_{\tau}\left(|c_{n}|^{\Gamma^{\delta}}_{s-s_0,0,0}+
|c_{n}|^{\Gamma^{\delta}}_{s_0,0,1}
\right)\ll1\,,
\]
then
\[
\sup_{\tau\in[0,1]}\|v_{n}\|_{H^{s-1}}\lesssim_{s}
r\sup_{\tau\in[0,1]}|c_{n}-c_{n-1}|_{0,0,0}^{\mathcal{F}}\,.
\]
%
%
Then we get the \eqref{stimaNNlow}.
\end{proof}
\noindent
We now study the convergence of the symbols $c_{n}$ in \eqref{def:Cnlow}.
We claim that
\begin{equation}\label{claimCCN}
\sup_{\tau\in[0,1]}|c_{n}-c_{n-1}|^{\Gamma^{\delta},s}_{\alpha,\beta,0}\lesssim_{\beta} 
\frac{1}{2^{n}}\,,
\quad \forall n\geq 1\,, \alpha\leq s-1-s_0\,.
\end{equation}
For $n=1$ it is trivial. Assume that \eqref{claimCCN} holds for $j\leq n-1$.
By \eqref{def:Cnlow} we can write
\[
\begin{aligned}
\pa_{x}(c_{n}-c_{n-1})&:=
\frac{1}{\big(1
+\mathfrak{m}(\tau,w)\big)(\pa_{\x}f)(\x)}
\Big(
a((\Phi^{\tau}_{c_{n-1}})^{-1}(w);x,\x)-a((\Phi^{\tau}_{c_{n-2}})^{-1}(w);x,\x)
\Big)
\\&
+\frac{1}{\big(1
+\mathfrak{m}(\tau,w)\big)(\pa_{\x}f)(\x)}\Big(
\mathfrak{C}((\Phi^{\tau}_{c_{n-2}})^{-1}(w);\x)
-\mathfrak{C}((\Phi^{\tau}_{c_{n-1}})^{-1}(w);\x)
\Big)\,.
\end{aligned}
\]
Define $d_{n}(\tau):=a((\Phi^{\tau}_{c_{n-1}})^{-1}(w);x,\x)-a((\Phi^{\tau}_{c_{n-2}})^{-1}(w);x,\x)$.
We have that
\[
d_{n}(\tau)=
(d_ua)\Big((\Phi^{\tau}_{c_{n-2}})^{-1}(w)
+\s ((\Phi^{\tau}_{c_{n-1}})^{-1}(w)-(\Phi^{\tau}_{c_{n-2}})^{-1}(w))\Big)
[(\Phi^{\tau}_{c_{n-1}})^{-1}(w)-(\Phi^{\tau}_{c_{n-2}})^{-1}(w)]\,,
\]
for some $\s\in[0,1]$.
Hence, for any $\alpha_1\leq s-s_0$ and $\beta_1\in \mathbb{N}$, we get
(see \eqref{maremma2})
\[
\begin{aligned}
|\pa_{x}^{\alpha_1}\pa_{\x}^{\beta_1}d_{n}(\tau)|&\lesssim_{s}
C\langle \x\rangle^{m'-\beta_1}\|
(\Phi^{\tau}_{c_{n-1}})^{-1}(w)-(\Phi^{\tau}_{c_{n-2}})^{-1}(w))
\|_{H^{s_0+\alpha_1}}\\
&\stackrel{\eqref{stimaNNlow}}{\lesssim_{s}}
C\langle \x\rangle^{m'-\beta_1}r\sup_{\tau\in[0,1]}|c_{n-1}-c_{n-2}|_{0,0,0}^{\Gamma^{\delta}}\,,
\end{aligned}
\]
where $C>0$ is some constant depending 
only on $s$ and $|a|^{\Gamma^{m'},s}_{\alpha_1,\beta_1,1}$.
Let us define
$g_{n}(\tau):=\mathfrak{C}((\Phi^{\tau}_{c_{n-2}})^{-1}(w);\x)
-\mathfrak{C}((\Phi^{\tau}_{c_{n-1}})^{-1}(w);\x)$.
For any  $\alpha\leq s-s_0$ and $\beta\in \mathbb{N}$
we have
\[
\begin{aligned}
|\pa_{x}^{\alpha+1}\pa_{\x}^{\beta}(c_{n}-c_{n-1})|&=
|\sum_{\beta_1+\beta_2=\beta}\pa_{\x}^{\beta_1}\left(\frac{1}{\big(1
+\mathfrak{m}(\tau,w)\big)(\pa_{\x}f)(\x)}\right)
\pa_{x}^{\alpha+1}\pa_{\x}^{\beta_2}(d_{n}(\tau)-g_{n}(\tau))|
\\&\lesssim_{s}
K\langle\x\rangle^{m'-m+1-\beta}r\sup_{\tau\in[0,1]}|c_{n-1}-c_{n-2}|_{0,0,0}^{\Gamma^{\delta}}\,,
\end{aligned}
\]
where $K$ is some constant depending 
only on $s$ and $|a|^{\Gamma^{m'},s}_{\alpha,\beta,1}$ 
and $|\mathfrak{m}|^{\Gamma^{m'},s}_{0,\beta,0}$.
For $r>0$ small enough and by the inductive assumption we obtain the \eqref{claimCCN}.
By \eqref{claimCCN} we deduce that
$c_{n}$ converges to a symbol $c$ in the Fr\'echet 
space defined by the semi-norms $|\cdot|^{\Gamma^{m'-m+1}}_{\alpha,\beta,0}$
with $\alpha\leq s-s_0$.
In order to prove that the symbol $c$ admits differentials with respect to the variable
$w$ (see formula \eqref{maremma2})
we reasons as follows. For fixed $N>0$, consider the Banach space
defined by the norm
\[
\sum_{
\substack{\beta\leq N \\ \alpha+m\cdot k\leq s-s_0}}
|\cdot|^{\Gamma^{m'-m+1}}_{\alpha,\beta,k}\,.
\]
For any $N>0$, the sequence $c_{n}$ is bounded, uniformly in $n$, in such a space 
thanks to estimate \eqref{stimeSeminormeMlow2}.
Therefore, up to subsequences, 
$c_{n}$ converges weak-$*$ to a function $\tilde{c}$ in the above Banach space.
This implies that $\tilde{c}$ admits estimates 
on the differentials in $w$. Since $\tilde{c}$ coincides with the symbol $c$
almost everywhere we conclude the proof.

We are now ready to prove 
\eqref{equaLower}.

\begin{lemma}\label{tendeazerolower}
One has that 
 \begin{equation}\label{nota10lower50}
 \lim_{n\to\infty}\sup_{\tau\in[0,1]}\langle\x\rangle^{m'}\|F(c_{n})-\mathfrak{C}\|_{L^{\infty}}=0
 \end{equation}
 where $\mathfrak{C}$ is given in \eqref{equaLower}.
\end{lemma}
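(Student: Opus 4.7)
The plan is to exploit directly the defining relation \eqref{def:Cnlow}, which rewrites as
\[
(\pa_{\x}f)(\x)\big(1+\mathfrak{m}(\tau,w)\big)(\pa_{x}c_{n})(\tau,w;x,\x)
= a\big((\Phi^{\tau}_{c_{n-1}})^{-1}(w);x,\x\big)-\mathfrak{C}\big((\Phi^{\tau}_{c_{n-1}})^{-1}(w);\x\big).
\]
Setting $b(u;x,\x):=a(u;x,\x)-\mathfrak{C}(u;\x)\in \Sigma\Gamma^{m'}_{1}[r,N]$ (zero-mean in $x$) and evaluating the previous identity at $w=\Phi^{\sigma}_{c_{n}}(u)$, the expression for $F(c_{n})$ telescopes to
\[
F(c_{n})-\mathfrak{C}(u;\x)=\int_{0}^{1}\!\Big[b(u;x,\x)-b(y_{n}(\sigma,u);x,\x)\Big]\,d\sigma\,,
\qquad
y_{n}(\sigma,u):=(\Phi^{\sigma}_{c_{n-1}})^{-1}\!\circ\Phi^{\sigma}_{c_{n}}(u)\,.
\]
Thus the convergence reduces to showing that $y_{n}(\sigma,u)\to u$ as $n\to \infty$ in a sufficiently strong norm, uniformly in $\sigma\in[0,1]$.

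The first step is to estimate $b(u)-b(y_{n}(\sigma,u))$. By the mean value theorem and the symbol estimate \eqref{maremma2} with $k=1$, for any $v$ in a neighbourhood of $u$,
\[
\langle \x\rangle^{-m'}\big|d_{u}b(v;x,\x)[h]\big|\lesssim_{s}\|h\|_{H^{s_{0}}}\,,
\]
so that
\[
\sup_{x\in\T}\langle\x\rangle^{-m'}\big|b(u;x,\x)-b(y_{n}(\sigma,u);x,\x)\big|
\lesssim_{s}\|u-y_{n}(\sigma,u)\|_{H^{s_{0}}}\,.
\]

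The second step controls $\|u-y_{n}(\sigma,u)\|_{H^{s_{0}}}$. Writing
\[
u-y_{n}(\sigma,u)=(\Phi^{\sigma}_{c_{n-1}})^{-1}\!\big(\Phi^{\sigma}_{c_{n-1}}(u)\big)
-(\Phi^{\sigma}_{c_{n-1}})^{-1}\!\big(\Phi^{\sigma}_{c_{n}}(u)\big),
\]
and using that the inverse flow $(\Phi^{\sigma}_{c_{n-1}})^{-1}$ is Lipschitz on $H^{s_{0}}$ (it satisfies the same type of bounds as the direct flow given by Theorem \ref{flussononlin}, in the spirit of Remark \ref{differenzioInver}), we get
\[
\|u-y_{n}(\sigma,u)\|_{H^{s_{0}}}
\lesssim_{s}\|\Phi^{\sigma}_{c_{n}}(u)-\Phi^{\sigma}_{c_{n-1}}(u)\|_{H^{s_{0}}}\,.
\]
Then Lemma \ref{diffeFlussiZlow} (applied at regularity $s_{0}+1$, which is admissible since $s\gg s_{0}$) combined with the contraction estimate \eqref{claimCCN} yields
\[
\sup_{\sigma\in[0,1]}\|\Phi^{\sigma}_{c_{n}}-\Phi^{\sigma}_{c_{n-1}}\|_{H^{s_{0}}}
\lesssim_{s} r\,\sup_{\tau\in[0,1]}|c_{n}-c_{n-1}|_{0,0,0}^{\Gamma^{\delta},s_{0}+1}
\lesssim_{s} 2^{-n}\,.
\]
Combining the two bounds and integrating in $\sigma$ gives
\[
\sup_{\tau\in[0,1]}\sup_{\x}\langle\x\rangle^{-m'}\|F(c_{n})-\mathfrak{C}\|_{L^{\infty}_{x}}
\lesssim_{s}2^{-n}\underset{n\to\infty}{\longrightarrow}0\,,
\]
which is \eqref{nota10lower50}.

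The only potential obstacle is the Lipschitz bound on the inverse flow in $H^{s_{0}}$, but this follows from the same iterative scheme used to build the direct flow in Theorem \ref{flussononlin}, by differentiating the identity $(\Phi^{\sigma}_{c})^{-1}\circ\Phi^{\sigma}_{c}=\mathrm{Id}$ and using the tame estimates \eqref{stimaflusso3}; alternatively, one may rerun the argument of Lemma \ref{diffeFlussiZlow} directly on the inverse flow, which satisfies an equation of the same structure as \eqref{flowlower}. Everything else is bookkeeping, so the entire argument is routine once the telescoping identity above is established.
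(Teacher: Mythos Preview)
Your argument is correct. Both you and the paper reduce \eqref{nota10lower50} to controlling $\Phi^{\sigma}_{c_{n}}(u)-\Phi^{\sigma}_{c_{n-1}}(u)$ via Lemma \ref{diffeFlussiZlow} together with the contraction \eqref{claimCCN}, but the telescoping is set up differently. The paper substitutes $w=\Phi^{\sigma}_{c_{n-1}}(u)$ in the defining relation for $c_{n}$, which yields the exact identity \eqref{penna2}; the error $F(c_{n})-\mathfrak{C}$ then becomes $\int_{0}^{1}(\partial_{\xi}f)\,A_{n}\,d\sigma$, where $A_{n}(\sigma)$ is the difference of $(1+\mathfrak{m})(\partial_{x}c_{n})$ evaluated at the two \emph{direct} flows $\Phi^{\sigma}_{c_{n-1}}(u)$ and $\Phi^{\sigma}_{c_{n}}(u)$, so that only the mean value theorem on the symbols $\mathfrak{m}$ and $c_{n}$ plus Lemma \ref{diffeFlussiZlow} are needed. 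Your substitution at $w=\Phi^{\sigma}_{c_{n}}(u)$ gives the algebraically cleaner identity $F(c_{n})-\mathfrak{C}=\int_{0}^{1}[b(u)-b(y_{n})]\,d\sigma$, but introduces the composite $y_{n}=(\Phi^{\sigma}_{c_{n-1}})^{-1}\circ\Phi^{\sigma}_{c_{n}}(u)$ and hence an extra Lipschitz estimate on the inverse flow. As you note, this is indeed available (the inverse flow solves an equation of the same structure and satisfies analogous bounds), so both routes go through; the paper's choice simply sidesteps that additional step.
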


\begin{proof}
By \eqref{def:Cnlow} with $w=\Phi^\tau_{c_{n-1}}(u)$ we can check that
\begin{equation}\label{penna2}
a(u;x,\x)-\int_{0}^{1}(\pa_{\x}f)(\x)\big(1+\mathfrak{m}(\s,\Phi_{c_{n-1}}^{\s}(u))\big)(\pa_{x}c_n)
(\s,\Phi_{c_{n-1}}^{\s}(u);x,\x)d\s=\mathfrak{C}(u,\x)\,.
\end{equation}
By \eqref{equaLower} (reasoning as in \eqref{nota10})
%
we have that
\begin{equation}\label{nota10lower2}
\begin{aligned}
F(c_n)&=a(u;x,\x)-\int_{0}^{1}(\pa_{\x}f)(\x)\big(1+\mathfrak{m}(\s,\Phi_{c_n}^{\s}(u))\big)(\pa_{x}c_n)
(\s,\Phi_{c_n}^{\s}(u);x,\x)d\s
\\&
=a(u;x,\x)-\int_{0}^{1}(\pa_{\x}f)(\x)\big(1+\mathfrak{m}(\s,\Phi_{c_{n-1}}^{\s}(u))\big)(\pa_{x}c_n)
(\s,\Phi_{c_{n-1}}^{\s}(u);x,\x)d\s\\
&\qquad \qquad +\int_{0}^{1}(\pa_{\x}f)(\x)
A_{n}(\s)
d\s\,,\\
&\!\!\!\!\!\!\stackrel{\eqref{penna2}}{=} \mathfrak{C}(u;\x) 
+\int_{0}^{1}(\pa_{\x}f)(\x)
A_{n}(\s)
d\s\,,
\end{aligned}
\end{equation}
where
\begin{equation*}
A_{n}(\s):=
\big(1+\mathfrak{m}(\s,\Phi_{c_{n-1}}^{\s}(u))\big)(\pa_{x}c_n)
(\s,\Phi_{c_{n-1}}^{\s}(u);x,\x)-
\big(1+\mathfrak{m}(\s,\Phi_{c_{n}}^{\s}(u))\big)(\pa_{x}c_n)
(\s,\Phi_{c_{n}}^{\s}(u);x,\x)\,.
\end{equation*}
We now show that $A_{n}(\s)$ goes to zero in norm $\|\cdot\|_{L^{\infty}}$
uniformly in $\s\in[0,1]$.
Notice that
\begin{align}
A_{n}(\s)&=\big(1+\mathfrak{m}(\s,\Phi_{c_{n-1}}^{\s}(u))\big)
\Big[
(\pa_{x}c_n)
(\s,\Phi_{c_{n-1}}^{\s}(u);x,\x)-
(\pa_{x}c_n)
(\s,\Phi_{c_{n}}^{\s}(u);x,\x)
\Big]\label{penna10}\\
&+
\Big[ \mathfrak{m}(\s,\Phi_{c_{n-1}}^{\s}(u))-\mathfrak{m}(\s,\Phi_{c_{n}}^{\s}(u))\Big]
(\pa_{x}c_n)
(\s,\Phi_{c_{n}}^{\s}(u);x,\x)\,.
\end{align}
Since $\mathfrak{m}\in \mathcal{F}^{\mathbb{R}}_1[r]$ and 
setting $h=\Phi_{c_{n}}^{\s}(u)+\s'(\Phi_{c_{n-1}}^{\s}(u)- \Phi_{c_{n}}^{\s}(u))$
for some $\s'\in[0,1]$, we have
\[
\begin{aligned}
| \mathfrak{m}(\s,\Phi_{c_{n-1}}^{\s}(u))-\mathfrak{m}(\s,\Phi_{c_{n}}^{\s}(u))|&\lesssim
|(d_u\mathfrak{m})(h)[\Phi_{c_{n-1}}^{\s}(u))-\Phi_{c_{n}}^{\s}(u)]|
\stackrel{\eqref{maremma2}, \eqref{stimaNNlow}, \eqref{claimCCN}}{\lesssim_{s}}2^{-n}\,,
\end{aligned}
\]
for $r$ small enough. Reasoning in the same way on the term in \eqref{penna10}
we get the claim on $A_{n}(\s)$. Hence, by \eqref{nota10lower2}, 
we deduce \eqref{nota10lower50}. 
\end{proof}
This concludes the proof of Theorem \ref{constEgolower}.

\subsection{Off-diagonal terms at lower orders}

Consider symbols 
\begin{align}
&f(\x)\in \Gamma_0^{m}\,,\;\;\; m>1\,,\;\;\;f(\x)\in\mathbb{R} \quad 
a_{m}(U;x)\in \Sigma\mathcal{F}^{\mathbb{R}}_1[r,N]\,,
 \label{low1off}\\
&
a_{m'}(U;x,\x)\in \Sigma\Gamma^{m'}_1[r,N]\,,\quad m'<m\,.
\label{low2off}
\end{align}
Let 
\begin{equation*}
C(\tau,U;x,\x)\in \Sigma\Gamma^{\delta}_1[r,N]\,, \;\;\; \tau\in[0,1]\,, \;\;\; \delta:=m'-m\,.
\end{equation*}
and consider the equation
 \begin{equation}\label{Ham111provaloweroff}
\left\{
\begin{aligned}
&\pa_{\tau}Z(\tau)=\opbw\big(\ii E{\bf C}(\tau,z(\tau);x,\x)\big)[z(\tau)]\,,\\
&Z(0)=Z_0=\vect{z_0\vspace{0.2em}}{\ov{z_0}}\,,
\end{aligned}\right.\qquad
{\bf C}(\tau,U;x,\x):=
\left(
\begin{matrix}
0 & C(\tau,U;x,\x) \\
\ov{C(\tau,U;x,-\x)} & 0
\end{matrix}
\right)\,.
\end{equation}
The following holds true.

\begin{theorem}\label{constEgoloweroff}
Assume \eqref{low1off}, \eqref{low2off}.
For $r>0$ small enough there exists a symbol 
$C(\tau,U;x,\x)$
belonging to $\Sigma\Gamma^{\delta}_1[r,N]$, 
$\tau\in[0,1]$, $\delta=m'-m$ 
such that
\begin{equation}\label{equaLoweroff}
F(c):=a_{m'}(U;x,\x)-2\int_{0}^{1}\big(1+a_{m}(\s,\Phi_{C}^{\s}(u))\big)f(\x)
C(\s,\Phi_{C}^{\s}(u);x,\x)d\s=0\,,
\end{equation}
where $\Phi_{C}^{\tau}(u)$, $\tau\in[0,1]$ is the flow of \eqref{Ham111provaloweroff}.
\end{theorem}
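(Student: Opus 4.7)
My plan is to mirror the strategy of the proofs of Theorems \ref{constEgo} and \ref{constEgolower}: first solve the equation within the class of non-homogeneous symbols $\Gamma^{\delta}_1[r]$ by an iterative fixed-point scheme, and then upgrade the solution to the full class $\Sigma\Gamma^{\delta}_1[r,N]$ by a multilinear ansatz, exactly as done at the end of Theorem \ref{constEgo}.

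The key observation is that the integral equation $F(C)=0$ is implied by the pointwise fixed-point equation
\[
C(\tau, w; x, \xi) \;=\; \frac{a_{m'}\bigl((\Phi_C^\tau)^{-1}(w); x, \xi\bigr)}{2\bigl(1 + a_m(w; x)\bigr)\, f(\xi)}, \qquad (\ast)
\]
for every $\tau\in[0,1]$. Indeed, substituting $w=\Phi_C^\sigma(u)$ into $(\ast)$ makes the integrand of $F(C)$ equal to $\tfrac12 a_{m'}(u;x,\xi)$, independent of $\sigma$, so that the integral exactly cancels the first term. The elliptic symbol $f(\xi)$ of order $m$ is invertible modulo a high-frequency cutoff, giving $1/f(\xi)$ of order $-m$; and $(1+a_m(w;x))^{-1}-1$ defines an element of $\Sigma\mathcal{F}_1^{\mathbb{R}}[r,N]$ by Remark \ref{prodottoSimboli} and the smallness of $r$. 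Hence the right-hand side of $(\ast)$ is a well-defined symbol of order $\delta=m'-m\leq 0$.

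Setting $C_0\equiv 0$, I define the iterative scheme
\[
C_n(\tau, w; x, \xi) \;:=\; \frac{a_{m'}\bigl((\Phi_{C_{n-1}}^\tau)^{-1}(w); x, \xi\bigr)}{2\bigl(1 + a_m(w; x)\bigr)\, f(\xi)},
\]
where $\Phi_{C_{n-1}}^\tau$ is the flow of \eqref{Ham111provaloweroff} with generator $C_{n-1}$, well-posed by Theorem \ref{flussononlin} applied with generator as in \eqref{sim3} (since $\delta\leq 0$) and with uniform estimates \eqref{stimaflusso1}--\eqref{stimaflusso3}. Arguing by induction in the spirit of Lemmata \ref{iterative} and \ref{bastaIterazioni}, the Fa\`a di Bruno formula applied to the composition $a_{m'}\circ(\Phi_{C_{n-1}}^\tau)^{-1}$ yields uniform bounds
\[
\sup_{\tau\in[0,1]} |C_n|^{\Gamma^{\delta}, s}_{\alpha, \beta, k} \;\leq\; C_{\alpha, \beta, k}, \qquad \alpha + m k \leq s - s_0,
\]
with constants depending only on $s$, $|a_{m'}|^{\Gamma^{m'}, s}$ and $|a_m|^{\mathcal{F}, s}$. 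Translation invariance and the support condition \eqref{pomosimbo2} are trivially preserved along the iteration.

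For convergence, using the analogue of Lemma \ref{diffeFlussiZhighoff} for a generator of order $\delta\leq 0$ (whose proof reduces to a Duhamel estimate in $H^{s-1}$ via Proposition \ref{AzioneParaMet}) together with the Lipschitz estimate for $a_{m'}$ in its function argument (condition \eqref{maremma2} with $k=1$), I obtain the contraction
\[
\sup_{\tau} |C_{n+1} - C_n|^{\Gamma^{\delta}, s-1}_{\alpha, \beta, 0} \;\lesssim_s\; r\, \sup_{\tau} |C_n - C_{n-1}|^{\Gamma^{\delta}, s-1}_{\alpha, \beta, 0} \;\leq\; 2^{-(n+1)}
\]
for $r>0$ sufficiently small. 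Combining the uniform $\Gamma^{\delta}$-bounds with this contraction and a Banach--Alaoglu argument in the Fr\'echet space $\Gamma^{\delta}_1[r]$, I extract a weak-$*$ limit $C\in\Gamma^{\delta}_1[r]$; passing to the limit in $(\ast)$ (continuity of $a_{m'}\circ(\Phi_{C_n}^\tau)^{-1}$ in $C_n$ follows from the flow-difference estimate and its analogue for the inverse flow) gives $F(C)=0$. Finally, I promote $C$ to the full class $\Sigma\Gamma^{\delta}_1[r,N]$ exactly as at the end of the proof of Theorem \ref{constEgo}: the ansatz $C=\sum_{j=1}^{N-1} C_j + C_N$ with $C_j\in\widetilde{\Gamma}^{\delta}_j$ and $C_N\in\Gamma^{\delta}_N[r]$, together with the homogeneity expansion of $(\ast)$, produces a triangular system solvable term by term. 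The main technical obstacle is the uniform control of all the semi-norms $|\cdot|^{\Gamma^{\delta}, s}_{\alpha, \beta, k}$ through the Fa\`a di Bruno expansion; this is however cleaner than the analogous step in Theorem \ref{constEgolower}, since the present formula avoids both $\partial_x^{-1}$ and the $x$-average, the elliptic division by $f(\xi)$ being purely algebraic at the symbol level.
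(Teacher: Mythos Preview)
Your proposal is correct and follows essentially the same approach as the paper, which simply defers to the proof of Theorem \ref{constEgolower}. Your iterative scheme $C_n(\tau,w;x,\xi)=\dfrac{a_{m'}((\Phi_{C_{n-1}}^{\tau})^{-1}(w);x,\xi)}{2(1+a_m(w;x))f(\xi)}$ is precisely the off-diagonal analogue of \eqref{def:Cnlow}, and your algebraic observation that the fixed-point equation $(\ast)$ forces the integrand of $F(C)$ to be $\sigma$-independent is the clean replacement for Lemma \ref{tendeazerolower} in this setting (no residual average is needed since here the target is $0$ rather than a Fourier multiplier).
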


\begin{proof}
One can reasons by following almost word by word the proof of Theorem \ref{constEgolower}.
\end{proof}

\section{Conjugations}\label{secconjconj}
In this section we prove four abstract theorems which will be used in order to prove Theorem \ref{thm:main}. In all the theorems we shall consider a system of the form
\begin{equation}\label{Nonlin1}
\left\{
\begin{aligned}
&\dot{U}=X(U)\\
&U(0)=U_0\in H^{s}\times H^{s}
\end{aligned}\right.\,,
\end{equation}
where the matrix of operator $X(U)$ is defined as follows
\begin{equation}\label{operatoreEgor1}
\begin{aligned}
&X(U):=\ii E\opbw(A(U;x,\x))[U]+R(U)[U]\,,\qquad E=\sm{1}{0}{0}{-1}\,, 
\quad \\
&R\in \Sigma\mathcal{R}^{-\rho}_{1}[r,N]\otimes\mathcal{M}_2(\mathbb{C})\,,\;\;
A(U;x,\x)\in\Sigma\Gamma^m_{0}[r,N]\otimes\mathcal{M}^2(\C)\,.
\end{aligned}
\end{equation}

 In  Theorem \ref{conjOrdMax} we shall consider the system \eqref{Nonlin1}, where the matrix of operators in r.h.s. is defined in \eqref{operatoreEgor1}. Here the matrix of para-differential operators is assumed to be diagonal (its off diagonal symbols equal 0).  In such a Theorem we exhibit a change of coordinates of the phase space $H^s\times H^{s}$ 
 which removes the dependence on the space 
 variable $x$ from the symbols on the diagonal at the highest order.

The main theorem in  Section \ref{diagoLower} is the \ref{conjOrdMaxredu}. In this case the matrix of symbols $A(U;x,\xi)$ is assumed to be diagonal as before with following additional assumption. We assume that symbol on the diagonal $a$ admits an expansion in decreasing orders which is equal to a Fourier multiplier at the highest order plus a lower order term which may depend on $x$ (see equation \eqref{Forma-di-Aredu}). 
We provide a change of coordinates 
which removes the dependence on  $x$ from such a lower order term.

In the same spirit, in Section \ref{diago-lineare} we explain how diagonalize the matrix $A(U;x,\xi)$ by means of changes of coordinates. The diagonalization of the highest order is the content of Theorem \ref{conjOrdMaxhighoff}, the lower order terms are treated in Theorem \ref{conjOrdMaxloweroff}.

The general strategy that we adopt in the proofs of all the theorems of this section is the following. We shall consider changes of coordinates defined through  non-linear flows of the form 
\begin{equation}\label{sist2}
\left\{
\begin{aligned}
&\pa_{\tau}\Psi^{\tau}=G^{\tau}(\Psi^{\tau})\\
&\Psi^{0}={\rm Id},
\end{aligned}\right.
\end{equation}
where $G^{\tau}$ is some non-linear vector field
possibly depending explicitly on $\tau\in[0,1]$. 
In the application the operator 
$G^{\tau}$ is  para-differential.  
All the issues of well posedness of \eqref{sist2} 
have been analyzed in Section \ref{FLOWS}. 
We consider as a new variable the function $Z:=\Psi(U):=\Psi^{\tau}_{|\tau=1}(U)$.
The system \eqref{Nonlin1} in the new 
coordinates 
reads
\begin{equation}\label{sistnew}
\left\{
\begin{aligned}
&\dot{Z}=d\Psi\big(\Psi^{-1}(Z)\big)\big[ X(\Psi^{-1}(Z))\big]=:(\Psi)^*X(Z)
=:P^{1}(Z)\\
&Z(0)=\Psi(U_0),
\end{aligned}\right.
\end{equation}
where $P^{1}(Z)=(P^{\tau}(Z))_{\tau=1}$
with $P^{\tau}$ defined, for $\tau\in[0,1]$, as
\begin{equation}\label{pushpush}
P^{\tau}(Z):=d\Psi^{\tau}\big((\Psi^{\tau})^{-1}(Z)\big)\big[ {X}((\Psi^{\tau})^{-1}(Z))\big]\,.
\end{equation}
On can prove  that $P^{\tau}$ satisfies the non-linear Heisenberg equation 
\begin{equation}\label{Ego}
\left\{
\begin{aligned}
&\pa_{\tau}P^{\tau}(Z)=\big[ G^{\tau}(Z) , P^{\tau}(Z)\big]\\
&P^{0}(Z)=X(Z)\,,
\end{aligned}\right.
\end{equation}
where the non-linear commutator is defined in \eqref{nonlinCommu}. We choose this Heisenberg approach since we start from a para-differential system and we are interested in preserving such structure. The Heisenberg approach provides a systematic way to prove this fact at each step. 
This Heisenberg strategy has been already used in \cite{BD}, \cite{Feola-Iandoli-Long}, \cite{FIloc}, \cite{BFP}. As already said in other part of the manuscript the authors in those  papers do not attempt to find non-linear changes of coordinates but only some modified energies. As a consequence they face a linear version of system \eqref{Ego}, where the non linear commutators are replaced by the linear ones and the operator $G^{\tau}$ is linear. 
In this case the commutator between two para-differential operators is still a para-differential one modulo smoothing remainders, this is a consequence of
 standard para-differential calculus. In our case the non-linear commutators involves also the differential of the symbols with respect to the "non-linear" variable $U$, this is coherent with our definition of symbols and operators given in Section \ref{paraparapara}.

\subsection{Reduction  to constant coefficients}\label{super-costanti}
In this section we show how to conjugate to constant coefficients
para-differential systems which are diagonal up to smoothing remainders, we shall adopt the Notation \ref{notazioni}.

\subsubsection{Reduction at the highest orders}\label{sec:egoHigh}
We consider system \eqref{Nonlin1} with operator $X(U)$ defined in \eqref{operatoreEgor1}. We shall assume that the matrix $A(U;x,\xi)$ is as follows
\begin{equation}\label{tordo}
A(U;x,\x):=\left(\begin{matrix}
a(U;x,\x) & 0 \\
0 &\ov{a(U;x,-\x)}
\end{matrix}\right).
\end{equation}

\noindent The symbol $a(U;x,\x)$ has the form
\begin{equation}\label{Forma-di-A}
\begin{aligned}
&a(U;x,\x)=(1+a_{m}(U;x))f_m(\x)+a_{m'}(U;x,\x)\,,\qquad m>1,\,\, m'=m-\frac12,\\
&{a}_m(U,x)\in \Sigma\mathcal{F}_1^{\mathbb{R}}[r,N]\,,\quad 
a_{m'}(U;x,\x) \in\Sigma\Gamma^{m'}_{1}[r,N]\,,\\
&a_{m'}(U;x,\x)-\ov{a_{m'}(U;x,\x)}\in \Sigma\Gamma^{0}_1[r,N]\,.
\end{aligned}
\end{equation}
where $m$ satisfies \eqref{ipotesim},
$f_{m}\in \Gamma_0^{m}$ is an even in $x$ \emph{classical} symbol. 
In particular $f_{m}$ admits the expansion
\begin{equation}\label{expFM}
f_{m}(\x)=f_0(\x)+\widetilde{f}_{m-1}(\x)\,,\quad \widetilde{f}_{m-1}\in \Gamma_{0}^{m-1}\,,
\end{equation}
and 
$f_0(\x)\in \Gamma_{0}^{m}$ is a 
$m$-homogeneous function.
The symbol in \eqref{Forma-di-A} (recall also \eqref{expFM}) 
satisfies the assumption \eqref{strutturaSimb}, 
therefore Theorem \ref{constEgo} applies 
and provides a symbol $b$ such that 
\eqref{equa814} holds true with $\tilde{a}_m\rightsquigarrow a_m$.
Having such $b$ we now consider the  matrix of symbols
\begin{equation}\label{simboB}
\begin{aligned}
&{\bf B}(\tau,U;x,\x):=\left(\begin{matrix}
 B(\tau,U;x,\x) & 0 \\ 0 &  \ov{B(\tau,U;x,-\x)} 
\end{matrix}
\right)=
\left(\begin{matrix}
 B(\tau,U;x,\x) & 0 \\ 0 & -{B(\tau,U;x,\x)} 
\end{matrix}
\right)
\\&
B(\tau,U;x,\x):=b(\tau,U;x)\x\,,
\quad
b\in \Sigma\mathcal{F}^{\mathbb{R}}_{1}[r,N]\,,
\end{aligned}
\end{equation}
and let 
 $\Psi^{\tau}$ be the flow of \eqref{sist2} with
 \begin{equation}\label{Nonlin2}
 G^{\tau}(U):=\ii E \opbw({\bf B}(\tau,U;x,\x))U\,.
 \end{equation}
 
\noindent Notice that the flow of system \eqref{sist2} with generator  \eqref{Nonlin2} is well-posed by Theorem \ref{flussononlin}
applied with generator as in \eqref{sim1}.
We define
\begin{equation}\label{variabNuova}
Z:=\vect{z}{\bar{z}}:={\bf \Psi}^{\tau}(U)_{|\tau=1}\,.
\end{equation}
Notice that, by \eqref{simboB},
\[
{\bf \Psi}^{\tau}(U)_{|\tau=1}
:=\left( 
\begin{matrix}
\Phi^{\tau}(u) \vspace{0.2em}\\
\ov{\Phi^{\tau}(u)}
\end{matrix}
\right)_{|\tau=1}\,,\quad \pa_\tau\Phi^{\tau}(u)=\opbw(\ii B(\tau,\Phi^{\tau}(u);x,\x))[\Phi^{\tau}(u)]\,.
\]
The main result of this section is the following.

\begin{theorem}{\bf (Non-linear Egorov).}\label{conjOrdMax}
For $r>0$ small enough the conjugate of $X$ in \eqref{Nonlin1} with generator \eqref{Nonlin2} has the form (see \eqref{variabNuova})
\begin{equation}\label{Nonlin3}
\dot{Z}=\ii E\opbw(A^{+}(Z;x,\x))[Z]+R^{+}(Z)[Z]\,,\qquad 
R^{+}\in R\in \Sigma\mathcal{R}^{-\rho}_{1}[r,N]\otimes\mathcal{M}_2(\mathbb{C})\,,
\end{equation}
and the matrix of symbols $A^{+}\in \Sigma\Gamma^{m}_{1}[r,N]\otimes\mathcal{M}_2(\mathbb{C})$
has the form
\begin{equation}\begin{aligned}
&A^{+}(Z;x,\x):=\left(\begin{matrix}
a^{+}(Z;x,\x) & 0 \\
0 &\ov{a^{+}(Z;x,-\x)}
\end{matrix}\right)\,,\\
&a^{+}(Z;x,\x)=\mathfrak{m}(Z)f_{m}(\x)+a^{+}_{m'}(Z;x,\x)\,,\qquad 
\mathfrak{m}\in \Sigma\mathcal{F}_0^{\mathbb{R}}[r,N]\,,  \quad m'=m-\frac12,\\
&a^{+}_{m'}(Z;x,\x) 
\in\Sigma\Gamma^{m'}_{1}[r,N]\,, \quad a_{m'}(U;x,\x)-\ov{a_{m'}(U;x,\x)}\in \Sigma\Gamma^{0}_1[r,N]\,.
\end{aligned}\end{equation}
with $\mathfrak{m}(Z)$ is \emph{independent} of $x\in \mathbb{T}$.
\end{theorem}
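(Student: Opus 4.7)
The plan is to realize the conjugation as the time-$1$ flow of the non-linear Heisenberg equation \eqref{Ego} with generator \eqref{Nonlin2}, and to choose the symbol $b(\tau,U;x)$ precisely so that the transported principal symbol becomes independent of $x$.

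First I would set up the ansatz
\begin{equation*}
P^{\tau}(Z)=\ii E\opbw\big(A^{\tau}(Z;x,\x)\big)[Z]+R^{\tau}(Z)[Z]\,,
\end{equation*}
with $A^{\tau}$ diagonal and of the same form as in \eqref{Forma-di-A},
\begin{equation*}
A^{\tau}(Z;x,\x)=\left(\begin{matrix} a^{\tau}(Z;x,\x) & 0 \\ 0 & \ov{a^{\tau}(Z;x,-\x)}\end{matrix}\right)\,,\qquad a^{\tau}=(1+a_{m}^{\tau}(Z;x))f_{m}(\x)+a_{m'}^{\tau}(Z;x,\x)\,,
\end{equation*}
and $R^{\tau}\in \Sigma\mathcal{R}^{-\rho}_{1}[r,N]\otimes\mathcal{M}_{2}(\mathbb{C})$, with initial data at $\tau=0$ given by the original symbol and remainder. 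I would then substitute the ansatz into \eqref{Ego} and use Proposition \ref{teoremadicomposizione} together with Proposition \ref{composizioniTOTALI} to expand the non-linear commutator $[G^{\tau}(Z),P^{\tau}(Z)]$ at each order of decay in $\x$. Because ${\bf B}$ is block-diagonal of order $1$ and ${A}^{\tau}$ is block-diagonal, all the commutators stay block-diagonal (no off-diagonal coupling is generated), so the ansatz is self-consistent. A direct computation, using \eqref{nonlinCommu} and the expansion $a\#_{\rho}b=ab+\tfrac{1}{2\ii}\{a,b\}+\cdots$, shows that modulo symbols of order $\leq m-1$ and smoothing remainders the top-order symbol $a_{m}^{\tau}$ obeys the transport-type equation
\begin{equation*}
\pa_{\tau}\big((1+a_{m}^{\tau})f_{m}(\x)\big)=\{b(\tau,Z;x)\x,(1+a_{m}^{\tau})f_{m}(\x)\}-(d_{Z}a_{m}^{\tau})\big[\opbw(\ii b(\tau,Z;x)\x)Z\big]\,f_{m}(\x)\,,
\end{equation*}
which, by the method of characteristics spelled out in Section \ref{sec:egohigh}, is solved along the flow \eqref{Ham111prova}.

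The key step is then to apply Theorem \ref{constEgo} to obtain a function $b\in \Sigma\mathcal{F}_{1}^{\mathbb{R}}[r,N]$ for which the solution of the above transport equation satisfies \eqref{equa814}, i.e.
\begin{equation*}
(1+a_{m}^{\tau}(Z;x))f_{m}(\x)\Big|_{\tau=1}=\mathfrak{m}(Z)f_{m}(\x)\,,
\end{equation*}
with $\mathfrak{m}\in\Sigma\mathcal{F}_{0}^{\mathbb{R}}[r,N]$ independent of $x$ (this uses the structural assumption $f_{m}$ is classical and \eqref{strutturaSimb}). With this $b$ fixed, the lower-order part $a_{m'}^{\tau}$ satisfies an inhomogeneous ODE of the same type as the one analyzed in Proposition \ref{constEgohighoffbis}: its right-hand side is a symbol in $\Sigma\Gamma^{m'}_{1}[r,N]$ coming from lower orders of the Poisson bracket and from $(d_{Z}a_{m'}^{\tau})\opbw(\ii b\x)Z$, hence $a_{m'}^{\tau}$ stays in the class $\Sigma\Gamma^{m'}_{1}[r,N]$ uniformly in $\tau\in[0,1]$, and the reality-type condition $a_{m'}^{\tau}-\ov{a_{m'}^{\tau}}\in \Sigma\Gamma^{0}_{1}[r,N]$ is preserved because ${\bf B}$ has a real symbol (and by \eqref{aggiunto}).

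Finally, I would show that the remainder $R^{\tau}$ stays in $\Sigma\mathcal{R}^{-\rho}_{1}[r,N]\otimes\mathcal{M}_{2}(\mathbb{C})$ for all $\tau\in[0,1]$: the terms produced by the asymptotic expansion beyond order $-\rho$ in Proposition \ref{teoremadicomposizione} are smoothing of the required order, and the commutators of $G^{\tau}$ with elements of $\Sigma\mathcal{R}^{-\rho}_{1}[r,N]$ again lie in that class by Proposition \ref{composizioniTOTALI}(i). This yields a closed triangular ODE system (top order, order $m'$, smoothing) with quadratic-type nonlinearity in the generators, which is well posed on $[0,1]$ for $r$ small by the existence results on non-linear flows from Section \ref{FLOWS}. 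The main obstacle is the first step, i.e.\ solving exactly for $\mathfrak{m}$: this is where the full strength of the implicit construction in Theorem \ref{constEgo}, carried out via the iterative scheme \eqref{Ham111provan1}-\eqref{Ham111provan3} and \eqref{def:gamman}-\eqref{def:Mn}, is essential; the rest is bookkeeping with symbolic calculus along the Heisenberg flow.
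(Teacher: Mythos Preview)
Your overall plan---set up the Heisenberg equation, derive the transport equation at order $m$, invoke Theorem~\ref{constEgo} to choose $b$, then handle lower orders and remainders---matches the paper's approach. But there is a genuine gap in your treatment of the remainder.

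You claim that ``the commutators of $G^{\tau}$ with elements of $\Sigma\mathcal{R}^{-\rho}_{1}[r,N]$ again lie in that class by Proposition~\ref{composizioniTOTALI}(i)'', and hence that the system is a ``closed triangular ODE''. This is not correct for the \emph{non-linear} commutator~\eqref{nonlinCommu}. Expanding $dG^{\tau}(Z)[R^{\tau}(Z)Z]$ produces, besides the harmless linear composition terms, the contribution $\opbw\big(\ii E\,d_{Z}{\bf B}(\tau,Z;x,\x)[R^{\tau}(Z)Z]\big)[Z]$, which is a para-differential operator of order~$1$ applied to $Z$---it is \emph{not} smoothing in the sense of Definition~\ref{omosmoothing}. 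Proposition~\ref{composizioniTOTALI}(i) only covers linear compositions $R\circ\opbw(a)$ and $\opbw(a)\circ R$, not this $d_Z$-term. So the equation for $R^{\tau}$ is not decoupled from the symbol evolution, and the triangular picture breaks.

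The paper avoids this by a different logical order. One first observes that $P^{\tau}$ is \emph{already known} as the push-forward~\eqref{pushpush}, hence $(\pa_{t}{\bf B}):=d_{Z}{\bf B}[P^{\tau}]$ is a known symbol in $\Sigma\Gamma^{1}_{1}$ (see~\eqref{dono2},~\eqref{timeDeriva}). This term is absorbed into the forcing $q_{j}$ of the lower-order symbol equations~\eqref{ord1},~\eqref{dono13}, which are then solved along the characteristics~\eqref{Ham111prova} order by order down to $-\rho$ (note the full expansion~\eqref{ansatz}, not just two levels). This produces an operator $Q^{\tau}=\opbw(\ii E A^{+})$ solving an \emph{approximate} Heisenberg equation~\eqref{approx} with smoothing error $\mathcal{G}_{\rho}$. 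Only then does one prove that $Q^{\tau}-P^{\tau}$ is smoothing, via the auxiliary \emph{linear} problem~\eqref{probTotale} for $V^{\tau}:=Q^{\tau}\circ\Psi^{\tau}-(d_{U}\Psi^{\tau})[X]$, solved by Duhamel against the flow $\Phi^{\tau}_{dG}$ of the linearised generator. This two-step structure (build $Q^{\tau}$, then bound the difference) is what replaces your triangular-ODE argument.

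Two smaller points: your reference to Proposition~\ref{constEgohighoffbis} for the lower-order symbols is misplaced---that proposition concerns the bounded off-diagonal flow $\Psi^{\tau}_{C}$, whereas here the lower-order equations~\eqref{ord1} are integrated along the same transport characteristics~\eqref{Ham111prova} as the top order. Also, to land in $\Sigma\mathcal{R}^{-\rho}_{1}$ you must carry the symbolic expansion through all orders $m,\,m-\tfrac12,\ldots$ down to $-\rho$ as in~\eqref{ansatz}, not stop at $m'$.
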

The rest of the section is devoted to the proof of the result above.
The system \eqref{Nonlin1} in the new 
coordinates \eqref{variabNuova}
has the form \eqref{sistnew}-\eqref{Ego} with $G^{\tau}$ as in 
\eqref{Nonlin2}.
Moreover, by the remarks under Definition \ref{smoothoperatormaps} and by Theorem \ref{flussononlin},
we note that 
\begin{equation}\label{dono1}
P^{\tau}(Z)=\ii E \Omega Z+M_1(\tau;Z)[Z]\,,\;\;\;
M_1\in \Sigma\mathcal{M}_{1}[r,N]\otimes\mathcal{M}_2(\mathbb{C})\,,
\end{equation}
with estimate uniform in $\tau\in[0,1]$, where $\Omega$ is in \eqref{omegone}. 
Actually we shall prove that
\begin{equation}\label{forma-totale}
P^{\tau}(Z)
=\ii E\Omega Z+M_1(\tau;Z)[Z]=\ii E\opbw(A^{+}(\tau,Z;x,\x))[Z]+R^{+}(\tau,Z)[Z]\,,
\end{equation}
with  
\begin{equation}\label{dono7}
A^{+}(\tau,Z;x,\x):=\left(\begin{matrix}
a^{+}(\tau,Z;x,\x) & 0 \\
0 &\ov{a^{+}(\tau,Z;x,-\x)}
\end{matrix}\right)\,, \qquad
R^{+}\in \Sigma\mathcal{R}^{-\rho}_1[r,N]\otimes\mathcal{M}_{2}(\mathbb{C})\,.
\end{equation}
In particular 
we make the ansatz
\begin{equation}\label{ansatz}
a^{+}(\tau,Z;x,\x)=\sum_{j=0}^{2(\rho+m)} a_{m-j}^{+}(\tau,Z;x,\x)\,,\qquad
a_{m}^{+}\in \Sigma\Gamma^{m}_{0}[r,N]\quad 
a_{m-j}^{+}\in \Sigma\Gamma^{m-\frac{j}{2}}_{1}[r,N]\,, j>0\,.
\end{equation}
Let $F(Z):=\opbw(\ii E{\bf B}(\tau,Z;x,\x))[Z]$. Then one can note that
\begin{equation}\label{strike}
(d_{Z}F)(Z)[V]= \opbw(\ii E{\bf B}(\tau,Z;x,\x))[V]+  \opbw(\ii E(d_{Z}{\bf B})(\tau,Z;x,\x)[V])[Z]\,,
\end{equation}
and we also recall that
\[
(d_{Z}{\bf B})(\tau,Z;x,\x)[V]=(\pa_{Z}{\bf B})(\tau,Z;x,\x)\cdot V\,.
\]
By the definition of the non-linear commutator 
in \eqref{nonlinCommu}
we write
\begin{equation}\label{commuExp100}
\begin{aligned}
\big[\opbw(\ii E{\bf B}(\tau,Z;x,\x))[Z] ,P^{\tau}(Z)\big]&=\opbw(\ii E{\bf B}(\tau,Z;x,\x))\big[P^{\tau}(Z)\big]\\
&+\opbw\big(\ii Ed_{Z}{\bf B}(\tau,Z;x,\x)[P^{\tau}(Z)]\big)[Z]
\\&
-d_{Z}P^{\tau}(Z)\big[\opbw(\ii E{\bf B}(\tau,Z;x,\x))[Z]\big]\,.
\end{aligned}
\end{equation}
By \eqref{dono1}, \eqref{simboB}, 
we deduce that the matrix of symbols in the second summand in the r.h.s. of  \eqref{commuExp100} has the form
\begin{equation}\label{dono2}
\left(\begin{matrix}
 (d_{Z}B)(\tau,Z;x,\x)[P^{\tau}(Z)] & 0 \\ 0 & -(d_{Z}B)(\tau,Z;x,\x)[P^{\tau}(Z)]
\end{matrix}
\right)\in 
 \Sigma\Gamma^{1}_{1}[r,N]\otimes\mathcal{M}_2(\mathbb{C})\,. 
\end{equation}
For simplicity we shall write
\begin{equation}\label{timeDeriva}
(\pa_{t}{\bf B})(\tau,Z;x,\x):=
d_{Z}{\bf B}(\tau,Z;x,\x)[P^{\tau}(Z)]\,.
\end{equation}
By \eqref{forma-totale}, \eqref{timeDeriva}  we have that \eqref{commuExp100}
becomes
\begin{align}
\big[\opbw(\ii E{\bf B}&(\tau,Z;x,\x))[Z] ,P^{\tau}(Z) \big]=\nonumber\\
&\quad\,\opbw(\ii E{\bf B}(\tau,Z;x,\x))\big[\opbw(\ii EA^{+}(\tau,Z;x,\x))[Z]\big]\label{commuExpA}\\
&-\opbw(\ii EA^{+}(\tau,Z;x,\x))\big[\opbw(\ii E{\bf B}(\tau,Z;x,\x))[Z]\big]\label{commuExpB}\\
&+\opbw\big(\ii E(\pa_{t}{\bf B})(\tau,Z;x,\x)\big)[Z]\label{commuExpC}\\
&-\opbw\big(\ii E(d_{Z}A^{+})(\tau,Z;x,\x)[\opbw(\ii E{\bf B}(\tau,Z;x,\x))[Z]]\big)[Z]
\label{commuExpD}\\
&
+\opbw(\ii E{\bf B}(\tau,Z;x,\x))\big[R^{+}(Z)[Z]\big]
-R^{+}(Z)[\opbw(\ii E{\bf B}(\tau,Z;x,\x))[Z]]
\label{commuExpE}\\
&-(d_{Z}R^{+})(Z)[\opbw(\ii E{\bf B}(\tau,Z;x,\x))[Z]]\,.\label{commuExpF}
\end{align}
We now analyze each summand above.
By Proposition \ref{composizioniTOTALI}
we have
\[
\eqref{commuExpE}+\eqref{commuExpF}\in 
\Sigma\mathcal{R}^{-\rho}_1[r,N]\otimes\mathcal{M}_2(\mathbb{C})\,.
\]
By \eqref{ansatz}
we have that (recall \eqref{commuExpD})
\begin{equation}\label{dono8}
\begin{aligned}
&\ii E(d_{Z}A^{+})(\tau,Z;x,\x)[\opbw(\ii E{\bf B}(\tau,Z;x,\x))[Z]]=
\ii E\left(\begin{matrix}
\tilde{d}(\tau,Z;x,\x) & 0 \\ 0 &
\ov{\tilde{d}(\tau,Z;x,-\x)}
\end{matrix}
\right)\,,\\
&\tilde{d}(\tau,Z;x,\x)=
\sum_{j=0}^{\rho+m} (d_{Z}a_{m-j}^{+})(\tau,Z;x,\x)[\opbw(\ii E{\bf B}(\tau,Z;x,\x))[Z]]\,.
\end{aligned}
\end{equation}
By Proposition 
\ref{teoremadicomposizione} we also deduce
\begin{equation*}
\eqref{commuExpA}+\eqref{commuExpB}=
\opbw\Big(\ii E{\bf B }(\tau,Z;x,\x)\star_{\rho}\ii EA^{+}(\tau,Z;x,\x) \Big)[Z]+\widetilde{R}(\tau,Z)[Z]\,,
\end{equation*}
where $\widetilde{R}\in \Sigma\mathcal{R}^{-\rho}_1[r,N]\otimes\mathcal{M}_2(\mathbb{C})$
(with estimates uniform in $\tau\in[0,1]$) and
\begin{equation*}
\begin{aligned}
\ii E{\bf B }(\tau,Z;x,\x)&\star_{\rho}\ii EA^{+}(\tau,Z;x,\x):=\\
&:=\ii E{\bf B }(\tau,Z;x,\x)\#_{\rho}\ii EA^{+}(\tau,Z;x,\x)
-\ii E{ A^{+} }(\tau,Z;x,\x)\#_{\rho}\ii E{\bf B}(\tau,Z;x,\x)\\
&\stackrel{\eqref{simboB}, \eqref{dono7}}{=}
\left(\begin{matrix}
d(\tau,Z;x,\x) & 0 \\ 0 &
\ov{d(\tau,Z;x,-\x)}
\end{matrix}
\right)\,,
\end{aligned}
\end{equation*}
where 
\begin{equation}\label{dono6}
d(\tau,Z;x,\x):=\ii B(\tau,Z;x,\x)\#_{\rho}\ii a^{+}(\tau,Z;x,\x)-
\ii a^{+}(\tau,Z;x,\x)\#_{\rho}\ii B(\tau,Z;x,\x)\,.
\end{equation}
By recalling the ansatz \eqref{ansatz} and using the expansion \eqref{espansione2}
we get
\begin{equation}\label{dono11}
d(\tau,Z;x,\x)=\ii \sum_{j=0}^{2(m+\rho)}\big\{b(\tau,Z;x)\x,a_{m-j}^{+}(\tau,Z;x,\x)\big\}+r_{j}(\tau,Z;x,\x)\,,
\end{equation}
where
\begin{equation*}
r_0=r_{-1}\equiv0\,,\quad r_{j}(\tau,Z;x,\x)\in 
\Sigma\Gamma^{m-\frac{j}{2}}_1[r,N]\,,\;\; j \geq 2\,,
\end{equation*}
 the symbol $r_{j}$ depends only on $b(\tau,Z;x)\x$
and on the symbols $a^{+}_{m-k}$ with $k<j$.
Recalling \eqref{dono2}, \eqref{timeDeriva} and \eqref{dono11}
we define, for $j=0,\ldots,2(m+\rho)$,
\begin{equation}\label{dono13}
q_{j}(\tau,Z;x,\x):=\left\{\begin{aligned}
&r_{j}(\tau,Z;x,\x)\,,\;\;\; \qquad  j\neq 2(m-1)\,,\\
&r_{j}(\tau,Z;x,\x)+(\pa_{t}{B})(\tau,Z;x,\x)\,,\;\;\;j=2(m-1)\,.
\end{aligned}\right.
\end{equation}
Our aim is to prove the ansatz \eqref{forma-totale} using \eqref{Ego},
the expansion \eqref{commuExpA}-\eqref{commuExpF}.
We shall solve such equation iteratively expanding 
the symbol in \eqref{dono6} in decreasing orders. 
%

\noindent
{\bf Order $m$.}
By \eqref{dono8}, \eqref{dono6}, \eqref{espansione2}, \eqref{dono11}
we have that, at the highest order, the equation \eqref{Ego}
reads
\begin{equation}\label{ordMax}
\left\{\begin{aligned}
&\pa_{\tau}a^{+}_m(\tau,Z;x,\x)=\big\{b(\tau,Z;x)\x, a^{+}_m(\tau,Z;x,\x)\big\}-
d_{Z}a^{+}_m(\tau,Z;x,\x)[\opbw(\ii E {\bf B}(\tau,Z;x,\x))[Z]]\\
&a^{+}_m(0,Z;x,\x)=a(Z;x,\x)\,,
\end{aligned}\right.
\end{equation}
where $\{\cdot,\cdot\}$ denotes the $\pa_{\x}\pa_{x}-\pa_{x}\pa_{\x}$.
Notice that the function 
\begin{equation*}
g(\tau)=a^{+}_m(\tau,Z(\tau);x(\tau),\x(\tau))
\end{equation*}
is constant along the solution of \eqref{Ham111prova}.
By Theorem \ref{constEgo} we have that the flow 
of \eqref{Ham111prova} is well posed and invertible.
We denote by
\begin{equation}\label{flussoInv}
\begin{aligned}
\Upsilon^{\tau}(U,x_0,\x_0)&:=\Big(\Upsilon^{(Z)},\Upsilon^{(x)}, \Upsilon^{(\x)} \Big)(\tau,U,x_0,\x_0)\,,\\
\widetilde{\Upsilon}^{\tau}(Z,x,\x)&:=\Big(\widetilde{\Upsilon}^{(Z)},
\widetilde{\Upsilon}^{(x)}, \widetilde{\Upsilon}^{(\x)} \Big)(\tau,Z,x,\x)\,,
\end{aligned}
\end{equation}
respectively the flow and the inverse flow of \eqref{Ham111prova}. 
In particular we have (recall \eqref{losperobene})
\begin{equation}\label{flussoInvInvInv}
\widetilde{\Upsilon}^{(\x)}(\tau,Z;x,\x):=(1+\widetilde{\Psi}^{(\x)}(\tau,Z;x))\x\,,\qquad
\widetilde{\Psi}^{(\x)}\in \Sigma\mathcal{F}^{\mathbb{R}}_1[r,N]\,.
\end{equation}
By the proof of Theorem \ref{constEgo}
we also deduce that
\begin{equation*}
g(Z;x,\x)\in \Sigma\Gamma^{m}_{0}[r,N]\quad
\Rightarrow 
\quad
g(\widetilde{\Upsilon}^{\tau}(Z,x,\x))\in \Sigma\Gamma^{m}_{0}[r,N]\,.
\end{equation*}
Hence the symbol
\begin{equation}\label{solOrdMax}
a^{+}_m(\tau,Z;x,\x)=a(\widetilde{\Upsilon}^{\tau}(Z;x,\x))\,,
\end{equation}
belongs to $ \Sigma\Gamma^{m}_{1}[r,N]\otimes\mathcal{M}_2(\mathbb{C})$ and
solves the problem \eqref{ordMax}.
We now study the properties of the symbol $a_{m}^{+}$
by using the result of Theorem \ref{constEgo},
in particular equation \eqref{equa814}.
By \eqref{Forma-di-A}, \eqref{expFM},
and using \eqref{solOrdMax}, \eqref{flussoInv}, \eqref{flussoInvInvInv}, 
(we shall omit the dependence on $Z,x$)
we have 
\begin{equation}\label{locomotive1}
\begin{aligned}
a_{m}^{+}(\tau,Z;x,\x)&=\big(1+a_{m}
(\widetilde{\Upsilon}^{(z)},\widetilde{\Upsilon}^{(x)})\big)
f_{m}\Big( (1+\widetilde{\Psi}^{(\x)})\x\Big)+
a_{m'}
\big(\widetilde{\Upsilon}^{(z)},\widetilde{\Upsilon}^{(x)}, \widetilde{\Upsilon}^{(\x)}\big)\\
&\stackrel{ \eqref{expFM}}{=}
\big(1+a_{m}
(\widetilde{\Upsilon}^{(z)},\widetilde{\Upsilon}^{(x)})\big)
(1+\widetilde{\Psi}^{(\x)})^{m}
f_{m}( \x)+r_1+r_2\\
&\stackrel{\eqref{equa814}}{=} \mathfrak{m} f_{m}(\x)+r_1+r_2\,,
\end{aligned}
\end{equation}
where $\mathfrak{m}\in \Sigma\mathcal{F}_{0}^{\mathbb{R}}[r,N]$ is independent of $x\in \mathbb{T}$ and 
\begin{equation}\label{locomotive2}
\begin{aligned}
r_1&:=\mathfrak{m}\big( f_0(\x)-f_{m}(\x)\big)+
\mathfrak{m}
(1+\widetilde{\Psi}^{(\x)})^{-m}
\widetilde{f}_{m-1}\big(
(1+\widetilde{\Psi}^{(\x)})\x
\big)\,,\\
r_2&:=a_{m'}
\big(\widetilde{\Upsilon}^{(z)},\widetilde{\Upsilon}^{(x)}, 
\widetilde{\Upsilon}^{(\x)}\big)\,.
\end{aligned}
\end{equation}
By \eqref{locomotive1}, \eqref{locomotive2}, and using the properties 
of $\widetilde{\Upsilon}$ in Theorem \ref{constEgo}
we  deduce that
%
\begin{equation*}
a_m^{+}(1,Z;x,\x)=
\mathfrak{m}(Z)f_{m}(\x)+r(Z;x,\x)\,,
\quad
\mathfrak{m}\in \Sigma\mathcal{F}^{\mathbb{R}}_{0}[r,N]\,,
\quad r\in \Sigma\Gamma^{m'}_{1}[r,N]\,,
\end{equation*}
and $\mathfrak{m}$ is independent of $x\in \mathbb{T}$.

\noindent
{\bf Lower orders.} 
Recalling \eqref{dono11}, \eqref{dono13}, \eqref{dono8}
we have that the equation \eqref{Ego}, at order
$m-j/2$ with $j\geq1$,
reads
\begin{align}
&\pa_{\tau}a^{+}_{m-j}(\tau,Z;x,\x)=\big\{b(\tau,Z;x)\x, a^{+}_{m-j}(\tau,Z;x,\x)\big\}\nonumber
\\&
\qquad\qquad\qquad\qquad\qquad-d_{z}a^{+}_{m-j}(\tau,Z;x,\x)[\opbw(\ii E{\bf B}(\tau,Z;x,\x))[Z]]
-\ii q_{j}(\tau,Z;x,\x)\label{ord1}\\
&a_0(0,z;x,\x)=0\nonumber\,,
\end{align}
where the $q_j'$s are defined in \eqref{dono13}.
Setting
$g(\tau)=a^{+}_{m-j}(\tau,Z(\tau);x(\tau),\x(\tau))\,,
$ with $z,x,\x$ satisfying \eqref{Ham111prova},
we note that
\[
\pa_{\tau}g(\tau)=-\ii q_{j}(\tau,Z(\tau);x(\tau),\x(\tau))\quad \Rightarrow\quad
g(\tau)=-\int_{0}^{\tau}\ii q_{j}(\s,Z(\s);x(\s),\x(\s))d\s.
\]
Hence we have that (recall \eqref{flussoInv})
\[
a^{+}_{m-j}(\tau,Z;x,\x)=-\int_{0}^{\tau}\ii 
q_{j}(\s,\Upsilon^{\s}\circ\widetilde{\Upsilon}^{\sigma}(Z;x,\x))d\s\in 
\Sigma\Gamma^{m-\frac{j}{2}}_1[r,N]\,
\]
solves the problem \eqref{ord1}.
By iterating  the procedure above (by solving the problems \eqref{ord1})
we construct a symbol $a^{+}$ as in \eqref{ansatz} such that the following holds.
Define \[
Q^{\tau}(Z)=\opbw(\ii E A^{+}(\tau,Z;x,\x))[Z]
\]
with $A^{+}$
of the form \eqref{dono7} 
with $a^{+}$ as in \eqref{ansatz}.
Then 
the operator $Q^{\tau}(Z)$ solves the problem (recall \eqref{Nonlin2})
\begin{equation}\label{approx}
\left\{\begin{aligned}
&\pa_{\tau}Q^{\tau}(Z)=\big[ G^{\tau}(Z),
Q^{\tau}(Z)\big]+
\mathcal{G}_{\rho}(\tau;Z)\\
&Q^{0}(Z)=\ii E\opbw(A(Z;x,\x))[Z]\,,
\end{aligned}\right.
\end{equation}
where
$\mathcal{G}_{\rho}(\tau;Z):=\opbw( G_{\rho}(\tau,Z;x,\x))[Z]$
for some matrix of symbols 
$G_{\rho}\in \Sigma\Gamma^{-\rho}_{1}[r,N]\otimes\mathcal{M}_2(\mathbb{C})$.
It remains to prove that 
the difference $Q^{\tau}-P^{\tau}$ is a smoothing remainder in
in $\Sigma\mathcal{R}^{-\rho}_{1}[r,N]\otimes\mathcal{M}_2(\mathbb{C})$.
First of all we write
\begin{equation}\label{strike5}
Q^{\tau}(Z)-P^{\tau}(Z)=V^{\tau}\circ (\Psi^{\tau})^{-1}(Z)\,,
\end{equation}
where, recalling \eqref{pushpush},
\[
V^{\tau}(U):=Q^{\tau}\circ\Psi^{\tau}(U)-(d_{U}\Psi^{\tau})(U)[X(U)]\,.
\]
Recalling \eqref{Nonlin2},  \eqref{approx}, \eqref{sist2}, $Z=\Psi^{\tau}(U)$,
we deduce that
\[
\begin{aligned}
&\pa_{\tau}d_{U}\Psi^{\tau}(U)[\cdot]=\big(d_Z G^{\tau}\big)(\Psi^{\tau}(U))\big[ d_{U}\Psi^{\tau}(U)[\cdot]\big]\,,\\
&\big[G^{\tau}(\Psi^{\tau}(U)), Q^{\tau}(\Psi^{\tau}(U)) \big]=
\big(d_{Z} G^{\tau}\big)(\Psi^{\tau}(U))\Big[Q^{\tau}(\Psi^{\tau}(U))\Big]-
\big(d_{Z} Q^{\tau}\big)(\Psi^{\tau}(U))\Big[G^{\tau}(\Psi^{\tau}(U))\Big]\,.
\end{aligned}
\]
Therefore we have that
\begin{equation}\label{probTotale}
\left\{\begin{aligned}
&\pa_{\tau}V^{\tau}(U)=(d_{Z}G^{\tau})(\Psi^{\tau})\big[ V^{\tau}\big]+\mathcal{G}_{\rho}(\Psi^{\tau})\,,\\
&V^{0}(U)=-R(U)[U]\,.
\end{aligned}\right.
\end{equation}
We claim that $V^{\tau}\in \Sigma\mathcal{R}^{-\rho}_{1}[r,N]\otimes\mathcal{M}_2(\mathbb{C})$.
We start by studying the flow generated by $(d_{Z}F^{\tau})(\Psi^{\tau})[\cdot]$,
denoted by $\Phi^{\tau}_{dG}$,
by using formula \eqref{strike}.
The existence of such flow can be deduced reasoning as in Lemma \ref{flusso-differenziale}.
We now provide estimates on Sobolev spaces.
Let us denote $\Phi^{\tau}_{B}$ the (linear) flow generated by 
$\opbw(\ii E{\bf B}(\tau,\Psi^{\tau}(U);x,\x))[\cdot]$ with $U\in H^{s}$.  We have that
\begin{equation}\label{strike2}
\|\Phi^{\tau}_{B}h\|_{H^{s}}\lesssim_{s}\|h(0)\|_{H^{s}}(1+\|\Psi^{\tau}(U)\|_{H^{s_0}})\lesssim_{s}
\|h(0)\|_{H^{s}}(1+\|U\|_{H^{s_0}})\,.
\end{equation}
Therefore we write (recall \eqref{strike})
\[
\Phi^{\tau}_{dG}h=\Phi^{\tau}_{B}h(0)+\Phi^{\tau}_{B}\int_{0}^{\tau}(\Phi^{\s}_{B})^{-1}
\opbw\big(\ii E(d_{W}{\bf B})(\s,\Psi^{\s}(U);x,\x)[\Phi^{\tau}_{dG}h]\big)[\Psi^{\s}(U)]d\s\,,
\]
where $W=\Psi^{\s}(U)$.
Hence, using \eqref{strike2} and Proposition \ref{azionepara},
\[
\|\Phi^{\tau}_{dG}h\|_{H^{s-1}}\lesssim_{s}\|h(0)\|_{H^{s-1}}+\|\Phi^{\tau}_{dG}h\|_{H^{s_0}}\|U\|_{H^{s}}\,,
\]
from which we deduce $\|\Phi^{\tau}_{dG}h\|_{H^{s-1}}\lesssim_{s}\|h(0)\|_{H^{s-1}}$, for any $s$,
if $U\in B_{r}(H^{s})$ with 
$r>0$ is small enough. 
Then we obtain, using the Duhamel formula on \eqref{probTotale},
\[
\|V^{\tau}(U)\|_{H^{s+\rho-1}}\lesssim_{s}
\|\Phi^{\tau}_{dF}R(U)U\|_{H^{s+\rho-1}}+
\|
\Phi^{\tau}_{dG}\int_{0}^{\tau}(\Phi^{\s}_{dG})^{-1}\mathcal{G}_{\rho}(\s;\Psi^{\s})d\s
\|_{H^{s+\rho-1}}\lesssim_{s}\|U\|_{H^{s_0}}\|U\|_{H^{s}}\,.
\]
This proves the \eqref{porto20} for $V^{\tau}$ with $k=0$.
The estimates on the differentials in $U$ (with $k\geq1$)
follow by 
differentiating the equation \eqref{probTotale} and reasoning in the same way.
As a consequence, using also estimates \eqref{stimaflusso3} on the flow $\Psi^{\tau}$,
we have that the operator in \eqref{strike5}
belongs to $\mathcal{R}^{-\rho}_{1}[r]\otimes\mathcal{M}_2(\mathbb{C})$. 
The fact that the operator $Q^{\tau}-P^{\tau}$
admits an expansion in homogeneous operators, i.e. it is in 
$\Sigma\mathcal{R}^{-\rho}_{1}[r,N]\otimes\mathcal{M}_2(\mathbb{C})$,
follows by using the expansion in homogeneous symbols and operators
of ${\bf B}(\tau,Z;x,\x)$ and of $\Psi^{\tau}(U)-U\in 
\Sigma\mathcal{M}_{1}[r,N]\otimes\mathcal{M}_2(\mathbb{C})$.
 This proves the \eqref{Nonlin3}.

\subsubsection{Reduction at lower orders}\label{diagoLower}

In this section 
we study how a para-differential vector field conjugate
under the flow in \eqref{flusso}
with $f(\tau,u;x,\x)$ as in \eqref{sim2} or \eqref{sim3}.
Let us consider  
\begin{equation*}
\begin{aligned}
&M(U;\x)=(1+\mathfrak{m}(U))f_{m}(\x)+\widetilde{M}(U;\x)\,,
\qquad \mathfrak{m}\in\mathcal{F}^{\R}_{1}[r,N]\,, 
\qquad \widetilde{M}(U;\x)\in \Sigma\Gamma^{m-\frac{1}{2}}_1[r,N]\,,\\
&\mathbb{R}\ni f_{m}(\x)\in\Gamma^{m}_0\,,
\qquad m>1, \quad M(U;\x)\;\; {\rm independent \;of}\; x\in\mathbb{T}\,,\\
& M(U;\xi)-\overline{M(U;\xi)}\in\Sigma\Gamma^0_1[r,N].
\end{aligned}
\end{equation*}
We also assume that \eqref{ipotesim} holds.
Consider the operator $X(U)$ as in \eqref{operatoreEgor1} with $A(U;x,\x)$ 
as in \eqref{tordo} and we assume
that the symbol $a(U;x,\x)$ has the form
\begin{equation}\label{Forma-di-Aredu}
\begin{aligned}
&a(U;x,\x)=M(U;\x)+a_{m'}(U;x,\x)\,,\quad
a_{m'}(U;x,\x) \in\Sigma\Gamma^{m'}_{1}[r,N]\,,\quad m'=m-\frac{p}{2}\,,
\end{aligned}
\end{equation}
for some $p\geq 1$
and
\begin{equation*}
a_{m'}(U;x,\x)-\ov{a_{m'}(U;x,\x)}\in\Sigma\Gamma^0_1[r,N].
\end{equation*}
The symbol $a$ in \eqref{Forma-di-Aredu} satisfies the assumptions \eqref{low1}, \eqref{low2}, \eqref{low3}. Hence, 
by Theorem \ref{constEgolower},
there exists a symbol $c(\tau,U;x,\x)$
satisfying \eqref{simboCC}, 
such that equation \eqref{equaLower}
is verified.
Let us now consider the  matrix of symbols
\begin{equation}\label{simboBredu}
\begin{aligned}
&{\bf C}(\tau,U;x,\x):=\left(\begin{matrix}
 c(\tau,U;x,\x) & 0 \\ 0 &  \ov{c(\tau,U;x,-\x)} 
\end{matrix}
\right)\,
\end{aligned}
\end{equation}
and let 
 $\Psi^{\tau}$ be the flow of
 \eqref{sist2} with 
 \begin{equation}\label{Nonlin2redu}
 G^{\tau}(U):= \ii E\opbw( {\bf C}(\tau,U;x,\x))[U]\,.
\end{equation}
Notice that this  flow  
is well-posed by Theorem \ref{flussononlin}.
We define
\begin{equation}\label{variabNuovaredu}
Z:=\vect{z}{\bar{z}}:={\bf \Psi}^{\tau}(U)_{|\tau=1}\,.
\end{equation}
By using  \eqref{simboBredu} one has
\[
{\bf \Psi}^{\tau}(U)_{|\tau=1}
:=\left( 
\begin{matrix}
\Phi^{\tau}(u) \vspace{0.2em}\\
\ov{\Phi^{\tau}(u)}
\end{matrix}
\right)_{|\tau=1}\,,\quad \pa_\tau\Phi^{\tau}(u)=\opbw(\ii c(\tau,\Phi^{\tau}(u);x,\x))[\Phi^{\tau}(u)]\,.
\]
The main result of this section is the following.
\begin{theorem}\label{conjOrdMaxredu}
For $r>0$ small enough 
the conjugate of $X$ in \eqref{Nonlin1} (with the assumption \eqref{Forma-di-Aredu}) 
has the form (see \eqref{variabNuovaredu})
\begin{equation*}
\dot{Z}=\ii E\opbw(A^{+}(Z;x,\x))[Z]+R^{+}(Z)[Z]\,,\qquad 
R^{+}\in \Sigma\mathcal{R}^{-\rho}_{1}[r,N]\otimes\mathcal{M}_2(\mathbb{C})\,,
\end{equation*}
and the matrix of symbols 
$A^{+}(Z;x,\x)$
has the form
\begin{equation*}\begin{aligned}
&A^{+}(Z;x,\x):=\left(\begin{matrix}
a^{+}(Z;x,\x) & 0 \\
0 &\ov{a^{+}(Z;x,-\x)}
\end{matrix}\right)\,,\qquad
m'':=m-\frac{p+1}{2}\,,
\\&
a^{+}(Z;x,\x)=M((\Psi^{1})^{-1}(Z);\x)+\widetilde{\mathfrak{m}}(Z;\x)
+a^{+}_{m''}(Z;x,\x)\,,
\quad
a^{+}_{m''}(Z;x,\x) \in\Sigma\Gamma^{m''}_{1}[r,N]\,,
\\
&\widetilde{\mathfrak{m}}(Z;\x)-\overline{\widetilde{\mathfrak{m}}(Z;\x)}\in\Sigma\Gamma^0_1[r,N]\, ,
\\
&a^{+}_{m''}(Z;x,\x)-\overline{a^{+}_{m''}(Z;x,\x)}
\in\Sigma\Gamma^0_1[r,N]\,,
\end{aligned}\end{equation*}
and where $\widetilde{\mathfrak{m}}\in \Sigma\Gamma_1^{m'}[r,N]$ 
 is \emph{independent} of $x\in \mathbb{T}$.
\end{theorem}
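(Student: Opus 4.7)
The plan is to mimic the strategy of Theorem \ref{conjOrdMax}, replacing the transport flow in $x$ by the (lower order) para-differential flow generated by $c$. First, since the symbol $a_{m'}(U;x,\xi)$ satisfies \eqref{low1}--\eqref{low3} (with the role of $(1+\mathfrak{m})f_m$ played by the Fourier multiplier $M(U;\xi)$), Theorem \ref{constEgolower} provides a symbol $c(\tau,U;x,\xi)\in\Sigma\Gamma^{m'-m+1}_1[r,N]$ satisfying the reality defect bound and the identity \eqref{equaLower} with
\[
\mathfrak{C}(U;\xi):=\frac{1}{2\pi}\int_\mathbb{T} a_{m'}(U;x,\xi)\,dx\in\Sigma\Gamma^{m'}_1[r,N].
\]
This $c$ defines the matrix $\mathbf{C}$ in \eqref{simboBredu} and the generator $G^\tau$ in \eqref{Nonlin2redu}. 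The order of $c$ is either in $(0,1)$ (when $p=1$) or $\leq 0$ (when $p\geq 2$), so in both cases Theorem \ref{flussononlin} (case \eqref{sim2} or \eqref{sim3}) guarantees the well-posedness of the flow $\Psi^\tau$.

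Next I would set $Z:=\Psi^1(U)$ and $P^\tau(Z):=d\Psi^\tau((\Psi^\tau)^{-1}(Z))[X((\Psi^\tau)^{-1}(Z))]$, so that $P^\tau$ solves the Heisenberg equation \eqref{Ego} with $F^\tau = \ii E\opbw(\mathbf{C})[\cdot]$. Following \eqref{commuExp100}--\eqref{commuExpF}, one makes the ansatz $P^\tau(Z)=\ii E\opbw(A^+(\tau,Z;x,\xi))[Z]+R^+(\tau,Z)[Z]$ with $A^+$ admitting an expansion in decreasing homogeneities $A^+ = M\bigl((\Psi^\tau)^{-1}(Z);\xi\bigr)+\sum_{j\geq 0} a^+_{m'-j/2}$, and expands the non-linear commutator via Proposition \ref{teoremadicomposizione}. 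The multiplier part $M((\Psi^\tau)^{-1}(Z);\xi)$ is automatically transported correctly because $M$ is $x$-independent: $\{c\,(\text{no-}x),M\}\equiv 0$ and the time derivative $\partial_\tau M((\Psi^\tau)^{-1}(Z);\xi)$ matches exactly the contribution $-d_Z M[\opbw(\ii E\mathbf{C})[Z]]$ coming from the non-linearity in the commutator, by the chain rule. The key step is the order $m'$ equation: setting $g(\tau):=a^+_{m'}(\tau,\Phi^\tau_c(u);x,\xi)$ along the characteristics of \eqref{Ham111provalower}, the Heisenberg equation rewrites as
\[
\partial_\tau g(\tau)=-\ii\,(\partial_\xi f_m)(\xi)\bigl(1+\mathfrak{m}(\Phi^\tau_c(u))\bigr)(\partial_x c)(\tau,\Phi^\tau_c(u);x,\xi),
\]
since the leading term of $\ii\,c\,\#_\rho\, \ii M - \ii M\,\#_\rho\,\ii c$ is precisely $(\partial_\xi f_m)(1+\mathfrak{m})(\partial_x c)$ because $M$ has no $x$-dependence. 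Integrating from $0$ to $1$ along the inverse flow and invoking \emph{exactly} the identity \eqref{equaLower} of Theorem \ref{constEgolower} gives
\[
a^+_{m'}(1,Z;x,\xi)=\mathfrak{C}\bigl((\Psi^1)^{-1}(Z);\xi\bigr)=:\widetilde{\mathfrak{m}}(Z;\xi),
\]
which is $x$-independent and belongs to $\Sigma\Gamma^{m'}_1[r,N]$; its reality defect is controlled because $c$ and $a_{m'}$ satisfy the analogous bound by hypothesis.

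The remaining orders $m'-j/2$ with $j\geq 1$ are handled exactly as in Section \ref{sec:egoHigh}: each Heisenberg equation is a non-linear transport equation of the form
\[
\partial_\tau a^+_{m'-j/2}=\{c,a^+_{m'-j/2}\}-d_Z a^+_{m'-j/2}[\opbw(\ii E\mathbf{C})[Z]]-\ii q_j,
\]
where $q_j\in\Sigma\Gamma^{m'-j/2}_1[r,N]$ depends only on symbols of strictly higher order (already constructed) and on $\partial_t\mathbf{C}$ for the order $j=2(m-m')$, so it is explicit; one solves it by integration along the characteristics of \eqref{Ham111provalower}, with the solutions belonging to the expected classes thanks to the chain rule estimates on inverse flows already used in the proof of Theorem \ref{constEgolower}. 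One iterates up to the desired smoothing order $-\rho$, which fixes the remainder $a^+_{m''}\in\Sigma\Gamma^{m''}_1[r,N]$. Finally, letting $Q^\tau(Z)$ denote the approximate vector field built from the ansatz, the difference $V^\tau:=Q^\tau\circ\Psi^\tau-d\Psi^\tau[X]$ satisfies the linear Cauchy problem \eqref{probTotale} with $\rho$-smoothing forcing and initial datum $-R(U)U$; Duhamel together with the Sobolev estimates on the linearized flow $\Phi^\tau_{dG}$ (identical to the ones used below \eqref{probTotale}) show that $V^\tau\circ(\Psi^\tau)^{-1}\in\Sigma\mathcal{R}^{-\rho}_1[r,N]\otimes\mathcal{M}_2(\C)$, whence $R^+=P^\tau-Q^\tau\in\Sigma\mathcal{R}^{-\rho}_1[r,N]\otimes\mathcal{M}_2(\C)$. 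The main obstacle I expect is keeping track of the reality-defect condition $a-\overline{a}\in\Sigma\Gamma^0_1[r,N]$ through each conjugation step (this is what guarantees the flow is well-posed via \eqref{sim2} when $\delta\in(0,1)$); it propagates to all the $a^+_{m'-j/2}$ because the Poisson bracket and the non-linear correction preserve this property at the symbolic level, but verifying it requires care at the order $j$ where the expansion of the composition contributes a term of order $0$ to the defect.
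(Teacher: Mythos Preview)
Your proposal is correct and follows essentially the same route as the paper: transport the ansatz along the flow of \eqref{Ham111provalower}, solve the symbol equations order by order with the key cancellation at order $m'$ coming from \eqref{equaLower}, and close the smoothing remainder via the Duhamel argument of \eqref{probTotale}. Two small corrections worth noting: (i) in the lower-order equations the term $\{c, a^+_{m'-j/2}\}$ has order $\delta + m' - j/2 - 1 < m' - j/2$ (since here $\delta=m'-m+1<1$, unlike the order-one generator $b\xi$ in Section~\ref{sec:egoHigh}), so it does \emph{not} sit in the equation for $a^+_{m'-j/2}$ but is absorbed into the forcing $q_j$ of a strictly lower step---compare the paper's \eqref{ord1redu}, which has no Poisson bracket on the left; (ii) the factor $\ii$ in your $\partial_\tau g$ formula should be dropped, since $\ii c\#_\rho \ii a^+-\ii a^+\#_\rho \ii c$ contributes $\{c,a^+\}$ at leading order (cf.\ \eqref{dono11redu}), which matches the sign in \eqref{equaLower}.
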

The rest of the section is devoted to the proof of the result above
and we will follow the strategy used in section \ref{sec:egoHigh}.
The system \eqref{Nonlin1} in the new 
coordinates \eqref{variabNuovaredu}
has the form \eqref{sistnew}-\eqref{Ego} with $G^{\tau}$
as in \eqref{Nonlin2redu}.
We recall also that, 
by the remarks under Definition \ref{smoothoperatormaps} and by Theorem \ref{flussononlin},
we have 
\begin{equation}\label{dono1redu}
P^{\tau}(Z)=\ii E\Omega Z+M_1(\tau;Z)[Z]\,,\;\;
M_1\in \Sigma\mathcal{M}_{1}[r,N]\otimes\mathcal{M}_2(\mathbb{C})
\end{equation}
with estimates uniform in $\tau\in[0,1]$, where $\Omega$ is in \eqref{omegone}.
We  shall look for a solution of the Heisenberg equation
of the form
\begin{equation}\label{forma-totaleredu}
P^{\tau}(Z)
=\ii E\Omega Z+M_1(\tau;Z)[Z]=\ii E\opbw(A^{+}(\tau,Z;x,\x))[Z]+R^{+}(\tau,Z)[Z]\,,
\end{equation}
with $A^{+}$, $R^{+}$ as follows 
\begin{equation}\label{dono7redu}
A^{+}(\tau,Z;x,\x):=\left(\begin{matrix}
a^{+}(\tau,Z;x,\x) & 0 \\
0 &\ov{a^{+}(\tau,Z;x,-\x)}
\end{matrix}\right)\,, \qquad
R^{+}\in \Sigma\mathcal{R}^{-\rho}_1[r,N]\otimes\mathcal{M}_{2}(\mathbb{C})\,.
\end{equation}
In particular 
we make the ansatz (recall \eqref{ipotesim})
\begin{equation}\label{ansatzredu}
\begin{aligned}
&a^{+}(\tau,Z;x,\x)=a_{m}^{+}(\tau,Z;\x)+
a_{m-\frac{1}{2}}^{+}(\tau,Z;\x)
+\sum_{j=0}^{2\rho+m}a^{+}_{m-\frac{p+j}{2}}(\tau,Z;x,\x)\,,
\\& a^{+}_{m-\frac{j}{2}}(\tau,Z;x,\x)\in 
\Sigma\Gamma^{m-\frac{j}{2}}_{1}[r,N] \quad \forall j\in\NNN
\\
&a_{m}^{+}(Z;\x)=(1+\mathfrak{m}^{+}(\tau,Z))f_{m}(\x)\,,
\qquad \mathfrak{m}^{+}\in\mathcal{F}^{\mathbb{R}}_{1}[r,N]\,, 
\qquad \\
&\widetilde{M}^{+}(Z;\x):=
a^{+}_{m-\frac{1}{2}}(\tau,Z;\x)\,\in \Sigma\Gamma^{m-\frac{1}{2}}_{1}[r,N]
\;\; {\rm is\; independent \;of}\; x\in\mathbb{T},\\
&\widetilde{M}^{+}(U;\x)-\overline{\widetilde{M}^{+}(U;\x)}\in\Sigma\Gamma^0_1[r,N].
\end{aligned}
\end{equation}
Expanding the non-linear commutator as in 
\eqref{commuExp100}-\eqref{commuExpF}
we get
\begin{equation}\label{commuExp100redu}
\big[\opbw(\ii E {\bf C}(\tau,Z;x,\x))[Z] , P^{\tau}(Z)\big]=
\ii E\opbw\left(\begin{matrix}
d(\tau,Z;x,\x) & 0 \\ 0 &
\ov{d(\tau,Z;x,-\x)}
\end{matrix}
\right)[Z]+\mathcal{G}_{\rho}(Z)[Z]
\end{equation}
where 
$\mathcal{G}_{\rho}\in \Sigma\mathcal{R}^{-\rho}_1[r,N]\otimes\mathcal{M}_2(\mathbb{C})$ and
\begin{equation}\label{newSimboDD}
\begin{aligned}
\ii d(\tau,Z;x,\x)&=
\ii c(\tau,Z;x,\x)\#_{\rho}\ii a^{+}(\tau,Z;x,\x)-
\ii a^{+}(\tau,Z;x,\x)\#_{\rho}\ii c(\tau,Z;x,\x)\\
&+\ii (\pa_{t}c)(\tau,Z;x,\x)-\ii(d_{Z}a^{+})(\tau,Z;x,\x)\big[\ii E \opbw({\bf C}(\tau,Z;x,\x))[Z]\big]
\end{aligned}
\end{equation}
with
\[
(\pa_{t}c)(\tau,Z;x,\x):=(d_{Z}{c})(\tau,Z;x,\x)[P^{\tau}(Z)]\,.
\]
In order to lighten the notation we shall sometimes omit  the dependence on 
$(\tau,Z;x,\x)$ of the symbols.
By recalling the ansatz \eqref{ansatzredu} and using the expansion \eqref{espansione2}
we get
\begin{equation}\label{dono11redu}
\begin{aligned}
d&=\{c, a_{m}^{+}\}+\sum_{j=1}^{m+2\rho}q_{j}
-(d_{Z}a^{+})(\tau,Z;x,\x)\big[\ii E \opbw({\bf C}(\tau,Z;x,\x))[Z]\big]\,,
\\&
q_{j}\in \Sigma\Gamma^{m-\frac{p+j}{2}}_1[r,N]\,, \;
\,j=1,\ldots,2\rho+m\,,\; m-\frac{p+j}{2}\neq
\delta\stackrel{\eqref{simboBredu}, \eqref{Forma-di-Aredu}}{=} 1-\frac{p}{2}\,,\\
&q_{j}:=(\pa_{t}c)+r_{j}\,,\;\; m-\frac{p+j}{2}:= 1-\frac{p}{2}\,,\quad r_{j}\in
\Sigma\Gamma^{\delta}_1[r,N] \,.
\end{aligned}
\end{equation}
Moreover the 
 symbols $q_{j}$ depend only on $c$, $a_{m-j/2}^{+}$, 
$q_{k}$, with $k<j$.
Our aim is to prove the ansatz \eqref{forma-totaleredu} using 
the expansion \eqref{newSimboDD}.
We shall solve   the Heisenberg equation
 iteratively expanding 
the symbol in \eqref{newSimboDD} in decreasing orders as in \eqref{dono11redu}. 

\noindent
{\bf Order $m$.}
By \eqref{newSimboDD}, \eqref{espansione2}, 
we have that, at the highest order,  the Heisenberg equation
reads
\begin{equation}\label{ordMaxredu2}
\left\{\begin{aligned}
&\pa_{\tau}a^{+}_m(\tau,Z;\x)=-
d_{Z}a^{+}_m(\tau,Z;\x)[\opbw(\ii E {\bf C}(\tau,Z;x,\x))[Z]]\\
&a^{+}_m(0,Z;\x)=(1+\mathfrak{m}(Z))f_{m}(\x)\,.
\end{aligned}\right.
\end{equation}
Notice that the function 
\begin{equation}\label{eq:glowoff}
g(\tau)=a^{+}_m(\tau,Z(\tau);\x)
\end{equation}
is constant along the solution generated by  \eqref{Nonlin2redu}.
By Theorem \ref{flussononlin} such flow  is well posed and invertible.
Hence the symbol
\begin{equation}\label{solOrdMaxredu}
a^{+}_m(\tau,Z;\x)=(1+\mathfrak{m}((\Psi^{\tau})^{-1}(Z)))f_{m}(\x)
=:(1+\mathfrak{m}^{+}(\tau,Z))f_{m}(\x)\,,
\end{equation}
belongs to $ \Sigma\Gamma^{m}_{1}[r,N]$ and
solves the problem \eqref{ordMaxredu2}. 
The symbol $a_m^+(\tau,Z;\x)$ in \eqref{solOrdMaxredu} 
is constant in $x$.
We have  verified the ansatz \eqref{ansatzredu} at the highest order.

\noindent
{\bf Order $m-1/2$.}
By \eqref{newSimboDD}, \eqref{espansione2}, 
we have that, at order $m-1/2$, the Heisenberg equation
reads
\begin{equation}\label{ordMaxredu22}
\left\{\begin{aligned}
&\pa_{\tau}a^{+}_{m-1/2}(\tau,Z;x,\x)=-
d_{Z}a^{+}_{m-\frac{1}{2}}(\tau,Z;x,\x)[\opbw(\ii E {\bf C}(\tau,Z;x,\x))[Z]]\\
&a^{+}_{m-\frac{1}{2}}(0,Z;x,\x)=\widetilde{M}(Z;\x)\,.
\end{aligned}\right.
\end{equation}
Notice that the function $g(\tau)=a^{+}_{m-1/2}(\tau,Z(\tau);x,\x)$
is constant along the solution of \eqref{Nonlin2redu}, therefore
the symbol
\begin{equation*}
a^{+}_{m-1/2}(\tau,Z;x,\x)=\widetilde{M}((\Psi^{\tau})^{-1}(Z);\x)=:\widetilde{M}^{+}(\tau,Z;\x)
\end{equation*}
belongs to $ \Sigma\Gamma^{m-\frac{1}{2}}_{1}[r,N]$ and
solves the problem \eqref{ordMaxredu22}. 
The symbol $a_{m-\frac{1}{2}}^{+}(\tau,Z;x,\x)$ in \eqref{solOrdMaxredu} clearly
is constant in $x$.
Hence it verifies the ansatz \eqref{ansatzredu}. 

\smallskip
\noindent
{\bf Order $m-p/2$.}
By \eqref{newSimboDD}, \eqref{dono11redu}, 
we have that, at order $m-p/2$,  the Heisenberg equation
reads
\begin{align}
&\pa_{\tau}a^{+}_{m-\frac{p}{2}}(\tau,Z;x,\x)=
\{C(\tau,Z;x,\x),  (1+\mathfrak{m}^{+}(\tau,Z))f_{m}(\x)\}
-
d_{Z}a^{+}_{m-\frac{p}{2}}(\tau,Z;x,\x)[\opbw(\ii E {\bf C}(\tau,Z;x,\x))[Z]]
\nonumber\\
&a^{+}_{m-\frac{p}{2}}(0,Z;x,\x)=a_{m-\frac{p}{2}}(Z;x,\x)\,.\label{ordMaxredu222}
\end{align}
Notice that the function $g(\tau)=a^{+}_{m-\frac{p}{2}}(\tau,Z(\tau);x,\x)$
satisfies
\[
\begin{aligned}
\pa_{\tau}g(\tau)&=\{c(\tau,Z;x,\x),  (1+\mathfrak{m}^{+}(\tau,Z))f_{m}(\x)\}=
-(1+\mathfrak{m}^{+}(\tau,Z))(\pa_{\x}f_{m})(\x)(\pa_{x}c)(\tau,Z;x,\x)\,,\\
&g(0)=a_{m-\frac{p}{2}}(Z(0);x,\x)=a_{m-\frac{p}{2}}(U;x,\x)\,.
\end{aligned}
\]
Therefore
\[
\begin{aligned}
a_{m-\frac{p}{2}}^{+}(\tau,Z,x,\x)=&a_{m-\frac{p}{2}}((\Psi^{\tau})^{-1}(Z);x,\x)\\
&\quad -\int_{0}^{\tau}
(1+\mathfrak{m}^{+}(\s,\Psi^{\s}(\Psi^{\tau})^{-1}(Z)))
(\pa_{\x}f_{m})(\x)(\pa_{x}c)(\s,\Psi^{\s}(\Psi^{\tau})^{-1}(Z);x,\x)d\s
\end{aligned}
\]
solves the problem \eqref{ordMaxredu222}. 
Moreover, by Theorem \ref{constEgolower} (see \eqref{equaLower}),
we have
\begin{equation*}
a^{+}_{m-\frac{p}{2}}(1,Z;x,\x)=\widetilde{\mathfrak{m}}(Z;\x)\stackrel{\eqref{equaLower}}{:=}
\frac{1}{2\pi}\int_{\mathbb{T}}a_{m-\frac{p}{2}}\big((\Psi^{1})^{-1}(Z);x,\x\big)dx\,,
\end{equation*}
i.e. $a^{+}_{m-\frac{p}{2}}(\tau,Z;x,\x)$
at $\tau=1$ is constant  in $x\in\mathbb{T}$.

\smallskip
\noindent
{\bf Lower orders.} 
Recalling \eqref{dono11redu}, 
we have that the equation \eqref{Ego}, at order
$m-(p+j)/2$, 
reads
\begin{align}
&\pa_{\tau}a^{+}_{m-\frac{p+j}{2}}(\tau,Z;x,\x)=
-d_{Z}a^{+}_{m-\frac{p+j}{2}}(\tau,Z;x,\x)[\opbw(\ii E{\bf C}(\tau,Z;x,\x))[Z]]
+ q_{j}(\tau,Z;x,\x)\label{ord1redu}\\
&a_{m-\frac{p+j}{2}}^{+}(0,z;x,\x)=0\nonumber\,.
\end{align}
Setting
\begin{equation*}
g(\tau)=a^{+}_{m-\frac{p+j}{2}}(\tau,Z(\tau);x,\x)
\end{equation*}
with $Z(\tau)$ satisfying \eqref{Nonlin2redu}
we note that
\[
\pa_{\tau}g(\tau)= q_{j}(\tau,Z(\tau);x,\x)\quad \Rightarrow\quad
g(\tau)=\int_{0}^{\tau} q_{j}(\s,Z(\s);x,\x)d\s.
\]
Hence we have that
\[
a^{+}_{m-\frac{p+j}{2}}(\tau,Z;x,\x)=\int_{0}^{\tau}
q_{j}(\s,\Psi^{\s}(\Psi^{\tau})^{-1}(Z);x,\x)d\s\in \Sigma\Gamma^{j}_1[r,N]\,,
\]
solves the problem \eqref{ord1redu}.
To summarize by iterating  the procedure above (by solving the problems \eqref{ord1redu})
we construct a symbol $a^{+}$ as in \eqref{ansatzredu} such that the following holds.
Define \[
Q^{\tau}(Z)=\opbw(\ii E A^{+}(\tau,Z;x,\x))[Z]
\]
with $A^{+}$
of the form \eqref{dono7redu} 
with $a^{+}$ as in \eqref{ansatzredu}.
Then 
the operator $Q^{\tau}(Z)$ solves the problem
\begin{equation*}
\left\{\begin{aligned}
&\pa_{\tau}Q^{\tau}(Z)=\big[\opbw(\ii E{\bf C}(\tau,Z;x,\x))[Z], Q^{\tau}(Z)\big]+
\mathcal{G}_{\rho}(\tau;Z)\\
&Q^{0}(Z)=\ii E\opbw(A(Z;x,\x))[Z]\,,
\end{aligned}\right.
\end{equation*}
where
$\mathcal{G}_{\rho}(\tau;Z):=\opbw(G_{\rho}(\tau,Z;x,\x))[Z]$
for some matrix of symbols 
$G_{\rho}\in \Sigma\Gamma^{-\rho}_{1}[r,N]\otimes\mathcal{M}_2(\mathbb{C})$.
The fact that 
the difference $Q^{\tau}-P^{\tau}$ is a smoothing remainder 
in $\Sigma\mathcal{R}^{-\rho}_{1}[r,N]\otimes\mathcal{M}_2(\mathbb{C})$
can be proved following word by word the conclusion of the proof of Theorem
\ref{conjOrdMax}.

\subsection{Diagonalization of the para-differential matrix}\label{diago-lineare}
In this section we show how to  diagonalize
a matrix of 
para-differential operators 
 up to smoothing remainders.

\subsubsection{Diagonalization of the matrix at the highest order}\label{sec:BlockDiago}

In this section 
we study how a para-differential vector field conjugates 
under the flow in \eqref{flusso}
with $f(\tau,u;x,\x)$ as in \eqref{sim3}.
Let us consider $X(U)$ as in \eqref{operatoreEgor1} with 
the matrix $A(U;x,\x)$ as follows
\begin{equation}\label{systExample}
\begin{aligned}
&
A(U;x,\x):=A_{m}(U;x,\x)+A_{m'}(U;x,\x)\,,\qquad m'=m-\frac{1}{2}\,,
\qquad 
A_{m}(U;x,\x):=(\uno+\widetilde{A}_{m}(U;x))f_{m}(\x)\,,
\\&
\widetilde{A}_{m}(U;x):=\left(
\begin{matrix}
a_{m}(U;x) & b_{m}(U;x) \\
\ov{b_{m}(U;x)} & a_{m}(U;x)
\end{matrix}
\right)\in \Sigma\mathcal{F}_{1}[r,N]\otimes\mathcal{M}_2(\mathbb{C})\,,
\quad {a}_m(U,x)\in \Sigma\mathcal{F}_1^{\mathbb{R}}[r,N]\,
\\&A_{m'}(U;x,\x):=\left(\begin{matrix}
a_{m'}(U;x,\x) & b_{m'}(U;x,\x) \\
\ov{b_{m'}(U;x,-\x)} &\ov{a_{m'}(U;x,-\x)}
\end{matrix}\right)\in  \Sigma\Gamma_{1}^{m'}[r,N]
\otimes\mathcal{M}_2(\mathbb{C})\,,\\
&a_{m'}(U;x,\x)-\ov{a_{m'}(U;x,\x)}\in \Sigma\Gamma^{0}_1[r,N]\,.
\end{aligned}
\end{equation}
where $f_{m}\in \Gamma_0^{m}$ and in particular it is a 
$m$-homogeneous ${C^{\infty}(\mathbb{R}^+,\mathbb{R})}$.
We also assume that $f_{m}(\x)$ is even in $\x$ and the \eqref{ipotesim}.
Our aim is to conjugate system \eqref{Nonlin1} with hypotheses \eqref{systExample}
with the flow $\Psi_{C}^{\tau}(U)$ of \eqref{sist2} with 
\begin{equation}\label{flussoord0}
G^{\tau}(U):=\opbw\big({\bf C}(\tau,U;x)\big)[U]
\qquad {\bf C}(\tau,U;x):=\left(
\begin{matrix}
0 & C(\tau,U;x) \\
\ov{C(\tau,U;x)} & 0
\end{matrix}
\right)
\end{equation}
where the symbol 
$C(\tau,U;x)$ in $ \Sigma\mathcal{F}_{1}[r,N]$ is given by Theorem \ref{constEgohighoff}.
Notice that the flow of \eqref{flussoord0} is well-posed 
by Theorem \ref{flussononlin} with generator as in \eqref{sim3}.
We define
\begin{equation}\label{variabNuovahighoff}
Z:=\vect{z}{\bar{z}}:={\bf \Psi}_{C}^{\tau}(U)_{|\tau=1}\,.
\end{equation}
The main result of this section is the following.

\begin{theorem}\label{conjOrdMaxhighoff}
For $r>0$ small enough 
the conjugate of $X$ in \eqref{Nonlin1} with assumption 
\eqref{systExample} has the form (see \eqref{variabNuovahighoff})
\begin{equation*}
\dot{Z}=\ii E\opbw(A^{+}(Z;x,\x))[Z]+R^{+}(Z)[Z]\,,\qquad 
R^{+}\in R\in \Sigma\mathcal{R}^{-\rho}_{1}[r,N]\otimes\mathcal{M}_2(\mathbb{C})\,,
\end{equation*}
where
\begin{equation}\label{nuovoSimboAhighoff}
\begin{aligned}
&
A^{+}(Z;x,\x):=A_{m}^{+}(Z;x,\x)+A^{+}_{m'}(Z;x,\x)\,,
\qquad 
A_{m}^{+}(Z;x,\x):=(\uno+\widetilde{A}_{m}^{+}(Z;x))f_{m}(\x)\,,
\\&
\widetilde{A}_{m}^{+}(Z;x):=\left(
\begin{matrix}
a^{+}_{m}(Z;x) & 0 \\
0 & a^{+}_{m}(Z;x)
\end{matrix}
\right)\in \Sigma\mathcal{F}^{\mathbb{R}}_{1}[r,N]
\otimes\mathcal{M}_2(\mathbb{C})\,,
\\&A^{+}_{m'}(Z;x,\x):=\left(\begin{matrix}
a^{+}_{m'}(Z;x,\x) & b^{+}_{m'}(Z;x,\x) \vspace{0.2em}\\
\ov{b^{+}_{m'}(Z;x,-\x)} &\ov{a^{+}_{m'}(Z;x,-\x)}
\end{matrix}\right)\in  \Sigma\Gamma_{1}^{m'}[r,N]
\otimes\mathcal{M}_2(\mathbb{C})\,,\\
&a_{m'}^+(U;x,\x)-\ov{a_{m'}^+(U;x,\x)}\in \Sigma\Gamma^{0}_1[r,N]\,.
\end{aligned}
\end{equation} 
\end{theorem}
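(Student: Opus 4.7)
\smallskip
\noindent
\textbf{Plan of proof of Theorem \ref{conjOrdMaxhighoff}.}
The strategy is the one implemented in Section \ref{sec:egoHigh} for the reduction to constant coefficients, with the essential difference that here the generator is of order $0$ and purely off-diagonal, so we will rely on the non-linear transport problems solved in Section \ref{sec:egohighoff} rather than in Section \ref{sec:egohigh}. First I set up the change of variables $Z:=\Psi^{\tau}_{C}(U)_{|\tau=1}$, where $\Psi^{\tau}_{C}$ is the flow of \eqref{sist2} generated by $G^{\tau}$ in \eqref{flussoord0}. Since the generator is a para-differential operator of order $0$, Theorem \ref{flussononlin} (case \eqref{sim3}) guarantees well-posedness and the map estimates for $\Psi_{C}^{\tau}$ as in \eqref{stimaflusso1}-\eqref{stimaflusso3}. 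In the new coordinates $X(U)$ becomes $P^{1}(Z)$ with $P^{\tau}(Z)$ solving the non-linear Heisenberg problem \eqref{Ego} with $P^{0}(Z)=X(Z)$ (see \eqref{pushpush}).

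I would then make the ansatz that $P^{\tau}(Z)=\ii E\opbw(A^{+}(\tau,Z;x,\x))Z+R^{+}(\tau,Z)Z$ with $A^{+}$ of the form
\begin{equation*}
A^{+}(\tau,Z;x,\x)=A_{m}^{+}(\tau,Z;x,\x)+A_{m'}^{+}(\tau,Z;x,\x)
\end{equation*}
mimicking \eqref{nuovoSimboAhighoff} (with $a_m^+,b_m^+$ functions and $A_{m'}^+$ a $2\times 2$ matrix in $\Sigma\Gamma^{m'}_{1}[r,N]\otimes\mathcal{M}_{2}(\mathbb{C})$), and $R^{+}\in\Sigma\mathcal{R}^{-\rho}_{1}[r,N]\otimes\mathcal{M}_2(\mathbb{C})$. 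Expanding the non-linear commutator as in \eqref{commuExp100}-\eqref{commuExpF} (with $\mathbf{B}$ replaced by $\mathbf{C}$) and applying Proposition \ref{teoremadicomposizione}, the leading $\#_{\rho}$-expansion produces matrices of symbols; since $\mathbf{C}$ is of order $0$ and off-diagonal, at order $m$ the Poisson bracket contribution vanishes and the only surviving terms are the pointwise product $\opbw(\mathbf{C})\opbw(A_{m}^{+})-\opbw(A_{m}^{+})\opbw(\mathbf{C})$ together with the derivative of $A_{m}^{+}$ along the flow direction, $-(d_{Z}A_{m}^{+})[\opbw(\mathbf{C}(Z;x))Z]$.

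Matching the diagonal and off-diagonal entries at order $m$ in the Heisenberg equation  yields exactly the coupled system \eqref{sistema01}, \eqref{sistema02} for $1+a^{+}_{m}(\tau,Z;x)$ and $b^{+}_{m}(\tau,Z;x)$ with initial data $1+a_{m}(Z;x)$ and $b_{m}(Z;x)$. By Theorem \ref{constEgohighoff} the symbol $C(\tau,Z;x)\in\Sigma\mathcal{F}_{1}[r,N]$ can be chosen so that $b^{+}_{m}(1,Z;x)\equiv 0$, while $a^{+}_{m}(\tau,Z;x)\in\Sigma\mathcal{F}_{1}^{\mathbb{R}}[r,N]$ by construction; this is exactly what produces the diagonal structure of $\widetilde{A}_{m}^{+}$ in \eqref{nuovoSimboAhighoff}. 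At order $m'=m-1/2$, collecting the subleading contributions of the $\#_{\rho}$-expansion of the two commutators $\opbw(\mathbf{C})\opbw(A_{m}^{+})-\opbw(A_{m}^{+})\opbw(\mathbf{C})$ (producing Poisson brackets between $C$ and $a_{m}^{+},b_{m}^{+}$) and the order-$0$ terms from the commutator of $\opbw(\mathbf{C})$ with $\opbw(A_{m'}^{+})$, one obtains for the entries of $A_{m'}^{+}$ two forced equations of precisely the form \eqref{sistema01bis}, \eqref{sistema02bis}, where $q^{(1)},q^{(2)}\in\Sigma\Gamma^{m'}_{1}[r,N]$ depend only on already constructed symbols of equal or higher order. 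Proposition \ref{constEgohighoffbis} then provides the solutions $a^{+}_{m'},b^{+}_{m'}\in\Sigma\Gamma^{m'}_{1}[r,N]$ with the desired reality/conjugation structure.

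Finally, to control the smoothing remainder I would argue exactly as at the end of the proof of Theorem \ref{conjOrdMax}: set $Q^{\tau}(Z):=\ii E\opbw(A^{+}(\tau,Z;x,\x))Z$ built from the symbols constructed above, verify by construction that $Q^{\tau}$ solves the Heisenberg equation \eqref{Ego} up to a residual $\mathcal{G}_{\rho}(\tau;Z)=\opbw(G_{\rho})Z$ with $G_{\rho}\in\Sigma\Gamma^{-\rho}_{1}[r,N]\otimes\mathcal{M}_2(\mathbb{C})$, then show that $V^{\tau}(U):=Q^{\tau}\circ\Psi_{C}^{\tau}(U)-(d_{U}\Psi_{C}^{\tau})(U)[X(U)]$ solves a linear Cauchy problem with initial datum $-R(U)U\in\Sigma\mathcal{R}^{-\rho}_{1}[r,N]\otimes\mathcal{M}_2(\mathbb{C})$ and bounded forcing $\mathcal{G}_{\rho}\circ\Psi_{C}^{\tau}$, and deduce via the Duhamel formula and the Sobolev bounds \eqref{stimaflusso1}-\eqref{stimaflusso3} on $\Psi_{C}^{\tau}$ that $V^{\tau}$ (and hence $Q^{\tau}-P^{\tau}=V^{\tau}\circ(\Psi_{C}^{\tau})^{-1}$) belongs to $\Sigma\mathcal{R}^{-\rho}_{1}[r,N]\otimes\mathcal{M}_2(\mathbb{C})$. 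The main obstacle is the careful bookkeeping at order $m'$: one must check that the forcing terms arising from the $\#_{\rho}$-expansion and from the $(d_{Z}A^{+})[\opbw(\mathbf{C})Z]$ correction really fit into the non-homogeneous structure required by Proposition \ref{constEgohighoffbis}, and that the reality/conjugation properties of $a_{m}^{+}, a_{m'}^{+}$ are preserved along the iteration so that the matrix $A^{+}$ has the real-to-real form \eqref{nuovoSimboAhighoff}.
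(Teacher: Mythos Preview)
Your overall architecture is exactly that of the paper: conjugate via the flow generated by \eqref{flussoord0}, solve the Heisenberg equation \eqref{Ego} order by order, invoke Theorem \ref{constEgohighoff} at order $m$ to kill the off-diagonal principal symbol, and then handle the remainder as in the end of the proof of Theorem \ref{conjOrdMax}. That part is fine.

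There is, however, a real oversimplification in your treatment of the sub-principal part. You propose to solve a \emph{single} forced system \eqref{sistema01bis}--\eqref{sistema02bis} at order $m'=m-\tfrac12$ for the full matrix $A_{m'}^{+}$, with $q^{(1)},q^{(2)}$ ``depending only on already constructed symbols''. This cannot close in one step. If $a_{m'}^{+},b_{m'}^{+}$ are meant to encode everything of order $\leq m'$, then the $\#_{\rho}$-expansion of $\opbw(\mathbf{C})\opbw(A_{m'}^{+})$ also feeds back terms of order $m'-1,\,m'-2,\ldots$ which involve $a_{m'}^{+},b_{m'}^{+}$ themselves; the forcing is then not given and Proposition \ref{constEgohighoffbis} does not apply as stated. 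Conversely, if $a_{m'}^{+},b_{m'}^{+}$ are only the order-$m'$ contributions, then $Q^{\tau}=\ii E\opbw(A_{m}^{+}+A_{m'}^{+})$ solves the Heisenberg equation only up to a residual $\mathcal{G}$ with symbol in $\Sigma\Gamma^{m'-1}_{1}[r,N]$, not in $\Sigma\Gamma^{-\rho}_{1}[r,N]$, and the Duhamel argument at the end no longer yields $R^{+}\in\Sigma\mathcal{R}^{-\rho}_{1}[r,N]$.

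The fix is the one the paper carries out: make the ansatz
\[
a^{+}=(1+a_{m}^{+})f_{m}+\sum_{j=1}^{2(m+\rho)}a^{+}_{m-\frac{j}{2}},\qquad
b^{+}=b_{m}^{+}f_{m}+\sum_{j=1}^{2(m+\rho)}b^{+}_{m-\frac{j}{2}},
\]
and iterate. At each level $m-\tfrac{j}{2}$ the forced system has exactly the form \eqref{sistema01bis}--\eqref{sistema02bis} with $q_{j}^{(1)},q_{j}^{(2)}\in\Sigma\Gamma^{m-\frac{j}{2}}_{1}[r,N]$ depending only on $C,\,a_{m}^{+},\,b_{m}^{+}$ and the $a^{+}_{m-\frac{k}{2}},b^{+}_{m-\frac{k}{2}}$ with $k<j$ (already constructed), so Proposition \ref{constEgohighoffbis} applies at every step. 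Only after running this down to $j=2(m+\rho)$ does $Q^{\tau}$ solve \eqref{Ego} up to $\mathcal{G}_{\rho}$ with symbol of order $-\rho$, and then your Duhamel/flow argument goes through verbatim. The reality/real-to-real structure is preserved at each step for the same reason it is at order $m$, so the final matrix indeed has the form \eqref{nuovoSimboAhighoff}.
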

The rest of the section is devoted to the proof of the result above
and we will follow the strategy used in section \ref{sec:egoHigh}.

The system \eqref{systExample} in the new 
coordinates \eqref{variabNuovahighoff}
has the form \eqref{sistnew}-\eqref{Ego} with $G^{\tau}$
as in \eqref{flussoord0}.
We note that 
because of the remarks under Definition \ref{smoothoperatormaps} and by Theorem \ref{flussononlin},
\begin{equation*}
P^{\tau}(Z)=\ii E\Omega Z+M_1(\tau;Z)[Z]\,,\;\;\;
M_1\in \Sigma\mathcal{M}_{1}[r,N]\otimes\mathcal{M}_2(\mathbb{C})\,,
\end{equation*}
with estimate uniform in $\tau\in[0,1]$, where $\Omega$ is in \eqref{omegone}.
We  shall look for a solution of the Heisenberg equation 
of the form
\begin{equation}\label{forma-totalehighoff}
P^{\tau}(Z)
=\ii E \Omega Z+M_1(\tau;Z)[Z]=\ii E\opbw(A^{+}(\tau,Z;x,\x))[Z]+R^{+}(\tau,Z)[Z]\,,
\end{equation}
with 
$R^{+}\in \Sigma\mathcal{R}^{-\rho}_1[r,N]\otimes\mathcal{M}_{2}(\mathbb{C})$
and $A^{+}$ of the form 
\begin{equation}\label{nuovoSimboAhighoffbis}
\begin{aligned}
A^{+}(\tau,Z;x,\x):=\left(\begin{matrix}
a^{+}(\tau,Z;x,\x) & b^{+}(\tau,Z;x,\x) \vspace{0.2em}\\
\ov{b^{+}(\tau,Z;x,-\x)} &\ov{a^{+}(\tau,Z;x,-\x)}
\end{matrix}\right)\,.
\end{aligned}
\end{equation} 
In particular 
we make the ansatz
\begin{equation}\label{ansatzhighoff}
\begin{aligned}
&a^{+}(\tau,Z;x,\x)=(1+a_{m}^{+}(\tau,Z;x))f_{m}(\x)+
\sum_{j=1}^{2(m+\rho)}a^{+}_{m-\frac{j}{2}}(\tau,Z;x,\x)\,,\\
&b^{+}(\tau,Z;x,\x)=
b_{m}^{+}(\tau,Z;x)f_{m}(\x)+
\sum_{j=1}^{2(m+\rho)}b_{m-\frac{j}{2}}^{+}(\tau,Z;x,\x)\,,
\\
&a_{m}^{+}\in\mathcal{F}^{\mathbb{R}}_{1}[r,N]\,,
\quad b_{m}^{+}\in\mathcal{F}_{1}[r,N]
 \quad
a^{+}_{m-\frac{j}{2}}, b_{m-\frac{j}{2}}^{+}\in \Sigma\Gamma^{m-\frac{j}{2}}_1[r,N]\,, \;\;j=1,\ldots, 2(m+\rho)\,.
\end{aligned}
\end{equation}
Expanding the non-linear commutator as in \eqref{commuExp100}-\eqref{commuExpF}
we get
\begin{equation*}
\big[\opbw({\bf C}(\tau,Z;x,\x))[Z] , P^{\tau}(Z)\big]=\ii E\opbw
\left(\begin{matrix}
d_1(\tau,Z;x,\x) & d_2(\tau,Z;x,\x) \\ 
\ov{d_2(\tau,Z;x,-\x)} &
\ov{d_1(\tau,Z;x,-\x)}
\end{matrix}
\right)Z+\mathcal{G}_{\rho}(Z)[Z]
\end{equation*}
where 
$\mathcal{G}_{\rho}\in \Sigma\mathcal{R}^{-\rho}_1[r,N]\otimes\mathcal{M}_2(\mathbb{C})$ and
\begin{equation}\label{newSimboDDhighoff1}
\begin{aligned}
d_1&=-
C(\tau,Z;x)\#_{\rho} \ov{b^{+}(\tau,Z;x,-\x)}-
 b^{+}(\tau,Z;x,\x)\#_{\rho}\ov{C(\tau,Z;x)}\\
&
-(d_{Z}a^{+})(\tau,Z;x,\x)\big[ \opbw({\bf C}(\tau,Z;x))[Z]\big]
\end{aligned}
\end{equation}
\begin{equation}\label{newSimboDDhighoff2}
\begin{aligned}
d_2&=-
C(\tau,Z;x)\#_{\rho} \ov{a^{+}(\tau,Z;x,-\x)}-
 a^{+}(\tau,Z;x,\x)\#_{\rho}{C(\tau,Z;x)}\\
&
+ (\pa_{t}C)(\tau,Z;x)
-(d_{Z}b^{+})(\tau,Z;x,\x)\big[ \opbw({\bf C}(\tau,Z;x))[Z]\big]
\end{aligned}
\end{equation}
with
\[
(\pa_{t}C)(\tau,Z;x):=(d_{Z}{C})(\tau,Z;x)[P^{\tau}(Z)]\,.
\]
By recalling the ansatz \eqref{ansatzhighoff} and using the expansion \eqref{espansione2}
we get
\begin{equation}\label{dono11highoff1}
\begin{aligned}
d_1&= -2{\rm Re}\Big(\ov{C(\tau,Z;x)}{b_{m}^{+}(\tau;Z,x)}\Big)f_{m}(\x)
-2\sum_{j=1}^{2(m+\rho)}{\rm Re}\Big( 
\ov{C(\tau,Z;x)} b_{m-\frac{j}{2}}^{+}(\tau;Z,x,\x)
\Big)
\\&+\sum_{j=1}^{2(m+\rho)}q^{(1)}_{j}(\tau,Z;x,\x)
-(d_{Z}a^{+})(\tau,Z;x,\x)\big[ \opbw({\bf C}(\tau,Z;x,\x))[Z]\big]
\end{aligned}
\end{equation}

\begin{equation}\label{dono11highoff2}
\begin{aligned}
d_2&=-2C(\tau,Z;x)(1+a_{m}^{+}(\tau,Z;x))f_{m}(\x)
-\sum_{j=1}^{2(m+\rho)}\Big(a_{m-\frac{j}{2}}^{+}(\tau,Z;x,\x)
+\ov{a_{m-\frac{j}{2}}^{+}(\tau,Z;x,-\x)}
\Big)C(\tau,Z;x)
\\&+\sum_{j=1}^{2(m+\rho)}q^{(2)}_{j}(\tau,Z;x,\x)
-(d_{Z}b^{+})(\tau,Z;x,\x)\big[ \opbw({\bf C}(\tau,Z;x,\x))[Z]\big]
\end{aligned}
\end{equation}
where
\begin{equation}\label{zerohigh}
q^{(1)}_{j}\,,\; q^{(2)}_{j}\in \Sigma\Gamma^{m-\frac{j}{2}}_{1}[r,N]\,.
\end{equation}
Moreover the symbols $q_{j}^{(k)}$ depends only on the symbols
$C$, $a_{m}^{+}$, $b_{m}^{+}$,  $q_{p}^{(k)}$ with $p<j$. Note that in the notation above the term $\partial_tC$ in \eqref{newSimboDDhighoff2} is contained in $q^{(2)}_{2m}$. In the expansion for $d_2$ we used that $a_m^+$ is real valued.

Our aim is to prove the ansatz \eqref{forma-totalehighoff}, \eqref{ansatzhighoff} 
using 
the expansions \eqref{newSimboDDhighoff1}, \eqref{newSimboDDhighoff2}.

\noindent
{\bf Order $m$.}
By \eqref{newSimboDDhighoff1}, \eqref{newSimboDDhighoff2}, 
the expansions \eqref{dono11highoff1}, \eqref{dono11highoff2}
and  \eqref{ansatzhighoff},
we have that, at the highest order, the Heisenberg equation 
reads
\begin{equation}\label{ordMaxredu2highoff1}
\left\{\begin{aligned}
&\pa_{\tau}a^{+}_m(\tau,Z;x)=
-{\rm Re}\Big( 
\ov{C(\tau,Z;x)}{b_{m}^{+}(\tau;Z,x)}
\Big)
-
d_{Z}a^{+}_m(\tau,Z;x)[\opbw({\bf C}(\tau,Z;x))[Z]]\\
&a^{+}_m(0,Z;x)=a_{m}(Z;x)\,,
\end{aligned}\right.
\end{equation}
\begin{equation}\label{ordMaxredu2highoff2}
\left\{\begin{aligned}
&\pa_{\tau}b^{+}_m(\tau,Z;x)=
-2(1+a_{m}^{+}(\tau,Z;x))C(\tau,Z;x)
-
d_{Z}b^{+}_m(\tau,Z;x)[\opbw({\bf C}(\tau,Z;x))[Z]]\\
&b^{+}_m(0,Z;x)=b_{m}(Z;x)\,.
\end{aligned}\right.
\end{equation}
Notice that the 
problems \eqref{ordMaxredu2highoff1}, \eqref{ordMaxredu2highoff2} 
have the form 
\eqref{sistema01}, \eqref{sistema02}. Therefore, by Theorem \ref{constEgohighoff}
we have
\[
a^{+}_m(\tau,Z;x)\in \Sigma\mathcal{F}^{\mathbb{R}}_1[r,N]\,,\quad
 b^{+}_m(\tau,Z;x)\in \Sigma\mathcal{F}_1[r,N]\,,
\]
with estimates uniform in $\tau\in[0,1]$ and
\[
 b^{+}_m(1,Z;x)\equiv0\,.
\]

\smallskip
\noindent
{\bf Lower orders.} 
Recalling \eqref{dono11highoff1}, \eqref{dono11highoff2}, \eqref{zerohigh}
we have that the Heisenberg equation, 
at order
$m-j/2$, for $j\geq1$,
reads
\begin{equation}\label{sistema01bishigh}
\begin{aligned}
\pa_{\tau}a_{m-\frac{j}{2}}^{+}(\tau,Z;x,\x)&=-
2{\rm Re}\Big(C(\tau,Z;x)\ov{b_{m-\frac{j}{2}}^{+}(\tau,Z;x,\x)}\Big)
+q_j^{(1)}(\tau,Z;x,\x)\\
&\qquad\qquad\qquad-(d_{Z}a_{m-\frac{j}{2}}^{+})(\tau,Z;x,\x)\big[
\opbw\big({\bf C}(Z;x)\big)[Z]\big]\,,\\
a_{m-\frac{j}{2}}^{+}(0,Z;x,\x)&=a_{m-\frac{j}{2}}(Z;x,\x)\,,
\end{aligned}
\end{equation}

\begin{equation}\label{sistema02bishigh}
\begin{aligned}
\pa_{\tau}b_{m-\frac{j}{2}}^{+}(\tau,Z;x,\x)&
=-\big(a_{m-\frac{j}{2}}^{+}(\tau,Z;x,\x)
+\ov{a_{m-\frac{j}{2}}^{+}(\tau,Z;x,-\x)}\big)
C(\tau,Z;x)+q_j^{(2)}(\tau,Z;x,\x)
\\&\qquad\qquad\qquad-(d_{Z}b_{m-\frac{j}{2}}^{+})(\tau,Z;x)\big[
\opbw\big({\bf C}(Z;x)\big)[Z]\big]\,,\\
b_{m-\frac{j}{2}}^{+}(0,Z;x,\x)&=b_{m-\frac{j}{2}}(Z;x,\x)\,,
\end{aligned}
\end{equation}
where (recall \eqref{systExample}) we defined
\begin{equation}\label{datiinizhighoff}
\begin{aligned}
&a_{m-\frac{j}{2}}(Z;x,\x):=0\,,\quad
b_{m-\frac{j}{2}}(Z;x,\x):=0\,,\qquad j\geq2\,.
\end{aligned}
\end{equation}
Notice that
the problems  \eqref{sistema01bishigh}, \eqref{sistema02bishigh}
are of the form \eqref{sistema01bis}, \eqref{sistema02bis}. Since
the symbols $q_{j}^{(1)}, q_{j}^{(2)}$ depend only
on the symbols
$C$, $a_{m}^{+}$, $b_{m}^{+}$,  $q_{p}^{(k)}$ with $p<j$, we can iteratively 
solve the problems 
\eqref{sistema01bishigh}, \eqref{sistema02bishigh}
and verify that, at the step $j$,
the symbols $q_{j}^{(1)}, q_{j}^{(2)}$ satisfy the hypothesis \eqref{ipo523}.
Therefore, by Proposition \ref{constEgohighoffbis}, we have that
the solutions of \eqref{sistema01bishigh}, \eqref{sistema02bishigh} satisfy
\begin{equation*}
a^{+}_{m-\frac{j}{2}}(\tau,U;x,\x)\,,\; 
b^{+}_{m-\frac{j}{2}}(\tau,U;x,\x)\in \Sigma\Gamma^{m-\frac{j}{2}}_1[r,N]\,.
\end{equation*}
By iterating  the procedure above we construct  symbols $a^{+}, b^{+}$ 
as in \eqref{ansatzhighoff} 
such that the following holds.
Define \[
Q^{\tau}(Z)=\opbw(\ii E A^{+}(\tau,Z;x,\x))[Z]
\]
with $A^{+}$
of the form \eqref{nuovoSimboAhighoffbis} 
with $a^{+}, b^{+}$ as in \eqref{ansatzhighoff}.
Then 
the operator $Q^{\tau}(Z)$ solves the problem
\begin{equation*}
\left\{\begin{aligned}
&\pa_{\tau}Q^{\tau}(Z)=\big[\opbw({\bf C}(\tau,Z;x,\x))[Z], Q^{\tau}(Z)\big]+
\mathcal{G}_{\rho}(\tau;Z)\\
&Q^{0}(Z)=\ii E\opbw(A(Z;x,\x))[Z]\,,
\end{aligned}\right.
\end{equation*}
where
$\mathcal{G}_{\rho}(\tau;Z):=\opbw(G_{\rho}(\tau,Z;x,\x))[Z]$
for some matrix of symbols 
$G_{\rho}\in \Sigma\Gamma^{-\rho}_{1}[r,N]\otimes\mathcal{M}_2(\mathbb{C})$.
The fact that 
the difference $Q^{\tau}-P^{\tau}$ is a smoothing remainder 
in $\Sigma\mathcal{R}^{-\rho}_{1}[r,N]\otimes\mathcal{M}_2(\mathbb{C})$
can be proved following word by word the conclusion of the proof of Theorem
\ref{conjOrdMax}.

\subsubsection{Diagonalization of the matrix at lower orders}\label{blokkoLower}
 
Let us consider the operator $X(U)$ as in \eqref{operatoreEgor1}
with 
\begin{equation*}
\begin{aligned}
&A(U;x,\x):=\left(\begin{matrix}
a(U;x,\x) & b(U;x,\x) \\
\ov{b(U;x,-\x)} &\ov{a(U;x,-\x)}
\end{matrix}\right)\,,
\end{aligned}
\end{equation*}
where
\begin{equation}\label{Forma-di-Aoff}
\begin{aligned}
&a(U;x,\x)=(1+a_{m}(U;x))f_m(\x)+a_{m'}(U;x,\x)\,,\quad m>1\,,\;\;\; m'=m-\frac{1}{2}\,,
\\ & b(U;x,\x)=b_{m''}(U;x,\x)
\in\Sigma\Gamma^{m''}_{1}[r,N]
\,,\;\; m'':=m'-\frac{p}{2}\,,\;\; {\rm for \; some }\;\;  p\geq0
\\&{a}_m(U,x)\in \Sigma\mathcal{F}_1^{\mathbb{R}}[r,N]\,,\quad 
a_{m'}(U;x,\x) \in\Sigma\Gamma^{m'}_{1}[r,N]\,,\\
&a_{m'}(U;x,\x)-\ov{a_{m'}(U;x,\x)}\in \Sigma\Gamma^{0}_1[r,N]\,.
\end{aligned}
\end{equation}
where $f_{m}\in \Gamma_0^{m}$ and in particular is a 
$m$-homogeneous ${\C^{\infty}(\mathbb{R}^+,\mathbb{R})}$
and even in $\x\in \mathbb{R}$. 
Assume also that \eqref{ipotesim} holds.
Let us now consider the  matrix of symbols
\begin{equation*}
\begin{aligned}
&{\bf C}(\tau,U;x,\x):=\left(\begin{matrix}
0 & C(\tau,U;x,\x)  \\   \ov{C(\tau,U;x,-\x)} & 0
\end{matrix}
\right)\,,\quad
C(\tau,U;x,\x)\in \Sigma\Gamma^{\delta}_{1}[r,N]\,,\;\;\delta:=m''-m\,,
\end{aligned}
\end{equation*}
where $C(\tau,U;x,\x)$ is the symbol given by Theorem \ref{constEgoloweroff}
with $m'\rightsquigarrow m''$,
and let 
 $\Psi^{\tau}$ be the flow of \eqref{sist2} with 
 \begin{equation}\label{Nonlin2off}
 G^{\tau}(U):=\opbw({\bf C}(\tau,U;x,\x))[U]\,.
\end{equation}
Notice that the flow of \eqref{sist2} with generator in \eqref{Nonlin2off}
 is well-posed by Theorem \ref{flussononlin}.
We define
\begin{equation}\label{variabNuovalowoff}
Z:=\vect{z}{\bar{z}}:={\bf \Psi}^{\tau}(U)_{|\tau=1}\,.
\end{equation}
The main result of this section is the following.

\begin{theorem}\label{conjOrdMaxloweroff}
For $r>0$ small enough 
the conjugate of $X$ in \eqref{Nonlin1} with assumption \eqref{Forma-di-Aoff} 
has the form (see \eqref{variabNuovalowoff})
\begin{equation*}
\dot{Z}=\ii E\opbw(A^{+}(Z;x,\x))[Z]+R^{+}(Z)[Z]\,,\qquad 
R^{+}\in R\in \Sigma\mathcal{R}^{-\rho}_{1}[r,N]\otimes\mathcal{M}_2(\mathbb{C})\,,
\end{equation*}
and the matrix of symbols $A^{+}\in \Sigma\Gamma^{m}_{1}[r,N]\otimes\mathcal{M}_2(\mathbb{C})$
has the form
\begin{align}
&A^{+}(Z;x,\x):=\left(\begin{matrix}
a^{+}(Z;x,\x) & b^{+}(Z;x,\x) \\
\ov{b^{+}(Z;x,-\x)} &\ov{a^{+}(Z;x,-\x)}
\end{matrix}\right)\,,\label{nuovoSimboAlowoff}
\end{align}
where
\begin{equation}\label{nuovoSimboA2lowoff}
\begin{aligned}
&a^{+}(Z;x,\x)=(1+a_{m}^{+}(Z;x))f_m(\x)+a^{+}_{m'}(Z;x,\x)\,,\quad 
b^{+}(Z;x,\x)=b^{+}_{m''-\frac{1}{2}}(Z;x,\x)\,,
\\&{a}_m^{+}(Z,x)\in \Sigma\mathcal{F}_1^{\mathbb{R}}[r,N]\,,\quad 
a_{m'}^{+}(Z;x,\x) \in\Sigma\Gamma^{m'}_{1}[r,N]\,,
\quad  b^{+}(Z;x,\x) \in\Sigma\Gamma^{m''-\frac{1}{2}}_{1}[r,N]\,\\
&a_{m'}^+(U;x,\x)-\ov{a_{m'}^+(U;x,\x)}\in \Sigma\Gamma^{0}_1[r,N]\,.
\end{aligned}
\end{equation}
\end{theorem}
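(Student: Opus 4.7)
\smallskip

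\noindent\textbf{Plan of proof.} The strategy follows the template already used for Theorem \ref{conjOrdMaxhighoff}, namely a non-linear Egorov/Heisenberg analysis, adapted to the off-diagonal generator of lower order $\delta = m'' - m < 0$ furnished by Theorem \ref{constEgoloweroff}. Write $Z = \Psi^1(U)$ and, for $\tau\in[0,1]$, define $P^\tau(Z)$ as in \eqref{pushpush}. Since the generator $G^\tau(U) = \opbw({\bf C}(\tau,U;x,\xi))[U]$ is of strictly negative order and $1$-homogeneous at leading order, Theorem \ref{flussononlin} (case \eqref{sim3}) gives $\Psi^\tau$ well posed, and the remarks under Definition \ref{smoothoperatormaps} imply $P^\tau(Z) = \ii E \Omega Z + M_1(\tau;Z)[Z]$ with $M_1 \in \Sigma\mathcal{M}_1[r,N]\otimes\mathcal{M}_2(\mathbb{C})$. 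The Heisenberg equation \eqref{Ego} governs $P^\tau$.

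\smallskip

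\noindent I will look for $P^\tau$ under the ansatz
\begin{equation*}
P^\tau(Z) = \ii E \opbw(A^+(\tau,Z;x,\xi))[Z] + R^+(\tau,Z)[Z],
\end{equation*}
where $A^+$ is real-to-real with
\begin{equation*}
a^+(\tau) = (1+a_m^+(\tau))f_m + \sum_{j=1}^{2(m+\rho)} a^+_{m'-\frac{j-1}{2}}(\tau), \qquad b^+(\tau) = \sum_{j=0}^{2(m+\rho)} b^+_{m''-\frac{j}{2}}(\tau),
\end{equation*}
with $a_m^+ \in \Sigma\mathcal{F}_1^{\mathbb{R}}[r,N]$, $a^+_{m'-(j-1)/2} \in \Sigma\Gamma^{m'-(j-1)/2}_1[r,N]$, and analogously for the off-diagonal entries. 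The initial conditions are $a_m^+(0)=a_m$, $a_{m'}^+(0)=a_{m'}$, $b_{m''}^+(0)=b_{m''}$, and $0$ for the remaining symbols (cf.\ \eqref{datiinizhighoff}). Expanding the non-linear commutator as in \eqref{commuExpA}--\eqref{commuExpF} and collecting by homogeneity in $\xi$ via Proposition \ref{teoremadicomposizione} and the expansion \eqref{espansione2}, one obtains, up to a remainder in $\Sigma\mathcal{R}^{-\rho}_1[r,N]\otimes\mathcal{M}_2(\mathbb{C})$, a triangular system of ODEs for the symbols $a^+_{\cdot}$, $b^+_{\cdot}$ of Heisenberg type. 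Crucially, the top diagonal order of $A^+$ stays at order $m$ since $\mathbf{C}$ is of order $\delta<0$: the two off-diagonal compositions $C\#_\rho \overline{b^+(x,-\xi)}$ contribute at order $m''+\delta < m$, so at order $m$ the diagonal equation is simply $\partial_\tau a_m^+ = -d_Z a_m^+\cdot G^\tau$, solved by the transport formula $a_m^+(\tau,Z;x) = a_m((\Psi^\tau)^{-1}(Z);x)$.

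\smallskip

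\noindent The core observation is at the off-diagonal order $m''$: the Heisenberg equation reduces, after collecting the principal contributions of $\mathbf{C}\#_\rho A^+ - A^+\#_\rho \mathbf{C}$, to
\begin{equation*}
\partial_\tau b_{m''}^+(\tau,Z;x,\xi) = -2\bigl(1+a_m^+(\tau,Z;x)\bigr)f_m(\xi)\, C(\tau,Z;x,\xi) - d_Z b_{m''}^+(\tau,Z;x,\xi)\bigl[\opbw(\mathbf{C})Z\bigr],
\end{equation*}
with $b_{m''}^+(0)=b_{m''}$. Integrating along the characteristics of the flow $\Psi^\tau$ yields
\begin{equation*}
b_{m''}^+(1,Z;x,\xi) = b_{m''}((\Psi^1)^{-1}(Z);x,\xi) - 2\int_0^1 (1+a_m(\cdots))\, f_m(\xi)\, C(\sigma,\Phi_C^\sigma(\cdot);x,\xi)\,d\sigma
\end{equation*}
after the change of variables $Z \mapsto (\Psi^1)^{-1}(Z)$, which is precisely the quantity $F(C)$ in \eqref{equaLoweroff} evaluated at $u=(\Psi^1)^{-1}(Z)$. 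Since Theorem \ref{constEgoloweroff} was applied with the hypotheses \eqref{low1off}--\eqref{low2off} satisfied by $a_m$ and $b_{m''}$ (recall \eqref{Forma-di-Aoff}), $F(C)\equiv 0$, so $b_{m''}^+(1)\equiv 0$ and the leading off-diagonal order is eliminated. The remaining equations at orders $m', m''-1/2, m''-1, \ldots$ are transport equations with forcing $q_j^{(1,2)}$ built recursively from previously constructed symbols, solvable along $\Psi^\tau$ as in Theorems \ref{conjOrdMaxhighoff}--\ref{conjOrdMaxredu}; this yields the classes claimed in \eqref{nuovoSimboAlowoff}--\eqref{nuovoSimboA2lowoff}, including preservation of the reality condition $a_{m'}^+ - \overline{a_{m'}^+} \in \Sigma\Gamma^0_1[r,N]$ (which descends from the structure of the initial datum and the fact that $\mathbf{C}$ is off-diagonal with matching symmetry).

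\smallskip

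\noindent The main obstacle, as usual in this framework, is not the construction of $A^+$ at the symbolic level but the verification that $\mathcal{Q}^\tau(Z) := \opbw(\ii E A^+)[Z] - P^\tau(Z)$ is a genuine smoothing operator in $\Sigma\mathcal{R}^{-\rho}_1[r,N]\otimes\mathcal{M}_2(\mathbb{C})$. Following the argument at the end of the proof of Theorem \ref{conjOrdMax}, I will write $\mathcal{Q}^\tau\circ\Psi^\tau = V^\tau$ and show that $V^\tau$ solves a linear non-homogeneous problem of the form \eqref{probTotale} whose forcing lies in $\Sigma\mathcal{R}^{-\rho}_1[r,N]\otimes\mathcal{M}_2(\mathbb{C})$ (this is where we must truncate the symbolic expansion at sufficiently many orders, i.e.\ $j$ up to $2(m+\rho)$ as in the ansatz). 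Energy/Duhamel estimates for the linearized flow generated by $d_Z G^\tau$, combined with the bounds on $\Psi^\tau$ from Theorem \ref{flussononlin} and the composition rules of Proposition \ref{composizioniTOTALI}, then give the required smoothing bounds on $V^\tau$, and hence on $R^+$ after composition with $(\Psi^\tau)^{-1}$. The decomposition into homogeneous and non-homogeneous pieces of $R^+$ follows from the analogous decompositions of $\Psi^\tau - \mathrm{Id}$ and $\mathbf{C}$.
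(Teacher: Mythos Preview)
Your proposal is correct and follows essentially the same approach as the paper: Heisenberg conjugation under the negative-order off-diagonal flow, the transport equation for $a_m^+$ and $a_{m'}^+$, the key equation for $b_{m''}^+$ solved via Theorem \ref{constEgoloweroff} so that $b_{m''}^+(1)=0$, recursive transport equations for the lower-order symbols, and the final $Q^\tau-P^\tau$ smoothing argument via \eqref{probTotale}. The only cosmetic difference is that the paper's ansatz \eqref{ansatzlowoff} explicitly separates the diagonal corrections arising from the off-diagonal composition $C\cdot\ov{b^+}$ as a distinct sum $\sum a^+_{2m''-m-j/2}$ (orders strictly below $m'$, since $2m''-m<m''\le m'$), whereas you absorb these into a single descending sequence $a^+_{m'-(j-1)/2}$; both bookkeepings are equivalent and lead to the same triangular hierarchy of transport equations.
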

The rest of the section is devoted to the proof of the result above
and we will follow the strategy used in section \ref{sec:egoHigh}.

The system 
in the new 
coordinates \eqref{variabNuovalowoff}
has the form \eqref{sistnew}-\eqref{Ego} with $G^{\tau}$ as in \eqref{Nonlin2off}.
We recall also that, 
by the remarks under Definition \ref{smoothoperatormaps} and by Theorem \ref{flussononlin},
we note that 
\begin{equation*}
P^{\tau}(Z)=\ii E\Omega Z+M_1(\tau;Z)[Z]\,,\;\;
M_1\in \Sigma\mathcal{M}_{1}[r,N]\otimes\mathcal{M}_2(\mathbb{C})\,,
\end{equation*}
with estimate uniform in $\tau\in[0,1]$, where $\Omega$ is in \eqref{omegone}.
We  shall look for a solution of the Heisenberg equation 
of the form
\begin{equation*}
P^{\tau}(Z)
=\ii E\Omega Z+M_1(\tau;Z)[Z]=\ii E\opbw(A^{+}(\tau,Z;x,\x))[Z]+R^{+}(\tau,Z)[Z]\,,
\end{equation*}
with $A^{+}$, $R^{+}$ 
\begin{equation}\label{dono7lowoff}
A^{+}(\tau,Z;x,\x):=\left(\begin{matrix}
a^{+}(\tau,Z;x,\x) & b^{+}(\tau,Z;x,\x) \vspace{0.2em}\\
\ov{b^{+}(\tau,Z;x,-\x)} &\ov{a^{+}(\tau,Z;x,-\x)}
\end{matrix}\right)\,, \qquad
R^{+}\in \Sigma\mathcal{R}^{-\rho}_1[r,N]\otimes\mathcal{M}_{2}(\mathbb{C})\,.
\end{equation}
In particular 
we make the ansatz
\begin{equation}\label{ansatzlowoff}
\begin{aligned}
&a^{+}(\tau,Z;x,\x)=(1+a_{m}^{+}(\tau,Z;x))f_{m}(\x)+
a_{m'}^{+}(\tau,Z;x,\x)+
\sum_{j=0}^{2(m+\rho)}a^{+}_{2m''-m-\frac{j}{2}}(\tau,Z;x,\x)\,,\\
&b^{+}(\tau,Z;x,\x)=b_{m''}^{+}(\tau,Z;x,\x)+
\sum^{2(m+\rho)}_{j=1}b_{m''-\frac{j}{2}}^{+}(\tau,Z;x,\x)\,,
\\
&a_{m}^{+}\in\Sigma\mathcal{F}^{\mathbb{R}}_{1}[r,N]\,, \quad
a^{+}_{m'}\in \Sigma\Gamma^{m'}_1[r,N]\,,
\quad
a^{+}_{2m''-m-\frac{j}{2}}\in \Sigma\Gamma^{2m''-m-\frac{j}{2}}_1[r,N]\,,
\quad b^{+}_{m''-\frac{j}{2}}\in \Sigma\Gamma^{m''-\frac{j}{2}}_1[r,N]\,,
\end{aligned}
\end{equation}
with $j\geq0$.
Expanding the non-linear commutator as in \eqref{commuExp100}-\eqref{commuExpF}
we get
\begin{equation*}
\big[\opbw( {\bf C}(\tau,Z;x,\x))[Z] , P^{\tau}(Z)\big]=\ii E
\left(\begin{matrix}
d_1(\tau,Z;x,\x) & d_2(\tau,Z;x,\x) \\ 
\ov{d_2(\tau,Z;x,-\x)} &
\ov{d_1(\tau,Z;x,-\x)}
\end{matrix}
\right)Z+\mathcal{G}_{\rho}(Z)[Z]
\end{equation*}
where 
$\mathcal{G}_{\rho}\in \Sigma\mathcal{R}^{-\rho}_1[r,N]\otimes\mathcal{M}_2(\mathbb{C})$ and
\begin{equation}\label{newSimboDDlowoff1}
\begin{aligned}
d_1&=-
C(\tau,Z;x,\x)\#_{\rho} \ov{b^{+}(\tau,Z;x,-\x)}-
 b^{+}(\tau,Z;x,\x)\#_{\rho}\ov{C(\tau,Z;x,-\x)}\\
&
-(d_{Z}a^{+})(\tau,Z;x,\x)\big[ \opbw({\bf C}(\tau,Z;x,\x))[Z]\big]
\end{aligned}
\end{equation}
\begin{equation}\label{newSimboDDlowoff2}
\begin{aligned}
d_2&=-
C(\tau,Z;x,\x)\#_{\rho} \ov{a^{+}(\tau,Z;x,-\x)}-
 a^{+}(\tau,Z;x,\x)\#_{\rho}{C(\tau,Z;x,\x)}\\
&
+(\pa_{t}C)(\tau,Z;x,\x)
-(d_{Z}b^{+})(\tau,Z;x,\x)\big[ \opbw({\bf C}(\tau,Z;x,\x))[Z]\big]
\end{aligned}
\end{equation}
with
\[
(\pa_{t}C)(\tau,Z;x,\x):=(d_{Z}{C})(\tau,Z;x,\x)[P^{\tau}(Z)]\,.
\]
By recalling the ansatz \eqref{ansatzlowoff} and using the expansion \eqref{espansione2}
we get
\begin{equation}\label{dono11lowoff1}
\begin{aligned}
d_1&=-2\sum_{j=0}^{2(m+\rho)}{\rm Re}
\Big( \ov{C(\tau,Z;x,\x)} b^{+}_{m''-\frac{j}{2}}(\tau,Z;x,\x)\Big)
+\sum_{j=2}^{2(m+\rho)}q^{(1)}_{j}(\tau,Z;x,\x)
\\&-(d_{Z}a^{+})(\tau,Z;x,\x)\big[ \opbw({\bf C}(\tau,Z;x,\x))[Z]\big]
\end{aligned}
\end{equation}
\begin{equation}\label{dono11lowoff2}
\begin{aligned}
d_2&=-2(1+a_{m}^{+}(\tau,Z;x))f_{m}(\x)C(\tau,Z;x,\x)
\\&
- \Big(a_{m'}^{+}(\tau,Z;x,\x)+
\ov{a_{m'}^{+}(\tau,Z;x,-\x)}
\Big)C(\tau,Z;x,\x)
+\sum_{j=0}^{2(m+\rho)}q^{(2)}_{j}(\tau,Z;x,\x)
\\&-(d_{Z}b^{+})(\tau,Z;x,\x)\big[\opbw({\bf C}(\tau,Z;x,\x))[Z]\big]
\end{aligned}
\end{equation}
where
\[
q_{j}^{(1)}\in \Sigma\Gamma^{2m''-m-\frac{j}{2}}_1[r,N]\,, \;\; j\geq 2\,,
\]
depends only on the symbols $C$ and $b^{+}_{m''-\frac{k}{2}}$ with $0\leq k\leq j-2$\,,
while
\[
\begin{aligned}
&q_{j}^{(2)}\in \Sigma\Gamma^{m''-\frac{j}{2}}_1[r,N]\,,
\;\; j\geq2\,, \; {\rm and}\;\; \frac{j}{2}\neq m\,,\\
&q_{j}^{(2)}=r_{j}+(\pa_{t}C)\,, r_j\in \Sigma\Gamma^{m''-\frac{j}{2}}_1[r,N]\,,\;\;
{\rm for }\;\; \frac{j}{2}=m\,,
\end{aligned}
\]
depends only on the symbols 
$C$, $a_{m}^{+}$, $a_{m'}^{+}$
and $a^{+}_{2m''-m-k/2}$ with $k$ such that
$2m''-m-k/2<m''-j/2$.

\smallskip
\noindent
{\bf High orders of the symbol ${a}^{+}$.}
By \eqref{newSimboDDlowoff1}, \eqref{espansione2}, 
we have that, at the highest order, the Heisenberg equation 
at orders $m$ and $m'$ reads
\begin{equation}\label{ordMaxredu2lowofftris}
\left\{\begin{aligned}
&\pa_{\tau}a^{+}_{m}(\tau,Z;x)=-
d_{Z}a^{+}_{m}(\tau,Z;x)[\opbw({\bf C}(\tau,Z;x,\x))[Z]]\\
&a^{+}_m(0,Z;x)=a_{m}(Z;x)f_{m}(\x)\,.
\end{aligned}\right.
\end{equation}
\begin{equation}\label{ordMaxredu2lowoff}
\left\{\begin{aligned}
&\pa_{\tau}a^{+}_{m'}(\tau,Z;x,\x)=-
d_{Z}a^{+}_{m'}(\tau,Z;x,\x)[\opbw({\bf C}(\tau,Z;x,\x))[Z]]\\
&a^{+}_{m'}(0,Z;x,\x)=a_{m'}(Z;x,\x))\,.
\end{aligned}\right.
\end{equation}
Notice that the functions
\begin{equation*}
g_1(\tau)=a^{+}_m(\tau,Z(\tau);x)\,,\quad
g_2(\tau)=a^{+}_{m'}(\tau,Z(\tau);x,\x)
\end{equation*}
are constant along the solution of \eqref{Nonlin2off}.
By Theorem \ref{flussononlin} such flow  is well posed and invertible.
Hence the symbols 
\begin{equation*}
a^{+}_m(\tau,Z;x,\x)=a_{m}((\Psi^{\tau})^{-1}(Z);x)\,,
\qquad
a^{+}_{m'}(\tau,Z;x,\x)=a_{m'}((\Psi^{\tau})^{-1}(Z);x,\x)\,,
\end{equation*}
belong respectively  to $ \Sigma\Gamma^{m}_{1}[r,N]$,  
$ \Sigma\Gamma^{m'}_{1}[r,N]$ and
solves the problem \eqref{ordMaxredu2lowoff}.

\smallskip
\noindent
{\bf High orders of the symbol ${b}^{+}$.}
By \eqref{newSimboDDlowoff2}, \eqref{espansione2}, 
we have that, at the highest order, the Heisenberg equation 
at orders $m''$ for the symbol
$b^{+}$
reads 
\begin{align}
&\pa_{\tau}b^{+}_{m''}(\tau,Z;x,\x)=-2(1+a_{m}^{+}(\tau,Z;x))f_{m}(\x)C(\tau,Z;x,\x)
-
d_{Z}b^{+}_{m''}(\tau,Z;x,\x)[\opbw({\bf C}(\tau,Z;x,\x))[Z]]\nonumber\\
&b^{+}_{m''}(0,Z;x,\x)=b_{m''}(Z;x,\x)\,.
\label{ordMaxredu22lowoff22b}
\end{align}
Notice that the function $g(\tau)=b^{+}_{m''}(\tau,Z(\tau);x,\x)$
satisfies
\[
\begin{aligned}
\pa_{\tau}g(\tau)&=-2(1+a_{m}^{+}(\tau,Z(\tau);x))f_{m}(\x)C(\tau,Z(\tau);x,\x)
\,,\\
&g(0)=b_{m''}(Z(0);x,\x)=b_{m''}(U;x,\x)\,.
\end{aligned}
\]
Therefore
\[
\begin{aligned}
b_{m''}^{+}(\tau,Z,x,\x)=&b_{m''}((\Psi^{\tau})^{-1}(Z);x,\x)\\
&\quad -2\int_{0}^{\tau}
(1+a_{m}^{+}(\s,\Psi^{\s}(\Psi^{\tau})^{-1}(Z)))
f_{m}(\x)C(\s,\Psi^{\s}(\Psi^{\tau})^{-1}(Z);x,\x)d\s
\end{aligned}
\]
solves the problem \eqref{ordMaxredu22lowoff22b}. 
Moreover, by Theorem \ref{constEgoloweroff} (see \eqref{equaLoweroff}),
one has that
\begin{equation*}
b^{+}_{m''}(1,Z;x,\x)\stackrel{\eqref{equaLoweroff}}{=}0\,.
\end{equation*}

\smallskip
\noindent
{\bf Lower orders.} 
We start by studying the first lower order corrections to the symbols 
$a^{+}$, $b^{+}$ in \eqref{ansatzlowoff} which are respectively of order
$2m''-m$ and $m''-\frac{1}{2}$.
Recalling \eqref{dono11lowoff1}, \eqref{dono11lowoff2},
we have that the Heisenberg equation, 
at these orders reads
\begin{equation}\label{rambo1}
\begin{aligned}
\pa_{\tau}a^{+}_{2m''-m}(\tau,Z;x)&=-
d_{Z}a^{+}_{2m''-m}(\tau,Z;x)[\opbw({\bf C}(\tau,Z;x,\x))[Z]]\\
&\qquad-2{\rm Re}\Big( \ov{C(\tau,Z;x,\x)} b^{+}_{m''}(\tau,Z;x,\x)\Big)\,,\\
a^{+}_{2m''-m}(0,Z;x)&=0\,,
\end{aligned}
\end{equation}
\begin{equation}\label{rambo2}
\begin{aligned}
\pa_{\tau}b^{+}_{m''-\frac{1}{2}}(\tau,Z;x,\x)&=-
d_{Z}b^{+}_{m''-\frac{1}{2}}(\tau,Z;x,\x)[\opbw({\bf C}(\tau,Z;x,\x))[Z]]\\
&\qquad-\Big(a_{m'}^{+}(\tau,Z;x,\x)+
\ov{a_{m'}^{+}(\tau,Z;x,-\x)}
\Big)C(\tau,Z;x,\x)\,,\\
b^{+}_{m''-\frac{1}{2}}(0,Z;x,\x)&=0\,.
\end{aligned}
\end{equation}
The symbols $b^{+}_{m''}$ and $a^{+}_{m'}$ have been already computed at the previous 
steps. Therefore, one can check that
\[
\begin{aligned}
a^{+}_{2m''-m}(\tau,Z;x,\x)&:=-2\int_{0}^{\tau}
{\rm Re}
\Big( \ov{C(\tau,\Psi^{\s}(\Psi^{\tau})^{-1}(Z);x,\x)} 
b^{+}_{m''}(\tau,\Psi^{\s}(\Psi^{\tau})^{-1}(Z);x,\x)\Big)
d\s\,,\\
b^{+}_{m''-\frac{1}{2}}(\tau,Z;x,\x)&:=\int_{0}^{\tau}
Q(\s,\Psi^{\s}(\Psi^{\tau})^{-1}(Z);x,\x)
d\s\,,
\\
Q(\tau,Z;x,\x)&:=
-\Big(a_{m'}^{+}(\tau,Z;x,\x)+
\ov{a_{m'}^{+}(\tau,Z;x,-\x)}
\Big)C(\tau,Z;x,\x)\,,
\end{aligned}
\]
solve the problems 
\eqref{rambo1}, \eqref{rambo2}.
In particular, using the estimates \eqref{stimaflusso3}
on the flow $\Psi^{\tau}$ (and reasoning as in section \ref{sec:PreEgorov}),
one can prove that $a^{+}_{2m''-m}\in \Sigma\Gamma^{2m''-m}_1[r,N]$
and $b^{+}_{m''-\frac{1}{2}}\in \Sigma\Gamma^{m''-\frac{1}{2}}_1[r,N]$.
For the lower order terms we reason similarly and iteratively.
Recalling \eqref{dono11lowoff1}, \eqref{dono11lowoff2},
we have that the Heisenberg equation, 
at lower orders
reads
\begin{align}
\pa_{\tau}a^{+}_{k}(\tau,Z;x,\x)&=
-2{\rm Re}
\Big( \ov{C(\tau,Z;x,\x)} b^{+}_{m''-\frac{j}{2}}(\tau,Z;x,\x)\Big)\nonumber
\\&\qquad\qquad-d_{z}a^{+}_{k}(\tau,Z;x,\x)[\opbw({\bf C}(\tau,Z;x,\x))[Z]]
+ q_{j}^{(1)}(\tau,Z;x,\x)\label{ord1lowoff1}\\
a_k^{+}(0,z;x,\x)&=0\nonumber\,,
\end{align}
with $k:=2m''-m-j/2$.
It is important to note that
$2m''-m-j/2< m''-j/2$. Then the corresponding equation
for the symbol $b^{+}_{m''-\frac{j}{2}}$ does not depend on
the symbol $a^{+}_{2m''-m-j/2}$. Actually,  recalling \eqref{dono11lowoff2},
we have
\begin{align}
&\pa_{\tau}b^{+}_{k}(\tau,Z;x,\x)=
-d_{Z}b^{+}_{k}(\tau,Z;x,\x)[\opbw({\bf C}(\tau,Z;x,\x))[Z]]
+ \widetilde{q}_{j}^{(2)}(\tau,Z;x,\x)\label{ord1lowoff2}\\
&b_k^{+}(0,z;x,\x)=0\nonumber\,,
\end{align}
with $k:=m''-\frac{j}{2}$
for some $\widetilde{q}_{j}^{(2)}$ depending on ${q}_{j}^{(2)}$ and
$a^{+}_{p}$ with $p<2m''-m-j/2$.
One can easily check that
\[
\begin{aligned}
a^{+}_{k}(\tau,Z;x,\x)&=-2\int_{0}^{\tau}
{\rm Re}
\Big( \ov{C(\tau,\Psi^{\s}(\Psi^{\tau})^{-1}(Z);x,\x)} 
b^{+}_{m''-\frac{j}{2}}(\tau,\Psi^{\s}(\Psi^{\tau})^{-1}(Z);x,\x)\Big)
d\s
\\&+\int_{0}^{\tau}
q_{j}^{(1)}(\s,\Psi^{\s}(\Psi^{\tau})^{-1}(Z);x,\x)d\s\in \Sigma\Gamma^{j}_1[r,N]\,,
\;\;\; k=2m''-m-\frac{j}{2}\,,
\end{aligned}
\]
\[
b^{+}_{k}(\tau,Z;x,\x)=\int_{0}^{\tau}
\widetilde{q}_{j}^{(2)}(\s,\Psi^{\s}(\Psi^{\tau})^{-1}(Z);x,\x)
d\s\in \Sigma\Gamma^{k}_1[r,N]\,,\;\;\; k=m''-\frac{j}{2}
\]
solve the problems \eqref{ord1lowoff1}, \eqref{ord1lowoff2}.
To summarize by iterating  the procedure above 
we construct  symbols $a^{+}, b^{+}$ 
as in \eqref{ansatzlowoff} 
such that the following holds.
Define \[
Q^{\tau}(Z)=\opbw(\ii E A^{+}(\tau,Z;x,\x))[Z]
\]
with $A^{+}$
of the form \eqref{dono7lowoff} 
with $a^{+}, b^{+}$ as in \eqref{ansatzlowoff}.
We define the matrix in \eqref{nuovoSimboAlowoff}
as 
$A^{+}(Z;x,\x):=A^{+}(1,Z;x,\x)$.
By the construction above we can note that the conditions 
in \eqref{nuovoSimboA2lowoff} are satisfied.
Moreover
the operator $Q^{\tau}(Z)$ solves the problem
\begin{equation*}
\left\{\begin{aligned}
&\pa_{\tau}Q^{\tau}(Z)=\big[\opbw({\bf C}(\tau,Z;x,\x))[Z], Q^{\tau}(Z)\big]+
\mathcal{G}_{\rho}(\tau;Z)\\
&Q^{0}(Z)=\ii E\opbw(A(Z;x,\x))[Z]\,,
\end{aligned}\right.
\end{equation*}
where
$\mathcal{G}_{\rho}(\tau;Z):=\opbw(G_{\rho}(\tau,Z;x,\x))[Z]$
for some matrix of symbols 
$G_{\rho}\in \Sigma\Gamma^{-\rho}_{1}[r,N]\otimes\mathcal{M}_2(\mathbb{C})$.
The fact that 
the difference $Q^{\tau}-P^{\tau}$ is a smoothing remainder 
in $\Sigma\mathcal{R}^{-\rho}_{1}[r,N]\otimes\mathcal{M}_2(\mathbb{C})$
can be proved following word by word the conclusion of the proof of Theorem
\ref{conjOrdMax}.
This concludes the proof of Theorem \ref{conjOrdMaxloweroff}.

\section{Symplectic conjugations}\label{simplettomorfo}
In this section we explain how, with mild modifications, the change of coordinates of Section \ref{regularization} can be adapted to our Hamiltonian contexts. In other words in the case that the equation \eqref{Nonlin1inizio} has the Hamiltonian structure introduced in Section \ref{intro-ham}, one may be interested in preserving such a structure for the equation in the new coordinates \eqref{nuovotutto}.

\noindent {\bf Para-differential Hamiltonian vector field.} Let $u$ be a $C^{\infty}$ function on the $\TTT$ with values in $\mathbb{C}$. We define
the following frequency  localization:
\begin{equation}\label{innercutoff}
S_{\xi} u:= \sum_{|k|\leq\varepsilon|\xi|} u_k e^{\ii kx}\,,
\end{equation}
for some $0<\e<1$.
Let $A(U;x,\x)\in \Sigma\Gamma^{m}_{1}[r,N]\otimes\mathcal{M}_{2}(\mathbb{C})$ 
be a matrix of symbols real-to-real and \emph{self-adjoint}, 
i.e. satisfying \eqref{prodotto} and \eqref{quanti801}
and consider the Hamiltonian function
\begin{equation}\label{HamFUNC}
H(U):=\frac{1}{2}\int_{\mathbb{T}}\opbw(A(S_{\x}U;x,\x))U\cdot \ov{U}dx\,,\qquad U=\vect{u}{\bar{u}}\,,
\end{equation}
where $S_{\x}U:=({S_{\x}u}, {S_{\x}\ov{u}})^{T}$\,.
We are going to show that the Hamiltonian vector field of $H$ in \eqref{HamFUNC}, which by definition is $\ii J \nabla H(U)$, equals $\ii E\opbw(A(U;x,\x))U$ modulo smoothing remainders.
We  need some preliminary lemmas.

\begin{lemma}[{\bf Differential of symbols}]\label{stimedifferent}
Fix $p\in \NNN$,  $m\in \RRR$ 
and  
consider a symbol $a\in \widetilde{\Gamma}^{m}_{p}$.
For $u_{1},\ldots,u_{p+1},h$ in $ C^{\infty}(\TTT;\mathbb{C}^{2})$
define 
a linear operator on $h$ as 
$$
F[h]=F(u_2,\ldots,u_{p+1})[h] := \opbw(a(h,\ldots,u_p;\x))[u_{p+1}].
$$
Then the operator $F^{*}$, the adjoint  with respect to the $L^2$ scalar product of $F$,  belongs to the class
$\widetilde{\RR}^{-\rho}_{p}$ for any $\rho>0$.
\end{lemma}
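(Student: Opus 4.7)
The plan is to exploit the spectral localization built into the Bony--Weyl cutoff $\chi_p$: when we view $a(h,u_2,\ldots,u_p;x,\xi)$ as a symbol whose first argument is the ``acted-on'' variable $h$, the cutoff forces the Fourier modes of $h$ that contribute to the symbol to be small compared to $\langle\xi\rangle$. By duality this becomes a smoothing mechanism on the output of $F^*$.

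More concretely, I would start by expanding $a$ as in Remark \ref{espansioneSSS},
\[
a(h,u_2,\ldots,u_p;x,\xi)=\sum_{k\in\ZZZ}e^{ikx}\!\!\!\sum_{\substack{\sigma_i\in\{\pm\}\\ \sum_{i=1}^p\sigma_i j_i=k}}\!\! a_{j_1,\ldots,j_p}^{\sigma_1\cdots\sigma_p}(\xi)\, h_{j_1}^{\sigma_1}\prod_{i=2}^{p} u_{i,j_i}^{\sigma_i},
\]
inserting this into the Weyl quantization \eqref{bambola202} with the truncation $\chi_p$ from Definition \ref{cutoff1}, and reading off the Fourier coefficients of $F[h]=\opbw(a(h,u_2,\ldots,u_p;\cdot))[u_{p+1}]$. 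Taking the $L^2$-adjoint via $(F[h],v)_{L^2}=(h,F^*[v])_{L^2}$ interchanges the roles: in the resulting formula for $\widehat{F^*(u_2,\ldots,u_{p+1})[v]}(j_1)$, the index $j_1$ is now the output Fourier mode, while $v$ plays the role that $u_{p+1}$ had in $F$. The crucial point is that the support condition coming from $\chi_p$ reads $\langle j_1\rangle\lesssim \langle\xi\rangle$ where, on the support of the cutoff and after using the zero-sum condition \eqref{pomosimbo2}, $\langle\xi\rangle$ is comparable to the maximum among the remaining Fourier labels $\langle n_2\rangle,\ldots,\langle n_{p+1}\rangle,\langle n_{p+2}\rangle$ of $u_2,\ldots,u_{p+1},v$.

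Combining this spectral gap with the polynomial bound \eqref{pomosimbo1} on $a_{j_1,\ldots,j_p}^{\sigma_1\cdots\sigma_p}(\xi)$ and a Schur-type summation over the inner frequencies $j_2,\ldots,j_p$ should yield, for \emph{any} $\rho>0$,
\[
\|\Pi_{j_1}F^*(\Pi_{n_2}u_2,\ldots,\Pi_{n_{p+1}}u_{p+1})[\Pi_{n_{p+2}}v]\|_{L^2}\leq C\,\frac{\max_{2}(\langle j_1\rangle,\langle n_2\rangle,\ldots,\langle n_{p+2}\rangle)^{\rho+\mu}}{\max(\langle j_1\rangle,\langle n_2\rangle,\ldots,\langle n_{p+2}\rangle)^{\rho}}\prod_{i=2}^{p+2}\|\Pi_{n_i}\cdot\|_{L^2},
\]
which is exactly estimate \eqref{omoresti1}; the gain is unbounded because $\langle j_1\rangle$ can be traded freely against the large frequencies through the cutoff. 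The zero-sum spectral condition \eqref{omoresti2} and the translation invariance \eqref{def:R-trin} for $F^*$ transfer immediately from \eqref{pomosimbo2} and \eqref{def:tr-in} for $a$.

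The main obstacle, and really the only non-trivial verification, is checking that on $\operatorname{supp}\chi_p$ the Weyl midpoint $\xi=(n_0+n_{p+2})/2$ is comparable to the maximum of the external frequencies involved. This is a standard para-product computation: combining the low-frequency support of $\chi_p$ with the total-momentum balance \eqref{pomosimbo2} forces the mode $j_1$ (the output of $F^*$) to lie in the para-product regime $|j_1|\ll \max(|n_2|,\ldots,|n_{p+2}|)$, after which all remaining bookkeeping is routine.
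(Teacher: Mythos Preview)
Your proposal is correct and follows essentially the same approach as the paper: both arguments compute the adjoint in Fourier, observe that the para-differential cutoff $\chi_p$ forces the output mode of $F^*$ (your $j_1$) to satisfy $|j_1|\lesssim\varepsilon\langle n_{p+1}\rangle$, and combine this with momentum conservation to conclude that $\max\sim\max_2$ among the input frequencies, whence the smoothing estimate \eqref{omoresti1} holds for every $\rho>0$. The paper phrases the adjoint computation via an auxiliary operator $L$ defined by $\int F[h]\cdot u_{p+2}=\int h\cdot L[u_{p+2}]$ rather than writing $F^*[v]$ directly, and is slightly less explicit about the Weyl midpoint than you are, but the substance is identical.
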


\begin{proof}
First we note that
\begin{equation*}
F[h]:=\sum_{n_0\in\ZZZ}\sum_{\s_1n_{1}+\s_2n_{2}+\ldots+\s_p n_{p}=\s_0n_0-\s_{p+1}n_{p+1}}
C_{n_1,\ldots,n_{p}}^{n_{p+1}}(\Pi_{n_1}h)
(\Pi_{n_2}u_2)\ldots (\Pi_{n_{p}}u_{p})(\Pi_{n_{p+1}}u_{p+1})
\end{equation*}
for some coefficients $C_{n_1,\ldots,n_{p}}^{n_{p+1}}\in \CCC$, $\s_i\in \{\pm\}$, 
which are different from zero only if
$$
\sum_{j=1}^{p}|n_{j}|\leq \e \langle n_{p+1}\rangle,\quad 0<\e\ll1.
$$
Then one has that
\begin{equation*}
\begin{aligned}
\Big(F[h],\ov{u_{p+2}}\Big)_{L^{2}}&=
\int_{\TTT}F[h]\cdot u_{p+2}=
\int_{\TTT}h\cdot L(u_{2},\ldots,u_{p+1})[u_{p+2}],
\end{aligned}
\end{equation*}
where the operator $L$ is defined as
\begin{equation*}
\begin{aligned}
L(u_{2}&,\ldots,u_{p+1})[u_{p+2}]:=\\&
\sum_{j\in\ZZZ}\sum_{\s_{2}n_{2}+\s_{3}n_{3}+\ldots+\s_{{p+1}}n_{p+1}-n_{p+2}=n_1}
C_{-j,\ldots,n_{p}}^{n_{p+1}}
(\Pi_{n_2}u_2)\ldots (\Pi_{n_{p}}u_{p})(\Pi_{n_{p+1}}u_{p+1})(\Pi_{-n_{p+2}}u_{p+2})\,.
\end{aligned}
\end{equation*}
In order to obtain the thesis it is sufficient to prove that $L$
belongs to the class of remainders, i.e. one has to show that \eqref{omoresti1} 
holds.
First we note that, using \eqref{pomosimbo1}, there exists $\mu>0$
such that 
\begin{equation}\label{fata}
\|\Pi_{j}L(\Pi_{n_{2}}u_{2},\ldots, \Pi_{n_{p+1}}u_{p+1})\Pi_{n_{p+2}}u_{p+2}\|_{L^{2}}\leq
C \max(\langle n_{2}\rangle,\ldots,\langle n_{p+1}\rangle,\langle n_{p+2}\rangle)^{\mu}\prod_{j=2}^{p+2}\|\Pi_{n_{j}}u_{j}\|_{L^{2}}.
\end{equation}
We remind that $L$ is different from zero only if the following two conditions
hold:
$$
\sum_{i=2}^{p}|n_{i}|+|j|\leq \e \langle n_{p+1}\rangle, \qquad 
\s_{2}n_{2}+\s_{3}n_{3}+\ldots+\s_{p+1}{n_{p+1}}-n_{p+2}=-\s_{1} n_1.
$$
Hence
there exist constants $c,C>0$ such that 
$$
c\max(\langle n_{2}\rangle,\ldots,\langle n_{p+1}\rangle,\langle n_{p+2}\rangle)\leq
{\rm max_{2}}(\langle n_{2}\rangle,\ldots,\langle n_{p+1}\rangle,\langle n_{p+2}\rangle)\leq C \max(\langle n_{2}\rangle,\ldots,\langle n_{p+1}\rangle,\langle n_{p+2}\rangle),
$$
which, together with $\eqref{fata}$, proves that $L$ satisfies the estimate \eqref{omoresti1} 
for any $\rho>0$.
\end{proof}

In the following lemma we prove that  a para-differential operator $\opbw(a(u;x,\xi))$ equals to the truncated one $\opbw(a(S_{\xi}u;x,\xi))$ modulo smoothing remainders.

\begin{lemma}\label{stimedifferent2}
Fix $p\in \NNN$, $m\in \RRR$, $r>0$, any $\rho>0$
and  
consider
 $a(u;\xi)\in \Gamma_{p}^{m}[r]$ and set 
 \[
 R(u):=\opbw(a(u;x,\xi)-a(S_{\xi}u;x,\xi))\,.
 \] 
 Then $R(u)$ belongs to the class $\mathcal{R}^{-\rho}_{p}[r]$.
\end{lemma}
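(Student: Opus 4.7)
The strategy parallels the proof of Lemma \ref{stimedifferent}: we exploit the cooperation between two frequency localizations---the "inner" cutoff $S_\xi$ acting on the argument of $a$, and the Bony--Weyl cutoff $\chi$ built into $\opbw$---to extract a smoothing gain. The plan is to write the difference $a(u;x,\xi)-a(S_\xi u;x,\xi)$ in integral form, use that the increment $(I-S_\xi)u$ has Fourier support only on modes $|k|>\varepsilon\langle\xi\rangle$, and convert this spectral gap into $\langle\xi\rangle^{-\gamma}$ decay of the resulting symbol, as many times as we wish.

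More precisely, first I would linearize by the fundamental theorem of calculus,
\[
a(u;x,\xi)-a(S_\xi u;x,\xi)=\int_0^1 c_t(u;x,\xi)\,dt,\qquad c_t(u;x,\xi):=(d_u a)(U_t;x,\xi)\big[(I-S_\xi)u\big],
\]
where $U_t:=S_\xi u+t(I-S_\xi)u$. Next I would exploit the spectral gap: since $(I-S_\xi)u$ is supported on Fourier modes $|k|>\varepsilon\langle\xi\rangle$, for any $\sigma\geq 0$ and any $\gamma\geq 0$,
\[
\|(I-S_\xi)u\|_{H^{\sigma}}\leq (\varepsilon\langle\xi\rangle)^{-\gamma}\|u\|_{H^{\sigma+\gamma}}.
\]
Inserting this bound into the symbolic estimate \eqref{maremma2} applied to $d_u a$ (that is, \eqref{maremma2} with $k=1$), I obtain, for any $\alpha,\gamma\geq 0$,
\[
|\partial_x^\alpha c_t(u;x,\xi)|\leq C\,\langle\xi\rangle^{m-\gamma}\Big[\|u\|_{H^{s_0}}^{p-2}\|u\|_{H^{s_0+\alpha}}\|u\|_{H^{s_0+\gamma}}+\|u\|_{H^{s_0}}^{p-1}\|u\|_{H^{s_0+\alpha+\gamma}}\Big],
\]
uniformly in $t\in[0,1]$. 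Thus $c_t$ satisfies the bounds of a symbol of order $m-\gamma$ with tame dependence on $u$.

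Given $\rho>0$, I would choose $\gamma=\rho+m$ and apply Proposition \ref{AzioneParaMet} (together with the tame refinement of Proposition \ref{azionepara}) to the symbol $c_t$, and then integrate in $t\in[0,1]$; this yields
\[
\|R(u)w\|_{H^{s+\rho}}\leq C\|u\|_{H^{s_0}}^{p-1}\|u\|_{H^s}\|w\|_{H^{s_0}}+C\|u\|_{H^{s_0}}^{p}\|w\|_{H^s},\qquad s\geq s_0,
\]
which is exactly the $k=0$ instance of \eqref{porto20}. For the higher differentials ($k\geq 1$), since $S_\xi$ is linear, differentiating the FTC identity in $u$ produces multilinear integrands of the same structural type---differentials of $a$ evaluated at $U_t$ paired against high-frequency inputs of the form $(I-S_\xi)(\cdot)$---to which the same argument applies verbatim, yielding the full estimates \eqref{porto20}. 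The translation invariance and homogeneous expansion are inherited directly from $a\in \Gamma^m_p[r]$.

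The main technical obstacle will be bookkeeping the interplay between the two cutoffs. The gain $\langle\xi\rangle^{-\gamma}$ extracted from $(I-S_\xi)u$ must be traded against the loss of Sobolev regularity in \eqref{maremma2}, while remaining compatible with the tame form of \eqref{porto20}; this forces $s_0$ in the definition of $\mathcal{R}^{-\rho}_p[r]$ to be taken sufficiently large depending on $\rho$ (as is permitted by Definition \ref{omosmoothing}). In addition, as in Lemma \ref{stimedifferent}, one must check that the Bony--Weyl cutoff $\chi(\eta,\xi)$ (restricting $|\eta|\leq\delta\langle\xi\rangle$) does not spoil the gain when estimating the differentials; this imposes the natural requirement $\varepsilon>\delta$ on the two cutoff parameters, which we may arrange once and for all.
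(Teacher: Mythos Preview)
Your proof is correct and follows essentially the same strategy as the paper: both linearize the difference via the fundamental theorem of calculus as $(d_u a)(U_t)[(I-S_\xi)u]$ and then extract the smoothing gain from the fact that $(I-S_\xi)u$ is Fourier-supported on modes $|j'|>\varepsilon\langle\xi\rangle$. The only difference is in packaging---the paper carries out the estimate by writing the triple Fourier sum over $(k,j,j')$ and redistributing the weight $\langle k\rangle^{s+\rho}$ between $v_j$ and $u_{j'}$, whereas you express the same gain as a drop in the order of the symbol $c_t$ and then invoke Proposition~\ref{AzioneParaMet}; both routes yield the same final bound.
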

\begin{proof}
Let $v\in H^{s}$,
expanding we obtain
\begin{equation*}
R(u)v=\sum_{k\in\ZZZ}e^{\ii kx}\sum_{j:|k-j|\leq\varepsilon|j|}\left(a(u;j)-a(S_j u;j)\right)_{k-j}v_j.
\end{equation*}
Setting $R_j u= u-S_j u$ there exists $\s\in(0,1)$ such that
$$\begin{aligned}
R(u)v &=\sum_{k\in\ZZZ}e^{\ii kx}\sum_{j:|k-j|\leq\varepsilon |j|}
\left(d_{u} a(u-\s R_ju)R_ju\right)_{k-j}v_j\\
&=\sum_{k\in\ZZZ}e^{\ii kx}\sum_{j:|k-j|\leq\varepsilon |j|}\sum_{j':|j'|>\varepsilon |j|}
\left(d_{u}a(u-\s R_ju)\right)_{k-j-j'}(R_{j}u)_{j'}v_j.
\end{aligned}$$
Therefore 
\begin{equation*}
\begin{aligned}
\|R(u)v\|_{H^{s+\rho}}^2 &\leq \sum_{k\in\ZZZ}\left(\sum_{j:|k-j|\leq\varepsilon |j|}
\sum_{j':|j'|>\varepsilon |j|}\left| \left(d_{u}a(u-\s R_ju)\right)_{k-j-j'}
(R_{j}u)_{j'}v_j\right|\langle k\rangle^{s+\rho}\right)^2\\
&\leq C \sum_{k\in\ZZZ}\left(\sum_{j:|k-j|\leq\varepsilon |j|}\sum_{j':|j'|>\varepsilon |j|}
\left| \left(d_{u}a(u-\s R_ju)\right)_{k-j-j'}\right| \left|(R_{j}u)_{j'}\right|
\langle j' \rangle^{\rho}\left| v_j\right|\langle j\rangle^{s}\right)^2\\
&\leq C \|d_{u}a(u-\s R_ju)\|_{H^{s_0}}^2\|{u}\|_{H^{s_0+\rho}}^2\|{v}\|_{H^{s}}^2,
\end{aligned}
\end{equation*}
for any $s_0>1/2$, $C>0$ depending on $s$.
This implies the bound \eqref{porto20}.
The bound on the differential in $u$ follows by \eqref{maremma2} 
on the symbol $a$ and reasoning similarly. This concludes the proof.
\end{proof}
We leave to the reader the proof of the following lemma which is
similar to the one of Lemma \ref{stimedifferent2}.
\begin{lemma}\label{stimedifferent2Multi}
Fix $p\in \NNN$, $m\in \RRR$, $r>0$, any $\rho>0$
and  
consider
 $a(u;\xi)\in \widetilde{\Gamma}_{p}^{m}$ and set 
 \[
 R(u):=\opbw(a(u;x,\xi)-a(S_{\xi}u;x,\xi))\,.
 \] 
 Then $R(u)$ belongs to the class $\widetilde{\mathcal{R}}^{-\rho}_{p}$.
\end{lemma}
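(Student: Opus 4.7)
\medskip

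\noindent\emph{Proof plan for Lemma \ref{stimedifferent2Multi}.}

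The plan is to mirror the argument used for the non-homogeneous version Lemma \ref{stimedifferent2}, replacing the fundamental theorem of calculus in $u$ with the multilinearity of $a\in \widetilde{\Gamma}^{m}_{p}$. First I would use the symmetry and $p$-linearity of $a$ to telescope
\[
a(u;x,\x)-a(S_{\x}u;x,\x)=\sum_{j=1}^{p} a\big(S_{\x}u,\ldots,S_{\x}u,\;u-S_{\x}u,\;u,\ldots,u;x,\x\big)\,,
\]
where $u-S_{\x}u$ sits in the $j$-th slot. It is then enough to show that, for each fixed $j$, the operator
\[
R_{j}(u_{1},\ldots,u_{p})[v]:=\opbw\big(a(u_{1},\ldots,u_{j-1},(I-S_{\x})u_{j},u_{j+1},\ldots,u_{p};x,\x)\big)[v]
\]
belongs to $\widetilde{\mathcal{R}}^{-\rho}_{p}$ for every $\rho>0$; polarizing and specializing to $u_{i}=u$ will give the result.

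Next I would expand $R_{j}$ in Fourier modes. By Definition \ref{quantizationtotale} the Bony--Weyl truncation $a_{\chi}$ forces the cut-off $\chi_{p}(\vec n,\x)$ to be supported on $|\vec n|\leq \delta\langle \x\rangle$ with $\delta$ small, so only frequencies with $\langle n_{i}\rangle\leq \delta\langle\x\rangle$ contribute. On the other hand the $j$-th input $(I-S_{\x})u_{j}$ only carries Fourier modes with $|n_{j}|>\varepsilon|\x|$ by \eqref{innercutoff}. Thus on the support of the integrand we have the two-sided bound
\[
\varepsilon|\x|<\langle n_{j}\rangle \leq \delta\langle \x\rangle\,,
\]
so that $\langle \x\rangle\sim \langle n_{j}\rangle$ there. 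This is the crucial gain: the $\langle \x\rangle^{m}$ growth of the symbol (cf.\ \eqref{pomosimbo1}) can be freely converted into $\langle n_{j}\rangle^{m}$, and we may further trade an arbitrarily large power $\langle n_{j}\rangle^{-\rho-m-\mu}$ against $\langle\x\rangle^{-\rho-m-\mu}$ to absorb the $\x$-growth and produce a regularizing factor $\langle \max(n_{1},\ldots,n_{p+1})\rangle^{-\rho}$.

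Having this, I would combine the pointwise bound \eqref{pomosimbo1} with standard action estimates on dyadic blocks (as in Proposition \ref{azionepara}, item $(i)$) to verify \eqref{omoresti1} for $R_{j}$ for every $\rho>0$, with a loss $\mu$ independent of $\rho$. The spectral-support property \eqref{omoresti2} follows directly from \eqref{pomosimbo2} applied to the polarized symbol, since the Bony--Weyl cut-off and the operators $\Pi_{n}$ respect the Fourier support structure. Finally the translation invariance \eqref{def:R-trin} is inherited from the translation invariance \eqref{def:tr-in} of $a$ (and of $S_{\x}$, which is a Fourier multiplier). The only routine point is bookkeeping in the conversion $\langle\x\rangle\leftrightarrow\langle n_{j}\rangle$ on the overlap region, and this is the main---and mild---technical step; everything else is an immediate adaptation of the non-homogeneous argument already written out for Lemma \ref{stimedifferent2}.
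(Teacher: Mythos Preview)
Your proposal is correct and follows precisely the route the paper indicates: the paper does not actually write out a proof of Lemma~\ref{stimedifferent2Multi} but simply states that it is ``similar to the one of Lemma~\ref{stimedifferent2}'', and your plan is exactly that adaptation, replacing the mean-value-theorem step $a(u)-a(S_j u)=d_u a(\cdots)[R_j u]$ by the multilinear telescoping. The crucial observation---that on the support of each telescoped term one has simultaneously $|n_j|>\varepsilon|\xi|$ from $(I-S_\xi)$ and $|\vec n|\leq \delta\langle\xi\rangle$ from the Bony cut-off, forcing $\langle n_j\rangle\sim\langle\xi\rangle\sim\langle n_{p+1}\rangle$ and hence $\max\sim\max_2$---is exactly right and yields \eqref{omoresti1} for every $\rho>0$ with a fixed loss $\mu'$ depending only on $m$ and the $\mu$ of \eqref{pomosimbo1}.
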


In the following proposition we give an explicit structure of the Hamiltonian vector field 
of the Hamiltonian function in \eqref{HamFUNC} as a sum of a para-differential operator plus a smoothing term.
\begin{proposition}\label{stimedifferent3}
Fix $p\in \NNN$,  $m\in \RRR$, $r>0$, any $\rho>0$
and  
let $A(U;x,\x)\in\Sigma\Gamma_{p}^{m}[r,N]\otimes\mathcal{M}_2(\CCC)$ of the form \eqref{prodotto}, 
consider the Hamiltonian function $H(U)$ in \eqref{HamFUNC}.
Then there exist  $R\in\Sigma\mathcal{R}^{-\rho}_{p}[r,N]\otimes\mathcal{M}_2(\CCC)$ such that 
\begin{equation*}
X_{H}(U)=\ii J\nabla H(U)
=\ii E \opbw(A(U;x,\x))U+R(U)U\,.
\end{equation*}
\end{proposition}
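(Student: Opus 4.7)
The plan is to compute $X_H(U) = \ii J\nabla H(U)$ directly via Wirtinger calculus, isolating a principal contribution of the form $\ii E \opbw(A(S_\xi U; x,\xi))U$ together with a remainder which one shows is smoothing, and then to remove the inner frequency cut-off $S_\xi$ in the principal term using Lemmas~\ref{stimedifferent2} and~\ref{stimedifferent2Multi}. Writing $H(U) = \tfrac12 (\opbw(A(S_\xi U;\cdot))U, U)_{{\bf H}^0}$ with $(V,W)_{{\bf H}^0} := \int V\cdot\bar W\,dx$, the chain rule gives
\begin{equation*}
dH(U)[V] = \tfrac12 (\opbw(A(S_\xi U;\cdot))V, U)_{{\bf H}^0} + \tfrac12 (\opbw(A(S_\xi U;\cdot))U, V)_{{\bf H}^0} + \tfrac12 (\opbw((d_U A)(S_\xi U;\cdot)[S_\xi V])U, U)_{{\bf H}^0}.
\end{equation*}
By the self-adjointness assumption on $A$ (conditions \eqref{quanti801}--\eqref{marieanto}), the first two summands on the right combine into $\mathrm{Re}\,(\opbw(A(S_\xi U;\cdot))U, V)_{{\bf H}^0}$, whose Wirtinger gradient is precisely $\opbw(A(S_\xi U;\cdot))U$. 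Since $A$ is real-to-real, this output is of the form $\vect{z}{\bar z}$ when $U = \vect{u}{\bar u}$, so that $\partial_u H$ and $\partial_{\bar u} H$ are complex conjugates and the assembly into $\ii J \nabla H$ reduces (via the identity $\ii J \vect{w}{\bar w} = \ii E \vect{\bar w}{w}$ read entrywise) to the principal term $\ii E \opbw(A(S_\xi U;x,\xi))U$.

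The key technical point is that the third, symbol-derivative summand
\begin{equation*}
V \longmapsto \tfrac12 (\opbw((d_U A)(S_\xi U;\cdot)[S_\xi V])U, U)_{{\bf H}^0}
\end{equation*}
defines, via $L^2$-duality, a matrix of operators $R(U)$ so that the functional equals $(R(U)U, V)_{{\bf H}^0}$; the claim is that $R \in \Sigma\mathcal{R}^{-\rho}_p[r,N]\otimes\mathcal{M}_2(\mathbb{C})$ for every $\rho > 0$. Expanding $A = \sum_{q=p}^{N-1} A_q + A_N$ as in \eqref{simbotot1}, each $q$-homogeneous piece contributes a $(q{+}2)$-linear functional of $(U,\ldots,U,V)$; passing to the $L^2$-adjoint swaps the role of $V$ from the ``low-frequency symbol slot'' (where it is wrapped by $S_\xi$) into the ``input slot'' of a para-differential operator whose remaining symbol entries are occupied by $U$. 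This is exactly the configuration analysed in Lemma~\ref{stimedifferent}, which produces a homogeneous remainder in $\widetilde{\mathcal{R}}^{-\rho}_q$ for every $\rho$. Applied to the non-homogeneous tail $A_N$ together with the bounds \eqref{maremma2} on $d_U A_N$, the same adjoint argument produces a non-homogeneous remainder in $\mathcal{R}^{-\rho}_N[r]$: here the crucial frequency gap, which allows one to upgrade the loss of $m$ derivatives (the original order of $\opbw(A)$) into a gain of $\rho$, is supplied precisely by the cut-off $S_\xi$ built into the definition of $H$, which ensures that $V$ has frequency $\leq \varepsilon \langle \xi\rangle$ while the outer $U$'s are untruncated. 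Finally, Lemmas~\ref{stimedifferent2Multi} and~\ref{stimedifferent2} allow us to replace $\opbw(A(S_\xi U;\cdot))U$ by $\opbw(A(U;\cdot))U$ modulo an additional smoothing remainder of the same class, completing the decomposition $X_H(U) = \ii E \opbw(A(U;x,\xi))U + R(U)U$.

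The main obstacle is the second step: the precise identification of the symbol-derivative contribution with an operator whose $L^2$-adjoint is covered by Lemma~\ref{stimedifferent}, together with the bookkeeping of the $2\times 2$ matrix structure (the four scalar symbol slots of $A$ with the conjugation relations dictated by \eqref{prodotto}) and the corresponding non-homogeneous analogue for $A_N$. The self-adjointness hypothesis \eqref{quanti801} is used twice: once to combine the first two terms in $dH$ into a real quadratic pairing, and once to guarantee that the resulting gradient preserves the real-to-real structure, allowing its Hamiltonian vector field to be written using $\ii E$ rather than $\ii J$.
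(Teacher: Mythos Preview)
Your proposal is correct and follows essentially the same route as the paper: differentiate $H$, isolate the principal term $\ii E\opbw(A(S_\xi U;\cdot))U$, recognize the symbol-derivative contribution as the adjoint of an operator of the type covered by Lemma~\ref{stimedifferent} (for the homogeneous pieces), and then remove the inner cut-off $S_\xi$ via Lemmas~\ref{stimedifferent2}--\ref{stimedifferent2Multi}. The only place where the paper is more explicit is the non-homogeneous tail $A_N$: rather than invoking ``the same adjoint argument'', it writes out the Fourier-side computation for $F^*[\bar u]$ (with $F[h]=\opbw(d_{\bar u}a(S_\xi U)S_\xi h)u$) and uses the constraint $|j'|>\varepsilon|j|$ coming from $S_\xi$ to trade the loss of $m$ derivatives for an arbitrary gain of $\rho$---exactly the mechanism you identify.
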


\begin{proof}
By using \eqref{HamFUNC} and \eqref{prodotto} we have the following
\begin{equation*}
\begin{aligned}
H(U)=H(u,\bar{u})=&\frac12\int_{\TTT}\Big(\opbw(a(S_{\xi}(u,\bar{u});x,\xi))u\cdot\bar{u}+\opbw(b(S_{\xi}(u,\bar{u});x,\xi))\bar{u}\cdot\bar{u}\\
+&\opbw(\bar{b}(S_{\xi}(u,\bar{u});x,-\xi))u\cdot\bar{u}+\opbw(\bar{a}(S_{\xi}(u,\bar{u});x,-\xi))\bar{u}\cdot u\Big)dx.
\end{aligned}
\end{equation*}
We prove the theorem for the first addendum, one can deal with the others in the same way and deduce the thesis. Since $a(U;x,\x)\in\Sigma\Gamma_{p}^{m}[r,N]$ we may decompose it as a sum of multilinear terms in $\widetilde{\Gamma}_{j}^{m}$ and a non homogeneous one in $\Gamma_N^m[r]$. We have 
\begin{equation*}
\partial_{\bar{u}}\Big(\int_{\TTT}\opbw(a(S_{\xi}(u,\bar{u});x,\xi))u\cdot\bar{u}dx\Big)=\opbw(a(S_{\xi}(u,\bar{u});x,\xi))u+F^*[\bar{u}],
\end{equation*}
where $F^*$ is the adjoint  of the linear operator $F:h\mapsto \opbw(d_{\bar{u}}a(S_{\xi}(u,\bar{u}))h)u$. Thanks to Lemma \ref{stimedifferent2} it is enough to prove that the addendum $F^*[\bar{u}]$ is a smoothing remainder in the class $\Sigma\mathcal{R}^{-\rho}_p[r,N]$. The multilinear counterpart of this claim is true thanks to Lemma \ref{stimedifferent}. In the following we prove that the non homogeneous part of $F^*[\bar{u}]$ is a smoothing remainder. With abuse of notation let $a$ be in $\Gamma_{N}^{m}[r]$, we have
\begin{equation*}
\begin{aligned}
\int_{\TTT}\opbw \big([\partial_u a(S_ju)]S_jh\big)u\cdot\bar{u}dx &= \sum_{k\in\ZZZ}\left(\sum_{j\in\ZZZ}
(\partial_{u}a(S_j u)S_j h)_{k-j}u_j\right)(\bar{u}_{-k})\\
	&=\sum_{k\in\ZZZ}\sum_{|k-j|\leq\varepsilon|j|}
	\sum_{|j'|\leq\varepsilon|j|}(\partial_{u}a(S_j u))_{k-j-j'}( h)_{j'}u_j(\bar{u})_{-k}\\
											    &=\sum_{j'\in\ZZZ}h_{j'}\sum_{\varepsilon |j|>|j'|}\sum_{|k-j|\leq\varepsilon|j|} (\partial_{u}a(S_ju))_{k-j-j'}u_j(\bar{u})_{-k}\\
											    &:=\sum_{j'}h_{j'} (f)_{-j'}.
\end{aligned}\end{equation*}
Therefore we can continue as follows
\begin{equation*}\begin{aligned}
\norm{f}_{H^{s+\rho}}^2&\leq \sum_{m\in\ZZZ}\left(\sum_{|k-j|\leq\varepsilon|j|}\sum_{|m|\leq\varepsilon|j|}|(d_{u}a(S_j u))_{k-j+m}||u_j||(\bar{u})_{-k}|\langle m\rangle^{s+\rho}\right)^2\\
				   &	\leq \sum_{m\in\ZZZ}\left(\sum_{|k-j|\leq\varepsilon|j|}\sum_{j\in\ZZZ}|(d_{u}a(S_j u))_{k-j+m}||u_j|\langle j\rangle^s|(\bar{u})_{-k}|\langle m\rangle^{\rho}\right)^2\\
				   &	\leq \sum_{m\in\ZZZ}\left(\sum_{k\in\ZZZ}\sum_{j\in\ZZZ}|(d_{u}a(S_j u))_{k-j+m}| |u_j|\langle j\rangle^s |(\bar{u}_{-k})|\langle k\rangle^{\rho}\right)^2\\
				   &\leq\norm{d_{u}a(S_{\xi}u)}_{H^{s_0}}\norm{u}_{H^s}\norm{\bar{u}}_{H^{s_0+\rho}}.
\end{aligned}\end{equation*}
As done in Lemma \ref{stimedifferent2} one concludes that $f$ is a smoothing remainder
hence the result follows.
\end{proof}

\noindent{\bf Symplectic Flows.}
In this section we study the symplectic corrections of the flows considered in Section
\ref{sec:nonlinflows}.
Consider a symbol $f(\tau,U;x,\x)$ as in \eqref{sim1}, \eqref{sim2}, \eqref{sim3},
\eqref{sim4}
a symbol $g(\tau,U;x,\x)\in \Sigma\Gamma_1^{m}[r,N]$, $m\leq 0$, and assume that
\begin{equation*}
f(\tau,U;x,\x)=\ov{f(\tau,U;x,\x)}\,, \qquad
g(\tau,U;x,-\x)=g(\tau,U;x,\x)\,.
\end{equation*}
We also assume that the symbols $f(\tau,U;x,\x)$, $g(\tau,U;x,\x)$ satisfy 
the estimates 
\eqref{pomosimbo1}-\eqref{maremma2} uniformly in $\tau\in[0,1]$.
Let us define the operator 
\begin{equation}\label{geneHam}
\mathcal{G}(U)[\cdot]:= \opbw\left(
\begin{matrix}
f(\tau,U;x,\x) & g(\tau,U;x,\x)\vspace{0.2em}\\
\ov{g(\tau,U;x,-\x)} & \ov{f(\tau,U;x,-\x)}
\end{matrix}
\right)[\cdot]\,,
\end{equation}
and the Hamiltonian function $G(U) : H^{s}(\mathbb{T};\mathbb{C})\cap\mathcal{U}\to \mathbb{R}$ defined as
\begin{equation}\label{geneHamiltonian}
G(U):=\int_{\mathbb{T}} \mathcal{G}(S_{\x}U)[U]\cdot\bar{U} dx\,,
\end{equation}
where $S_{\x}$ is defined in \eqref{innercutoff}.
We have the following result.

\begin{proposition}{\bf (Symplectic flow).}\label{sympflows}
Let us define $\Psi_{G}^{\tau}$, $\tau\in[0,1]$, 
the flow generated by the Hamiltonian
$G$ in \eqref{geneHamiltonian}. We have that
$U(\tau):=\Psi_{G}^{\tau}(U_0)$\,, 
$U_0\in H^{s}(\mathbb{T};\mathbb{C}^{2})\cap\mathcal{U}$, solves the problem
\begin{equation}\label{eq:HamPsi}
\pa_{\tau}U=\ii J\nabla G(U)=\ii E\mathcal{G}(U)[U]+\mathcal{R}(U)U\,,\qquad U(0)=U_0\,,
\end{equation}
where $\mathcal{R}\in 
\Sigma\mathcal{R}^{-\rho}_1[r,N]\otimes\mathcal{M}_2(\mathbb{C})$.
The map $\Psi^{\tau}_{G}$ satisfies estimates 
like \eqref{stimaflusso1}-\eqref{stimaflusso3} and it is \emph{symplectic}, 
i.e. 
\begin{equation*}
\lambda\big(\Psi^{\tau}_{G}(U), \Psi^{\tau}_{G}(V)\big)
=\lambda\big(U,V\big)\,,\qquad \forall U\,,\;V\,\in\mathcal{U}\,,
\end{equation*}
where $\lambda(\cdot,\cdot)$ is the symplectic form in \eqref{symform}.
\end{proposition}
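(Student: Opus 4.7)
The plan is to separate the statement into three independent tasks: (i) rewriting $\ii J\nabla G(U)$ in the para-differential form \eqref{eq:HamPsi}; (ii) establishing well-posedness and Sobolev estimates for the resulting Cauchy problem; (iii) deducing symplecticity from the general theory of Hamiltonian flows.

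First, I would verify that the matrix of symbols
\[
A(\tau,U;x,\x):=\left(\begin{matrix} f(\tau,U;x,\x) & g(\tau,U;x,\x)\\ \ov{g(\tau,U;x,-\x)} & \ov{f(\tau,U;x,-\x)}\end{matrix}\right)
\]
fits the hypotheses of Proposition \ref{stimedifferent3}. The real-to-real structure \eqref{prodotto} is built into the definition of $\mathcal{G}(U)$, and the self-adjointness conditions \eqref{quanti801} read $f=\ov{f}$ and $g(x,-\x)=g(x,\x)$, which are precisely the symmetry assumptions we are given on $f$ and $g$. Consequently $G(U)=\tfrac12\int \opbw(A(S_\x U;x,\x))U\cdot\ov U\,dx$ (up to the inessential factor $1/2$) defines a \emph{real-valued} Hamiltonian, and Proposition \ref{stimedifferent3} yields a smoothing operator $\mathcal{R}\in\Sigma\mathcal{R}^{-\rho}_{1}[r,N]\otimes\mathcal{M}_2(\C)$ such that
\[
\ii J\nabla G(U)=\ii E\,\opbw(A(U;x,\x))U+\mathcal{R}(U)U=\ii E\,\mathcal{G}(U)[U]+\mathcal{R}(U)U,
\]
which is exactly the right-hand side of \eqref{eq:HamPsi}.

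Next, for well-posedness of \eqref{eq:HamPsi}, I would treat the equation as a perturbation of the problem \eqref{flusso}: the principal part $\ii E\mathcal{G}(U)[U]$ has the shape already analyzed in Theorem \ref{flussononlin} (its diagonal entries match \eqref{sim1}--\eqref{sim4}, and the real-to-real structure reduces the $2\times 2$ system to a single equation on $u$ with its conjugate on the second component), while the smoothing correction $\mathcal{R}(U)U$ is a bounded map gaining $\rho$ derivatives. The iterative scheme of Lemma \ref{esistenzaAN} goes through verbatim after adding $\mathcal{R}(U_{n-1})U_n$ to $\mathcal{A}_n$: the extra term contributes only lower-order estimates in the energy inequalities and in the contraction bound \eqref{eqdiff3}, because $\|\mathcal{R}(U)V\|_{H^s}\lesssim_s \|U\|_{H^{s_0}}\|V\|_{H^s}$ with arbitrary gain of derivatives. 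Hence the flow $\Psi^\tau_G$ exists on $[0,1]$, has the form $\Psi^\tau_G(U_0)=U_0+M(\tau;U_0)U_0$ with $M\in\Sigma\mathcal{M}_1[r,N]$, and satisfies the analogues of \eqref{stimaflusso1}--\eqref{stimaflusso3}.

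Finally, symplecticity is a consequence of the Hamiltonian nature of the vector field. Since $X_G(U)=\ii J\nabla G(U)$ is by construction the Hamiltonian vector field of $G$ with respect to the symplectic form $\lambda$ in \eqref{symform}, one computes, for any $U,V\in\mathcal{U}$ and setting $U(\tau)=\Psi^\tau_G(U)$, $V(\tau)=\Psi^\tau_G(V)$,
\[
\frac{d}{d\tau}\lambda(U(\tau),V(\tau))=\lambda(X_G(U(\tau)),V(\tau))+\lambda(U(\tau),X_G(V(\tau)))=d^2G(U(\tau))[V(\tau),V(\tau)]\cdot\text{(skew)},
\]
which vanishes by the symmetry of the Hessian $d^2G$ and the antisymmetry of $\lambda$ (equivalently: $\mathcal{L}_{X_G}\lambda=d(i_{X_G}\lambda)=d(dG)=0$). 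Integrating from $0$ to $\tau$ gives $\lambda(\Psi^\tau_G(U),\Psi^\tau_G(V))=\lambda(U,V)$. The only mild obstacle in the whole argument is checking that Proposition \ref{stimedifferent3} applies to the \emph{time-dependent} symbols $f(\tau,\cdot),g(\tau,\cdot)$ uniformly in $\tau\in[0,1]$; this is immediate since the estimates \eqref{pomosimbo1}-\eqref{maremma2} on $f,g$ are assumed to hold uniformly in $\tau$, and Proposition \ref{stimedifferent3} is applied pointwise in the time variable.
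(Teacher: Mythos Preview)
Your proof follows the same three-step structure as the paper's (very brief) argument: verify self-adjointness and apply Proposition~\ref{stimedifferent3} to obtain \eqref{eq:HamPsi}, invoke Theorem~\ref{flussononlin} (with the harmless smoothing perturbation) for well-posedness and the estimates \eqref{stimaflusso1}--\eqref{stimaflusso3}, and deduce symplecticity from the Hamiltonian nature of the flow. The paper does exactly this, only more tersely.

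One caveat on step (iii): your displayed computation of $\tfrac{d}{d\tau}\lambda(U(\tau),V(\tau))$ for two \emph{distinct} trajectories $U(\tau)=\Psi^\tau_G(U)$, $V(\tau)=\Psi^\tau_G(V)$ does not actually vanish. Using $\lambda(X_G(W),Z)=dG(W)[Z]$ one gets $dG(U(\tau))[V(\tau)]-dG(V(\tau))[U(\tau)]$, which is nonzero for nonlinear $G$; no Hessian appears. The proposition's literal statement $\lambda(\Psi^\tau_G(U),\Psi^\tau_G(V))=\lambda(U,V)$ is in fact imprecise for the same reason --- the intended (and true) assertion is that the differential preserves $\lambda$, i.e.\ $\lambda\big(d\Psi^\tau_G(U)[h],d\Psi^\tau_G(U)[k]\big)=\lambda(h,k)$. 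Your parenthetical Cartan-formula argument $\mathcal{L}_{X_G}\lambda=d(\iota_{X_G}\lambda)=d(dG)=0$ is precisely the correct proof of this correct statement, so the substance is there; just drop the erroneous displayed line.
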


\begin{proof}
Notice that the operator $\mathcal{G}(U)$ in \eqref{geneHam}
is self-adjoint according to Definition \ref{selfi}.
Hence formula \eqref{eq:HamPsi} follows by Proposition \ref{stimedifferent3}.
The flow $\Psi^{\tau}_{G}$
is well-posed by Theorem \ref{flussononlin}. Moreover it is symplectic
since it is the flow on an Hamiltonian vector field.
\end{proof}


\begin{proof}[{\bf Proof of Theorem \ref{thm:main2}} ]
In order to prove Theorem \ref{thm:main} we apply
iteratively Theorems \ref{conjOrdMaxhighoff}, 
\ref{conjOrdMaxloweroff},
\ref{conjOrdMax} and 
\ref{conjOrdMaxredu}.
The maps provided by  such results
are constructed as flows of para-differential vector fields.
In order to obtain a symplectic correction to
such maps one can reason as follows.
Instead of considering 
the flows generated by
 \eqref{flussoord0}, \eqref{Nonlin2off}, \eqref{Nonlin2} 
and \eqref{Nonlin2redu}
one can consider their symplectic corrections
generated by Hamiltonian functions of the form
\eqref{geneHamiltonian}. By Proposition \ref{sympflows}
the maps constructed in this way are symplectic. Moreover the 
generators of such flows  have the form 
\eqref{eq:HamPsi}. In other words
the generator of the symplectic correction
to the maps  
\eqref{flussoord0}, \eqref{Nonlin2off}, \eqref{Nonlin2} 
and \eqref{Nonlin2redu} has the same form up to a smoothing
remainder. Therefore the results of Theorems 
 \ref{conjOrdMaxhighoff}, 
\ref{conjOrdMaxloweroff},
\ref{conjOrdMax} and 
\ref{conjOrdMaxredu} hold true 
with some very mild modifications of their proofs.

In order to prove the self-adjointness of the operator $\mathcal{L}(Z)$
is it sufficient to show that the symbol $\mathfrak{m}(Z;\x)$ is real valued.
We need to enter the proofs of the results in section \ref{secconjconj}.

First of all, since $X(U)$ in \eqref{Nonlin1inizio} is Hamiltonian, then
the matrix $A(U;x,\x)$ in \eqref{operatoreEgor1inizio} is self-adjoint, i.e. it satisfies 
\eqref{quanti801}.
In giving the proof of Theorem \ref{conjOrdMaxhighoff} with this additional assumption
on $A(U;x,\x)$ it is evident that the matrix $A^{+}(Z;x,\x)$ in \eqref{nuovoSimboAhighoff}
still satisfies \eqref{quanti801}.
This can be deduced from equations \eqref{ansatzhighoff}-\eqref{newSimboDDhighoff2}
and \eqref{ordMaxredu2highoff1}, \eqref{ordMaxredu2highoff2}.
This gives the self-adjointness at the highest order $m$.
One can check this property at lower orders by using equations \eqref{sistema01bishigh},
\eqref{sistema02bishigh}.
At this point rename $A^{+}=A$ and apply Theorem \ref{conjOrdMaxloweroff}.
In the same way one deduces the self-adjointness of the matrix $A^{+}$ in \eqref{nuovoSimboAlowoff} by equations
\eqref{ansatzlowoff}-\eqref{newSimboDDlowoff2}
and \eqref{ordMaxredu2lowofftris}, \eqref{ordMaxredu2lowoff},
at the highest order. Similarly for the lower orders.

One has to repeat this check in the proofs of Theorems \ref{conjOrdMax}, 
\ref{conjOrdMaxredu}.
\end{proof}

\bigskip 
\section{Poincar\'e-Birkhoff normal forms}\label{sec:BNF} 
The proof of the Theorem \ref{thm:mainBNF} 
 is based on  
 the iterative procedure which is performed in the following subsections.

\subsection{Abstract conjugation results}
In this section we provide a conjugation result 
of a vector field of the form \eqref{NonlinBNF1}
under the flow generated either by Fourier multipliers or by smoothing reminders.
We shall  we consider the  system 

\begin{equation}\label{Pollini}
\dot{U}=\mathcal{X}(U)=\ii E\Omega U+\ii E\opbw\big(\mathfrak{N}(U;\x)\big)U
+\mathcal{R}(U)[U]\,,\qquad U(0)=U_0\,,
\end{equation}
where $\Omega$ is in \eqref{omegone}, 
$\mathcal{R}\in\Sigma\mathcal{R}^{-\rho}_1[r,N]
\otimes\mathcal{M}_2(\mathbb{C})$, 
$\mathfrak{N}(Z;\x)\in \Sigma\Gamma^{m}_{1}[r,N]\otimes\mathcal{M}_2(\mathbb{C})$
is independent of $x\in \mathbb{T}$ and has the form 
\begin{equation*}
\mathfrak{N}(U;\x):=\sm{\mathfrak{n}(U;\x)}{0}{0}{{\mathfrak{n}(U;-\x)}}\,,
\end{equation*}
for some $\mathfrak{n}\in \Sigma\Gamma^{m}_{1}[r,N]$
real valued.

\subsubsection{Flows of Fourier multipliers}
Fix $p\in \mathbb{N}$ and 
consider  
the matrix of symbols 
\begin{equation}\label{Pollini333}
\begin{aligned}
&{ B}_{p}(U;\x):=\left(\begin{matrix}
 b_p(U;\x) & 0 \\ 0 &  {b_p(U;-\x)} 
\end{matrix}
\right)\,,
\qquad  b_p(U;\x)\in  \widetilde{\Gamma}^{m}_p\,,
\end{aligned}
\end{equation}
where $b_p$ is real, independent of $x\in \mathbb{T}$
and admits the expansion \eqref{espandoFousimbo}.
Consider the Hamiltonian function (recall \eqref{innercutoff})
\begin{equation}\label{Pollini3}
\mathcal{G}(U):=\frac{1}{2}\int_{\mathbb{T}}\opbw(B_p(S_{\x}U;\x))U\cdot\ov{U}dx\,.
\end{equation}
We are using the abuse of notation $b_p(U;\xi):=b_p(U,\ldots,U;\xi)$.
Let $\tilde{\mathcal{A}}_p^{\tau}$ be the solution of
\begin{equation}\label{Pollini4}
\left\{
\begin{aligned}
&\pa_{\tau}\tilde{\mathcal{A}}_p^{\tau}(U)
=X_{\mathcal{G}}(\tilde{\mathcal{A}}_p^{\tau}(U))\\
&\tilde{\mathcal{A}}_p^{0}(U)=U\,,
\end{aligned}\right.
\end{equation}
where $X_{\mathcal{G}}$ is the Hamiltonian vector field of \eqref{Pollini3}
and has the form (see Proposition \ref{stimedifferent3} and Lemma \ref{stimedifferent2Multi})
\begin{equation}\label{deadwood5}
X_{\mathcal{G}}(U)=\ii E \opbw(B_p(U;\x))U+ \mathcal{B}_p(U)U\,,\qquad 
\mathcal{B}_p\in \widetilde{\mathcal{R}}^{-\rho}_p\,.
\end{equation}

We prove the following.

\begin{proposition}\label{prop:Pollini}
For $r>0$ small enough the following holds.
Setting  (recall \eqref{nuovotutto}, \eqref{Pollini})
\begin{equation}\label{Pollini6}
W:=\tilde{\mathcal{A}}_{p}(U):=\tilde{\mathcal{A}}_{p}^{1}(U)
\,,
\qquad
\mathcal{X}^{+}(W):=P^{\tau}(W)_{|\tau=1}
:=d\tilde{\mathcal{A}}_{p}^{\tau}\big(\tilde{\mathcal{A}}_{p}^{-\tau}(W)\big)
\big[ \mathcal{X}(\tilde{\mathcal{A}}_{p}^{-\tau}(W))\big]_{|\tau=1}\,,
\end{equation}
we have that
\begin{equation}\label{Pollini7}
\left\{
\begin{aligned}
&\dot{W}=\mathcal{X}^{+}(W):=\ii E \Omega W+
\ii E\opbw\big( \mathfrak{N}^{+}(W;\x)\big)[W]
+\mathcal{R}^{+}(W)[W]\\
&W(0)=\mathcal{A}(Z_0)
\end{aligned}\right.
\end{equation}
where $\mathcal{R}^{+}\in\Sigma\mathcal{R}^{-\rho}_1[r,N]
\otimes\mathcal{M}_2(\mathbb{C})$ and
${\mathfrak{N}}^{+}\in 
\Sigma\Gamma^{m}_1[r,N]
\otimes\mathcal{M}_2(\mathbb{C})$
is independent of $x\in \mathbb{T}$, 
real valued  and has the form
\begin{equation}\label{Pollini8}
\begin{aligned}
&\mathfrak{N}^{+}(W;\x):=\left(
\begin{matrix}
\mathfrak{n}^{+}(W;\x) & 0\\
0 & {\mathfrak{n}^{+}(W;-\x) }
\end{matrix}
\right)\,,\\
&
\mathfrak{n}^{+}(W;\x)=\mathfrak{n}(\tilde{\mathcal{A}}_{p}^{-1}(W);\x)+
\int_{0}^{1} (d_{Z}b_p)\big(\tilde{\mathcal{A}}_{p}^{\s}
\tilde{\mathcal{A}}_{p}^{-1}(W);\x\big)
[P^{\s}\big(
\tilde{\mathcal{A}}_{p}^{\s}\tilde{\mathcal{A}}_{p}^{-1}(W)
\big) ]d\s\,.
\end{aligned}
\end{equation}
Moreover, for any $s\geq s_0$, the maps $\tilde{\mathcal{A}}^{\pm1}_{p}$ are symplectic 
and satisfy
\begin{equation}\label{Pollini9}
\|\tilde{\mathcal{A}}_{p}^{\pm1}(U)\|_{H^{s}}\leq \|U\|_{H^{s}}(1+C\|U\|^{p}_{H^{s_0}})\,,
\end{equation}
for some constant $C>0$ depending  on $s$.
\end{proposition}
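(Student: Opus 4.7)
The plan is to follow the non-linear Heisenberg scheme used throughout Section~\ref{secconjconj}, exploiting the fact that the generator of the change of coordinates is Hamiltonian and its symbol $B_p$ is independent of $x\in\mathbb{T}$. First I would use Proposition~\ref{sympflows} with $f=b_p$ (which is real and $x$-independent, hence admissible as in \eqref{sim4}) and $g=0$ to conclude that the Cauchy problem \eqref{Pollini4} is globally well-posed on a ball of $H^s$ for $r>0$ small, that $\tilde{\mathcal{A}}_p^\tau$ is symplectic, and that it satisfies bounds of the form \eqref{stimaflusso1}, which yield the estimate \eqref{Pollini9}. In particular $X_{\mathcal{G}}$ has the structure \eqref{deadwood5}.

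Once the flow is in hand, the push-forward $P^\tau(W)$ in \eqref{Pollini6} satisfies the non-linear Heisenberg equation
\begin{equation*}
\partial_\tau P^\tau(W) = [X_{\mathcal{G}}(W), P^\tau(W)], \qquad P^0(W) = \mathcal{X}(W),
\end{equation*}
exactly as in \eqref{Ego}. I would make the ansatz $P^\tau(W) = \ii E\Omega W + \ii E\opbw(\mathfrak{N}^\tau(W;\xi))W + R^\tau(W)[W]$ with $\mathfrak{N}^\tau$ diagonal, real valued, independent of $x$ and in $\Sigma\Gamma^m_1[r,N]\otimes\mathcal{M}_2(\mathbb{C})$, and $R^\tau\in\Sigma\mathcal{R}^{-\rho}_1[r,N]\otimes\mathcal{M}_2(\mathbb{C})$. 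Expanding the commutator as in \eqref{commuExp100}--\eqref{commuExpF} and applying Propositions \ref{teoremadicomposizione} and \ref{composizioniTOTALI}, the crucial simplification is that the composition terms $B_p\#_\rho \mathfrak{N}^\tau-\mathfrak{N}^\tau\#_\rho B_p$, $[B_p,\Omega]$, $[\mathfrak{N}^\tau,\Omega]$ are purely smoothing: every term of order $k\geq 1$ in the expansion \eqref{espansione2} contains at least one $x$-derivative, which annihilates each $x$-independent factor. The surviving contributions at the level of diagonal Fourier-multiplier symbols reduce to the transport-type equation
\begin{equation*}
\partial_\tau \mathfrak{n}^\tau(W;\xi) = (d_W b_p)(W;\xi)[P^\tau(W)] - (d_W \mathfrak{n}^\tau)(W;\xi)[X_{\mathcal{G}}(W)],\qquad \mathfrak{n}^0 = \mathfrak{n},
\end{equation*}
while all other terms can be absorbed into a new smoothing remainder in $\Sigma\mathcal{R}^{-\rho}_1[r,N]\otimes\mathcal{M}_2(\mathbb{C})$.

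Next I would solve this transport equation by the method of characteristics along $\tilde{\mathcal{A}}_p^\tau$: setting $h(\tau):=\mathfrak{n}^\tau(\tilde{\mathcal{A}}_p^\tau(U);\xi)$, the chain rule yields $\dot h(\tau)=(d_W b_p)(\tilde{\mathcal{A}}_p^\tau(U);\xi)[P^\tau(\tilde{\mathcal{A}}_p^\tau(U))]$ with $h(0)=\mathfrak{n}(U;\xi)$. Integrating on $[0,1]$ and substituting $U=\tilde{\mathcal{A}}_p^{-1}(W)$ produces exactly \eqref{Pollini8}. That the result remains $x$-independent follows from the observation that the Fourier expansion \eqref{espandoFousimbo} of $b_p$, which encodes the momentum-conservation constraint $\sum_i\sigma_i j_i=0$ characterizing $x$-independent multilinear symbols, is preserved under $U$-differentiations regardless of the test vector plugged in. Hence $\mathfrak{n}^+\in\Sigma\Gamma^m_1[r,N]$ is $x$-independent and real.

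Finally, as in the conclusion of the proof of Theorem~\ref{conjOrdMax}, I would verify that the approximate operator $Q^\tau(W):=\ii E\Omega W+\ii E\opbw(\mathfrak{N}^\tau(W;\xi))W$ constructed above solves the Heisenberg equation up to a forcing in $\Sigma\mathcal{R}^{-\rho}_1[r,N]\otimes\mathcal{M}_2(\mathbb{C})$, and then a Duhamel argument based on \eqref{stimaflusso3} applied to the linearized flow of $X_{\mathcal{G}}$ shows that $P^\tau-Q^\tau$ itself belongs to that class. This yields \eqref{Pollini7}. The main obstacle will be step three: bookkeeping the various summands of the non-linear commutator to obtain a closed equation for $\mathfrak{n}^\tau$ without reintroducing $x$-dependence and without losing derivatives, which ultimately relies on the stability of the class of $x$-independent symbols under compositions, $U$-differentiations, and evaluations along the symplectic flow.
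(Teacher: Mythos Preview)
Your proposal is correct and follows essentially the same route as the paper: derive the Heisenberg equation for $P^\tau$, exploit the $x$-independence of $B_p$ and $\mathfrak{N}^\tau$ so that the linear-commutator contributions of the paradifferential parts vanish, reduce to the transport equation for $\mathfrak{n}^\tau$, solve it by characteristics along $\tilde{\mathcal{A}}_p^\tau$ to obtain \eqref{Pollini8}, and conclude by the Duhamel argument of the end of the proof of Theorem~\ref{conjOrdMax}. One minor sharpening: since both $B_p$ and $\mathfrak{N}^\tau$ are Fourier multipliers, the terms $B_p\#_\rho\mathfrak{N}^\tau-\mathfrak{N}^\tau\#_\rho B_p$ are not merely smoothing but identically zero, which is why the paper's expansion \eqref{dead11}--\eqref{dead14} contains no such contribution at all.
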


\begin{proof}
The estimates \eqref{Pollini9} on the flow of \eqref{Pollini4}
follow by Theorem \ref{flussononlin}.
The vector field $\mathcal{X}$ in  \eqref{Pollini} in the new 
coordinates $W$ has the form \eqref{Pollini6}.
In particular 
 $P^{\tau}$ satisfies,  for $\tau\in[0,1]$, (recall \eqref{Pollini}, \eqref{Pollini4})
\begin{equation}\label{deadwood10}
\left\{
\begin{aligned}
&\pa_{\tau}P^{\tau}(W)=\big[X_{\mathcal{G}}(W) , P^{\tau}(W)\big]\\
&P^{0}(W)=\mathcal{X}(W)\,.
\end{aligned}\right.
\end{equation}
Moreover, by the remarks under Definition \ref{smoothoperatormaps} and by Theorem \ref{flussononlin},
we note that 
\begin{equation}\label{deadwood11}
P^{\tau}(W)=\ii E \Omega W+M_1(\tau;W)[W]\,,\;\;\;
M_1\in \Sigma\mathcal{M}_{1}[r,N]\otimes\mathcal{M}_2(\mathbb{C})\,,
\end{equation}
with estimate uniform in $\tau\in[0,1]$.
Actually we shall prove that
\begin{equation*}
P^{\tau}(W)
=\ii E\Omega W+\ii E\opbw(\mathfrak{N}^{+}(\tau,W;\x))[W]+\mathcal{R}^{+}(\tau,W)[W]\,,
\end{equation*}
with $\mathfrak{N}^{+}$, $\mathcal{R}^{+}$ as follows 
\begin{equation}\label{deadwood13}
\mathfrak{N}^{+}(\tau,W;\x):=\left(\begin{matrix}
\mathfrak{n}^{+}(\tau,W;\x) & 0 \\
0 &{\mathfrak{n}^{+}(\tau,W;-\x)}
\end{matrix}\right)\,, \qquad
\mathcal{R}^{+}\in \Sigma\mathcal{R}^{-\rho}_1[r,N]\otimes\mathcal{M}_{2}(\mathbb{C})\,.
\end{equation}
In particular 
we make the ansatz that $\mathfrak{n}^{+}(\tau,W;\x)$
is real valued and $x$-independent.
By expanding the non linear commutator as in \eqref{commuExpA}-\eqref{commuExpF},
and using that $\mathfrak{N}^{+}, B_p$ are independent of $x$ we deduce
(recall \eqref{deadwood5})
\begin{align}
\big[X_{\mathcal{G}}(W) ,P^{\tau}(W) \big]&=
\opbw\big(\ii E(d_{W}B_p)(W;\x)[P^{\tau}(W)]\big)[W]\label{dead11}\\
&-\opbw\big(\ii E(d_{W}\mathfrak{M}^{+})(\tau,W;\x)[X_{\mathcal{G}}(W)]\big)[W]
\label{dead12}\\
&
+\opbw(\ii EB_p(\tau,W;\x))\big[\mathcal{R}^{+}(\tau,W)[W]\big]
+ \mathcal{B}_p(W)\mathcal{R}^{+}(\tau,W)[W]\label{dead13}
\\&
-\mathcal{R}^{+}(\tau,W)[X_{\mathcal{G}}(W) ]
-(d_{W}\mathcal{R}^{+})(\tau,W)[X_{\mathcal{G}}(W) ]\,.\label{dead14}
\end{align}
Notice that, by \eqref{deadwood11} (recall also \eqref{Pollini333}), 
we have
\begin{equation}\label{deadwood19}
(\pa_{t}b_p)(W;\x):=(d_{W}b_p)(W;\x)\big)[P^{\tau}(W)]\in \Sigma\Gamma^{m}_p[r,N]\,.
\end{equation}
\noindent
{\bf Order $m$.}
By 
 \eqref{dead11}-\eqref{dead14}
we have that, at the highest order, the equation \eqref{deadwood10}
reads
\begin{equation}\label{deadwood14}
\left\{\begin{aligned}
&\pa_{\tau}\mathfrak{n}^{+}(\tau,W;\x)=(\pa_{t}b_p)(W;\x)
-d_{W}\mathfrak{n}^{+}(\tau,W;\x)[X_{\mathcal{G}}(W)]\\
&\mathfrak{n}^{+}(0,W;\x)=\mathfrak{n}(W;\x)\,.
\end{aligned}\right.
\end{equation}
Notice that, if $\mathfrak{n}^{+}$ solves \eqref{deadwood14},  
the function 
\begin{equation*}
g(\tau)=\mathfrak{n}^{+}(\tau,W(\tau);\x)
\end{equation*}
where $W(\tau)=\tilde{\mathcal{A}}^{\tau}_p(U)$ in \eqref{Pollini4}, satisfies
\begin{equation*}
\begin{aligned}
&\pa_{\tau}g(\tau)=(\pa_{t}b_p)(W(\tau);\x)\,, \qquad g(0):=\mathfrak{n}(Z;\x)\,,
\qquad \Rightarrow \qquad
g(\tau)=g(0)+\int_{0}^{\tau}(\pa_{t}b_p)(W(\s);\x)d\s\,.
\end{aligned}
\end{equation*}
Therefore
\begin{equation}\label{deadwood18}
\mathfrak{n}^{+}(\tau,W;\x):=\mathfrak{n}(\tilde{\mathcal{A}}_{p}^{-\tau}(W);\x)
+\int_{0}^{\tau}(\pa_{t}b_p)(\tilde{\mathcal{A}}_{p}^{\s}
\tilde{\mathcal{A}}_{p}^{-\tau}(W);\x)d\s\,,
\end{equation}
solves the problem \eqref{deadwood14}. 
Reasoning as in Section \ref{sec:PreEgorov} 
(see for instance the proof of Lemma \ref{iterative})
using the estimates 
of Theorem \ref{flussononlin},
one deduces that 
$\mathfrak{n}^{+}(\tau,W;\x)$ belongs to $\Sigma\Gamma^{m}_1[r,N]$.
By \eqref{deadwood18}, \eqref{deadwood19}
one gets the \eqref{Pollini8}.

Define \[
Q^{\tau}(W)=\ii E\Omega W+\opbw(\ii E \mathfrak{N}^{+}(\tau,W;\x))[W]
\]
with $\mathfrak{N}^{+}$
of the form \eqref{deadwood13}.
Then 
the operator $Q^{\tau}(W)$ solves the problem (recall \eqref{Pollini4},
\eqref{dead11}-\eqref{dead14}, \eqref{deadwood14})
\begin{equation}\label{deadwood20}
\left\{\begin{aligned}
&\pa_{\tau}Q^{\tau}(W)=\big[ X_{\mathcal{G}}(W),
Q^{\tau}(W)\big]\\
&Q^{0}(W)=\ii E\Omega W+\ii E\opbw(\mathfrak{N}(W;\x))[W]\,.
\end{aligned}\right.
\end{equation}
It remains to prove that 
the difference $Q^{\tau}-P^{\tau}$ is a smoothing remainder in
in $\Sigma\mathcal{R}^{-\rho}_{1}[r,N]\otimes\mathcal{M}_2(\mathbb{C})$.
First of all we write
\begin{equation*}
Q^{\tau}(W)-P^{\tau}(W)=V^{\tau}\circ (\tilde{\mathcal{A}}_p^{\tau})^{-1}(W)\,,
\end{equation*}
where, recalling \eqref{Pollini6},
\[
V^{\tau}(U):=Q^{\tau}\circ\tilde{\mathcal{A}}_p^{\tau}(U)
-(d_{U}\tilde{\mathcal{A}}_p^{\tau})(U)[X(U)]\,.
\]
Reasoning as for the operator in  \eqref{strike5}
we deduce (see \eqref{Pollini})
\begin{equation}\label{deadwood22}
\left\{\begin{aligned}
&\pa_{\tau}V^{\tau}(U)=(d_{W(\tau)}X_{\mathcal{G}})(\tilde{\mathcal{A}}_p^{\tau})\big[ V^{\tau}\big]\,,\\
&V^{0}(U)=-\mathcal{R}(U)U\,.
\end{aligned}\right.
\end{equation}
Reasoning as done for \eqref{probTotale}
one can check  
$V^{\tau}\in \Sigma\mathcal{R}^{-\rho}_{1}[r,N]\otimes\mathcal{M}_2(\mathbb{C})$.
This implies that 
$\mathcal{R}^{+}(\tau,W)[W]:=Q^{\tau}(W)-P^{\tau}(W)$ 
(again using Theorem \ref{flussononlin})
belongs to
$
\Sigma\mathcal{R}^{-\rho}_{1}[r,N]\otimes\mathcal{M}_2(\mathbb{C})$.
Hence we have the \eqref{Pollini7}.
\end{proof}

\subsubsection{Flows of smoothing remainders}
In this section we consider
 the flow
$\mathcal{A}_p^{\tau}$ of 
\begin{equation}\label{Pollini44bis}
\left\{
\begin{aligned}
&\pa_{\tau}\mathcal{A}_p^{\tau}(U)=\mathcal{Q}^{(p)}_{aux}(\mathcal{A}_p^{\tau}(U))\mathcal{A}_p^{\tau}(U)\\
&\mathcal{A}_p^{0}(U)=U\,,
\end{aligned}\right.
\end{equation}
where $\mathcal{Q}^{(p)}_{aux}\in \widetilde{\mathcal{R}}^{-\rho}_{p}$, this problem
 is well-posed by standard theory of ODEs on Banach spaces.
Assume also that 
$\mathcal{Q}_{aux}^{(p)}(U)U$
 is Hamiltonian, i.e. there is a map $\mathtt{Q}^{(p)}\in 
 \widetilde{\mathcal{M}}_p\otimes\mathcal{M}_2(\CCC)$
 such that (recall \eqref{X_H})
 \begin{equation}\label{samba1}
 \mathcal{Q}_{aux}^{(p)}(U)U=X_{\mathcal{C}}(U)\,,\qquad \mathcal{C}(U)
 :=\int_{\mathbb{T}}\mathtt{Q}^{(p)}(U)U\cdot\ov{U}dx\,.
 \end{equation}
Recalling the system \eqref{Pollini} we define
\begin{equation}\label{pollo22bis}
W:={\mathcal{A}}_{p}(U):={\mathcal{A}}_{p}^{1}(U)\,,
\qquad
\mathcal{X}^{+}(W):=P^{\tau}(W)_{|\tau=1}
:=d{\mathcal{A}}_{p}^{\tau}\big({\mathcal{A}}_{p}^{-\tau}(W)\big)
\big[ {\mathcal{X}}({\mathcal{A}}_{p}^{-\tau}(W))\big]_{|\tau=1}\,.
\end{equation}

We prove the following.

\begin{proposition}\label{prop:PolliniSmooth}
For $r>0$ small enough the following holds.
The function $W$ in \eqref{pollo22bis} satisfies 
\begin{equation}\label{Pollini7bis}
\left\{
\begin{aligned}
&\dot{W}=\mathcal{X}^{+}(W)
:=\ii E \Omega W+
\ii E\opbw\big( \mathfrak{N}^{+}(W;\x)\big)[W]
+\mathcal{R}^{+}(W)[W]\\
&W(0)=\mathcal{A}(Z_0)
\end{aligned}\right.
\end{equation}
where $\mathcal{R}^{+}\in\Sigma\mathcal{R}^{-\rho}_1[r,N]
\otimes\mathcal{M}_2(\mathbb{C})$ and
${\mathfrak{N}}^{+}\in 
\Sigma\Gamma^{m}_1[r,N]
\otimes\mathcal{M}_2(\mathbb{C})$
is independent of $x\in \mathbb{T}$, 
real valued  and it has the form
\begin{equation}\label{Pollini8bis}
\begin{aligned}
&\mathfrak{N}^{+}(W;\x):=\left(
\begin{matrix}
\mathfrak{n}^{+}(W;\x) & 0\\
0 & {\mathfrak{n}^{+}(W;-\x) }
\end{matrix}
\right)\,,
\qquad 
\mathfrak{n}^{+}(\tau,W;\x)
:={\mathfrak{n}}(\mathcal{A}_{p}^{-\tau}(W);\x)\,.
\end{aligned}
\end{equation}
Moreover, for any $s\geq s_0$, the maps ${\mathcal{A}}^{\pm1}_{p}$ are symplectic 
and satisfy
\begin{equation}\label{Pollini9bis}
\|{\mathcal{A}}_{p}^{\pm1}(U)\|_{H^{s}}\leq \|U\|_{H^{s}}(1+C\|U\|^{p}_{H^{s_0}})\,,
\end{equation}
for some constant $C>0$ depending  on $s$.
\end{proposition}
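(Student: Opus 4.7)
The approach closely parallels the proof of Proposition \ref{prop:Pollini}, but with a crucial simplification: the generator $X_{\mathcal{C}}(U)=\mathcal{Q}^{(p)}_{aux}(U)U$ of the flow \eqref{Pollini44bis} is a \emph{purely smoothing} operator in $\widetilde{\mathcal{R}}^{-\rho}_p$ rather than a paradifferential one. First, since $\mathcal{Q}^{(p)}_{aux}(U)$ maps $H^s$ into $H^{s+\rho}$ and vanishes at order $p$ in $U$, standard Banach space ODE theory shows that the flow \eqref{Pollini44bis} is well posed in $H^s$ for any $s\geq s_0$, and the tame estimates \eqref{Pollini9bis} follow by Gronwall. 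The symplectic nature of $\mathcal{A}_p^{\pm 1}$ is immediate from the Hamiltonian structure \eqref{samba1}. In the new coordinates $W=\mathcal{A}_p(U)$, the pulled-back vector field $P^{\tau}(W)$ in \eqref{pollo22bis} solves the Heisenberg equation
\begin{equation*}
\pa_\tau P^{\tau}(W)=[X_{\mathcal{C}}(W),P^{\tau}(W)]\,,\qquad P^{0}(W)=\mathcal{X}(W)\,,
\end{equation*}
and by the remarks under Definition \ref{smoothoperatormaps} we have $P^{\tau}(W)=\ii E\Omega W+M_1(\tau;W)W$ with $M_1\in \Sigma\mathcal{M}_1[r,N]\otimes\mathcal{M}_2(\CCC)$.

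Next, I make the ansatz $P^{\tau}(W)=\ii E\Omega W+\ii E\opbw(\mathfrak{N}^+(\tau,W;\x))W+\mathcal{R}^+(\tau,W)W$, with $\mathfrak{N}^+$ diagonal, $x$-independent and real valued, and $\mathcal{R}^+\in\Sigma\mathcal{R}^{-\rho}_1[r,N]\otimes\mathcal{M}_2(\CCC)$. Expanding the non-linear commutator as in \eqref{commuExpA}-\eqref{commuExpF}, I examine which terms contribute to the paradifferential symbol at order $m$. Because $X_{\mathcal{C}}$ is $(-\rho)$-smoothing, Proposition \ref{composizioniTOTALI} (items $(i)$-$(iii)$) shows that \emph{all} contributions of the form $X_{\mathcal{C}}\circ(\text{bounded order term})$, $(\text{bounded order term})\circ X_{\mathcal{C}}$, $d_W X_{\mathcal{C}}[\cdots]$ and $d_W\mathcal{R}^+[X_{\mathcal{C}}]$ land in the smoothing class $\Sigma\mathcal{R}^{-\rho}_1[r,N]\otimes\mathcal{M}_2(\CCC)$. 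The only surviving symbol-level contribution is
\begin{equation*}
-\ii E\opbw\bigl(d_W\mathfrak{N}^+(\tau,W;\x)[X_{\mathcal{C}}(W)]\bigr)W\,,
\end{equation*}
since no paradifferential piece comes from the generator. Projecting onto the diagonal entry gives the purely \emph{transport} equation
\begin{equation*}
\pa_\tau\mathfrak{n}^+(\tau,W;\x)=-d_W\mathfrak{n}^+(\tau,W;\x)[X_{\mathcal{C}}(W)]\,,\qquad \mathfrak{n}^+(0,W;\x)=\mathfrak{n}(W;\x)\,.
\end{equation*}
Setting $g(\tau):=\mathfrak{n}^+(\tau,\mathcal{A}_p^{\tau}(U);\x)$ and using \eqref{Pollini44bis} gives $\pa_\tau g\equiv 0$, so $\mathfrak{n}^+(\tau,W;\x)=\mathfrak{n}(\mathcal{A}_p^{-\tau}(W);\x)$, which is precisely \eqref{Pollini8bis}. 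That $\mathfrak{n}^+\in\Sigma\Gamma^m_1[r,N]$ then follows by a Faà di Bruno argument analogous to the ones carried out in Section \ref{sec:PreEgorov}, using the estimates \eqref{Pollini9bis} on the inverse flow.

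It remains to show that $\mathcal{R}^+\in\Sigma\mathcal{R}^{-\rho}_1[r,N]\otimes\mathcal{M}_2(\CCC)$. Defining $Q^{\tau}(W):=\ii E\Omega W+\ii E\opbw(\mathfrak{N}^+(\tau,W;\x))W$, the choice of $\mathfrak{n}^+$ ensures that $Q^{\tau}$ solves $\pa_\tau Q^{\tau}=[X_{\mathcal{C}},Q^{\tau}]+\mathcal{G}_{\rho}(\tau;W)$ for some $\mathcal{G}_{\rho}$ in $\Sigma\mathcal{R}^{-\rho}_1[r,N]\otimes\mathcal{M}_2(\CCC)$, with $Q^{0}-P^{0}=-\mathcal{R}(W)W$. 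Writing $Q^{\tau}-P^{\tau}=V^{\tau}\circ(\mathcal{A}_p^{\tau})^{-1}$ with $V^{\tau}(U)=Q^{\tau}\circ\mathcal{A}_p^{\tau}(U)-d_U\mathcal{A}_p^{\tau}(U)[\mathcal{X}(U)]$, one checks that $V^{\tau}$ satisfies a linear inhomogeneous equation of the form \eqref{probTotale}-\eqref{deadwood22}, with smoothing initial datum $-\mathcal{R}(U)U$ and smoothing forcing $\mathcal{G}_{\rho}\circ\mathcal{A}_p^{\tau}$. A Duhamel argument identical to the one at the end of the proof of Proposition \ref{prop:Pollini}, combined with the boundedness of the flow generated by $d_W X_{\mathcal{C}}$ (which is smoothing, hence \emph{a fortiori} bounded on $H^s$), then gives $V^{\tau}\in\Sigma\mathcal{R}^{-\rho}_1[r,N]\otimes\mathcal{M}_2(\CCC)$. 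The main technical obstacle in the whole argument is precisely this bookkeeping of smoothing orders in the commutator expansion: one has to verify, term by term, that every piece outside the transport equation either is canceled by the ansatz or lies in $\Sigma\mathcal{R}^{-\rho}_1[r,N]\otimes\mathcal{M}_2(\CCC)$ with estimates uniform in $\tau\in[0,1]$; this is where the fact that the generator is $(-\rho)$-smoothing is essential, since otherwise Proposition \ref{composizioniTOTALI} would not suffice to conclude.
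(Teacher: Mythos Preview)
Your proposal is correct and follows essentially the same approach as the paper's proof: both expand the non-linear commutator, identify that only the term $-\opbw((d_W\mathfrak{N}^+)[X_{\mathcal{C}}])W$ contributes at the symbol level (the rest being smoothing because the generator is), solve the resulting transport equation by the characteristics $g(\tau)=\mathfrak{n}^+(\tau,\mathcal{A}_p^{\tau}(U);\x)\equiv\text{const}$, and then handle the remainder via the $V^{\tau}$/Duhamel argument as in the proof of Theorem \ref{conjOrdMax}. Your inclusion of the forcing $\mathcal{G}_\rho\circ\mathcal{A}_p^{\tau}$ in the $V^{\tau}$ equation is in fact slightly more explicit than the paper's \eqref{cannon22bis}, which omits it but refers back to \eqref{probTotale} where it appears.
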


\begin{proof}
The field 
 $P^{\tau}$  in \eqref{pollo22bis} satisfies,  for $\tau\in[0,1]$, 
 (recall \eqref{Pollini})
\begin{equation}\label{cannon1bis}
\left\{
\begin{aligned}
&\pa_{\tau}P^{\tau}(W)=\big[\mathcal{Q}^{(p)}_{aux}(W)W , P^{\tau}(W)\big]\\
&P^{0}(W)={\mathcal{X}}(W)\,.
\end{aligned}\right.
\end{equation}
Moreover, by the remarks under Definition \ref{smoothoperatormaps} and by Theorem \ref{flussononlin},
we note that 
\begin{equation}\label{cannon2bis}
P^{\tau}(W)=\ii E \Omega W+M_1(\tau;W)[W]\,,\;\;
M_1\in \Sigma\mathcal{M}_{1}[r,N]\otimes\mathcal{M}_2(\mathbb{C})\,,
\end{equation}
with estimate uniform in $\tau\in[0,1]$.
Actually we shall prove that
\begin{equation*}
P^{\tau}(W)
=\ii E\Omega W+
\ii E\opbw(\mathfrak{N}^{+}(\tau,W;\x))[W]+\mathcal{R}^{+}(\tau,W)[W]\,,
\end{equation*}
with $\mathfrak{N}^{+}\in \Sigma\Gamma_1^{m}[r,N]\otimes\mathcal{M}_2(\CCC)$, 
$\mathcal{Q}^{+}\in\Sigma\mathcal{R}^{-\rho}_1[r,N]\otimes\mathcal{M}_{2}(\mathbb{C})$
and 
\begin{equation}\label{cannon4bis}
\mathfrak{N}^{+}(\tau,W;\x):=\left(\begin{matrix}
\mathfrak{n}^{+}(\tau,W;\x) & 0 \\
0 &{\mathfrak{n}^{+}(\tau,W;-\x)}
\end{matrix}\right)\,.
\end{equation}
Furthermore 
we make the ansatz that $\mathfrak{n}^{+}(\tau,W;\x)$
is real valued and $x$-independent.
By expanding the non linear commutator as in \eqref{commuExpA}-\eqref{commuExpF},
we deduce
\begin{align}
\big[\mathcal{Q}^{(p)}_{aux}(W)W ,P^{\tau}(W) \big]&=
\mathcal{Q}^{(p)}_{aux}(W)\big[P^{\tau}(W)\big]+
(d_{W}\mathcal{Q}^{(p)}_{aux}(W)W)\big[P^{\tau}(W)\big]\label{cannon5bis}\\
&-
\mathcal{R}^{+}(W)\big[\mathcal{Q}^{(p)}_{aux}(W)W\big]-
(d_{W}\mathcal{R}^{+}(W)W)\big[\mathcal{Q}^{(p)}_{aux}(W)W\big]
\label{cannon6bis}\\
&-\opbw\big( \mathfrak{N}^{+}(\tau,W;\x)\big)
\big[\mathcal{Q}^{(p)}_{aux}(W)W\big]\label{cannon7bis}\\
&
-\opbw\big( (d_{W}\mathfrak{N}^{+})(\tau,W;\x)
\big[\mathcal{Q}^{(p)}_{aux}(W)W \big]  \big)[W]\,.
\label{cannon8bis}
\end{align}
Notice that, by \eqref{cannon2bis}, 
we have
\begin{equation}\label{cannon9bis}
\frac{d}{dt}\big(\mathcal{Q}_{aux}^{(p)}(W)W\big)
=\mathcal{Q}^{(p)}_{aux}(W)\big[P^{\tau}(W)\big]+(d_{W}\mathcal{Q}^{(p)}_{aux}(W)W)\big[P^{\tau}(W)\big]\in
\Sigma\mathcal{R}^{-\rho}_{1}[r,N]\,.
\end{equation}
\noindent
{\bf Order $m$.}
By 
 \eqref{cannon5bis}-\eqref{cannon9bis}
we have that, at the highest order, the equation \eqref{cannon1bis}
reads (recall \eqref{Pollini}, \eqref{cannon4bis})
\begin{equation}\label{cannon10bis}
\left\{\begin{aligned}
&\pa_{\tau}\mathfrak{n}^{+}(\tau,W;\x)=
-d_{Z}\mathfrak{n}^{+}(\tau,W;\x)[\mathcal{Q}^{(p)}_{aux}(W)W]\\
&\mathfrak{n}^{+}(0,W;\x)={\mathfrak{n}}(W;\x)\,.
\end{aligned}\right.
\end{equation}
Notice that, if $\mathfrak{n}^{+}$ solves \eqref{cannon10bis},  
the function 
$g(\tau)=\mathfrak{n}^{+}(\tau,W(\tau);\x)$ is constant 
along the solution of \eqref{Pollini44bis}. 
Therefore
\begin{equation}\label{cannon11bis}
\mathfrak{n}^{+}(\tau,W;\x)
:={\mathfrak{n}}(\mathcal{A}_{p}^{-\tau}(W);\x)\,,
\end{equation}
solves the problem \eqref{cannon10bis}. 
Reasoning as in Section \ref{sec:PreEgorov} 
(see for instance the proof of Lemma \ref{iterative})
using the estimates 
of Theorem \ref{flussononlin},
one deduces that 
$\mathfrak{n}^{+}(\tau,W;\x)$ belongs to $\Sigma\Gamma^{m}_1[r,N]$.
This proves the \eqref{Pollini8bis}.

Let us now define 
\[
Q^{\tau}(W)=\ii E\Omega W+\opbw(\ii E \mathfrak{N}^{+}(\tau,W;\x))[W]
\]
with $\mathfrak{N}^{+}$
of the form \eqref{cannon4bis}, \eqref{cannon11bis}.
Then 
the operator $Q^{\tau}(W)$ solves the problem (recall \eqref{Pollini44bis},
\eqref{cannon5bis}-\eqref{cannon8bis}, \eqref{cannon10bis})
\begin{equation}\label{cannon20bis}
\left\{\begin{aligned}
&\pa_{\tau}Q^{\tau}(W)=\big[ \mathcal{Q}_{aux}^{(p)}(W)W,
Q^{\tau}(W)\big]+\mathcal{G}_{\rho}(W)W\\
&Q^{0}(W)=\ii E\Omega W+\ii E\opbw(\mathfrak{N}(W;\x))[W]\,,
\end{aligned}\right.
\end{equation}
for some $\mathcal{G}_{\rho}\in \Sigma\mathcal{R}^{-\rho}_{1}[r,N]\otimes\mathcal{M}_2(\mathbb{C})$.
It remains to prove that 
the difference $Q^{\tau}-P^{\tau}$ is a smoothing remainder in
in $\Sigma\mathcal{R}^{-\rho}_{1}[r,N]\otimes\mathcal{M}_2(\mathbb{C})$.
First of all we write
\begin{equation*}
Q^{\tau}(W)-P^{\tau}(W)=V^{\tau}\circ (\mathcal{A}_p^{\tau})^{-1}(W)\,,
\end{equation*}
where, recalling \eqref{pollo22bis},
\[
V^{\tau}(U):=Q^{\tau}\circ\mathcal{A}_1^{\tau}(U)-(d_{U}\mathcal{A}_1^{\tau})(U)[X(U)]\,.
\]
Reasoning as for the operator in  \eqref{strike5}
we deduce (see \eqref{Pollini})
\begin{equation}\label{cannon22bis}
\left\{\begin{aligned}
&\pa_{\tau}V^{\tau}(Z)=(d_{Z}\mathtt{Q}_{aux})(\mathcal{A}_{p}^{\tau})\big[ V^{\tau}\big]\,,
\qquad \mathtt{Q}_{aux}(Z):=\mathcal{Q}_{aux}^{(p)}(Z)Z\,,
\\
&V^{0}(Z)=-{\mathcal{R}}(Z)Z\,.
\end{aligned}\right.
\end{equation}
Reasoning as done for \eqref{probTotale}
one can check   that
$V^{\tau}\in \Sigma\mathcal{R}^{-\rho}_{1}[r,N]\otimes\mathcal{M}_2(\mathbb{C})$.
This implies that  the remainder
$\mathcal{R}^{+}(\tau,W)[W]:=Q^{\tau}(W)-P^{\tau}(W)$ 
(again using Theorem \ref{flussononlin})
belongs to 
$ 
\Sigma\mathcal{R}^{-\rho}_{1}[r,N]\otimes\mathcal{M}_2(\mathbb{C})$.
Hence, by the discussion above, we have obtained the \eqref{Pollini7bis}.
\end{proof}

\subsection{Elimination of quadratic terms}\label{quadtermBNF}
Consider the system \eqref{NonlinBNF1}. By hypothesis we have
(see also \eqref{opfinale})
\begin{equation*}
\begin{aligned}
\mathfrak{M}(Z;\x)&=\mathfrak{M}_1(Z;\x)+\mathfrak{M}_{\geq 2}(Z;\x)\,,\quad
\mathfrak{M}_{1}(Z;\x)\in \widetilde{\Gamma}^{m}_{1}\otimes\mathcal{M}_2(\CCC)\,,\quad
\mathfrak{M}_{\geq 2}(Z;\x)\in \Sigma\Gamma^{m}_{2}[r,N]\otimes\mathcal{M}_2(\CCC)\,, \\
\mathcal{Q}(Z)&=\mathcal{Q}_{1}(Z)+\mathcal{Q}_{\geq2}(Z)\,,\quad
\mathcal{Q}_{1}(Z)\in \widetilde{\mathcal{R}}^{-\rho}_{1}\otimes\mathcal{M}_{2}(\CCC) 
\quad
\mathcal{Q}_{\geq2}(Z)\in \Sigma\mathcal{R}^{-\rho}_{2}[r,N]\otimes\mathcal{M}_{2}(\CCC)\,.
\end{aligned}
\end{equation*}
The aim of this section is to eliminate the quadratic vector field
\begin{equation*}
\mathcal{Y}_{2}(Z):=\opbw(\mathfrak{M}_1(Z;\x))Z+\mathcal{Q}_1(Z)Z\,,
\end{equation*}
in \eqref{NonlinBNF1}.
This will be done into two steps.
We shall first 
reduce the symbol $\mathfrak{M}_1(Z;x)$ (see section \ref{sec:stepsimbo1}) 
and then the smoothing operator 
$\mathcal{Q}_1(Z)$ (see section \ref{sec:stepsmooth1}).
Before giving the proof we make a comment on the symbol 
$\mathfrak{m}_1(Z;\x)$.
By the expansion \eqref{espandoFousimbo}  (with $p=1$)
and the fact that
$\mathfrak{m}_1(Z;\x)$
is independent of $x\in \mathbb{T}$
we can write
\begin{equation}\label{pollo}
\mathfrak{m}_1(U;\x)=
\sum_{j\in \mathbb{Z}}e^{\ii jx}
u_{j}(\mathfrak{m}_1)^{+}_{j}(\x)+e^{-\ii j x }\ov{u_{j}} (\mathfrak{m}_1)_{j}^{-}(\x)
=
u_0  (\mathfrak{m}_1)_{0}^{+}(\x)+\ov{u_0}  (\mathfrak{m}_1)_{0}^{-}(\x)\,.
\end{equation}
Notice that $\mathfrak{m}_1(U;\x)$ depends \emph{only}
on the average $u_0$ of $u$. 
We shall assume that  (see \eqref{omegone}) $\omega_0\neq0$.

%
%
%
%
%

\subsubsection{Elimination of linear symbols}\label{sec:stepsimbo1}

Notice that the system \eqref{NonlinBNF1} has the form 
\eqref{Pollini}
with $\mathfrak{N}\rightsquigarrow \mathfrak{M}$, 
$\mathcal{R}\rightsquigarrow\mathcal{Q}$,
$U\rightsquigarrow Z$.
Consider now 
a real valued, independent of $x\in \mathbb{T}$ symbol 
$b_{1}\in \widetilde{\Gamma}^{m}_{1}$
and the matrix $B_{1}(Z;\x)$
having the form \eqref{Pollini333} with $p=1$.
Let $\widetilde{\mathcal{A}}_{1}^{\tau}$ be the flow of 
\eqref{Pollini4} 
generated by 
the Hamiltonian 
\eqref{Pollini3} with $p=1$.
By Proposition \ref{prop:Pollini}
we have that the variable
\begin{equation*}
W:=\widetilde{\mathcal{A}}_{1}(Z):=\widetilde{\mathcal{A}}_{1}^{1}(Z)\,,
\qquad
\widetilde{\mathcal{Y}}^{(1)}(W):=P^{\tau}(W)_{|\tau=1}
:=d\widetilde{\mathcal{A}}_{1}^{\tau}\big(\widetilde{\mathcal{A}}_{1}^{-\tau}(W)\big)
\big[ \mathcal{Y}(\widetilde{\mathcal{A}}_{1}^{-\tau}(W))\big]_{|\tau=1}\,,
\end{equation*}
satisfies (recall \eqref{NonlinBNF1}, \eqref{Pollini7})
\begin{equation}\label{pollo7}
\left\{
\begin{aligned}
&\dot{W}=\widetilde{\mathcal{Y}}^{(1)}(W):=\ii E \Omega W+
\ii E\opbw\big( \widetilde{\mathfrak{M}}^{(1)}(W;\x)\big)[W]
+\widetilde{\mathcal{Q}}^{(1)}(W)[W]\\
&W(0)=\widetilde{\mathcal{A}}(Z_0)
\end{aligned}\right.
\end{equation}
where $\widetilde{\mathcal{Q}}^{(1)}\in\Sigma\mathcal{R}^{-\rho}_1[r,N]
\otimes\mathcal{M}_2(\mathbb{C})$ and
$\widetilde{\mathfrak{M}}^{(1)}\in 
\Sigma\Gamma^{m}_1[r,N]
\otimes\mathcal{M}_2(\mathbb{C})$
is independent of $x\in \mathbb{T}$, 
real valued  and has the form
\begin{equation}\label{pollo8}
\begin{aligned}
&\widetilde{\mathfrak{M}}^{(1)}(W;\x):=\left(
\begin{matrix}
\widetilde{\mathfrak{m}}^{(1)}(W;\x) & 0\\
0 & {\widetilde{\mathfrak{m}}^{(1)}(W;-\x) }
\end{matrix}
\right)\,,\\
&
\widetilde{\mathfrak{m}}^{(1)}(W;\x)=\mathfrak{m}(\widetilde{\mathcal{A}}_{1}^{-1}(W);\x)+
\int_{0}^{1} (d_{Z}b_1)\big(\widetilde{\mathcal{A}}_{1}^{\s}\widetilde{\mathcal{A}}_{1}^{-1}(W);\x\big)
[P^{\s}\big(
\widetilde{\mathcal{A}}_{1}^{\s}\widetilde{\mathcal{A}}_{1}^{-1}(W)
\big) ]d\s\,.
\end{aligned}
\end{equation}

\vspace{0.5em}
\noindent
{\bf The homological equation.}
We look for a linear symbol $b_1\in \widetilde{\Gamma}_1^{m}$ 
such that the symbol $\widetilde{\mathfrak{m}}^{(1)}(W;\x)$ in \eqref{pollo8}
is at least quadratic in the variable $W$.
In order to do this we reason as follows.
First of all, by Theorem \ref{flussononlin}, 
we deduce that the flow $\widetilde{\mathcal{A}}_1^{\tau}$
of \eqref{Pollini4} is such that
\begin{align*}
&\widetilde{\mathcal{A}}_1^{\tau}(Z)=Z+N_1(\tau,Z)[Z]\,,\qquad 
\widetilde{\mathcal{A}}_1^{-\tau}(Z)=Z+N_2(\tau,Z)[Z]\,, \quad N_1,N_{2}
\in\Sigma\mathcal{M}_{1}[r,N]\otimes\mathcal{M}_2(\mathbb{C})\,,
\end{align*}
with estimates uniform in $\tau\in[0,1]$.
Recall also that, by hypothesis, one has
\[
\mathfrak{m}(Z;\x)=\mathfrak{m}_1(Z;\x)+\mathfrak{m}_{\geq2}(Z;\x)\,,\quad
\mathfrak{m}_1\in\widetilde{\Gamma}^{m}_1\,,\quad
\mathfrak{m}_{\geq2}\in\Sigma\Gamma^{m}_{2}[r,N]\,.
\]
Therefore, using also \eqref{deadwood11} and the composition Proposition \ref{composizioniTOTALI}, 
we can write $\widetilde{\mathfrak{m}}^{(1)}$
in \eqref{pollo8}
as
\begin{align}
&\widetilde{\mathfrak{m}}^{(1)}(W;\x)=\widetilde{\mathfrak{m}}^{(1)}_1(W;\x)+
\widetilde{\mathfrak{m}}^{(1)}_{\geq2}(W;\x)\,,\quad 
\widetilde{\mathfrak{m}}^{(1)}_{\geq2}\in \Sigma\Gamma^{m}_2[r,N]
\otimes\mathcal{M}_2(\mathbb{C})\,,
\nonumber\\
&\widetilde{\mathfrak{m}}^{(1)}_{1}(W;\x):=
\mathfrak{m}_1(W;\x)+(d_{W}b_1)(W;\x)\big[\ii \Omega W\big]\,.
\label{deadwood30}
\end{align}
We prove the following.

\begin{lemma}{\bf (Homological equation).}\label{omoeqBB1}
There exists a symbol $b_1\in \widetilde{\Gamma}^{m}_1$ such that
(see \eqref{deadwood30})
\begin{equation}\label{deadwood31}
\widetilde{\mathfrak{m}}^{(1)}_{1}(W;\x)=0\,.
\end{equation}
Moreover $b_1$ is real valued.
\end{lemma}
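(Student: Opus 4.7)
The plan is to exploit the very restrictive structure of the symbol $\mathfrak{m}_1(W;\xi)$ highlighted in \eqref{pollo}. Since $\mathfrak{m}_1\in\widetilde{\Gamma}^m_1$ is independent of $x\in\mathbb{T}$, the momentum conservation property \eqref{pomosimbo2} applied with $p=1$ forces the only surviving Fourier index to be $j=0$, so one can write
\begin{equation*}
\mathfrak{m}_1(W;\xi)=w_0\,\alpha^+(\xi)+\bar w_0\,\alpha^-(\xi),\qquad \alpha^\pm(\xi)\in\widetilde{\Gamma}^m_0,
\end{equation*}
with $\alpha^-=\overline{\alpha^+}$ by the reality of $\mathfrak{m}_1$ (which descends from the reality of $\mathfrak{m}$ in \eqref{opfinale}). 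Accordingly, I would make the ansatz that $b_1$ itself depends only on the zero mode:
\begin{equation*}
b_1(W;\xi):=w_0\,\beta^+(\xi)+\bar w_0\,\beta^-(\xi),
\end{equation*}
for unknown $\beta^\pm(\xi)\in\widetilde{\Gamma}^m_0$ to be chosen.

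Next I would plug this ansatz into the homological equation \eqref{deadwood31}. Since $\Omega$ acts as the Fourier multiplier with symbol $f_m$ and eigenvalues $\omega_j$, and since the only relevant mode here is $j=0$, a direct computation yields
\begin{equation*}
(d_{W}b_1)(W;\xi)[\ii E\Omega W]=\ii\,\omega_0\,w_0\,\beta^+(\xi)-\ii\,\omega_0\,\bar w_0\,\beta^-(\xi).
\end{equation*}
Matching coefficients of $w_0$ and $\bar w_0$ in \eqref{deadwood30}, the homological equation reduces to the scalar identities
\begin{equation*}
\alpha^+(\xi)+\ii\omega_0\beta^+(\xi)=0,\qquad \alpha^-(\xi)-\ii\omega_0\beta^-(\xi)=0.
\end{equation*}
Under the non-resonance hypothesis \eqref{omegone0} this is solved explicitly by $\beta^+(\xi):=\ii\alpha^+(\xi)/\omega_0$ and $\beta^-(\xi):=-\ii\alpha^-(\xi)/\omega_0$. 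No small-divisor issue arises since $\omega_0$ is a single non-zero constant; this is the crucial simplification at the quadratic step.

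Finally I would verify the three properties required to conclude: (i) \emph{Class membership.} The symbols $\beta^\pm(\xi)$ inherit from $\alpha^\pm(\xi)\in\widetilde{\Gamma}^m_0$ the estimate \eqref{pomosimbo1} (up to the fixed factor $|\omega_0|^{-1}$) and the translation-invariance \eqref{def:tr-in}, hence $b_1\in\widetilde{\Gamma}^m_1$. (ii) \emph{Momentum conservation.} The property \eqref{pomosimbo2} holds trivially since $b_1$ involves only $j=0$, and it is by construction independent of $x$. (iii) \emph{Reality.} Using $\alpha^-=\overline{\alpha^+}$ and that $\omega_0\in\mathbb{R}$, one gets
\begin{equation*}
\overline{\beta^+(\xi)}=-\ii\,\overline{\alpha^+(\xi)}/\omega_0=-\ii\,\alpha^-(\xi)/\omega_0=\beta^-(\xi),
\end{equation*}
which is exactly the condition ensuring $b_1(W;\xi)=\overline{b_1(W;\xi)}$. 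No step requires delicate analysis: the main conceptual point, and the only obstruction one has to take care of, is the necessity of the hypothesis $\omega_0\neq 0$, which appears precisely because $\mathfrak{m}_1$ is forced by its structure to depend only on the zero mode.
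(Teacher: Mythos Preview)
Your proposal is correct and follows essentially the same approach as the paper: both exploit \eqref{pollo} to reduce $\mathfrak m_1$ (and hence the ansatz for $b_1$) to a dependence on the zero mode alone, solve the resulting two scalar equations using $\omega_0\neq 0$, and deduce reality from $\alpha^-=\overline{\alpha^+}$. Your formulas $\beta^\pm(\xi)=\pm\ii\alpha^\pm(\xi)/\omega_0$ are exactly the paper's \eqref{pollo10} (modulo what appears to be a typographical slip there in the superscript of $(\mathfrak m_1)_0^{-}$).
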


\begin{proof}
We look for a symbol $b_1\in \widetilde{\Gamma}^{m}_1$ of the form
\begin{equation}\label{pollo11}
b_1(W;\x)=u_0  (b_1)_{0}^{+}(\x)+\ov{u_0}  (b_1)_{0}^{-}(\x)\,,
\end{equation}
for some coefficients $(b_1)_{0}^{\s}$, $\s\in \{\pm\}$. 
Let us define (recall \eqref{pollo})
\begin{equation}\label{pollo10}
(b_1)_{0}^{\s}(\x):=\frac{(\mathfrak{m}_1)_{0}^{-}(\x)}{\ii \s\omega_0}\,, \quad 
\s\in\{\pm\}\,.
\end{equation}
Using the  expansion \eqref{espandoFousimbo}, 
with $p=1$ (recall also \eqref{pollo})
one can check, by an explicit computation, that the function in \eqref{pollo11}
with coefficients in \eqref{pollo10} solves the equation \eqref{deadwood31}.
Moreover, since $\mathfrak{m}_1$ is real valued, one has
\[
\ov{(\mathfrak{m}_1)_{0}^{+}(\x)}=(\mathfrak{m}_1)_{0}^{-}(\x).
\] 
Hence, by \eqref{pollo10}, also the coefficients $(b_1)_{0}^{\s}(\x)$
have the same property. This implies that $b_1$ is real valued.
\end{proof}

\subsubsection{Elimination of linear smoothing operators}\label{sec:stepsmooth1}
Consider the 
system \eqref{pollo7} obtained from the change of coordinates generated 
(see \eqref{Pollini3}, \eqref{Pollini4})
by  $b_1$ given by the Lemma \ref{omoeqBB1}.
We now consider the flow
$\mathcal{A}_1^{\tau}$ of 
\begin{equation}\label{Pollini44}
\left\{
\begin{aligned}
&\pa_{\tau}\mathcal{A}_1^{\tau}(W)=\mathcal{Q}^{(1)}_{aux}(\mathcal{A}_1^{\tau}(W))\mathcal{A}_1^{\tau}(W)\\
&\mathcal{A}_1^{0}(W)=W\,,
\end{aligned}\right.
\end{equation}
where $\mathcal{Q}^{(1)}_{aux}\in \widetilde{\mathcal{R}}^{-\rho}_{1}$
which is well-posed by Theorem \ref{flussononlin}.
Assume that $\mathcal{Q}^{(1)}_{aux}$ has the form 
\eqref{smooth-terms2}, \eqref{R2epep'} with $p=1$ and 
 coefficients  (see \eqref{BNF5}) 
 \begin{equation}\label{cannon31}
({\mathcal{Q}}_{aux}^{(1)}(W_1))_{\s,j}^{\s',k}=\sum_{\substack{
\s_1\in\{\pm\}, j_1\in\mathbb{Z} \\ \s_1j_1=\s j-\s'k}}
((\mathtt{q}_{aux}^{(1)})_{j_1}^{\s_1} )_{\s,j}^{\s',k} w_{j_1}^{\s_1}\,,\qquad
((\mathtt{q}_{aux}^{(1)})_{j_1}^{\s_1} )_{\s,j}^{\s',k}\in \mathbb{C}\,.
\end{equation}
 Finally assume that $\mathcal{Q}^{(1)}_{aux}$  is Hamiltonian, see \eqref{samba1}.
We define
\begin{equation}\label{pollo22}
W_1:={\mathcal{A}}_{1}(W):={\mathcal{A}}_{1}^{1}(W)\,,
\qquad
\mathcal{Y}^{(1)}(W_1):=P^{\tau}(W_1)_{|\tau=1}
:=d{\mathcal{A}}_{1}^{\tau}\big({\mathcal{A}}_{1}^{-\tau}(W_1)\big)
\big[ \widetilde{\mathcal{Y}}^{(1)}({\mathcal{A}}_{1}^{-\tau}(W_1))\big]_{|\tau=1}\,.
\end{equation}
In the following lemma we show that it is possible to choose the coefficients
$((\mathtt{q}_{aux}^{(1)})_{j_1}^{\s_1} )_{\s,j}^{\s',k}$ in \eqref{cannon31}
in such a way the vector field in \eqref{pollo22} does not
contain any \emph{quadratic} monomials.

\begin{lemma}\label{prop:Pollini100}
For $r>0$ small enough the following holds.
There exists $\mathcal{Q}_{aux}^{(1)}\in \widetilde{\mathcal{R}}^{-\rho}_1$, 
of the form \eqref{cannon31},
such that
the function $W_1$ in \eqref{pollo22} satisfies 
\begin{equation}\label{Pollini77}
\left\{
\begin{aligned}
&\dot{W}_1=\mathcal{Y}^{(1)}(W):=\ii E \Omega W_1+
\ii E\opbw\big( \mathfrak{M}^{(1)}(W_1;\x)\big)[W_1]
+\mathcal{Q}^{(1)}(W_1)[W_1]\\
&W_1(0)=\mathcal{A}_1(W)
\end{aligned}\right.
\end{equation}
where $\mathcal{Q}^{(1)}\in\Sigma\mathcal{R}^{-\rho}_2[r,N]
\otimes\mathcal{M}_2(\mathbb{C})$ and
${\mathfrak{M}}^{(1)}\in 
\Sigma\Gamma^{m}_2[r,N]
\otimes\mathcal{M}_2(\mathbb{C})$
is independent of $x\in \mathbb{T}$, 
real valued  and has the form
\begin{equation}\label{Pollini88}
\begin{aligned}
&\mathfrak{M}^{(1)}(W;\x):=\sm{\mathfrak{m}^{(1)}(W;\x)}{0}{0}{\mathfrak{m}^{(1)}(W;-\x)}\,,
%
\end{aligned}
\end{equation}
Moreover, for any $s\geq s_0$, the maps $\mathcal{A}^{\pm1}_{1}$ 
are symplectic 
and satisfy
\begin{equation}\label{Pollini99}
\|\mathcal{A}_{1}^{\pm1}(U)\|_{H^{s}}\leq \|U\|_{H^{s}}(1+C\|U\|_{H^{s_0}})\,,
\end{equation}
for some constant $C>0$ depending  on $s$.
\end{lemma}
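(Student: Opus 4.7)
The plan is to apply Proposition \ref{prop:PolliniSmooth} with $p=1$: conjugating by the flow $\mathcal{A}_1^{\tau}$ of \eqref{Pollini44} transforms the symbolic part $\widetilde{\mathfrak{M}}^{(1)}$, which is already at least quadratic in $W$ thanks to Lemma \ref{omoeqBB1}, into another symbol $\mathfrak{M}^{(1)}\in\Sigma\Gamma^{m}_{2}[r,N]\otimes\mathcal{M}_2(\mathbb{C})$ of the form \eqref{Pollini88}, by the formula \eqref{Pollini8bis}. What must be argued is that the generator $\mathcal{Q}^{(1)}_{aux}$ can be chosen so that the resulting smoothing remainder $\mathcal{Q}^{(1)}$ starts at degree $\geq 2$ in $W$, and so that $\mathcal{A}_1^{\tau}$ is symplectic.

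The linear-in-$W$ part of the transformed remainder, read off the Heisenberg equation \eqref{cannon1bis} as in the proof of Proposition \ref{prop:PolliniSmooth}, is
\begin{equation*}
\widetilde{\mathcal{Q}}^{(1)}_1(W)W+\bigl[\mathcal{Q}^{(1)}_{aux}(W)W,\ \ii E\Omega W\bigr],
\end{equation*}
where $\widetilde{\mathcal{Q}}^{(1)}_1\in\widetilde{\mathcal{R}}^{-\rho}_{1}\otimes\mathcal{M}_2(\mathbb{C})$ is the $1$-homogeneous component of $\widetilde{\mathcal{Q}}^{(1)}$. Expanding both summands on the Fourier side via \eqref{smooth-terms2}--\eqref{BNF5} with $p=1$, the condition that this linear part vanish becomes, coefficient by coefficient under the constraint $\sigma_1 j_1+\sigma j-\sigma' k=0$ coming from \eqref{omoresti2},
\begin{equation*}
\ii\bigl(\sigma_1\omega_{j_1}+\sigma\omega_{j}-\sigma'\omega_{k}\bigr)\bigl((\mathtt{q}^{(1)}_{aux})_{j_1}^{\sigma_1}\bigr)_{\sigma,j}^{\sigma',k}=-\bigl((\widetilde{\mathtt{q}}^{(1)})_{j_1}^{\sigma_1}\bigr)_{\sigma,j}^{\sigma',k}.
\end{equation*}
Since the total number of frequencies involved is $3$, hence odd, Definition \ref{nonresOmegaCOND} gives $\mathcal{S}_{3}=\emptyset$ and \eqref{nonresOMEGA} yields $|\sigma_1\omega_{j_1}+\sigma\omega_j-\sigma'\omega_k|\geq c\,\max(\langle j_1\rangle,\langle j\rangle,\langle k\rangle)^{-N_0}$; thus the equation is solved by dividing. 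The only polynomial loss introduced is $\max(\langle j_1\rangle,\langle j\rangle,\langle k\rangle)^{N_0}$, which is absorbed into the exponent $\mu$ in \eqref{omoresti1}, so $\mathcal{Q}^{(1)}_{aux}\in\widetilde{\mathcal{R}}^{-\rho}_{1}\otimes\mathcal{M}_2(\mathbb{C})$.

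For the Hamiltonian requirement \eqref{samba1} on $\mathcal{Q}^{(1)}_{aux}$ I would proceed as in Remark \ref{montefuffa}: since the para-differential contribution $\widetilde{\mathfrak{M}}^{(1)}$ has no $1$-homogeneous part (Lemma \ref{omoeqBB1}) and the system \eqref{pollo7} is Hamiltonian (its predecessor \eqref{NonlinBNF1} was Hamiltonian and $\widetilde{\mathcal{A}}_1$ is symplectic by Proposition \ref{prop:Pollini}), the entire $1$-homogeneous contribution $\widetilde{\mathcal{Q}}^{(1)}_1(W)W$ is itself Hamiltonian, i.e.\ $\widetilde{\mathcal{Q}}^{(1)}_1(W)W=X_{\widetilde{\mathcal{K}}}(W)$ for a cubic Hamiltonian $\widetilde{\mathcal{K}}$. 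Using $[X_{F},X_{H_0}]=-X_{\{F,H_0\}}$ with $H_0(W)=\int_{\TTT}\Omega W\cdot\bar W\,dx$, the operator homological equation is equivalent to the scalar equation $\{\mathcal{G}_{aux},H_0\}=\widetilde{\mathcal{K}}$, which is again solved monomial by monomial by the very same small denominators; defining $\mathcal{Q}^{(1)}_{aux}(W)W:=X_{\mathcal{G}_{aux}}(W)$ gives a Hamiltonian generator coinciding with the one above.

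Finally, Hamiltonicity of $\mathcal{Q}^{(1)}_{aux}$ makes $\mathcal{A}_1^{\tau}$ symplectic (Proposition \ref{sympflows} applied to the smoothing generator $X_{\mathcal{G}_{aux}}$), so \eqref{Pollini77}--\eqref{Pollini88} and the bound \eqref{Pollini99} follow directly from Proposition \ref{prop:PolliniSmooth} together with standard flow estimates. The only real difficulty here is verifying that the small-divisor division preserves the class $\widetilde{\mathcal{R}}^{-\rho}_{1}\otimes\mathcal{M}_2(\mathbb{C})$ while still delivering a Hamiltonian generator; both points are controlled by the odd-$p$ case of Definition \ref{nonresOmegaCOND} and by Remark \ref{montefuffa}, respectively.
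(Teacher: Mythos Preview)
Your proposal is correct and follows essentially the same route as the paper: apply Proposition \ref{prop:PolliniSmooth}, identify the linear-in-$W$ part of the new remainder as $\widetilde{\mathcal{Q}}^{(1)}_1(W)W+[\mathcal{Q}^{(1)}_{aux}(W)W,\ii E\Omega W]$ (the paper obtains this via a Taylor expansion of $P^\tau$ at $\tau=0$, equation \eqref{cannon26}, rather than reading it off Proposition \ref{prop:PolliniSmooth} directly), solve the resulting homological equation in Fourier using the non-resonance condition with $\mathcal{S}_3=\emptyset$, and then verify Hamiltonicity of the generator by noting that $\widetilde{\mathcal{Q}}^{(1)}_1(W)W$ is itself Hamiltonian and writing $\mathcal{Q}^{(1)}_{aux}(W)W=X_{\mathrm{ad}_{H_2}^{-1}A}$. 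The paper cites Lemma 6.5 in \cite{BFP} for the check that $\mathcal{Q}^{(1)}_{aux}\in\widetilde{\mathcal{R}}^{-\rho}_1$, which is precisely your absorption of the $N_0$-loss into the exponent $\mu$.
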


\begin{proof}
We start by studying the conjugate of 
\eqref{pollo7} under the flow \eqref{Pollini44} assuming that 
$\mathcal{Q}^{(1)}_{aux}\in \widetilde{\mathcal{R}}^{-\rho}_1$.
Notice that the vector field $\widetilde{\mathcal{Y}}^{(1)}$ in \eqref{pollo7}
has the same form of $\mathcal{X}$ in \eqref{Pollini},
with $\mathfrak{N}\rightsquigarrow \widetilde{\mathfrak{M}}^{(1)}$,
$\mathcal{R}\rightsquigarrow \widetilde{\mathcal{Q}}^{(1)}$,
$U\rightsquigarrow W$.
The generator $\mathcal{Q}^{(1)}_{aux}$ has the same properties of the 
generator $\mathcal{Q}^{(p)}_{aux}$ in \eqref{Pollini44bis}.
Therefore Proposition \ref{prop:PolliniSmooth} applies.
As a consequence we obtain that
\begin{equation}\label{cannon23}
\dot{W}_1=\ii E \Omega W_1+
\ii E\opbw\big( \mathfrak{M}^{(1)}(W_1;\x)\big)[W_1]
+\mathcal{Q}^{(1)}(W_1)[W_1]\,,
\end{equation}
where $\mathcal{Q}^{(1)}\in\Sigma\mathcal{R}^{-\rho}_1[r,N]\otimes\mathcal{M}_2(\CCC)$, 
$\mathfrak{M}^{(1)}$ has the form \eqref{Pollini88} with
(see \eqref{Pollini8bis})
\begin{equation*}
\mathfrak{m}^{(1)}(\tau,W_1;\x)
:=\widetilde{\mathfrak{m}}^{(1)}(\mathcal{A}_{1}^{-\tau}(W_1);\x)\in 
\Sigma\Gamma^{m}_2[r,N]\,.
\end{equation*}
The symbol above is quadratic since, by \eqref{deadwood30} and Lemma \ref{omoeqBB1}, 
the symbol
$\widetilde{\mathfrak{m}}^{(1)}(\tau,W;\x)$
is at least quadratic.
Moreover we have that
\begin{equation}\label{cannon24}
\mathcal{Q}^{(1)}(W_1)[W_1]=\mathcal{Q}_1^{(1)}(W_1)[W_1]
+\mathcal{Q}_{\geq 2}^{(1)}(W_1)[W_1]\,,\qquad 
\mathcal{Q}_{\geq 2}^{(1)}\in \Sigma\mathcal{R}^{-\rho}_2[r,N]\otimes\mathcal{M}_2(\CCC)\,,
\end{equation}
with $\mathcal{Q}_1^{(1)}\in\widetilde{\mathcal{R}}^{-\rho}_1$\,.
We claim that there exists $\mathcal{Q}_{aux}^{(1)}$ of the form \eqref{cannon31} 
such that
\begin{equation}\label{omoresto1}
\mathcal{Q}_1^{(1)}(W_1)W_1=0\,.
\end{equation}
The \eqref{cannon23} and the \eqref{omoresto1} imply the \eqref{Pollini77}.

In order to solve \eqref{omoresto1} we need a more explicit expression
of the remainder $\mathcal{Q}_1^{(1)}$ in \eqref{cannon24}
in terms of the generator of the flow $\mathcal{Q}_{aux}^{(1)}$.
This could be deduced by 
developing the computation in the proof 
of Proposition \ref{prop:PolliniSmooth}. In particular by calculating
explicitly the remainder
$\mathcal{G}_{\rho}$ in \eqref{cannon20bis} and using equation 
\eqref{cannon22bis}. However the computation is quite involved.
Then we reason as follows. 

We remark that the flow $\mathcal{A}_1^{\tau}$ in \eqref{Pollini44}
is $C^{k}$ in $\tau\in[0,1]$
with values in the Banach space $H^{s}(\mathbb{T};\CCC^2)$, 
because it solves an ODE on $H^s(\mathbb{T};\CCC^2)$.
Then, by Taylor expanding $P^{\tau}$ in \eqref{pollo22}
at $\tau=0$ (and using the Heisenberg equation
 \eqref{cannon1bis}),
we get
\begin{equation}\label{cannon26}
\mathcal{Y}^{(1)}=P^{1}(W_1)=
\widetilde{\mathcal{Y}}^{(1)}(W_1)+\big[\mathcal{Q}_{aux}^{(1)}(W_1)W_1,\widetilde{\mathcal{Y}}^{(1)}(W_1)\Big]+\int_0^{1}(1-\s)\pa_{\s}^{2}P^{\s}(W_1)d\s\,.
\end{equation}
Recall \eqref{deadwood30} and that
the operator $\widetilde{\mathcal{Q}}^{(1)}$ has the form
\[
\widetilde{\mathcal{Q}}^{(1)}(W_1)=\widetilde{\mathcal{Q}}_1^{(1)}(W_1)
+\widetilde{\mathcal{Q}}_{\geq2}^{(1)}(W_1)\,,\qquad 
\widetilde{\mathcal{Q}}^{(1)}_{\geq2}
\in\Sigma\mathcal{R}^{-\rho}_2[r,N]\otimes\mathcal{M}_2(\CCC)\,,
\]
with $\widetilde{\mathcal{Q}}_1^{(1)}\in \widetilde{\mathcal{R}}^{-\rho}_1$.
Then by \eqref{cannon26} we deduce
\begin{equation}\label{cannon25}
\mathcal{Y}^{(1)}(W_1)=\ii \Omega W_1+
\underbrace{\widetilde{\mathcal{Q}}_1^{(1)}(W_1)W_1
+\big[{\mathcal{Q}}_{aux}^{(1)}(W_1)W_1,\ii E\Omega W_1\big]}_{quadratic}
+\underbrace{M_{\geq 2}(W_1)W_1}_{cubic}\,,
\end{equation}
for some maps $M_{\geq 2}\in \Sigma\mathcal{M}_2[r,N]\otimes\mathcal{M}_2(\CCC)$.
Since $\mathcal{A}_1^{\tau}$ is regular, the quadratic terms in  \eqref{cannon25}
must coincide with the quadratic terms in \eqref{cannon23}.
Therefore we have (recall \eqref{cannon24})
\begin{equation}\label{cannon27}
\mathcal{Q}^{(1)}_1(W_1)W_1=
\widetilde{\mathcal{Q}}_1^{(1)}(W_1)W_1
+\big[{\mathcal{Q}}_{aux}^{(1)}(W_1)W_1,\ii E\Omega W_1\big]\,.
\end{equation}
Recall the notation \eqref{smooth-terms2}
and the definition of the non-linear commutator  \eqref{nonlinCommu}. 
Then equation \eqref{omoresto1}, using \eqref{cannon27},
is equivalent to
\begin{equation}\label{cannon28}
(\widetilde{\mathcal{Q}}_1^{(1)}(W_1))_{\s}^{\s'}w_1^{\s'}-\ii\s\Omega 
({\mathcal{Q}}_{aux}^{(1)}(W_1))_{\s}^{\s'}w_1^{\s'}+\ii\s' 
({\mathcal{Q}}_{aux}^{(1)}(W_1))_{\s}^{\s'}\Omega w_1^{\s'}
+({\mathcal{Q}}_{aux}^{(1)}(\ii E\Omega W_1))_{\s}^{\s'}w_1^{\s'}=0\,,
\end{equation}
for $\s,\s'\in\{\pm\}$ and
where $W_1=\vect{w_1}{\ov{w_1}}$ and $w_1^{+}=w_1$, $w_1^{-}=\ov{w_1}$. 
Passing to the Fourier representation (see \eqref{R2epep'}, \eqref{omegone})
we have that equation \eqref{cannon28} reads
\begin{equation}\label{cannon29}
(\widetilde{\mathcal{Q}}_1^{(1)}(W_1))_{\s,j}^{\s',k}+(\ii\s'\omega_{k}-\ii\s\omega_{j})
({\mathcal{Q}}_{aux}^{(1)}(W_1))_{\s,j}^{\s',k}+
({\mathcal{Q}}_{aux}^{(1)}(\ii E\Omega W_1))_{\s,j}^{\s',k}=0\,,
\end{equation}
for any $j,k\in \mathbb{Z}$.
Recall that, since $\widetilde{\mathcal{Q}}_1^{(1)}(W_1)\in
\widetilde{\mathcal{R}}^{-\rho}_{1}$,
the coefficients $(\widetilde{\mathcal{Q}}_1^{(1)}(W_1))_{\s,j}^{\s',k}$
have the form (see \eqref{BNF5})
\begin{equation}\label{cannon30}
(\widetilde{\mathcal{Q}}_1^{(1)}(W_1))_{\s,j}^{\s',k}=\sum_{
\substack{\s_1\in\{\pm\},j_1\in\mathbb{Z} \\ \s_1j_1=\s j-\s'k}}
((\mathtt{q}_1^{(1)})_{j_1}^{\s_1} )_{\s,j}^{\s',k} w_{j_1}^{\s_1}\,,\qquad
((\mathtt{q}_1^{(1)})_{j_1}^{\s_1} )_{\s,j}^{\s',k}\in \mathbb{C}\,.
\end{equation}
We look for the operator ${\mathcal{Q}}_{aux}^{(1)}$ as in \eqref{smooth-terms2}, \eqref{R2epep'} with coefficients as in \eqref{cannon31}.
Using \eqref{cannon30}, \eqref{cannon31}, we rewrite \eqref{cannon29}
as
\begin{equation*}
((\mathtt{q}_1^{(1)})_{j_1}^{\s_1} )_{\s,j}^{\s',k}+
\big(
\ii\s'\omega_{k}-\ii\s\omega_{j}+\ii \s_1 j_1
\big)((\mathtt{q}_{aux}^{(1)})_{j_1}^{\s_1} )_{\s,j}^{\s',k}  = 0\,,
\end{equation*}
and hence we define
\begin{equation}\label{cannonsol}
((\mathtt{q}_{aux}^{(1)})_{j_1}^{\s_1} )_{\s,j}^{\s',k}:=\frac{-((\mathtt{q}_1^{(1)})_{j_1}^{\s_1} )_{\s,j}^{\s',k}}{\big(
\ii\s'\omega_{k}-\ii\s\omega_{j}+\ii \s_1 j_1
\big)}\,,\qquad \forall\,\; \s_1 j_1+\s'k=\s j\,,\;\; j_1,j,k\in\mathbb{Z}\,,\;\; 
\s,\s',\s_1\in\{\pm\}\,.
\end{equation}
Notice that, by assumption, the frequencies $\omega_{j}$ are not resonant 
according to Definition \ref{nonresOmegaCOND}.
By Lemma $6.5$ in \cite{BFP} we have that, since 
$\widetilde{\mathcal{Q}}_1^{(1)}\in \widetilde{\mathcal{R}}^{-\rho}_1$
and using the bounds \eqref{nonresOMEGA}, 
the operator
${\mathcal{Q}}_{aux}^{(1)}$ with coefficients as in \eqref{cannonsol}
belongs to  $ \widetilde{\mathcal{R}}^{-\rho}_1$.

To conclude the proof of the lemma it remains to show that the flow 
$\mathcal{A}_1^{\tau}$ with generators defined above is symplectic.
We show that ${\mathcal{Q}}_{aux}^{(1)}(W_1)W_1$ is an Hamiltonian vector field.
We reason as follows. First of all we have that (recall \eqref{X_H})
\[
\ii E\Omega W_1:=X_{H_2}(W_1)\,,\qquad 
H_2(W_1):=\frac{1}{2}\int_{\mathbb{T}}\Omega W_1\cdot\ov{W_1}dx=\sum_{j\in\mathbb{Z}}
\omega_{j}|(w_{1})_{j}|^{2}
\]
is the Hamiltonian vector field of the Hamiltonian $H_2$.
Moreover, the field $\widetilde{\mathcal{Y}}^{(1)}$ in \eqref{pollo7}
is Hamiltonian
and hence its quadratic terms are Hamiltonian. 
Then
\[
\begin{aligned}
&\widetilde{\mathcal{Q}}_1^{(1)}(W_1)(W_1)=X_{A}(W_1)\,,\\
&A(W_1)=\int_{\mathbb{T}}\widetilde{\mathtt{Q}}^{(1)}(W_1)W_1\cdot\ov{W_1}dx
=\sum_{\s_1 j_1+\s' k=\s j}(\widetilde{\mathtt{Q}}^{(1)})_{j_1,k,j}^{\s_1,\s',\s}
(w_1)_{j_1}^{\s_1}(w_1)_{k}^{\s'}(w_1)_{j}^{-\s}\,,
\end{aligned}
\]
for some multilinear map $\widetilde{\mathtt{Q}}^{(1)}\in 
\widetilde{\mathcal{M}}_1\otimes\mathcal{M}_2(\CCC)$.
Since ${\mathcal{Q}}_1^{(1)}(W_1)W_1$ solves
\eqref{omoresto1} (see also \eqref{cannon27}),
then one can check that
\[
{\mathcal{Q}}_{aux}^{(1)}(W_1)W_1=X_{{\rm ad}_{H_2}^{-1}A}(W_1)\,,
\qquad
({\rm ad}_{H_2}^{-1}A)(W_1):=
\sum_{\s_1 j_1+\s' k=\s j}\frac{(\widetilde{\mathtt{Q}}^{(1)})_{j_1,k,j}^{\s_1,\s',\s}}{\ii\s'\omega_{k}-\ii\s\omega_{j}+\ii \s_1 j_1}
(w_1)_{j_1}^{\s_1}(w_1)_{k}^{\s'}(w_1)_{j}^{-\s}\,,
\]
and hence it is Hamiltonian. This implies that the flow \eqref{Pollini44}
is symplectic. 
The bounds \eqref{Pollini99} follow by Theorem \ref{flussononlin}.
This concludes the proof.
\end{proof}

\subsection{Elimination of (j+1)-homogeneous terms}
Let $j\in \mathbb{N}$, $j\geq2$ and 
consider a para-differential system of the form
\begin{equation}\label{samba3}
\dot{W}_{j}=\mathcal{Y}^{(j)}(W_{j})
:=\ii E\Omega W_{j}+\ii E\opbw\big( \mathfrak{M}^{(j)}(W_{j};\x)\big)W_{j}
+\mathcal{Q}^{(j)}(W_{j})W_{j}\,,
\qquad W_{j}:=\vect{w_{j}}{\ov{w_{j}}}
\end{equation}
and assume the following. 
The matrix of symbols $\mathfrak{M}^{(j)}$
as the form
\begin{equation*}
\mathfrak{M}^{(j)}(W_{j};\x):=\left(
\begin{matrix}
\mathfrak{m}^{(j)}(W_{j};\x) & 0 \\
0 & \mathfrak{m}^{(j)}(W_{j};-\x)
\end{matrix}
\right)\,,\qquad \mathfrak{m}^{(j)}\in \Sigma\Gamma_2^{m}[r,N]\,,
\end{equation*}
and $\mathfrak{m}^{(j)}$ is real valued and independent of $x\in \mathbb{T}$.
Moreover we assume that it has the expansion (recall Definition \ref{def:resonant})
\begin{equation}\label{samba4bis}
\begin{aligned}
&\mathfrak{m}^{(j)}(W_{j};\x):=\sum_{k=2}^{j-1}\bral\mathfrak{m}^{(j)}_{k}\brar(W_{j};\x)+
\mathfrak{m}^{(j)}_{j}(W_{j};\x)+
\mathfrak{m}^{(j)}_{\geq j+1}(W_{j};\x)\,,\\
& \mathfrak{m}^{(j)}_{k}\in \widetilde{\Gamma}^{m}_{k}\,,\;\; k=2,\ldots,j\,,\;\;\;
\mathfrak{m}^{(j)}_{\geq j+1}\in \Sigma\Gamma^{m}_{j+1}[r,N]\,.
\end{aligned}
\end{equation}
The smoothing remainder 
$\mathcal{Q}^{(j)}(W_{j})$ admits the expansion
\begin{equation}\label{samba5}
\begin{aligned}
&\mathcal{Q}^{(j)}(W_{j})=\sum_{k=2}^{j-1}\bral\mathcal{Q}^{(j)}_{k}\brar(W_{j})+
\mathcal{Q}^{(j)}_{j}(W_{j})+\mathcal{Q}^{(j)}_{\geq j+1}(W_{j})\,,\\
&\mathcal{Q}^{(j)}_{k}\in \widetilde{\mathcal{R}}^{-\rho}_{k}\,,\;\;\;k=2,\ldots,j\,,\;\;\;
\mathcal{Q}^{(j)}_{\geq j+1}\in \Sigma\mathcal{R}^{-\rho}_{j+1}[r,N]
\otimes\mathcal{M}_2(\CCC)\,.
\end{aligned}
\end{equation}
The aim of the section is to eliminate all the non resonant terms
of degree of homogeneity $j+1$ appearing in the vector field 
$\mathcal{Y}^{(j)}$ in \eqref{samba3}, i.e.
the non resonant terms of 
\begin{equation*}
\mathcal{Y}^{(j)}_{j}(W_{j}):=\ii E\opbw\left(
\begin{matrix}
\mathfrak{m}^{(j)}_{j}(W_{j};\x) & 0 \\ 0 & {\mathfrak{m}^{(j)}_{j}(W_{j};-\x)}
\end{matrix}
\right)W_{j}+\mathcal{Q}^{(j)}_{j}(W_{j})W_{j}\,.
\end{equation*}
As done in section \ref{quadtermBNF}
we eliminate $\mathcal{Y}^{(j)}_{j}$
in two steps. In section \ref{jthtermBNFsimbo} we reduce the symbol
$\mathfrak{m}^{(j)}_{j}$. In subsection \ref{jthtermBNFsmooth}
we deal with the $j$-homogeneous smoothing remainders.

\subsubsection{Elimination of j-homogeneous symbols}\label{jthtermBNFsimbo}
Consider a real valued, independent of $x\in \mathbb{T}$ symbol 
$b_{j}\in \tilde{\Gamma}^{m}_{j}$ and let $B_{j}(W_{j};\x)$ be a matrix
of symbols of the form \eqref{Pollini333} with $p=j$.
Let $\tilde{\mathcal{A}}_j^{\tau}$ be the solution of
\begin{equation}\label{samba6}
\left\{
\begin{aligned}
&\pa_{\tau}\tilde{\mathcal{A}}_j^{\tau}(W_{j})
=X_{\mathcal{G}_{j}}(\tilde{\mathcal{A}}_j^{\tau}(W_{j}))\\
&\tilde{\mathcal{A}}_j^{0}(W_{j})=W_{j}\,,
\end{aligned}\right.
\end{equation}
where 
\begin{equation}\label{samba6bis}
X_{\mathcal{G}_{j}}(W_{j})=\ii E\opbw(B_{j}(W_{j};\x))W_{j}+\mathcal{B}_{j}(W_{j})W_{j}\,,
\qquad
\mathcal{B}_{j}\in \widetilde{\mathcal{R}}^{-\rho}_{j}\,,
\end{equation}
is the Hamiltonian vector field of an Hamiltonian of the form 
\eqref{Pollini3} with $p=j$.
In the following lemma we conjugate the field \eqref{samba3}
under the flow \eqref{samba6}.

\begin{lemma}\label{lem:Pollinijth}
For $r>0$ small enough there exists 
a symbol $b_{j}\in \mathcal{\Gamma}^{m}_{j}$ such that
the following holds.
Setting  (recall \eqref{samba3})
\begin{equation}\label{samba7}
\tilde{W}_{j}:=\tilde{\mathcal{A}}_{j}(W_{j})
:=\tilde{\mathcal{A}}_{j}^{1}(W_{j})
\,,
\qquad
\tilde{\mathcal{Y}}^{(j)}(\tilde{W}_{j}):=P^{\tau}(\tilde{W}_{j})_{|\tau=1}
:=d\tilde{\mathcal{A}}_{p}^{\tau}
\big(\tilde{\mathcal{A}}_{p}^{-\tau}(\tilde{W}_{j})\big)
\big[ \mathcal{Y}^{(j)}(\tilde{\mathcal{A}}_{p}^{-\tau}(\tilde{W}_{j}))\big]_{|\tau=1}\,,
\end{equation}
we have that
\begin{equation}\label{samba8}
\left\{
\begin{aligned}
&\dot{\tilde{W}}_{j}=\tilde{\mathcal{Y}}^{(j)}(\tilde{W}_{j})
:=\ii E \Omega \tilde{W}_{j}+
\ii E\opbw\big( \tilde{\mathfrak{M}}^{(j)}(\tilde{W}_{j};\x)\big)[\tilde{W}_{j}]
+\tilde{\mathcal{Q}}^{(j)}(\tilde{W}_{j})[\tilde{W}_{j}]\\
&\tilde{W}_{j}(0)=\tilde{\mathcal{A}}_{j}(W_j(0))
\end{aligned}\right.
\end{equation}
where $\tilde{\mathcal{Q}}^{(j)}\in\Sigma\mathcal{R}^{-\rho}_2[r,N]
\otimes\mathcal{M}_2(\mathbb{C})$ and
$\tilde{\mathfrak{M}}^{(j)}\in 
\Sigma\Gamma^{m}_2[r,N]
\otimes\mathcal{M}_2(\mathbb{C})$
is independent of $x\in \mathbb{T}$, 
real valued  and has the form
\begin{align}
&\tilde{\mathfrak{M}}^{(j)}(\tilde{W}_{j};\x)
:=\left(
\begin{matrix}
\tilde{\mathfrak{m}}^{(j)}(\tilde{W}_{j};\x) & 0\\
0 & \tilde{\mathfrak{m}}^{(j)}(\tilde{W}_{j};-\x) 
\end{matrix}
\right)\,,\label{samba9}\\
&\tilde{\mathfrak{m}}^{(j)}(\tilde{W}_{j};\x)
:=\sum_{k=2}^{j}\bral\tilde{\mathfrak{m}}^{(j)}_{k}\brar(\tilde{W}_{j};\x)+
\tilde{\mathfrak{m}}^{(j)}_{j+1}(\tilde{W}_{j};\x)+
\tilde{\mathfrak{m}}^{(j)}_{\geq j+2}(\tilde{W}_{j};\x)\,,\label{samba10}\\
& \tilde{\mathfrak{m}}^{(j)}_{k}\in \tilde{\Gamma}^{m}_{k}\,,\;\; k=2,\ldots,j+1\,,\;\;\;
\tilde{\mathfrak{m}}^{(j)}_{\geq j+2}\in \Sigma\Gamma^{m}_{j+2}[r,N]\,.\nonumber
\end{align}
Moreover the remainder $\tilde{\mathcal{Q}}^{(j)}$ has the form
\begin{equation}\label{samba11}
\begin{aligned}
&\tilde{\mathcal{Q}}^{(j)}(\tilde{W}_{j})
=\sum_{k=2}^{j-1}\bral\tilde{\mathcal{Q}}^{(j)}_{k}\brar(\tilde{W}_{j})+
\tilde{\mathcal{Q}}^{(j)}_{j}(\tilde{W}_{j})
+\tilde{\mathcal{Q}}^{(j)}_{\geq j+1}(\tilde{W}_{j})\,,\\
&\tilde{\mathcal{Q}}^{(j)}_{k}\in \tilde{\mathcal{R}}^{-\rho}_{k}
\otimes\mathcal{M}_2(\CCC)\,,\;\;\;k=2,\ldots,j\,,\;\;\;
\tilde{\mathcal{Q}}^{(j)}_{\geq j+1}\in \Sigma\mathcal{R}^{-\rho}_{j+1}[r,N]
\otimes\mathcal{M}_2(\CCC)\,.
\end{aligned}
\end{equation}
Finally, for any $s\geq s_0$, the maps $\tilde{\mathcal{A}}^{\pm1}_{p}$ are symplectic 
and satisfy
\begin{equation}\label{samba12}
\|\tilde{\mathcal{A}}_{j}^{\pm1}(U)\|_{H^{s}}\leq \|U\|_{H^{s}}(1+C\|U\|^{p}_{H^{s_0}})\,,
\end{equation}
for some constant $C>0$ depending  on $s$.
\end{lemma}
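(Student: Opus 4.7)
The plan mirrors Section \ref{sec:stepsimbo1}: the generator of the flow $\tilde{\mathcal{A}}_j^\tau$ is chosen so as to solve a homological equation at the $j$-homogeneous order of the symbol. First I would apply Proposition \ref{prop:Pollini} with $p=j$, generator $B_j$ of the form \eqref{Pollini333} built from a scalar $b_j\in\widetilde{\Gamma}^m_j$ yet to be chosen, and Hamiltonian $\mathcal{G}_j$ as in \eqref{Pollini3} so that the associated vector field has the shape \eqref{samba6bis} and the flow is symplectic. The proposition then rewrites the system as in \eqref{samba8} with the new symbol given by \eqref{Pollini8},
\begin{equation*}
\tilde{\mathfrak{m}}^{(j)}(\tilde W_j;\xi) = \mathfrak{m}^{(j)}(\tilde{\mathcal{A}}_j^{-1}(\tilde W_j);\xi) + \int_0^1 (d_Z b_j)\big(\tilde{\mathcal{A}}_j^\sigma\tilde{\mathcal{A}}_j^{-1}(\tilde W_j);\xi\big)\big[P^\sigma(\tilde{\mathcal{A}}_j^\sigma\tilde{\mathcal{A}}_j^{-1}(\tilde W_j))\big]\, d\sigma.
\end{equation*}
Since $\tilde{\mathcal{A}}_j^\tau=\mathrm{Id}+O(\|W\|^j)$ and $P^\sigma=\ii E\Omega(\cdot)+O(\|W\|^2)$, a Taylor expansion shows that at each homogeneity degree $k<j$ the resonant summands $\bral \mathfrak{m}^{(j)}_k\brar$ in \eqref{samba4bis} are carried over unchanged, while the degree-$j$ piece equals $\mathfrak{m}^{(j)}_j(W;\xi)+(d_W b_j)(W;\xi)[\ii E\Omega W]$, as in \eqref{deadwood30}, and all remaining contributions fall into $\Sigma\Gamma^m_{j+1}[r,N]$.

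To put the degree-$j$ part in the desired resonant form $\bral\mathfrak{m}^{(j)}_j\brar$, the next step is to solve the homological equation
\begin{equation*}
(d_W b_j)(W;\xi)[\ii E\Omega W] \;=\; \bral \mathfrak{m}^{(j)}_j\brar(W;\xi) - \mathfrak{m}^{(j)}_j(W;\xi).
\end{equation*}
Expanding both sides in the Fourier series \eqref{espandoFousimbo} (summing only over tuples with $\sum_i\sigma_i j_i=0$, by the $x$-independence imposed in \eqref{Pollini333}), this reduces to a scalar divisor equation for the coefficients: on the resonant set $\mathcal{S}_j$ both sides vanish by definition \eqref{espandoFousimbo100} and I set $(b_j)^{\vec\sigma}_{\vec j}(\xi)=0$; off $\mathcal{S}_j$, division by $\ii(\sigma_1\omega_{j_1}+\cdots+\sigma_j\omega_{j_j})$ is legitimate thanks to the non-resonance bound \eqref{nonresOMEGA}, whose loss of $\max_i\langle j_i\rangle^{N_0}$ is absorbed in the parameter $\mu$ of \eqref{pomosimbo1}, keeping $b_j$ in the class $\widetilde{\Gamma}^m_j$. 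Reality of $b_j$ follows by pairing the $(\vec j,\vec\sigma)$- and $(-\vec j,-\vec\sigma)$-coefficients and exploiting $\omega_{-j}=\omega_j$ (which holds since $f_m$ is even in $\xi$).

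With this $b_j$ fixed, the symbol $\tilde{\mathfrak{m}}^{(j)}$ acquires the shape \eqref{samba10}. For the smoothing part, Proposition \ref{prop:Pollini} produces $\tilde{\mathcal{Q}}^{(j)}\in\Sigma\mathcal{R}^{-\rho}_1[r,N]\otimes\mathcal{M}_2(\mathbb{C})$; by the same degree-counting argument (the flow deviates from the identity only at order $j$) combined with the composition result Proposition \ref{composizioniTOTALI}, the homogeneous pieces $\bral\mathcal{Q}^{(j)}_k\brar$ for $k\le j-1$ are preserved, $\mathcal{Q}^{(j)}_j$ is retained unchanged at this step (since $b_j$ acts only at the symbol level), and all new contributions are of degree $\ge j+1$, yielding \eqref{samba11}. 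The bound \eqref{samba12} and the symplectic character of $\tilde{\mathcal{A}}_j$ are inherited directly from Proposition \ref{prop:Pollini}. I expect the main obstacle to be the Fourier-level inversion of $\mathrm{ad}_{\ii E\Omega}$ within $\widetilde{\Gamma}^m_j$: one must verify that the non-resonance loss $\langle j_i\rangle^{N_0}$ can indeed be absorbed into the class, and that the bookkeeping of lower-order resonant terms and $(j+1)$-order corrections is compatible with the definitions of $\Sigma\Gamma^m_p[r,N]$ and $\Sigma\mathcal{R}^{-\rho}_p[r,N]\otimes\mathcal{M}_2(\mathbb{C})$.
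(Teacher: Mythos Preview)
Your proposal is correct and follows essentially the same approach as the paper: apply Proposition \ref{prop:Pollini} with $p=j$, expand the resulting symbol \eqref{Pollini8} in homogeneity using $\tilde{\mathcal{A}}_j^{\pm\tau}=\mathrm{Id}+O(\|W\|^j)$, solve the homological equation at degree $j$ by Fourier-coefficient division with the non-resonance bound \eqref{nonresOMEGA}, and verify that the smoothing remainder is unchanged at degrees $\le j$. One small imprecision: your parenthetical ``since $b_j$ acts only at the symbol level'' is not quite the right reason the remainder is preserved at degree $j$ --- the paper argues instead via the explicit equation \eqref{deadwood22} for $V^\tau$ from the proof of Proposition \ref{prop:Pollini}, noting that the generator $X_{\mathcal{G}_j}$ has homogeneity $j+1$, so the Duhamel correction to $V^0=-\mathcal{Q}^{(j)}(U)U$ lands at degree $\ge j+3$, and composition with $\tilde{\mathcal{A}}_j^{-\tau}$ likewise only perturbs degrees $\ge j+3$; this is what yields $\tilde{\mathcal{Q}}^{(j)}_k=\mathcal{Q}^{(j)}_k$ for $2\le k\le j$.
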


\begin{proof}
The map $\tilde{\mathcal{A}}_{j}$ is symplectic by \eqref{samba6}, 
\eqref{samba6bis}
and satisfies \eqref{samba12} by Theorem \ref{flussononlin}.
Notice that the vector field $\mathcal{Y}^{(j)}$ in \eqref{samba3}
has the same form of \eqref{Pollini}
with $\mathfrak{N}\rightsquigarrow \mathfrak{M}^{(j)}$, 
$\mathcal{R}\rightsquigarrow \mathcal{Q}^{(j)}$, $U\rightsquigarrow W_{j}$.
Then 
Proposition \ref{prop:Pollini} applies to $\mathcal{Y}^{(j)}$. We obtain
(recall \eqref{samba7} and \eqref{Pollini7})
\begin{equation*}
\tilde{\mathcal{Y}}^{(j)}(\tilde{W}_{j})
:=\ii E \Omega \tilde{W}_{j}+
\ii E\opbw\big( \tilde{\mathfrak{M}}^{(j)}(\tilde{W}_{j};\x)\big)[\tilde{W}_{j}]
+\tilde{\mathcal{Q}}^{(j)}(\tilde{W}_{j})[\tilde{W}_{j}]
\end{equation*}
where 
$\tilde{\mathcal{Q}}^{(j)}\in\Sigma\mathcal{R}^{-\rho}_2[r,N]
\otimes\mathcal{M}_2(\CCC)$ and 
$\tilde{\mathfrak{M}}^{(j)}\in \Sigma\Gamma^{m}_{2}[r,N]\otimes\mathcal{M}_2(\CCC)$
has the form \eqref{samba9} 
with (see formula \eqref{Pollini8})
\begin{equation}\label{samba14}
\tilde{\mathfrak{m}}^{(j)}(\tilde{W}_{j};\x)
=\mathfrak{m}^{(j)}
(\tilde{\mathcal{A}}_{j}^{-1}(\tilde{W}_j);\x)+
\int_{0}^{1} (d_{Z}b_j)\big(\tilde{\mathcal{A}}_{j}^{\s}
\tilde{\mathcal{A}}_{j}^{-1}(\tilde{W}_j);\x\big)
[P^{\s}\big(
\tilde{\mathcal{A}}_{j}^{\s}\tilde{\mathcal{A}}_{j}^{-1}(\tilde{W}_j)
\big) ]d\s\,.
\end{equation}
To conclude the proof we need to show the expansions \eqref{samba10}
and \eqref{samba11}.

\vspace{0.5em}
\noindent
{\bf The homological equation.}
We look for a symbol $b_j\in \tilde{\Gamma}_{j}^{m}$ 
such that  $\tilde{\mathfrak{m}}^{(j)}(\tilde{W}_j;\x)$ in \eqref{samba14}
satisfies \eqref{samba10}.
First of all, by Theorem \ref{flussononlin}, 
we deduce that the flow $\tilde{\mathcal{A}}_j^{\tau}$
of \eqref{samba6} is such that
\begin{align}
&\tilde{\mathcal{A}}_j^{\tau}(Z)=Z+N_j^{(1)}(\tau,Z)[Z]\,,\qquad
\tilde{\mathcal{A}}_j^{-\tau}(Z)=Z+N_j^{(2)}(\tau,Z)[Z]\,, \label{samba15}
\\& N_j^{(1)},N_{j}^{(2)}
\in\Sigma\mathcal{M}_{j}[r,N]\otimes\mathcal{M}_2(\mathbb{C})\,,\nonumber
\end{align}
with estimates uniform in $\tau\in[0,1]$ since the generator is a map in $\widetilde{\mathcal{M}}_j$.
We also recall that the symbol $\mathfrak{m}^{(j)}$ admits the expansion \eqref{samba4bis},
i.e. it is resonant (see Def. \ref{def:resonant})
up to degree of homogeneity $j$. 
Finally, notice that, for any $k\leq j-1 $, 
\begin{equation}\label{accendino}
\bral \mathfrak{m}_k^{(j)}\brar\big( \tilde{\mathcal{A}}_j^{-\tau}(\tilde{W}_{j});\x\big)
\stackrel{\eqref{samba15}}{=}
\bral \mathfrak{m}_k^{(j)}\brar\big(\tilde{W}_{j};\x\big)+\mathtt{r}_{j+k}(\tilde{W}_j;\x)\,,
\end{equation}
for some real valued and independent of $x\in \mathbb{T}$
symbol $\mathtt{r}_{j+k}\in \Sigma\Gamma^{m}_{j+k}[r,N]$. We now expand in 
degree of homogeneity the symbol in \eqref{samba14}.
By the discussion above we obtain
\begin{equation}\label{samba30}
\tilde{\mathfrak{m}}^{(j)}(\tilde{W}_{j};\x)
:=\sum_{k=2}^{j-1}\bral{\mathfrak{m}}^{(j)}_{k}\brar(\tilde{W}_{j};\x)+
\tilde{\mathfrak{m}}^{(j)}_{j}(\tilde{W}_{j};\x)+
\tilde{\mathfrak{m}}^{(j)}_{\geq j+1}(\tilde{W}_{j};\x)\,,
\end{equation}
for some real, independent of $x$ symbol $\tilde{\mathfrak{m}}^{(j)}_{\geq j+1}\in
\Sigma\Gamma^{m}_{j+1}[r,N]$ depending on $\mathfrak{m}^{(j)}$ and $b_{j}$,
and where
\begin{equation}\label{samba19}
\tilde{\mathfrak{m}}^{(j)}_{j}(\tilde{W}_{j};\x):=
\mathfrak{m}^{(j)}(\tilde{W}_{j};\x)+(d_{Z} b_{j})(\tilde{W}_{j};\x)\big[\ii E\Omega 
\tilde{W}_{j}
\big]\,.
\end{equation}

We prove the following.
\begin{lemma}{\bf (Homological equation).}\label{omoeqsimboJ}
There exists a symbol $b_j\in \tilde{\Gamma}^{m}_j$ such that
(see \eqref{samba19})
\begin{equation*}\label{samba20}
\tilde{\mathfrak{m}}^{(j)}_{j}(W;\x)=
\mathfrak{m}_j^{(j)}(W;\x)+(d_{Z} b_{j})(W;\x)
\big[\ii E\Omega W\big]=
\bral \mathfrak{m}_{j}^{(j)}\brar(W;\x)
\end{equation*}
Moreover $b_j(W;\x)$ is real valued and independent of $x\in \mathbb{T}$.
\end{lemma}

\begin{proof}
We recall that, by \eqref{espandoFousimbo}, the symbol $\mathfrak{m}^{(j)}_j$
has the form
\begin{equation}\label{samba21}
\mathfrak{m}^{(j)}_{j}(W;\x)
=\sum_{\substack{\s_i\in\{\pm\}\,,i=1,\ldots,j \\
n_i\in \mathbb{Z}\,,\\
\sum_{i=1}^{j}\s_i n_i=0}} 
(\mathfrak{m}_{j}^{(j)})_{n_1,\ldots,n_j}^{\s_1\cdots\s_{j}}(\x)w_{n_1}^{\s_1}
\ldots w_{n_j}^{\s_j}\,.
\end{equation}
Recall \eqref{nonresOMEGA3}, \eqref{omegone}
and define
\begin{equation}\label{samba22}
(b_{j})_{n_1,\ldots,n_j}^{\s_1\cdots\s_{j}}(\x):=\frac{-(\mathfrak{m}_{j}^{(j)})_{n_1,\ldots,n_j}^{\s_1\cdots\s_{j}}(\x)}{\ii(\s_1\omega_{n_1}+\ldots+\s_{j}\omega_{n_j})}\,,
\qquad 
\begin{aligned}
&\s_1n_1+\ldots+\s_j n_j=0\,,\\
&(\s_1,\ldots,\s_j,n_1,\ldots,n_j)\notin\mathcal{S}_j
\end{aligned}
\end{equation}
and $(b_{j})_{n_1,\ldots,n_j}^{\s_1\cdots\s_{j}}(\x):=0$ otherwise. 
One can check, by an explicit computation, that the symbol
\begin{equation}\label{samba23}
b_{j}(W;\x)
=\sum_{\substack{\s_i\in\{\pm\}\,,i=1,\ldots,j \\
n_i\in \mathbb{Z}\,,\\
\sum_{i=1}^{j}\s_i n_i=0}} 
(b_{j})_{n_1,\ldots,n_j}^{\s_1\cdots\s_{j}}(\x)w_{n_1}^{\s_1}
\ldots w_{n_j}^{\s_j}\,,
\end{equation}
with $(b_{j})_{n_1,\ldots,n_j}^{\s_1\cdots\s_{j}}(\x)$ in \eqref{samba22}
solves the equation \eqref{samba21}
where the r.h.s. 
$\bral \mathfrak{m}^{(j)}\brar(W;\x)$
is defined as in \eqref{espandoFousimbo100}.

Since the symbol $\mathfrak{m}^{(j)}_j$
 is real valued, its coefficients satisfies 
 \begin{equation}\label{samba24}
 \ov{(\mathfrak{m}_{j}^{(j)})_{n_1,\ldots,n_j}^{\s_1\cdots\s_{j}}(\x)}=
 (\mathfrak{m}_{j}^{(j)})_{n_1,\ldots,n_j}^{-\s_1\cdots-\s_{n}}(\x)\,.
 \end{equation}
 By formula \eqref{samba22} one can check that 
 $(b_{j})_{n_1,\ldots,n_j}^{\s_1\cdots\s_{j}}(\x)$ satisfies the same property as in \eqref{samba24}. Therefore the symbol in \eqref{samba23} is real valued.
\end{proof}
In view of Lemma \ref{omoeqsimboJ}
we have that formula \eqref{samba30}
implies the \eqref{samba10}
by setting $\tilde{\mathfrak{m}}^{(j)}_{k}=\mathfrak{m}^{(j)}_{k}$ for $0\leq k\leq j$.
In order to prove the \eqref{samba11} we reason as follows.
Recalling \eqref{samba9}, \eqref{samba10}, we define the operator 
\[
Q^{\tau}(W):=\ii E\Omega W+\ii E\opbw(\tilde{\mathfrak{M}}^{(j)}(W;\x))W\,.
\]
By the proof of Proposition \ref{prop:Pollini} (see \eqref{deadwood20}-\eqref{deadwood22}),
we deduce that the smoothing remainder in \eqref{samba8}
has the form
\begin{equation}\label{samba34}
\tilde{\mathcal{Q}}^{(j)}(W)W:=V^{\tau}\circ\tilde{\mathcal{A}}_{j}^{-\tau}(W)_{|\tau=1}
\end{equation}
where $V^{\tau}$ solves the problem (see \eqref{samba6bis})
\begin{equation*}
\left\{\begin{aligned}
&\pa_{\tau}V^{\tau}(U)=(d_{Z}X_{\mathcal{G}_j})(\tilde{\mathcal{A}}_j^{\tau})
\big[ V^{\tau}\big]\,,\\
&V^{0}(U)=-\mathcal{Q}^{(j)}(U)U\,.
\end{aligned}\right.
\end{equation*}
Since the generator $X_{\mathcal{G}_j}$ has degree of homogeneity
equals to $j+1$, the Taylor expansion of the remainder 
$\tilde{\mathcal{Q}}^{(j)}$ in \eqref{samba34}
coincides with the expansion in degree of homogeneity of the 
initial remainder $\mathcal{Q}^{(j)}$. Therefore, recalling \eqref{samba5},
we have that \eqref{samba11} holds
by setting
$\tilde{\mathcal{Q}}^{(j)}_{k}=\mathcal{Q}^{(j)}_{k}$ for $2\leq k\leq j$.
\end{proof}

\subsubsection{Elimination of j-homogeneous smoothing 
operators}\label{jthtermBNFsmooth}
Consider the vector field $\tilde{\mathcal{Y}}^{(j)}$
in \eqref{samba8}.
The aim of this section is to eliminate the non resonant terms
of degree of homogeneity
$j+1$ of $\tilde{\mathcal{Y}}^{(j)}$.
In view of Lemma \ref{lem:Pollinijth} we have that 
such terms appear only in the smoothing 
remainder $\tilde{\mathcal{Q}}^{(j)}$ 
(see \eqref{samba10}, \eqref{samba11}).

We now consider the flow
$\mathcal{A}_j^{\tau}$ of 
\begin{equation}\label{salsa1}
\left\{
\begin{aligned}
&\pa_{\tau}\mathcal{A}_j^{\tau}(W)=\mathcal{Q}^{(j)}_{aux}(\mathcal{A}_j^{\tau}(W))\mathcal{A}_1^{\tau}(W)\\
&\mathcal{A}_1^{0}(W)=W\,,
\end{aligned}\right.
\end{equation}
where $\mathcal{Q}^{(j)}_{aux}\in \tilde{\mathcal{R}}^{-\rho}_{j}$.
Assume also that the vector field 
$\mathcal{Q}_{aux}^{(j)}(W)W$
 is Hamiltonian, i.e. there is a map $\mathtt{Q}^{(j)}\in 
 \tilde{\mathcal{M}}_j\otimes\mathcal{M}_2(\CCC)$
 such that (recall \eqref{X_H})
 \begin{equation*}
 \mathcal{Q}_{aux}^{(j)}(W)W=X_{\mathcal{C}}(W)\,,\qquad \mathcal{C}(W)
 :=\int_{\mathbb{T}}\mathtt{Q}^{(j)}(W)W\cdot\ov{W}dx\,.
 \end{equation*}
 Finally
 assume that $\mathcal{Q}^{(j)}_{aux}$ has the form 
\eqref{smooth-terms2}, \eqref{R2epep'} with $p=j$ and 
 coefficients  (see \eqref{BNF5}) 
 \begin{equation}\label{salsa3}
({\mathcal{Q}}_{aux}^{(j)}(W))_{\s,k}^{\s',k'}
=\sum_{\substack{\s_i\in\{\pm\},n_i\in\mathbb{Z} \\ \sum_{i=1}^{j}\s_in_i=\s k-\s'k'}}
((\mathtt{q}_{aux}^{(j)})_{n_1,\ldots,n_{j}}^{\s_1\cdots\s_{j}} )_{\s,k}^{\s',k'} 
w_{n_1}^{\s_1}\ldots w_{n_j}^{\s_j}\,,\qquad
((\mathtt{q}_{aux}^{(j)})_{n_1,\ldots,n_{j}}^{\s_1\cdots\s_{j}} )_{\s,k}^{\s',k'} \in \mathbb{C}\,.
\end{equation}

\noindent In the following lemma we show that it is possible to choose the coefficients
$((\mathtt{q}_{aux}^{(j)})_{n_1,\ldots,n_{j}}^{\s_1\cdots\s_{j}} )_{\s,k}^{\s',k'} $
 in \eqref{salsa3}
in such a way that the vector field in \eqref{samba8} does not
contain any non resonant  monomials of degree $j+1$.

\begin{lemma}\label{prop:Pollinisalsa}
For $r>0$ small enough 
there exists $\mathcal{Q}_{aux}^{(j)}\in \tilde{\mathcal{R}}^{-\rho}_j$, 
of the form \eqref{salsa3},
such that the following holds.
Setting (recall \eqref{samba7})
\begin{equation}\label{salsa4}
\begin{aligned}
&W_{j+1}:={\mathcal{A}}_{j}(\tilde{W}_{j}):={\mathcal{A}}_{j}^{1}(\tilde{W}_j)\,,\\
&\mathcal{Y}^{(j+1)}(W_{j+1}):=P^{\tau}(W_{j+1})_{|\tau=1}
:=d{\mathcal{A}}_{j}^{\tau}\big({\mathcal{A}}_{j}^{-\tau}(W_{j+1})\big)
\big[ \tilde{\mathcal{Y}}^{(j)}({\mathcal{A}}_{j}^{-\tau}(W_{j+1}))\big]_{|\tau=1}\,,
\end{aligned}
\end{equation}
we have that
\begin{equation*}
\left\{
\begin{aligned}
&\dot{W}_{j+1}=\mathcal{Y}^{(j+1)}(W_{j+1})
:=\ii E \Omega W_{j+1}+
\ii E\opbw\big( \mathfrak{M}^{(j+1)}(W_{j+1};\x)\big)[W_{j+1}]
+\mathcal{Q}^{(j+1)}(W_{j+1})[W_{j+1}]\\
&W_{j+1}(0)=\mathcal{A}_1(\tilde{W}_j(0))
\end{aligned}\right.
\end{equation*}
where $\mathcal{Q}^{(j+1)}\in\Sigma\mathcal{R}^{-\rho}_2[r,N]
\otimes\mathcal{M}_2(\mathbb{C})$ and
${\mathfrak{M}}^{(j+1)}\in 
\Sigma\Gamma^{m}_2[r,N]
\otimes\mathcal{M}_2(\mathbb{C})$
is independent of $x\in \mathbb{T}$, 
real valued  and has the form
\begin{align}
&{\mathfrak{M}}^{(j+1)}({W}_{j+1};\x)
:=\sm{{\mathfrak{m}}^{(j+1)}({W}_{j+1};\x)}{0}{0}{{\mathfrak{m}}^{(j+1)}({W}_{j+1};-\x)}
\,,\nonumber
\\
&{\mathfrak{m}}^{(j+1)}({W}_{j+1};\x)
:=\sum_{k=2}^{j}\bral{\mathfrak{m}}^{(j+1)}_{k}\brar({W}_{j};\x)+
{\mathfrak{m}}^{(j+1)}_{j+1}({W}_{j};\x)+
{\mathfrak{m}}^{(j+1)}_{\geq j+2}({W}_{j};\x)\,,\label{salsa7}\\
& {\mathfrak{m}}^{(j+1)}_{k}\in \tilde{\Gamma}^{m}_{k}\,,\;\; k=2,\ldots,j+1\,,\;\;\;
{\mathfrak{m}}^{(j+1)}_{\geq j+2}\in \Sigma\Gamma^{m}_{j+2}[r,N]\,.\nonumber
\end{align}
Moreover the remainder ${\mathcal{Q}}^{(j+1)}$ has the form
\begin{equation}\label{salsa8}
\begin{aligned}
&{\mathcal{Q}}^{(j+1)}({W}_{j+1})
=\sum_{k=2}^{j}\bral{\mathcal{Q}}^{(j+1)}_{k}\brar({W}_{j+1})+
\tilde{\mathcal{Q}}^{(j+1)}_{j+1}({W}_{j+1})
+{\mathcal{Q}}^{(j+1)}_{\geq j+2}({W}_{j+1})\,,\\
&{\mathcal{Q}}^{(j+1)}_{k}\in \tilde{\mathcal{R}}^{-\rho}_{k}
\otimes\mathcal{M}_2(\CCC)\,,\;\;\;k=2,\ldots,j+1\,,\;\;\;
{\mathcal{Q}}^{(j+1)}_{\geq j+2}\in \Sigma\mathcal{R}^{-\rho}_{j+1}[r,N]
\otimes\mathcal{M}_2(\CCC)\,.
\end{aligned}
\end{equation}
Finally, for any $s\geq s_0$, the maps $\mathcal{A}^{\pm1}_{j}$ 
are symplectic 
and satisfy
\begin{equation}\label{salsa9}
\|\mathcal{A}_{j}^{\pm1}(U)\|_{H^{s}}\leq \|U\|_{H^{s}}(1+C\|U\|_{H^{s_0}})\,,
\end{equation}
for some constant $C>0$ depending  on $s$.
\end{lemma}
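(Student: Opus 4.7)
The plan is to follow the template of Lemma \ref{prop:Pollini100}, replacing the quadratic generator with one of homogeneity degree $j$. I would first apply Proposition \ref{prop:PolliniSmooth} to the vector field $\tilde{\mathcal{Y}}^{(j)}$ in \eqref{samba8}, viewed as a system of the form \eqref{Pollini} with $\mathfrak{N} \rightsquigarrow \tilde{\mathfrak{M}}^{(j)}$, $\mathcal{R} \rightsquigarrow \tilde{\mathcal{Q}}^{(j)}$. This gives, for any $\mathcal{Q}_{aux}^{(j)} \in \tilde{\mathcal{R}}^{-\rho}_j$ generating an Hamiltonian flow, a new system with a symbol $\mathfrak{m}^{(j+1)}(W_{j+1};\x) = \tilde{\mathfrak{m}}^{(j)}(\mathcal{A}_j^{-1}(W_{j+1});\x)$ in $\Sigma\Gamma^m_2[r,N]$ and a smoothing remainder in $\Sigma\mathcal{R}^{-\rho}_1[r,N] \otimes \mathcal{M}_2(\C)$. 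Next, since the flow $\mathcal{A}_j^\tau$ is $C^k$ in $\tau$ with a generator whose action on $W$ has homogeneity degree $j+1$, Taylor expanding $P^\tau(W_{j+1})$ at $\tau=0$ as in \eqref{cannon26} yields
\[
\mathcal{Y}^{(j+1)}(W_{j+1}) = \tilde{\mathcal{Y}}^{(j)}(W_{j+1}) + \big[\mathcal{Q}_{aux}^{(j)}(W_{j+1}) W_{j+1}, \tilde{\mathcal{Y}}^{(j)}(W_{j+1})\big] + (\text{homogeneity} \geq j+2),
\]
and the only contribution of degree exactly $j+1$ to the remainder from the commutator comes from bracketing with $\ii E \Omega W_{j+1}$. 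Imposing that the non-resonant part of this $(j+1)$-homogeneous remainder vanishes gives the homological equation
\[
\tilde{\mathcal{Q}}^{(j)}_j(W) W + \big[\mathcal{Q}_{aux}^{(j)}(W) W,\, \ii E \Omega W\big] = \bral \tilde{\mathcal{Q}}^{(j)}_j \brar(W)W.
\]

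Writing $\tilde{\mathcal{Q}}^{(j)}_j$ and $\mathcal{Q}_{aux}^{(j)}$ in the Fourier representation \eqref{BNF5}, \eqref{salsa3}, this equation reduces, for each tuple $(\vec{\s},\s,\s',\vec{n},k,k')$ with $\sum_i \s_i n_i + \s'k' = \s k$, to the scalar relation
\[
\big((\mathtt{q}^{(j)}_j)^{\s_1\cdots\s_j}_{n_1,\ldots,n_j}\big)^{\s',k'}_{\s,k} + \bigl(\ii \s' \omega_{k'} - \ii \s \omega_k + \textstyle\sum_{i=1}^j \ii \s_i \omega_{n_i}\bigr)\big((\mathtt{q}_{aux}^{(j)})^{\s_1\cdots\s_j}_{n_1,\ldots,n_j}\big)^{\s',k'}_{\s,k} = 0
\]
outside $\mathcal{S}_{j+2}$, and we set the $\mathcal{Q}_{aux}^{(j)}$ coefficients to zero on $\mathcal{S}_{j+2}$. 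The non-resonance hypothesis \eqref{nonresOMEGA} gives the divisor the lower bound $\max(\langle n_i\rangle,\langle k\rangle,\langle k'\rangle)^{-N_0}$, and the argument based on Lemma 6.5 of \cite{BFP} (invoked exactly as in the proof of Lemma \ref{prop:Pollini100}) shows that the resulting $\mathcal{Q}_{aux}^{(j)}$ still belongs to $\tilde{\mathcal{R}}^{-\rho}_j$, provided $\rho$ is chosen large enough to absorb the polynomial loss $N_0$. The Hamiltonian property of $\mathcal{Q}_{aux}^{(j)}$ is then ensured, as in Lemma \ref{prop:Pollini100}, by realizing $\tilde{\mathcal{Q}}^{(j)}_j(W)W = X_A(W)$ for some Hamiltonian $A$ arising from a multilinear map in $\tilde{\mathcal{M}}_j \otimes \mathcal{M}_2(\C)$, and observing that $\mathcal{Q}_{aux}^{(j)}(W)W = X_{\mathrm{ad}_{H_2}^{-1} A}(W)$ where $H_2 = \frac{1}{2}\int \Omega W \cdot \ov{W}\,dx$; hence $\mathcal{A}_j^\tau$ is symplectic and \eqref{salsa9} follows from Theorem \ref{flussononlin}.

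It remains to check the structural expansions \eqref{salsa7}--\eqref{salsa8} at degrees $k\leq j$. Since the generator of $\mathcal{A}_j^\tau$ has homogeneity $\geq j+1$, the non-linear commutator $[\mathcal{Q}_{aux}^{(j)}(W)W,\cdot]$ in \eqref{samba5}-type expansions strictly raises degrees by at least $j$; consequently the Taylor expansion of $P^\tau$ in $\tau$ shows that, for $k\leq j$, the $k$-homogeneous components of $\mathfrak{m}^{(j+1)}$ and $\mathcal{Q}^{(j+1)}$ coincide with those of $\tilde{\mathfrak{m}}^{(j)}$ and $\tilde{\mathcal{Q}}^{(j)}$, which are already in resonant normal form by the inductive hypotheses \eqref{samba10}--\eqref{samba11}. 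The $(j+1)$-homogeneous remainder is, by construction of $\mathcal{Q}_{aux}^{(j)}$, exactly $\bral \tilde{\mathcal{Q}}^{(j)}_j\brar$, which by Remark \ref{montefuffa} retains the Hamiltonian (hence real-to-real) structure. The new $(j+1)$-homogeneous symbol is $\tilde{\mathfrak{m}}^{(j)}_{j+1}$ plus corrections coming from the composition $\tilde{\mathfrak{m}}^{(j)} \circ \mathcal{A}_j^{-1}$ and from $\int_0^1 (d_Z b_j)[\,\cdot\,]$ in \eqref{samba14} evaluated against higher-order Taylor terms; all of these land in $\tilde{\Gamma}^m_{j+1}$ since the generator is at least $j$-homogeneous.

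The main obstacle is the verification that $\mathcal{Q}_{aux}^{(j)}$ truly belongs to $\tilde{\mathcal{R}}^{-\rho}_j$, because the inverse of the small divisor entails a polynomial loss of order $N_0$ in $\max(\langle n_i\rangle,\langle k\rangle,\langle k'\rangle)$. This loss is absorbed by the smoothing character of $\tilde{\mathcal{Q}}^{(j)}_j$ using the mechanism of Lemma 6.5 of \cite{BFP}, which exploits the fact that in estimate \eqref{omoresti1} the numerator involves $\max_2$ and not $\max$; the divisor bound is controlled by $\max$, allowing us to pay the loss while keeping a net smoothing $-\rho + N_0$ degree. Once this is established, the remaining bookkeeping on degrees is, as detailed above, a direct consequence of the fact that the generator is homogeneous of sufficiently high order to avoid interfering with the already-normalized lower-order pieces.
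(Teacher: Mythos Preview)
Your overall architecture is correct and matches the paper's: apply Proposition \ref{prop:PolliniSmooth}, Taylor expand $P^\tau$ to identify the $(j+1)$-homogeneous contribution, solve the homological equation in Fourier, and invoke Lemma~6.5 of \cite{BFP} for the small-divisor loss. The structural bookkeeping on degrees $k\le j$ is also right, for the reason you give.

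There is, however, a real gap in your verification that $\mathcal{Q}^{(j)}_{aux}(W)W$ is Hamiltonian. You write ``realizing $\tilde{\mathcal{Q}}^{(j)}_j(W)W = X_A(W)$ for some Hamiltonian $A$'' by analogy with Lemma~\ref{prop:Pollini100}. That analogy fails: in the $j=1$ case the linear symbol had already been \emph{eliminated} by Lemma~\ref{omoeqBB1}, so the entire quadratic part of $\widetilde{\mathcal{Y}}^{(1)}$ was the remainder, and hence Hamiltonian. Here, by contrast, after Lemma~\ref{lem:Pollinijth} the $(j+1)$-homogeneous part of $\tilde{\mathcal{Y}}^{(j)}$ is
\[
\ii E\,\opbw\!\big(\bral \tilde{\mathfrak{m}}^{(j)}_j\brar(W;\x)\big)W \;+\; \tilde{\mathcal{Q}}^{(j)}_j(W)W,
\]
and only this \emph{sum} is a Hamiltonian vector field; $\tilde{\mathcal{Q}}^{(j)}_j(W)W$ alone is not in general of the form $X_A$.

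The paper repairs this as follows. Write the Hamiltonian of the $(j+1)$-homogeneous part as $A=A_1+A_2$, with $A_1$ the functional built from the resonant symbol $\bral\tilde{\mathfrak m}^{(j)}_j\brar$. By Proposition~\ref{stimedifferent3} (and Lemma~\ref{stimedifferent2Multi}), $X_{A_1}$ equals the para\-differential term plus a smoothing remainder $R_1$; the key point, from Remark~\ref{montefuffa}, is that because the symbol is resonant, $R_1=\bral R_1\brar$ is itself resonant. Hence $\tilde{\mathcal Q}^{(j)}_j = R_1 + X_{A_2}$ and the \emph{non-resonant} part satisfies $\tilde{\mathcal Q}^{(j),\perp}_j = X_{A_2}-\bral X_{A_2}\brar$, which is Hamiltonian. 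The homological equation is then posed for $\tilde{\mathcal Q}^{(j),\perp}_j$ (cf.\ \eqref{salsa31}), and its solution is $X_{\mathrm{ad}_{H_2}^{-1}(A_2^{\perp})}$, hence Hamiltonian. Your Fourier solution is the same object, but the justification of its Hamiltonian character needs this extra step.

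A minor remark: the small-divisor loss is absorbed into the parameter $\mu$ in \eqref{omoresti1}, not into $\rho$; so $\mathcal Q_{aux}^{(j)}$ genuinely stays in $\tilde{\mathcal R}^{-\rho}_j$ rather than $\tilde{\mathcal R}^{-\rho+N_0}_j$.
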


\begin{proof}
Notice that the vector field $\tilde{\mathcal{Y}}^{(j)}$ in \eqref{samba7}, \eqref{samba8}
has the same form of $\mathcal{X}$ in \eqref{Pollini},
with $\mathfrak{N}\rightsquigarrow \tilde{\mathfrak{M}}^{(j)}$,
$\mathcal{R}\rightsquigarrow \tilde{\mathcal{Q}}^{(j)}$,
$U\rightsquigarrow \tilde{W}_{j}$.
The generator $\mathcal{Q}^{(j)}_{aux}$ in \eqref{salsa1} 
has the same properties of the 
generator $\mathcal{Q}^{(p)}_{aux}$, $p=j$,
 in \eqref{Pollini44bis}.
Therefore Proposition \ref{prop:PolliniSmooth} applies.
As a consequence we obtain that
\begin{equation}\label{salsa10}
\dot{W}_{j+1}=\ii E \Omega W_{j+1}+
\ii E\opbw\big( \mathfrak{M}^{(j+1)}(W_{j+1};\x)\big)[W_{j+1}]
+\mathcal{Q}^{(j+1)}(W_{j+1})[W_{j+1}]\,,
\end{equation}
where $\mathcal{Q}^{(j+1)}\in\Sigma\mathcal{R}^{-\rho}_2[r,N]
\otimes\mathcal{M}_2(\CCC)$, 
$\mathfrak{M}^{(j+1)}$ has the form \eqref{salsa7} with
(see \eqref{Pollini8bis})
\begin{equation}\label{salsa11}
\mathfrak{m}^{(j+1)}(\tau,W_{j+1};\x)
:=\tilde{\mathfrak{m}}^{(j+1)}(\mathcal{A}_{j}^{-\tau}(W_{j+1});\x)\in 
\Sigma\Gamma^{m}_2[r,N]\,.
\end{equation}
Reasoning as in \eqref{samba15}, 
since the generator in \eqref{salsa1} has homogeneity $j+1$, 
we deduce that the flow ${\mathcal{A}}_j^{\tau}$
of \eqref{samba6} is such that
\begin{align}
&{\mathcal{A}}_j^{\tau}(Z)=Z+N_j^{(3)}(\tau,Z)[Z]\,,\qquad
{\mathcal{A}}_j^{-\tau}(Z)=Z+N_j^{(4)}(\tau,Z)[Z]\,, \label{salsa12}
\\& N_j^{(3)},N_{j}^{(3)}
\in\Sigma\mathcal{M}_{j}[r,N]\otimes\mathcal{M}_2(\mathbb{C})\,.\nonumber
\end{align}
Then, using \eqref{salsa11}, \eqref{salsa12} and reasoning as in \eqref{accendino}, 
we deduce that the expansion \eqref{salsa7}
holds by setting
$\mathfrak{m}^{(j+1)}_{k}=\tilde{\mathfrak{m}}^{(j)}_k$ for $2\leq k\leq j$.
Let us check the \eqref{salsa8}.
In order to provide an explicit expression of the terms
of homogeneity smaller than $j$ in the remainder $\mathcal{Q}^{(j+1)}$
we reason as in \eqref{cannon26}.
We Taylor expand the vector field $\mathcal{Y}^{(j+1)}$ by using formula \eqref{salsa4}.
We get
\begin{equation*}
\mathcal{Y}^{(j+1)}(W_{j+1})=
\tilde{\mathcal{Y}}^{(j)}(W_{j+1})+\big[\mathcal{Q}_{aux}^{(j)}(W_{j+1})W_{j+1},
\tilde{\mathcal{Y}}^{(j)}(W_{j+1})\Big]+\int_0^{1}(1-\s)\pa_{\s}^{2}P^{\s}(W_{j+1})d\s\,.
\end{equation*}
Recalling \eqref{samba8}-\eqref{samba11} we obtain the expansion
\begin{equation}\label{salsa14}
\begin{aligned}
\mathcal{Y}^{(j+1)}(W_{j+1})&=\ii E\Omega W_{j+1}+
\sum_{k=1}^{j}
\opbw\big(
\sm{\bral \tilde{\mathfrak{m}}_{k}^{(j)}\brar(W_{j+1};\x)}{0}{0}{\bral 
\tilde{\mathfrak{m}}_{k}^{(j)}\brar(W_{j+1};-\x)}\big)W_{j+1}+
\sum_{k=1}^{j-1}\bral \tilde{\mathcal{Q}}^{(j)}_k\brar(W_{j+1})W_{j+1}\\
&+\mathcal{Q}_{j}^{(j)}(W_{j+1})W_{j+1}+\mathtt{M}_{>j}(W_{j+1})W_{j+1}\,,
\end{aligned}
\end{equation}
where $\mathtt{M}_{> j}$ is some map in 
$\Sigma\mathcal{M}_{j+1}[r,N]\otimes\mathcal{M}_2(\CCC)$ and 
\begin{equation}\label{salsa15}
\mathcal{Q}_{j}^{(j+1)}(W_{j+1})W_{j+1}:=\tilde{\mathcal{Q}}_{j}^{(j)}(W_{j+1})W_{j+1}+
\big[\mathcal{Q}_{aux}^{(j)}(W_{j+1})W_{j+1},\ii E\Omega W_{j+1}\big]\,.
\end{equation}
The expansion \eqref{salsa14} coincide with the expansion
of the vector field in \eqref{salsa10}
in multilinear maps. Notice that
the terms of homogeneity $j+1$
\[
\opbw\big(
\sm{\bral \tilde{\mathfrak{m}}_{j}^{(j)}\brar(W_{j+1};\x)}{0}{0}{\bral 
\tilde{\mathfrak{m}}_{k}^{(j)}\brar(W_{j+1};-\x)}\big)W_{j+1}
\]
are \emph{resonant} (see Def. \ref{def:resonant}), 
i.e. they are  already in Birkhoff normal form. Hence the only non-resonant
terms of degree $j+1$ belongs to 
$\mathcal{Q}_{j}^{(j+1)}(W_{j+1})W_{j+1}$ in \eqref{salsa15}.
By the definition of the non-linear commutator \eqref{nonlinCommu}
one can easily note that the term 
$\big[\mathcal{Q}_{aux}^{(j)}(W_{j+1})W_{j+1},\ii E\Omega W_{j+1}\big]$
does not contain any \emph{resonant} monomials. Then we define
\begin{equation}\label{salsa16}
\tilde{\mathcal{Q}}_{j}^{(j), \perp}(W_{j+1})W_{j+1}:=
\tilde{\mathcal{Q}}_{j}^{(j)}(W_{j+1})W_{j+1}-
\bral\tilde{\mathcal{Q}}_{j}^{(j)}\brar(W_{j+1})W_{j+1}
\end{equation} 
the non-resonant part of \eqref{salsa15}.
We claim that the vector field 
$\tilde{\mathcal{Q}}_{j}^{(j), \perp}(W_{j+1})W_{j+1}$ is Hamiltonian. 
Indeed we have that 
$\tilde{\mathcal{Y}}^{(j)}$ is Hamiltonian. 
Hence also its terms of homogeneity
$(j+1)$, i.e. (see \eqref{salsa16})
\begin{equation}\label{salsa20}
\tilde{\mathcal{Y}}^{(j)}_{j}(W_{j+1}):=\opbw\big(
\sm{\bral \tilde{\mathfrak{m}}_{j}^{(j)}\brar(W_{j+1};\x)}{0}{0}{\bral 
\tilde{\mathfrak{m}}_{k}^{(j)}\brar(W_{j+1};-\x)}\big)W_{j+1}+\bral\tilde{\mathcal{Q}}_{j}^{(j)}\brar(W_{j+1})W_{j+1}+\tilde{\mathcal{Q}}_{j}^{(j), \perp}(W_{j+1})W_{j+1}
\end{equation}
 are Hamiltonian. This means that there is a multilinear map 
 $\mathtt{M}_{j}\in \tilde{\mathcal{M}}_{j}$ such that
$  \tilde{\mathcal{Y}}^{(j)}_{j}(W_{j+1})$ 
is the Hamiltonian vector field of the Hamiltonian function 
$A(W_{j+1}):=A_1(W_{j+1})+A_{2}(W_{j+1})$ with
 \[
 \begin{aligned}
 A_1(W_{j+1})&:=\frac{1}{2}\int_{\mathbb{T}}\opbw\big(
\sm{\bral \tilde{\mathfrak{m}}_{j}^{(j)}\brar(W_{j+1};\x)}{0}{0}{\bral 
\tilde{\mathfrak{m}}_{k}^{(j)}\brar(W_{j+1};-\x)}\big)W_{j+1}\cdot\ov{W}_{j+1}dx\,,\\
A_2(W_{j+1})&:=
\int_{\mathbb{T}}\mathtt{M}_j(W_{j+1})W_{j+1}\cdot\ov{W}_{j+1}dx\,.
\end{aligned}
 \]
By Proposition \ref{stimedifferent3} and Lemma \ref{stimedifferent2Multi}
the vector field of $A_1(W_{j+1})$ has the form
\[
X_{A_1}(W_{j+1})=\opbw\big(
\sm{\bral \tilde{\mathfrak{m}}_{j}^{(j)}\brar(W_{j+1};\x)}{0}{0}{\bral 
\tilde{\mathfrak{m}}_{k}^{(j)}\brar(W_{j+1};-\x)}\big)W_{j+1}+R_1(W_{j+1})W_{j+1}\,,
\qquad
R_1(W_{j+1})\equiv\bral R_1\brar(W_{j+1})\,,
\]
for some $R_{1}\in \tilde{\mathcal{R}}^{-\rho}_j\otimes\mathcal{M}_2(\CCC)$.
On the other hand the vector field of $A_2(W_{j+1})$ has the form
\[
X_{A_2}(W_{j+1})=\bral R_2\brar(W_{j+1})W_{j+1}
+ \Big(R_2(W_{j+1})W_{j+1}-\bral R_2\brar(W_{j+1})W_{j+1}\Big)\,,
\]
for some multilinear map $R_2\in \tilde{\mathcal{M}}_j\otimes\mathcal{M}_2(\CCC)$.
Then, recalling \eqref{salsa20}, we must have
\begin{equation}\label{pantera}
\begin{aligned}
&\bral \tilde{\mathcal{Q}}_j^{(j)}\brar(W_{j+1})W_{j+1}\equiv
\bral R_1\brar(W_j)W_{j+1}+\bral R_2\brar(W_{j+1})W_{j+1}\,,\\
&\tilde{\mathcal{Q}}_{j}^{(j), \perp}(W_{j+1})W_{j+1}\equiv R_2(W_{j+1})W_{j+1}
-\bral R_2\brar(W_{j+1})W_{j+1}\,,
\end{aligned}
\end{equation}
implying that $ \tilde{\mathcal{Q}}_{j}^{(j), \perp}(W_{j+1})W_{j+1}$ is Hamiltonian 
(see Remark \ref{montefuffa}).
In order to conclude the proof of Lemma \ref{prop:Pollinisalsa} we need the following.
\begin{lemma}{\bf (Homological equation).}\label{lem:salsa}
There is an Hamiltonian vector field of the form
$\mathcal{Q}^{(j)}_{aux}(W)W$ with 
$\mathcal{Q}^{(j)}_{aux}\in \tilde{\mathcal{R}}^{-\rho}_j\otimes\mathcal{M}_{2}({\CCC})$
such that (see \eqref{salsa16}, \eqref{salsa15})
\begin{equation}\label{salsa31}
\tilde{\mathcal{Q}}_{j}^{(j), \perp}(W)W
+\big[\mathcal{Q}_{aux}^{(j)}(W)W,\ii E\Omega W\big] =0\,.
\end{equation}
\end{lemma}

\begin{proof}
We look for a solution $\mathcal{Q}^{(j)}_{aux}$ in the class of multilinear 
operators $\tilde{\mathcal{R}}^{-\rho}_{j}\otimes\mathcal{M}_2(\CCC)$
of the form \eqref{smooth-terms2}, \eqref{R2epep'}
with coefficients
 \begin{align} \label{salsa333}
 ({\mathcal{Q}}_{aux}^{(j)}(W))_{\s,k}^{\s',k'} :=
 \frac{1}{(2\pi)^{j}}
 \sum_{\substack{\s_i\in\{\pm\}, n_i\in\Z \\ 
 \sum_{i=1}^{p}\s_i n_i=\s k-\s'k' }}
 \big( ({\mathtt{q}}_{aux}^{(j)})_{n_1,\ldots,n_j}^{\s_1\cdots \s_{j}}\big)_{\s,k}^{\s',k'}
  w_{n_1}^{\s_1}\ldots w_{n_j}^{\s_{j}}  \, ,   \quad  
  k,k'\in \Z   \, , 
 \end{align}
 for some 
 $ \big( ({\mathtt{q}}_{aux}^{(j)})_{n_1,\ldots,n_j}^{\s_1\cdots \s_{j}}\big)_{\s,k}^{\s',k'}\in\CCC$.
For convenience we write
\[
\mathcal{Q}^{(j)}_{aux}(W)W=\mathcal{Q}^{(j)}_{aux}(\underbrace{W,\ldots,W}_{j-times})W\,.
\]
With this notation we have, for any $Y=\vect{y}{\bar{y}}$, 
\[
d_W\Big(\mathcal{Q}^{(j)}_{aux}(W)W\Big)[Y]=
\mathcal{Q}^{(j)}_{aux}(W,\ldots,W)Y
+\sum_{k=1}^{j}\mathcal{Q}^{(j)}_{aux}(W,\ldots,\underbrace{Y}_{k-th},\ldots,W)W\,.
\]
Therefore (recall \eqref{nonlinCommu}) the equation 
\eqref{salsa31} reads
\begin{equation}\label{salsa31tris}
\begin{aligned}
(\tilde{\mathcal{Q}}_{j}^{(j), \perp}(W))_{\s}^{\s'}w_{\s'}
&-\ii \s\Omega( \tilde{\mathcal{Q}}_{j}^{(j), \perp}(W))_{\s}^{\s'}w^{\s'}
+\ii\s' ( \tilde{\mathcal{Q}}_{j}^{(j), \perp}(W))_{\s}^{\s'}\Omega w^{\s'}\\
&+
\sum_{k=1}^{j}
(\mathcal{Q}^{(j)}_{aux}(W,\ldots,\ii E\Omega W,\ldots,W))_{\s}^{\s'}w^{\s'}=0\,.
\end{aligned}
\end{equation}
for any $\s,\s'\in\{\pm\}$.
Recall that $(\tilde{\mathcal{Q}}_{j}^{(j), \perp}(W))_{\s}^{\s'}$
has the form \eqref{R2epep'} with coefficients
 \begin{align*} 
 (\tilde{\mathcal{Q}}_{j}^{(j), \perp}(W))_{\s,k}^{\s',k'} :=
 \frac{1}{(2\pi)^{j}}
 \sum_{\substack{\s_i\in\{\pm\}, n_i\in\Z \\ 
 \sum_{i=1}^{p}\s_i n_i=\s k-\s'k' }}
 \big( (\tilde{\mathtt{q}}_{j}^{(j)})_{n_1,\ldots,n_j}^{\s_1\cdots \s_{j}}\big)_{\s,k}^{\s',k'}
  w_{n_1}^{\s_1}\ldots w_{n_j}^{\s_{j}}  \, ,   \quad  
  k,k'\in \Z   \, , 
 \end{align*}
for 
$\big( (\tilde{\mathtt{q}}_{j}^{(j)})_{n_1,\ldots,n_j}^{\s_1\cdots \s_{j}}\big)_{\s,k}^{\s',k'}\in\CCC$
and where the sum is restricted to indexes outside the 
set $\mathcal{S}_{j}$ (see \eqref{nonresOMEGA3}).
Passing to the Fourier coefficients
the equation \eqref{salsa31tris}
becomes (recall \eqref{salsa333})
\begin{equation*}
 \big( (\tilde{\mathtt{q}}_{j}^{(j)})_{n_1,\ldots,n_j}^{\s_1\cdots \s_{j}}\big)_{\s,k}^{\s',k'}
 +
\ii \Big(\sum_{i=1}^{j}\s_i\omega_{n_i}-\s k+\s'k'\Big)
  \big( ({\mathtt{q}}_{aux}^{(j)})_{n_1,\ldots,n_j}^{\s_1\cdots \s_{j}}\big)_{\s,k}^{\s',k'}=0\,,
\end{equation*}
for any indexes satisfying
\begin{equation}\label{salsa41}
\sum_{i=1}^{j}\s_i n_i=\s k-\s'k'\,,\qquad 
(\s_1,\ldots,\s_{j},\s,\s', n_1,\ldots,k,k')\notin\mathcal{S}_{j}\,.
\end{equation}
Therefore we define the operator $\mathcal{Q}^{(j)}_{aux}(W)$ as in \eqref{salsa333}
with coefficients
\begin{equation}\label{salsa42}
  \big( ({\mathtt{q}}_{aux}^{(j)})_{n_1,\ldots,n_j}^{\s_1\cdots \s_{j}}\big)_{\s,k}^{\s',k'}=
  \frac{
  - \big( (\tilde{\mathtt{q}}_{j}^{(j)})_{n_1,\ldots,n_j}^{\s_1\cdots \s_{j}}\big)_{\s,k}^{\s',k'}}
  {\ii(\s_1n_1+\ldots+\s_{j}n_{j}+\s'k'-\s k)}
\end{equation}
for indexes satisfying \eqref{salsa41} and $0$ otherwise.
Thanks to \eqref{salsa42}, the bound \eqref{nonresOMEGA}
and reasoning as in Lemma $6.5$ in \cite{BFP}
one can check that 
$\mathcal{Q}^{(j)}_{aux}\in \tilde{\mathcal{R}}^{-\rho}_j\otimes\mathcal{M}_{2}({\CCC})$.
Finally the vector field 
$\mathcal{Q}^{(j)}_{aux}(W)W$ si Hamiltonian 
since it solves \eqref{salsa31} and the field
$\tilde{\mathcal{Q}}_{j}^{(j), \perp}(W)W$ is Hamiltonian (see \eqref{pantera}).
\end{proof}
We conclude the proof of Lemma \ref{prop:Pollinisalsa}.
Thanks to Lemma \ref{lem:salsa} we have that
the operator ${\mathcal{Q}}_{j}^{(j+1)}(W)$ in 
\eqref{salsa15} is equal to
$\bral \tilde{\mathcal{Q}}_{j}^{(j)}\brar(W)$.
Hence the expansion \eqref{salsa8}
follows by \eqref{salsa10}, \eqref{salsa14}
setting ${\mathcal{Q}}^{(j+1)}_{k}=
\tilde{\mathcal{Q}}^{(j)}_{k}$ for $2\leq k\leq j$.
Since, by Lemma \ref{lem:salsa}, $\mathcal{Q}^{(j)}_{aux}(W)W$ is Hamiltonian
then the flow in \eqref{salsa1} 
is symplectic.
The estimates \eqref{salsa9} follow by Theorem \ref{flussononlin}.
This concludes the proof.
\end{proof}

\begin{proof}[{\bf Proof of Corollary \ref{thm:energy}}]
Consider the system \eqref{YYYN}.
Since 
$\mathcal{Q}_N\in\Sigma\mathcal{R}^{-\rho}_2[r,N]
\otimes\mathcal{M}_2(\mathbb{C})$ and 
$\mathfrak{M}^{(N)}$ belongs to 
$\Sigma\Gamma^{m}_{2}[r,N]\otimes\mathcal{M}_{2}(\CCC)$ we write
\begin{equation}\label{sarosaro2}
\begin{aligned}
&\mathfrak{M}^{(N)}(W;\x)
=\sum_{j=2}^{N-1}\mathfrak{M}_{j}^{(N)}(W;\x)+\mathfrak{M}^{(N)}_{N}(W;\x)\,,
\quad
\mathfrak{M}^{(N)}_{j}\in \widetilde{\Gamma}^{m}_{j}\otimes\mathcal{M}_2(\CCC)\,,
\;\;\;
\mathfrak{M}^{(N)}_{N}\in {\Gamma}^{m}_{N}[r]\otimes\mathcal{M}_2(\CCC)
\\
&\mathcal{Q}_{N}(W)=
\sum_{j=2}^{N-1}\mathcal{Q}_{N,j}(W)+\mathcal{Q}_{N,N}(W)\,,
\quad \mathcal{Q}_{N,j}\in \widetilde{\mathcal{R}}^{-\rho}_{j}\otimes\mathcal{M}_2(\CCC)\,,
\;\;\;
\mathcal{Q}_{N,N}\in {\mathcal{R}}^{-\rho}_{N}[r]\otimes\mathcal{M}_2(\CCC)\,.
\end{aligned}
\end{equation}
In particular (see \eqref{sarosaro}) we have 
\begin{equation}\label{sarosaroApp}
\begin{aligned}
&\mathfrak{M}^{(N)}_{N}(W;\x):=\left(
\begin{matrix}
\mathfrak{m}^{(N)}_{N}(W;\x) & 0\\
0 & {\mathfrak{m}^{(N)}_{N}(W;-\x) }
\end{matrix}
\right)\,,
\qquad \mathfrak{m}^{(N)}_{N}(W;\x)=\ov{\mathfrak{m}^{(N)}_{N}(W;\x)}\,,
\end{aligned}
\end{equation}
with $\mathfrak{m}^{(N)}_{N}\in \Gamma_{N}^{m}[r]$ independent of $x\in \mathbb{T}$. 
We also recall that $\mathcal{Q}_{N,N}$ is a real-to-real matrix of operators, hence we can write (recall \eqref{smooth-terms2}, \eqref{opeBarrato}, \eqref{vinello})
\begin{equation}\label{saroMafia10}
\mathcal{Q}_{N,N}=\Big( (\mathcal{Q}_{N,N})_{\s}^{\s'}\Big)_{\s,\s'\in\{\pm\}}\,,
\qquad (\mathcal{Q}_{N,N})_{\s}^{\s'}=\ov{(\mathcal{Q}_{N,N})_{-\s}^{-\s'}}\,.
\end{equation}
By Theorem \ref{thm:mainBNF} we know that the vector field $\mathcal{Y}_{N}$ 
in \eqref{YYYN} is Hamiltonian. Hence, recalling the expansions 
\eqref{sarosaro2} and Definition \ref{def:resonant}, 
we have (see \eqref{X_H})
\begin{equation}\label{sarosaro3}
\begin{aligned}
&\mathcal{Y}_{N}(W)=\ii J \nabla H^{(N)}(W)\,,\quad
H^{(N)}(W)=\frac{1}{2}\int_{\mathbb{T}}\Omega W\cdot\ov{W}+H^{(N)}_{<N}(W)+H^{(N)}_{\geq N}(W)\,,\\
&H^{(N)}_{<N}(W)=\sum_{j=2}^{N-1} H^{(N)}_{j}(W)\,,\qquad
H^{(N)}_{j}(W):=\int_{\mathbb{T}} M_{j}(W)W\cdot\ov{W}\,,
\;\;\;
H^{(N)}_{\geq N}(W):=\int_{\mathbb{T}} M_{N}(W)W\cdot\ov{W}
\end{aligned}
\end{equation}
for some multilinear maps 
$M_j\in \widetilde{\mathcal{M}}_{j}\otimes\mathcal{M}_{2}(\CCC)$
and a non-homogeneous map $M_{N}\in \mathcal{M}_N[r]\otimes\mathcal{M}_2(\CCC)$.
We deduce that the term with
 highest homogeneity  in $\mathcal{Y}_{N}$
is
\begin{equation}\label{saroMafia}
\ii E\opbw\big(\mathfrak{M}^{(N)}_{N}(W;\x)\big)W+\mathcal{Q}_{N,N}(W)W
=\ii J\nabla H_{\geq N}^{(N)}(W)\,,
\end{equation}
while at lower orders we have
\begin{equation*}
\ii E\Omega W+
\sum_{j=2}^{N-1}\left(\ii E\opbw\big(\bral \mathfrak{M}^{(N)}_{j}\brar(W;\x)\big)W
+
\bral\mathcal{Q}_{N,j}\brar(W)W\right)=
\ii E\Omega W+\sum_{j=2}^{N-2}\ii J\nabla H_{j}^{(N)}(W)\,.
\end{equation*}
Moreover (recall Def. \ref{def:resonant})
these terms having lower degree of homogeneity  are \emph{resonant}.
Therefore we must have that
the Hamiltonians $H_j^{(N)}$ have the following form.
For $j=2p$ 
 (recall \eqref{nonresOMEGA3}) we have
\begin{equation*} 
H_j^{(N)}(W)=
\sum_{
\substack{n_i\in \mathbb{Z},i=0,\ldots,j+1\\
\{|n_0|,\ldots,|n_{p}|\}=\{|n_{p+1}|,\ldots,|n_{j+1}|\}}
}
h_{n_0,\ldots,n_{j+1}}w_{n_0}\cdots w_{n_{p}}\ov{w_{n_{p+1}}} \cdots\ov{w_{n_{j+1}}}\,.
\end{equation*}
If $j$ is odd then $H_j^{(N)}\equiv0$. 
In particular the Hamiltonians $H_j^{(N)}$ are real valued and the coefficients
$h_{n_0,\ldots,n_{j+1}}\in \mathbb{C}$
 are invariant under permutations of the indexes $n_0,\ldots, n_{j+1}$.
Recalling  \eqref{Sobnorm2} we define
\begin{equation*} 
G(W):=\|w\|_{H^{s}}^{2}=\int_{\mathbb{T}}\langle D\rangle^{s}w\cdot
\langle D\rangle^{s}\ov{w}dx=
\sum_{j\in \mathbb{Z}} \langle j \rangle^{2s} |w_{j}|^{2} \,.
\end{equation*}
It is a straightforward computation to check that (recall \eqref{Poisson})
\begin{equation}\label{HamZERO}
\{G,H_{j}^{(N)}\}\equiv0\,.
\end{equation}
Then
\[
\begin{aligned}
\pa_{t}\|w\|_{H^{s}}^{2}&\stackrel{\eqref{Sobnorm2}}{=}
2{\rm Re}\big(\langle D\rangle^{s}\dot{w}, \langle D\rangle^{s}w\big)_{L^2}
\stackrel{\eqref{YYYN}, \eqref{sarosaro3}, \eqref{Poisson}}{=}
\{G,H^{(N)}\}\stackrel{\eqref{HamZERO}}{=}
\{G,H^{(N)}_{\geq N}\}\\
&\stackrel{\eqref{saroMafia}, \eqref{sarosaroApp}, \eqref{saroMafia10}}{=}
2{\rm Re}\big(\langle D\rangle^{s}
\ii\opbw(\mathfrak{m}^{(N)}_{ N}(W;\x) )w,\langle D\rangle^{s}w  \big)_{L^{2}}\\
&+2{\rm Re}\Big(\langle D\rangle^{s}\big(
(\mathcal{Q}_{N,N}(W))_{+}^{+}w+(\mathcal{Q}_{N,N}(W))_{+}^{-}\bar{w}
\big) ,\langle D\rangle^{s}w\Big)_{L^2}\,.
\end{aligned}
\]
Since the symbol $\mathfrak{m}^{(N)}_{ N}(W;\x)$
is real-valued and independent of $x\in \mathbb{T}$ we have
\[
{\rm Re}\big(\langle D\rangle^{s}
\ii\opbw(\mathfrak{m}^{(N)}_{ N}(W;\x) )w,\langle D\rangle^{s}w  \big)_{L^{2}}=0\,.
\]
By estimate \eqref{porto20} on the smoothing remainders 
and using the Cauchy-Schwarz inequality we get
\[
{\rm Re}\Big(\langle D\rangle^{s}\big( 
(\mathcal{Q}_{N,N}(W))_{+}^{+}w+(\mathcal{Q}_{N,N}(W))_{+}^{-}\bar{w}
\big) ,\langle D\rangle^{s}w\Big)_{L^2}\lesssim_{s}\|W\|_{H^{s}}^{N+2}\,.
\]
Therefore we have obtained 
$
\pa_{t}\|w\|_{H^{s}}^2\lesssim_{s}\|w\|_{H^{s}}^{N+2}\,,
$
by integrating in $t$ we get the \eqref{cor:energy}.
\end{proof}


\def\cprime{$'$}

\end{document}